\documentclass[11pt]{amsart}

\usepackage[margin=1in]{geometry}

\usepackage{amsmath, amscd, amsthm, amssymb,hyperref}

\def\C{{\mathbf C}}
\def\R{{\mathbf R}}
\def\Z{{\mathbf Z}}
\def\Q{{\mathbf Q}}
\def\A{{\mathbf A}}
\def\g{{\mathfrak g}}
\def\m{{\mathfrak m}}
\def\k{{\mathfrak k}}
\def\p{{\mathfrak p}}
\def\sl{{\mathfrak sl}}
\def\su{{\mathfrak su}}
\def\h{{\mathfrak h}}

\def\Vell{{\mathbb{V}_{\ell}}}

\newtheorem{theorem}{Theorem}[section]

\newtheorem{lemma}[theorem]{Lemma}

\newtheorem{proposition}[theorem]{Proposition}

\newtheorem{corollary}[theorem]{Corollary}

\newtheorem{claim}[theorem]{Claim}

\theoremstyle{definition}
\newtheorem{definition}[theorem]{Definition}

\theoremstyle{remark}
\newtheorem{remark}[theorem]{Remark}

\newcommand{\mm}[4]{\left(\begin{smallmatrix} #1 & #2\\ #3 & #4\end{smallmatrix}\right)}
\newcommand{\mb}[4]{\left(\begin{array}{cc} #1 & #2\\ #3 & #4\end{array}\right)}

\newcommand{\floor}[1]{\lfloor #1 \rfloor}

\DeclareMathOperator{\tr}{tr}

\DeclareMathOperator{\SO}{SO}
\DeclareMathOperator{\Spin}{Spin}
\DeclareMathOperator{\GSpin}{GSpin}
\DeclareMathOperator{\Sp}{Sp}

\DeclareMathOperator{\SU}{SU}

\DeclareMathOperator{\SL}{SL}
\DeclareMathOperator{\GL}{GL}

\DeclareMathOperator{\diag}{diag}

\begin{document}
	\title{Automatic convergence and arithmeticity of modular forms on exceptional groups}
	\author{Aaron Pollack}
	\address{Department of Mathematics\\ University of California San Diego\\ La Jolla, CA USA}
	\email{apollack@ucsd.edu}
	\thanks{Funding information: AP has been supported by the NSF via grant numbers 2101888 and 2144021.}
	
	\begin{abstract} We prove that the space of cuspidal quaternionic modular forms on the groups of type $F_4$ and $E_n$ have a purely algebraic characterization.  This characterization involves Fourier coefficients and Fourier-Jacobi expansions of the cuspidal modular forms.  The main component of the proof of the algebraic characterization is to show that certain infinite sums, which are potentially the Fourier expansion of a cuspidal modular form, converge absolutely.  As a consequence of the algebraic characterization, we deduce that the cuspidal quaternionic modular forms have a basis consisting of forms all of whose Fourier coefficients are algebraic numbers.
	\end{abstract}
	
	\maketitle
	
	\setcounter{tocdepth}{1}
	\tableofcontents
	\section{Introduction} 
	Holomorphic modular forms on groups $G$ with an associated Hermitian tube domain have a rich theory.  The group $G$ has an associated collection of Shimura varieties, and the holomorphic modular forms can be considered sections of coherent vector bundles on these varieties.  In line with the fact that the Shimura varieties can be defined over a number field, the holomorphic modular forms have an algebraic structure: There is a basis of the space of holomorphic modular forms on $G$, such that every classical Fourier coefficient of every element of this basis is an algebraic number.
	
	Going back to Gross-Wallach \cite{grossWallachI, grossWallachII}, Wallach \cite{wallach}, and Gan-Gross-Savin \cite{ganGrossSavin}, these authors have singled out for further study a collection of groups $G$, called the quaternionic groups, and certain automorphic forms on $G$, called the quaternionic modular forms.  The quaternionic groups, as we define them, consist of the collection $G_{2,2}, B_{3,3}, F_{4,4}, E_{6,4}, E_{7,4}, E_{8,4}$ and $D_{n,4}, B_{n,4}$ with $n \geq 4$.  Here by $X_{n,r}$ we mean a group of Dynkin type $X_n$ and real rank $r$.  Their symmetric spaces never have $G$-invariant complex structure, so the quaternionic groups $G$ do not have a theory of holomorphic modular forms.  Nevertheless, in prior work, the quaternionic modular forms have appeared to behave similarly to classical holomorphic modular forms.
	
	\subsection{Algebraicity of Fourier coefficients}
	In particular, extending and refining work of Wallach \cite{wallach} and Gan-Gross-Savin \cite{ganGrossSavin}, in \cite{pollackQDS} we gave a precise notion of Fourier expansion and Fourier coefficients of the quaternionic modular forms (QMFs) along a maximal Heisenberg parabolic subgroup $P$ of $G$.  Thus, associated to a QMF $\varphi$ on the group $G$, is a collection of complex numbers $a_{\varphi}(w)$, one for each $w$ in a certain rational vector space $W$, called the Fourier coefficients of $\varphi$.  Given the existence of this collection of complex numbers, it makes sense to ask if they have any arithmetic properties, as do the classical Fourier coefficients of holomorphic modular forms.
	
	In \cite{pollackETF}, we gave substantial evidence for this in the case $G = G_2$.  Quaternionic modular forms have a \emph{weight}, which is an integer $\ell$ at least $1$.  In \cite{pollackETF} we proved that the space of cuspidal modular forms on $G_2$ of even weight at least $6$ has a basis consisting of QMFs with all Fourier coefficients in the cyclotomic extension of $\Q$.  The proof in \cite{pollackETF} was constructive: We showed that every cusp form of even weight $\ell \geq 6$ on $G_2$ is an exceptional theta lift from an anisotropic group of type $F_{4,0}$, and we calculated the Fourier coefficients of these lifts.  This gives a somewhat algorithmic way of writing down the cuspidal QMFs on $G_2$, and we implemented these formulas in level one in \cite{pollackComputationG2}.
	
	One purpose of this paper is to resolve in the affirmative the question of whether the cuspidal quaternionic modular on the larger exceptional groups have an algebraic structure, defined in terms of Fourier coefficients.  For a quaternionic group $G$ and a subring $R$ of the complex numbers, let $S_\ell(G,R)$ denote the space of cuspidal quaternionic modular form on $G$ of weight $\ell$, all of whose Fourier coefficients are in $R$.
	
	\begin{theorem}[Algebraicity Theorem] \label{thm:algIntro} Suppose $G$ is a rational quaternionic group, of type $F_{4,4}$, $E_{6,4}$, $E_{7,4}$, or $E_{8,4}$.  Then $S_{\ell}(G,\C)$ has a basis consisting of modular forms all of whose Fourier coefficients lie in $\overline{\Q}$, the algebraic closure of $\Q$ in $\C$.  In other words, $S_{\ell}(G,\C) = S_{\ell}(G,\overline{\Q}) \otimes_{\overline{\Q}} \C$.
	\end{theorem}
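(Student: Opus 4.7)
The plan is to characterize $S_\ell(G,\C)$ as the complex points of an explicit $\overline{\Q}$-subspace of $\C^{W(\Q)}$ cut out by $\overline{\Q}$-linear conditions on Fourier coefficients, and then to deduce the basis statement by flat base change. By cuspidality together with the Fourier expansion theory of \cite{pollackQDS}, the coefficient map $\varphi \mapsto (a_\varphi(w))_{w \in W(\Q)}$ is injective, so $S_\ell(G,\C)$ embeds into $\C^{W(\Q)}$; the entire task is to describe this image by $\overline{\Q}$-rational conditions.

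First I would isolate a collection of $\overline{\Q}$-linear relations that the Fourier coefficients of any genuine cusp form must obey. Two natural sources present themselves: (i) equivariance under the integral points of the Levi of the Heisenberg parabolic $P$, which relates $a_\varphi(w)$ to $a_\varphi(\gamma w)$ via $\overline{\Q}$-rational characters; and (ii) Fourier-Jacobi compatibility along a second parabolic, which expresses the $a_\varphi(w)$ in terms of the Fourier coefficients of the Fourier-Jacobi coefficients of $\varphi$, each of which is itself a modular form on a smaller quaternionic group. This latter ingredient induces on exceptional rank and, for the base of the induction, reduces to the algebraicity results already known for $G_2$ from \cite{pollackETF}. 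The resulting $\overline{\Q}$-vector space $S_\ell^{\mathrm{alg}} \subset \C^{W(\Q)}$ contains the image of $S_\ell(G,\C)$ by construction.

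The crux of the theorem is the converse inclusion: given any collection $\{a(w)\}_{w \in W(\Q)}$ in $S_\ell^{\mathrm{alg}}$, I must produce a cuspidal quaternionic modular form of weight $\ell$ on $G$ realizing these as its Fourier coefficients. The candidate is the explicit sum
\[
\varphi(g) = \sum_{w \in W(\Q)} a(w)\, W_{w,\ell}(g),
\]
where $W_{w,\ell}$ is the generalized quaternionic Whittaker function of weight $\ell$ attached to $w$. The main obstacle, as the abstract advertises, is proving that this infinite sum converges absolutely and uniformly on compacta of $G(\A)$; once convergence is in hand, automorphy of $\varphi$ follows formally from the $\Gamma$-equivariance relations of type (i), and cuspidality follows from the Fourier-Jacobi compatibilities of type (ii). I would attack convergence by combining archimedean Bessel-type decay estimates for $W_{w,\ell}$ (obtained from the Lie-algebraic differential equations characterizing weight-$\ell$ QMFs) with polynomial-growth bounds on $|a(w)|$ derived via majorization against a reference series whose convergence is already in hand, such as an Eisenstein series of matching weight or a cusp form produced by an exceptional theta lift from a smaller group.

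Granted convergence, the identification $S_\ell(G,\C) = S_\ell^{\mathrm{alg}} \otimes_{\overline{\Q}} \C$ is immediate, and the theorem follows: any $\overline{\Q}$-basis of the finite-dimensional $\overline{\Q}$-vector space $S_\ell^{\mathrm{alg}}$ lifts to a basis of $S_\ell(G,\C)$ consisting of modular forms with all Fourier coefficients in $\overline{\Q}$. Finite-dimensionality of $S_\ell^{\mathrm{alg}}$ is inherited from that of $S_\ell(G,\C)$, which is standard for cuspidal automorphic forms of fixed weight and level. Apart from the convergence input, the remaining arguments are bookkeeping within the Fourier-coefficient formalism of \cite{pollackQDS} combined with elementary Galois descent.
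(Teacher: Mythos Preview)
Your overall architecture---identify $S_\ell(G,\C)$ with a $\overline{\Q}$-rational subspace $S_\ell^{\mathrm{alg}}$ of a space of Fourier-coefficient data and then descend---matches the paper exactly. But two of your key inputs are misidentified, and these are precisely where the real work lies.

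First, the Fourier--Jacobi targets are \emph{not} smaller quaternionic groups, and there is no induction on exceptional rank terminating at $G_2$. The paper takes FJ coefficients along two non-Heisenberg maximal parabolics $Q$ and $R$: along $Q$ one obtains holomorphic cusp forms on $\widetilde{\SL_2}$, and along $R$ one obtains holomorphic cusp forms on (a group isogenous to) $\SO(2,\dim(C)+3)$. The $\overline{\Q}$-rationality of the resulting linear conditions on the $a_w$ then comes from the classical algebraicity of holomorphic modular forms on these groups, not from \cite{pollackETF}. In fact the paper states explicitly that its method does not specialize to $G_2$, because $G_2$ lacks the parabolic $R$; conversely the $G_2$ method of \cite{pollackETF} (exhaustion by exceptional theta lifts) does not generalize upward. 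Your inductive scheme cannot get off the ground.

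Second, your proposed convergence argument---majorize $|a(w)|$ against an Eisenstein series or a known theta lift---is not what the paper does, and I do not see how to make it work: one has no a priori comparison between an abstract symmetry-satisfying family $\{a(w)\}$ and the Fourier coefficients of any particular automorphic form. The paper's Automatic Convergence Theorem instead bootstraps polynomial growth of $|a_w|$ directly from the $P$, $Q$, $R$ symmetries, using reduction theory for $H_J^1$ and for orthogonal groups together with a \emph{quantitative Sturm bound} on $\widetilde{\SL_2}$ and on $\SO(2,n)$ (if the first few normalized Fourier coefficients of a holomorphic cusp form are small, then all of them are correspondingly small, with explicit dependence on the level). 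This is the technical heart of the paper. Finally, automorphy of the resulting sum under all of $G(\Q)$ is not formal from Levi equivariance alone: the paper needs an Identity Theorem for quaternionic functions plus the $R$-symmetries to propagate invariance from $R(\Q)$ and $M_J^1(\Q)$ to $G(\Q)$.
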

	
	Our proof of algebraicity on $G_2$ does not generalize to the larger groups in Theorem \ref{thm:algIntro}, and our proof of algebraicity on these larger groups, as we will explain, does not specialize to $G_2$.
	
	More broadly, the primary purpose of this paper is to give a purely algebraic characterization of the cuspidal quaternionic modular forms on the groups $G$ in Theorem \ref{thm:algIntro}.  Specifically, the primary purpose of this paper is to prove Theorem \ref{thm:algebraizationIntro} below. To setup this theorem, we first delve into Fourier-Jacobi expansions and outline the proof of Theorem \ref{thm:algIntro}.
	
	\subsection{Fourier-Jacobi expansions}
	The first step in the proof of Theroem \ref{thm:algIntro} is to develop two notions of Fourier-Jacobi expansions for cuspidal quaternionic modular forms.  Besides a maximal Heisenberg parabolic subgroup $P$, the groups $G$ in Theorem \ref{thm:algIntro} also have two other maximal parabolic subgroups with which we work: A parabolic subgroup $Q$, whose Levi factor $M_Q$ has an $A_1$ quotient, and a parabolic subgroup $R$, whose Levi factor $M_R$ is of type $B$ or $D$.  
	
	The unipotent radical $N_Q$ of $Q$ is three-step.  Taking a non-degenerate character on the second step, we can define a Fourier-Jacobi coefficient of a cuspidal QMF $\varphi$ on $G$.  This Fourier-Jacobi coefficient is an automorphic form on $\widetilde{\SL_2}$, the double cover of $\SL_2$.  We prove that it corresponds to a holomorphic modular form, and relate its Fourier coefficients to the Heisenberg Fourier coefficients of $\varphi$.
	
	Likewise, the unipotent radical $N_R$ of $R$ is two-step.  Taking a non-degenerate character on the second step, we can again define a Fouier-Jacobi coeficient of a cuspidal QMF $\varphi$ on $G$.  This Fourier-Jacobi coefficient is now an automorphic form on a group of type $\SO(2, n)$.  We prove that it corresponds to a holomorphic modular form, and relate its Fourier coefficients to the Heisenberg Fourier coefficients of $\varphi$.  This expansion builds on and uses calculations from the paper \cite{apawMS}, which proved the existence of a Fourier-Jacobi coefficient in the case of the orthogonal quaternionic groups, and \cite{mcGladeFJ}, which handles a more general Fourier-Jacobi coefficient on the groups of type $B$ and $D$.
	
	\begin{theorem}[Fourier-Jacobi expansion] \label{thm:FJintro} Let $\varphi$ be a cuspidal quaternionic modular form of weight $\ell$ on the group $G$.
		\begin{enumerate}
			\item Let $G$ be any quaternionic group.  The Fourier-Jacobi coefficients of $\varphi$ along the parabolic $Q$ are holomorphic cuspidal modular forms on $\widetilde{\SL}_2$ of weight $\ell' = \ell+1 - \dim(J)/2$, where $G$ is associated to the cubic norm structure $J$.  
			
			\item Let $G$ be a quaternionic group of type $F_4$ or $E_n$, of rational (and real) rank four.  Assume that it is associated to $J = H_3(C)$ where $C$ is a rational compsotion algebra.  The Fourier-Jacobi coefficients of $\varphi$ along the parabolic $R$ are holomorphic cuspidal modular forms on (a group isogenous to) $\SO(2,\dim(C)+3)$ of weight $\ell_1 = \ell - \dim(C)$.  
		\end{enumerate}
	\end{theorem}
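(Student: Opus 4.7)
The strategy for both parts runs in parallel: given a cuspidal quaternionic modular form $\varphi$ of weight $\ell$ and the prescribed character $\chi$, form the Fourier--Jacobi coefficient
$$\varphi_\chi(g) = \int_{[N]} \varphi(ng)\,\overline{\chi(n)}\,dn$$
where $N$ is the relevant step of the unipotent radical; identify the stabilizer $H_\chi$ of $\chi$ in the Levi so that $\varphi_\chi$ is an automorphic form on (a cover of) $H_\chi$; determine the minimal $K$-type of $\varphi_\chi$ by branching the quaternionic $K$-type $\Vell$ under the maximal compact of $H_\chi$; use the quaternionic differential equations of \cite{pollackQDS} to force $\varphi_\chi$ to be holomorphic of the advertised weight; and finally match its Fourier expansion against the Heisenberg Fourier coefficients of $\varphi$ via a change of variables on the unipotent groups.

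For part (1), write $N_Q \supset N_Q^{(2)} \supset N_Q^{(3)}$ for the descending central series, so that $N_Q^{(3)}$ is the one-dimensional center and $N_Q^{(2)}/N_Q^{(3)}$ is an abelian group of dimension $\dim(J)$ on which the $A_1$ quotient of $M_Q$ acts. Taking $\chi$ non-trivial on $N_Q^{(3)}$ produces, via the Heisenberg--Weil mechanism on $N_Q^{(2)}/N_Q^{(3)}$, a theta-type transformation law on the metaplectic cover $\widetilde{\SL}_2$; this is the source of the double cover in the theorem. The weight formula $\ell' = \ell + 1 - \dim(J)/2$ is a branching computation: the $+1$ comes from the scalar $K$-action on the central step, the $-\dim(J)/2$ from the Weil representation on the $J$-direction, and the remaining $\ell$ from the original weight of $\Vell$.

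For part (2), the parabolic $R$ has a two-step $N_R$ with abelian commutator $[N_R,N_R]$, and a non-degenerate $\chi$ on $[N_R,N_R]$ has stabilizer $H_\chi \subset M_R$ isogenous to $\SO(2,\dim(C)+3)$. The explicit Fourier--Jacobi identities developed in \cite{apawMS} for the orthogonal quaternionic groups, and in \cite{mcGladeFJ} for the more general $B/D$ Fourier--Jacobi setting, supply both the formula for $\varphi_\chi$ and the comparison with Heisenberg Fourier coefficients. The remaining work in the exceptional cases is to identify the relevant orthogonal group inside the Levi of $R$ and to compute the weight shift, which by a parallel branching calculation equals $\ell_1 = \ell - \dim(C)$.

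The main obstacle, in both parts, is the holomorphicity step. The Cauchy--Riemann system on the $\widetilde{\SL}_2$- or $\SO(2,n)$-symmetric domain has many more $K$-finite solutions than holomorphic modular forms of the prescribed weight, so one cannot simply quote a general descent statement. Overcoming this requires an explicit Lie-algebra computation: one must decompose $\g_\C$ relative to the parabolic at hand, branch $\Vell$ under the maximal compact $K_{H_\chi}$, and verify that the restriction of the quaternionic holomorphicity operator to the relevant isotypic component of $\Vell$ coincides with the standard lowering operator for holomorphic discrete series of the stated weight. Once this identification is made, holomorphicity of $\varphi_\chi$ and the weight formula follow, and the Fourier-coefficient comparison is obtained by interchanging integrations in the Heisenberg expansion of $\varphi$.
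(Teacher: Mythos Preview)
Your plan diverges from the paper's actual proof in a fundamental way, and it also contains a structural error about $N_Q$ that would need fixing.

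\textbf{Structural error.} For the $Q$-parabolic you write that $N_Q^{(3)}$ is the one-dimensional center and take $\chi$ nontrivial there. In fact the top graded piece of $N_Q$ (eigenvalue $3$ for $h_Q$) is two-dimensional, spanned by $E_{13}$ and $E_{23}$; one first takes the constant term along this piece. The resulting quotient $N_Q/N_Q^3$ is a Heisenberg group whose center has Lie algebra $\delta_3\otimes J^\vee$, of dimension $\dim(J)$, and the character $\chi$ is determined by an element $B\in J$ with $N_J(B)\neq 0$ via pairing on this center. The symplectic space for the Weil representation is then $(v_1\otimes J)\oplus(v_2\otimes J)$, of dimension $2\dim(J)$. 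Your identification of where the character lives and what plays the role of the Heisenberg group is off by one step.

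\textbf{Different method for holomorphicity.} More importantly, the paper does \emph{not} prove holomorphicity by branching $\Vell$ under $K_{H_\chi}$ and identifying the restricted Schmid operator with a lowering operator. Instead, both parts are proved by direct archimedean computation. One fixes, at the infinite place, a specific Gaussian $\phi_0$ in the Schr\"odinger model of the Weil representation; one then computes the integral
\[
\int W_{\ell,w}(xg)\,(\omega(g)\phi_0)(x)\,dx
\]
explicitly, using the closed formula for $W_{\ell,w}$ in terms of $K$-Bessel functions. The output is matched with the holomorphic Whittaker function $\mathcal{W}_{\ell',n}$ on $\widetilde{\SL}_2$ (respectively $\mathcal{W}_{\ell_1,u_1}$ on $\SO(2,n)$). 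The weight shift is read off from two ingredients: the eigenvalue of $\phi_0$ under the compact part of the Weil representation (Proposition~\ref{prop:GaussianQ} and Lemmas~\ref{lem:KGaussianR1}--\ref{lem:KGaussianR2}), and the $\su_2$-projection of the relevant compact generator (Lemma~\ref{lem:pr(e-f)}). The integrals themselves reduce to two definite-integral identities involving $K_v$, proved separately in the appendix (Theorem~\ref{thm:defInt1} and Proposition~\ref{prop:defInt2}).

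Your proposed route---an abstract $K$-type/differential-operator argument---is not a priori unreasonable, but you correctly identify it as the main obstacle and do not carry it out. The paper circumvents this entirely: it never analyzes how $D_\ell$ restricts to the subgroup, instead verifying holomorphicity by producing the explicit Whittaker function on the nose. If you want to pursue your approach you would need to perform the Lie-algebra branching you allude to and check compatibility of the Schmid operator with the Cauchy--Riemann operator on the smaller group; this is not in the paper and would be genuinely new content.
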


	\subsection{Converse theorem} The Heisenberg Fourier coefficients of a QMF $\varphi$ on $G$ are locally constant functions $a_{w}: G(\A_f) \rightarrow \C$, where $w \in W_J(\Q)$ runs over certain vectors in a rational vector space.   The existence of the Fourier-Jacobi modular forms from Theorem \ref{thm:FJintro} associated to $\varphi$ imposes many relations among the $a_w(g_f)$.  We loosely call these relations $P, Q,$ and $R$ symmetries, corresponding to the relations imposed by thinking about the Fourier expansion along the associated parabolic.  We detail these relations below in Definition \ref{defn:symmetries}.  Additionally, the fact that $\varphi$ is a QMF implies that the $a_w$ satisfy a moderate growth condition: the values $|a_w(g_f)|$ grow polynomially in the norm of $w$ for any fixed $g_f \in G(\A_f)$.
	
	The second step in the proof of Theorem \ref{thm:algIntro} is to prove that, conversely, if the functions $a_w(g_f)$ satisfy the $P$ and $R$ symmetries and grow polynomially, then they are the Fourier coefficients of a cuspidal QMF on $G$. (The $Q$ symmetries are not needed at this step.) Inuitively, the idea is that one writes down, using the $a_w(g_f)$, the putative QMF $\Psi$--a function on $G(\A)$--as an infinite sum.  The fact that the $a_w$ grow slowly means this sum converges absolutely.  Using that the $a_w$ satisfy the $P$ and $R$ symmetries, one can show that $\Psi$ is left invariant by $R(\Q)$ and another group $M_J^1(\Q) \not\subset R(\Q)$, and thus by $G(\Q)$. 
	
	\begin{theorem}[Converse Theorem] \label{thm:converseIntro} Suppose $G$ is a quaternionic exceptional group, of type $F$ or $E$.  Let $\{a_w\}_{w}: G(\A_f) \rightarrow \C$ be a set of functions, one for each $w \in W_J(\Q)$.  Assume that the $a_w$ are uniformly smooth, in that there is an open compact subgroup $U \subseteq G(\A_f)$ so that $a_w$ is right $U$-invariant for every $w \in W_J(\Q)$.  Assume moreover:
		\begin{enumerate}
			\item the $a_w$ satisfy the $P$ and $R$ symmetries;
			\item the numbers $|a_w(g_f)|$ grow polynomially with $w$ for each $g_f \in G(\A_f)$.  
		\end{enumerate}
		Then there is a cuspidal quaternionic modular form $\Psi$ on $G(\A)$, defined as an infinite sum, whose Fourier coefficients are the $a_w(g_f)$.
	\end{theorem}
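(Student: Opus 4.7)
The plan is to write $\Psi$ explicitly as an absolutely convergent infinite sum dictated by the data, check the basic automorphy conditions, and then deduce left $G(\Q)$-invariance from the $P$ and $R$ symmetries combined with a generation statement. For each $w$ in the positive cone of $W_J(\Q)$ and each weight $\ell$, the prior works on quaternionic modular forms produce a distinguished archimedean generalized Whittaker function $\mathcal{W}_w^{\ell}:G(\R)\to V_\ell$ of moderate growth that transforms under $N_P(\R)$ by the character attached to $w$, has the correct $K_\infty$-type, and is annihilated by the Schmid operator $D_\ell$. I would define
\[
\Psi(g) \;=\; \sum_{w \in W_J(\Q)^{+}} a_w(g_f)\,\mathcal{W}_w^{\ell}(g_\infty)
\]
and verify the required properties one by one.

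Absolute convergence is the first step. Each $\mathcal{W}_w^{\ell}$ is given by an explicit Bessel-type formula and decays exponentially in $|w|$ uniformly for $g_\infty$ in a compact set. Combined with the polynomial bound $|a_w(g_f)|\ll_{g_f}(1+|w|)^{N}$, this yields absolute and locally uniform convergence, so $\Psi$ is smooth and of moderate growth. The correct right archimedean behavior ($K_\infty$-type and annihilation by $D_\ell$) is inherited term by term; right $U$-invariance follows from the uniform smoothness of $\{a_w\}$; and left $N_P(\Q)$-invariance is automatic, since by construction $\Psi$ is a Fourier series along $N_P$ with no constant term. This also yields the vanishing of the constant term along $P$.

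The core of the argument is left $G(\Q)$-invariance. The $P$ symmetries are designed to encode the transformation $a_w(m g_f) = \chi_\ell(m)\,a_{m^{-1}\cdot w}(g_f)$ for $m \in M_J^1(\Q)$, which cancels the corresponding transformation $\mathcal{W}_w^{\ell}(m g_\infty) = \chi_\ell(m)^{-1}\mathcal{W}_{m\cdot w}^{\ell}(g_\infty)$; after reindexing the sum this gives left $M_J^1(\Q)$-invariance. For left $R(\Q)$-invariance I would re-expand $\Psi$ as a Fourier series along the center $Z_R$ of the unipotent radical $N_R$ and, using the $R$ symmetries, identify each nonzero Fourier--Jacobi coefficient so obtained with a bona fide holomorphic modular form on the group isogenous to $\SO(2,\dim(C)+3)$ furnished by Theorem \ref{thm:FJintro}. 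Since such holomorphic forms are genuinely $R(\Q)$-invariant, so is $\Psi$. A root-subgroup calculation then shows that $M_J^1(\Q)$ and $R(\Q)$ together generate $G(\Q)$, completing the invariance; cuspidality along the other standard parabolics falls out of the same rewriting once one observes that the relevant Fourier--Jacobi coefficients have no constant term.

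The principal obstacle is the $R(\Q)$-invariance: it requires repackaging the Heisenberg $N_P$-expansion of $\Psi$ as a genuinely different $N_R$-expansion and then matching the resulting coefficients term-by-term with classical holomorphic forms on $\SO(2,\dim(C)+3)$, in the sense of \cite{apawMS} and \cite{mcGladeFJ}. Absolute convergence, and therefore the polynomial-growth hypothesis, is essential in order to justify the interchanges of summation that underlie this repackaging; this is the precise place where the hypothesis enters in an indispensable way.
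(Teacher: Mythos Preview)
Your overall strategy---build $\Psi$ from the $a_w$, verify analytic properties, then prove $G(\Q)$-invariance by generating $G(\Q)$ from $M_J^1(\Q)$ and $R(\Q)$---matches the paper's. But there are two genuine gaps.

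\textbf{The definition of $\Psi$ is too small.} Your sum $\sum_{w>0} a_w(g_f)\mathcal{W}_w^\ell(g_\infty)$ is exactly the $Z$-constant term $\Psi_Z$ of the sought cusp form, where $Z$ is the one-dimensional center of $N_P$. But $\Psi_Z$ is only left $P(\Q)$-invariant, not $G(\Q)$-invariant: the missing piece is the non-trivial $Z$-Fourier modes. The paper's $\Psi$ (equation \eqref{eqn:PsiDef}) is
\[
\Psi(g)=\sum_{a(w)=0} a_w(g_f)W_{\ell;w}(g_\infty)+\sum_{\gamma\in B(\Q)\backslash \SL_2(\Q)}\sum_{a(w)\neq 0} a_w(\gamma_f g_f)W_{\ell;w}(\gamma_\infty g_\infty),
\]
with the $\SL_2$ embedded via $j_{E_{12}}$ into $M_Q$. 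This extra $\SL_2$-sum is essential: it is what makes $\Psi$ (rather than $\Psi_Z$) a candidate for $G(\Q)$-invariance, and it is why the absolute-convergence estimate (Proposition \ref{prop:absConv}) is nontrivial---one must also control the sum over $\gamma$.

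\textbf{The $R(\Q)$-invariance step needs an analytic continuation principle you do not mention.} The $R$-symmetries tell you that, for each normal $T$ and each $g_f\in G(\A_f)$, the Fourier--Jacobi coefficient $\mathrm{FJ}_{T,\phi}(\Psi)(r;g_f)$ is automorphic in $r\in M'(\A)$. From this and the expansion of $\Psi_T$ in terms of theta functions (Lemma \ref{lem:HilbertBasis}), you get $\Psi_T(\gamma g)=\Psi_T(g)$ for $\gamma\in M'(\Q)$---but only for $g$ whose archimedean component lies in $N_R(\R)M'(\R)K_J$, not for all $g\in G(\A)$. The paper closes this gap with an \emph{Identity Theorem} (Theorem \ref{thm:Identity}): a quaternionic function annihilated by $D_\ell$ is real-analytic, and if it vanishes on a $K_J$-stable submanifold whose normal directions are quaternionically transverse, it vanishes identically. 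Applying this to $F(g_\infty)=\Psi_T(\gamma g_f g_\infty)-\Psi_T(g_f g_\infty)$ extends the invariance to all $g$. Without this step the argument does not close. After that, one still has to pass from $M'(\Q)$-invariance of each $\Psi_T$ (for normal, positive-definite $T$) to full $M_R(\Q)$-invariance of $\Psi$; the paper does this via a sequence of claims relating $\Psi_{T_1}$ for arbitrary $T_1$ with $q_{V_7}(T_1)>0$ to $\Psi_T$ for normal $T$, and showing that only such $T_1$ contribute.
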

	
	In this step, we use parabolic subgroup $R$, which does not have any analogue on $G_2$.  This is the reason why this argument does not specialize to $G_2$.  Moreover, this step does not apply to the groups of type $B_{n,4}$ and $D_{n,4}$, because $M_J^1(\Q) \subseteq R(\Q)$ for the groups of type $B$ and $D$.  We hope to prove the arithmeticity of the QMFs on groups of type $B$ and $D$ in the near future.
	
	Our proof of the Converse Theorem uses an ``Identity theorem" for quaternionic functions, which might be of independent interest.  The Identity theorem asserts that if a quaternionic function is $0$ on a sufficiently large subset of its domain, then it is identically $0$.
	
	\subsection{The automatic convergence theorem} The crucial final step in the proof of Theorem \ref{thm:algIntro} is what can be called an ``Automatic convergence theorem".  We prove that any collection of functions $a_w: G(\A_f) \rightarrow \C$ that satisfies the $P$, $Q$ and $R$ symmetries necessarily grows polynomially with $w$.  In other words, the sum defining $\Psi$ above from the $a_w(g_f)$ automatically converges absolutely, once one knows that the $a_w$ satisfy the requisite symmetries.  
	
	\begin{theorem}[Automatic Convergence] \label{thm:ACintro} Suppose $G$ is a quaternionic group of type $F_4$ or $E_n$ of rational (and real) rank four.  Let $\{a_w\}_{w}$ be a collection of functions that are uniformly smooth, and satsify the $P, Q$ and $R$ symmetries.  Then $|a_w(g_f)|$ grows polynomially with $w$ for every $g_f \in G(\A_f)$.
	\end{theorem}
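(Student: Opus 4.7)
The plan is to use the $Q$-symmetries to identify segments of the Fourier coefficient data $\{a_w\}$ with the Fourier expansions of holomorphic cusp forms on $\widetilde{\SL}_2$ as in Theorem \ref{thm:FJintro}(1), and to extract polynomial growth from the classical Hecke bound $|c(n)| \ll n^{k/2}$ on Fourier coefficients of holomorphic cusp forms of weight $k$. The essential difficulty, which will be the main obstacle, is promoting the formal $q$-expansions supplied by the $Q$-symmetries to genuine (moderately growing) modular forms, rather than mere formal series; this is where the extra input from the $R$-symmetries enters crucially.

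First I would make the packaging precise. For each non-degenerate character $\psi$ on the second step of $N_Q$ and each $g_f \in G(\A_f)$, the $Q$-symmetries imply that the formal expression
\[
f_\psi(g_f;\tau) := \sum_{n} c_\psi(n,g_f)\,e^{2\pi i n \tau},
\]
with $c_\psi(n,g_f) = a_{w(\psi,n)}(g_f)$ for the appropriate $w(\psi,n) \in W_J(\Q)$, transforms under a congruence subgroup of $\widetilde{\SL}_2(\Z)$ like a half-integral weight $\ell' = \ell + 1 - \dim(J)/2$ cusp form. The same scheme applied to the $R$-symmetries assembles the $a_w$ into a formal Fourier expansion on the tube domain of $\SO(2,\dim(C)+3)$ that transforms formally like a weight $\ell_1 = \ell - \dim(C)$ cusp form, with the feature that the Jacobi-type Fourier coefficients of this formal $\SO(2,n)$-expansion along its own Heisenberg parabolic recover exactly the $f_\psi$, because the relevant $\widetilde{\SL}_2$ sits inside a Jacobi subgroup of $R$ via the ambient structure of $G$.

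The bootstrap comes from the $R$-direction. The tube domain for $\SO(2,\dim(C)+3)$ is Hermitian of Cartan rank two, so its cusp geometry is rich enough to be exploited directly. Using uniform smoothness of $\{a_w\}$, I would cut the formal $R$-expansion down to a finite combinatorial truncation on a single $G(\A_f)/U$-coset and bound it by elementary means, thereby showing that the formal $R$-series defines an honest holomorphic function on a small neighborhood of the zero-dimensional cusp. Invoking the full modularity under the $\SO(2,n)$-congruence subgroup encoded in the $R$-symmetries then propagates this convergence across the entire tube domain; the resulting function lies in the finite-dimensional space of holomorphic cusp forms of weight $\ell_1$ and fixed level, and so is automatically a genuine cusp form. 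Its Fourier-Jacobi restrictions along the $\SO(2,n)$-Heisenberg parabolic then realize the $f_\psi$ as honest weight $\ell'$ cusp forms on $\widetilde{\SL}_2$.

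Given that the $f_\psi$ and the $\SO(2,n)$-cusp forms are now established as genuine modular forms, the classical Hecke bounds yield $|c_\psi(n,g_f)| \ll_{g_f,\psi} n^{\ell'/2}$, hence polynomial growth of $a_w(g_f)$ along each $\psi$-ray, and likewise polynomial growth along each character of the second step of $N_R$. To conclude for arbitrary $w \in W_J(\Q)$, I would use the $P$-symmetries: the Heisenberg Levi $M_J^1(\Q)$ acts on $W_J(\Q)$ with only finitely many orbits meeting any bounded-height set, and the $M_J^1$-equivariance built into the $P$-symmetries allows one to dominate $|a_w(g_f)|$ by a polynomial in the height of $w$ (with constant depending on $g_f$) using a representative of its $M_J^1$-orbit to which the $Q$- or $R$-ray bounds apply directly.
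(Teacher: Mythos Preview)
Your proposal has a genuine gap at its core. First, a minor misreading: the $Q$- and $R$-symmetries in Definition~\ref{defn:symmetries} already assert that the relevant series are Fourier expansions of \emph{genuine} cuspidal automorphic forms on $\widetilde{\SL_2}$ and $M'$, not merely formal series with modular transformation laws. So your bootstrap step (showing convergence near a cusp and propagating via modularity) is unnecessary. But once you drop it, your argument reduces to: apply the Hecke bound to each such cusp form, then patch together via $P$-symmetries. The problem is that the Hecke bound $|c(n)| \ll n^{\ell'/2}$ carries an implied constant depending on the cusp form itself---concretely, on its sup-norm, which in turn is controlled by its Fourier coefficients. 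For each $B$ (in the $Q$-case) or each $T$ (in the $R$-case) you get a \emph{different} cusp form, and you have no a priori bound on its norm without already knowing bounds on the $a_w$. This is circular. Your final reduction step via $M_J^1(\Q)$-orbits does not save you: a general $w$ can be moved to a representative of the form $(0,B,0,d)$, but as $|q(w)|$ grows, so does $N(B)$ in general, and you are forced to invoke Hecke bounds for infinitely many $B$'s with no uniformity.

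The paper breaks this circularity by an entirely different mechanism: an induction on the size of $|q(w)|$, taking $D \to D^\delta$ for some $\delta>1$. The key ingredient is a \emph{quantitative Sturm bound} (Theorems~\ref{thm:SturmSL2} and~\ref{thm:SturmBound}): if the Fourier coefficients of a holomorphic cusp form indexed by small $\lambda$ are all bounded by $\epsilon$, then \emph{all} its Fourier coefficients are bounded by $\epsilon$ times an explicit constant depending polynomially on the level and on $(T,T)$. Combined with delicate reduction theory (Section~\ref{sec:reduction}) that controls how large $|q(T)|$ and $N(B)$ must be for a given $w$, this lets one feed the inductive hypothesis (bounds for $|q(w)| \leq D$) into the Sturm bound to produce bounds for $|q(w)| \leq D^\delta$, with controlled growth of the constant. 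The exponent $1/4 < 1/2$ in Theorem~\ref{thm:SturmBound}'s inequality~\eqref{eqn:ineqLambda} is what makes the induction close; this is not a feature one can extract from the bare Hecke bound.
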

	
	The automatic convergence theorem has antecedents in work of Ibukiyama-Poor-Yuen \cite{IPY}, Bruinier \cite{bruinierGenus2}, Raum \cite{raumGenus2}, Bruinier-Raum \cite{bruinerRaum2015,bruinierRaum2024}, and Xia \cite{xiaUnitary}, all of who proved similar results for holomorphic modular forms on symplectic groups or unitary groups, in various cases and in different degrees of generality. The techniques of these works are somewhat geometric, taking advantage of the ring structure on holomorphic modular forms and the existence of the Shimura variety.
	
	Our proof of the automatic convergence theorem for the quaternionic groups uses some of the ideas of \cite{bruinerRaum2015}, and some new ones: a lot of reduction theory, and a ``quantitative Sturm bound".  The quantitative Sturm bound says that if the ``first" several Fourier coefficients of a holomorphic modular form on a group of type $\SO(2,n)$ are small, then all the Fourier coefficients of this modular form are correspondingly small.  We intend to use these same ideas to give a new proof of an automatic convergence theorem for Siegel modular forms in a separate paper.
	
	\subsection{Algebraic characterization}
	Combining the results from Theorems \ref{thm:FJintro}, \ref{thm:converseIntro}, \ref{thm:ACintro}, and the fact that holomorphic modular forms can be characterized algebraically, one obtains a purely algebraic characterization of the elements of $S_{\ell}(G,\C)$: they can be identified with collection of functions $a_w: G(\A_f) \rightarrow \C$ that satisfy the $P$, $Q$, and $R$ symmetries.  \emph{No analytic or convergence criterion is needed.}  More precisely, for a subring $T$ of $\C$, let $S_{\ell}^{alg}(G,T)$ be the vector space of $T$-valued functions $a_w: G(\A_f) \rightarrow T$ that are jointly smooth, and satisfy the $P, Q$ and $R$ symmetries.
	
	\begin{theorem}[Algebraization of modular forms] \label{thm:algebraizationIntro} Suppose $G$ is a quaternionic exceptional group, of type $F$ or $E$.  The Fourier expansion map $S_{\ell}(G,\C) \rightarrow S_{\ell}^{alg}(G,\C)$ is a linear isomorphism.
	\end{theorem}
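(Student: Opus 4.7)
The plan is to assemble the three major theorems established in the paper---the Fourier-Jacobi expansion (Theorem \ref{thm:FJintro}), the Converse Theorem (Theorem \ref{thm:converseIntro}), and the Automatic Convergence Theorem (Theorem \ref{thm:ACintro})---into a three-step verification that the Fourier expansion map, which I denote $\mathrm{FE}\colon S_\ell(G,\C) \to S_\ell^{alg}(G,\C)$, is well-defined, injective, and surjective.

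For well-definedness, I would start with $\varphi \in S_\ell(G,\C)$ and consider its Heisenberg Fourier coefficients $a_w : G(\A_f) \to \C$. Uniform smoothness is immediate from the fact that $\varphi$ is fixed by some open compact subgroup of $G(\A_f)$. The $P$ symmetries are tautological consequences of the definition of the Heisenberg Fourier expansion: they record how the $a_w$ transform under the Levi of $P$. The $Q$ and $R$ symmetries are precisely the algebraic constraints forced on the Heisenberg Fourier coefficients by the existence of holomorphic Fourier-Jacobi expansions along the parabolics $Q$ and $R$, and thus follow from Theorem \ref{thm:FJintro}, once one matches Definition \ref{defn:symmetries} with the holomorphy conditions that theorem produces.

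Injectivity and surjectivity then follow by direct invocation of the remaining theorems. If $\mathrm{FE}(\varphi) = 0$, then all Heisenberg Fourier coefficients of $\varphi$ vanish; since $\varphi$ is cuspidal, the Heisenberg Fourier expansion developed in \cite{pollackQDS} reconstructs $\varphi$ pointwise from the $a_w$, forcing $\varphi = 0$. For surjectivity, given $\{a_w\} \in S_\ell^{alg}(G,\C)$, the prescribed joint smoothness together with the $P$, $Q$, $R$ symmetries are exactly the hypotheses of Theorem \ref{thm:ACintro}, which yields polynomial growth of $|a_w(g_f)|$ in $w$ for each fixed $g_f$. This supplies the missing analytic input for Theorem \ref{thm:converseIntro} (which itself requires only the $P$ and $R$ symmetries together with polynomial growth), and that theorem produces a cuspidal QMF $\Psi$ whose Fourier coefficients are the given $a_w$, so that $\mathrm{FE}(\Psi) = \{a_w\}$.

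The main conceptual content of Theorem \ref{thm:algebraizationIntro} has been absorbed by the three prior theorems, so the hard part of the present proof is bookkeeping: one must confirm that the precise formulation of the $P$, $Q$, $R$ symmetries used in the definition of $S_\ell^{alg}$ is exactly the one produced by Theorem \ref{thm:FJintro} and consumed by Theorem \ref{thm:converseIntro}. The subtlety is that the $R$ symmetries must appear on both sides in the same form---as relations among the $a_w$ encoding $\SO(2,\dim(C)+3)$-holomorphy of the Fourier-Jacobi coefficient---so that the output of the Fourier-Jacobi expansion theorem feeds verbatim into the hypothesis of the Converse Theorem. Once this matching is verified, Theorem \ref{thm:algebraizationIntro} follows as a short assembly of the three prior results.
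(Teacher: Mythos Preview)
Your proposal is correct and matches the paper's approach: the paper presents Theorem \ref{thm:algebraizationIntro} precisely as the assembly of Theorems \ref{thm:FJintro}, \ref{thm:converseIntro}, and \ref{thm:ACintro}, with well-definedness coming from the Fourier-Jacobi expansion, injectivity from the reconstruction formula for cuspidal QMFs in terms of their Heisenberg Fourier coefficients, and surjectivity from Automatic Convergence feeding into the Converse Theorem. Your identification of the bookkeeping issue---that the $P$, $Q$, $R$ symmetries in Definition \ref{defn:symmetries} must simultaneously be the output of Theorem \ref{thm:FJintro} and the input of Theorems \ref{thm:converseIntro} and \ref{thm:ACintro}---is exactly right, and the paper handles this by making Definition \ref{defn:symmetries} the single common formulation.
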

	
	Theorem \ref{thm:algebraizationIntro} is the main theorem of this paper.   As a consequence of it, we immediately deduce the algebraicity of the Fourier coefficients of cuspidal QMFs, i.e., Theorem \ref{thm:algIntro}.
	
	\begin{proof}[Proof of Theorem \ref{thm:algIntro}]  The $P,Q$ and $R$ symmetries can all be described in terms of linear relations among the $a_w(g_f)$ with algebraic coefficients.  By transcendental Galois theory, e.g., \cite[Theorem 9.29]{milneGalois}, one has $S_{\ell}^{alg}(G,\C) = S_{\ell}^{alg}(G,\overline{\Q}) \otimes_{\overline{\Q}} \C$.  Thus the result follows from Theorem \ref{thm:algebraizationIntro}.
	\end{proof}
	
	\section{Notation}
	In this section, we collect together much of the notation we use throughout the paper.
	
	\subsection{The quaternionic groups and subgroups}
	We use notation exactly as in \cite[sections 2,3,4]{pollackQDS}, unless stated otherwise.  Thus, $J$ denotes a cubic norm structure, and $J^\vee$ the dual structure.  We assume throughout that the trace pairing on $J \otimes \R$ is positive-definite; this is the assumption that leads to quaternionic groups, as opposed to other forms of the groups.  We will not comment again on this assumption.
	
	We let $M_J$ denote the identity component of the group of linear automorphism of $J$ that preserve the norm $N_J$ on $J$ up to scaling, and $M_J^1$ the subgroup that fixed the norm exactly.  Let $\m(J)$ denote the Lie algebra of $M_J$ and $\m(J)^0$ the Lie algebra of $M_J^1$.  For $B \in J$ with $N_J(B) \neq 0$, set $A_J^B$ the subgroup of $M_J$ that fixes $B$. 
	
	We let $W_J = \Q \oplus J \oplus J^\vee \oplus \Q$ denote the Freudenthal construction attached to $J$, and $H_J$ the identity component of the similitude group preserving (up to scaling) the natural symplectic $\langle \,, \rangle$ and quartic form $q_J$ on $W_J$.  We write $\nu: H_J \rightarrow \GL_1$ for the similtude, and set $H_J^1$ the kernel of $\nu$.  Set $\h(J)$ the Lie algebra of $H_J$ and $\h(J)^0$ the Lie algebra of $H_J^1$.  One has $\h(J)^0 = J \oplus \m(J) \oplus J^\vee$, and this is a $\Z$-grading, with $J$ in degree $1$, $\m(J)$ in degree $0$, and $J^\vee$ in degree $-1$.
	
	We write $\g_J$ for the Lie algebra associated to $J$ in section 4 of \cite{pollackQDS}.  Specifically, $\g_J = \sl_3 \oplus \m(J)^0 \oplus (V_3 \otimes J) \oplus (V_3 \otimes J)^\vee$.  This is a $\Z/3\Z$-grading.  One also has the $\Z/2\Z$-grading $\g(J) = \sl_2 \oplus \h(J)^0 \oplus V_2 \otimes W_J$.  Here $V_2$, respectively, $V_3$, is the standard representation of $\sl_2$, respectively, $\sl_3$.
	
	The group $G_J^{ad} = \mathrm{Aut}(\g(J))^0$ denotes the identity component of the automorphism group of the Lie algebra $\g(J)$.  We let $G_J \rightarrow G_J^{ad}$ denote a linear algebraic cover.  Sometimes, we drop $J$ from the notation, and write $G$ instead of $G_J$.  For the exceptional groups, $G_2, F_4$ and $E_8$ are both simply-connected and adjoint, so necessarily $G_J = G_J^{ad}$ in these cases.  For the case of $E_6, E_7$, explicit simply-connected covers are constructed in \cite[section 2.3 and 2.4]{pollackE8}.  For simplicity, we assume that $G_J(\R)$ is connected.  This is automatic if $G_J$ is of exceptional type.  
	
	\subsection{A restriction on the cubic norm structure}
	Our Fourier-Jacobi expansion for the parabolic subgroup $Q$ can be defined for arbitrary cubic norm structures.  The parabolic subgroup $R$--and thus the associated Fourier-Jacobi coefficients--can only be defined when the cubic norm structure $J$ satisfies a certain property.  Moreover, we can at this point only prove the automatic convergence theorem when $J$ satisfies a slightly more stringent condition.  In this subsection, we detail these assumptions on $J$.
	
	For the parabolic subgroup $R$ to exist, we assume that $J$ contains a rank one element.  See \cite[Definition 4.2.9 and Definition 4.3.2]{pollackLL} for the definition of rank of an element of $J$ and $W_J$.  More specifically, we will assume that $J$ is of the form $H_3(C)$ for a composition algebra $C$.  Thus, we are exluding the case of $J = \Q$, which corresponds to $G_J = G_2$, and the groups of type $B$ and $D$.  For $j \in \{1,2,3\}$, we let $e_{jj}$ be the element in $J$ with $1$ in the $(j,j)$ location and $0$'s elsewhere.

	\subsection{Parabolic subgroups of the quaternionic groups}\label{subsec:parabolic}
	In this subsection, we define the parabolic subgroups $P,Q$, and $R$.  We call $P$ the Heisenberg parabolic subgroup, $Q$ the $A_1$-parabolic subgroup, and $R$ the orthogonal parabolic subgroup.
	
	We first define the Heisenberg parabolic subgroup.  We have a five-step $\Z$-grading on $\g(J)$, see \cite[section 4.3]{pollackQDS}.  In the notation of this reference, 
	\[\g(J) = \Q E_{13} \oplus (e \otimes W_{J}) \oplus \h(J) \oplus (f \otimes W_J) \oplus \Q E_{31}\]
	where $e \otimes W_J = \Q E_{12} \oplus (v_1 \otimes J) \oplus (\delta_3 \otimes J^\vee) \oplus \Q E_{23}$ and $f \otimes W_J = \Q E_{32} \oplus (v_3 \otimes J) \oplus (\delta_1 \otimes J^\vee) \oplus \Q E_{21}$.  The Heisenberg parabolic subgroup $P$ is the one whose Lie algebra is $ \Q E_{13} \oplus (e \otimes W_{J}) \oplus \h(J)$.  The Levi subgroup $M_P$ has Lie algebra $\h(J)$, and the unipotent radical $N_P$ has Lie algebra $ \Q E_{13} \oplus (e \otimes W_{J})$.  Put differently, the Hiesenberg parabolic subgroup $P = M_P N_P$ is the one associated to grading on $\g(J)$ defined by the element $h_P :=E_{11} - E_{33} \in \g(J)$.  That is, $ad(E_{11}-E_{33})$ has eigenvalues $2,1,0,-1,-2$ on $g(J)$, and $Lie(M_P)$ is the $0$ eigenspace, whereas $Lie(N_P)$ is the direct sum of the $1$ and $2$ eigenspaces.  We let $Z$ denote the subgroup of $N_J$ whose Lie algebra is the $2$-eigenspace of $ad(h_P)$.  Thus $Lie(Z)$ is spanned by $E_{13}$.
	
	We next define the $A_1$-parabolic subgroup $Q$.  Set $h_Q = E_{11}+E_{22}-2E_{33}$.  Then $ad(h_Q)$ has eigenvalues $3,2,1,0,-1,-2,-3$ on $\g(J)$.  We let $Q = M_Q N_Q$ be the corresponding parabolic subgroup, so that $Lie(M_Q)$ is the $0$ eigenspace of $ad(h_Q)$, whereas $Lie(N_Q)$ is the direct sum of the positive eigenspaces.  The subgroup $Q$ preserves the two-dimensional subspace $\g(J)^{ad(h_Q)=3} = \Q E_{13} \oplus \Q E_{23}$.  This defines a group homomorphism $Q \rightarrow \GL_2$.
	
	We now define the orthogonal parabolic subgroup $R$, assuming that $J$ satisfies assumption $R$. For $\gamma \in J^\vee$ and $x \in J$, let $\Phi'_{\gamma,x} \in \m(J)^0$ be the associated Lie algebra element; see \cite[section 3.3]{pollackQDS}. Set $h_R = \frac{2}{3}(E_{11}+E_{22}-2 E_{33}) + \Phi'_{e_{11},e_{11}}$.  Here $e_{11} \in J, J^\vee$ precisely because $J$ satisfies assumption $R$.  We let $R = M_R N_R$ be the parabolic subgroup associated to $h_R$.  Thus $Lie(M_R)$ is the $0$-eigenspace of $ad(h_R)$ on $\g(J)$, whereas $Lie(N_R)$ is the direct sum of the positive eigenspaces for $ad(h_R)$ on $\g(J)$.  If $\g(J)$ is exceptional, then $ad(h_R)$ takes the eigenvalues $2,1,0,-1,-2$ on $\g(J)$, while if $\g(J)$ is of type $B$ or $D$, then $ad(h_R)$ takes the eigenvalues $2,0,-2$ on $\g(J)$.

	\subsection{Actions} If $U$ is a vector space with a left action of a group $T$, and $\langle \,,\,\rangle$ is a non-degenerate bilinear form on $U$ that is invariant up to scaling for the action of $T$, we define the right action of $U$ so that $\langle u_1 \cdot t, u_2 \rangle = \langle u_1, t \cdot u_2\rangle$ for all $u_1, u_2 \in U$.
	
	\subsection{Lattices} Let $\mathcal{O}_C \subseteq C$ be a maximal order in the composition algebra $C$.  Set $J_0 \subseteq J = H_3(C)$ to be the lattice of elements with diagonal entries in $\Z$ and off-diagonal entries in $\mathcal{O}_C$.  Set $\Lambda_0 = \Z \oplus J_0 \oplus J_0 \oplus \Z \subseteq W_J(\Q)$.  
	
	Let $V_3(\Z) = \Z v_1 \oplus \Z v_2 \oplus \Z v_3$ be the standard lattice in the defining representation of $\SL_3$.  We fix a lattice $\Lambda_\g \subseteq \g(J)$ so that $\Lambda_\g$ contains $V_3(\Z) \otimes J_0$,  $V_3(\Z)^\vee \otimes J_0$, $E_{ij}$ for $i \neq j$, and $\Lambda_\g$ is closed under the Lie bracket.  This can be done.  We can (and do) also assume that $\Lambda_\g$ preserves a lattice in a faithful representation of the group $G$.  Consequently, if $p$ is prime number, the exponential $\exp(p v) \in G(\Q_p)$ is defined, if $v \in \Lambda_\g \otimes \Z_p$.

	If $\Lambda$ is a lattice and $\lambda \in \Lambda$ is nonzero, we write $\mathrm{cont}(\lambda;\Lambda)$ for the content of $\lambda$ with respect to $\Lambda$.  Thus, $\mathrm{cont}(\lambda;\Lambda)$ is the largest positive integer $n$ so that $n^{-1} \lambda \in \Lambda$.  If $\lambda \in \Lambda \otimes \Q$, and $m$ is a positive integer so that $m \lambda \in \Lambda$, we define $\mathrm{cont}(\lambda;\Lambda) = m^{-1} \mathrm{cont}(m\lambda;\Lambda)$.  This is well-defined.
	
	\section{Review of quaternionic modular forms}
	In this section, we briefly review quaternionic modular forms.  
	
	\subsection{Generalities} Let $J$ be a cubic norm structure and $G_J$ a rational quaternionic group.  Let $\ell \geq 1$ be an integer.  Let $K_J \subseteq G_J(\R)$ be the maximal compact subgroup as defined by the Cartan involution in \cite[section 4]{pollackQDS}.   Recall that we assume $G_J(\R)$, and thus $K_J$, is connected.  The Lie algebra of $K_J$ has a distinguished $\su_2$ as a direct factor; see \cite[section 6]{pollackQDS}. The conjugation action defines a surjection $K_J \rightarrow \SU_2/\mu_2 = \mathrm{Aut}(\su_2)$.  Let $\Vell = \mathrm{Sym}^{2\ell}(\C^2)$ denote the $(2\ell+1)$-dimensional irreducible representation of $\SU_2/\mu_2$, pulled back to $K_J$.  In \cite[section 6]{pollackQDS}, we endow the $\su_2 \otimes \C \subseteq \g(J) \otimes \C$ with an $\sl_2$-triple, which gives rise to a basis $\{x^{2\ell}, x^{2\ell-1}y, \ldots, x y^{2\ell-1},y^{2\ell}\}$ of $\Vell$.  
	
	Quaternionic modular forms are defined to be automorphic forms on $G_J$ that are annihilated by a certain Schmid differential operator $D_{\ell}$.  We review this now.  Suppose then that $F: G_J(\R) \rightarrow \Vell$ is a smooth function, satisfying $F(gk) = k^{-1} F(g)$ for all $g \in G_J(\R)$ and $k \in K_J$.  Let $\g(J) \otimes \C = \k \oplus \p$ be the Cartan decomposition of $\g(J)$, let $\{X_{\alpha}\}$ be a basis of $\p$ and $\{X_\alpha^\vee\}$ the dual basis of $\p^\vee$.  Define $\widetilde{D} F = \sum_{\alpha}{ X_\alpha F \otimes X_\alpha^\vee}$, so that $\widetilde{D} F$ takes values in $\Vell \otimes \p^\vee$.  Now, there is an identification $\p \approx V_2(\C) \otimes W_J$, where recall $V_2$ denotes the two-dimensional representation of $\SL_2$.  Consequently, there is a $K_J$-equivariant projection $pr_{D}:\Vell \otimes \p^\vee \rightarrow \mathrm{Sym}^{2\ell-1}(\C^2) \otimes W_J$.  Define $D_{\ell} = pr_D \circ \widetilde{D}$.
	
	\begin{definition} Suppose $\ell \geq 1$ is an integer.  A quaterionic modular form on $G_J$ of weight $\ell$ is a smooth function $\varphi: G_J(\Q)\backslash G_J(\A) \rightarrow \Vell$ satisfying
		\begin{enumerate}
			\item $\varphi$ is of moderate growth and $\mathcal{Z}(\g(J))$-finite;
			\item $\varphi(gk) = k^{-1} \varphi(g)$ for all $k \in K_J$ and $g \in G_J(\A)$;
			\item $D_\ell \varphi \equiv 0$.
		\end{enumerate}
		Here that $\varphi$ is smooth means that there is an open compact subgroup $U \subseteq G_J(\A_f)$ so that $\varphi$ is right-invariant by $U$, and, for each $g_f \in G_J(\A_f)$, the function $\varphi(g_f g_\infty): G_J(\R) \rightarrow \Vell$ is smooth in the usual sense.  One says that $\varphi$ is cuspidal if, as usual, the constant term of $\varphi$ along the unipotent radical of every proper rational parabolic subgroup is identically $0$.  It follows from the main result of \cite{pollackQDS} that $\varphi$ is cuspidal if and only if $\varphi$ is bounded.
	\end{definition}
	
	\subsection{The Fourier expansion}
	Quaternionic modular forms have a semi-classical Fourier expansion.  The exact shape of this expansion is the main result of \cite{pollackQDS}.  For $w \in W_J(\R)$, let $\chi_w: N_J(\R) \rightarrow \C^\times$ be the unitary character given by $\chi_w(n) = e^{i \langle w, \overline{n} \rangle}$, where $\overline{n}$ is the image of $n$ in $W_J(\R)$, via the map $N_J(\R)/Z(\R) \stackrel{\log}{\rightarrow} W_J(\R)$.  If $w \neq 0$, there is a defined in \cite{pollackQDS} an explicit, smooth, moderate growth function $W_w: G_J(\R) \rightarrow \Vell$ satisfying 
	\begin{enumerate}
		\item $W_w(n g) = \chi_w(n) W_w(g)$ for all $w \in N_J(\R)$ and $g \in G_J(\R)$;
		\item $W_w(gk) = k^{-1} W_w(g)$ for all $g \in G_J(\R)$ and $k \in K_J$;
		\item $D_{\ell} W_w \equiv 0$
	\end{enumerate}
	In fact, it is proved in \cite{pollackQDS}, extending a result of Wallach from \cite{wallach}, that the space of such functions is at most one-dimensional.  A specific element of this space is singled out.
	
	In order for the space of such generalized Whittaker functions to be nonzero, the element $w$ must satisfy a condition called \emph{positive semi-definiteness}.  Let $r_0(i) = (1, -i 1_J, -1_J, i ) \in W_J(\C)$.  The element $w$ is said to be positive semi-definite if $\langle w , g r_0(i) \rangle \neq 0$ for all $g \in M_P(\R)$.  The element $w$ is said to be positive definite if in addition $q_J(w) \neq 0$, in which case $q_J(w) < 0$ (in our normalization of $q_J$).  We write $w > 0$ if $w$ is positive-definite.  If $w$ is positive semi-definite, then for $g \in M_P(\R)$ and $\alpha_w(g) = \langle w, g r_0(i) \rangle$,
	
	\[W_w(g) = \nu(g)^{\ell} |\nu(g)| \sum_{-\ell \leq v \leq \ell} \left(\frac{|\alpha_w(g)|}{\alpha_w(g)}\right)^v K_v(|\alpha_w(g)|) \frac{x^{\ell+v} y^{\ell-v}}{(\ell+v)!(\ell-v)!}.\]
	This formula, together with the $N_J(\R)$ and $K_J$-equivariance conditions, uniquely determines $W_w(g)$.
	
	Let $\psi: \Q\backslash \A \rightarrow \C^\times$ be the standard additive character.   For each $w \in W_J(\Q)$, define a character $\xi_w: N_J(\Q)\backslash N_J(\A) \rightarrow \C^\times$ as $\xi_w(n) = \psi(\langle w, \overline{n} \rangle)$, where again $\overline{n}$ is the image of $n$ in $W_J(\A)$ via the $\log$ map.  We have $\xi_{w}|_{N_J(\R)} = \chi_{2\pi w}$.  
	
	Suppose now that $\varphi$ is a weight $\ell$ QMF on $G_J(\A)$. We can take the constant term of $\varphi$ along $Z$, and Fourier expand along $Z(\A) N_J(\Q) \backslash N_J(\A)$ to obtain $\varphi_Z(g) = \varphi_{N_J}(g) + \sum_{ w \neq 0}{\varphi_w(g)}$, where
	\[\varphi_w(g) = \int_{[N_J]}{\xi_w^{-1}(n) \varphi(ng)\,dn}.\]
	By the main theorem of \cite{pollackQDS}, we have $\varphi_w(g_f g_\infty) = a_w(g_f) W_{2\pi w}(g_\infty)$ for some locally constant function $a_w: G_J(\A_f) \rightarrow \C$.  The function $a_w$ is called the $w$ Fourier coefficient of $\varphi$.  If $\varphi$ is cuspidal, then 
	\[\varphi_Z(g_f g_\infty) = \sum_{w > 0}{a_w(g_f) W_{2\pi w}(g_\infty)},\]
	the sum being over positive-definite $w$.
	
	In fact, one can recover the entire function $\varphi$ from the $a_w$'s.  If $w = (a,b,c,d) \in W_J$, let $a(w) =a$, $b(w) = b$, etc.  Suppose $\varphi$ is cuspidal.  Then
	\begin{align*}
		\varphi(g) &= \sum_{w \in W_J(\Q), a(w) = 0}{\varphi_w(g)} + \sum_{\gamma \in B_2(\Q)\backslash \SL_2(\Q)}\sum_{w \in W_J(\Q), a(w) \neq 0}{\varphi_w(\gamma g)} \\
		&= \sum_{w \in W_J(\Q), a(w) = 0}{a_w(g_f)W_{2\pi w}(g_\infty)} + \sum_{\gamma \in B_2(\Q)\backslash \SL_2(\Q)}\sum_{w \in W_J(\Q), a(w) \neq 0}{a_w(\gamma_f g_f) W_{2\pi w}(\gamma_\infty g_\infty)} \
	\end{align*}
	Here the $\SL_2$ is embedded in $M_Q$, and $B_2 \subseteq \SL_2$ is the upper-triangular Borel subgroup.
	
	\section{The Weil representation}\label{sec:Weil}
	In this section, we collect together results we will need about the Weil representation and theta functions.
	
	\subsection{Heisenberg groups}
	For us, a Heisenberg group $H$ is an extension $1 \rightarrow Z \rightarrow H \rightarrow W \rightarrow 1$ with the following properties:
	\begin{enumerate}
		\item the subgroup $Z$ and the quotient $W$ are vector groups, i.e., isomorphic to a finite sum of copies of the additive group $\mathbf{G}_a$;
		\item the subgroup $Z$ is the center of $H$;
		\item for a Zariski-open set of linear maps $\ell: Z \rightarrow \mathbf{G}_a$, the alternating pairing $\langle\,,\, \rangle_{H,\ell}: W \times W \rightarrow \mathbf{G}_a$ given by $\langle w_1, w_2 \rangle_{H,\ell} = \ell([w_1, w_2])$ (well-defined because $Z$ is central) is non-degenerate.  In particular, $W$ is even-dimensional.
	\end{enumerate}
	It would be more conventional to rephrase the above as follows: Let $H_{\ell}$ be the extension $1 \rightarrow \mathbf{G}_a \rightarrow H_{\ell} \rightarrow W \rightarrow 1$ obtained from $H$ by pushout along $\ell: Z \rightarrow \mathbf{G}_a$.  Then $H_{\ell}$ is a Heisenberg group in the usual sense of the word.
	
	Heisenberg groups have Weil representations and theta functions.   Suppose first $k$ is a local field.  Fix $\ell \in Z^\vee$ so that the corresponding alternating pairing is non-degenerate.  Let $\psi: k \rightarrow \C^\times$ be an additive character.  Let $W  = X \oplus Y$ be a Lagrangian decomposition.  The character $\psi_{\ell}=\psi \circ \ell: Z(k) \rightarrow \C^\times$ extends trivially to a character of the abelian subgroup $YZ(k)$ of $H(k)$, which we denote by $\psi_{Y,\ell}$.  The Weil representation of $H(k)$ is the smooth induced representation $Ind_{YZ(k)}^{H(k)}(\psi_{Y,\ell})$.  It is denoted by $\omega_{\psi_{Y,\ell}}$, although we will sometimes drop the subcripts.  We identify the space of this representation with $S(X(k))$, the Schwartz-(Bruhat) space on $X(k)$.
	
	If $\ell, X,Y$ are defined over our ground field $\Q$, then the representations just produced tensor together to give a representation of $H(\A)$ on $S(X(\A))$.  From this global representation, we can define $\theta$-functions, as follows.  Suppose $\phi \in S(X(\A))$.  One defines
	\[\theta_\phi(h) = \sum_{\xi \in X(\Q)}{(\omega_{\psi_{Y,\ell}}(h)\phi)(\xi)} = \sum_{\xi \in X(\Q)}{(\omega_{\psi_{Y,\ell}}(\xi h)\phi)(0)}\]
	The function $\theta_\phi$, defined on $H(\A)$, is in fact left-invariant by $H(\Q)$.  It is also smooth and of moderate growth on $H(\A)$.
	
	\subsection{Symplectic groups}
	Let $\widetilde{\Sp(W)} \rightarrow \Sp(W)$ denote the metaplectic two-fold cover.  The group $\Sp(W)$ acts on the Heisenberg group $H_{\ell}$.  In many references, a right action of $\Sp(W)$ on $W$ and $H_{\ell}$ is assumed.  We will relate these particular right and left actions via $w \cdot g =  g^{-1} \cdot w$. 
	
	Suppose again $k$ is a local field.  From the action of $\Sp(W)$ on $H$, we have a semi-direct product $J_{\ell}:=H_{\ell} \rtimes \widetilde{\Sp(W)}$, called the Jacobi group.  The representation of $H(k)$ on $S(X(k))$ extends to a representation of $\widetilde{J}(k):=H_{\ell}(k) \rtimes \widetilde{\Sp(W)}(k)$.  This is again called the Weil representation, and we denote it again by $\psi_{Y,\ell}$.  As $k$ varies over the completions of $\Q$, the representations for the various $k$ piece together to give a representation of $\widetilde{J}_{\ell}(\A)=H_{\ell}(\A) \rtimes \widetilde{\Sp(W)}(\A)$.
	
	If $g  = h r \in H_{\ell}(\A) \rtimes \widetilde{\Sp(W)}(\A)$, we can define $\theta_\phi(g) = \sum_{\xi \in X(\Q)}{\omega_{\psi_{Y,\ell}}(g)\phi(\xi)}$.  This function is automorphic on the Jacobi group $\widetilde{J}_{\ell}(\A)$.
	
	We will need a couple of formulas for this Weil representation.  Let $N_{S,Y} \subseteq \Sp(W)$ be the unipotent radical of the Siegel parabolic subgroup $P_{S,Y}$ stabilizing $Y \subseteq W$ for the right action.  Then, there is a unique splitting of $N_{S,Y}(k)$ into $\widetilde{\Sp(W)(k)}.$  Suppose $n \in N_{S,Y}$ has matrix form $\mm{1}{\beta}{}{1}$, so that $\beta \in Hom(X,Y)$.  Then, using the splitting,
	\[\omega_{\psi_{Y,\ell}}(n)\phi(x) = \psi_{\ell}( \langle x, x \cdot \beta \rangle/2) \phi(x) = \psi(\ell([x,x\cdot \beta])/2) \phi(x)= \psi(\ell([x,x\cdot n])/2) \phi(x).\]
	
	Let $\det_{Y}: P_{S,Y} \rightarrow \mathbf{G}_m$ denote the determinant for the action of $p \in P_{S,Y}$ on $Y$.  Let $P_{S,Y}^1$ denote the subgroup with $\det_Y$ equal to $1$.  Then $P_{S,Y}^1(k)$ splits uniquely into $\widetilde{\Sp(W)}(k)$, and for $p \in P_{S,Y}^1(k)$ one has
	\[\omega_{\psi_{Y,\ell}}(p)\phi(x) = |\det_Y(p)|^{-1/2} \psi(\ell([pr_{X}(xp), xp])/2) \phi(pr_{X}(xp))\]
	where $pr_{X}: W \rightarrow W/Y \simeq X$ is the projection.  Of course, the first term is $1$ for $p \in P_{S,Y}^1$.  However, the formula remains accurate for $p$ in the identity component of $P_{S,Y}(\R)$, which is why we have written it this way.
	
	\section{The Fourier-Jacobi expansion for $Q$}
	In this section, we derive the Fourier-Jacobi coefficients of a quaternionic modular form associated to the parabolic subgroup $Q$.
	
	\subsection{The general Fourier-Jacobi coefficient}
	Recall the parabolic subgroup $Q$, together with its Levi decomposition $Q = M_Q N_Q$.  Let $N_Q^3$ denote the subgroup of $N_Q$ with Lie algebra $Lie(N_Q^3) = \Q E_{13} + \Q E_{23}$.  Then $N_Q/N_Q^3$ is a Heisenberg group in the sense of section \ref{sec:Weil}. Its center $Z(N_Q/N_Q^3)$ has Lie algebra identified with $\delta_3 \otimes J^\vee \subseteq \g(J)$.   Fix $B \in J(\Q)$ with nonzero norm.   Then $B$ gives linear map on the center of $N_Q/N_Q^3$.  Let $H_B$ be the pushout of $N_Q/N_Q^3$ along the map $Z(N_Q/N_Q^3) \rightarrow \mathbf{G}_a$ given by $B$.  The group $H_B$ is a Heisenberg group, with center $Z_B:=\mathbf{G}_a$ and abelianization the vector group $W_J^B = \mathrm{Span}(v_1,v_2) \otimes J$.   We set $X = v_2 \otimes J$ and $Y = v_1 \otimes J$.  This gives a Lagrangian decomposition of $W_J^B$.  Concretely, the symplectic form on $W_J^B$ is determined by $\langle v_1 \otimes y, v_2 \otimes x \rangle = (B,x,y)_J$.  Here $(\,,\,,\,)_J$ is the unique symmetric trilinear form satisfying $(z,z,z)_J = 6 N_J(z)$.
	
	The group $M_Q$ acts on the space $v_3 \otimes J$, via the adjoint action on $\g(J)$.  Let $M_Q^B$ denote the subgroup of $M_Q$ that fixes the element $v_3 \otimes B \in v_3 \otimes J$.  The adjoint action of $M_Q$ on $N_Q/N_Q^3$ gives a homomorphism $M_Q^B \rightarrow \Sp(W_J^B)$.  For a local field $k$, let $\widetilde{M_Q^B}(k) \rightarrow M_Q^B(k)$ denote the pullback of $\widetilde{\Sp(W_J^B)} \rightarrow \Sp(W_J^B)$.  
	
	Suppose $\varphi$ is a cuspidal automorphic form on $G_J$.  Let $\phi \in S(X(\A)) = S(J(\A))$.  We can now define the Fourier-Jacobi coefficient of $\varphi$ associated to the pair $(B, \phi)$, which is an automorphic function on $\widetilde{M_Q^B}(\A)$.
	
	\begin{definition} Let the notation be as above.  For $g = hr \in H_B(\A) \rtimes \widetilde{M_Q^B}(\A)$, let
		\[\theta_\phi(g) = \sum_{\xi \in X(\Q)}{ \omega(g)\phi(\xi)},\]
		be the theta function. One can inflate $\theta_\phi$ to a function on $N_Q/N_Q^3(\A) \rtimes \widetilde{M_Q^B}(\A)$. For $r \in \widetilde{M_Q^B}(\A)$, let $\overline{r}$ denote its image in $M_B^Q(\A)$.  Let $\varphi_{NQ^3}$ denote the constant term of $\varphi$ along $N_Q^3$.  The Fourier-Jacobi coefficient of $\varphi$ associated to $(B,\phi)$ is defined as
		\[\mathrm{FJ}_{B,\phi}(\varphi)(r) = \int_{H_B(\Q)\backslash H_B(\A)}{\varphi(h\overline{r}) \theta_\phi(hr)\,dh}.\]
		It is an automorphic form on $\widetilde{M_Q^B}(\A)$.
	\end{definition}
	
	Regarding this Fourier-Jacobi coefficient, we prove the following proposition.  For $w \in W_J(\Q)$, recall the Fourier coefficient $\varphi_w$ of $\varphi$ along the Heisenberg unipotent subgroup.  To set up the proposition, for $d \in \Q$ and $g \in \widetilde{M_Q^B}(\A)$ define 
	
	\[\mathcal{F}_{B,d,\phi}(\varphi)(g) = \int_{J(\A)}{ \varphi_{(0,B,0,d)}(\exp( v_2 \otimes x) \overline{g}) (\omega_{\psi_B}(g)\phi)(x)\,dx}.\]
	
	\begin{proposition}\label{prop:FJQ1}One has
		\[ \mathrm{FJ}_{B,\phi}(\varphi)(g) = \sum_{d \in \Q}{\mathcal{F}_{B,d,\phi}(\varphi)(g)},\]
		and this is its Fourier expansion along the unipotent group $\exp(\A E_{12}) \subseteq \widetilde{M_Q^B}(\A)$. In particular, $\mathcal{F}_{B,d,\phi}$ is the $(-d)^{th}$ Fourier coefficient of $\mathrm{FJ}_{B,\phi}(\varphi)$ along the unipotent group $\exp(\A E_{12})$.
	\end{proposition}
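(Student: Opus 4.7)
My plan is to compute directly the $(-d)$-th Fourier coefficient of $\mathrm{FJ}_{B,\phi}(\varphi)$ along the unipotent subgroup $\exp(\A E_{12}) \subseteq \widetilde{M_Q^B}(\A)$ and verify it equals $\mathcal{F}_{B,d,\phi}(\varphi)$. Since $\mathrm{FJ}_{B,\phi}(\varphi)$ is automorphic on $\widetilde{M_Q^B}(\A)$ and hence left-invariant under $\exp(\Q E_{12}) \subseteq M_Q^B(\Q)$, it admits a standard Fourier expansion along this one-parameter subgroup, so establishing the identification of the $(-d)$-th coefficient will immediately give the claimed expansion $\mathrm{FJ}_{B,\phi}(\varphi) = \sum_{d \in \Q} \mathcal{F}_{B,d,\phi}(\varphi)$.

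The first step is to unfold the theta function. Expanding $\theta_\phi(hr) = \sum_{\xi \in X(\Q)}(\omega(hr)\phi)(\xi)$ and decomposing $H_B = X \cdot Y \cdot Z_B$ via the chosen Lagrangian, I would break $\int_{[H_B]}$ into iterated integrals over $[X]$, $[Y]$, and $[Z_B]$. Using that $X$ acts on $\phi \in S(X)$ by translation in the Weil representation, together with the left $X(\Q)$-invariance of $\varphi$, the sum over $\xi \in X(\Q)$ combines with the $[X]$-integral to yield a single integral over $X(\A)$:
\[\mathrm{FJ}_{B,\phi}(\varphi)(r) = \int_{X(\A)} \int_{[YZ_B]} \varphi(xu\bar r)(\omega(u)\omega(r)\phi)(x)\, du\, dx.\]

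Next, to extract the $(-d)$-th Fourier coefficient, substitute $r \to \exp(tE_{12})r$ and integrate against $\psi(dt)\,dt$ over $t \in \Q\backslash\A$. Because $E_{12}$ annihilates $v_3$, one has $\exp(tE_{12}) \in M_Q^B$; it acts on $H_B$ via a Siegel-type unipotent in $\Sp(W_J^B)$, preserving $Y$ and $Z_B$ and sending $v_2 \otimes x \mapsto v_2 \otimes x + t(v_1 \otimes x)$ on $X$. Conjugating $\exp(tE_{12})$ past $xu$ via the Baker-Campbell-Hausdorff formula (using the $\g(J)$-bracket $[v_2 \otimes x, v_1 \otimes x] \in \delta_3 \otimes J^\vee$), combined with the explicit Weil-representation formula from Section~\ref{sec:Weil} for $N_{S,Y}$-type unipotents and suitable changes of variable in $Y$ and $Z_B$, makes the $t$-integration against $\psi(dt)$ pick out the $(-d)$-th Fourier mode of $\varphi$ along $\exp(\A E_{12}) \subseteq N_J/Z$. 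Simultaneously the $[Z_B]$-integral extracts the $\psi_B$-Fourier mode of $\varphi$ along $\delta_3 \otimes J^\vee$ (the $b = B$ component), the $[Y]$-integral gives the $c = 0$ contribution (using that $\xi_{(0,B,0,d)}$ is trivial on $v_1 \otimes J$), and the constant term along $E_{23}$ gives $a = 0$. The integrand collapses to $\varphi_{(0,B,0,d)}(\exp(v_2 \otimes x)\bar g)(\omega_{\psi_B}(g)\phi)(x)$, and the remaining integral over $X(\A) = J(\A)$ yields $\mathcal{F}_{B,d,\phi}(\varphi)(g)$.

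The main obstacle will be the careful bookkeeping of characters arising from the BCH corrections, the Weil-representation formula for $\omega(\exp(tE_{12}))$, and the symplectic pairing $\langle v_1 \otimes y, v_2 \otimes x\rangle = (B,x,y)_J$: these must combine consistently via the trilinear and quadratic form identities on $J$ so that all extra $t$-character contributions cancel and only $\psi(dt)$ remains. A subsidiary issue is the interpretation of $\varphi(h\bar r)$ in the integral over $[H_B]$, which must be read as the constant term of $\varphi$ along $N_Q^3$ and along the kernel of $N_Q/N_Q^3 \to H_B$; the $\psi_B$-equivariance of $\theta_\phi$ ensures that the combined integrand descends to $[H_B]$.
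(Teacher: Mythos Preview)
Your approach is correct and relies on the same key ingredients as the paper: the theta unfolding, the BCH commutation of $\exp(tE_{12})$ with $\exp(v_2 \otimes x)$, and the Weil-representation formula $\omega_{\psi_B}(\exp(uE_{12}))\phi(x) = \psi(u(B,x^\#))\phi(x)$. The paper, however, organizes the argument more cleanly by separating it into two independent steps. First, it unfolds the theta kernel and integrates successively over $[Z_B]$ and then $[Y]$ to obtain directly the decomposition $\mathrm{FJ}_{B,\phi}(\varphi)(g) = \sum_{d \in \Q} \mathcal{F}_{B,d,\phi}(\varphi)(g)$; no $E_{12}$-manipulation enters at this stage. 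Second, it verifies the equivariance $\mathcal{F}_{B,d,\phi}(\varphi)(\exp(uE_{12})g) = \psi(-du)\,\mathcal{F}_{B,d,\phi}(\varphi)(g)$ by a single short BCH computation, conjugating $\exp(uE_{12})$ past $\exp(v_2\otimes x)$ only; the $\psi(\pm u(B,x^\#))$ contributions from the Heisenberg side and the Weil side cancel, leaving exactly $\psi(-du)$. Your plan conflates these two steps by inserting the $t$-integration before the $[Y]$- and $[Z_B]$-integrations are carried out, which forces you to conjugate $\exp(tE_{12})$ past both $x \in X(\A)$ and $u \in [YZ_B]$ and to track all character contributions simultaneously. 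This works, but the bookkeeping you flag as the ``main obstacle'' is precisely what the paper's two-step organization sidesteps.
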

	\begin{proof} Let $\varphi_{(0,B,*,*)} = \sum_{\gamma \in J^\vee, d \in \Q}{\varphi_{(0,B,\gamma,d)}}$ and let $\varphi_{(0,B,0,*)} = \sum_{d \in \Q}{\varphi_{(0,B,0,d)}}$.  With this notation, we have
		
		\begin{align*}\mathrm{FJ}_{B,\phi}(\varphi)(g) &= \int_{H_B(\Q) Z_B(\A)  \backslash H_B(\A)}{\varphi_{(0,B,*,*)}(hg) \Theta_\phi(hg)\,dh} \\ 
			&= \int_{Y(\Q) Z_B(\A)\backslash H_B(\A)}{\varphi_{(0,B,*,*)}(yxg) \omega_{\psi_B}(yxg)\phi(0)\,dh} \\
			&= \int_{Y(\A)Z_B(\A)\backslash H_B(\A)}{\varphi_{(0,B,0,*)}(xg) \omega_{\psi_B}(xg)\phi(0)\,dh} \\
			&= \sum_{d \in \Q}{\mathcal{F}_{B,d,\phi}(\varphi)(g)}.\end{align*}
		
		The proof is completed by verifying that $\mathcal{F}_{B,d,\phi}(\varphi)(g)$ has the correct equivariance property with respect to $\exp( u E_{12})$.  To verify this, we need to compute $\exp(-u E_{12}) \exp( v_2 \otimes x) \exp( u E_{12})$.  One has
		
		\begin{align*}\exp(-uE_{12}) \exp( v_2 \otimes x) \exp( u E_{12}) &= \exp( Ad(\exp(-u E_{12})) v_2 \otimes x) \\ 
			&= \exp(v_2 \otimes x- u v_1 \otimes x)\\
			&=  \exp(v_2 \otimes x- u v_1 \otimes x) \exp(-v_2 \otimes x) \exp(v_2 \otimes x) \\
			&= \exp(-u v_1 \otimes x+ u \delta_3 \otimes x^\# + A) \exp(v_2 \otimes x)
		\end{align*}
		where the last line uses Baker-Campbell-Hausdorff and $A \in N_Q^3$.  Thus 
		\[\varphi_{(0,B,0,d)}(\exp(v_2 \otimes x) \exp(u E_{12})g) = \psi(-du) \psi(-u(B,x^\#)) \varphi_{(0,B,0,d)}(\exp(v_2 \otimes x) g).\]
		As $\omega_{\psi_B}(\exp(u E_{12}))\phi(x) = \psi(u (B,x^\#))\phi(x)$, this proves the proposition.
	\end{proof}
	
	\subsection{Holomorphic modular forms}\label{subsec:holMFQ}
	In this subsection, we use the Fourier-Jacobi coefficient studied in Proposition \ref{prop:FJQ1} to show that certain linear combinations of Fourier coefficients of a quaternionic modular form are the Fourier coefficients of a holomorphic modular form on $\widetilde{\SL_2}$. 
	
	Suppose $\varphi$ is a cuspidal quaternionic modular form on $G_J$ of weight $\ell$, with Fourier expansion
	\[ \varphi_Z(g) = \sum_{w \in W_J(\Q), w > 0}{a_w(g_f) W_{2\pi w,\ell}(g_\infty)}.\]
	If $\phi \in S(J(\A_f))$ is a Schwartz-Bruhat function at the finite places, $d \in \Q^\times$, and $r_f \in \widetilde{M_Q^B}(\A_f)$ and $g_f \in G(\A_f)$, set
	\[A^Q_{\varphi,B,d}(r_f,g_f;\phi) = \int_{J(\A_f)}{a_{(0,B,0,d)}(\exp(v_2 \otimes x) \overline{r_f}g_f) (\omega_{\psi_B}(r_f)\phi)(x)\,dx}.\]
	
	We have $\SL_3 \rightarrow G_J^{ad}$ via the $\Z/3\Z$-model of the Lie algebra $\g(J)$.  Embed $\SL_2$ into $\SL_3$ via the upper left $2 \times 2$ block.  Let $j_{E_{12}}: \SL_2 \rightarrow M_Q^B \subseteq G_J$ be the associated map into $G_J$, and $j_{E_{12}}': \widetilde{\SL_2} \rightarrow \widetilde{M_Q^B}$ the lift to the double cover.  
	
	On $\widetilde{\SL_2}(\R)$ times the upper half-plane $\mathcal{H} = \{z \in \C: Im(z) > 0\}$, let $j_{1/2}: \widetilde{\SL_2}(\R) \times \mathcal{H} \rightarrow \C^\times$ be the canonical squareroot of $cz+d$, $\overline{g_\infty} = \mm{a}{b}{c}{d} \in \SL_2(\R)$ and $z \in \mathcal{H}$.  For an a half-integer $r$, $n \in \Q$ and $g_\infty \in \widetilde{\SL_2}(\R)$, let 
	\[\mathcal{W}_{\SL_2,r,n}(g_\infty) = j_{1/2}(g_\infty,i)^{-2r} e^{2\pi i n (\overline{g_\infty} \cdot i)}.\]
	We have the following theorem.  
	\begin{theorem} Let the notation be as above, so that $\phi \in S(J(\A_f))$ and $\varphi$ is a quaternionic modular form of weight $\ell$.  Let $B \in J$ be positive-definite and set $\ell' = \ell+1 - \dim(J)/2$.  Suppose $g_f \in \widetilde{\SL_2}(\A_f)$ and $g_\infty \in \widetilde{\SL_2}(\R)$.  Then
		\[ \sum_{n \in \Q_{> 0}}{A_{\varphi,B,-n}^Q(j_{E_{12}}'(g_f);\phi) \mathcal{W}_{\SL_2,\ell',n}(g_\infty)}\]
		is the Fourier expansion of an automorphic form on $\widetilde{\SL_2}(\A)$ corresponding to a holomorphic modular form of weight $\ell'$.
	\end{theorem}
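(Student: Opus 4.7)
The plan is to deduce this from Proposition \ref{prop:FJQ1} via an explicit archimedean computation. First I would fix a Gaussian-type archimedean Schwartz function $\phi_\infty \in S(J(\R))$, chosen so that it has a definite weight under the preimage in $\widetilde{\SL_2}(\R)$ of the standard maximal compact of $\SL_2(\R)$ via the Weil representation $\omega_{\psi_B}$, and set $\Phi = \phi \otimes \phi_\infty \in S(J(\A))$. Proposition \ref{prop:FJQ1} then gives that $\mathrm{FJ}_{B,\Phi}(\varphi)$ is automorphic on $\widetilde{M_Q^B}(\A)$ with Fourier expansion $\sum_{d \in \Q} \mathcal{F}_{B,d,\Phi}(\varphi)$ along $\exp(\A E_{12})$, and pulling back along $j'_{E_{12}}$ produces an automorphic form on $\widetilde{\SL_2}(\A)$ whose $d$-th Fourier coefficient splits as a product of the finite-adelic piece $A^Q_{\varphi,B,d}(j'_{E_{12}}(g_f);\phi)$ and an archimedean integral $I_d(g_\infty)$.

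The heart of the argument is to compute this archimedean integral
\[
I_d(g_\infty) = \int_{J(\R)} W_{2\pi(0,B,0,d),\ell}\bigl(\exp(v_2 \otimes x)\, j'_{E_{12}}(g_\infty)\bigr)\,\bigl(\omega_{\psi_B}(j'_{E_{12}}(g_\infty))\phi_\infty\bigr)(x)\, dx
\]
and to show that $I_d \equiv 0$ for $d \geq 0$, while for $d = -n < 0$ one has (after projecting $\Vell$ onto a suitable one-dimensional weight subspace) $I_{-n}(g_\infty) = c\cdot \mathcal{W}_{\SL_2,\ell',n}(g_\infty)$ with $c$ a nonzero constant and $\ell' = \ell+1-\dim(J)/2$. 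To carry this out I would insert the explicit formula for $W_w$ in terms of Bessel $K_v$-functions together with the integral representation $K_v(z) = \tfrac{1}{2}(z/2)^v \int_0^\infty e^{-t-z^2/4t}t^{-v-1}dt$, swap the orders of integration, perform the resulting Gaussian integral over $J(\R)$, and track the $\widetilde{\SL_2}(\R)$-equivariance. The transformation of the Gaussian under $\omega_{\psi_B}$ and the $\nu(g)^\ell|\nu(g)|$ normalization in the formula for $W_w$ combine to give equivariance of weight $\ell' = \ell + 1 - \dim(J)/2$, the $-\dim(J)/2$ coming from the Weil representation on the Gaussian and the $+1$ from the Whittaker normalization.

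Once the archimedean computation is in place, the theorem follows quickly. Automorphy on $\widetilde{\SL_2}(\A)$ is inherited from Proposition \ref{prop:FJQ1}; the vanishing of $I_d$ for $d \geq 0$, together with the positive-definiteness constraint on Heisenberg Fourier coefficients (which, for $B$ positive-definite, already forces $d < 0$), yields Fourier support on $n > 0$; the transformation of the archimedean factor identifies the pull-back with a modular form of weight $\ell'$; and the explicit shape of $\mathcal{W}_{\SL_2,\ell',n}$, combined with the absence of negative-frequency terms, forces holomorphy on the upper half-plane. Convergence of the sum over $n > 0$ is inherited from the moderate growth of $\mathrm{FJ}_{B,\Phi}(\varphi)$.

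The main obstacle is executing the archimedean computation $I_d(g_\infty)$ cleanly: verifying the vanishing for $d \geq 0$, correctly tracking both the $\widetilde{\SL_2}(\R)$-equivariance and the $\Vell$-weight component, and numerically identifying the weight shift $\ell \mapsto \ell' = \ell + 1 - \dim(J)/2$. This combinatorial identification of $\ell'$ is the crux of the theorem.
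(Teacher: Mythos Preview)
Your overall strategy matches the paper's proof exactly: choose a Gaussian $\phi_\infty$, apply Proposition~\ref{prop:FJQ1} to the Fourier--Jacobi coefficient $\mathrm{FJ}_{B,\phi\otimes\phi_\infty}(\varphi)$ restricted along $j'_{E_{12}}$, and then identify the archimedean integral with $\mathcal{W}_{\SL_2,\ell',n}$ by a direct computation at $g_\infty=h(y)$ together with matching the $\exp(\R E_{12})$- and $\widetilde{\SO(2)}$-equivariance (the latter via the action of $E_{12}-E_{21}$ on the Gaussian, which is Proposition~\ref{prop:GaussianQ} in the paper, and its projection to the long-root $\su_2$).

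The one place where your sketch diverges from the paper is the mechanism for the archimedean integral. You propose to insert the representation $K_v(z)=\tfrac12(z/2)^v\int_0^\infty e^{-t-z^2/4t}t^{-v-1}\,dt$ and do a Gaussian integral over all of $J(\R)$. The paper instead splits $J(\R)=\R\cdot 1_J\oplus V$ with $V=\{v:(B,1,v)=0\}$; on $V$ the integrand is already a Gaussian (since $(B,v^\#)$ is negative-definite there), while the remaining one-variable integral over $\R\cdot 1_J$ is exactly Theorem~\ref{thm:defInt1}, whose proof in the appendix uses a differential relation $\partial_\mu I_v=\tfrac12(I_{v+1}+I_{v-1})$ together with a contour-shifting evaluation of $I_0$. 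Your proposed route may well work, but the phase factor $(|\alpha|/\alpha)^v$ in the Whittaker formula prevents the $J(\R)$-integral from being a naive Gaussian after inserting your $K_v$-representation, so some nontrivial identity of the same flavor as Theorem~\ref{thm:defInt1} will still be needed; you should be aware that this is where the actual work lies, and the paper isolates it as a separate appendix result rather than absorbing it into the main argument.
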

	\begin{proof} The proof is to compute $\mathrm{FJ}_{B, \phi \otimes \phi_\infty}(\varphi)(j_{E_{12}}'(g))$ for a specific choice of $\phi_\infty \in S(J(\R))$.  Namely, we will let $\phi_\infty$ be the Gaussian $\phi_0$ of Proposition \ref{prop:GaussianQ}.
		
		For $g \in \widetilde{\SL_2}(\R)$, we must compute the integral
		\[I_{Q,B,d}(\phi_\infty;g) = \int_{J(\R)}{ W_{2\pi(0,B,0,d)}(\exp(v_2 \otimes x) \overline{g}), \omega_{\psi_B}(g)\phi_\infty)(x)\,dx}\]
		when $\phi_\infty=\phi_{0}$ is the Gaussian.  More exactly, for $v \in \Vell$ appropriately chosen, we must compute $\langle I_{Q,B,d}(\phi_\infty,g), v \rangle_{K_J}$, where $\langle \,,\, \rangle_{K_J}$ is the $K_J$-invariant pairing on $\Vell$.  Before specializing to the Gaussian, we work a bit more generally.
		
		We begin by manipulating $I_{Q,B,d}(\phi_\infty,g)$ when $g = h(y) := \exp(\log(y)(E_{11}-E_{22})/2)$, $y \in \R_{>0}$. We have $h(y)^{-1} \cdot v_2 X = y^{1/2} v_2 \otimes X$ and $\omega(h(y))\phi(X) = y^{\dim(J)/4} \phi(y^{1/2} X)$.  Finally, $\nu(h(y)) = y^{1/2}$, as one checks by acting on $E_{13}$.  Thus, making a change of variable,
		\begin{align*} I_{Q,B,d}(\phi_\infty;h(y)) &= (y^{1/2})^{\ell'} \int_{J(\R)}{W_{2\pi (0,B,0,d) \cdot h(y)}(\exp(v_2 \otimes x)) \phi_\infty(x)\,dx} \\
			&= (y^{1/2})^{\ell'} \int_{J(\R)}{W_{2 \pi (0,B,0, yd)}(\exp(v_2 \otimes x)) \phi_\infty(x)\,dx}
		\end{align*}
		as $(0,B,0,d) \cdot h(y) = \nu(h(y)) h(y)^{-1} \cdot (0,B,0,d) = (0,B,0, y d).$
		
		We now write out $I_{Q,B,d}(\phi_\infty,h(y))$ more explicitly.  First observe that, for $w = 2\pi (0,B,0,dy)$, we have $\alpha_w(\exp(v_2 \otimes x))^* = -2 \pi ((B,(x+i1)^\#)+dy)$.  Consequently, if $B > 0$ and $d < 0$, then
		\[I_{Q,B,d}(\phi_\infty;h(y)) = (-1)^v y^{\ell'/2} \int_{J(\R)}{\phi_\infty(x) \left(\frac{(2\pi B,(x+i1)^\#) - \mu}{|(2\pi B,(x+i1)^\#)-\mu|}\right)^v K_v(|(2\pi B,(x+i1)^\#)-\mu|)\,dx}\]
		where $\mu_0 = 2\pi|dy|$.  
		
		Let $V \subseteq J(\R)$ be the set of $X \in J(\R)$ so that $(B,1_J, X)_J = 0$.  One has $J(\R) = \R \cdot 1_J \oplus V$.  Assume that $\phi_\infty(t_0 1 + v) = \phi_{1}(t_0)\phi_{V}(v)$ for Schwartz functions $\phi_1$, $\phi_V$ on $\R \cdot 1$ and $V$, respectively.  If $x = t_0 1 + v$, then $(2\pi B,(x+i1)^\#) = 2\pi (B,1) (t_0+i)^2 + 2\pi (B,v^\#)$.    Let $\lambda >0$ be the squareroot of $2\pi (B,1)$.  Set $\mu_1(v) = \lambda^{-2}(\mu_0+ |(2\pi B,v^\#)|)$.  Then we wish to evaluate
		\[\int_{\R \times V}{e^{-\lambda^2 t_0^2} \phi_V(v) \left(\frac{(t_0+i)^2-\mu_1(v)}{|(t_0+i)^2-\mu_1(v)|}\right)^v K_v(\lambda^2( (t_0+i)^2-\mu_1(v)|)\,dt_0\,dv}.\]
		Set $\mu(v) = \lambda^2 \mu_1 = 2\pi |dy| + 2\pi |(B,v^\#)|$.  Consequently, making a change of variables,
		\begin{align*} I_{Q,B,d}(e^{-2\pi (B,1)t_0^2} &\otimes \phi_V(v);h(y)) = C_B (-1)^{v} y^{\ell'/2} \\
			&\,\,\,\ \times  \int_{\R \times V}{e^{-t_0^2} \phi_V(v) \left(\frac{(t_0+\lambda^2 i)^2-\mu(v)}{|(t_0+\lambda^2 i)^2-\mu(v)|}\right)^v K_v(( (t_0+\lambda^2 i)^2-\mu(v)|)\,dt_0\,dv}\end{align*}
		for a positive constant $C_B$ that only depends on $B$.  We now apply Theorem \ref{thm:defInt1} to obtain that 
		\[I_{Q,B,d}(\phi_0;h(y)) = C'' y^{\ell'/2} e^{-2 \pi y |d|} \otimes (x+y)^{2\ell}\]
		if $B > 0$ and $d < 0$, for some nonzero complex number $C''$ that depends on $B$ but does not depend on $y$ or $d$ or $v$.  Here $(x+y)^{2\ell} \in \Vell$ and this $y$ is not to be confused with the $y$ in $h(y)$.
		
		We claim that $\langle I_{Q,B,d}(\phi_{0};g), (x-y)^{2\ell} \rangle_{K_J} = C'' \mathcal{W}_{\SL_2, \ell',|d|}(g)$.  This claim finishes the proof of the theorem.  To see the claim, note that we have already computed that $I_{Q,B,d}(\phi_0;g)$ has the same left-equivariance property under $\exp(\R E_{12})$ as does $\mathcal{W}_{\SL_2,\ell',|d|}$.  By our computation just made, they have the same restriction to the subgroup of $h(y)$'s, $y \in \R_{>0}$.  They also have the same restriction to $\widetilde{\SO(2)}$.  To see this last fact, observe that, in the notation of \cite{pollackQDS}, \[pr_{\su_2}(i (E_{12}-E_{21})) = pr_{\su_2}(u(-iv_3)) = \mathfrak{so}_3(-i v_3) = \mathfrak{so}_3(v_1-iv_3)/2 +  \mathfrak{so}_3(-v_1-iv_3)/2 =\frac{1}{2}(e_{\ell}+f_{\ell}).\]
		Thus $(E_{12}-E_{21}) \cdot (x-y)^{2\ell} = i \ell (x-y)^{2\ell}$.  Applying Proposition \ref{prop:GaussianQ} gives the result.
	\end{proof}
	
	\subsection{The Gaussian}
	The purpose of this subsection is to prove the following result.
	\begin{proposition}\label{prop:GaussianQ} Assume $B > 0$ is positive definite. Let $V = \{v \in J: (B,1,v) = 0\}$.  Define $\phi_0$ on $J(\R)$ as 
		\[\phi_{0}(t_0 1 + v) = e^{-2 \pi (B,1) t_0^2} e^{2 \pi (B,v^\#)}.\]
		where $t_0 \in \R$ and $v \in V$.  Then $v \mapsto (B,v^\#)$ is a negative-definite function on $V$, so that $\phi_0$ is a Gaussian.  In particular, $\phi_0 \in S(J(\R))$.  Let $d \omega_{\psi_B}$ denote the differential of the Weil representation $\omega_{\psi_B}$.  Then
		\[d\omega_{\psi_B}(E_{12}-E_{21}) \phi_0 = \left(\frac{-i}{2}\right)(\dim(J)-2)\phi_0.\]
	\end{proposition}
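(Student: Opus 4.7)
The plan is to prove the two assertions independently: the negative-definiteness of $v \mapsto (B, v^{\#})$ on $V$ (which also guarantees that $\phi_0$ is Schwartz), and the eigenvalue formula for $d\omega_{\psi_B}(E_{12} - E_{21})$ on $\phi_0$.

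For the negative-definiteness, I would first reduce to $B = 1_J$. Since the trace pairing on $J(\R)$ is positive-definite by hypothesis, $J(\R)$ is a formally real Jordan algebra; its positive elements form a single $M_J(\R)$-orbit up to scaling, so there is $m \in M_J(\R)$ carrying $B$ to a positive scalar multiple of $1_J$. The trilinear form $(\cdot,\cdot,\cdot)_J$ transforms equivariantly under $M_J$, so the assertion reduces to the case $B = 1_J$. In that case the identity $(1, 1, v) = 2 T(v)$ identifies $V$ with the traceless elements of $J$, and the standard cubic norm identity $T(v^{\#}) = \tfrac{1}{2}(T(v)^2 - T(v^2))$ specializes on traceless $v$ to $(1_J, v^{\#}) = -\tfrac{1}{2} T(v^2)$, which is negative-definite by positive-definiteness of the trace form.

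For the eigenvalue, the plan is to compute the two summands separately. The action of $E_{12}$ is immediate from the formula $\omega_{\psi_B}(\exp(u E_{12}))\phi(x) = \psi(u(B, x^{\#}))\phi(x)$ recorded in the proof of Proposition \ref{prop:FJQ1}: differentiating at $u = 0$ gives $d\omega_{\psi_B}(E_{12})\phi_0 = 2\pi i (B, x^{\#})\phi_0$. The embedding $j_{E_{12}}: \widetilde{\SL_2} \to \widetilde{M_Q^B} \to \widetilde{\Sp(W_J^B)}$ identifies $\omega_{\psi_B}|_{\widetilde{\SL_2}}$ with the standard oscillator representation of $\widetilde{\SL_2}$ attached to the quadratic form $Q^B(x) := (B, x^{\#})$ on $J$. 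Combining the already-known action of $E_{12}$, the Levi formula of Section \ref{sec:Weil} for $H = E_{11} - E_{22}$, and the $\sl_2$-commutation $[E_{12}, E_{21}] = H$ forces $d\omega_{\psi_B}(E_{21}) = \tfrac{i}{8\pi} \Delta_{Q^B}$, where $\Delta_{Q^B}$ is the Laplacian for $Q^B$. In the orthogonal decomposition $J = \R \cdot 1_J \oplus V$, the form $Q^B(x) = (B,1) t_0^2 + (B, v^{\#})$ has signature $(1, \dim(J) - 1)$ (by Part 1 and $(B, 1) > 0$), and a direct Gaussian computation gives
\[
\Delta_{Q^B} \phi_0 = \bigl[ 16 \pi^2 (B, x^{\#}) + 4\pi(\dim(J) - 2) \bigr] \phi_0,
\]
where the constant $4\pi(\dim(J) - 2)$ reflects the signature cancellation between the single positive eigenvalue from the $t_0$-direction and the $\dim(J) - 1$ negative eigenvalues from $V$. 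Subtracting yields $d\omega_{\psi_B}(E_{12} - E_{21})\phi_0 = -\tfrac{i}{2}(\dim(J) - 2)\phi_0$ as claimed.

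The hard part will be pinning down the normalization $d\omega_{\psi_B}(E_{21}) = \tfrac{i}{8\pi} \Delta_{Q^B}$: the Schr\"odinger-model formula for the opposite Siegel unipotent is more delicate than the one for $E_{12}$, and must be obtained either by direct computation via the Fourier-transform description of $\omega_{\psi_B}$ at the Weyl element, or by using $[E_{12}, E_{21}] = H$ to determine $d\omega_{\psi_B}(E_{21})$ up to a scalar from the known actions of $E_{12}$ and $H$ and then fixing the scalar via the metaplectic normalization.
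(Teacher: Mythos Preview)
Your proposal is correct, and the overall architecture matches the paper's: establish negative-definiteness first, then compute the two summands $d\omega_{\psi_B}(E_{12})$ and $d\omega_{\psi_B}(E_{21})$ separately and watch the non-constant terms cancel.

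For the negative-definiteness you take a genuinely different route. The paper proves a slightly more general lemma (for any positive-definite $\sigma$ with $(v, B\times\sigma)=0$, not just $\sigma=1_J$): it uses the $M_J$-action to set $B=1_J$, then the $A_J$-action to diagonalize $\sigma$, and finishes with an explicit coordinate computation in $H_3(C)$. Your argument reduces to $B=1_J$ the same way but then invokes the Newton identity $T(v^\#)=\tfrac12(T(v)^2-T(v^2))$, which on traceless $v$ gives $(1_J,v^\#)=-\tfrac12(v,v)$ directly from positive-definiteness of the trace form. Your version is shorter and works uniformly for any cubic norm structure with Euclidean $J(\R)$; the paper's version yields the extra generality in $\sigma$, which is not used elsewhere.

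For the eigenvalue computation the two approaches coincide in substance. The paper's operator $D_B=\tfrac{1}{4\pi i}\,\mathrm{pair}\bigl((\iota_B\otimes 1)\widetilde D^2\bigr)$ is exactly the Laplacian for the bilinear form $(B,\cdot,\cdot)$, and the paper shows $d\omega_{\psi_B}(-E_{21})=D_B$ by conjugating $d\omega_{\psi_B}(E_{12})$ through the Weyl element $J_2$, which acts as the $(B,\cdot,\cdot)$-Fourier transform; this is precisely the first of the two methods you list for pinning down the normalization. Your alternative via $[E_{12},E_{21}]=H$ and the known Levi action would also work and is the standard oscillator-representation argument. The paper then computes $D_B\phi_0$ by an explicit basis calculation (their Lemma on $D_B\phi_0$), which amounts to your ``direct Gaussian computation'' of $\Delta_{Q^B}\phi_0$; the signature-cancellation you describe is exactly the mechanism producing the constant $\dim(J)-2$.
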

	We begin by establishing the fact that $v \mapsto (B,v^\#)$ is a negative-definite quadratic form on $V$.
	
	\begin{lemma} If $B,\sigma$ are positive definite, and $(v, B \times \sigma) = 0$, then $(B, v^\#) \leq 0$, with equality if and only if $v=0$.\end{lemma}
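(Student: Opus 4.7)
The plan is to recast the inequality as a statement about a Lorentzian quadratic form. Introduce the symmetric bilinear form $\langle v, w\rangle_B := (B, v \times w)$ on $J(\R)$. The identity $v \times v = 2 v^\#$ gives $\langle v, v\rangle_B = 2(B, v^\#)$, and the total symmetry of the trilinear form $(x, y, z)_J = (x, y \times z)$ gives $(v, B \times \sigma) = \langle v, \sigma\rangle_B$. Thus the lemma becomes the assertion that, for $B$ and $\sigma$ positive-definite, the orthogonal complement of $\sigma$ with respect to $\langle \cdot, \cdot\rangle_B$ is negative-definite.

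I would deduce this from two claims: (a) for every positive-definite $B \in J(\R)$, $\langle \cdot, \cdot\rangle_B$ has Lorentzian signature $(1, \dim J - 1)$; and (b) every positive-definite $\sigma \in J(\R)$ satisfies $\langle \sigma, \sigma\rangle_B > 0$. Given (a) and (b), the standard fact that the orthogonal complement of a positive vector in a Lorentzian quadratic space is negative-definite finishes the proof. Claim (b) is immediate from the formula $\langle \sigma, \sigma\rangle_B = 2(B, \sigma^\#)$: when $\sigma \in J = H_3(C)$ is positive-definite so is its adjugate $\sigma^\#$, and the trace pairing of two positive-definite elements of $J$ is strictly positive.

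For (a), my plan is to exploit the transitivity of the real structure group. The $M_J$-equivariance of the cross product and of the canonical $J \times J^\vee \to \R$ pairing, which transform under $M_J$ by opposite similitude characters, gives
\[
\langle g v, g w\rangle_{g B} = \nu(g)\, \langle v, w\rangle_B
\]
for every $g \in M_J$. Since $\nu$ is positive on $M_J(\R)^0$, and $M_J(\R)^0$ acts transitively on the cone of positive-definite elements of $J$ (a standard fact about cubic Jordan algebras with positive-definite trace), the signature of $\langle \cdot, \cdot\rangle_B$ is constant in $B > 0$ and may be computed at $B = 1_J$. For $B = 1_J$ and $J = H_3(C)$, a direct calculation in the orthogonal decomposition $J = (\R e_{11} \oplus \R e_{22} \oplus \R e_{33}) \oplus J_{\mathrm{off}}$ shows that $\langle v, w\rangle_{1_J}$ restricts to $\tr(v)\tr(w) - (v, w)$ on the diagonal summand (signature $(1, 2)$) and to a negative multiple of the standard pairing on each of the three copies of $C$ in $J_{\mathrm{off}}$ (which is negative-definite because the composition-algebra norm on $C$ is positive-definite in our setting).

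The main obstacle I anticipate is administrative---tracking normalizations in the cubic norm identities and the $M_J$-equivariance formula---rather than conceptual. If one wishes to avoid invoking transitivity of $M_J(\R)^0$ on the positive cone, nondegeneracy of $\langle \cdot, \cdot\rangle_B$ for every positive-definite $B$ (again proved by reduction to $B = 1_J$) combined with connectedness of the positive cone and continuity of the signature in $B$ gives the same conclusion.
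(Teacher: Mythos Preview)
Your argument is correct and takes a genuinely different route from the paper's. Both proofs begin the same way, using the $M_J$-action to reduce to $B = 1_J$. From there the paper makes a \emph{second} reduction: it uses the stabilizer $A_J$ of $1_J$ to diagonalize $\sigma$, and then finishes with an explicit calculation on the three diagonal entries of $v$, solving the linear constraint for $v_3$ and completing the square (the off-diagonal contributions to $(1,v^\#)$ are the terms $-n_C(x_i)$, which are trivially $\le 0$). Your approach instead recognizes $\langle\cdot,\cdot\rangle_{1_J}$ as Lorentzian and invokes the standard fact that the orthogonal complement of any positive vector in a Lorentzian space is negative-definite; this handles all positive-definite $\sigma$ uniformly without the second reduction. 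The paper's version is more hands-on and entirely self-contained; yours is more structural and makes transparent \emph{why} the inequality holds, at the cost of quoting the transitivity of $M_J(\R)^0$ on the positive cone and the Lorentzian orthogonal-complement fact.
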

	\begin{proof} Observe that the condition $(B,\sigma, v) = 0$ is invariant under the action of $M_J$, and likewise the quantity $(B,v,v)$ is invariant under this action.   Thus we can use the $M_J$ action to assume that $B = 1$.  Then, we can use the $A_J$ action to assume that $\sigma = (\alpha_1, \alpha_2, \alpha_3)$ is diagonal with $\alpha_j > 0$ for each $j$.  In this case, $B \times \sigma = \diag(\alpha_2+\alpha_3, \alpha_1+\alpha_3, \alpha_1+\alpha_2)$.  
		
		If $v$ has diagonal entries $(v_1, v_2, v_3)$, then $(B, v^\#) = (1,v^\#) =  v_1 v_2 + v_2 v_3 + v_3 v_1$.  That $(B,\sigma, v) = 0$ means that $(\alpha_2+\alpha_3)v_1 + (\alpha_1+\alpha_3)v_2 + (\alpha_1+\alpha_2)v_3 = 0$.  Solving for $v_3$, we obtain
		\[ (B,v^\#) = v_1 v_2 - (v_1+v_2)\left(\left(\frac{\alpha_2+\alpha_3}{\alpha_1 + \alpha_2}\right) v_1 + \left(\frac{\alpha_1+\alpha_3}{\alpha_1 + \alpha_2}\right)v_2\right).\]
		Consequently,
		\begin{align*} (\alpha_1+\alpha_2)(B,v^\#) &= (\alpha_1+\alpha_2)v_1v_2 - (\alpha_2+\alpha_3)v_1(v_1+v_2) - (\alpha_1+\alpha_3) v_2 (v_1+v_2) \\
			&= - [(\alpha_2+\alpha_3)v_1^2 + (\alpha_1+\alpha_3)v_2^2]-2\alpha_3 v_1 v_2 \\ &= -\alpha_2 v_1^2 - \alpha_1 v_2^2 - \alpha_3 (v_1+v_2)^2 \\
			&\leq 0.
		\end{align*}
		This completes the proof.
	\end{proof}
	
	We will now compute $d\omega_{\psi_B}(E_{12}-E_{21})\phi_0$.  To do this, we work a little more generally.  Suppose then that $B \in J$ has nonzero norm.  Consider the map $J \rightarrow J^\vee$ given by $x \mapsto B \times x$.  This map is invertible.  In fact, set $\iota_B: J^\vee \rightarrow J$ given by $\iota_B(y) = N_J(B)^{-1}(B^\# \times y - \frac{1}{2}(B,y)B)$.  Then $\iota_B$ is the inverse of $x \mapsto B \times x$.
	
	We define a differential operator on the Schwartz space $S(J(\R))$, called $D_B$, as follows.  Let $J = \R \times V$ be our decomposition of $J$, where $V = \{v \in J: (B,1,v) = 0\}$.  Let $\{x_0,x_1, \ldots, x_r\}$ be a basis of $J$, with $x_0$ spanning $\R 1$ and $x_1, \ldots, x_r$ a basis of $V$.  Define $\widetilde{D}: C^\infty(J(\R)) \rightarrow C^\infty(J(\R)) \otimes J^\vee$ as 
	\[\widetilde{D}f = \sum_{j} \partial_{x_j} f \otimes x_j^\vee.\]
	The operator $\widetilde{D}$ is independent of the choice of basis.
	
	We let $\widetilde{D} \circ \widetilde{D}: C^\infty(J(\R)) \rightarrow C^\infty(J(\R)) \otimes J^\vee \otimes J^\vee$ be the composition of $\widetilde{D}$ with itself, i.e., 
	\[\widetilde{D} \circ \widetilde{D}f = \sum_{j,k}\partial_{x_j} \partial_{x_k} f \otimes x_k^\vee \otimes x_j^\vee.\]
	We now set 
	\[D_B = \frac{1}{4\pi i} \mathrm{pair}((\iota_B \circ 1)(\widetilde{D} \circ \widetilde{D})).\]
	That is,
	\[D_B(f) = \frac{1}{4\pi i} \sum_{j,k}(\iota_B(x_k^\vee), x_j^\vee)\partial_{x_j} \partial_{x_k} f .\]
	
	Assume $B > 0$ is positive definite.  We define a Gaussian $\phi_0$ on $J(\R)$ as 
	\[\phi_{0}(t_0 1 + v) = e^{-C_0 \pi (B,1) t_0^2} e^{C_V \pi (B,v^\#)}\]
	for positive constants $C_0, C_V$ to be determined.
	
	\begin{lemma}\label{lem:DB1} One has
		\[(4\pi i) D_B\phi_0 = (-C_0 \pi + 2 C_0^2 \pi^2 (B,1) t_0^2 + C_V \pi (\dim(J)-1) + 2C_V^2 \pi^2 (B,v^\#)) \phi_0.
		\]
	\end{lemma}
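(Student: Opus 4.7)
The plan is to expand $4\pi i\,D_B\phi_0 = \sum_{j,k} C_{jk}\,\partial_{x_j}\partial_{x_k}\phi_0$ using the basis $\{x_0 = 1, x_1, \ldots, x_r\}$ of $J$ with $x_1,\ldots,x_r$ a basis of $V$, and to decompose the sum according to whether each index is $0$ or positive; here $C_{jk} := (\iota_B(x_k^\vee), x_j^\vee)$. Writing $\phi_0 = e^G$ with $G(t_0 \cdot 1 + v) = -C_0\pi (B,1) t_0^2 + C_V\pi (B,v^\#)$, the variables $t_0$ and $v$ separate in $G$, so $\partial_{x_0}\partial_{x_j} G = 0$ for $j \geq 1$, while $\partial_{x_0}^2 G = -2C_0\pi(B,1)$ and $\partial_{x_j}\partial_{x_k} G = C_V\pi(B, x_j \times x_k)$ for $j,k \geq 1$. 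Since $\partial_{x_j}\partial_{x_k}\phi_0 = \bigl(\partial_{x_j}\partial_{x_k} G + (\partial_{x_j} G)(\partial_{x_k} G)\bigr)\phi_0$, this splits $D_B\phi_0$ into a ``Hessian'' contribution and a ``gradient-squared'' contribution.

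The main ingredient is the evaluation $C_{00} = 1/(2(B,1))$. To obtain this, I would use that $V$ is by definition the kernel of the linear functional $\beta := B \times 1 \in J^\vee$. Since $1^\vee$ vanishes on $V$ and is normalized by $(1, 1^\vee) = 1$, it must equal $\beta/(1, \beta)$, and a short computation gives $(1, \beta) = (B, 1 \times 1) = 2(B, 1^\#) = 2(B,1)$, using the standard identity $1^\# = 1$. Since $\iota_B$ is the inverse of $x \mapsto B \times x$, this gives $\iota_B(1^\vee) = 1/(2(B,1))$, and hence $C_{00} = 1/(2(B,1))$.

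I then assemble the four blocks. The Hessian contribution is $C_{00} \cdot (-2C_0\pi(B,1)) + C_V\pi \sum_{j,k\geq 1} C_{jk}(B, x_j \times x_k)$. I evaluate the second sum by observing that $\sum_{\text{all }j,k} C_{jk}(B, x_j \times x_k) = \tr(\iota_B \circ (x \mapsto B\times x)) = \dim J$, that the mixed boundary terms vanish since $(B, 1\times x_k) = (B, 1, x_k) = 0$ for $x_k \in V$, and that the $(0,0)$ boundary contribution is $C_{00}\cdot 2(B,1^\#) = 1$; thus $\sum_{j,k\geq 1} C_{jk}(B, x_j\times x_k) = \dim J - 1$, and the Hessian total is $-C_0\pi + C_V\pi(\dim J - 1)$. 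For the gradient-squared contribution, the $(0,0)$ block gives $C_{00}\cdot 4C_0^2\pi^2(B,1)^2 t_0^2 = 2C_0^2\pi^2(B,1)t_0^2$; the cross blocks vanish because $\sum_{k\geq 1} C_{0k}(B, v\times x_k) = (\iota_B\Phi_B v, 1^\vee) = (v, 1^\vee) = 0$ for $v \in V$ (using $\Phi_B = (x \mapsto B\times x)$); and the $(j,k\geq 1)$ block contributes $C_V^2\pi^2 \sum_{j,k\geq 1} C_{jk}(B, v\times x_j)(B, v\times x_k)$, which by the same boundary-subtraction trick (noting $(B, v\times 1) = (B, 1, v) = 0$) equals $C_V^2\pi^2 (v, B\times v) = 2 C_V^2\pi^2 (B, v^\#)$. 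Summing the four blocks yields the claimed identity.

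The main obstacle is the explicit identification of $1^\vee$ as $(B\times 1)/(2(B,1))$ and the resulting evaluation of $C_{00}$; once this is in place, the remaining manipulations reduce to two clean identities, namely $\iota_B \circ (x \mapsto B \times x) = \mathrm{id}_J$ and $(B, 1, v) = 0$ for $v \in V$, together with the standard cubic norm identity $1^\# = 1$.
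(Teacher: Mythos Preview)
Your proof is correct and follows essentially the same route as the paper's: both split $J = \R\cdot 1 \oplus V$, identify $x_0^\vee = (B\times 1)/(2(B,1))$ to get $C_{00} = (2(B,1))^{-1}$, and then evaluate the four blocks using that $\iota_B$ inverts $x\mapsto B\times x$. The only organizational difference is that the paper establishes directly that $(\iota_B(x_0^\vee), x_j^\vee) = (\iota_B(x_j^\vee), x_0^\vee) = 0$ for $j\geq 1$, so the cross terms drop out immediately, whereas you instead keep the full sum and subtract boundary contributions using $(B,1,v)=0$; both lead to the same result with comparable effort.
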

	\begin{proof} Let $x_0 = 1$.  Then $x_0^\vee = \frac{1}{2(B,1)}(1 \times B)$.  We compute from the definition.
		
		One has 
		\[\widetilde{D} \phi_0 = (-2 C_0 \pi (B,1) t_0 \otimes x_0^\vee + (\sum_{j=1}^{r}C_V \pi (B,x_j,v) \otimes x_j^\vee)) \phi_0.\]
		Differentiating again,
		\begin{align*}\widetilde{D}^2 \phi_0 &= -2C_0 \pi (B,1) \phi_0 \otimes x_0^\vee \otimes x_0^\vee + 4 C_0^2 \pi^2 (B,1)^2 t_0^2 \phi_0 \otimes x_0^\vee \otimes x_0^\vee \\
			&\,\,\,+ \sum_{j=1}^{r}(-2C_0 \pi (B,1) t_0)(C_V \pi (B,x_j,v)) \phi_0 \otimes (x_0^\vee \otimes x_j^\vee + x_j^\vee \otimes x_0^\vee) \\
			&\,\,\, + \sum_{j,k = 1}^{r} C_V \pi (B,x_j, x_k) \phi_0 \otimes x_j^\vee \otimes x_k^\vee \\
			&\,\,\, + \sum_{j,k = 1}^{r} C_V^2 \pi^2 (B,x_j, v)(B,x_k,v) \phi_0 \otimes x_j^\vee \otimes x_k^\vee
		\end{align*}
		We have $x_0^\vee \otimes x_0^\vee = (2(B,1))^{-2}((1 \times B) \otimes (1 \times B))$.  Additionally, 
		\[\sum_{j,k}(B,x_j,x_k) \otimes x_j^\vee \otimes x_k^\vee = \sum_{k}(B \times x_k) \otimes x_k^\vee\]
		and
		\[\sum_{j,k}(B,x_j,v)(B,x_k,v) \otimes x_j^\vee \otimes x_k^\vee = (B \times v) \otimes (B \times v).\]
		We now apply $\iota_B$.  To do this, note the following facts:
		\begin{itemize}
			\item $(\iota_B(x_0^\vee),x_0^\vee) = (2(B,1))^{-2}(1, 1 \times B) = (2(B,1))^{-1}$.
			\item For $j \geq 1$, $(\iota_B(x_0^\vee), x_j^\vee) = (2(B,1))^{-1}(1, x_j^\vee) = 0$.
			\item For $j \geq 1$, $(\iota_B(x_j^\vee), x_0^\vee) = (2(B,1))^{-1}(\iota_B(x_j^\vee), B \times 1) = (2(B,1))^{-1}(x_j^\vee,1) = 0.$  Here we are using that $(\iota_B(B \times y), B \times y') = (y, B \times y') = (B \times y, y')$.
			\item $\sum_{k=1}^{r}(\iota_B(B \times x_k),x_k^\vee) = \dim(V) = \dim(J) -1$.
			\item $(\iota_B(B \times v), B \times v) = 2(B,v^\#).$
		\end{itemize}
		Applying these computations, we obtain
		\[(4\pi i) D_B\phi_0 = (-C_0 \pi + 2 C_0^2 \pi^2 (B,1) t_0^2 + C_V \pi (\dim(J)-1) + 2C_V^2 \pi^2 (B,v^\#)) \phi_0.
		\]
	\end{proof}
	If $C_V = C_0$, then
	\[(4\pi i)D_B \phi_0(x) = \pi C_0(\dim(J)-2) \phi_0(x) + 2 C_0^2 \pi^2 (B,x^\#) \phi_0(x).\]
	
	We also must compute $D_B$ applied to the exponentials $e^{2 \pi i C(B, x \times y)}$, as a function of $x$.  The result is:
	\begin{lemma}\label{lem:DB2} One has
		\[D_B(e^{2\pi i (B \times y, x)}) = 2 \pi i C^2 (B,y^\#) e^{2\pi i C(B \times y,x)}.\]
	\end{lemma}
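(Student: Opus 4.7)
The plan is a direct computation: apply the definition of $D_B$ to the exponential $f(x) = e^{2\pi i C (B\times y, x)}$ (reading the statement with the obvious constant $C$ in the exponent on both sides) and simplify using basis duality. Since the exponent is linear in $x$, the partial derivatives separate cleanly: $\partial_{x_j} f = 2\pi i C (B\times y, x_j) f$, and hence $\partial_{x_j}\partial_{x_k} f = -4\pi^2 C^2 (B\times y, x_j)(B\times y, x_k) f$. Substituting into
\[D_B f = \frac{1}{4\pi i} \sum_{j,k}(\iota_B(x_k^\vee), x_j^\vee)\, \partial_{x_j}\partial_{x_k} f\]
and pulling out $f$ reduces the problem to evaluating the scalar
\[S := \sum_{j,k}(\iota_B(x_k^\vee), x_j^\vee)(B\times y, x_j)(B\times y, x_k).\]

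Next, I would collapse $S$ using the fact that, because $\{x_j\}$ and $\{x_j^\vee\}$ are dual bases of $J$ and $J^\vee$, every $\alpha \in J^\vee$ can be written as $\alpha = \sum_j (\alpha, x_j) x_j^\vee$. Applying this identity with $\alpha = B\times y$ collapses the sum over $j$, giving $S = \sum_k (B\times y, x_k)(\iota_B(x_k^\vee), B\times y)$, and a second application collapses the sum over $k$ to $S = (\iota_B(B\times y), B\times y)$. At this point I would invoke the identity $(\iota_B(B\times v), B\times v) = 2(B, v^\#)$, already recorded in the proof of Lemma \ref{lem:DB1}, which follows from $\iota_B$ being the inverse of $u \mapsto B\times u$ together with the trilinear identity $(v, B\times v) = 2(B, v^\#)$. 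This yields $S = 2(B, y^\#)$.

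Combining the pieces,
\[D_B f = \frac{1}{4\pi i} \cdot (-4\pi^2 C^2) \cdot 2(B, y^\#)\, f = 2\pi i C^2 (B, y^\#)\, f,\]
which is the claimed formula. There is no substantive obstacle here; the proof is essentially bookkeeping. The only points that require care are (i) keeping the $\tfrac{1}{4\pi i}$ normalization and the $(2\pi i)^2$ from the two derivatives consistent in sign, and (ii) the double application of the duality identity needed to collapse the double sum $S$ to a single pairing. Both are routine once the operator $D_B$ and the map $\iota_B$ are set up as in the preceding subsection, so no new input beyond the definition of $D_B$ and the identity from Lemma \ref{lem:DB1} is needed.
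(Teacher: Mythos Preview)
Your proof is correct and follows essentially the same approach as the paper: both compute two derivatives of the exponential, identify the result with $(\iota_B(B\times y), B\times y)$, and invoke the identity $(\iota_B(B\times y), B\times y) = 2(B, y^\#)$ from Lemma~\ref{lem:DB1}. The only cosmetic difference is that the paper works directly with the tensor-valued operator $\widetilde{D}$ (so $\widetilde{D}f = 2\pi i C\, f \otimes (B\times y)$ without any sum to collapse), whereas you expand in coordinates and then collapse the double sum via basis duality; these are the same computation in different notation.
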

	\begin{proof} We have 
		\[\widetilde{D} e^{2\pi i C (B \times y,x)} = 2 \pi i C e^{2\pi i C(B \times y,x)} \otimes (B \times y)\]
		and so
		\[\widetilde{D}^2 e^{2\pi i C (B \times y,x)} = (2 \pi i C)^2 e^{2\pi i C(B \times y,x)} \otimes (B \times y) \otimes (B \times y).\]
		Thus 
		\[D_B(e ^{2\pi i C(B \times y,x)}) = 2 \pi i C^2 (B,y^\#)e^{2\pi i C(B \times y,x)}.\]
	\end{proof}
	
	Let $C_0 = C_V = 2$.  We can now compute how the Lie algebra element $E_{12}-E_{21}$ acts on the Gaussian $\phi_0(x)$ via the Weil representation.
	\begin{proposition} Let $C_0 = C_V = 2$.  Let $d \omega_{\psi_B}$ denote the differential of the Weil representation $\omega_{\psi_B}$.  Then
		\[d\omega_{\psi_B}(E_{12}-E_{21}) \phi_0 = \left(\frac{-i}{2}\right)(\dim(J)-2)\phi_0.\]
	\end{proposition}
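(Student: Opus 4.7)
The goal is to evaluate $d\omega_{\psi_B}(E_{12}-E_{21})\phi_0$. My plan is to compute the two summands separately and combine them, watching the multiplication-by-$(B,x^\#)$ terms cancel. The action of $E_{12}$ is immediate from the Weil representation section: the relation $[E_{12}, v_2\otimes x] = v_1\otimes x$ shows $\exp(uE_{12}) \in N_{S,Y}$, and since the symplectic pairing on $W_J^B$ is $\langle v_1\otimes y, v_2\otimes x\rangle = (B,x,y)_J$, differentiating the formula $\omega_{\psi_B}(\exp(uE_{12}))\phi(x) = \psi(u(B,x^\#))\phi(x)$ at $u=0$ yields $d\omega_{\psi_B}(E_{12})\phi_0 = 2\pi i (B,x^\#)\phi_0$.

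The nontrivial step is the action of $E_{21}$, which generates the opposite Siegel unipotent $N_{S,X}$. I would establish that $d\omega_{\psi_B}(E_{21}) = -D_B$ as operators on $S(J(\R))$; the normalization $1/(4\pi i)$ in the definition of $D_B$ is designed precisely for this. The cleanest route is to conjugate by a Weyl element $w \in \widetilde{\SL_2}$ with $wE_{12}w^{-1} = -E_{21}$: the operator $\omega_{\psi_B}(w)$ is (up to a metaplectic scalar) the symplectic Fourier transform on $J(\R)$ associated to the $B$-twisted pairing, under which multiplication by $2\pi i (B,x^\#)$ conjugates to precisely $-D_B$. An equivalent route is to derive the Weil-representation formula for $N_{S,X}$ directly, parallel to the formula stated for $N_{S,Y}$ in the Weil representation section, then differentiate. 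Lemma \ref{lem:DB2} serves as a useful cross-check: on the Fourier mode $e^{2\pi i C(B \times y, x)}$, $D_B$ acts by $2\pi i C^2 (B, y^\#)$, which matches what one expects for $-d\omega_{\psi_B}(E_{21})$ on such modes.

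With the identification $d\omega_{\psi_B}(E_{21}) = -D_B$ in hand, the proposition reduces to substitution into Lemma \ref{lem:DB1} with $C_0 = C_V = 2$. Since $v \in V$ satisfies $(B,1,v) = 0$, the identity $(B,x^\#) = (B,1) t_0^2 + (B,v^\#)$ holds for $x = t_0 \cdot 1 + v$, so Lemma \ref{lem:DB1} collapses to
\[(4\pi i) D_B \phi_0 = 2\pi(\dim(J) - 2)\phi_0 + 8\pi^2 (B, x^\#)\phi_0,\]
i.e.\ $D_B \phi_0 = \tfrac{-i}{2}(\dim(J)-2)\phi_0 - 2\pi i (B, x^\#)\phi_0$. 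Adding to $d\omega_{\psi_B}(E_{12})\phi_0 = 2\pi i(B,x^\#)\phi_0$, the $(B,x^\#)$ terms cancel and the claimed identity $d\omega_{\psi_B}(E_{12}-E_{21})\phi_0 = \tfrac{-i}{2}(\dim(J)-2)\phi_0$ falls out. The main obstacle throughout is the identification $d\omega_{\psi_B}(E_{21}) = -D_B$: keeping signs, powers of $2\pi i$, and the metaplectic cocycle for the Weyl element straight requires care. Once this is secured, everything else is routine bookkeeping on top of Lemma \ref{lem:DB1}.
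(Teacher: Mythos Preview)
Your proposal is correct and follows essentially the same route as the paper: compute $d\omega_{\psi_B}(E_{12})$ directly, identify $d\omega_{\psi_B}(-E_{21})$ with $D_B$ by conjugating through the Weyl element $J_2$ (the paper uses Lemma~\ref{lem:DB2} as the actual tool for this identification rather than merely a cross-check), and then substitute $C_0=C_V=2$ into Lemma~\ref{lem:DB1} so that the $(B,x^\#)$ terms cancel.
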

	\begin{proof} For $\phi \in S(J(\R))$, one has $\exp(u E_{12}) \phi(x)= e^{2\pi i u (B,x^\#)} \phi(x)$.  Thus $d \omega_{\psi_B}(E_{12}) \phi(x) = 2 \pi i (B,x^\#) \phi(x)$.  In $\sl_2$, we have $-E_{21} = Ad(J_2)(E_{12})$, where $J_2  = \mm{0}{1}{-1}{0}$.    Up to scalar multiple, the element $J_2$ acts via the Weil representation as Fourier transform for the non-degenerate pairing on $J$ given by $x,y \mapsto (B, x \times y)$.  Precisely, there is a nonzero complex number $\gamma$ so that if $J_2' \in \widetilde{\SL_2}(\R)$ maps to $J_2$, and $\phi \in S(J(\R))$, then
		\[ \omega_{\psi_B}(J_2') \phi(y) = \gamma \int_{J(\R)}{ e^{-2\pi i (B, x \times y)} \phi(x)\,dx}.\]
		Applying Lemma \ref{lem:DB2}, one has $d\omega_{\psi_B}(-E_{21})\phi = D_B \phi$.  Indeed, let $\phi = \omega_{\psi_B}(J_2') \phi'$.  Then 
		\[d \omega_{\psi_B}(-E_{21})\phi(y) = (2 \pi i (B,y^\#))\gamma \int_{J(\R)}{e^{-2\pi i (B,x \times y)} \phi'(x)\,dx}\]
		while
		\[D_B \phi(y) = \gamma \int_{J(\R)}{ e^{-2\pi i(B, x \times y)} D_B \phi'(x)\,dx}.\]
		Applying Lemma \ref{lem:DB1} gives the result.
	\end{proof}	
	
	\section{The Fourier-Jacobi expansion for $R$ I: Splittings}\label{sec:splittings}
	In this section, we begin to develop the Fourier-Jacobi expansion along the parabolic subgroup $R \subseteq G_J$.
	
	\subsection{Preliminaries}
	Recall from subsection \ref{subsec:parabolic} the parabolic subgroup $R$, together with its Levi decomposition $R = M_R N_R$. Let us write $Lie(N_R) = V_8 \oplus V_7$, where $V_8$ is the subspace with $h_R$-eigenvalue $1$ and $V_7$ is the subspace with $h_R$ eigenvalue $2$.  One has $V_8 = C^8$ and $V_7 = \Q^3 \oplus C \oplus \Q^3$.  Thus, if $G_J = F_4$, then $V_8$ has dimension $8$ and $V_7$ has dimension $7$.  For other groups, these are not the dimensions.  If $G_J$ is an orthogonal group, then $V_8 = 0$.  For $? \in \{8,7\}$ and $k \in \{0,1,2\}$ we let $V_{?}^{[k]}$ denote the subspace of $V_{?}$ with $h_{P}$ eigenvalue equal to $k$.  Then $V_8 = V_8^{[0]} \oplus V_8^{[1]}$, while $V_7 = V_7^{[0]} \oplus V_7^{[1]} \oplus V_7^{[2]}$.
	
	In case $J = H_3(C)$, for $x_j \in C$, let $V(x_1,x_2, x_3) = \left(\begin{array}{ccc} 0 & x_3 & x_2^* \\ x_3^* & 0 & x_1 \\ x_2 & x_1^* & 0 \end{array}\right)$.  Set $E = 1_J- e_{11} \in J,J^\vee$.  Set $H_2(C) = \mathrm{Span}(e_{22},e_{33}, V(x_1,0,0)) \subseteq J, J^\vee$.  One has
	\begin{itemize}
		\item $V_8^{[0]} = \mathrm{Span}(\Phi_{E,V(0,u_2,u_3)},v_2 \otimes V_{(0,u_2',u_3')})$
		\item $V_8^{[1]} = \mathrm{Span}(v_1 \otimes V(0,v_2,v_3),\delta_3 \otimes V(0,v_2',v_3'))$
		\item $V_7^{[0]} = \mathrm{Span}(v_2 \otimes e_{11})$
		\item $V_7^{[1]} = \mathrm{Span}(v_1 \otimes e_{11}, \delta_3 \otimes H_2(C),E_{23})$
		\item $V_7^{[2]} = \mathrm{Span}(E_{13})$.
	\end{itemize}
	
	The group $M_R$ preserves a quadratic form on $V_7$, up to similitude.  Namely, define the elements $b_i, b_{-j}$ by an equality of lists
	\[(b_1, b_2, b_3, b_{-3},b_{-2},b_{-1}) =  (E_{13}, v_1 \otimes e_{11}, \delta_{3} \otimes e_{22}, \delta_{3} \otimes e_{33}, -E_{23}, v_2 \otimes e_{11}).\]
	A general element $v$ of $V_7$ can be written as $v = \left(\sum_{j \in \{\pm 1,\pm 2,\pm 3\}}{\alpha_j b_j}\right) + \delta_3 \otimes V(\beta,0,0)$ with $\alpha_j \in \Q$ and $\beta \in C$.  Define 
	\[q_{V_7}(v) = \alpha_1 \alpha_{-1} + \alpha_2 \alpha_{-2} + \alpha_3 \alpha_{-3} - n_C(\beta).\]
	
	\begin{proposition} The adjoint action of $M_R$ on $V_7$ preserves the quadratic form $q_{V_7}$ up to scaling. \end{proposition}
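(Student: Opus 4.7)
The plan is to verify infinitesimally that $Lie(M_R)$ acts on $V_7$ through the similitude Lie algebra $\mathfrak{co}(V_7,q_{V_7}) = \mathfrak{so}(V_7,q_{V_7}) \oplus \Q \cdot \mathrm{id}$; this suffices because $M_R$, being the Levi of a parabolic subgroup in a connected reductive group, is connected.

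First I would produce an explicit set of generators of $\g_0 := Lie(M_R)$, the zero eigenspace of $ad(h_R)$ on $\g(J)$. Using the $\Z/3\Z$-grading $\g(J) = \sl_3 \oplus \m(J)^0 \oplus (V_3 \otimes J) \oplus (V_3 \otimes J)^\vee$ together with the formula $h_R = \frac{2}{3}(E_{11}+E_{22}-2E_{33}) + \Phi'_{e_{11},e_{11}}$, one intersects each summand with the zero-weight space. The outcome is a small generating set: a two-dimensional torus containing $h_R$; the subalgebra of $\m(J)^0$ preserving both the idempotent $e_{11}$ and the complementary element $E = 1_J - e_{11}$; and a short list of off-diagonal root vectors from $V_3 \otimes J$ and $V_3^\vee \otimes J^\vee$ whose $J$- or $J^\vee$-component lies in the $e_{11}$-line.

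Next, for each such generator $X$, I would compute $ad(X)$ on the basis $(b_1,b_2,b_3,b_{-3},b_{-2},b_{-1},\delta_3 \otimes V(\beta,0,0))$ of $V_7$ by direct bracket computation in $\g(J)$, using the formulas of \cite[sections 3,4]{pollackQDS}, and then read off the induced action on the polynomial $q_{V_7}(v) = \alpha_1\alpha_{-1} + \alpha_2\alpha_{-2} + \alpha_3\alpha_{-3} - n_C(\beta)$. To keep the bookkeeping tractable, I would refine by the secondary $ad(h_P)$-grading $V_7 = V_7^{[0]} \oplus V_7^{[1]} \oplus V_7^{[2]}$, which is preserved by the subset of generators lying in a $(h_R,h_P)$-bieigenbasis of $\g_0$. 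The form $q_{V_7}$ splits accordingly into three hyperbolic pairings (matching $V_7^{[0]}$ with $V_7^{[2]}$, and pairing the two halves of the hyperbolic part of $V_7^{[1]}$) plus the piece $-n_C(\beta)$ on the $\delta_3 \otimes V(\beta,0,0)$ subspace. Torus generators act diagonally, and one reads off the similitude character $\lambda(X)$; each root-vector generator shifts one basis vector into another and must preserve $q_{V_7}$ up to this same scalar, which reduces to a finite list of sign/structure-constant checks.

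The treatment of the subspace $\delta_3 \otimes V(\beta,0,0) \subseteq V_7^{[1]}$ deserves a separate remark: here one uses that the subalgebra of $\m(J)^0$ stabilizing both $e_{11}$ and $E$ acts on $V(\beta,0,0)$ through the Lie algebra of the orthogonal group of $n_C$. This follows from the compatibility of the action of $\m(J)^0$ with the cubic norm $N_J$, because $n_C(\beta)$ appears (up to an explicit factor) as the relevant coefficient of $N_J$ evaluated on $E + \delta_3$-type elements. The main obstacle is the sheer combinatorial volume of bracket computations, but the $(h_R,h_P)$-bigrading combined with the block decomposition of $q_{V_7}$ reduces the verification to a handful of short identities, one per generator.
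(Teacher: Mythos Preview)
Your infinitesimal strategy is sound and is presumably close to what the cited reference carries out; the paper itself gives no argument here, only citing \cite[Proposition~5.2.1]{pollackNTM} for the $E_8$ case and asserting that the same proof works in general. So you are in effect supplying what the paper omits.

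That said, your enumeration of generators for $Lie(M_R)$ is incomplete. The centralizer of $h_R$ in $\g(J)$ also contains the copy of $\sl_2$ spanned by $E_{12}, E_{21}, E_{11}-E_{22}$ inside $\sl_3$, and elements of the form $v_2 \otimes S$, $\delta_2 \otimes S'$ with $S, S' \in H_2(C)$; these appear explicitly later in the paper, for instance in Lemma~\ref{lem:cRaoRedconj} and the proof of Proposition~\ref{prop:JT}. None of them fit your three listed categories: they are not torus elements, they do not lie in $\m(J)^0$, and when they come from $V_3 \otimes J$ or its dual the $J$-component lies in $H_2(C)$ rather than on the $e_{11}$-line. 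This is not a fatal gap---once you enlarge your list to include these, the same case-by-case bracket verification goes through---but as written your generating set does not span $Lie(M_R)$, so the check would be incomplete.
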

	\begin{proof} This is proved in \cite[Proposition 5.2.1]{pollackNTM} in case $G_J = E_8$, and the general case can be proved by the same argument.\end{proof}
	
	We will need, at various points below, a computation of the commutator $[x,y]$, if $x \in V_{8}^{[0]}$ and $y\in V_8^{[1]}$.  We do this now.  Suppose $u = (u_2,u_3) \in C^2$, and likewise $u' = (u_2',u_3'), v = (v_2,v_3), v'= (v_2',v_3') \in C^2$.  We write $(u,u')_X$ for the element $\Phi_{E,V(0,u_2,u_3)} + v_2 \otimes V(0,u_2',u_3')$ of $V_{8}^{[0]}$ and $(v,v')_Y$ for the element $v_1 \otimes V(0,v_2,v_3) + \delta_3 \otimes V(0,v_2',v_3')$ of $V_{8}^{[1]}$.
	\begin{lemma}\label{lem:commutatorV8} Let $(u,v) = (u_2,v_2)_C + (u_3, v_3)_C$ and likewise for $(u',v')$.  Then
		\[ [(u,u')_X, (v,v')_Y] = (u,v) v_1 \otimes e_{11} + \delta_3 \otimes (e_{11} \times (u \times v')) - \delta_{3} \otimes (u' \times v) - (u',v') E_{23}.\]
	\end{lemma}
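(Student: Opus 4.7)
My plan is to expand the commutator bilinearly using the decomposition of each element into its two natural summands. Writing
\[(u,u')_X = \Phi_{E,V(0,u_2,u_3)} + v_2 \otimes V(0,u_2',u_3'), \qquad (v,v')_Y = v_1 \otimes V(0,v_2,v_3) + \delta_3 \otimes V(0,v_2',v_3'),\]
the bracket $[(u,u')_X,(v,v')_Y]$ breaks into four commutators, each of which is computable directly from the $\Z/3\Z$-graded structure of $\g_J = \sl_3 \oplus \m(J)^0 \oplus (V_3 \otimes J) \oplus (V_3 \otimes J)^\vee$ recorded in section 4 of \cite{pollackQDS}. The four pieces involve, respectively, the $\m(J)^0$-action on $V_3 \otimes J$, the $\m(J)^0$-action on $V_3^\vee \otimes J^\vee$, the bracket $[V_3 \otimes J, V_3 \otimes J] \subseteq V_3^\vee \otimes J^\vee$ (given by the cross product $J \times J \to J^\vee$ tensored with $v_2 \wedge v_1 = -\delta_3$), and the bracket $[V_3 \otimes J, V_3^\vee \otimes J^\vee]$ which, because $v_2 \otimes \delta_3$ is off-diagonal, projects purely to the $E_{23}$-summand of $\sl_3$ weighted by the trace pairing $(\,,\,)_J$.

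Concretely, the first piece yields $[\Phi_{E,V(0,u_2,u_3)}, v_1 \otimes V(0,v_2,v_3)] = v_1 \otimes \Phi_{E,V(0,u_2,u_3)} \cdot V(0,v_2,v_3)$. A short calculation with the formula for $\Phi_{\gamma,x}$ acting on $J$ (combined with the fact that $E = 1_J - e_{11}$ projects $V(0,v_2,v_3)$ to zero in every off-diagonal entry except along $e_{11}$) identifies this with $(u,v)\, v_1 \otimes e_{11}$. The second piece, the commutator of $\Phi_{E,V(0,u_2,u_3)}$ with $\delta_3 \otimes V(0,v_2',v_3')$, is handled symmetrically using the dual action on $J^\vee$, and the result $\delta_3 \otimes (e_{11} \times (u \times v'))$ is read off using the standard identity $\Phi_{E,x}(y) = \tfrac{1}{2}(E \times (x \times y)) + \ldots$ (tracking which terms survive when $E = 1_J - e_{11}$). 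The third piece is immediate: $[v_2 \otimes V(0,u_2',u_3'), v_1 \otimes V(0,v_2,v_3)] = -\delta_3 \otimes V(0,u_2',u_3') \times V(0,v_2,v_3) = -\delta_3 \otimes (u' \times v)$. The fourth piece contributes only an $\sl_3$-component, namely $-(V(0,u_2',u_3'), V(0,v_2',v_3'))_J\, E_{23} = -(u',v')\, E_{23}$, after unwinding the trace pairing on $H_3(C)$ between two elements whose only nonzero entries lie in the $(1,2), (1,3), (2,1), (3,1)$ positions.

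The main obstacle is purely bookkeeping: normalizing the three structure constants (the action of $\m(J)^0$, the cross product $J \times J \to J^\vee$, and the trace pairing) so that all four summands appear with the signs and coefficients shown. Both the $\m(J)^0$-valued portion of the fourth bracket and any diagonal contributions from the second and third brackets vanish automatically, because $[V_8, V_8] \subseteq V_7$ on eigenvalue grounds ($h_R = 2$) and the target $V_7^{[1]} = \mathrm{Span}(v_1 \otimes e_{11},\, \delta_3 \otimes H_2(C),\, E_{23})$ is already quite constrained. Once these normalization checks are made, the four contributions sum to the stated formula.
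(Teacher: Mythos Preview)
Your approach is essentially the same as the paper's: expand the bracket bilinearly into four pieces using the $\Z/3\Z$-graded structure of $\g_J$, compute each piece, and sum. The paper records the four contributions as
\[
v_1 \otimes \bigl(-E \times (u \times v)\bigr) + \delta_3 \otimes \bigl(-u \times v' - (u,v')E\bigr) - \delta_3 \otimes (u' \times v) - (u',v')\,E_{23},
\]
and then simplifies using the single identity $E \times V(0,A,B) = -V(0,A,B)$ (together with the computation of $e_{11} \times (\cdot)$), which is the concrete ingredient behind your ``tracking which terms survive when $E = 1_J - e_{11}$.'' Your eigenvalue observation that $[V_8^{[0]},V_8^{[1]}] \subseteq V_7^{[1]}$ forces the $\m(J)^0$-component of the fourth piece to vanish is a nice shortcut the paper does not make explicit, but otherwise the two arguments are the same direct computation.
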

	\begin{proof} This is a direct computation.  Indeed, computing from the definition and using $E \times V(0,A,B) = - V(0,A,B)$ gives
		\begin{align*}
			[(u,u')_X, (v,v')_Y] &= v_1 \otimes (-E \times (V(0,u_2,u_3) \times V(0,v_2,v_3))) \\ &\,\,\,+ \delta_3 \otimes (-V(0,u_2, u_3) \times V(0,v_2',v_3') - (V(0,u_2,u_3), V(0,v_2',v_3'))E) \\ &\,\,\, - \delta_3 \otimes (V(0,u_2',u_3') \times V(0,v_2,v_3)) - E_{23} \cdot (V(0,u_2',u_3'), V(0,v_2',v_3')).
		\end{align*}
		This is then seen to be equal to the quantity in the statement of the lemma.	
	\end{proof}
	
	Fix $T \in V_7$.  Define an alternating pairing $\langle \,, \, \rangle_{V_8,T}$ on $V_8$ via $\langle w_1, w_2 \rangle_{V_8,T} = (T, [w_1, w_2])_{V_7}$.   If $T \in V_{7}$, we say that $T$ is \emph{normal} if $T \in \mathrm{Span}(\delta_3 \otimes H_2(C))$ and $q_{V_7}(T) \neq 0$.
	\begin{lemma}\label{lem:symplecticV8} Suppose $T = \delta_3 \otimes T' \in V_{7}$ is normal.   Then $\langle \,,\, \rangle_{V_8,T}$ is non-degenerate and $V_8 = V_{8}^{[0]} \oplus V_{8}^{[1]}$ is a Lagrangian decompsotion.  One has
		\[ \langle (u,u')_X, (v,v')_Y \rangle_{V_8,T} = (T', u \times v')_J - (T, e_{11} \times (u' \times v))_J.\]
	\end{lemma}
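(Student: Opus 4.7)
\medskip

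\noindent\textbf{Proof plan.} The plan is to compute both parts of the lemma directly from the commutator formula in Lemma~\ref{lem:commutatorV8}, organizing the calculation by the $h_P$-grading so that most terms pair trivially with $T$.

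First I would tackle the explicit formula. Apply Lemma~\ref{lem:commutatorV8} to get
\[
[(u,u')_X,(v,v')_Y] = (u,v)\,v_1\otimes e_{11} \;+\; \delta_3\otimes\bigl(e_{11}\times (u\times v')\bigr) \;-\; \delta_3\otimes (u'\times v) \;-\; (u',v')\,E_{23},
\]
and then pair with $T=\delta_3\otimes T'$ using the polarization of $q_{V_7}$. In the basis $\{b_{\pm 1},b_{\pm 2},b_{\pm 3}\}\cup\{\delta_3\otimes V(\beta,0,0)\}$, the form $q_{V_7}$ only pairs $b_j$ with $b_{-j}$ (and couples the $\beta$-part to itself via $-n_C$). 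The first and last terms of the commutator live in $\mathrm{Span}(b_{\pm 2})$, while $T$ is contained in $\delta_3\otimes H_2(C)=\mathrm{Span}(b_{\pm 3},\delta_3\otimes V(\beta,0,0))$; hence these two terms contribute $0$. What remains is the pairing of $\delta_3\otimes T'$ with $\delta_3\otimes\bigl(e_{11}\times(u\times v') - u'\times v\bigr)$ inside $V_7^{[1]}$. The key small computation is then to identify this bilinear form on $\delta_3\otimes J$ with the trace form on $J$: writing an element of $H_2(C)$ as $a_{22}e_{22}+a_{33}e_{33}+V(\alpha,0,0)$, one checks directly from the definition of $q_{V_7}$ that the restriction to $\delta_3\otimes H_2(C)$ agrees with the trace pairing $(\cdot,\cdot)_J$ restricted to $H_2(C)$. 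Since $e_{11}\times(u\times v')$ and $u'\times v$ both lie in $H_2(C)$ (because $e_{11}\times(H_2(C))\subseteq H_2(C)$ and a cross product of two elements in the off-diagonal $(2,3)$-block lies in $H_2(C)$), the stated formula $\langle(u,u')_X,(v,v')_Y\rangle_{V_8,T}=(T',u\times v')_J-(T',e_{11}\times(u'\times v))_J$ falls out. (I read the ``$T$'' in the second term of the displayed formula in the statement as $T'$, compatibly with the $J$-pairing.)

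Next, for the Lagrangian decomposition, I would use the grading again. The bracket $[V_8^{[0]},V_8^{[0]}]$ has $h_P$-weight $0$ and $h_R$-weight $2$, so lies in $V_7^{[0]}=\mathrm{Span}(b_{-1})$; dually, $[V_8^{[1]},V_8^{[1]}]\subseteq V_7^{[2]}=\mathrm{Span}(b_1)$. Since $T$ has no $b_{\pm 1}$ component (it is normal and thus lies in $\delta_3\otimes H_2(C)$), both subspaces $V_8^{[0]}$ and $V_8^{[1]}$ are isotropic for $\langle\,,\,\rangle_{V_8,T}$. As $V_8=V_8^{[0]}\oplus V_8^{[1]}$ with each summand of the same dimension, this is a Lagrangian decomposition provided the pairing between the two summands is non-degenerate.

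Finally, for non-degeneracy, I would plug the explicit formula into the question: given a nonzero $(u,u')_X\in V_8^{[0]}$, produce $(v,v')_Y$ so that the pairing is nonzero. Since the operators $X\mapsto T'\times X$ and $X\mapsto e_{11}\times X$ act invertibly on the relevant subspace of $J$ (using $q_{V_7}(T')\neq 0$, equivalently the rank-one/invertibility assumptions built into ``normal''), one can separately arrange the two terms in the formula to be nonzero, e.g.\ by taking $v=0$ to isolate the first term and using non-degeneracy of the trace pairing on the $(2,3)$-off-diagonal block of $J$. The main obstacle I expect is the bookkeeping in this last step: one must verify that the composition of Freudenthal cross products with $e_{11}\times(\cdot)$ and $T'\times(\cdot)$ does in fact give a non-degenerate pairing of $V_8^{[0]}$ with $V_8^{[1]}$, which comes down to showing that $e_{11}\times(\cdot)$ and $T'\times(\cdot)$ restrict to isomorphisms on $\mathrm{Span}\{V(0,u_2,u_3):u_j\in C\}\subseteq J$. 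This is a short Jordan-algebra computation in $H_3(C)$ that uses only $n_C(1)\neq 0$ and the nonzero-norm hypothesis on $T'$.
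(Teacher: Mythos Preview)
Your overall strategy---apply Lemma~\ref{lem:commutatorV8}, pair with $T$ using $(\,\cdot\,,\,\cdot\,)_{V_7}$, use the $h_P$-grading to kill the $b_{\pm 2}$ terms, and then argue isotropy and non-degeneracy from the explicit formula---is exactly what the paper intends by its one-line ``This follows from Lemma~\ref{lem:commutatorV8}.''

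There is, however, one concrete error in your plan. You assert that the restriction of $(\,\cdot\,,\,\cdot\,)_{V_7}$ to $\delta_3\otimes H_2(C)$ agrees with the trace pairing $(\,\cdot\,,\,\cdot\,)_J$ on $H_2(C)$. This is false: $q_{V_7}$ restricted to $\delta_3\otimes H_2(C)$ is $n_{H_2(C)}$, which has Lorentzian signature, whereas the trace form on $J$ is positive-definite by the standing hypothesis on $J$. Concretely, writing $S=c_2e_{22}+c_3e_{33}+V(r_1,0,0)$ and $Y=d_2e_{22}+d_3e_{33}+V(s_1,0,0)$, one finds
\[
(\delta_3\otimes S,\ \delta_3\otimes Y)_{V_7}=c_2d_3+c_3d_2-(r_1,s_1)_C=(S,\ e_{11}\times Y)_J,
\]
not $(S,Y)_J$. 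With this corrected identity the computation goes through: applying it to the two $\delta_3\otimes(\cdot)$ terms of the commutator gives $(T',\,e_{11}\times(e_{11}\times(u\times v')))_J-(T',\,e_{11}\times(u'\times v))_J$, and since $e_{11}\times(e_{11}\times A)=A$ for $A\in H_2(C)$, the first term simplifies to $(T',u\times v')_J$, yielding the stated formula. Had you used your incorrect identification literally, you would have obtained $(T',e_{11}\times(u\times v'))_J-(T',u'\times v)_J$ instead, which is not the same expression unless $e_{11}\times T'=T'$. The Lagrangian and non-degeneracy arguments you outline are fine.
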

	\begin{proof} This follows from Lemma \ref{lem:commutatorV8}.
	\end{proof}
	
	From now on, we assume that $T \in V_{7}$ is normal.  Define $J_{2,V_8}: V_8 \rightarrow V_8$ as $J_{2,V_8}((u,u')_X) = (u',-u)_{Y}$ and $J_{2,V_8}((v,v')_{Y}) = (v',-v)_{X}$.  We observe that $J_{2,V_8}$ preserves the symplectic form $\langle \,,\, \rangle_{V_8,T}$.  Moreover, $J_{2,V_8}^2 = -1_{V_8}$ is negative the identity on $V_8$.
	
	The group $N_R$, together with a non-degenerate normal element $-T$, gives a Heisenberg group.  Namely, we have a linear map $V_7 \rightarrow \mathbf{G}_a$ given by $v \mapsto (-T,v)_{V_7}$, where $(x,y)_{V_7} = q_{V_7}(x+y) - q_{V_7}(x) - q_{V_7}(y)$ is the bilinear form associated to the quadratic form $q_{V_7}$.  Let $\chi_T$ denote the character on $V_7$ as $\chi_T(v) = \psi(-(T,v)_{V_7})$. Let $\omega_{\chi_T}$ be the Weil representation of $N_R \rtimes \Sp(V_{8}, \langle \,,\, \rangle_{V_8,T})$ associated to this linear map on $V_7$.  We associate this representation to the Langrangian decomposition $V_8 = V_{8}^{[0]} \oplus V_{8}^{[1]}$, so that is acts on $S(V_8^{[0]}(\A))$.  If $\phi \in S(V_8^{[0]}(\A))$, we let 
	\[\Theta_{\phi}(hg) = \sum_{\xi \in V_8^{[0]}(\Q)}{(\omega_{\chi_T}(hg)\phi)(\xi)}\]
	be the theta function.
	
	\subsection{Splittings: non-commutative case}
	Let $M_R^T$ denote the subgroup of $M_R$ that stabilizes $T \in V_7$, and $M'$ its derived group.  (For ease of notation, we drop the $R,T$, even though this group does depend on $T$.)  Then we have a natural map $M_R^T \rightarrow \Sp(V_8, \langle \,,\, \rangle_{V_8,T})$.  The purpose of the rest of this section is to show that when $J = H_3(C)$, there is a splitting $M' \rightarrow \widetilde{\Sp(V_8)}$ into the double cover.  
	
	Let $k = \Q_v$ be a completion of $\Q$.  We now take up the task of providing a splitting $M_R^T(k) \rightarrow \widetilde{\Sp(V_8)}(k)$ when $\dim(C) \geq 4$, i.e., when $C$ is noncommutative.  The result uses the Rao cocycle \cite[Theorem 5.3]{rao}, which we review now.  For $g_1, g_2 \in \Sp(V_8)(k)$, one has $c_{Rao}(g_1, g_2) \in \mu_2$, and the map $c_{Rao}: \Sp(V_8)(k) \times \Sp(V_8)(k) \rightarrow \mu_2$ is a coycle, i.e., 
	\[c_{Rao}(g_1,g_2) c_{Rao}(g_1g_2,g_3) = c_{Rao}(g_1,g_2g_3) c_{Rao}(g_2,g_3).\]
	
	To define $c_{Rao}(g_1,g_2)$, we need a few preliminaries.  First, fix a symplectic basis $e_1, \ldots, e_n$, $e_1^*, \ldots, e_n^*$, of $V_8 = X \oplus Y$, with $X = \mathrm{Span}(e_1, \ldots, e_n)$ and $Y = \mathrm{Span}(e_1^*,\ldots,e_n^*)$.  For a subset $S \subseteq \{1,\ldots,n\}$, let $\tau_S$ be the element of $\Sp(V_8)$ (acting on the right of $V_8$) that takes $e_j^* \mapsto e_j$, $e_j \mapsto - e_j^*$ for $j \in S$ and is the identity on the other basis elements.	 See before Lemma 2.13 in \cite{rao}.
	
	Let $P_{V_8}$ denote the Siegel parabolic subgroup of $\Sp(V_8)$, which by definition stabilizes $Y$ for the right action.  For an integer $j \in \{0,1,\ldots,n\}$, let $\Omega_j$ be the subset of $\Sp(V_8)$ from \cite[Lemma 2.14]{rao}, so that $\Omega_j = P_{V_8} \tau_{S} P_{V_8}$ for any $S$ with $|S| = j$.  One has $\Sp(V_8)$ is the disjoint union of the $\Omega_j$, $j = 0, 1, \ldots, n$.
	
	Let $x: \Sp(V_8)(k) \rightarrow k^\times/(k^\times)^2$ be the map of \cite[Lemma 5.1]{rao}.  On $\Omega_j$, $x(p_1 \tau_S p_2) = \det(p_1 p_2|Y)$ if $|S| = j$.  For $g_1, g_2 \in \Sp(V_8)(k)$, let $\rho = q(Y,Yg_2^{-1},Y g_1)$ be the Leray invariant of these three isotropic subspaces; see \cite[Definition 2.10]{rao}.  Finally, if $g_1 \in \Omega_{j_1}$, $g_2 \in \Omega_{j_2}$ and $g \in \Omega_{j}$, let $\ell$ be the integer (\cite[Proof of Theorem 5.3]{rao}) satisfying $2\ell = j_1+j_2-j-\dim(\rho)$.  If $h(\rho)$ denotes the Hasse invariant of $\rho$, then
	\[c_{Rao}(g_1,g_2) = (x(g_1),x(g_2)) (-x(g_1g_2), x(g_1)x(g_2)) ((-1)^{\ell},\det(\rho)) (-1,-1)^{\ell (\ell-1)/2} h(\rho).\]
	
	The double cover $\widetilde{\Sp(V_8)}(k)$ is the set $\Sp(V_8)(k) \times \{\pm 1\}$ with multiplication $(g_1,\epsilon_1) (g_2, \epsilon_2) = (g_1g_2, \epsilon_1\epsilon_2 c_{Rao}(g_1, g_2))$.  It carries a Weil representation on the Schwartz-Bruhat space $S(X(k))$.  Our aim for the rest of the section is to prove the following result.
	
	\begin{proposition}\label{prop:splittingC} Suppose $J = H_3(C)$ with $\dim(C) \geq 4$.  Then $c_{Rao}(g_1, g_2) = 1$ for all $g_1, g_2 \in M_R^T(k)$.  Consequently, the map $M_R^T(k) \rightarrow \widetilde{\Sp(V_8)}(k)$ given by $g \mapsto (g,1)$ is a group homomorphism.
	\end{proposition}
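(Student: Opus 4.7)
The plan is to verify the vanishing of the Rao cocycle $c_{Rao}$ on $M_R^T(k) \times M_R^T(k)$ by combining a structural description of $M_R^T$ with direct computation of Hilbert symbols and Hasse invariants. First I would identify the image of $M_R^T(k)$ in $\Sp(V_8)(k)$ concretely. By Lemma \ref{lem:symplecticV8}, under the Lagrangian decomposition $V_8 = V_8^{[0]} \oplus V_8^{[1]}$, each piece is isomorphic to $C \oplus C$ via the $(u,u')_X$ and $(v,v')_Y$ coordinates, and the symplectic form is given explicitly in terms of the norm data of $T'$. I would use this to identify $M_R^T$ with a concrete classical group attached to the norm form on $C$, isolating the derived subgroup $M'$.

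The second step is to exploit two standard properties of the Rao cocycle: it is cohomologically trivial on the Siegel parabolic $P_{V_8}$ stabilizing $V_8^{[1]}$, vanishing on the subgroup $P_{V_8}^1$ of elements of trivial similitude; and the metaplectic cover admits canonical splittings over subgroups that commute with the complex structure $J_{2,V_8}$. Elements of $M_R^T$ lying in either of these classes lift to $\widetilde{\Sp(V_8)}$ automatically. The remaining elements must be analyzed via the full Rao formula, where the relevant Hilbert symbols and Hasse invariants ultimately involve the norm form $n_C$. This is where the hypothesis $\dim(C) \geq 4$ enters: when $C$ is a noncommutative composition algebra, $n_C$ is universal over any local $k$, so the Hilbert symbols and discriminants formed from $n_C$ trivialize and the bad terms disappear.

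The main obstacle will be the case analysis across Rao's Bruhat stratification $\Sp(V_8) = \bigsqcup_j \Omega_j$. For mixed pairs $(g_1,g_2)$ where the product falls into a smaller cell than the factors, the Leray invariant $\rho = q(Y, Yg_2^{-1}, Yg_1)$ is nontrivial, and its Hasse invariant $h(\rho)$ must cancel against the product $(x(g_1),x(g_2))(-x(g_1g_2),x(g_1)x(g_2))((-1)^\ell,\det \rho)(-1,-1)^{\ell(\ell-1)/2}$. I would propagate the vanishing from a generating set of $M'$ — a suitably chosen Levi subgroup together with one Weyl-type involution playing the role of $J_{2,V_8}$ — to the full group using the cocycle identity. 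Perfectness of $M'$ ensures that consistency on generators suffices to conclude $c_{Rao}|_{M_R^T(k)} \equiv 1$, whence $g \mapsto (g,1)$ is a homomorphism as claimed.
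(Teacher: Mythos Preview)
Your overall strategy---reduce the verification of $c_{Rao}\equiv 1$ via a structural decomposition of $M_R^T$, then compute Leray invariants and Hasse symbols explicitly using properties of $n_C$---matches the paper's approach in outline. But several of your specific mechanisms are misidentified, and the plan as written would not close.

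First, the appeal to ``subgroups that commute with the complex structure $J_{2,V_8}$'' is a red herring here. The element $J_{2,V_8}$ plays no role in the noncommutative argument; invoking a unitary-type splitting is precisely the strategy for the \emph{commutative} case $\dim(C)\leq 2$ (Proposition~\ref{prop:SUsplitting}), where one builds a genuine complex structure $J_T$ commuting with $M'$ and reduces to Kudla's splitting. For $\dim(C)\geq 4$ no such shortcut is taken, and you have not argued that any nontrivial portion of $M_R^T$ commutes with $J_{2,V_8}$.

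Second, your stated reason for why $\dim(C)\geq 4$ helps---that $n_C$ is \emph{universal} over local fields---is not the operative fact. What the computation actually needs is that $\det(n_C)\in(k^\times)^2$ and that $2\mid\dim(C)$. For a quaternion or octonion algebra the norm form has square discriminant; for a quadratic \'etale algebra it generally does not. Universality (representing all of $k^\times$) does not by itself force Hilbert symbols like $(\alpha,\det\rho)$ or Hasse invariants $h(\rho)$ to vanish.

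Third, the reduction is more delicate than ``propagate from generators via perfectness.'' Perfectness gives uniqueness of a splitting once one exists; it does not prove the cocycle vanishes. The paper instead uses the Bruhat decomposition of $M_R^T$ (an orthogonal group of Witt rank two) as $P_R^T\sqcup P_R^T w_1 P_R^T\sqcup P_R^T w_3 P_R^T$, shows $x(g)\equiv 1$ on $M_R^T$ because $\det(p|Y)=\nu(p)^{2\dim(C)}$ is always a square, and then uses the cocycle identity (Lemmas~\ref{lem:opencell}--\ref{lem:cRaoRed2}) to reduce everything to computing $c_{Rao}(w_j,nw_3)$ for $j\in\{1,3\}$ and $n$ of a very specific form. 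Only then are the Leray invariants $\rho$ written down explicitly as sums $\alpha\rho_1\oplus\beta\rho_1$ with $\rho_1=\mu_2 n_C\oplus\mu_3 n_C$, and $h(\rho)=1$ is read off from a general Hasse-invariant identity using $4\mid\dim\rho_1$ and $\det\rho_1\in(k^\times)^2$. Your plan gestures at this final computation but skips the reduction architecture that makes it sufficient.
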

	
	We begin with some $\SL_2$'s that map to $M_R$.  Define
	\begin{itemize}
		\item $e_1 = E_{12}$, $f_1 = E_{21}$, $h_1 = [e_1, f_1] = E_{11}-E_{22}$
		\item $e_2 = \delta_3 \otimes e_{11}$, $f_2 = - v_3 \otimes e_{11}$, 
		\[h_2 = [e_2, f_2] = \frac{1}{3}(E_{11}+E_{22}-2E_{33})- \Phi'_{e_{11},e_{11}}.\]
	\end{itemize}
	\begin{lemma}\label{lem:sl2triple} For $j = 1,2$, $e_j, h_j, f_j$ is an $\sl_2$-triple.  If $T \in \delta_3 \otimes H_2(C)$, then these $\sl_2$-triples are in $Lie(M_R^T)$.	\end{lemma}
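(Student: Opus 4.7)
The plan is to verify the lemma by direct bracket calculation inside $\g_J$, using the $\Z/3\Z$-grading $\g_J = \sl_3 \oplus \m(J)^0 \oplus (V_3 \otimes J) \oplus (V_3^\vee \otimes J^\vee)$ of \cite[section 4]{pollackQDS}, handling $j=1$ and $j=2$ separately.

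For $j=1$, the triple $(e_1,h_1,f_1) = (E_{12}, E_{11}-E_{22}, E_{21})$ lies entirely in the $\sl_3$ summand, so the $\sl_2$-triple relations are the standard ones for the upper-left $\sl_2 \subseteq \sl_3$. For the $h_R$-invariance, recall $h_R = \frac{2}{3}(E_{11}+E_{22}-2E_{33}) + \Phi'_{e_{11},e_{11}}$: the diagonal part commutes with $E_{12}$ and $E_{21}$ because indices $1$ and $2$ carry equal coefficient, while $\Phi'_{e_{11},e_{11}} \in \m(J)^0$ commutes with all of $\sl_3$ by the grading. For $T \in \delta_3 \otimes H_2(C)$, both $E_{12}$ and $E_{21}$ annihilate $\delta_3 \in V_3^\vee$ (the lower index differs from $3$) and act trivially on the $J^\vee$-factor, so $[e_1,T]=[f_1,T]=0$. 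Hence $e_1,h_1,f_1 \in Lie(M_R^T)$.

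For $j=2$, I would invoke the bracket formula between $V_3 \otimes J$ and $V_3^\vee \otimes J^\vee$ from \cite[section 4]{pollackQDS}: for $v \otimes x$ and $\delta \otimes \gamma$, the bracket decomposes as an $\sl_3$-contribution built from $v \otimes \delta$ together with the trace pairing $(x,\gamma)_J$, plus an $\m(J)^0$-contribution proportional to $\delta(v)\,\Phi'_{\gamma,x}$. Substituting $e_2 = \delta_3 \otimes e_{11}$ and $f_2 = -v_3 \otimes e_{11}$ and using $\delta_3(v_3)=1$ produces exactly the stated $h_2 = \frac{1}{3}(E_{11}+E_{22}-2E_{33}) - \Phi'_{e_{11},e_{11}}$; the sign in $f_2$ and the normalization of the trace pairing fix the rational coefficients. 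The relations $[h_2,e_2]=2e_2$ and $[h_2,f_2]=-2f_2$ then decompose as $\pm\tfrac{2}{3}$ (from the diagonal acting on $\delta_3$ or $v_3$) plus $\pm\tfrac{4}{3}$ (from $\Phi'_{e_{11},e_{11}}$ acting on the $e_{11}$-factor in $J^\vee$ or $J$), summing to $\pm 2$. The key numerical input is that $\Phi'_{e_{11},e_{11}}$ acts with eigenvalue $+\tfrac{4}{3}$ on $e_{11} \in J$ and $-\tfrac{4}{3}$ on $e_{11} \in J^\vee$, which is immediate from $e_{11}$ being a rank-one idempotent and the definition of $\Phi'$ in \cite[section 3.3]{pollackQDS}.

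Finally, to place the $j=2$ triple in $Lie(M_R^T)$: the computation $[h_R,e_2]=0$ is the cancellation of $\tfrac{4}{3}\delta_3 \otimes e_{11}$ from the diagonal part of $h_R$ against $-\tfrac{4}{3}\delta_3 \otimes e_{11}$ from $\Phi'_{e_{11},e_{11}}$, with the symmetric statement for $f_2$. The bracket $[e_2,T]$ lies in $[V_3^\vee \otimes J^\vee, V_3^\vee \otimes J^\vee] \subseteq V_3 \otimes J$ and carries a $\delta_3 \wedge \delta_3 = 0$ factor, hence vanishes. The bracket $[f_2,T]=[-v_3 \otimes e_{11}, \delta_3 \otimes t]$ splits into an $\sl_3$-piece proportional to $(e_{11},t)_J$, which vanishes by trace-orthogonality of $e_{11}$ and $H_2(C)$ in $J = H_3(C)$, and an $\m(J)^0$-piece proportional to $\Phi'_{t,e_{11}}$, which vanishes because the Jordan product $e_{11} \circ t = 0$ for $t \in H_2(C)$. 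Thus $e_2,f_2 \in Lie(M_R^T)$, and $h_2 = [e_2,f_2]$ follows automatically. The only real obstacle is bookkeeping: the rational coefficients in $h_R$, $h_2$, and the eigenvalues of $\Phi'_{e_{11},e_{11}}$ all have to line up, which they do once the bracket and $\Phi'$ formulas of \cite[sections 3,4]{pollackQDS} are correctly unpacked.
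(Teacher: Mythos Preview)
Your argument is correct and follows the same direct-computation route as the paper's proof: verify the $\sl_2$-relations inside $\sl_3$ and via the $\Z/3$-graded bracket formulas of \cite[section~4]{pollackQDS}, then check commutation with $h_R$ and annihilation of $T=\delta_3\otimes t$. The paper is terser---it simply asserts that $h_R$ commutes with $e_j,f_j$ and that the key fact for annihilating $T$ is $\Phi'_{X,e_{11}}=0$ for $X\in H_2(C)$---whereas you spell out the eigenvalue bookkeeping and the separate vanishing of the $\sl_3$- and $\m(J)^0$-pieces of $[f_2,T]$. One small caution: your reason ``because the Jordan product $e_{11}\circ t=0$'' is a shorthand; the vanishing of the operator $\Phi'_{t,e_{11}}$ is not an immediate formal consequence of a single product being zero, but rather of the full Peirce decomposition relative to the idempotent $e_{11}$ (one checks from the formula for $\Phi'$ in \cite[section~3.3]{pollackQDS} that it acts trivially on each Peirce component). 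The conclusion is correct, and indeed it is exactly the fact the paper invokes.
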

	\begin{proof} That the $e_j, h_j, f_j$ form an $\sl_2$-triple is clear for $j=1$, and is immediately checked from the definition for $j=2$.  To see that they live in $Lie(M_R)$, recall that $h_R = \frac{2}{3}(E_{11}+E_{22}-2E_{23}) + \Phi'_{e_{11},e_{11}}$, and $Lie(M_R)$ is the $0$-eigenspace of the adjoint action of $h_R$ on $\g(J)$. One computes that $h_R$ commutes with $e_1, f_1, e_2, f_2$, so these $\sl_2$-triples lie in $Lie(M_R)$.  Finally, to see that they are in $Lie(M_R^T)$. one uses that $\Phi'_{X,e_{11}} = 0$ if $X \in H_2(C)$.
	\end{proof}
	
	For $j=1,2$, from Lemma \ref{lem:sl2triple}, we have corresponding maps $\iota_j: \SL_2 \rightarrow M_R^T$.   We use the $\sl_2$-triples of Lemma \ref{lem:sl2triple} to help give a Bruhat decomposition for $M_R^T$.  To do this, note that for $t \in k^\times$, one has
	\begin{equation}\label{eqn:wt} \mb{1}{t}{}{1} \mb{1}{}{-t^{-1}}{1} \mb{1}{t}{}{1} = \mb{}{t}{-t^{-1}}{}.\end{equation}
	Let $w_1   = \iota_1(\mm{}{1}{-1}{})$, $w_2 = \iota_2(\mm{}{1}{-1}{})$, and $w_3 = w_1 w_2 = w_2 w_1$.  Let $P_R^T = P \cap M_R^T$, where $P$ is the Heisenberg parabolic subgroup of $G_J$.   The group $P_R^T$ is a parabolic subgroup of $M_R^T$.
	
	\begin{lemma} Assume $q_{V_7}(T) > 0$.  One has a Bruhat decomposition
		\[ M_R^T = P_R^T \sqcup P_R^T w_1 P_R^T \sqcup P_R^T w_3 P_R^T.\]
	\end{lemma}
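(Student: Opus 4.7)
The plan is to establish this Bruhat-type decomposition by identifying $P_R^T\backslash M_R^T/P_R^T$ with the three orbits of $P_R^T$ acting on a projective quadric. First I will exploit the $\sl_2$-triples from Lemma \ref{lem:sl2triple}: since $e_1 = E_{12}$ and $e_2 = \delta_3\otimes e_{11}$ both lie in $\mathrm{Lie}(N_P)\subseteq\mathrm{Lie}(P)$, the upper-triangular Borel $\iota_j(B)\subseteq P$ for $j=1,2$, and preserves $T$, so $\iota_j(B)\subseteq P_R^T$. In particular, $P_R^T$ contains the rank-two maximal torus $\iota_1(T_1)\iota_2(T_1)$, witnessing that $M_R^T$ has split rank at least two.

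Next I will identify the ambient geometry. The group $M_R$ acts on $V_7$ preserving $q_{V_7}$ up to similitude (the Proposition before Lemma \ref{lem:commutatorV8}). Because $T\in\delta_3\otimes H_2(C)$ is normal with $q_{V_7}(T)>0$, the stabilizer $M_R^T$ is, modulo a central $\mathbf{G}_m$, the isometry group of the restriction of $q_{V_7}$ to the orthogonal complement $T^\perp\subseteq V_7$. Using the explicit basis, $T$ lies in $\mathrm{Span}(\delta_3\otimes e_{22},\delta_3\otimes e_{33},\delta_3\otimes V(\beta,0,0))$, so the isotropic vector $b_1 = E_{13}$ lies in $T^\perp$. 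The parabolic $P_R^T = P\cap M_R^T$ is exactly the stabilizer of the isotropic line $\Q b_1\subseteq T^\perp$, because $P$ is the stabilizer of $\Q E_{13}\subseteq \g(J)$ and conversely any element of $M_R^T$ stabilizing $\Q b_1$ normalizes the one-dimensional $+2$-eigenspace of $h_P$. Thus $M_R^T/P_R^T$ is a projective quadric, on which the $P_R^T$-orbits are classified by the standard incidence relation for pairs of isotropic lines $(\Q b_1,\Q v)$: the three orbits are (i) $\Q v = \Q b_1$, (ii) $\Q v\neq\Q b_1$ but $\Q b_1+\Q v$ is totally isotropic, and (iii) $\Q b_1+\Q v$ is a hyperbolic plane.

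To match the three cells to these orbits, I verify the action of $w_1,w_2,w_3$ on $b_1$. Using the $\iota_1$-doublet structure $\{b_1,b_{-2}\}$ with $e_1\cdot b_1=0$ and $f_1\cdot b_1 = -b_{-2}$, a direct exponential computation gives $w_1\cdot b_1 = b_{-2}$; since $(b_1,b_{-2})_{V_7}=0$ for the polarization of $q_{V_7}$, this places $w_1$ in orbit (ii). A parallel computation under $\iota_2$, using $e_2\cdot b_1=0$ and $f_2\cdot b_1 = b_2$, yields $w_2\cdot b_1\in\Q b_2$, which is likewise $q_{V_7}$-orthogonal to $b_1$; thus $w_2$ also represents orbit (ii), forcing $P_R^T w_2 P_R^T = P_R^T w_1 P_R^T$. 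Finally $w_3\cdot b_1 = w_1(w_2\cdot b_1)\in\Q\cdot w_1(b_2)$, and a short computation with the commutation relations $[e_1,b_2]=0$, $[f_1,b_2]=b_{-1}$ gives $w_1\cdot b_2\in\Q b_{-1}$, which satisfies $(b_1,b_{-1})_{V_7}=1\neq 0$; hence $w_3$ represents the open orbit (iii).

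The main obstacle is the identification of $M_R^T$ as an orthogonal similitude group of $T^\perp$, which requires a careful analysis of which elements of $\g(J)$ lie in $\mathrm{Lie}(M_R^T)$ and why they exhaust the orthogonal Lie algebra. Once this is in hand, the decomposition follows from the classical orbit analysis on the projective quadric together with the verification that the three orbits are represented by $\{1,w_1,w_3\}$. The orbit-merging $P_R^T w_2 P_R^T = P_R^T w_1 P_R^T$ is essential: it reflects the fact that $P_R^T$ is not a Borel of $M_R^T$ but a maximal parabolic whose associated Weyl-type double coset space has only three elements.
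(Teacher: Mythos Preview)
Your approach is essentially the same as the paper's: both identify $M_R^T$ (up to isogeny) with a special orthogonal group of the Witt rank two space $T^\perp$, identify $P_R^T$ with the stabilizer of the isotropic line $\Q b_1$, invoke the standard three-cell Bruhat decomposition for such a parabolic, and then verify by explicit computation that $w_1$ sends $b_1$ to $b_{-2}$ and $w_3$ sends $b_1$ to $\Q b_{-1}$. The paper computes the action of $w_1,w_2$ on all four vectors $b_{\pm 1},b_{\pm 2}$, while you compute only what is needed to distinguish the orbits; both are adequate.

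One small inaccuracy: your phrase ``modulo a central $\mathbf{G}_m$'' is not quite right. Since $M_R^T$ fixes $T$ and $q_{V_7}(T)\neq 0$, the similitude character of $M_R$ is forced to be trivial on $M_R^T$, so there is no $\mathbf{G}_m$ to quotient by; rather, $M_R^T$ is \emph{isogenous} to the special orthogonal group (the paper's wording). This does not affect the argument.
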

	\begin{proof} Recall the elements $b_1 = E_{13}, b_{-1} = v_2 \otimes e_{11}$, $b_2  = v_1 \otimes e_{11}, b_{-2} = -E_{23}$ of $\g(J)$.  By the assumption on $T$, the group $M_R^T$ is isogenous to a special orthogonal group of a quadratic space with Witt rank two.  The parabolic group $P_R^T$ stabilizes an isotropic line $\Q b_1$ in the orthogonal representation.  Thus the lemma follows from the Bruhat decomposition on the special orthogonal group, as soon as we see how $w_1, w_3$ act on $V_7$.
		
		Thus, we must compute how $w_1, w_2$ act on $b_1, b_2, b_{-2}, b_{-1}$ and all elements of the form $\delta_3 \otimes X$ with $X \in H_2(C)$.  For the latter, note that $e_1, f_1, e_2, f_2$ annihilate all of $\delta_3 \otimes H_2(C)$, so $w_1, w_2$ fix these Lie algebra elements.
		
		By \eqref{eqn:wt}, 
		\begin{align*} Ad(w_1) &= \exp(ad(e_1)) \exp(-ad(f_1)) \exp(ad(e_1)),\\
			Ad(w_2) &= \exp(ad(e_2)) \exp(-ad(f_2)) \exp(ad(e_2)).\end{align*}
		We compute: 
		\begin{align*}
			w_1(b_1) &= \exp(ad(e_1)) \exp(-ad(f_1))\exp(ad(e_1)) E_{13} = \exp(ad(e_1)) \exp(-ad(f_1))E_{13} \\ &= \exp(ad(e_1))(E_{13}-E_{23}) = - E_{23} = b_{-2}\\
			w_1(b_2) &= \exp(ad(e_1)) \exp(-ad(f_1))\exp(ad(e_1)) v_1 \otimes e_{11} = \exp(ad(e_1))\exp(-ad(f_1)) v_1 \otimes e_{11} \\ &= \exp(ad(e_1))(v_1 \otimes e_{11}-v_2 \otimes e_{11}) = - v_2 \otimes e_{11} = - b_{-1}\\
			w_1(b_{-1}) &= \exp(ad(e_1)) \exp(-ad(f_1))\exp(ad(e_1)) v_2 \otimes e_{11} \\ &= \exp(ad(e_1)) \exp(-ad(f_1))(v_2 \otimes e_{11}+v_1 \otimes e_{11}) \\ &=\exp(ad(e_1)) v_1 \otimes e_{11} = v_1 \otimes e_{11} = b_2
		\end{align*}
		\begin{align*}
			w_1(b_{-2}) &= \exp(ad(e_1)) \exp(-ad(f_1))\exp(ad(e_1))(-E_{23}) = \exp(ad(e_1)) \exp(-ad(f_1))(-E_{23}-E_{13}) \\ &= \exp(ad(e_1))(-E_{13}) = - E_{13} = -b_1.\\
			w_2(b_1) &= \exp(ad(e_2)) \exp(-ad(f_2))\exp(ad(e_2)) E_{13} =  \exp(ad(e_2)) \exp(-ad(f_2)) E_{13} \\ &= \exp(ad(e_2))(E_{13}-v_1 \otimes e_{11}) = - v_1 \otimes e_{11} = -b_2\\
			w_2(b_2) &= \exp(ad(e_2)) \exp(-ad(f_2))\exp(ad(e_2)) v_1 \otimes e_{11} = \exp(ad(e_2)) \exp(-ad(f_2))(v_1 \otimes e_{11}+E_{13}) \\ &= \exp(ad(e_2)) E_{13} = E_{13} = b_1 \\
			w_2(b_{-1}) &= \exp(ad(e_2)) \exp(-ad(f_2))\exp(ad(e_2)) v_2 \otimes e_{11} = \exp(ad(e_2)) \exp(-ad(f_2))(v_2 \otimes e_{11}+E_{23}) \\ &= \exp(ad(e_2)) E_{23} = E_{23} = - b_{-2}\\
			w_2(b_{-2}) &= \exp(ad(e_2)) \exp(-ad(f_2))\exp(ad(e_2))(-E_{23}) = \exp(ad(e_2)) \exp(-ad(f_2))(-E_{23}) \\ &= \exp(ad(e_2))(v_2 \otimes e_{11}-E_{23}) = v_2 \otimes e_{11} = b_{-1}.
		\end{align*}
		The lemma follows.
	\end{proof}
	
	We now compare the $w_1, w_3$ with elements $\tau_S$, $S \subseteq \{1,2,\ldots, n\}$, where $n = 4 \dim(C)$.  We begin by computing the action of $w_1, w_2$ on $V_8 = X \oplus Y$.  Recall $X = \mathrm{Span}\{(u,u')_X\}$, $Y = \mathrm{Span}\{(v,v')_Y\}$ with 
	\[(u,u')_X = \Phi_{E, V(0,u_2,u_3)} + v_2 \otimes V(0,u_2',u_3')\]
	and
	\[(v,v')_Y = v_1 \otimes V(0,v_2,v_3) + \delta_3 \otimes V(0,v_2',v_3').\]
	\begin{lemma}\label{lem:wjactionXY} One has the following identities:
		\begin{enumerate}
			\item $w_1((u,u')_X) = (u,0)_X + (u',0)_Y$;
			\item $w_1((v,v')_Y) = (0,-v)_X + (0,v')_Y$;
			\item $w_2((u,u')_X) = (0,u)_Y + (0,u')_X$;
			\item $w_2((v,v')_Y) = (v,0)_Y + (-v',0)_X$;
			\item $w_3((u,u')_X) = (u',u)_Y$;
			\item $w_3((v,v')_Y) = (-v',-v)_X$.
		\end{enumerate}
	\end{lemma}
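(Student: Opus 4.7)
The strategy is entirely computational: use the Bruhat identity \eqref{eqn:wt} to write
\[
\mathrm{Ad}(w_j) = \exp(\mathrm{ad}(e_j))\,\exp(-\mathrm{ad}(f_j))\,\exp(\mathrm{ad}(e_j))
\]
for $j=1,2$, and evaluate each factor on the four generator types that span $X$ and $Y$, namely $\Phi_{E,V(0,u_2,u_3)}$, $v_2\otimes V(0,u_2',u_3')$, $v_1\otimes V(0,v_2,v_3)$, and $\delta_3\otimes V(0,v_2',v_3')$. This is the same method the preceding lemma used to compute the action of $w_1,w_2$ on $b_{\pm 1},b_{\pm 2}$; only the targets are different. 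I will reduce to the four identities (1)--(4), and then deduce (5) and (6) from $w_3=w_1w_2$ by composition: applying parts (2) and (1) to $w_2((u,u')_X)=(0,u)_Y+(0,u')_X$ gives $w_3((u,u')_X)=(u',u)_Y$, and similarly parts (1) and (2) applied to $w_2((v,v')_Y)$ give (6).

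To carry out parts (1)--(4), the key brackets needed are:
\begin{itemize}
\item $[\sl_3,\m(J)^0]=0$, so $\mathrm{ad}(e_1)$ and $\mathrm{ad}(f_1)$ annihilate $\Phi_{E,V(0,u_2,u_3)}$, and similarly $\mathrm{ad}(e_2),\mathrm{ad}(f_2)$ act on $v_i\otimes \bullet$ and $\delta_3\otimes\bullet$ via the usual $\sl_3$-module structure only on the first tensor factor together with contributions coming from brackets with $e_{11}\in J$.
\item The action of $e_1=E_{12}$, $f_1=E_{21}$ on $V_3\otimes J$ and $V_3\otimes J^\vee$ is the standard $\sl_3$-action, so $e_1$ swaps $v_2\otimes X\mapsto v_1\otimes X$, kills $v_1\otimes X$, and likewise for the dual side.
\item The nontrivial brackets involving $e_2=\delta_3\otimes e_{11}$ and $f_2=-v_3\otimes e_{11}$ with $v_i\otimes V(0,\cdot,\cdot)$ and $\Phi_{E,V(0,\cdot,\cdot)}$ must be computed directly from the structure of $\g(J)$ as recalled in \cite[sections 3--4]{pollackQDS}. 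In particular, one uses $e_{11}\times V(0,a,b)=0$ (since $e_{11}$ and $V(0,a,b)$ share no diagonal support), which makes many of the intermediate terms vanish.
\end{itemize}

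With these bracket identities in hand, each of (1)--(4) becomes a three-step calculation: first compute $\exp(\mathrm{ad}(e_j))$ applied to the given element, noting that the Taylor series truncates after at most two terms; then apply $\exp(-\mathrm{ad}(f_j))$, again with a finite expansion; then $\exp(\mathrm{ad}(e_j))$ once more. The truncations hold because the $\sl_2$-triples $(e_j,h_j,f_j)$ act on $V_8$ with eigenvalues bounded in absolute value by $1$, so each operator $\mathrm{ad}(e_j)^2$ or $\mathrm{ad}(f_j)^2$ either vanishes on the relevant subspaces or produces another element whose further image is controlled.

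The main obstacle is not conceptual but bookkeeping: one must carefully unwind the definitions of $\Phi_{E,V(0,u_2,u_3)}$ as a derivation of $J$ and track its image under the nonzero brackets with $e_2$ and $f_2$, and similarly keep the signs straight in the three successive exponentials. Since the analogous computation has already been done in the preceding lemma for $b_1,b_2,b_{-1},b_{-2}$, the present lemma is an extension of the same method, and no new structural input beyond the Lie bracket conventions of \cite{pollackQDS} is required.
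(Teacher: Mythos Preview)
Your approach is correct and essentially identical to the paper's: both write $\mathrm{Ad}(w_j)=\exp(\mathrm{ad}(e_j))\exp(-\mathrm{ad}(f_j))\exp(\mathrm{ad}(e_j))$, compute on the four generator types of $V_8$, and then obtain (5)--(6) from (1)--(4) via $w_3=w_1w_2$. The only refinement worth noting is that the paper streamlines the $w_2$ computations by invoking the identity $\Phi_{E,V(0,u_2,u_3)}=\Phi_{V(0,u_2,u_3),e_{11}}$ (and its $\Phi'$ analogue), which you would uncover when unwinding the brackets with $e_2=\delta_3\otimes e_{11}$ and $f_2=-v_3\otimes e_{11}$.
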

	\begin{proof} The last two identities follows from the first four, and the definition $w_3 = w_1 w_2$.
		
		One immediately verifies $w_1((u,0)_X) = (u,0)_X$ and $w_1((0,u')_X) = (u',0)_Y$.  The second identity is similarly checked.  For the third and fourth identities, we compute in detail.  Recall
		\[w_2 = \exp(ad(\delta_3 \otimes e_{11})) \exp(ad(v_3 \otimes e_{11})) \exp(ad(\delta_3 \otimes e_{11})).\]
		Using the equality $\Phi_{E, V(0,u_2,u_3)} = \Phi_{V(0,u_2,u_3),e_{11}}$, one has
		\begin{align*}
			w_2((u,0)_X) &= \exp(ad(e_2)) \exp(ad(-f_2))(\Phi_{E,V(0,u_2,u_3)}+ \delta_3 \otimes V(0,u_2,u_3)) \\ &= \exp(ad(e_2))(\delta_3 \otimes V(0,u_2,u_3)) = \delta_{3} \otimes V(0,u_2,u_3) = (0,u)_Y.
		\end{align*}
		The equality $w_2((0,u')_X) = (0,u')_X$ is immediately verified, as is $w_2((v,0)_Y) = (v,0)_Y$.  Finally,
		\begin{align*}
			w_2((0,v')_Y) &= \exp(ad(e_2)) \exp(-ad(f_2)) \delta_{3} \otimes V(0,v_2',v_3') \\&= \exp(ad(e_2))(\delta_{3} \otimes V(0,v_2',v_3')-\Phi'_{V(0,v_2',v_3'),e_{11}}) \\ &= -\Phi'_{V(0,v_2',v_3'),e_{11}} = -\Phi_{E, V(0,v_2',v_3')} = (-v',0)_X.
		\end{align*}
		This completes the proof.
	\end{proof}
	
	We now relate the elements $w_1, w_2, w_3 \in M_R^T(k)$ to the $\tau_S$.  Recall that $P_{V_8} \subseteq \Sp(V_8)$ denotes the Siegel parabolic subgroup of $\Sp(V_8)$ that stabilizes $Y \subseteq V_8$ for the right action.
	\begin{lemma}\label{lem:wjsqaure} If $j= 1,2$, then there is $m_j, m_j' \in P_{V_8}(k)$ and $S_j \subseteq \{1,2,\ldots, 4\dim(C)\}$ so that $w_j = m_j \tau_{S_j} = \tau_{S_j} m_j'$ with $\det(m_j|Y) \in (k^\times)^2$, $\det(m_j'|Y) \in (k^\times)^2$ and $|S_j| = 2 \dim(C)$.  Let $\tau = \tau_{S}$ for $S = \{1,2,\ldots, 4 \dim(C)\}$.  For $j = 3$, there is $m_3, m_3' \in P_{V_8}(k)$ so that $w_3 = m_3 \tau = \tau m_3'$ with $\det(m_3|Y) \in (k^\times)^2$ and $\det(m_3'|Y) \in (k^\times)^2$.
	\end{lemma}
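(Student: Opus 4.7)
The plan is to perform explicit block-matrix computations in a symplectic basis of $V_8$ adapted to the four-fold decomposition $V_8 = X_u \oplus X_{u'} \oplus Y_{v'} \oplus Y_v$ coming from Lemma \ref{lem:symplecticV8}. I would fix a symplectic basis $e_1, \ldots, e_{4\dim(C)}, e_1^*, \ldots, e_{4\dim(C)}^*$ so that $e_1, \ldots, e_{2\dim(C)}$ spans $X_u$ and $e_{2\dim(C)+1}, \ldots, e_{4\dim(C)}$ spans $X_{u'}$, with the duals $e_j^*$ correspondingly spanning $Y_{v'}$ and $Y_v$. Lemma \ref{lem:wjactionXY} then expresses each $w_j$ as a $4\times 4$ block matrix in this basis.

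To handle $j=1$, I would take $S_1 = \{2\dim(C)+1, \ldots, 4\dim(C)\}$. Both $w_1$ and $\tau_{S_1}$ act as the identity on $X_u\oplus Y_{v'}$, while on the complementary symplectic subspace $X_{u'}\oplus Y_v$ they take the block forms $\mm{0}{-A^{-t}}{A}{0}$ and $\mm{0}{-I}{I}{0}$ respectively. Here $A$ is the matrix (in the symplectic dual bases) of the identification $X_{u'}\to Y_v$, $(0,u')_X \mapsto (u',0)_Y$. A direct matrix multiplication then shows that $m_1 = w_1 \tau_{S_1}^{-1}$ and $m_1' = \tau_{S_1}^{-1} w_1$ are both block-diagonal in the decomposition $X\oplus Y$, so they lie in the Levi of $P_{V_8}$. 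The case $j=2$ is symmetric with $S_2=\{1,\ldots,2\dim(C)\}$. For $j=3$, the element $w_3=w_1 w_2$ and $\tau=\tau_{\{1,\ldots,4\dim(C)\}}$ both exchange $X$ with $Y$ globally, and the analogous block computation again yields $m_3,m_3'$ block-diagonal.

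For the square condition, the block-diagonal form immediately reduces $\det(m_j|Y)$ to $\det(A)$ (or its inverse) for $j=1,2$, and to $\det(A_1)\det(A_2)$ for $j=3$, where the $A_i$ are the corresponding identification matrices. Via Lemma \ref{lem:symplecticV8}, $\det(A)$ equals the discriminant of the symmetric bilinear form $B'(u',v) = -(T, e_{11}\times (u'\times v))_{V_7}$ on the $2\dim(C)$-dimensional space $C^2$. Since $B'$ depends linearly on $T$, the polynomial $\det(B'_T)$ is homogeneous of degree $2\dim(C)$ in $T$; equivariance under the action of $M_R$ on $V_7$ implies that $\det(B'_T)/q_{V_7}(T)^{\dim(C)}$ is a degree-zero $\SO(q_{V_7})$-invariant rational function, hence constant. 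A test evaluation at $T = \alpha b_3 + \beta b_{-3}$ (the $\gamma=0$ case) gives $B' = \diag(\beta T_n, \alpha T_n)$, where $T_n$ is the polarization of the norm on $C$, whence $\det(B'_T) = \det(T_n)^2\cdot q_{V_7}(T)^{\dim(C)}$ identically. Since $\dim(C)\in\{4,8\}$ is even, $q_{V_7}(T)^{\dim(C)}$ is a square, and therefore so is $\det(m_j|Y)$.

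The principal technical difficulty lies in the last paragraph: identifying $\det(B'_T)$ up to squares via a polynomial-invariance argument plus a single test evaluation sidesteps a direct combinatorial calculation with the Freudenthal cross product. A subtle point is that the $Y_v$-basis must be chosen as the symplectic dual of the fixed $X_{u'}$-basis (not the naive $v_1\otimes c_j$), so that the matrix $A$ literally coincides with the bilinear-form matrix of $B'$.
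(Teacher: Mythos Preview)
Your block-matrix setup in the first two paragraphs is correct and essentially what the paper does implicitly: the action formulas of Lemma~\ref{lem:wjactionXY} give the factorizations $w_j = m_j\tau_{S_j}$ with your choices of $S_j$, and the problem reduces to showing $\det(m_j|Y)\in (k^\times)^2$, which in your coordinates is the discriminant of the bilinear form $B'_{T}$ on $C^2$.

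The gap is in your third paragraph. The form $B'_T$ depends only on the $\delta_3\otimes H_2(C)$-component of $T$, since $[(0,u')_X,(v,0)_Y]=-\delta_3\otimes(u'\times v)$ lies entirely in that subspace of $V_7$. Hence $\det(B'_T)$ is insensitive to the $b_{\pm 1},b_{\pm 2}$ coordinates of $T$, while $q_{V_7}(T)^{\dim(C)}$ is not; the ratio cannot be an $\SO(q_{V_7})$-invariant as you claim. Even restricting to $T'\in H_2(C)$, you would need a group acting on $H_2(C)$ whose effect on $\det(B'_{T'})$ is a controllable square --- but any element of $M_R$ that moves $T'$ also moves the $C^2$ on which $B'_{T'}$ lives, contributing a Jacobian factor you have not tracked. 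The invariance step, as written, does not go through.

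The paper sidesteps this entirely by a direct computation. Writing $T'=\left(\begin{smallmatrix} t_{11}&t_{12}\\ t_{12}^*&t_{22}\end{smallmatrix}\right)$ (with $t_{11}t_{22}\neq 0$ since $q_{V_7}(T)>0$), one expands
\[
\langle (v,v')_Y,\, w_1^{-1}(v,v')_Y\rangle_{V_8,T} \;=\; (T'\times e_{11},\,v\times v)_J \;=\; -2\bigl(t_{11}n_C(v_3)+t_{22}n_C(v_2)+(t_{12},v_2,v_3)\bigr),
\]
and completes the square in $v_3$ to obtain a diagonal form $-2t_{11}\,n_C(\tilde v_3)\oplus -2(t_{22}-t_{11}^{-1}n_C(t_{12}))\,n_C(v_2)$. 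Since $2\mid\dim(C)$, each scaled copy of $n_C$ has square determinant, and the total discriminant lies in $(k^\times)^2$. This is precisely your ``test evaluation'' at diagonal $T'$, but carried out for \emph{general} $T'$ --- the completion of the square replaces your invariance argument with a one-line change of variable. The cases $j=2,3$ follow identically.
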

	\begin{proof} Let $T = \delta_3 \otimes T'$, with $T' = \mm{t_{11}}{t_{12}}{t_{12}^*}{t_22} \in H_2(C)$.  Because $q_{V_{7}}(T) = t_{11}t_{22}-n_C(t_{12}) > 0$, $t_{11} t_{22} \neq 0$.  Applying Lemma \ref{lem:symplecticV8} and Lemma \ref{lem:wjactionXY}, one computes 
		\begin{align*} \langle (v,v')_Y, w_1^{-1}(v,v')_Y \rangle_{V_8,T} &= (T' \times e_{11}, v \times v)_J \\
			&= -2(t_{11}n_C(v_3) + t_{22}n_C(v_2) + (t_{12},v_2, v_3))\\
			&= -2( t_{11}n_C(v_3 + t_{11}^{-1} (t_2 v_2)^*) + (t_{22}-t_{11}^{-1}n_C(t_{22})) n_C(v_2).\end{align*}
		Using $2|\dim(C)$, one obtains that the determinant of this quadratic form is $1 \in k^\times/(k^\times)^2$.  The statements for $w_1$ follow.  The proof of the statements for $w_2$ and $w_3$ are similar.
	\end{proof}
	
	Next, we evaluate $x(p)$ for $p \in P_{R}^T$.
	\begin{lemma}\label{lem:x(p)} If $p \in P_R^T = P \cap M_R^T$, then $\det(p|Y)  = \nu(p)^{2\dim(C)}$.  Consequently, $x(g) = 1 \in k^\times/(k^\times)^2$ for all $g \in M_R^T$.
	\end{lemma}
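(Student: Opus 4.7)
The approach is to establish the determinantal formula on $P_R^T$ first, then deduce the vanishing of $x$ on all of $M_R^T$ as a formal corollary via the Bruhat decomposition and Lemma~\ref{lem:wjsqaure}.

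For $p \in P_R^T$, I would first perform a Levi reduction: writing $p = mn$ with $m \in M_P \cap M_R^T$ and $n \in N_P \cap M_R^T$, the unipotent factor $n$ acts as the identity on $Y$. Indeed, $Y = V_8^{[1]}$ is the top $h_P$-graded piece of $V_8$, while $\mathrm{Lie}(N_P)$ sits in strictly positive $h_P$-weight, so $\mathrm{ad}(X)$ sends $Y$ into $V_8^{[\geq 2]} \cap V_8 = 0$ for every $X \in \mathrm{Lie}(N_P)$; hence $\det(p|Y) = \det(m|Y)$ and $\nu(p) = \nu(m)$. For the Levi part, view $Y \subset e \otimes W_J \cong W_J$ and decompose $Y = Y_J \oplus Y_{J^\vee}$, where $Y_J = Y \cap (v_1 \otimes J)$ and $Y_{J^\vee} = Y \cap (\delta_3 \otimes J^\vee)$, each of dimension $2\dim(C)$. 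Assuming $m$ preserves both summands, the fact that $m \in H_J = M_P$ scales the symplectic form $\langle \cdot, \cdot \rangle_{W_J}$ by $\nu(m)$, combined with the observation that this form restricts to a perfect pairing $Y_J \times Y_{J^\vee} \to \Q$, yields
\[\det(m|Y_J) \cdot \det(m|Y_{J^\vee}) = \nu(m)^{\dim Y_J} = \nu(m)^{2\dim(C)},\]
which is the desired formula.

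For the second assertion, use the Bruhat decomposition $M_R^T = P_R^T \sqcup P_R^T w_1 P_R^T \sqcup P_R^T w_3 P_R^T$. For $g = p_1 w p_2$ with $p_i \in P_R^T$ and $w \in \{1, w_1, w_3\}$, Lemma~\ref{lem:wjsqaure} furnishes a factorization $w = m_w \tau_{S_w}$ with $m_w \in P_{V_8}(k)$ satisfying $\det(m_w|Y) \in (k^\times)^2$ (taking $m_w = 1$, $S_w = \emptyset$ if $w = 1$). Thus $g = (p_1 m_w) \tau_{S_w} p_2 \in \Omega_{|S_w|}$, and the Rao formula gives
\[x(g) \equiv \det(p_1|Y) \cdot \det(m_w|Y) \cdot \det(p_2|Y) \pmod{(k^\times)^2}.\]
Each $\det(p_i|Y) = \nu(p_i)^{2\dim(C)}$ is a square, since $2\dim(C) \geq 8$ is even under the standing assumption $\dim(C) \geq 4$; and $\det(m_w|Y)$ is a square by Lemma~\ref{lem:wjsqaure}. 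Hence $x(g) = 1$ in $k^\times/(k^\times)^2$.

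The main obstacle is verifying that $m \in M_P \cap M_R^T$ preserves the decomposition $Y = Y_J \oplus Y_{J^\vee}$, equivalently that this intersection preserves the Freudenthal summands $J$ and $J^\vee$ of $W_J$. The full group $M_P = H_J$ does not preserve these summands (various $\SL_2$'s inside $H_J$ mix them), so one must critically use the additional condition $m \in M_R^T$, namely that $m$ fixes $T = \delta_3 \otimes T' \in V_7^{[1]}$, to cut out a subgroup with the required compatibility. A concrete strategy is to realize $M_P \cap M_R^T$ as the centralizer of a suitable cocharacter (for instance, one refining the $h_P$-grading via the $\Z/3\Z$-grading on $\g_J$) whose weights separate $v_1 \otimes J$ from $\delta_3 \otimes J^\vee$; alternatively, one can verify the formula directly on a chosen set of generators of $M_P \cap M_R^T$ and invoke reductivity to extend the determinantal character identity.
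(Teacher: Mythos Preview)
Your treatment of the second assertion is correct and is exactly what the paper does: Bruhat decomposition plus Lemma~\ref{lem:wjsqaure}. (One small remark: $\nu(p)^{2\dim(C)}=(\nu(p)^{\dim(C)})^2$ is a square for any $\dim(C)\geq 1$, so you do not need $\dim(C)\geq 4$ here.)

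For the first assertion, however, your ``main obstacle'' is not a verification to be completed but an actual obstruction: the Levi $M_P\cap M_R^T$ does \emph{not} preserve the summands $Y_J$ and $Y_{J^\vee}$. Concretely, for $S\in H_2(C)$ with $(S,T')_J=0$ one has $\exp(v_2\otimes S)\in M_P\cap M_R^T$ (this is noted in the paper just before Lemma~\ref{lem:cRaoRedconj}), yet
\[
[v_2\otimes S,\,v_1\otimes V(0,u_2,u_3)] \;=\; -\,\delta_3\otimes\bigl(S\times V(0,u_2,u_3)\bigr)\in Y_{J^\vee},
\]
which is generically nonzero. So $\exp(v_2\otimes S)$ carries $Y_J$ nontrivially into $Y_{J^\vee}$, and your block-diagonal formula $\det(m|Y)=\det(m|Y_J)\det(m|Y_{J^\vee})$ is unavailable.

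The fix is to drop the decomposition entirely, and this is the paper's argument. You have already observed that the $W_J$-symplectic form restricts to a perfect pairing $Y_J\times Y_{J^\vee}\to\Q$; equivalently, $\langle\,,\,\rangle_{W_J}$ restricted to $Y$ is non-degenerate. Your Levi reduction shows $n$ acts trivially on $Y$, and $m\in M_P$ preserves $Y$ (being in $M_R$) while scaling the ambient symplectic form by $\nu(m)$. Thus $m|_Y$ is a symplectic similitude on the $4\dim(C)$-dimensional space $(Y,\langle\,,\,\rangle_{W_J}|_Y)$ with factor $\nu(m)$, which forces $\det(m|Y)^2=\nu(m)^{4\dim(C)}$; connectedness of $P_R^T$ pins down the sign, giving $\det(p|Y)=\nu(p)^{2\dim(C)}$ on the nose. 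No decomposition of $Y$, and no cocharacter or generator-by-generator check, is needed.
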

	\begin{proof} The subspace $Y \subseteq W_J$ is a non-degenerate symplectic subspace of $W_J$.  It is preserved by $M_R^T$.  Thus, if $p \in P_R^T$, $p$ preserves $Y$ and also scales the symplectic form on $Y$ that is the restriction of the one on $W_J$.  The first part of the lemma follows.  The second part now follows from the definition of $x(g)$ and Lemma \ref{lem:wjsqaure}.
	\end{proof}
	
	To prove $c_{Rao}(g_1,g_2) = 1$ for all $g_1, g_2 \in M_R^T(k)$, it suffices to restrict to certain special $g_1, g_2$.  This is made precise in the next two lemmas.
	\begin{lemma}\label{lem:opencell} Suppose $c_{Rao}(x,y) = 1$ whenever $y \in P_R^T w_3 P_R^T$.  Then $c_{Rao}(g_1,g_2) = 1$ for all $g_1, g_2 \in M_R^T$.
	\end{lemma}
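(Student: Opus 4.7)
The plan is to exploit the cocycle identity for $c_{Rao}$ together with density of the big Bruhat cell. Since $c_{Rao}$ takes values in $\mu_2$, the cocycle identity
\[
c_{Rao}(g_1,g_2)\, c_{Rao}(g_1 g_2,g_3) = c_{Rao}(g_1,g_2 g_3)\, c_{Rao}(g_2,g_3)
\]
rearranges to $c_{Rao}(g_1,g_2) = c_{Rao}(g_1,g_2 g_3)\, c_{Rao}(g_2,g_3)\, c_{Rao}(g_1 g_2,g_3)$. Given arbitrary $g_1, g_2 \in M_R^T(k)$, the strategy is to find a single auxiliary $g_3 \in M_R^T(k)$ making all three factors on the right equal $1$ by the hypothesis. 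Inspecting the second argument of each factor, this amounts to imposing two conditions on $g_3$: (i) $g_3 \in P_R^T w_3 P_R^T$, which by hypothesis forces $c_{Rao}(g_2,g_3) = c_{Rao}(g_1 g_2,g_3) = 1$, and (ii) $g_2 g_3 \in P_R^T w_3 P_R^T$, which forces $c_{Rao}(g_1,g_2 g_3) = 1$.

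To produce such a $g_3$, I would argue that the two subsets $A := P_R^T w_3 P_R^T$ and $B := g_2^{-1}\bigl(P_R^T w_3 P_R^T\bigr)$ of $M_R^T$ have non-empty intersection in $k$-points. The Bruhat decomposition $M_R^T = P_R^T \sqcup P_R^T w_1 P_R^T \sqcup P_R^T w_3 P_R^T$ proved in the preceding lemma identifies $P_R^T w_3 P_R^T$ as the cell corresponding to the element of maximal length $w_3 = w_1 w_2$, hence it is the open dense Bruhat cell inside the irreducible variety $M_R^T$. Consequently $A$ and its left translate $B$ are each non-empty Zariski-open subsets of $M_R^T$, so $A \cap B$ is a non-empty Zariski-open subscheme of $M_R^T$. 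Since $k$ is an infinite field and $M_R^T$ is a smooth variety, this intersection has $k$-rational points; any such $g_3$ satisfies both (i) and (ii), and the lemma follows.

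The only delicate point is the claim that $P_R^T w_3 P_R^T$ is open and dense in $M_R^T$, which amounts to the standard Bruhat closure relation that the dimensions of the double cosets strictly increase with the length of the Weyl representative. Once the three cosets of the preceding lemma are identified with the parabolic Weyl double cosets $\{e, w_1, w_3\}$ of the rank-two group $M_R^T$, this is automatic. An alternative, more hands-on approach would be to write $g_2$ in its Bruhat form $p_1 w p_2$ with $w \in \{1, w_1, w_3\}$ and exhibit $g_3$ of the form $u \cdot w_3 \cdot u'$ with $u, u'$ chosen in unipotent radicals so that $g_2 g_3$ lands in the big cell by direct computation; however, the density argument above is cleaner and avoids case work on $w$.
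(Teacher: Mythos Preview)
Your argument is correct and is essentially identical to the paper's proof: both use the cocycle identity and choose an auxiliary $g_3$ lying in the big cell $P_R^T w_3 P_R^T$ such that $g_2 g_3$ also lies in the big cell. You supply more detail than the paper does on why such a $g_3$ exists (the Zariski-density argument over an infinite field), whereas the paper simply asserts one can so choose $g_3$.
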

	\begin{proof} Recall the cocycle relation 
		\[c_{Rao}(g_1,g_2) c_{Rao}(g_1g_2,g_3) = c_{Rao}(g_1,g_2g_3) c_{Rao}(g_2,g_3).\]
		Choose $g_3$ in the open cell $P_R^T w_3 P_R^T$ so that $g_2 g_3$ is also in the open cell.  Then $c_{Rao}(g_1g_2,g_3)$, $c_{Rao}(g_1,g_2g_3)$, $c_{Rao}(g_2,g_3)$ all equal $1$ by assumption, so $c_{Rao}(g_1, g_2) = 1$ by the cocylce relation.
	\end{proof}
	
	Let $M_{P_R}^T = M_P \cap M_R^T$, $N_{P_R}^T = N_{P} \cap M_R^T$.
	\begin{lemma}\label{lem:cRaoRed1} Suppose $c_{Rao}(w_j,nw_3) =1$ for $n \in N_{P_R}^T(k)$ and $j \in \{1,3\}$.  Then $c_{Rao}(g_1,g_2) = 1$ for all $g_1, g_2 \in M_R^T(k)$.
	\end{lemma}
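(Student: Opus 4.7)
The plan is to combine the Bruhat decomposition $M_R^T = P_R^T \sqcup P_R^T w_1 P_R^T \sqcup P_R^T w_3 P_R^T$ with repeated applications of the cocycle identity $c_{Rao}(a,b)\,c_{Rao}(ab,c) = c_{Rao}(a,bc)\,c_{Rao}(b,c)$ to peel off parabolic factors until one is left with expressions of the form $c_{Rao}(w_j, nw_3)$, which vanish by hypothesis.

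The crucial preliminary is
\[ c_{Rao}(h,p) = c_{Rao}(p,h) = 1 \quad\text{for every } h \in M_R^T(k),\ p \in P_R^T(k). \]
By Lemma \ref{lem:x(p)}, $x(g) = 1$ in $k^\times/(k^\times)^2$ for every $g \in M_R^T(k)$, so the first two Hilbert-symbol factors of the Rao formula are trivial. Since $p \in P_R^T \subseteq P_{V_8}$ stabilizes $Y$ on the right, two of the three Lagrangians in $\rho = q(Y, Y g_2^{-1}, Yg_1)$ coincide, forcing $\dim \rho = 0$ and hence $\det \rho = 1$, $h(\rho) = 1$. Moreover $p \in \Omega_0$, and multiplication on either side by an element of $P_{V_8}$ preserves the strata $\Omega_j$, so in $2\ell = j_1 + j_2 - j - \dim \rho$ one of $j_1,j_2$ is zero and $j$ equals the other, giving $\ell = 0$. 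Every factor of the Rao formula therefore collapses.

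By Lemma \ref{lem:opencell} it suffices to treat $g_2 = p_1 w_3 p_2$ with $p_1, p_2 \in P_R^T(k)$. Two applications of the cocycle identity, each time cancelling a pair of factors using the preliminary, yield
\[ c_{Rao}(g_1, p_1 w_3 p_2) = c_{Rao}(g_1, p_1 w_3) = c_{Rao}(g_1 p_1, w_3). \]
Setting $h := g_1 p_1 \in M_R^T(k)$, the problem reduces to showing $c_{Rao}(h, w_3) = 1$ for every $h \in M_R^T(k)$. If $h \in P_R^T$ this is the preliminary; otherwise, writing $h = p_1' w_j p_2'$ with $j \in \{1,3\}$, the same telescoping gives
\[ c_{Rao}(h, w_3) = c_{Rao}(w_j, p_2' w_3). \]

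To finish, write $p_2' = n m$ with $n \in N_{P_R}^T(k)$ and $m \in M_{P_R}^T(k)$. The computation of $w_3$ on $V_7$ in the proof of the Bruhat decomposition shows that $w_3$ interchanges $b_1 \leftrightarrow b_{-1}$ and $b_2 \leftrightarrow b_{-2}$ while fixing the anisotropic complement, so $w_3$ normalizes the Levi $M_{P_R}^T$; hence $w_3^{-1} m w_3 \in M_{P_R}^T \subseteq P_R^T$, and $p_2' w_3 = nw_3 \cdot (w_3^{-1} m w_3)$. A final invocation of the preliminary converts $c_{Rao}(w_j, p_2' w_3)$ into $c_{Rao}(w_j, nw_3)$, which is $1$ by hypothesis. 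The only non-routine step is the triviality preliminary for $c_{Rao}$ on $M_R^T(k) \times P_R^T(k)$; once that is in hand, the rest is cocycle bookkeeping together with the fact that $w_3$ normalizes $M_{P_R}^T$.
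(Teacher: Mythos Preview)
Your proposal is correct and follows essentially the same route as the paper. The paper's proof invokes \cite[Corollary 5.5 (3) and (4)]{rao} together with Lemma~\ref{lem:x(p)} to strip parabolic factors; your preliminary ``$c_{Rao}(h,p)=c_{Rao}(p,h)=1$ for $p\in P_R^T$'' is exactly the content of those Rao formulas once $x(\cdot)\equiv 1$ on $M_R^T(k)$, and your explicit cocycle telescoping is what those formulas encode. The one point you make explicit that the paper leaves tacit is that $w_3$ normalizes $M_{P_R}^T$ (needed to pass from $p_2' w_3$ to $n w_3$); this is implicit in the paper's reduction as well, and your justification via the action on $b_{\pm 1},b_{\pm 2}$ is correct.
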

	\begin{proof} Applying Lemma \ref{lem:x(p)}, one checks that if $g_1 \in P_R^T$, then $c_{Rao}(g_1, g_2) = 1$.  By Lemma \ref{lem:opencell}, we may assume $g_2$ is in the open cell.  By Lemma \ref{lem:x(p)} and \cite[Corollary 5.5 (3) and (4)]{rao}, we may assume $g_1 = w_j$ for $j = 1,3$ and $g_2 = n w_3$ for some $n \in N_{P_R}^T(k)$.  This gives the lemma. 	\end{proof}
	
	To finally prove Proposition \ref{prop:splittingC}, we will evaluate $c_{Rao}(w_j, n w_3)$ for $j = 1,3$.  We break the proof into two more lemmas.
	\begin{lemma}\label{lem:cRaoRedconj} Suppose $n \in N_{P_R}^T(k)$.  Then there exists $m \in M_{P_R}^T(k)$ so that $m n m^{-1}$ is of the form $\exp(a E_{12} + b \delta_3 \otimes e_{11})$, $a,b \in k$.
	\end{lemma}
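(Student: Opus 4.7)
The plan is to first compute $\mathrm{Lie}(N_{P_R}^T) = \mathrm{Lie}(N_P) \cap \mathrm{Lie}(M_R^T)$. Decomposing $\mathrm{Lie}(N_P) = k E_{13} \oplus k E_{12} \oplus (v_1 \otimes J) \oplus (\delta_3 \otimes J^\vee) \oplus k E_{23}$ by $h_R$-weight, and using the $\Phi'_{e_{11},e_{11}}$-eigenspace decomposition of $J$ into $k e_{11}$, $H_2(C)$, and the $V(0,C,C)$-type piece, one finds the $h_R$-zero subspace of $\mathrm{Lie}(N_P)$ is $k E_{12} \oplus v_1 \otimes H_2(C) \oplus k \delta_3 \otimes e_{11}$. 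Further intersecting with the centralizer of $T = \delta_3 \otimes T'$ imposes the single condition $(X, T') = 0$ on the $v_1 \otimes H_2(C)$-piece, yielding an abelian subalgebra
\[
\mathrm{Lie}(N_{P_R}^T) \;=\; k E_{12} \,\oplus\, v_1 \otimes \bigl(H_2(C) \cap (T')^\perp\bigr) \,\oplus\, k \delta_3 \otimes e_{11}
\]
of dimension $3 + \dim(C)$.

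Next, I would identify the adjoint action of $M_{P_R}^T$ on $N_{P_R}^T$ with the natural representation of the Levi of an isotropic-line-stabilizer parabolic in an orthogonal group. Concretely, $M_R^T$ is isogenous to $\SO(T^\perp)$, where $T^\perp \subseteq V_7$ has signature $(2, \dim(C)+3)$, and $P_R^T$ is the stabilizer of the isotropic line $L := k E_{13} \subseteq T^\perp$. Hence $M_{P_R}^T$ is a cover of $\GL_1 \times \SO(1, \dim(C)+2)$, and $N_{P_R}^T \cong L^\vee \otimes L^\perp/L$ inherits a non-degenerate form of signature $(1, \dim(C)+2)$. A direct calculation using Lemma \ref{lem:commutatorV8} and the formula for $q_{V_7}$ shows that in our basis this form is
\[
q\bigl(a E_{12} + v_1 \otimes Y + b \delta_3 \otimes e_{11}\bigr) \;=\; a b \,-\, \tilde q(Y),
\]
where $\tilde q$ is anisotropic on $H_2(C) \cap (T')^\perp$.

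With this identification in place, the lemma is a direct consequence of Witt's extension theorem. Given $n = \exp(v)$ with $v = a E_{12} + v_1 \otimes Y + b \delta_3 \otimes e_{11}$, choose $a', b' \in k$ with $a' b' = q(v)$ (take $(a',b') = (1, q(v))$ if $q(v) \neq 0$, and $(1, 0)$ otherwise), and set $v' = a' E_{12} + b' \delta_3 \otimes e_{11}$. The one-dimensional isometry $k v \to k v'$ extends to an element of $O(N_{P_R}^T, q)$; composing with a reflection fixing $v'$ when necessary (which is possible because $\dim N_{P_R}^T \geq 3$) yields one of determinant one. Lifting back to $m \in M_{P_R}^T$ produces $m n m^{-1} = \exp(v')$, as desired.

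The main technical obstacle is the identification in the second paragraph: matching the commutator structure of $\mathrm{Lie}(N_{P_R}^T) \subseteq \mathrm{Lie}(N_P)$ with the orthogonal-group picture and verifying that the two quadratic forms agree. This is essentially bookkeeping via Lemma \ref{lem:commutatorV8} and the explicit form of $q_{V_7}$, but requires care. A somewhat more elementary alternative that avoids any appeal to Witt's theorem is to exhibit explicit ``shear''-type elements in $\mathfrak{h}(J) \cap \mathrm{Lie}(M_R^T)$, built from pairs $X \in J$, $\gamma \in J^\vee$ satisfying the appropriate orthogonality conditions with respect to $T'$ and $e_{11}$, whose adjoint actions on $\mathrm{Lie}(N_{P_R}^T)$ reproduce the standard $\SO(1, \dim(C)+2)$-shears; a two-step iteration then eliminates the $v_1 \otimes Y$-component of $n$.
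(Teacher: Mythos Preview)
Your proposal is correct and mirrors the paper's proof. The paper is terse: it notes that the conjugation action of $M_{P_R}^T$ on $N_{P_R}^T$ is, up to isogeny, that of an orthogonal group on its defining module, whence ``the lemma follows''---this is your Witt-extension argument. It then supplies exactly your alternative, naming the shear elements $\exp(v_2 \otimes S)$ (for $S \in H_2(C)$ with $(S, T')_J = 0$) and $\exp(\delta_2 \otimes S')$ (for $S' \in H_2(C)$ with $(e_{11} \times T', S') = 0$) as the concrete tools.

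Two remarks. First, your claim that $\tilde q$ is anisotropic on $H_2(C) \cap (T')^\perp$ is false over a general local field once $\dim(C) \geq 4$ (any form in $\geq 5$ variables over a $p$-adic field is isotropic); fortunately nothing in your argument uses it. Second, ``lifting back to $m \in M_{P_R}^T$'' is not automatic: since $M_{P_R}^T$ is only isogenous to $\GL_1 \times \SO(N_{P_R}^T)$, the map on $k$-points need not hit every element of $\SO(N_{P_R}^T)(k)$. The cleanest way around this---which both your alternative and the paper's concrete approach take---is to work directly with the unipotent shears, which are honest elements of $M_{P_R}^T(k)$ from the start, so no lifting is required.
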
 
	\begin{proof}The conjugation action of $M_{P_R}^T$ on $N_{P_R}^T$ is isogenous to that of an orthogonal group acting on the underlying quadratic space that defines it. The lemma follows.  More concretely, suppose $T = \delta_3 \otimes T'$.  Let $S \in H_2(C)$ satisfy $(S,T')_J = 0$, and let $S' \in H_2(C)$ satisfy $(e_{11} \times T',S') = 0$.  Then $\exp(v_2 \otimes S)$ and $\exp(\delta_2 \otimes S')$ are in $M_{P_R}^T$.  One can use the action of these elements to prove the lemma.\end{proof}
	
	\begin{lemma}\label{lem:cRaoRed2} Suppose $c_{Rao}(w_1, n w_3) = 1$ for all $n$ of the form $n = \exp(a E_{12})$, and $c_{Rao}(w_3, n w_3) = 1$ for all $n$ of the form $\exp(a E_{12} + b \delta_{3} \otimes e_{11})$.  Then $c_{Rao}(g_1, g_2) = 1$ for all $g_1, g_2 \in M_R^T(k)$.
	\end{lemma}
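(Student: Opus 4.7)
My plan is to combine Lemma \ref{lem:cRaoRed1} and Lemma \ref{lem:cRaoRedconj} with direct cocycle manipulations, guided by the fact (Lemma \ref{lem:x(p)}) that $x \equiv 1$ on $M_R^T$. By Lemma \ref{lem:cRaoRed1}, the task reduces to showing $c_{Rao}(w_j, n w_3) = 1$ for every $n \in N_{P_R}^T(k)$ and every $j \in \{1, 3\}$. Given such an $n$, Lemma \ref{lem:cRaoRedconj} produces $m \in M_{P_R}^T(k)$ conjugating $n$ to some $n_0 = \exp(a E_{12} + b\,\delta_3 \otimes e_{11})$, so I must evaluate $c_{Rao}(w_j, m^{-1} n_0 m w_3)$.

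Next, I would absorb the two copies of $m$ by iterated application of the cocycle identity, together with Corollary 5.5 of \cite{rao}, which controls the Rao cocycle under multiplication by elements of the Siegel parabolic $P_{V_8}$. Because every element of $P_R^T \subseteq P_{V_8}(k)$ has trivial $x$-invariant, all Hilbert-symbol correction factors produced in these steps equal $1$. The net outcome is the identity $c_{Rao}(w_j, n w_3) = c_{Rao}(w_j, n_0 w_3)$. For $j = 3$ this is precisely the second hypothesis.

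The remaining case $j = 1$ requires one further reduction, since the hypothesis only covers $n$ of the form $\exp(a E_{12})$ (without the $\delta_3 \otimes e_{11}$ term). Here I would exploit that $\iota_1(\SL_2)$ and $\iota_2(\SL_2)$ commute inside $M_R^T$: by inspection of the $\Z/3\Z$-grading on $\g_J$, one has $[E_{12}, \delta_3 \otimes e_{11}] = 0 = [E_{21}, \delta_3 \otimes e_{11}]$, so the two $\sl_2$-triples of Lemma \ref{lem:sl2triple} commute. Consequently $w_1$ commutes with $\exp(b\,\delta_3 \otimes e_{11}) = \exp(b e_2)$, allowing me to rewrite
\[ n_0 w_3 = \exp(a E_{12})\, w_1 \cdot \bigl(\exp(b e_2)\, w_2\bigr). \]
The $\SL_2$-Bruhat decomposition inside $\iota_2(\SL_2)$ rewrites the trailing factor as a $P_R^T$-multiple of $w_2$ (or of the identity if $b=0$), and a final round of the same cocycle argument collapses the expression to $c_{Rao}(w_1, \exp(a E_{12})\, w_3)$, which is the first hypothesis.

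The main obstacle will be bookkeeping. Rao's formula for $c_{Rao}(g_1,g_2)$ is a product of several Hilbert symbols, a Hasse invariant, and a sign depending on the Leray invariant of three Lagrangians, and each of the reduction steps above introduces auxiliary factors that must be individually shown to be trivial. All of these vanishings are ultimately forced by Lemma \ref{lem:x(p)}, but verifying each cleanly demands careful tracking of which Bruhat cell $\Omega_j$ the intermediate products land in, of the Leray invariants attached to them, and of the resulting Hasse invariants.
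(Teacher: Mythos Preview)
Your overall strategy matches the paper's: reduce via Lemma~\ref{lem:cRaoRed1} to $c_{Rao}(w_j, n w_3)$ for $j \in \{1,3\}$, and for $j=3$ use Lemma~\ref{lem:cRaoRedconj} together with \cite[Corollary~5.5]{rao} and $x \equiv 1$ to replace $n$ by $n_0 = \exp(aE_{12} + b\,\delta_3 \otimes e_{11})$. That part is correct and is exactly what the paper does.

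The gap is in your treatment of $j=1$. You propose the same conjugation-by-$m$ reduction there, asserting that Rao's Corollary~5.5 lets you absorb both copies of $m$ to obtain $c_{Rao}(w_1, n w_3) = c_{Rao}(w_1, n_0 w_3)$. But stripping $m$ ultimately requires $w_1 m w_1^{-1} \in P_{V_8}$, and this fails for the conjugating elements produced by Lemma~\ref{lem:cRaoRedconj}. Concretely, that lemma uses elements of the form $m = \exp(\delta_2 \otimes S')$, and one computes $Ad(w_1)(\delta_2 \otimes S') = \delta_1 \otimes S'$, which has $h_P$-weight $-1$; hence $\exp(\delta_1 \otimes S')$ does not stabilize $Y = V_8^{[1]}$ and lies outside $P_{V_8}$. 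So neither Rao~5.5 nor the Leray-invariant formula (which depends on $Y w_1$ versus $Y w_1 m^{-1}$) lets you pass from $n$ to $n_0$ for free. This is precisely why the paper does \emph{not} invoke Lemma~\ref{lem:cRaoRedconj} for $j=1$.

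The paper's remedy is to bypass the conjugation entirely when $j=1$: write an arbitrary $n \in N_{P_R}^T$ directly as $n = n_1 n_2$ with $n_2 = \exp(aE_{12})$ and $n_1 = \exp(v_1 \otimes U + b\,\delta_3 \otimes e_{11})$. One checks $Ad(w_1)(v_1 \otimes U) = -v_2 \otimes U$ and $Ad(w_1)(\delta_3 \otimes e_{11}) = \delta_3 \otimes e_{11}$, both of which lie in $Lie(P_R^T) \subseteq Lie(P_{V_8})$. Thus $w_1 n_1 w_1^{-1} \in P_{V_8}$, and Rao~5.5 (with $x \equiv 1$) gives $c_{Rao}(w_1, n_1 n_2 w_3) = c_{Rao}(w_1 n_1, n_2 w_3) = c_{Rao}(w_1, n_2 w_3)$ directly. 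Your observation that the two $\SL_2$'s commute is correct and is essentially the special case $U=0$ of this computation; the point is that the decomposition $n = n_1 n_2$ is already tailored to $w_1$, so no preliminary $M_{P_R}^T$-conjugation is needed (and the detour through the Bruhat decomposition of $\iota_2(\SL_2)$ is unnecessary).
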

	\begin{proof} By Lemma \ref{lem:cRaoRed1}, we must only consider $c_{Rao}(w_j, n w_2)$ for $j = 1,3$ and $n \in N_{P_R}^T(k)$.  Suppose first $j=3$.  Applying Lemma \ref{lem:cRaoRedconj} and \cite[Corollary 5.5 (3) and (4)]{rao}, we get the desired reduction.  Now suppose that $j=1$.  If $U \in H_2(C)$, one computes $Ad(w_1)( \delta_3 \otimes U) = \delta_3 \otimes U$ and $Ad(w_1)( \delta_3 \otimes e_{11}) = \delta_3 \otimes e_{11}$.  Suppose $n \in N_{P_R}^T$.  We can write $n= n_1 n_2$ with $n_1 = \exp(v_1 \otimes U + b \delta_3 \otimes e_{11})$ and $n_2 = \exp(a E_{12})$.  Then using \cite[Corollary 5.5 (3) and (4)]{rao} again, $c_{Rao}(w_1, n w_3) = c_{Rao}(w_1 n_1, n_2 w_3) = c_{Rao}(w_1, n_2 w_3)$ by conjugating the $w_1$ past the $n_1$.  This proves the lemma.
	\end{proof}
	
	To finally prove Proposition \ref{prop:splittingC}, we need to calculate the integer $\ell$ and the Leray invariant that arises in $c_{Rao}(w_j , n w_3)$ for the $n$ of the special form that appear in Lemma \ref{lem:cRaoRed2}.
	\begin{proof}[Proof of Proposition \ref{prop:splittingC}] Let us first compute $[ a E_{12} + b \delta_3 \otimes e_{11}, (u,u')_X]$.  One gets
		\begin{align*}
			[ a E_{12} + b \delta_3 \otimes e_{11}, (u,u')_X] &=a v_1 \otimes V(0,u_2',u_3') - b \delta_3 \otimes \Phi_{E, V(0,u_2,u_3)}(e_{11})\\
			&= a v_1 \otimes V(0,u_2',u_3') + b \delta_3 \otimes V(0,u_2,u_3).
		\end{align*}
		Thus 
		\begin{equation}\label{eqn:nXaction}[ a E_{12} + b \delta_3 \otimes e_{11}, (u,u')_X] = (au',bu)_Y.\end{equation}.
		Now, if $g \in \Sp(V_8)$, then $g \in \Omega_j$ if and only if $\dim pr_{X}(Yg) = j$, where $pr_X: V_8 \rightarrow X$ is the projection with kernel $Y$.  
		
		Suppose $g_1 = w_3$ and $g_2 = n w_3$ with $n = \exp(a E_{12} + b \delta_3)$.  Using this characterization of $\Omega_j$ and equation \eqref{eqn:nXaction}, one sees $g_1 g_2 \in \Omega_j$ where $j = n = 4\dim(C)$ if $ab \neq 0$, $j = 2 \dim(C)$ if exactly one of $a,b$ is not $0$, and $j = 0$ if both $a,b=0$.  
		
		The Leray invariant $\rho = q(Y, Y g_2^{-1}, Yg_1) = q(Y, X n^{-1}, X) = - q(Y,X,Xn^{-1})$.  If $ab \neq 0$, then $\dim(\rho) = n = 4 \dim(C)$, and $\rho$ is the quadratic form on $X$ given by 
		\begin{align*} \rho((u,u')_X) &= -\frac{1}{2} \langle (u,u')_X, (u,u')_X \cdot n^{-1} \rangle_{V_8,T} = -\frac{1}{2} \langle (u,u')_X, (au',bu)_Y \rangle_{V_8,T} \\ &= a (T' \times e_{11}, (u')^\#) - b(T', u^\#).\end{align*}
		In particular, $\ell = 0$ in this case.
		
		To simplify further, suppose $\rho_1$ is a quadratic form and $\alpha \in k^\times$.  Then one quickly verifies 
		\[h(\alpha \rho_1) = (\alpha,\alpha)^{\dim(\rho_1)(\dim(\rho_1)-1)/2} (\alpha, \det(\rho_1))^{\dim(\rho_1)-1} h(\rho_1).\]
		If $\rho_2$ is another quadratic form, then $h(\rho_1 \oplus \rho_2) = h(\rho_1) h(\rho_2) (\det(\rho_1),\det(\rho_2))$.  Combining these identities, if $\beta \in k^\times$ and $d_1 = \dim(\rho_1)$, then
		\begin{equation}\label{eqn:hasseSum} h(\alpha \rho_1 \oplus \beta \rho_1) = ((\alpha,\alpha)(\beta,\beta))^{d_1(d_1-1)/2}  (\alpha \beta, \det(\rho_1))^{d_1-1} (\alpha^{d_1} \det(\rho_1), \beta^{d_1} \det(\rho_1)).\end{equation}
		
		Now, by the change of variables used in the proof of Lemma \ref{lem:wjsqaure}, one can assume $T'$ is diagonal in $H_2(C)$.  Then $\rho$ is of the form $\alpha \rho_1 \oplus \beta \rho_1$, with $\rho_1 = \mu_2 n_{C} \oplus \mu_3 n_C$ for some $\mu_2, \mu_3 \in k^\times$.  In particular, because $2|\dim(C)$, $\det(\rho_1)$ is a square and $4|\dim(\rho_1)$.  By equation \eqref{eqn:hasseSum}, $h(\rho)=1$.  We have thus proved that $c_{Rao}(w_3, \exp(a E_{12} + b \delta_3 \otimes e_{11}) w_3) = 1$ when $ab \neq 0$.
		
		We next handle the case when $a\neq 0$ but $b=0$.  In this case, $j_1 = n = 4\dim(C)$, $j_2 = n = 4\dim(C)$, $j = 2 \dim(C)$, and $\dim(\rho) = 2\dim(C)$.  Thus $4|\ell$.  The Leray invariant $\rho$ is of the form  $\mu_2 n_{C} \oplus \mu_3 n_C$ for some $\mu_2, \mu_3 \in k^\times$.  Because $C$ is a quaternionic algebra or octonion algebra, $\det(n_C)$ is a square.  By equation \eqref{eqn:hasseSum}, $h(\rho)=1$.
		
		The case where $a=0$ but $b\neq 0$ is nearly identical.  Finally then, we comute $c_{Rao}(w_1, \exp(a E_{12}) w_3)$ for $a \in k$.   If $a =0$, then $w_1 w_3 \in \Omega_j$ with $j = 2\dim(C)$.  The Leray invariant is trivial in this case, so $h(\rho) = 1$, $\dim(\rho) = 0$, and $2\ell = 4\dim(C)$.  Thus $c_{Rao}(w_1, w_3) = 1$.
		
		If $a \neq 0$, then $w_1 \exp(a E_{12}) w_3 \in \Omega_j$ with $j = n = 4\dim(C)$. In this case, $\dim(\rho)=2\dim(C)$ and $\rho$ is again of the form $\mu_2 n_{C} \oplus \mu_3 n_C$ for some $\mu_2, \mu_3 \in k^\times$.  Thus $\ell=0$, $h(\rho)=1$ and $c_{Rao}(w_1, \exp(aE_{12})w_3) = 1$.  This completes the proof of the proposition.
	\end{proof}
	
	\subsection{Splittings: commutative case}
	In this subsection, we prove that $M'(k)$ splits into $\widetilde{\Sp(V_8)}(k)$ when $\dim(C) = 1$ or $2$, i.e., when $C$ is commutative.  We will reduce to the result of Kudla \cite{kudlaSplitting} that the unitary group $U(V)$ has a splitting into the metaplectic cover of a symplectic group associated with this unitary group.  We begin with the following proposition.
	
	Let $J_2'': C^2 \rightarrow C^2$ given by $(x_2, x_3) \mapsto (x_3, -x_2)$.  Let $T = \delta_3 \otimes T'$ and set $T'' = e_{11} \times T'$. Define a map $J_T: V_8 \rightarrow V_8$ as follows.
	\begin{enumerate}
		\item $J_T((u,0)_X) = (J_2''(T' \times u),0)_X$
		\item $J_T((0,u')_X) = (0,-J_2''(T'' \times u'))_X$
		\item $J_T((v,0)_Y) = (-J_2''(T'' \times v),0)_X$
		\item $J_T((0,v')_X) = (0,J_2''(T' \times v'))_X$
	\end{enumerate}
	If $T' = \mm{c_2}{r_1}{r_1^*}{c_3}$, set $n_{H_2(C)}(T') = c_3 c_3 - n_C(r_1)$.
	
	\begin{proposition}\label{prop:JT} The map $J_T: V_8 \rightarrow V_8$ satisfies $J_T^2 = - n_{H_2(C)}(T') 1_{V_8}$.  Moreover, $J_T$ commutes with the action of $M'$ on $V_8$.
	\end{proposition}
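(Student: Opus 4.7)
The plan is to verify both claims by direct computation, reducing each to a handful of identities in the cubic norm structure $J = H_3(C)$ restricted to the subspace $H_2(C)$. For the squared identity, I would compute $J_T$ twice on each of the four distinguished summands appearing in the definition. On $(u,0)_X$ one obtains $J_2''(T' \times J_2''(T' \times u))$; using $(J_2'')^2 = -1$ together with the interaction between $J_2''$ and multiplication by $T'$ via the cross product, this should collapse to a scalar multiple of $(u,0)_X$. The scalar is computed using $S \times S = 2 S^{\#}$ for $S \in H_2(C)$, combined with the identification of $(T')^{\#}$ (restricted to $H_2(C)$) as $n_{H_2(C)}(T')$ times the appropriate adjoint. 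The other three pieces are handled symmetrically, with $T''$ in place of $T'$ on the primed pieces; the sign appearing in case (2) of the definition is chosen precisely so that the scalars on all four pieces agree. A key supporting identity is $n_{H_2(C)}(T'') = n_{H_2(C)}(e_{11} \times T') = n_{H_2(C)}(T')$, which reflects the fact that $e_{11}$ is a rank-one idempotent.

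For the commutation with $M'$, I would reduce to checking $[Z, J_T] = 0$ on a set of Lie algebra generators of $\mathrm{Lie}(M')$. Convenient generators are: the two $\sl_2$-triples from Lemma \ref{lem:sl2triple}, which fix $T$ when $T \in \delta_3 \otimes H_2(C)$; the Lie algebra of $N_{P_R}^T$ and its opposite; and the piece of $\mathrm{Lie}(M_{P_R}^T)$ fixing $T$. For each such generator, its action on the four summands of $V_8$ is given by an explicit cross-product or Jordan multiplication formula. The commutation with $J_T$ then reduces to the equivariance of the cross product in $J$ under the relevant Lie algebra, together with the fact that $J_2''$ is intrinsic to $C^2$ and hence commutes with any operator built purely from the Jordan structure of $J$. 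The crucial point throughout is that each generator fixes $T$ (hence $T'$ and $T''$), which is what allows the scalar factors built from $T$ to be pulled past the action.

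The main obstacle is sign- and index-bookkeeping. On the one hand, the four cases in the definition of $J_T$ use $T'$ or $T''$ with different signs, so producing a uniform scalar $-n_{H_2(C)}(T')$ requires that the cross-product identities in each case contribute compatible signs; any slip produces a wrong scalar or even a non-scalar operator. On the other hand, for commutation with $M'$, certain generators of $\mathrm{Lie}(M')$ mix the unprimed and primed pieces of $V_8$, and in those mixed cases the commutation is least formal, depending on triple-product identities in $J$ that must be carefully restricted to $H_2(C)$. Once these identities are tracked, however, the two assertions of the proposition follow.
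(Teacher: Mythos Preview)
Your proposal is correct and follows essentially the same strategy as the paper: direct coordinate verification of $J_T^2 = -n_{H_2(C)}(T')\,1_{V_8}$, followed by checking $[Z,J_T]=0$ on Lie algebra generators of $\mathrm{Lie}(M')$.

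The organizational differences are minor but worth noting. The paper is slightly more economical in two ways. First, rather than checking all of $\mathrm{Lie}(M_{P_R}^T)$, it verifies commutation only with the specific Levi elements $v_2 \otimes U$ (with $(U,T')=0$) and $\delta_2 \otimes U'$ (with $(T'',U')=0$); these are exactly the elements used in Lemma~\ref{lem:cRaoRedconj} to conjugate an arbitrary element of $N_{P_R}^T$ into the form $\exp(aE_{12} + b\,\delta_3 \otimes e_{11})$, so once commutation with them is known, the check on $N_{P_R}^T$ reduces to this two-parameter family. Second, the paper then invokes the structural fact (valid for orthogonal groups) that $\mathrm{Lie}(M')$ is generated by $\mathrm{Lie}(N_{P_R}^T)$ together with its opposite, so no separate check on the full Levi is required. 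Your route---checking the $\sl_2$-triples, the full $N_{P_R}^T$ and its opposite, and $M_{P_R}^T$---covers a redundant generating set, but is of course still valid. One caution: your remark that $J_2''$ ``commutes with any operator built purely from the Jordan structure of $J$'' is too loose as stated; the commutation genuinely depends on the orthogonality conditions $(U,T')=0$ and $(T'',U')=0$ that encode the stabilizer of $T$, as you correctly emphasize in your final paragraph.
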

	\begin{proof} The fact that $J_T^2 = - n_{H_2(C)}(T') 1_{V_8}$ is checked directly in coordinates.
		
		We now argue regarding the commutativity.  The group $M'$ contains the elements $v_2 \otimes U$ and $\delta_2 \otimes U'$, where $U, U' \in H_2(C)$ satisfy $(U,T') = 0$ and $(T'',U') = 0$.  It is a tedious but straightforward computation to check that $J_T$ commutes with these Lie algebra elements.  (One uses the fact that $(T',U) = 0$ and $(T'', U') = 0$.)  Next, one checks that $J_T$ commutes with the elements in $N_{P_R}^T$.  To do this, by the observation of Lemma \ref{lem:cRaoRedconj}, one only must check this commutativity on elements of the form $a E_{12} + b \delta_3 \otimes e_{11}$.  These checks are immediate.  Next one makes the same computation on the opposite nilradical.  Finally, $Lie(M')$ is generated by $Lie(N_{P_R}^T)$ and its opposite (this is true generally in orthogonal groups), so the commutativity holds on all of $Lie(M')$.  The proposition follows.
	\end{proof}
	
	\begin{proposition}\label{prop:SUsplitting} Suppose $\dim(C) = 1$ or $2$ and $J = H_3(C)$.  Let $k = \Q_v$ be a completion of $\Q$.  Then there is a splitting $s_v:M'(k) \rightarrow \widetilde{\Sp(V_8)}(k)$.
	\end{proposition}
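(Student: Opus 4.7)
The plan is to use the almost-complex structure $J_T$ of Proposition \ref{prop:JT} to exhibit $M'$ as a subgroup of a unitary group over a quadratic étale $k$-algebra, and then invoke Kudla's splitting of the metaplectic cover over such a unitary group \cite{kudlaSplitting}. This reduces the commutative case to a known result, in contrast to the noncommutative case (Proposition \ref{prop:splittingC}), which required an explicit computation with the Rao cocycle.

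Concretely, set $E = k[\delta]/(\delta^2 + n_{H_2(C)}(T'))$, a quadratic étale $k$-algebra. Since $J_T^2 = -n_{H_2(C)}(T') \cdot 1_{V_8}$, the rule $\delta \mapsto J_T$ defines an embedding $E \hookrightarrow \mathrm{End}_k(V_8)$, giving $V_8$ the structure of a free $E$-module. By Proposition \ref{prop:JT}, $J_T$ commutes with the action of $M'$, so $M'$ acts $E$-linearly on $V_8$. The next step is to verify, by a direct computation using the description of $\langle \cdot, \cdot \rangle_{V_8, T}$ in Lemma \ref{lem:symplecticV8} together with the explicit formula for $J_T$ given before Proposition \ref{prop:JT}, the skew-adjoint relation $\langle J_T v, w \rangle_{V_8, T} + \langle v, J_T w \rangle_{V_8, T} = 0$. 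Granted this, the formula
\[ h(v, w) = \langle J_T v, w \rangle_{V_8, T} + \delta \langle v, w \rangle_{V_8, T} \]
defines a nondegenerate Hermitian form on the $E$-module $V_8$, with imaginary part recovering $\langle \cdot, \cdot \rangle_{V_8, T}$. The group $M'$, preserving both $\langle \cdot, \cdot \rangle_{V_8, T}$ and the $E$-action, lies inside $U(V_8, h)$; this produces a homomorphism $\varphi\colon M'(k) \rightarrow U(V_8, h)(k)$.

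Finally, I would apply Kudla's theorem \cite{kudlaSplitting}, which for each completion $k = \Q_v$ (after fixing auxiliary character data of $E^\times/k^\times$ globally) produces a splitting $\widetilde{s}\colon U(V_8, h)(k) \rightarrow \widetilde{\Sp(V_8)}(k)$ of the restriction of the metaplectic cover to the unitary group. Setting $s_v = \widetilde{s} \circ \varphi$ gives the desired splitting.

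The main obstacle is the coordinate verification of the skew-adjointness of $J_T$ with respect to $\langle \cdot, \cdot \rangle_{V_8, T}$ and nondegeneracy of the resulting Hermitian form. One must also handle the case in which $-n_{H_2(C)}(T')$ is a square in $k$, so that $E \simeq k \oplus k$ is split; in this situation the unitary group becomes essentially a general linear group sitting in $\Sp(V_8)$ as a Siegel Levi, and Kudla's framework still furnishes a splitting, now via the standard splitting of this Siegel Levi into the metaplectic cover.
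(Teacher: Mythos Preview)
Your approach is essentially the same as the paper's: use Proposition~\ref{prop:JT} to embed $M'$ in a unitary group and then invoke Kudla's splitting. The paper is terser about the Hermitian structure (it simply asserts $M' \subseteq \SU(V_8,T)$), while you spell out the $E$-module structure and the form $h$; this extra care is fine.

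There is, however, a genuine gap in your final step. Kudla's splitting in \cite{kudlaSplitting} lands in the metaplectic group viewed as a central extension of $\Sp(V_8)(k)$ by $\C^\times$, not in the two-fold cover $\widetilde{\Sp(V_8)}(k)$ as you write. Indeed, for the full unitary group $U(V_8,h)$ the splitting depends on a choice of character of $E^\times$ and typically takes values outside $\mu_2$; so your composite $\widetilde{s}\circ\varphi$ a priori only gives a homomorphism $M'(k)\to \Sp(V_8)(k)\times\C^\times$ (as a set), not into the double cover. The paper closes this gap by passing first to $\SU(V_8,T)$ and observing (via \cite[Theorem 7.1, Proposition 7.6, Theorem 7.6]{platonovRapinchuk}) that $\SU(V_8,T)(k)$ is its own derived group; since the derived subgroup of the $\C^\times$-metaplectic extension is exactly the $\mu_2$ double cover, the restricted splitting necessarily lands in $\widetilde{\Sp(V_8)}(k)$. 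You should insert this perfectness argument, either for $\SU(V_8,h)$ as the paper does, or directly for $M'$.
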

	\begin{proof} By Proposition \ref{prop:JT}, the group $M'$ is a subgroup of a special unitary group $\SU(V_8,T)$ that sits in $\Sp(V_8)$.  It is proved by Kudla \cite{kudlaSplitting} that the unitary group splits into the metaplectic group, which is a central extension of $\Sp(V_8)$ by $\C^\times$.  As the special unitary group is its own derived group \cite[Theorem 7.1, Proposition 7.6, Theorem 7.6]{platonovRapinchuk}, and the derived group of the metaplectic group is the double cover of $\Sp(V_8)(k)$, there is a splitting $\SU(V_8,T)(k) \rightarrow \widetilde{\Sp(V_8)}(k)$.  Restricting this splitting to $M'(k)$, we obtain $s_v$.  We remark that the splitting on $\SU(V_8,T)$ is unique because every homomorphism $\SU(V_8,T)(k) \rightarrow \mu_2$ is trivial, because $\SU(V_8,T)(k)$ is its own derived group.
	\end{proof}

	\begin{remark} Suppose again that $\dim(C) =1$ or $2$. As the group of rational points $\Sp(V_8)(\Q)$ splits into $\widetilde{\Sp(V_8)}(\A)$, we have a splitting $s_\Q: \SU(V_8,T)(\Q) \rightarrow \widetilde{\Sp(V_8)}(\A)$.  We have another splitting $\SU(V_8,T)(\Q) \rightarrow \widetilde{\Sp(V_8)}(\A)$, by pieceing together the local splittings of the proof of Proposition \ref{prop:SUsplitting}.  By \cite[Theorem 9.1]{platonovRapinchuk}, $\SU(V_8,T)(\Q)$ is its own derived group, so there are no nontrivial homomorphisms $\SU(V_8,T)(\Q) \rightarrow \mu_2$.  Consequently, these two splittings agree.  As the splittings on $M'$ are restricted from those of $\SU(V_8,T)$, the two global splittings on $M'(\Q)$ agree.
	\end{remark}
	
	\section{The Fourier-Jacobi expansion for $R$ II: Computation}\label{sec:FJcomp}
	Let $M'(\A) \rightarrow \widetilde{\Sp(V_8)}(\A)$ denote the splitting constructed in section \ref{sec:splittings}.  Using this map, we can define a Fourier-Jacobi coefficient of a cusp form $\varphi$ with respect to a theta function $\Theta_\phi$.  Namely, if $\varphi$ is a cuspidal automorphic form on $G_J(\A)$, we define the Fourier-Jacobi coefficient $\mathrm{FJ}^R_{T,\phi}(\varphi)$ as
	\[\mathrm{FJ}^R_{T,\phi}(\varphi)(g) = \int_{N_R(\Q)\backslash N_R(\A)}{\varphi(h g) \Theta_\phi(hg)\,dh}.\]
	Here $h \in N_R$ and $g \in M'(\A)$.  
	
	For a cusp form $\varphi$, we set $\varphi_{\chi_T}(g) =\int_{[V_7]}{\chi_T(n)\varphi(\exp(n)g)\,dn}$.  Let $\widetilde{\mathcal{R}}$ denote the set $w \in W_J(\Q)$ with $\xi_{w}|_{V_7^{[1]}}= \chi_T^{-1}$ and $\chi_w$ is trivial on $V_{8}^{[1]}$.  For $w \in \widetilde{\mathcal{R}}$, set
	\[\mathrm{FJ}^{R}_{\phi,w}(g) = \int_{(M_P \cap N_R)(\A)}{\varphi_w(xg) (\omega_\chi(xg)\phi)(0)\,dx}.\]
	Note that $M_P \cap N_R = \exp(V_8^{[0]} + V_7^{[0]})$.  Let $\mathcal{R}$ denote a set of representatives for $\widetilde{\mathcal{R}}/\exp(v_2 \otimes e_{11}(\Q))$; the group $\exp(\Q (v_2 \otimes e_{11}))$ acts freely on $\widetilde{\mathcal{R}}$. 
	
	We have the following proposition.  Let $P_R = P \cap M_R = M_{P_R} N_{P_R}$ and recall $M_{P_R}^T = M_P \cap M_R^T$, $N_{P_R}^T = N_{P} \cap M_R^T$.  The group $N_{P_R}^T$ is codimension $1$ in $N_{P_R}$. We write $X = V_8^{[0]}$, $Y = V_8^{[1]}$.
	\begin{proposition}\label{prop:FJT1} Assume $T$ is normal.  Then one has $\mathrm{FJ}^{R}_{T,\phi}(g) = \sum_{w \in \mathcal{R}}{\mathrm{FJ}^{R}_{\phi,w}(g)}$, and this is the Fourier expansion of the automorphic form $\mathrm{FJ}^{R}_{T,\phi}$ on $M'$ along the unipotent group $N_{P_R}^T$.
	\end{proposition}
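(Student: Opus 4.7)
The plan is to unfold the integral defining $\mathrm{FJ}^R_{T,\phi}$ in stages: first integrate out the center $V_7$ of $N_R$ against the character $\chi_T$, then perform the standard theta unfolding using the Lagrangian $V_8 = X \oplus Y$, and finally match the result to $\sum_{w \in \mathcal{R}} \mathrm{FJ}^R_{\phi,w}$ via the Heisenberg Fourier expansion of $\varphi$.

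First, using that $V_7$ is central in $N_R$ and that $\Theta_\phi$ transforms under $\exp(V_7)$ by $\chi_T$, the decomposition $N_R(\Q)\backslash N_R(\A) \simeq (V_8(\Q)\backslash V_8(\A)) \times (V_7(\Q)\backslash V_7(\A))$ lets one perform the inner $V_7$-integral to get
\[
\mathrm{FJ}^R_{T,\phi}(\varphi)(g) = \int_{V_8(\Q)\backslash V_8(\A)} \varphi_{\chi_T}(\exp(u)g)\,\Theta_\phi(\exp(u)g)\,du.
\]
Since $\chi_T$ is trivial on $V_7^{[0]}$ and on $V_7^{[2]} = Z$, while its restriction to $V_7^{[1]} \subseteq N_J$ agrees with $\xi_w^{-1}|_{V_7^{[1]}}$ precisely for $w \in \mathcal{S} := \{w \in W_J(\Q) : \xi_w|_{V_7^{[1]}} = \chi_T^{-1}\}$, Heisenberg Fourier expansion of $\varphi$ along $V_7^{[2]} + V_7^{[1]} \subseteq N_J$ yields
\[
\varphi_{\chi_T}(g) = \int_{V_7^{[0]}(\Q)\backslash V_7^{[0]}(\A)} \sum_{w \in \mathcal{S}} \varphi_w\bigl(\exp(v^{[0]})g\bigr)\,dv^{[0]}.
\]

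Next, perform the standard theta unfolding with $V_8 = X \oplus Y$, $X = V_8^{[0]}$, $Y = V_8^{[1]}$. Writing $u = u_X + u_Y$, the sum $\Theta_\phi = \sum_{\xi \in X(\Q)}(\omega(\cdot)\phi)(\xi)$ combines with $\int_{X(\Q)\backslash X(\A)}$ to give $\int_{X(\A)}$ at $\xi = 0$, producing the Weil factor $(\omega(\exp(u_X)g)\phi)(0)$. The remaining $\int_{Y(\Q)\backslash Y(\A)}$ computes the $V_8^{[1]}$-Fourier coefficient of $\varphi_w$; after the Weil-representation phase in $u_Y$ cancels against the Baker--Campbell--Hausdorff correction from reordering $\exp(u_X)\exp(u_Y)$ (using $[X,Y] \subseteq V_7^{[1]}$ and $\xi_w|_{V_7^{[1]}} = \chi_T^{-1}$), the $u_Y$-integration restricts the sum to those $w \in \mathcal{S}$ with $\xi_w|_{V_8^{[1]}} = 1$, i.e.\ $w \in \widetilde{\mathcal{R}}$.

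Finally, the residual $V_7^{[0]}$-integral is unfolded using the free right action of $\exp(v_2 \otimes e_{11}(\Q)) \subseteq M_P(\Q)$ on $\widetilde{\mathcal{R}}$, via $\varphi_w(\exp(v^{[0]})g) = \varphi_{w \cdot \exp(v^{[0]})}(g)$ (since $V_7^{[0]} \subseteq M_P$): the compact integral becomes $\int_{V_7^{[0]}(\A)}$ and the sum over $\widetilde{\mathcal{R}}$ collapses to a sum over $\mathcal{R}$. Combined with $\int_{X(\A)}$ from the previous step, this produces $\int_{(M_P \cap N_R)(\A)}$ and the stated identity. That this constitutes the Fourier expansion along $N_{P_R}^T$ follows from $\xi_w$-equivariance of $\varphi_w$ under left-$N_J$-translation together with the inclusion $N_{P_R}^T \subseteq N_J$. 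The main technical obstacle is the second step: carefully accounting for the Weil-representation phases and BCH corrections so that the factor appearing at $\xi = 0$ is exactly $(\omega(xg)\phi)(0)$, matching the expression defining $\mathrm{FJ}^R_{\phi,w}$.
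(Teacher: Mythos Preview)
Your unfolding is essentially the paper's argument, with the order of ``theta-unfold'' and ``Heisenberg Fourier-expand $\varphi$'' swapped; both orderings work and the key cancellations you identify (the $Y$-phase from the Weil model vanishing at $\xi=0$, and BCH commutators landing in $V_7^{[1]}$ where $\xi_w$ agrees with $\chi_T^{-1}$) are exactly what the paper uses.

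There is, however, a genuine gap in your last sentence. The claim that the decomposition is the Fourier expansion along $N_{P_R}^T$ does \emph{not} follow simply from $N_{P_R}^T \subseteq N_P$ and the $\xi_w$-equivariance of $\varphi_w$. In $\mathrm{FJ}^R_{\phi,w}(ng)$ the element $n \in N_{P_R}^T = N_P \cap M_R^T$ sits to the \emph{right} of the integration variable $x \in (M_P \cap N_R)(\A)$, so you must first commute $n$ past $x$. The paper does this explicitly: $xn = n x_1 x$ with $x_1 \in N_R \cap N_P$, and then two separate facts are needed. First, on the $\varphi_w$ side, $\xi_w(x_1)$ and the corresponding phase from the Weil action of $x_1 \in N_R$ on $\phi$ at $0$ must cancel (this uses $w \in \widetilde{\mathcal{R}}$ again). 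Second, and this is the point your argument misses entirely, the Weil factor $(\omega_{\chi_T}(xng)\phi)(0)$ also sees $n$ as an element of $\Sp(V_8)$; one needs that $n$ acts trivially on $Y = V_8^{[1]}$, so that $n$ lies in $P_{S,Y}^1$ and $\omega_{\chi_T}(n)\phi'(0) = \phi'(0)$. Without this observation about the Weil side, the equivariance $\mathrm{FJ}^R_{\phi,w}(ng) = \xi_w(n)\,\mathrm{FJ}^R_{\phi,w}(g)$ is not established.
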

	\begin{proof} Unfolding the sum defining $\Theta_\phi$, we obtain
		\[\mathrm{FJ}^{R}_{\phi,\chi_T}(g) = \int_{ Y(\Q) V_7(\A)\backslash N_R(\A)}{ \varphi_{\chi}(hg) (\omega_\chi(hg)\phi)(0)\,dh}.\]
		We integrate over $Y(\Q)\backslash Y(\A)$ to obtain that 
		\[\mathrm{FJ}^{R}_{\phi,\chi_T}(g) = \int_{Y(\A)V_7(\A)\backslash N_R(\A)}{\varphi_{\chi_T,V_8^{[1]}}(hg) (\omega_\chi(hg)\phi)(0)\,dh}\]
		where $\varphi_{\chi_T,V_8^{[1]}}(g) = \int_{[V_8^{[1]}]}{\varphi_{\chi_T}(\exp(n)g)\,dn}$.
		
		Let $\chi'$ be the character of $N_P \cap N_R = V_8^{[1]} + V_7^{[1]} + V_7^{[2]}$ that is the restriction of $\chi_T^{-1}$ on the $V_7$ parts and is trivial on $V_8^{[1]}$.  Then 
		\[\varphi_{\chi_T,V_8^{[1]}}(x) = \int_{\Q\backslash \A}{ \varphi_{\chi'}(\exp(s v_2 \otimes e_{11}) x)\,ds}.\]
		Moreover, 
		\[\varphi_{\chi'}(x) = \sum_{w \in W_J(\Q): \chi_w|_{N_P \cap N_R} = \chi'}{\varphi_w(x)}.\]
		
		Observe moreover that if $w \in \widetilde{\mathcal{R}}$, $\mu \in \Q$, $\mu \neq 0$, then $ \exp(\mu v_2 \otimes e_{11}) w \in \widetilde{\mathcal{R}}$, and is not equal to $w$.   We obtain
		\[\varphi_{\chi'}(x) = \sum_{\mu \in \Q, w \in \mathcal{R}}{\varphi_{w}(\exp(\mu v_2 \otimes e_{11}) x)}.\]
		
		Thus
		\[\varphi_{\chi,V_8^{[1]}}(x) = \sum_{w \in \mathcal{R}} \int_{\A}{\varphi_{w}(\exp(s v_2 \otimes e_{11}) x)\,dx}.\]
		From the above we obtain
		\[\mathrm{FJ}^{R}_{T,\phi}(g) = \sum_{w \in \mathcal{R}} \int_{(N_R \cap M_{P})(\A)}{\varphi_w(x g) (\omega_{\chi}(xg)\phi)(0)\,dx}.\]
		
		It remains to check that this is the Fourier expansion of $\mathrm{FJ}^{R}_{T,\phi}(g)$.  Suppose $x \in N_R \cap M_P$ and $n \in N_P \cap M_R^T$.  Then $xn = n x_1 x$ for some $x_1 \in N_R \cap N_P$.  Because $n$ acts as the identity on $V_{8}^{[1]}$, $\omega_{\chi_T}(n)\phi'(0) = \phi'(0)$ for any $\phi' \in S(X(\A))$.  One obtains
		\[\mathrm{FJ}^R_{\phi,w}(ng) = \xi_w(n) \mathrm{FJ}^R_{\phi,w}(g).\]
		The proposition now follows.
	\end{proof}
	
	\subsection{The Gaussian} In this subsection, we analyze a certain Gaussian key to our computations.  Recall $T = \delta_3 \otimes T'$.  We assume $T'$ is positive-definite, i.e., $\tr(T') > 0$ and $n_{H_2(C)}(T') > 0$.  Let $\epsilon \in \Sp(V_8)$ be the map defined as $\epsilon((u,u')_X) = (-u,u')_X$ and $\epsilon((v,v')_Y) = (v,-v')_Y$.  Set $J_2' \in \Sp(V_8)$ as $J_2' = \epsilon \circ w_3$.  Thus $J_2'((u,u')_X) = (u',-u)_Y$ and $J_2'((v,v')_Y) = (v',-v)_X$.  Observe that for $x = (u,u')_X \in X(\R)$, 
	\[ \langle x, J_2'(x) \rangle_{V_8,T} = -(T',u \times u) - (T' \times e_{11}, u' \times u'),\]
	which is a positive-definite quadratic form on $X(\R)$, using that $T'$ is positive-definite. For a positive constant $C$ to be specified below and $x \in X(\R)$, we define $\phi_{0}(x) = e^{-C \langle x, J_2'(x) \rangle}$.
	
	The action of $M_R^T$ on $V_7$ preserving the quadratic form induces a homomorphism $Lie(M_R^T) \rightarrow \wedge^2 V_7$.  Set $e = \delta_3 \otimes e_{11} - E_{12}$ and $f = -v_3 \otimes e_{11} - E_{21}$.  Then $e-f \mapsto (b_1 + b_{-1}) \wedge (b_2 + b_{-2})$.  We will compute the action of $e-f$ on the Gaussian $\phi_0$ by the differential of the Weil representation $d \omega$.
	
	\begin{lemma}\label{lem:AdJ2e} One has $Ad(J_2')(e) = - f$.
	\end{lemma}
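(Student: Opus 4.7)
The plan is to factor $Ad(J_2') = Ad(\epsilon) \circ Ad(w_3) = Ad(\epsilon) \circ Ad(w_1) \circ Ad(w_2)$ and compute term by term, reducing everything to the fact that the two $\sl_2$-triples $(e_1, h_1, f_1)$ and $(e_2, h_2, f_2)$ of Lemma \ref{lem:sl2triple} commute in $\g(J)$.

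First I would verify this commutativity. Writing $e = e_2 - e_1$ and $f = f_2 - f_1$, each of the four cross-brackets $[e_1, e_2]$, $[e_1, f_2]$, $[f_1, e_2]$, $[f_1, f_2]$ reduces to one of the trivial identities $E_{12}\!\cdot\!\delta_3 = 0$, $E_{12}\!\cdot\!v_3 = 0$, $E_{21}\!\cdot\!\delta_3 = 0$, $E_{21}\!\cdot\!v_3 = 0$ coming from the standard action of $\sl_3$ on $V_3$ and $V_3^\vee$. Similarly, $h_1 = E_{11}-E_{22}$ commutes with $e_2, f_2$, and the two pieces of $h_2$ commute with $e_1, f_1$ (the $\sl_3$-piece by direct weight calculation, and $\Phi'_{e_{11},e_{11}} \in \m(J)^0$ because $\sl_3$ and $\m(J)^0$ sit in different factors of the $\Z/3\Z$-grading). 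In particular $h := [e,f] = h_1 + h_2$ and $(e, h, f)$ forms a third $\sl_2$-triple sitting diagonally inside $(e_1, h_1, f_1) \oplus (e_2, h_2, f_2)$.

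Next, I would combine this with the formula $Ad(w_j) = \exp(\mathrm{ad}\,e_j)\exp(-\mathrm{ad}\,f_j)\exp(\mathrm{ad}\,e_j)$ and the standard $\sl_2$ identity $Ad(w_j)(e_j) = -f_j$. The commutativity from the previous step makes $Ad(w_2)$ act as the identity on the $(e_1, h_1, f_1)$-triple and $Ad(w_1)$ act as the identity on the $(e_2, h_2, f_2)$-triple. Therefore
\[
Ad(w_3)(e) \;=\; Ad(w_1)\bigl(Ad(w_2)(e_2 - e_1)\bigr) \;=\; Ad(w_1)(-f_2 - e_1) \;=\; -f_2 + f_1 \;=\; -f.
\]

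Finally, I would argue that $\epsilon$, when realized as an element of $M_R^T$, acts trivially on both $e$ and $f$, so that $Ad(J_2')(e) = Ad(\epsilon)(-f) = -f$. The main obstacle I anticipate is this last step: $\epsilon$ is defined only by its symplectic action on $V_8$, and transferring this to an adjoint action on $\g(J)$ requires choosing a lift into $M_R^T$. A natural candidate is a central element of the torus of the $\SL_2$ generated by $(e, h, f)$ acting by $\pm 1$ on the two halves $V_8^{[0]}$ and $V_8^{[1]}$, which by construction centralizes $(e, h, f)$; verifying that this is consistent with the explicit formulas for $\epsilon$ on $X$ and $Y$ should complete the argument.
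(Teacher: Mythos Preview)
Your computation of $Ad(w_3)(e)=-f$ via the two commuting $\sl_2$-triples is correct and is a genuinely different route from the paper, which simply verifies the identity as endomorphisms of $V_8$: the paper checks that both $Ad(J_2')(e)$ and $-f$ annihilate $X$ and send $(v,v')_Y$ to $(-v',v)_X$.  Your argument is more structural and explains \emph{why} the identity holds, at the cost of having to deal with $\epsilon$ separately.

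However, your proposed treatment of $\epsilon$ contains an error.  First, $\epsilon$ does \emph{not} act by $\pm 1$ on the two halves $X=V_8^{[0]}$ and $Y=V_8^{[1]}$: by definition $\epsilon((u,u')_X)=(-u,u')_X$ and $\epsilon((v,v')_Y)=(v,-v')_Y$, so it has eigenvalues $(-1,+1,+1,-1)$ on the four pieces $(u,0)_X,\ (0,u')_X,\ (v,0)_Y,\ (0,v')_Y$.  Second, the central element $\exp(i\pi h)$ of the diagonal $\SL_2$ acts as $-1$ on all of $V_8$ (since $h$ has weight $-1$ on $X$ and $+1$ on $Y$), not as $\epsilon$.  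The correct lift is $\exp(i\pi h_2)$, the central element of the $(e_2,h_2,f_2)$-copy of $\SL_2$: one checks from Lemma~\ref{lem:wjactionXY} that $h_2$ has weights $(-1,0,0,+1)$ on the four pieces, so $\exp(i\pi h_2)$ does act as $\epsilon$ on $V_8$, and being central in one $\SL_2$ and commuting with the other, it centralizes $e$ and $f$.  Alternatively, and more in the spirit of the paper, you can bypass the lift entirely and simply check that $\epsilon$ commutes with $ad(f)|_{V_8}$ directly: $ad(f)$ kills $X$ and sends $(v,v')_Y\mapsto (v',-v)_X$, and one line shows $\epsilon\circ ad(f)=ad(f)\circ\epsilon$ on $Y$.
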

	\begin{proof}  Both $Ad(J_2')(e)$ and $f$ are trivial on $X \subseteq V_8$, and on $Y \subseteq V_8$ one has
		\begin{align*}
			Ad(J_2')(e)((v,v')_Y) &= J_2' \circ ad(e) (-v',v)_X = J_2' \circ ([-E_{12}+ \delta_{3} \otimes e_{11}, -\Phi_{E,v'} + v_2 \otimes v]) \\ 
			&= J_2' \circ (-v_1 \otimes v - \delta_{3} \otimes v') = J_2'((-v,-v')_Y) \\
			&= (-v',v)_X;\\
			ad(-f)((v,v')_Y) &= [v_3 \otimes e_{11}+E_{21}, v_1 \otimes v + \delta_3 \otimes v'] \\
			&= -\Phi_{E,v'} + v_2 \otimes v = (-v',v)_X.
		\end{align*}
		This gives the lemma.
	\end{proof}
	
	One has 
	\[[e,(u,u')_X] = [\delta_{3} \otimes e_{11}-E_{12}, \Phi_{E,u} + v_2 \otimes u'] = \delta_3 \otimes u - v_1 \otimes u' = (-u',u)_Y.\]
	Thus
	\begin{align*}\frac{1}{2}\langle [e,(u,u')_X], (u,u')_X \rangle &= \frac{1}{2} \langle (u,u')_X, (u',-u)_Y \rangle =  \frac{1}{2} \rangle (u,u')_X, J_2'(u,u')_X \rangle \\ &= -(T',u^\#) - (T' \times e_{11}, (u')^\#).\end{align*}
	For ease of notation, define the quadratic form $q^0_T: X(\R) \rightarrow \R$ as $q_T^0(x) = \frac{1}{2}\langle x, J_2' x \rangle$.  As mentioned above, this quadratic form is positive-definite.	If $x = (u,u') \in X(\R)$ and $\phi \in S(X(\R))$ then $d\omega(e) \phi(x) = -2\pi i q_T^0(x) \phi(x)$.  (Recall that our Weil representation in this case was defined using the element $-T \in V_7$, hence the minus sign here.)
	
	We will use this computation and Lemma \ref{lem:AdJ2e} to compute $d\omega(-f)$.  Define $\widetilde{D}: S(X(\R)) \rightarrow S(X(\R)) \otimes X^\vee$ as
	\[\widetilde{D}\phi(x) = \sum_{\alpha} X_\alpha \phi \otimes X_\alpha^\vee\]
	where $\{X_{\alpha}\}_\alpha$ is a basis of the vector space $X(\R)$. 
	
	Identify $Y \rightarrow X^\vee$ via 
	\[y \mapsto \{x \mapsto \langle x, y \rangle_{V_8}\}.\]
	Now let $D_T  = \mathrm{pair} \circ J_2' \circ \widetilde{D}^2$.   That is,
	\[D_T \phi = \sum_{\alpha,\beta} X_\beta X_\alpha \phi \otimes \langle (J_2')^{-1} X_\alpha^\vee, X_\beta^\vee \rangle\]
	where we identify $X^\vee \simeq Y$ via the symplectic pairing as above and then apply $(J_2')^{-1}: Y \rightarrow X$.
	
	\begin{lemma} One has $d\omega(-f)\phi = - \frac{1}{4\pi i } D_T\phi$ for all $\phi \in S(X(\R))$.
	\end{lemma}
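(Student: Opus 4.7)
The plan is to use the intertwining relation provided by Lemma \ref{lem:AdJ2e} together with the known action $d\omega(e)\phi(x) = -2\pi i\, q_T^0(x)\phi(x)$. Since $Ad(J_2')(e)=-f$, the differential of the Weil representation gives
\[
d\omega(-f) \;=\; \omega(J_2')\,\circ\, d\omega(e)\,\circ\, \omega(J_2')^{-1}.
\]
Thus the task reduces to conjugating the multiplication operator $-2\pi i\, q_T^0(x)$ by the operator $\omega(J_2')$. The element $J_2'$ exchanges the Lagrangians $X$ and $Y$ via $(u,u')_X\mapsto (u',-u)_Y$, so up to a nonzero constant $\omega(J_2')$ acts by the Fourier transform associated to the symplectic pairing $\langle\,,\,\rangle_{V_8,T}$ between $X$ and $Y$; that is, for $\phi\in S(X(\R))$,
\[
\omega(J_2')\phi(y) \;=\; \gamma \int_{X(\R)} e^{-2\pi i \langle x,y\rangle_{V_8,T}} \phi(x)\,dx,
\]
for some nonzero constant $\gamma$.

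Next, I would invoke the standard fact that under this Fourier transform, multiplication by a linear functional $\langle\cdot, y\rangle_{V_8,T}$ on $X(\R)$ conjugates to $\frac{1}{-2\pi i}$ times the directional derivative in the direction of $y\in Y \cong X^\vee$. Polarizing, conjugation of the quadratic multiplication operator $q_T^0(x)=\frac12\langle x,J_2'x\rangle_{V_8,T}$ by $\omega(J_2')$ produces the second-order differential operator
\[
\Bigl(\tfrac{1}{-2\pi i}\Bigr)^2 \sum_{\alpha,\beta} \bigl\langle (J_2')^{-1} X_\alpha^\vee, X_\beta^\vee \bigr\rangle_{V_8,T}\, X_\alpha X_\beta \;=\; \bigl(\tfrac{1}{-2\pi i}\bigr)^2 D_T,
\]
where the appearance of $(J_2')^{-1}$ is what moves $X_\alpha^\vee\in X^\vee\simeq Y$ back into $X$ so that the quadratic form $q_T^0$ can be expressed intrinsically. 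Multiplying by the $-2\pi i$ coming from $d\omega(e)$, the factor collapses to $-\frac{1}{4\pi i}$, yielding the claimed identity $d\omega(-f)\phi = -\frac{1}{4\pi i} D_T\phi$.

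The routine part is the Fourier-side calculation; the main obstacle is keeping track of constants and signs: the precise normalization of $\omega(J_2')$ (both the overall $\gamma$ and the choice of additive character $\psi$ in the Weil representation, which was defined here using $-T$, accounting for the minus sign already present in $d\omega(e)$), the identification $X^\vee\simeq Y$ via $\langle\,,\,\rangle_{V_8,T}$, and the fact that $(J_2')^2=-1_{V_8}$ (so $(J_2')^{-1}=-J_2'$). As long as one verifies that the constant $\gamma$ is consistent with these conventions (which can be checked by evaluating both sides on a single test Gaussian, or by comparing with the standard $\SL_2$ computation in a symplectic basis), the polarization identity for $D_T$ reads off directly, and one concludes.
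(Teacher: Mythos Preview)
Your proposal is correct and follows essentially the same approach as the paper: both use Lemma \ref{lem:AdJ2e} to write $d\omega(-f)=\omega(J_2')\,d\omega(e)\,\omega(J_2')^{-1}$, realize $\omega(J_2')$ as a Fourier transform, and identify conjugation of the quadratic multiplication operator $-2\pi i\,q_T^0$ with the second-order differential operator $-\tfrac{1}{4\pi i}D_T$. The paper carries this out concretely by writing $\phi=\omega(J_2')\phi'$ and checking that $D_T$ acts on the kernel $e^{-2\pi i\langle x,J_2'x'\rangle}$ by the eigenvalue $(-2\pi i)^2\langle x',J_2'x'\rangle$, which is exactly the verification you defer to ``evaluating both sides on a single test function.''
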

	\begin{proof} Suppose $y \in Y(\R)$.  We first compute $D_T e^{-2\pi i \langle x, y \rangle_{V_8}}$.  
		We have 
		\[\widetilde{D} e^{-2\pi i \langle x,y \rangle_{V_8}} = (-2\pi i) e^{-2\pi i \langle x,y \rangle_{V_8}}  \otimes y, \text{ and } \widetilde{D}^2 e^{-2\pi i \langle x,y \rangle_{V_8}} = (-2\pi i)^2 e^{-2\pi i \langle x,y \rangle_{V_8}}  \otimes y \otimes y.\]
		Thus
		\[D_T e^{-2\pi i \langle x, y \rangle_{V_8}} = (-2\pi i)^2 \langle (J_2')^{-1}y,y \rangle_{V_8} e^{-2\pi i \langle x, y \rangle_{V_8}},\]
		so 
		\[D_T e^{-2\pi i \langle x, J_2' x'\rangle_{V_8}} = (-2\pi i)^2 \langle x', J_2' x' \rangle e^{-2\pi i \langle x, J_2'x' \rangle_{V_8}}.\]
		
		Now, observe that if $g \in \widetilde{Sp(V_8)}(\R)$ is of the form $g = m \tau$, with $m$ in the Levi of the Siegel parabolic, then 
		\[\omega(g)\phi(x) = A_g \int_{X(\R)}{e^{-2\pi i \langle x, g x' \rangle_{V_8,T}} \phi(x')\,dx'}\]
		for some $A_g \in \C^\times$.  Suppose that $\phi = \omega(J_2') \phi'$.  By Lemma \ref{lem:AdJ2e}
		\[d\omega(-f)\phi(x) = A_{J_2'} \int_{X(\R)}{e^{-2\pi i \langle x, J_2' x' \rangle}(-2\pi i) q_T^0(x')\phi'(x')\,dx'}.
		\]
		
		On the other hand, $D_T \phi(x)$ is computed as
		\begin{align*}
			D_T\phi(x) &= D_T\left( A_{J_2'} \int_{X(\R)}{e^{-2\pi i \langle x, J_2'x' \rangle} \phi'(x')\,dx'}\right)\\
			&= A_{J_2'} \int_{X(\R)}{e^{-2\pi i \langle x, J_2'x' \rangle} (-2\pi i)^2 \langle x',J_2' x' \rangle \phi'(x')\,dx'}.
		\end{align*}
		The lemma follows.
	\end{proof}
	
	We are now ready to compute $d\omega(e-f) \phi_0$.
	\begin{lemma}\label{lem:KGaussianR1} If $C = \pi$ so that $\phi_0(x) = e^{-\pi\langle x, J_2' x \rangle}$, then $d\omega(e-f) \phi_0  = - \left(\frac{i n}{2}\right)\phi_0$ where $n = \dim(X) = 4 \dim(C)$.
	\end{lemma}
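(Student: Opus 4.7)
The plan is to combine the two formulas established immediately before the lemma statement: $d\omega(e)\phi(x) = -2\pi i\, q_T^0(x)\,\phi(x)$ and $d\omega(-f)\phi = -\frac{1}{4\pi i}\, D_T\phi$. Thus the computation reduces to evaluating $D_T\phi_0$ on the Gaussian $\phi_0(x) = e^{-\pi \langle x, J_2'x\rangle_{V_8}}$, and checking that the quadratic-in-$x$ contribution to $D_T\phi_0$ exactly cancels against $d\omega(e)\phi_0$.

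Let $B(x,y) = \langle x, J_2'y\rangle_{V_8}$, which is a symmetric bilinear form on $X(\R)$ (since $q_T^0(x) = \tfrac{1}{2}B(x,x)$ is a genuine quadratic form), and positive-definite by the discussion preceding the lemma. Fix a basis $\{X_\alpha\}$ of $X$ with dual basis $\{X_\alpha^\vee\}$ in $X^\vee \simeq Y$. First I would compute
\[\widetilde{D}\phi_0 = -2\pi\,\phi_0 \sum_\alpha B(X_\alpha, x)\otimes X_\alpha^\vee,\]
using the symmetry of $B$. Differentiating a second time, I get
\[\partial_{X_\beta}\partial_{X_\alpha}\phi_0 = (2\pi)^2\,\phi_0\, B(X_\beta,x) B(X_\alpha,x) \;-\; 2\pi\,\phi_0\, B(X_\alpha, X_\beta).\]

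Next, the pairing appearing in $D_T$ can be made explicit. Writing $J_2' X_\alpha = \sum_\beta B_{\beta\alpha} X_\beta^\vee$ under the identification $Y \simeq X^\vee$ used in the definition of $D_T$, the matrix of $J_2'|_X$ in these bases is precisely the Gram matrix $(B_{\beta\alpha})$. Inverting gives $\langle (J_2')^{-1} X_\alpha^\vee, X_\beta^\vee\rangle_{V_8} = (B^{-1})_{\beta\alpha}$. Plugging this into the definition of $D_T$, the quadratic-in-$x$ term becomes
\[(2\pi)^2\,\phi_0 \sum_{\alpha,\beta,\gamma,\delta} B_{\beta\gamma} B_{\alpha\delta} (B^{-1})_{\beta\alpha}\, x_\gamma x_\delta = (2\pi)^2\,\phi_0\, B(x,x) = 2(2\pi)^2\,\phi_0\, q_T^0(x),\]
using $B^T = B$ and $B\cdot B^{-1}=I$, while the constant term becomes
\[-2\pi\,\phi_0 \sum_{\alpha,\beta} B_{\alpha\beta} (B^{-1})_{\beta\alpha} = -2\pi n\,\phi_0,\]
where $n = \dim X = 4\dim(C)$.

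Combining, $D_T\phi_0 = 8\pi^2\, q_T^0(x)\,\phi_0 - 2\pi n\,\phi_0$, so
\[d\omega(-f)\phi_0 = -\frac{1}{4\pi i}\bigl(8\pi^2 q_T^0(x) - 2\pi n\bigr)\phi_0 = 2\pi i\, q_T^0(x)\,\phi_0 - \frac{in}{2}\,\phi_0.\]
Adding $d\omega(e)\phi_0 = -2\pi i\, q_T^0(x)\,\phi_0$, the quadratic terms cancel and we are left with $d\omega(e-f)\phi_0 = -\frac{in}{2}\phi_0$, as desired. The only real point requiring care is the bookkeeping for the identifications $Y \simeq X^\vee$ and the resulting matrix inversion: this is what makes the cross terms collapse cleanly to $B(x,x)$ and the trace to $\dim X$. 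All other steps are routine differentiation of a Gaussian.
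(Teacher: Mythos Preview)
Your proof is correct and follows essentially the same approach as the paper's: both use the formulas $d\omega(e)\phi(x)=-2\pi i\,q_T^0(x)\phi(x)$ and $d\omega(-f)=-\tfrac{1}{4\pi i}D_T$, compute $D_T\phi_0$ on the Gaussian, and observe that the quadratic terms cancel. The only cosmetic difference is that the paper keeps the computation coordinate-free (writing $\widetilde{D}\phi_0=-2C\,\phi_0\otimes J_2'x$ and reading off $D_T\phi_0=((-2C)^2\langle x,J_2'x\rangle-2Cn)\phi_0$ directly), whereas you introduce the Gram matrix $B_{\alpha\beta}=\langle X_\alpha,J_2'X_\beta\rangle$ and track indices explicitly to reach the same result.
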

	\begin{proof} We have $d\omega(e) \phi_0(x) = -\pi i \langle x, J_2' x \rangle \phi_0(x)$.  We now compute $D_T \phi_0$.  Let $e_1, \ldots, e_n$ be a basis of $X(\R)$. 
		
		One has $\widetilde{D}\phi_0(x) = -2 C \phi_0(x) \otimes J_2' x$.  Thus 
		\[ \widetilde{D}^2 \phi_0(x) = (-2C)^2\phi_0(x) \otimes J_2'x \otimes J_2'x + (-2C)\phi_0(x) \otimes \sum_{i,j} J_2' e_i \otimes e_j^{\vee}.\]
		Thus
		\[ D_T \phi_0(x) = ((-2C)^2 \langle x, J_2' x \rangle + (-2C)n)\phi_0(x).\]
		Thus if $C = \pi$ then
		\[ d\omega(-f)\phi_0 = -\frac{1}{4\pi i} D_T \phi_0 = (i \pi \langle x, J_2' x \rangle - \frac{i}{2} n) \phi_0.\]
	\end{proof}
	
	Because $\phi_0$ is a Gaussian, it is easy to determine how other compact Lie algebra elements act. 
	
	\begin{lemma}\label{lem:KGaussianR2} Let $\widetilde{U}(V_8;J_2')$ denote the subgroup of $\widetilde{\Sp(V_8)}(\R)$ that commutes with (a preimage) of $J_2'$, and let $\SU(V_8;J_2')$ denote its derived group.  If $k \in \SU(V_8;J_2')$, then $\omega(k) \phi_0 = \phi_0$.  In particular, suppose $k \in M'(\R)$ is in the derived group of the commutator of $\SO(2) \approx \{ \exp(t (e-f))\} \subseteq M'(\R)$, then $\omega(k)\phi_0 = \phi_0$.
	\end{lemma}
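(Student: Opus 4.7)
The plan is to exhibit $\phi_0$ as the Fock-model vacuum for the positive complex structure $J_2'$, deduce that $\widetilde{U}(V_8;J_2')(\R)$ acts on $\phi_0$ through a continuous character, and then observe that any such character is trivial on the perfect derived group $\SU(V_8;J_2')$.

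First, I would decompose $V_8 \otimes \C = V^+ \oplus V^-$ into the $(\pm i)$-eigenspaces of $J_2'$, and verify the holomorphic-annihilation identity $d\omega(Z)\phi_0 = 0$ for every $Z \in V^-$. Writing $Z = X + i J_2' X$ with $X \in V_8(\R)$, this reduces to a computation exactly parallel to Lemma \ref{lem:KGaussianR1}: in the Schr\"odinger realization $d\omega$ is multiplication by an affine-linear function for $X \in V_8^{[0]}$ and a partial-derivative operator (dual to $J_2'$, in the sense encoded by $D_T$) for $X \in V_8^{[1]}$, and the Gaussian structure of $\phi_0 = e^{-\pi q_T^0(x)}$ forces the multiplication and derivative contributions in $d\omega(Z)\phi_0$ to cancel. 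Standard Fock-space theory then identifies $\C\phi_0$ as the full (one-dimensional) space of Schwartz joint annihilators of these operators.

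Since every $k \in U(V_8;J_2')$ commutes with $J_2'$, it preserves $V^\pm$ and therefore preserves the line of joint annihilators, so $\omega(k)\phi_0 = \chi(k)\phi_0$ defines a continuous character $\chi: \widetilde{U}(V_8;J_2')(\R) \to \C^\times$. Restricted to the connected semisimple subgroup $\SU(V_8;J_2')$, which equals its own commutator subgroup (cf.\ the Platonov--Rapinchuk citation already used in Proposition \ref{prop:SUsplitting}), $\chi$ must be trivial. This proves the first assertion. For the ``in particular'' clause, Lemma \ref{lem:AdJ2e} gives $Ad(J_2')(e-f) = e - f$, so the action of $e-f$ on $V_8$ commutes with $J_2'$; combined with the weight $-in/2$ computed in Lemma \ref{lem:KGaussianR1}, this action is forced to be a nonzero scalar multiple of $J_2'$ itself (the unique such generator of the one-dimensional center of $Lie(U(V_8;J_2'))$). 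Consequently any element of $M'(\R)$ centralizing $\{\exp(t(e-f))\}$ commutes with $J_2'$, lies in $\widetilde{U}(V_8;J_2')(\R)$, and its derived group lies in $\SU(V_8;J_2')$, where $\chi \equiv 1$.

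The main obstacle is the Fock-vacuum computation itself: the cancellation of Lemma \ref{lem:KGaussianR1} must be run not just for the specific element $e - f$ but uniformly for all $Z$ in the full complex vector space $V^-$, and one must keep careful track of the factors of $2\pi$, $i$, and $J_2'$ that appear when passing between the multiplication operators (for $V_8^{[0]}$) and the $J_2'$-twisted differentiation operators (for $V_8^{[1]}$) in the Schr\"odinger model. Once set up, however, the cancellation is forced by exactly the same mechanism that made Lemma \ref{lem:KGaussianR1} work.
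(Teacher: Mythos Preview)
Your Fock-vacuum argument for the first assertion is correct and is essentially an unpacking of what the paper cites as ``well-known'': the Gaussian is the unique Schwartz vector annihilated by $V^-$, so $\widetilde{U}(V_8;J_2')$ preserves the line $\C\phi_0$, and the resulting character is trivial on the perfect group $\SU(V_8;J_2')$.

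The gap is in your ``in particular'' clause. From Lemma~\ref{lem:AdJ2e} you correctly deduce $Ad(J_2')(e-f)=e-f$, hence $ad(e-f)|_{V_8}\in\mathfrak{u}(V_8;J_2')$. But you then assert that the nonzero weight $-in/2$ from Lemma~\ref{lem:KGaussianR1} forces $ad(e-f)|_{V_8}$ to be a scalar multiple of $J_2'$. That inference is wrong: a nonzero weight on $\phi_0$ only tells you that the projection of $ad(e-f)|_{V_8}$ to the center $\R J_2'$ of $\mathfrak{u}(V_8;J_2')$ is nonzero; it says nothing about the $\mathfrak{su}(V_8;J_2')$-component. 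If that component were nonzero, an element of $\Sp(V_8)$ centralizing $ad(e-f)|_{V_8}$ need not centralize $J_2'$, and your containment of the $\SO(2)$-centralizer inside $\widetilde{U}(V_8;J_2')$ would fail.

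The paper closes this gap by a direct computation rather than a weight argument: the formulas $ad(e)(u,u')_X=(-u',u)_Y$, $ad(e)(v,v')_Y=0$, $ad(-f)(u,u')_X=0$, $ad(-f)(v,v')_Y=(-v',v)_X$ (all established earlier in the section) show that $ad(e-f)|_{V_8}$ equals $-J_2'$ exactly, whence $\exp(\tfrac{\pi}{2}\,ad(e-f))=-J_2'$. Once $ad(e-f)|_{V_8}$ is literally a scalar multiple of $J_2'$, centralizing $\{\exp(t(e-f))\}$ inside $\Sp(V_8)$ is the same as centralizing $J_2'$, and the derived group of that centralizer lands in $\SU(V_8;J_2')$ as needed. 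You can repair your argument simply by replacing the weight step with this four-line computation.
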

	\begin{proof} The statement about the action of $\SU(V_8;J_2')$ on the Gaussian in the Schrodinger model of the Weil representation is well-known.  The second statement follows from the fact that $\exp(\frac{\pi}{2} ad(e-f)) = -J_2'$.  To see this equality, recall that we have proved the following identities:
		\begin{itemize}
			\item $ad(e)((u,u')_X) = (-u',u)_Y$
			\item $ad(e)((v,v')_Y)  = 0$
			\item $ad(-f)((u,u')_X) = 0$
			\item $ad(-f)((v,v')_Y) = (-v',v)_X$.
		\end{itemize}
		If $U$ is an endomorphism of some vector space with $U^2=-1$, then $\exp(t U ) = \cos(t) 1 + \sin(t) U$.  Thus $\exp(\frac{\pi}{2} ad(e-f)) = -J_2'$ as claimed.
	\end{proof}
	
	\subsection{The explicit integral}
	The purpose of this subsection is to compute the $\C$-valued integral
	\[ I_{T,\infty}(w,g;\phi)=\int_{(M_P \cap M_R^T)(\R)}{\langle W_{\ell,w}(xg), (x + y)^{2\ell} \rangle_K (\omega_{\chi_T}(xg)\phi)(0)\,dx}\]
	if $g \in M'(\R)$, $w \in \mathcal{R}$, and where $\phi = \phi_0$ is the Gaussian.  By Proposition \ref{prop:FJT1}, $I_{T,\infty}(ng,\phi) = \xi_w(n) I_{T,\infty}(g,\phi)$ if $n \in N_{P_R}^T(\R)$.  
	
	Some of our arguments also work on the vector-valued integral
	\[ J_{T,\infty}(w,g;\phi)=\int_{(M_P \cap M_R^T)(\R)}{ W_{\ell,w}(xg) (\omega_{\chi_T}(xg)\phi)(0)\,dx},\]
	and we will phrase some computations as pertaining to this integral.
	
	Let $K_{M'}$ denote the identity component of the maximal compact subgroup of $M'(\R)$ determined by our Cartan involution on $\g(J)$.  We next compute $I_{T,\infty}(gk;\phi_0)$ for $k \in K_{M'}$.  We begin the following lemma.   
	\begin{lemma}\label{lem:pr(e-f)} Let $\mathrm{pr}_{\su_2}: \g(J) \rightarrow \su(2)$ be the projection to the long root $\su_2$, and let $e_{\ell},h_{\ell}, f_{\ell}$ denote the $\sl_2$-triple in $\su_2 \otimes \C$ from \cite[section 5.1]{pollackQDS}.  Then $\mathrm{pr}_{\su(2)}(e-f) = i(e_{\ell}+f_{\ell})$.  Moreover, if $k \in K_J$ is in the derived group of the centralizer of $\SO(2) \approx \{ \exp(t (e-f))\} \subseteq M'(\R)$, then $k$ acts trivially on $\Vell$.
	\end{lemma}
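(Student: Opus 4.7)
The plan is to establish the two assertions of the lemma in sequence. First, decompose
\[e - f = (\delta_3 \otimes e_{11} + v_3 \otimes e_{11}) + (E_{21} - E_{12}).\]
The second summand was analyzed in the proof of the theorem of Subsection~\ref{subsec:holMFQ}, where it was shown that $\mathrm{pr}_{\su_2}(i(E_{12}-E_{21})) = \tfrac{1}{2}(e_\ell + f_\ell)$, and hence $\mathrm{pr}_{\su_2}(E_{21}-E_{12}) = \tfrac{i}{2}(e_\ell + f_\ell)$. For the first summand, I would apply the same machinery: $\delta_3 \otimes e_{11}$ and $v_3 \otimes e_{11}$ lie respectively in $(V_3 \otimes J)^\vee$ and $V_3 \otimes J$ and are exchanged (up to sign) by the Cartan involution, so their sum lies in $\k$. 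The maps $u$ and $\mathfrak{so}_3$ from \cite[section 6]{pollackQDS}, which are the essential ingredients in the projection computation for $i(E_{12}-E_{21})$, apply equally well to this second combination, and the explicit calculation should produce $\mathrm{pr}_{\su_2}(\delta_3 \otimes e_{11} + v_3 \otimes e_{11}) = \tfrac{i}{2}(e_\ell+f_\ell)$. Adding the two contributions yields $\mathrm{pr}_{\su_2}(e - f) = i(e_\ell + f_\ell)$.

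For the second assertion, I would exploit the direct-sum decomposition $\k = \su_2 \oplus \k'$ into commuting ideals, where $\su_2$ is the distinguished long-root summand and $\Vell$ is pulled back along the quotient $K_J \to \SU_2/\mu_2$, whose kernel at the Lie algebra level is $\k'$. Write $e - f = Z_1 + Z_2$ with $Z_1 \in \su_2$ and $Z_2 \in \k'$; by the first part, $Z_1 \neq 0$. If $W = W_1 + W_2 \in \k$ is decomposed analogously, then the commutativity of $\su_2$ and $\k'$ gives $[e-f, W] = [Z_1, W_1] + [Z_2, W_2]$, so the Lie algebra of the centralizer of $\SO(2) = \exp(\R(e-f))$ in $K_J$ equals $\mathfrak{c}_{\su_2}(Z_1) \oplus \mathfrak{c}_{\k'}(Z_2)$. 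Since $Z_1$ is a nonzero element of the three-dimensional Lie algebra $\su_2$, the centralizer $\mathfrak{c}_{\su_2}(Z_1)$ is the one-dimensional abelian subalgebra $\R Z_1$. Therefore the derived subalgebra of the centralizer equals $[\mathfrak{c}_{\k'}(Z_2), \mathfrak{c}_{\k'}(Z_2)] \subseteq \k'$, so the corresponding derived subgroup lies in the kernel of $K_J \to \SU_2/\mu_2$ and acts trivially on $\Vell$.

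The main technical obstacle is verifying the explicit projection of $\delta_3 \otimes e_{11} + v_3 \otimes e_{11}$ to $\su_2$: this requires carefully unwinding the definitions of $\mathrm{pr}_{\su_2}$, $u$, and $\mathfrak{so}_3$ from \cite[section 6]{pollackQDS} and tracking the contributions of the $V_3 \otimes J$ and $(V_3 \otimes J)^\vee$ summands. Once the projection formula is in hand, the triviality of the action of the derived centralizer is a clean structural consequence of the direct-factor decomposition of $\k$ relative to its long-root $\SU_2$, requiring no further computation.
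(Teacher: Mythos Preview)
Your argument for the second assertion is essentially the same as the paper's, only spelled out in more detail: the paper compresses your centralizer analysis into the single remark that a nontrivial $\su_2$-projection of $e-f$ forces the derived centralizer to have trivial $\su_2$-projection.

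For the first assertion, however, the paper takes a different and cleaner route. Rather than splitting $e-f$ into $(E_{21}-E_{12}) + (\delta_3\otimes e_{11} + v_3\otimes e_{11})$ and computing each projection separately, the paper rewrites $e-f$ in the $\Z/2\Z$-grading $\g(J)=\sl_2\oplus\h(J)^0\oplus V_2\otimes W_J$ as
\[
e-f = e\otimes(1,0,e_{11},0) + f\otimes(0,e_{11},0,1),
\]
and then applies the universal projection formula
\[
\mathrm{pr}_{\su_2}(X) = B(X,f_\ell)\,e_\ell + \tfrac{1}{2}B(X,h_\ell)\,h_\ell + B(X,e_\ell)\,f_\ell,
\]
where $B$ is the invariant form on $\g(J)$. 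Since $e_\ell$, $f_\ell$, $h_\ell$ are given explicitly in \cite[section 5.1]{pollackQDS} as elements of $V_2\otimes W_J(\C)$ and $\h(J)^0$, the three pairings are immediate.

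Your proposed approach has a soft spot: the maps $u$ and $\mathfrak{so}_3$ you invoke from \cite[section 6]{pollackQDS} are tailored to the $\sl_3$ summand of the $\Z/3\Z$-model (this is how $i(E_{12}-E_{21}) = u(-iv_3)$ arises), and it is not clear they apply ``equally well'' to $\delta_3\otimes e_{11} + v_3\otimes e_{11}$, which lives in $(V_3\otimes J)^\vee \oplus (V_3\otimes J)$ rather than $\sl_3$. You acknowledge this as the main obstacle, but the paper's method via the invariant form $B$ sidesteps it entirely and treats all of $e-f$ uniformly in one short computation.
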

	\begin{proof}
		Recall from \cite[section 5.1]{pollackQDS} that
		\begin{itemize}
			\item $e_{\ell} = \frac{1}{4}(ie+f) \otimes r_0(i)$
			\item $f_{\ell} = \frac{1}{4}(ie-f) \otimes r_0(-i)$
			\item $h_{\ell} = \frac{i}{2}\left( \mm{0}{1}{-1}{0} + n_L(-1_J) + n_{L}^\vee(1_J)\right)$.
		\end{itemize}
		Here $r_0(i) = (1,-i 1_J, -1_J,i ) \in W_J(\C)$.  Applying \cite[section 4.2.4]{pollackQDS}, 
		\[e-f = e \otimes (1,0,e_{11},0) + f \otimes (0,e_{11},0,1) \]
		where the notations $``e, f"$ are overloaded.  
		
		For any $X \in \g(J)$, 
		\[\mathrm{pr}_{\su(2)}(X) = B(X,f_{\ell}) e_{\ell} + \frac{1}{2} B(X,h_{\ell})h_{\ell} + B(X,e_{\ell}) f_{\ell}\]
		where the pairing $B$ on $\g(J)$ is from \cite[section 4.1.2]{pollackQDS}, with $\alpha = \frac{1}{2}$.
		Thus $\mathrm{pr}_{\su(2)}(e-f) = i(e_{\ell}+f_{\ell})$.
		
		For the second part of the lemma, simply observe that, because the $\su_2$ projection of $e-f$ is nontrivial, the derived group of the centralizer of $\SO(2)$ has trivial $\su_2$ projection.
	\end{proof}
	
	Let $j_{M'}: M'(\R) \rightarrow \C^\times$ denote the function
	\[j_{M'}(g) = (g (b_1 + i (b_2+b_{-2})+ b_{-1}),b_1)_{V_7} = (g (E_{13}+v_2 \otimes e_{11} + i(v_1 \otimes e_{11}-E_{23})),E_{13})_{V_7}.\]
	\begin{lemma}\label{lem:KM'action}The function $j_{M'}$, restricted to $K_{M'}$, is a character. If $k \in K_{M'}$, then $I_{T,\infty}(gk,\phi_0) = j_{M'}(k)^{\ell - \dim(C)} I_{T,\infty}(g,\phi_0)$.
	\end{lemma}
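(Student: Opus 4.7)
The plan is to verify both assertions by splitting $K_{M'}$ into two commuting pieces: the one-parameter subgroup $T := \{\exp(t(e-f)) : t \in \R\}$, and its commutator $K_{M'}^{\mathrm{der}} = [K_{M'},K_{M'}]$, which centralizes $T$. Since $M'$ is isogenous to $\SO(2,3+\dim(C))^0$, the maximal compact $K_{M'}$ is connected and decomposes as $T \cdot K_{M'}^{\mathrm{der}}$, with $T$ realizing the central $\SO(2)$-factor. It will suffice to verify the two assertions separately on these two pieces.

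For the character assertion, I show that $K_{M'}$ preserves the isotropic complex line $\C \mathcal{v} \subset V_7(\C)$, where $\mathcal{v} := (b_1+b_{-1}) + i(b_2+b_{-2})$. Set $u := b_1+b_{-1}$ and $v := b_2+b_{-2}$: one has $q_{V_7}(u) = q_{V_7}(v) = 1$, $(u,v)_{V_7} = 0$, and $(u,T)_{V_7} = (v,T)_{V_7} = 0$ (using $T = \delta_3 \otimes T' \in \delta_3 \otimes H_2(C)$). Hence $\mathrm{Span}_\R(u,v)$ is a positive-definite $2$-plane in $T^\perp$. Since $M'$ acts on $T^\perp$ with signature $(2,3+\dim(C))$, this $2$-plane is maximal positive, and is therefore stabilized by $K_{M'}$, which acts on it by $\SO(2)$-rotations. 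Consequently $K_{M'}$ stabilizes $\C\mathcal{v}$, giving a character $\chi:K_{M'} \to \C^\times$ determined by $k \cdot \mathcal{v} = \chi(k)\mathcal{v}$; a short computation shows $(\mathcal{v},b_1)_{V_7} = (b_{-1},b_1)_{V_7} = 1$, so $j_{M'}(k) = (k\mathcal{v},b_1)_{V_7} = \chi(k)$ on $K_{M'}$.

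For the equivariance identity, I first observe that on $K_{M'}^{\mathrm{der}}$ all three ingredients are trivial: $j_{M'}(k)^{\ell-\dim(C)} = 1$ because characters vanish on derived subgroups; $\omega_{\chi_T}(k)\phi_0 = \phi_0$ by Lemma \ref{lem:KGaussianR2}; and $k$ acts trivially on $(x+y)^{2\ell} \in \Vell$ by the second assertion of Lemma \ref{lem:pr(e-f)}. Hence $I_{T,\infty}(gk,\phi_0) = I_{T,\infty}(g,\phi_0)$ for all $k \in K_{M'}^{\mathrm{der}}$, matching the claim.

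On $T$, for $k = \exp(t(e-f))$ I compute the three resulting factors. Using the map $\mathrm{Lie}(M_R^T) \to \wedge^2 V_7$ sending $e-f$ to $u \wedge v$, a direct calculation gives $(e-f)u = -2v$ and $(e-f)v = 2u$, hence $(e-f)\mathcal{v} = 2i\mathcal{v}$ and $j_{M'}(k) = e^{2it}$. Lemma \ref{lem:KGaussianR1} gives $\omega_{\chi_T}(k)\phi_0 = e^{-2\dim(C)\,it}\phi_0$ (recall $\dim X = 4\dim(C)$). By Lemma \ref{lem:pr(e-f)}, $\mathrm{pr}_{\su_2}(e-f) = i(e_\ell+f_\ell)$; the earlier identity $(E_{12}-E_{21})(x-y)^{2\ell} = i\ell(x-y)^{2\ell}$ together with tracelessness of $e_\ell+f_\ell \in \sl_2$ forces $(e_\ell+f_\ell)$ to act with eigenvalue $+1$ on $x+y$, so $k$ acts on $(x+y)^{2\ell}$ as $e^{2\ell it}$. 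Using $\langle W_{\ell,w}(xgk),(x+y)^{2\ell}\rangle_K = \langle W_{\ell,w}(xg), k\cdot(x+y)^{2\ell}\rangle_K$, this contributes a factor $e^{2\ell it}$ inside the integral. Multiplying, $I_{T,\infty}(gk,\phi_0) = e^{2(\ell-\dim(C))it}\,I_{T,\infty}(g,\phi_0) = j_{M'}(k)^{\ell-\dim(C)}\,I_{T,\infty}(g,\phi_0)$, as claimed. The main subtlety is the second paragraph: identifying $\mathrm{Span}_\R(u,v)$ as a $K_{M'}$-stable maximal positive $2$-plane in $T^\perp$, on which the character interpretation of $j_{M'}|_{K_{M'}}$ — and hence everything else — rests.
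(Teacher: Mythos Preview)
Your proof is correct and follows essentially the same strategy as the paper: decompose $K_{M'}$ into the $\SO(2)$ generated by $e-f$ and the derived subgroup, then invoke Lemmas~\ref{lem:pr(e-f)}, \ref{lem:KGaussianR1}, and \ref{lem:KGaussianR2} to compute the three factors. Your numerics (working with $e-f$ rather than $\theta=\tfrac{1}{2}(e-f)$) are consistent with the paper's.

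One point where your justification is looser than the paper's: you assert that $\mathrm{Span}_\R(u,v)$ ``is maximal positive, and is therefore stabilized by $K_{M'}$.'' Being maximal positive is not by itself enough---there are many maximal positive $2$-planes, and $K_{M'}$ stabilizes only the one singled out by the ambient Cartan involution on $\g(J)$, which you have not checked. The paper avoids this by arguing via eigenspaces: any $k$ centralizing $\{\exp(t\theta)\}$ preserves the $i$-eigenspace $\C\mathcal{v}$ of $ad(\theta)$ on $V_7(\C)$, and if $k$ lies in the derived group it must act trivially there. This only requires $\{\exp(t\theta)\}\subset K_{M'}$ (which both arguments take as given), not the stronger geometric identification you invoke. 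Your conclusion is correct, but the paper's route to it is slightly tighter.
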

	\begin{proof} Set $\theta= \frac{1}{2}(e-f)$.  First observe that 
		\[ad(\theta)(b_1 + b_{-1} + i (b_2+b_{-2})) = i (b_1 + b_{-1} + i (b_2+b_{-2})).\]
		Thus $\exp( t \theta)(b_1 + b_{-1} + i (b_2+b_{-2})) = e^{it} (b_1 + b_{-1} + i (b_2+b_{-2}))$.  If $k \in K_{M'}$ is in the derived group of the centralizer of $\SO(2) \approx \{\exp(t \theta): t \in \R\}$ then $k$ fixes $(b_1 + b_{-1} + i (b_2+b_{-2}))$.  Thus $j_{M'}: K_{M'} \rightarrow \C^\times$ is the unique character whose differential takes $\theta$ to $i$.
		
		By Lemma \ref{lem:KGaussianR1} and Lemma \ref{lem:KGaussianR2}, $\omega(k) \phi_0 = j_{M'}(k)^{-\dim(C)} \phi_0$.  By Lemma \ref{lem:pr(e-f)},
		\[ \langle W_{\ell,w}(gk), (x + y)^{2\ell} \rangle_{K_J} = \langle W_{\ell,w}(g), k (x+ y)^{2\ell} \rangle_{K_J} = j_{M'}(k)^{ \ell} \langle W_{\ell,w}(g), (x + y)^{2\ell} \rangle_{K_J}.\]
		This concludes the proof.
	\end{proof}
	
	We next understand $J_{T,\infty}(w,g,\phi)$ if $g \in (M_P \cap M')(\R)$.
	\begin{lemma}\label{lem:LeviM'action} Suppose $g \in (M_P \cap M')(\R)$.  Then $J_{T,\infty}(w,g,\phi) = \nu(g)^{\ell} |\nu(g)|^{-\dim(C)} J_{T,\infty}(w \cdot g, 1,\phi)$.
	\end{lemma}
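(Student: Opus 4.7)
The plan is to prove the lemma by direct computation, based on the substitution $x \mapsto g x g^{-1}$ in the integral defining $J_{T,\infty}(w,g,\phi)$. Since $g \in (M_P \cap M')(\R)$ lies in $M_P$ and in $M' \subseteq M_R$, conjugation by $g$ normalizes both $M_P$ and $N_R$, and hence preserves the integration domain. The key algebraic identity after the substitution $x = g y g^{-1}$ is $xg = gy$, which pulls the $g$ to the left so that its effect on the integrand can be evaluated via known left-equivariance properties.

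After the substitution, three ingredients enter. First, the explicit formula of Section 3 for $W_{\ell,w}$ on $M_P(\R)$, combined with the unipotency $\nu(y)=1$ and the identity $\alpha_w(gy) = \alpha_{w \cdot g}(y)$ (a direct consequence of $\langle u \cdot g, v \rangle = \langle u, g \cdot v \rangle$ and associativity of the right action), yields
\[
W_{\ell,w}(gy) = \nu(g)^\ell \, |\nu(g)| \, W_{\ell, w \cdot g}(y).
\]
Second, since $M_P$ centralizes $h_P$ and hence preserves both Lagrangians $X = V_8^{[0]}$ and $Y = V_8^{[1]}$, the group $M_P \cap M'$ lies in the Levi of the Siegel parabolic of $\Sp(V_8)$ stabilizing $Y$, so the Weil representation formula of Section \ref{sec:Weil} applied at $0 \in X$ gives
\[
(\omega_{\chi_T}(gy)\phi)(0) = |\det_Y(g)|^{-1/2} \, (\omega_{\chi_T}(y)\phi)(0).
\]
Third, the Jacobian of the conjugation substitution on the unipotent vector group $M_P \cap N_R = \exp(V_8^{[0]} \oplus V_7^{[0]})$ is $|\det(\mathrm{Ad}(g)|_{\mathrm{Lie}(M_P \cap N_R)})|$.

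Combining these factors produces
\[
J_{T,\infty}(w, g, \phi) = \nu(g)^\ell \, |\nu(g)| \, |\det_Y(g)|^{-1/2} \, |\det(\mathrm{Ad}(g)|_{\mathrm{Lie}(M_P \cap N_R)})| \, J_{T,\infty}(w \cdot g, 1, \phi).
\]
The main remaining task, which I expect to be the principal technical obstacle, is to verify that the product of absolute values collapses to $|\nu(g)|^{-\dim(C)}$. This is a direct weight computation using the $h_P$-eigenvalues: on each of the relevant subspaces the action of $g \in M_P \cap M'$ is controlled by the similitude character $\nu$ of $M_P = H_J$ on $W_J$. With $\dim V_8^{[1]} = 4\dim(C)$ and $\dim(V_8^{[0]} \oplus V_7^{[0]}) = 4\dim(C) + 1$, and keeping careful track of the right-action convention for the Weil representation's $\det_Y$, one computes $|\det_Y(g)| = |\nu(g)|^{-2\dim(C)}$ and $|\det(\mathrm{Ad}(g)|_{\mathrm{Lie}(M_P \cap N_R)})| = |\nu(g)|^{-(2\dim(C)+1)}$, so that the exponents combine as $1 + \dim(C) - 2\dim(C) - 1 = -\dim(C)$, giving the claimed formula.
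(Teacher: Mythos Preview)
Your proposal is correct and follows essentially the same approach as the paper. Both arguments perform the conjugation substitution to rewrite $xg$ as $gy$, use the $M_P$-equivariance $W_{\ell,w}(gy)=\nu(g)^{\ell}|\nu(g)|\,W_{\ell,w\cdot g}(y)$, invoke the Siegel--Levi formula for the Weil representation, and then reduce the remaining scalar factors to powers of $|\nu(g)|$ via $\det(g|_Y)=\nu(g)^{2\dim(C)}$ (Lemma~\ref{lem:x(p)}) together with $|g|_{V_7^{[0]}}=|\nu(g)|^{-1}$. The only cosmetic difference is bookkeeping: the paper first unwinds $(\omega_{\chi_T}(xg)\phi)(0)$ to $|g|_X^{-1/2}\phi(g^{-1}\cdot x)$ and then changes variables on $V_7^{[0]}\oplus V_8^{[0]}$ separately, arriving at the product $|\nu(g)|\cdot|g|_{V_7^{[0]}}\cdot|g|_X^{1/2}$, whereas you keep the Weil factor $|\det_Y(g)|^{-1/2}$ and the full adjoint Jacobian together; since $|\det_Y(g)|=|g|_X=|g|_Y^{-1}$ these regroupings are equivalent.
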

	\begin{proof} For a vector space $U$ on which $(M_P \cap M')(\R)$ acts, let $|g|_U$ denote the Jacobian of the left action of $g$ on $U$.  We have
		\begin{align*} J_{T,\infty}(w,g,\phi) &= \int_{(V_7^{[0]} + V_8^{[0]})(\R)}{ W_w(g (g^{-1} \cdot s) (g^{-1} \cdot x) ) |g|_X^{-1/2}\phi(g^{-1} \cdot x)\,ds\,dx} \\ &= \nu(g)^{\ell}|\nu(g)| |g|_{V_7^{[0]}} |g|_X^{1/2} \left( \int_{(V_7^{[0]} + V_8^{[0]})(\R)}{ W_{w \cdot g}( s x ) \phi(x)\,ds\,dx}\right).\end{align*}
		
		Now, if $g$ preserves the quadratic form on $V_7$, then $|g|_{V_7^{[0]}} = |\nu(g)|^{-1}$, by using $(E_{13}, v_{2}\otimes e_{11})_{V_7} =1$.  Likewise, $|g|_{X}^{1/2} = |g|_Y^{-1/2}$, and $|g|_Y$ can be computed in terms of the similitude.  Namely, one finds $|g|_Y^{-1/2} = |\nu(g)|^{-\dim(C)}$.  The lemma follows.
	\end{proof}
	
	It remains to compute $I_{T,\infty}(w,1,\phi_{0})$ as a function of $w$.  We will assume $w \in Lie(M_R)^{[1]} \oplus V_7^{[1]}$, as this suffices for our purposes. Here is the result.
	\begin{proposition}\label{prop:ITat1} Suppose 
		\[w = (a,b,c,d) =  2\pi (a', b',c',d') \in Lie(M_R)^{[1]} \oplus V_7^{[1]}\]
		is positive-definite and $\xi_w|_{V_7^{[1]}} = \xi_T^{-1}$, with $T$ normal.  Then there is a nonzero complex number $C' \in \C^\times$, possibly depending on $T$ but otherwise independent of $w$, so that 
		\[I_{T,\infty}(w,1,\phi_0) = C'  e^{-2\pi (-b'_{11}+d')}\]
		if $ -b_{11}' + d' > 0$ and $I_{T,\infty}(w,1,\phi_0) = 0$ if $-b_{11}' + d' < 0$.
	\end{proposition}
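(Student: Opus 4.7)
The plan is to evaluate $I_{T,\infty}(w,1,\phi_0)$ by direct computation, mirroring the strategy used for the parabolic $Q$ in the theorem preceding Proposition \ref{prop:GaussianQ}. I parameterize the integration domain $(M_P \cap N_R)(\R) = \exp((V_8^{[0]} + V_7^{[0]})(\R))$ by writing a generic element as $\exp(X)\exp(sv_2 \otimes e_{11})$ with $X \in V_8^{[0]}(\R)$ and $s \in \R$, so that Haar measure factors as $dX\,ds$ and the integral becomes iterated.

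In these coordinates I compute each ingredient of the integrand. The central quantity is $\alpha_w(\exp(X)\exp(sv_2\otimes e_{11})r_0(i)) = \langle w, \exp(X)\exp(sv_2\otimes e_{11})r_0(i)\rangle$: since $v_2\otimes e_{11}$ and the generators of $V_8^{[0]}$ all lie in $\h(J)$ and act nilpotently on $W_J$, these exponentials expand polynomially. Under the hypothesis on $w$ together with the normality and positivity of $T$, this expansion splits into (i) a quadratic-in-$X$ contribution whose imaginary part is positive-definite and matches the quadratic form $\langle x, J_2' x\rangle$ defining $\phi_0$, and (ii) a linear-in-$s$ term whose coefficient is precisely $-b'_{11}+d'$. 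The Weil representation factor $\omega_{\chi_T}(\exp(X)\exp(sv_2\otimes e_{11}))\phi_0(0)$ is evaluated via the formulas of Section \ref{sec:Weil}, producing $\phi_0(X)$ times a phase in $s$. Finally, the $K_J$-pairing with $(x+y)^{2\ell}$ collapses the multi-term Bessel sum in the explicit formula for $W_{\ell,w}$ to a single $K_\nu$, via cancellations parallel to those in Lemmas \ref{lem:KGaussianR1}, \ref{lem:KGaussianR2}, and \ref{lem:KM'action}.

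After substituting and performing a change of variable that aligns the $V_8^{[0]}$-integration with the Gaussian's positive-definite form, the $X$-integral is of exactly the shape treated by Theorem \ref{thm:defInt1}; that theorem evaluates it to an exponential times a polynomial in the auxiliary variable. The remaining $s$-integral is then of the form $\int_\R e^{-2\pi\mu|s|}e^{2\pi i s(-b'_{11}+d')}\,ds$ up to an overall constant, which yields the claimed $e^{-2\pi(-b'_{11}+d')}$ when $-b'_{11}+d'>0$ and vanishes (by a sign/contour argument) when $-b'_{11}+d'<0$. The principal obstacle I anticipate is the bookkeeping required to show that the quadratic-in-$X$ expansion of $\alpha_w$ lines up with the Gaussian's form exactly as needed to trigger Theorem \ref{thm:defInt1}, together with tracking the phases produced by Baker--Campbell--Hausdorff so that all contributions outside the single exponential in the conclusion cancel; the vanishing in the negative case, while conceptually clear from decay considerations, requires explicit sign analysis of the integrand.
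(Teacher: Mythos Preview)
Your broad strategy—parameterize $(M_P\cap N_R)(\R)$ by $(s,X)$, expand $\alpha_w$ polynomially, and feed the result into one of the definite integrals of the appendix—is exactly what the paper does. But the execution is inverted in two coupled ways, and this is a genuine gap.

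First, the order of integration and the appendix result are swapped. The paper integrates in $s$ \emph{first}: with $x\in X(\R)$ fixed, $\alpha_w(\exp(sv_2\otimes e_{11})\exp(x))=z(x)+sw''$ is \emph{linear} in $s$, so for each $v$ one faces exactly the one-dimensional integral $I_v(z,w'')=\int_\R\bigl(\tfrac{z+s\beta}{|z+s\beta|}\bigr)^v K_v(|z+s\beta|)\,ds$ of Proposition~\ref{prop:defInt2}, not Theorem~\ref{thm:defInt1}. The output of that proposition is the exponential $e^{-|\delta(x)|}$ with $\delta(x)=\mathrm{Im}((w'')^*z(x))/|w''|$. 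Your plan to do the $X$-integral first would leave a multi-dimensional integral of $K_v$ against the Gaussian with $z(x)$ \emph{quadratic} in $x$; Theorem~\ref{thm:defInt1} is a one-variable identity and does not apply here.

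Second, the structural identifications are off. The coefficient of $s$ in $\alpha_w$ is $w''=a-c_{11}-i(b,E)$, not $-b'_{11}+d'$; the latter quantity appears only after evaluating the $s$-integral, inside the exponent $\delta(x)=2\pi\bigl((T',E)-b'_{11}+d'-(T',u^{\#})-(T'\times e_{11},v^{\#})\bigr)$. The miracle—and the point you are missing—is that the $x$-dependent part of $\delta(x)$ is \emph{exactly} $-\log\phi_0(x)$, so the result of the $s$-integral is a constant times $\phi_0(x)$, and the remaining $x$-integral collapses to $\int_{X(\R)}\phi_0(x)^2\,dx$. There is no reduction to a single $K_\nu$ via the $K_J$-pairing; rather, every $v$-term of the Whittaker sum contributes $(-1)^v$ times the same exponential, and the pairing with $(x+y)^{2\ell}$ just assembles the nonzero scalar $\sum_v(-1)^v\binom{2\ell}{\ell+v}^{-1}$-type constant. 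Finally, the sign dichotomy in the statement is handled not by a contour argument but by observing that $w>0$ forces $b'_{11}<0$ and $d'>0$, so $\delta(x)>0$ throughout and Proposition~\ref{prop:defInt2} applies with the favorable sign.
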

	\begin{proof}
		We compute the integral
		\[J_{T,\infty}(w,1,\phi_0) = \int_{X(\R)}\int_{\R}{W_{\ell;w}(\exp(s v_2 \otimes e_{11}) \exp(x))\phi_0(x)\,ds\,dx}\]
		in two steps, first doing the $s$ integral then doing the $x$ integral.
		
		To do the computation, we use the explicit formula \cite{pollackQDS} for $W_{\ell,w}(g)$: for $g \in M_P(\R)$, 
		\[W_{\ell,w}(g) = \nu(g)^{\ell}|\nu(g)| \sum_{-\ell \leq v \leq \ell}{\left(\frac{|\alpha_w(g)|}{\alpha_w(g)}\right)^{v} K_v(|\alpha_w(g)|) \frac{x^{\ell+v} y^{\ell-v}}{(\ell+v)! (\ell-v)!}}\]
		where $\alpha_w(g) = \langle w, g r_0(i)\rangle$.
		
		Set $w' = \exp(-v_2 \otimes e_{11}) w$.  If $w = (a,b,c,d)$, then $w' = (0, a e_{11}, b \times e_{11}, (c,e_{11})) \in V_7^{[1]}$.  Note that if $x \in X(\R)$, then $w' \cdot \exp(x) = w'$.  Thus $w'' = \langle w', x r_0(i) \rangle = \langle w', r_0(i) \rangle$ is independent of $x$.  In coordinates, $w'' = a-c_{11} -i(b,E)$.
		
		Let $z(x) = \langle w, x r_0(i) \rangle$. 	We have 
		\[\alpha_w(\exp(s v_2 \otimes e_{11}) \exp(x)) = \langle w, \exp(s v_2 \otimes e_{11}) \exp(x) r_0(i)\rangle = z(x) + s w''.\]
		Using the explicit formula for $W_{\ell,w}(g)$, we therefore must compute the integral
		\[ I_{v}(z,w'') = \int_{\R}{ \left(\frac{z+sw''}{|z+sw''|}\right)^v K_v(|z+sw''|)\,ds}.\]
		This integral is computed in Proposition \ref{prop:defInt2} in terms of the quantity $\delta(x) = \frac{Im((w'')^* z(x))}{|w''|}$.   
		
		To make the formula explict, we compute $z(x)$ in more detail.  Suppose $x = \Phi_{E,u} + v_2 \otimes v$. One has
		\begin{align*}[\Phi_{E,u} + v_2 \otimes v, r_0(i)] &= [\Phi_{E,u}+v_2 \otimes v, E_{12} - i v_1 \otimes 1_J - \delta_3 \otimes 1_J + i E_{23}] \\ &= (-i) v_1 \otimes \Phi_{E,u}(1_J) -\delta_3 \otimes \Phi_{E,u}(1_J) - v_1 \otimes v + i \delta_3 \otimes (v \times 1_J) \\
			&= (-i)v_1 \otimes u + \delta_3 \otimes u - v_1 \otimes v + i \delta_3 \otimes (-v) \\
			&= -i v_1 \otimes (u-iv) + \delta_3 \otimes (u-iv).
		\end{align*}
		Continuing,
		\begin{align*}
			[\Phi_{E,u} + v_2 \otimes v, [\Phi_{E,u}+v_2 \otimes v,r_0(i)]] &= [\Phi_{E,u} + v_2 \otimes v, -i v_1 \otimes (u-iv) + \delta_3 \otimes (u-iv)] \\ 
			&= -iv_1 \otimes \Phi_{E,u}(u-iv) + \delta_3 \otimes \Phi_{E,u}(u-iv) \\&\,\,\,\,+ i \delta_3 \otimes (v \times (u-iv)) - (u-iv,v) E_{23} \\
			&=  -i (u,u-iv)v_1 \otimes e_{11} - \delta_3 \otimes (u \times (u-iv) + (u,u-iv)E) \\&\,\,\,\,+ i \delta_3 \otimes (v \times (u-iv)) - (u-iv,v) E_{23}.
		\end{align*}
		
		Thus
		\[
		z(x) = \langle w, r_0(i) \rangle - \frac{1}{2} a (u-iv,v) + (b,(u-iv)^\#) +\frac{1}{2}(u,u-iv)(b,E) - \frac{i}{2} (u,u-iv) c_{11}.
		\]
		
		Assume $a=0, c_{11} =0, b=b_{11} e_{11} - e_{11} \times (2\pi T')$, as we can because $T$ is assumed normal and $\xi_w(v) = \psi((T,v)_{V_7})$ for $v \in V_7^{[1]}$.  Then
		\begin{align*} \delta(x) &=\frac{Im((w'')^* \langle w, r_0(i) \rangle)}{|w''|} + \frac{1}{|w''|} (b,E)(-(b \times e_{11},u^\#) - (b,v^\#)) \\
			&= - Re( \langle w, r_0(i) \rangle) - (2\pi T',u^\#) - (2\pi T' \times e_{11},v^\#)\\
			&= 2\pi((T',E) - b'_{11}+d' - (T',u^\#) - (T' \times e_{11},v^\#)).
		\end{align*}
		Because $w > 0$, $b_{11}' < 0$ and $d' > 0$.  Thus $\delta(x) > 0$ for all $x$.  Thus
		\begin{align} \nonumber I_v(z(x),w'') &= \frac{1}{2} (-1)^v (T',E)^{-1} e^{-\delta(x)} \\ \label{eqn:Ivphi0}
			&=\frac{1}{2} (-1)^v (T',E)^{-1} e^{-2\pi ((T',E) - b'_{11}+d')} \phi_0(x)
		\end{align}
		where we are using that $\phi_0(x) = e^{2\pi ((T',u^\#)+(T'\times e_{11},v^\#))}$.
		
		The above quantity is equal to its complex conjugate, and 
		\[\left\langle \sum_{v} (-1)^v \frac{x^{\ell+v} y^{\ell-v}}{(\ell+v)! (\ell-v)!}, (x+y)^{2\ell} \right\rangle_{K_J} \neq 0,\]
		so
		\[I_{T,\infty}(w,1,\phi_0) = C'  e^{-2\pi (-b'_{11}+d')} \int_{X(\R)}{\phi_0(x)^2\,dx}.\]
		This proves the proposition.
	\end{proof}

	The following corollary will be used in section \ref{sec:converseThm}.
	\begin{corollary}\label{cor:FJotherphi} Suppose $\phi \in S(X(\R))$ satisfies $\int_{X(\R)}{\overline{\phi_0(x)} \phi(x)\,dx} = 0$.  Then $J_{T,\infty}(w,g;\phi) = 0$ on $M'(\R)^0$.
	\end{corollary}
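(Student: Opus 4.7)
My plan is to extract from the proof of Proposition \ref{prop:ITat1} the stronger statement that, for $g = 1$, the linear functional $\phi \mapsto J_{T,\infty}(w, 1, \phi)$ factors through the pairing $\phi \mapsto \int_{X(\R)} \overline{\phi_0(x)} \phi(x)\, dx$. Indeed, in the expression
\[J_{T,\infty}(w, 1, \phi) = \int_{X(\R)} \int_\R W_{\ell,w}(\exp(s v_2 \otimes e_{11})\exp(x))\,(\omega_{\chi_T}(\exp(s v_2 \otimes e_{11})\exp(x))\phi)(0)\, ds\, dx,\]
the element $\exp(s v_2 \otimes e_{11})$ lies in the center of the Heisenberg group and acts trivially on the Weil representation since $(T, v_2 \otimes e_{11})_{V_7} = 0$, while $(\omega_{\chi_T}(\exp(x))\phi)(0) = \phi(x)$ for $x \in X$. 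Hence $\phi(x)$ pulls out of the inner $s$-integral, which then depends only on $W_{\ell,w}$ and, by exactly the calculation of Proposition \ref{prop:ITat1} via the identity \eqref{eqn:Ivphi0}, evaluates to $C(w) \phi_0(x) V$ for some $C(w) \in \C$ and the fixed vector $V = \sum_{-\ell \leq v \leq \ell} (-1)^v \frac{x^{\ell+v} y^{\ell-v}}{(\ell+v)!(\ell-v)!} \in \Vell$. Since $\phi_0$ is real-valued, this gives
\[J_{T,\infty}(w, 1, \phi) = C(w)\, V \int_{X(\R)} \overline{\phi_0(x)}\, \phi(x)\, dx,\]
which vanishes by hypothesis.

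To promote this from $g = 1$ to all of $M'(\R)^0$, I would use the Iwasawa-type decomposition $M'(\R)^0 = N_{P_{M'}}(\R) \cdot (M_P \cap M')(\R)^0 \cdot K_{M'}$, where $P_{M'} = P \cap M'$ is a parabolic of $M'$, and handle each factor in turn. For $n \in N_{P_{M'}}(\R) \subseteq N_{P_R}^T(\R)$, the equivariance $J_{T,\infty}(w, ng, \phi) = \xi_w(n) J_{T,\infty}(w, g, \phi)$ established in the proof of Proposition \ref{prop:FJT1} absorbs the unipotent factor. For $m \in (M_P \cap M')(\R)^0$, Lemma \ref{lem:LeviM'action} reduces matters to the previous case with $w$ replaced by $w \cdot m$, which continues to lie in $Lie(M_R)^{[1]} \oplus V_7^{[1]}$ with $\xi_{w \cdot m}|_{V_7^{[1]}} = \xi_T^{-1}$ since $M_P \cap M_R^T$ preserves both the $h_P$- and $h_R$-gradings and fixes $T$. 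Finally, for $k \in K_{M'} \subseteq K_J$, the $K_J$-equivariance $W_{\ell,w}(\cdot k) = k^{-1} W_{\ell,w}(\cdot)$ yields
\[J_{T,\infty}(w, gk, \phi) = k^{-1} \cdot J_{T,\infty}(w, g, \omega_{\chi_T}(k)\phi).\]
Lemmas \ref{lem:KGaussianR1} and \ref{lem:KGaussianR2} together imply that $K_{M'}$ acts on $\phi_0$ by a character $c: K_{M'} \to \C^\times$; unitarity of the Weil representation then gives
\[\int_{X(\R)} \overline{\phi_0} \cdot \omega_{\chi_T}(k)\phi\, dx = \overline{c(k)}^{-1} \int_{X(\R)} \overline{\omega_{\chi_T}(k)\phi_0} \cdot \omega_{\chi_T}(k)\phi\, dx = \overline{c(k)}^{-1} \int_{X(\R)} \overline{\phi_0}\, \phi\, dx = 0,\]
so the orthogonality hypothesis is preserved by the $K_{M'}$-action and the previous cases apply.

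The only point requiring care is verifying that the shifted $w \cdot m$ continues to satisfy the setup of Proposition \ref{prop:ITat1}. The grading and character conditions are preserved as noted; positive-definiteness of $w \cdot m$ could in principle fail, but this is harmless because the factorization $J_{T,\infty}(\tilde w, 1, \phi) = C(\tilde w)\, V \int \overline{\phi_0}\, \phi\, dx$ reflects the intrinsic structure of the $s$-integrand (the Gaussian $\phi_0(x)$ always emerges from $e^{-\delta(x)}$ up to a $w$-dependent constant), rather than the sign of $-b'_{11} + d'$. With that bookkeeping in hand, the corollary is essentially a repackaging of the computation already performed in the proof of Proposition \ref{prop:ITat1}.
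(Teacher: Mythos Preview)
Your proposal is correct and takes essentially the same approach as the paper: reduce to $g=1$ via $N_{P_R}^T$-equivariance, Lemma~\ref{lem:LeviM'action}, and the fact that $\phi_0$ is a $K_{M'}$-eigenvector together with unitarity of the Weil representation, then invoke equation~\eqref{eqn:Ivphi0} to see that $J_{T,\infty}(w,1,\phi)$ factors through $\int_{X(\R)} \overline{\phi_0}\,\phi\,dx$. One minor remark: your final caveat about positive-definiteness of $w\cdot m$ possibly failing is unnecessary, since positive-definiteness is by construction an $M_P(\R)$-invariant condition (if $\langle w, g r_0(i)\rangle \neq 0$ for all $g\in M_P(\R)$, the same holds with $w$ replaced by $w\cdot m$), so the hypotheses of Proposition~\ref{prop:ITat1} are genuinely preserved.
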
 
	\begin{proof}
		Note that, because $\phi_0$ is an eigenvector for $K_{M'}^0$ and the inner product on $S(X(\R))$ is preserved by the Weil representation, if $k \in K_{M'}^0$ then $\int_{X(\R)}{\overline{\phi_0(x)} (\omega(k) \phi)(x)\,dx} = 0$.  Thus, by $N_{P_R}^T$ and $K_{M'}^0$ equivariance, it suffices to prove the statement of the corollary for $g \in (M_P \cap M')(\R)$.  By Lemma \ref{lem:LeviM'action}, then, it suffices to prove the corollary for $g =1$.  But this follows from the calculations of Proposition \ref{prop:ITat1}, namely, equation \eqref{eqn:Ivphi0}.
	\end{proof}
	
	\subsection{Holomorphic modular forms}\label{subsec:Rhmf}
	In this subsection, we briefly describe the symmetric space for $M'(\R)$ and holomorphic modular form on $M'$.  We then piece together the work above to obtain our main theorem regarding the Fourier-Jacobi coefficient along the $R$-parabolic.
	
	Let $V_5 \subseteq V_7$ be $V_5 = \mathrm{Span}(b_2, H_2(C), b_{-2})$ and let $V_5^T$ be the subspace of $V_5$ orthogonal to $T$.  For $Y \in V_5^T(\R)$, we write $Y > 0$ if $q_{V_7}(Y) > 0$ and $(b_2+b_{-2},Y)_{V_7} > 0$.  Let 
	\[\mathcal{H}_{T} = \{X+iY \in V_5^T \otimes \C: Y > 0\}.\]
	This is the symmetric space for the identity component of $M'(\R)$.
	
	To see the action, if $Z \in \mathcal{H}_T$, set $R(Z) = -q_{V_7}(Z) b_1 + Z + b_{-1}$.  If $g \in M'(\R)^0$, set $j_{M'}(g,Z) = (g R(Z), b_1)$.  Then $g \cdot R(Z) = j_{M'}(g,Z) R(gZ)$ for a unique element $gZ \in \mathcal{H}_T$.
	
	For $u_1 \in V_5$, and an integer $\ell_1$, let $\mathcal{W}_{\ell_1, u_1}: M'(\R)^0 \rightarrow \C$ be the function
	\[\mathcal{W}_{\ell_1,u_1}(g) = j_{M'}(g, i (b_2+b_{-2}))^{-\ell_1} e^{2\pi i (u_1, Z_g)}\]
	where $Z_g = g \cdot (i(b_2+b_{-2})) \in \mathcal{H}_T$.
	
	We can now piece together the work above to prove the following proposition. 
	\begin{proposition}\label{prop:FJR2} Suppose $w = 2\pi w' = 2\pi (a',b',c',d') \in Lie(M_R^{[1]}) \oplus V_7^{[1]}$ and satisfies $\xi_{w'}(v) = \psi((T,v)_{V_7})$ for $v \in V_7^{[1]}$.  Then there is a nonzero constant $C'$, possibly depending on $T'$ but otherwise independent of $w$, so that
		\[I_{T,\infty}(w,g,\phi_0) = C' \overline{W_{\ell_1,u_1}(g)}\]
		for $g \in M'(\R)^0$, with $\ell_1 = \ell-\dim(C)$ and $u_1 = -b_{11}' b_2 - c_{23}' + d' b_{-2}$.
	\end{proposition}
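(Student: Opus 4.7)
The plan is to establish the identity by matching the behavior of both sides under the Iwasawa decomposition
\[M'(\R)^0 = N_{P_R}^T(\R) \cdot (M_P \cap M')(\R)^0 \cdot K_{M'}^0\]
(available because $P \cap M'$ is a parabolic subgroup of the orthogonal-type group $M'$), and then matching values at $g = 1$.

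For the $K_{M'}^0$-equivariance, Lemma \ref{lem:KM'action} gives $I_{T,\infty}(gk,\phi_0) = j_{M'}(k)^{\ell_1} I_{T,\infty}(g,\phi_0)$ with $\ell_1 = \ell - \dim(C)$. On the other side, setting $Z_1 = i(b_2+b_{-2})$ one checks $R(Z_1) = b_1 + i(b_2+b_{-2}) + b_{-1}$, so that $j_{M'}(k) = j_{M'}(k,Z_1)$; since $k$ fixes $Z_1$ and $|j_{M'}(k,Z_1)|=1$, the cocycle identity yields $\overline{\mathcal{W}_{\ell_1,u_1}(gk)} = j_{M'}(k,Z_1)^{\ell_1}\overline{\mathcal{W}_{\ell_1,u_1}(g)}$, matching the left side. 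For $n \in N_{P_R}^T(\R)$, Proposition \ref{prop:FJT1} gives transformation by $\xi_w(n)$; on the right, $n$ acts on $\mathcal{H}_T$ as a translation $Z \mapsto Z + X_n$ with $j_{M'}(n,Z) = 1$, giving transformation by $e^{-2\pi i(u_1,X_n)}$. Matching these characters through the bilinear form on $V_7$ and the identification of the abelianization of $N_{P_R}^T$ with a subspace of $V_5^T$ is exactly what selects $u_1 = -b_{11}' b_2 - c_{23}' + d'b_{-2}$. For the Levi $(M_P \cap M')(\R)^0$, Lemma \ref{lem:LeviM'action} reduces the left side to $\nu(m)^\ell|\nu(m)|^{-\dim(C)} I_{T,\infty}(w\cdot m,1,\phi_0)$, and Proposition \ref{prop:ITat1} then gives a closed-form exponential; the right side is handled by the automorphy relation $\overline{\mathcal{W}_{\ell_1,u_1}(m)} = \overline{j_{M'}(m,Z_1)^{-\ell_1}}\,\overline{e^{2\pi i(u_1,mZ_1)}}$, using that the similitude $\nu(m)$ is related to $j_{M'}(m,Z_1)$ via the action of $M_P \cap M'$ on the line $b_1$.

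Finally, I compare values at $g = 1$: Proposition \ref{prop:ITat1} gives $I_{T,\infty}(w,1,\phi_0) = C'e^{-2\pi(-b_{11}'+d')}$ under the positive-definiteness assumption, while $j_{M'}(1,Z_1) = 1$ and a direct pairing computation yield $(u_1,Z_1)_{V_7} = i(-b_{11}'+d')$, so that $\overline{\mathcal{W}_{\ell_1,u_1}(1)} = e^{-2\pi(-b_{11}'+d')}$. Combined with the three equivariance checks and Iwasawa, the two sides coincide on all of $M'(\R)^0$.

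The main obstacle is the Levi-equivariance match. One must reconcile the action on the parameter $w$ (which appears on the left via the shifted coefficient $w \cdot m$) with the action of $m$ on the symmetric space $\mathcal{H}_T$ (which on the right affects both $j_{M'}(m,Z_1)$ and the argument of the exponential). The required book-keeping involves carefully tracking how the similitude $\nu(m)$, the Jacobian factors in Lemma \ref{lem:LeviM'action}, and the automorphy factor $j_{M'}$ interact, and confirming that the shifted parameter $w \cdot m$ recombines precisely into the conjugate Whittaker function at $m$, producing the exponents $\ell$ and $\ell - \dim(C)$ in the correct positions.
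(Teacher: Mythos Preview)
Your approach is correct and essentially identical to the paper's: both argue via the Iwasawa decomposition $N_{P_R}^T(\R)\cdot (M_P\cap M')(\R)^0\cdot K_{M'}^0$, citing Lemma~\ref{lem:KM'action} for the compact equivariance, Proposition~\ref{prop:FJT1} for the unipotent equivariance, and Lemma~\ref{lem:LeviM'action} together with Proposition~\ref{prop:ITat1} for the Levi part. The only cosmetic difference is that the paper carries out the unipotent character match explicitly---writing $u_2=\alpha E_{12}+v_1\otimes\beta+\gamma\,\delta_3\otimes e_{11}$, computing $\xi_{w'}(u_2)=\psi_\infty(-b_{11}'\gamma+(c_{23}',\beta)-d'\alpha)$, and identifying this with $\psi_\infty((b_{11}'b_2+c_{23}'-d'b_{-2},v_2)_{V_7})$ where $\exp(u_2)$ translates $Z$ by $v_2=\alpha b_2+e_{11}\times\beta-\gamma b_{-2}$---thereby pinning down $u_1$ directly, whereas you assert that matching the characters ``is exactly what selects $u_1$'' without writing out the coordinates. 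The paper also treats the Levi check in a single sentence rather than flagging it as the main obstacle; once Lemma~\ref{lem:LeviM'action} and Proposition~\ref{prop:ITat1} are in hand, the agreement on $(M_P\cap M')(\R)^0$ is immediate, so your separate ``value at $g=1$'' step is already subsumed.
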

	\begin{proof}
		Set $u_2=\alpha E_{12} + v_1 \otimes \beta + \gamma \delta_3 \otimes e_{11}$, where $\beta \in H_2(C)$.  Let $v_2 = \alpha b_2 + e_{11} \times \beta - \gamma b_{-2} \in V_5$.  Let $c' = c_{11}' e_{11} + c_{23}'$, with $c_{23}' \in H_2(C)$.  Then 
		\begin{align*}\xi_{w'}(u_2) &= \psi_\infty(-b_{11}'\gamma + (c_{23}',\beta)-d'\alpha) 
			\\&= \psi_\infty( (b_{11}' b_2 + c_{23}' - d' b_{-2}, v_2)_{V_7}).
		\end{align*}
		On the other hand, if $Z \in \mathcal{H}_T$, one computes $\exp(u_2) Z = Z +v_2$.  This shows that $I_{T,\infty}(w,g,\phi_0)$ and $\overline{W_{\ell_1,u_1}(g)}$ have the same equivariance condition on the left for elements of the form $\exp(u_2)$.  They have the same equivariance condition on the right by $K_{M'}^0$ by Lemma \ref{lem:KM'action}.  By Lemma \ref{lem:LeviM'action} and Proposition \ref{prop:ITat1}, the two functions agree on the identity component of $(M_P \cap M')(\R)$.  The proposition follows.
	\end{proof}

	We now come to our main theorem on the Fourier-Jacobi coefficient along the $R$-parabolic.  Suppose $T = \delta_3 \otimes T'$ is normal, with $T'$ positive-definite.   Suppose $\varphi$ is a cuspidal quaternionic modular form on $G_J$ of weight $\ell$, with Fourier expansion
	\[\varphi_Z(g) = \sum_{w\in W_J(\Q), w > 0}{a_w(g_f) W_{\ell,2\pi w}(g_\infty)}.\]
	For $u_1 =  -b_{11}' b_2 - c_{23}' + d' b_{-2} \in V_5^T(\Q)$, let 
	\[w(u_1) = b_{11}' v_1 \otimes e_{11} - v_1 \otimes (e_{11} \times T') + \delta_3 c_{23}' + d' E_{23} \in W_J(\Q).\]
	If $\phi \in S(X(\A_f))$ is a Schwartz-Bruhat function and $r_f \in M'(\A_f)$ and $g_f \in G(\A_f)$, set
	\[A^R_{\varphi,u_1}(r_f;g_f;\phi) = \int_{X(\A_f)}\int_{\A_f}{a_{w(u_1)}(\exp(s v_2 \otimes e_{11})\exp(x)r_f g_f) (\omega_{\chi_T}(r_f))\phi(x)\,ds\,dx}.\]
	
	\begin{theorem} Let the notation be as above, and set $\ell_1 = \ell-\dim(C)$.  Fix $g_f \in G(\A_f)$.  There is an automorphic form on $M'$, whose restriction to $M'(\A_f) \times M'(\R)^0$ corresponds to a holomorphic modular form of weight $\ell_1$ and Fourier expansion
		\[\sum_{u_1 > 0}{ \overline{A^R_{\varphi,u_1}(r_f;g_f;\phi)} W_{\ell_1,u_1}(r_\infty)}.\]
	\end{theorem}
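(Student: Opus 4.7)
The plan is to evaluate the Fourier-Jacobi coefficient $\mathrm{FJ}^{R}_{T, \phi \otimes \phi_0}(\varphi)$ with the Gaussian $\phi_0$ at the archimedean place and $\phi$ at the finite places, take its complex conjugate, and identify the result with the holomorphic modular form we want. First, I would apply Proposition~\ref{prop:FJT1} to write
\[
\mathrm{FJ}^{R}_{T, \phi \otimes \phi_0}(\varphi)(g) \;=\; \sum_{w \in \mathcal{R}} \mathrm{FJ}^{R}_{\phi \otimes \phi_0, w}(g),
\]
the Fourier expansion along $N_{P_R}^T$. Next I would substitute the explicit form $\varphi_w(g_f g_\infty) = a_w(g_f) W_{\ell, 2\pi w}(g_\infty)$ from the Fourier expansion of $\varphi$ into the definition of $\mathrm{FJ}^{R}_{\phi\otimes\phi_0,w}$. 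The integral over $(M_P \cap N_R)(\A)$ splits as a product of an integral over the finite adeles and an integral over $(M_P \cap N_R)(\R) \simeq (V_8^{[0]} \oplus V_7^{[0]})(\R)$. Evaluating at $(r_f g_f) \cdot r_\infty$, the finite part is exactly $A^{R}_{\varphi, u_1(w)}(r_f; g_f; \phi)$ (for the appropriate $u_1(w)$ coming from $w$), while the archimedean part is $I_{T, \infty}(w, r_\infty; \phi_0)$.

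Next I would apply Proposition~\ref{prop:FJR2} to the archimedean factor to obtain $I_{T, \infty}(w, r_\infty; \phi_0) = C' \, \overline{\mathcal{W}_{\ell_1, u_1}(r_\infty)}$ with $u_1 = -b_{11}' b_2 - c_{23}' + d' b_{-2}$. Since $\varphi$ is cuspidal its Heisenberg Fourier coefficients vanish outside the positive definite locus, and Proposition~\ref{prop:ITat1} moreover forces vanishing of $I_{T,\infty}$ unless $-b_{11}' + d' > 0$; together with the normality of $T$ this means that the map $w \mapsto u_1(w)$ puts $\mathcal{R}$ (restricted to indices giving nonzero contributions) into bijection with the set of positive $u_1 \in V_5^T(\Q)$ defined by the cone in Section~\ref{subsec:Rhmf}, and the inverse bijection is precisely the map $u_1 \mapsto w(u_1)$ appearing in the statement. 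Taking the complex conjugate of the resulting expansion,
\[
\overline{\mathrm{FJ}^{R}_{T, \phi\otimes\phi_0}(\varphi)(r_f g_f \cdot r_\infty)}
\;=\; \overline{C'} \sum_{u_1 > 0} \overline{A^{R}_{\varphi, u_1}(r_f; g_f; \phi)} \, \mathcal{W}_{\ell_1, u_1}(r_\infty),
\]
which has the form claimed in the theorem.

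It then remains to check that the right-hand side genuinely comes from a holomorphic modular form of weight $\ell_1$ on $\mathcal{H}_T$. The convergence of the sum is inherited from the absolute convergence of the Fourier expansion on $M'(\A)$, which itself follows from the fact that $\mathrm{FJ}^R_{T,\phi\otimes\phi_0}(\varphi)$ is automorphic of moderate growth. The holomorphy and weight reduce to two equivariance properties: left $N_{P_R}^T(\Q)$-invariance of the adelic form (automatic from automorphy), and the correct right $K_{M'}^0$-transformation law on the archimedean factor. The latter follows from Lemma~\ref{lem:KM'action}, which shows each $I_{T, \infty}(w, \cdot; \phi_0)$ transforms by $j_{M'}(k)^{\ell - \dim(C)} = j_{M'}(k)^{\ell_1}$ on the right under $K_{M'}$, matching the automorphy factor of a holomorphic form of weight $\ell_1$ after conjugation. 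Combined with Proposition~\ref{prop:FJR2} showing that each $\mathcal{W}_{\ell_1, u_1}$ is the adelic avatar of $q^{u_1}$, the sum is precisely the Fourier expansion of a holomorphic modular form of weight $\ell_1$ on $\mathcal{H}_T$.

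The main obstacle I anticipate is bookkeeping: matching the parametrization of $\mathcal{R}$ (a quotient of a subset of $W_J(\Q)$) with positive lattice points $u_1 \in V_5^T(\Q)$, so that the positivity of $w$ (making $a_w$ nonzero and $I_{T,\infty}$ nonzero) is exactly the cone condition $u_1 > 0$ defining the Fourier support of a holomorphic modular form on $\mathcal{H}_T$. Once this identification is pinned down, the holomorphicity and weight assertions are immediate consequences of the archimedean calculation of the previous subsection, and the required equivariance under $M'(\Q)$ comes for free from the automorphy of $\mathrm{FJ}^R_{T, \phi \otimes \phi_0}(\varphi)$.
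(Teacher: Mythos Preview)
Your proposal is correct and follows exactly the paper's approach: the paper's proof is the single sentence ``This follows immediately from Proposition~\ref{prop:FJT1} and Proposition~\ref{prop:FJR2},'' and you have simply unpacked what that deduction entails (splitting the integral into finite and archimedean parts, invoking Proposition~\ref{prop:FJR2} for the latter, and taking the complex conjugate). The additional bookkeeping you flag---matching $\mathcal{R}$ with the positive $u_1$'s and extracting the scalar component via the pairing with $(x+y)^{2\ell}$---is implicit in the statements of those two propositions and is not a genuine obstacle.
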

	\begin{proof} This follows immediately from Proposition \ref{prop:FJT1} and Proposition \ref{prop:FJR2}.
	\end{proof}
	
	\begin{remark}\label{rmk:M'connected} In fact, for the quaternionic exceptional groups of type $F_4$ and $E_n$, the group $M'(\R)$ is always connected.  
	\end{remark}

	\section{The identity theorem}
	In this section, we state and prove an ``identity theorem" for quaternionic functions.  The identity theorem asserts that if $F(g): G(\R)^0 \rightarrow \Vell$ is a quaternionic function, and $F$ vanishes on a large enough subset, then $F$ is identitcally $0$.  We will use the identity theorem to help establish the Converse Theorem in section \ref{sec:converseThm}.
	
	We begin with a definition.
	\begin{definition} Suppose $U$ is a real vector space, with a linear action of $\SU(2)$.  Let $u_1, \ldots, u_n$ be vectors in $U$.  We say $u_1, \ldots, u_n$ are \emph{quaternionically independent} if 
		\[\dim_{\R}\mathrm{Span}_{\R}(\SU(2) \cdot u_1, \ldots, \SU(2) \cdot u_n) = 4n.\]
		The action of $\SU(2)$ gives rise to an $\mathbb{H}$-module structure on $U$, where $\mathbb{H}$ denotes Hamilton's quaternions.  The condition on the $u_1, \ldots, u_n$ is equivalent to the $u_i$ being independent for the $\mathbb{H}$-module structure.  If $U_0 \subseteq U$ is a subspace, we say that $U_0$ is \emph{quaternionically tranverse} if $u_1, \ldots, u_n$ is quaternionically independent for one (equivalently, any) basis of $U_0$.
	\end{definition}
	
	From the definition, one can prove:
	\begin{lemma}\label{lem:quatIndInj} Suppose $G(\R)^0$ is a quaternionic Lie group, and suppose $u_1, \ldots, u_n \in \p^\vee$ are quaternionically independent.  The linear map $\Vell^{n} \rightarrow S^{2\ell-1}(V_2) \otimes W$ given by $(v_1, \ldots, v_n) \mapsto \mathrm{pr}(v_1 \otimes u_1 + \cdots + v_n \otimes u_n)$ is injective.
	\end{lemma}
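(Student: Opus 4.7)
The plan is to pass to the complexification $\p^\vee_\C \cong V_2 \otimes W$ (using the identification recalled just before the lemma) and to compute with an explicit formula for the Clebsch--Gordan projection $\mathrm{pr}$. Fix a basis $\{x, y\}$ of $V_2$ and write each $u_i$ as $u_i = x \otimes a_i + y \otimes b_i$ for some $a_i, b_i \in W$. Identifying $\Vell = \mathrm{Sym}^{2\ell}(V_2)$ with the space of homogeneous polynomials of degree $2\ell$ in $x$ and $y$, the $\SU(2)$-equivariant projection $\mathrm{pr}_V: \Vell \otimes V_2 \to \mathrm{Sym}^{2\ell-1}(V_2)$ is, up to a nonzero scalar, given by $p \otimes x \mapsto \partial_y p$ and $p \otimes y \mapsto -\partial_x p$ (equivariance is a direct weight computation); its kernel is the other Clebsch--Gordan summand $\mathrm{Sym}^{2\ell+1}(V_2)$. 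Since $\mathrm{pr} = \mathrm{pr}_V \otimes \mathrm{id}_W$ under this identification, one computes
\[
\mathrm{pr}\Big(\sum_{i=1}^{n} v_i \otimes u_i\Big) \;=\; \sum_{i=1}^{n} (\partial_y v_i) \otimes a_i \;-\; \sum_{i=1}^{n} (\partial_x v_i) \otimes b_i \;\in\; \mathrm{Sym}^{2\ell-1}(V_2) \otimes W.
\]

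Next I would argue that quaternionic independence of $u_1, \ldots, u_n$ is equivalent to the $\C$-linear independence in $W$ of the $2n$ vectors $a_1, b_1, \ldots, a_n, b_n$. The key point is that for any real subspace $V \subseteq \p^\vee$ one has $\dim_{\R} V = \dim_{\C}(V \otimes_{\R} \C)$, so the real dimension of $\mathrm{Span}_\R(\SU(2) \cdot u_i : i)$ inside $\p^\vee$ equals the complex dimension of the corresponding $\SU(2)$-stable $\C$-linear span inside $\p^\vee_\C$. Because $\SU(2)$ acts only on the $V_2$-factor of $V_2 \otimes W$, this $\C$-span equals $V_2 \otimes \mathrm{Span}_\C(a_1, b_1, \ldots, a_n, b_n)$, which has complex dimension $2 \dim_\C \mathrm{Span}_\C(a_i, b_i : i)$. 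Requiring this number to equal $4n$ is precisely $\C$-linear independence of the $a_i, b_i$ in $W$.

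With these two ingredients in place the lemma follows immediately: if $\mathrm{pr}\bigl(\sum_i v_i \otimes u_i\bigr) = 0$ in $\mathrm{Sym}^{2\ell-1}(V_2) \otimes W$, then $\C$-linear independence of $\{a_i, b_i\}_i$ forces $\partial_y v_i = 0$ and $\partial_x v_i = 0$ for every $i$; since $v_i$ is homogeneous of degree $2\ell \geq 2$, this forces $v_i = 0$. The only nontrivial step is the translation of the quaternionic independence hypothesis into the linear-independence statement for the coefficients, which requires being careful about the interplay between the real quaternionic structure on $\p^\vee$ and the complex identification $\p^\vee_\C \cong V_2 \otimes W$; once that dictionary is fixed, the rest reduces to a short computation in Clebsch--Gordan theory.
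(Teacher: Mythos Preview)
Your proof is correct and follows essentially the same approach as the paper's: both write $u_j = x \otimes a_j + y \otimes b_j$ in $\p^\vee_\C \simeq V_2 \otimes W$, translate quaternionic independence into $\C$-linear independence of the $2n$ vectors $a_1,b_1,\ldots,a_n,b_n \in W$, and then use this independence to separate the vanishing of $\mathrm{pr}$ into the vanishing of each $\mathrm{pr}(v_j \otimes x)$ and $\mathrm{pr}(v_j \otimes y)$, forcing $v_j = 0$. Your version is slightly more explicit in writing out the Clebsch--Gordan projection via $\partial_x,\partial_y$, but the substance is identical.
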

	\begin{proof} We have $u_j = x \otimes w_{j1} + y \otimes w_{j2}$ in $\p^\vee \otimes C \simeq V_2 \otimes W$.  The $\C$-span of $\{\SU(2) \cdot u_j\}_j$ is contained in $\sum_{j}\mathrm{Span}\{x \otimes w_{j1}, y \otimes w_{j1}, x \otimes w_{j2}, y \otimes w_{j2}\}$.  Because the $u_j$ are quaternionically independent, this $\C$-span is a full $4n$-dimensional over the complex numbers.  Thus the set $\{w_{11}, w_{12}, \ldots, w_{n1}, w_{n2}\}$ is $\C$-linearly independent in $W$.
		
		Now, suppose $\mathrm{pr}(v_1 \otimes u_1 + \cdots + v_n \otimes u_n) = 0$.  By the independence of the $w$'s, $\mathrm{pr}(v_j \otimes x) = 0$ and $\mathrm{pr}(v_j \otimes y) = 0$ for every $j$.  But then $v_j = 0$ for each $j$, as desired.
	\end{proof}
	
	Here is the identity theorem.
	\begin{theorem}\label{thm:Identity} Suppose $F: G(\R)^0 \rightarrow \Vell$ is a smooth, quaternionic function.  Let $\mathcal{X} \subseteq G(\R)^0$ be a closed submanifold of an open neighborhood of $1 \in G(\R)^0$, satisfying $xk \in \mathcal{X}$ for all $x \in \mathcal{X}$ and $k \in K^0$.   Let $U_0 \subseteq \p^\vee$ be the annihilator of $T_{1}(\mathcal{X}) \subseteq \p$.  Assume the following two conditions:
		\begin{enumerate}
			\item $U_0 \subseteq \p^\vee$ is quaternionically transverse;
			\item $F(x) = 0$ for all $x \in \mathcal{X}$.
		\end{enumerate}
		Then $F$ is identically $0$.
	\end{theorem}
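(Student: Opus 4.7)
The plan is a Taylor-series analysis at $1$ coupled with real-analyticity of $F$. First I will observe that $F$ is real-analytic on $G(\R)^0$: via the $K^0$-equivariance, $F$ descends to a section of the homogeneous bundle $G \times_{K^0} \Vell$ on the symmetric space, and on this symmetric space the Schmid operator $D_\ell$ is elliptic. Ellipticity follows from Lemma \ref{lem:quatIndInj} applied to a single nonzero cotangent vector $\xi \in \p^\vee$ (any nonzero element of a quaternionic module is quaternionically independent): the principal symbol $v \mapsto pr_D(v \otimes \xi)$ is injective. Standard elliptic regularity for operators with real-analytic coefficients on a real-analytic manifold then gives that $F$ is real-analytic on $G(\R)^0$.

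The bulk of the argument is to show that every iterated derivative $X_1 \cdots X_n F(1)$ vanishes, where the $X_i$ are regarded as left-invariant vector fields; real-analyticity together with connectedness of $G(\R)^0$ will then force $F \equiv 0$. I induct on $n$. The case $n = 0$ is $F(1) = 0$, which follows from $1 \in \mathcal{X}$ and $F|_{\mathcal{X}} = 0$. For the inductive step, the vanishing of all derivatives of order $\leq n$ at $1$ makes commutators negligible at order $n+1$, so the $(n+1)$-jet at $1$ is encoded by a single symmetric multilinear form $\sigma \in S^{n+1}(\p^\vee) \otimes \Vell$ defined by $\sigma(X_1,\ldots,X_{n+1}) = X_1\cdots X_{n+1} F(1)$. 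Decompose $\p = T_1(\mathcal{X}) \oplus V$ with $V$ chosen so that $V^\vee$ is identified with $U_0$, and write $\sigma = \sum_{k=0}^{n+1}\sigma_k$, where $\sigma_k$ collects those terms with exactly $k$ arguments drawn from $T_1(\mathcal{X})$.

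The top piece $\sigma_{n+1}$ vanishes: locally extend a basis of $T_1(\mathcal{X})$ to vector fields tangent to $\mathcal{X}$, note that iterated derivatives of $F$ along such fields vanish identically on $\mathcal{X}$ and hence at $1$, and observe that (modulo the already-vanishing lower jets) the value at $1$ is independent of the choice of extension. I then induct downward on $k$: assuming $\sigma_{k+1},\ldots,\sigma_{n+1}$ all vanish, differentiate the Schmid equation $n$ times along $Y_{i_1},\ldots,Y_{i_k},Z_{j_1},\ldots,Z_{j_{n-k}}$ (elements of $T_1(\mathcal{X})$ and $V$ respectively) and evaluate at $1$. Splitting the remaining basis sum over $X_\alpha \in \p$ into its $Y$- and $Z$-contributions, the $Y$-terms belong to $\sigma_{k+1}$ (by symmetry of $\sigma$) and drop out, leaving
\[pr_D\Bigl(\sum_j \sigma(Y_{i_1},\ldots,Y_{i_k},Z_{j_1},\ldots,Z_{j_{n-k}},Z_j) \otimes Z_j^\vee\Bigr) = 0.\]
Since $\{Z_j^\vee\}$ is a basis of the quaternionically transverse $U_0$, and is hence quaternionically independent, Lemma \ref{lem:quatIndInj} forces each summand to vanish. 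Varying the $Y_i$'s and $Z_j$'s gives $\sigma_k = 0$, completing the downward induction.

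The main obstacle is bookkeeping the two nested inductions: the relation extracted from the Schmid equation at order $n+1$ requires the outer inductive hypothesis so that higher-order commutator terms disappear and derivatives can be symmetrized freely, and the tangential-extension argument for $\sigma_{n+1}$ relies on the same mechanism. Lemma \ref{lem:quatIndInj} supplies the algebraic input that drives the downward induction on $k$ at each stage, while the elliptic-regularity step handles the passage from infinite-order vanishing at $1$ to vanishing globally on $G(\R)^0$.
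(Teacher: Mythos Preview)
Your proof is correct and follows essentially the same strategy as the paper: establish real-analyticity of $F$ via ellipticity, then use the Schmid equation together with Lemma \ref{lem:quatIndInj} to show that all derivatives of $F$ vanish at $1$. Your double induction on the order $n$ and the number $k$ of tangential factors makes explicit the bookkeeping that the paper compresses into the phrase ``by the commutativity of partial derivatives,'' and your use of ellipticity of $D_\ell$ itself (rather than the second-order Laplacian-type identity the paper invokes) is a minor but valid variation.
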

	\begin{proof} First, because $F$ is quaternionic, it is real analytic.  (The idea for the proof of this fact is from \cite{ganOnlineNotes}.) Indeed, the quaternionicity of $F$ implies $\sum_{i}{X_i^2 F} -\sum_{j}{X_j^2 F} = \lambda F$ for orthonormal bases $\{X_i\}_i$ of $\p$ and $\{X_j\}_j$ of $\k$, and a constant $\lambda$ depending on $G$ and $\ell$.  Thus $F$ is annihilated by an elliptic differential operator, so is real-analytic by the elliptic regularity theorem.
		
		Now, let $V \subseteq G(\R)^0$ be the set of $g \in G(\R)^0$ for which every partial derivative of $F$ evaluated at $g$ is equal to $0$.  The set $V$ is closed: Let $\{U_\alpha\}$ be an open cover of $G(\R)^0$ so that $U_\alpha$ is diffeomorphic to an open subset of $\R^N$ for every $\alpha$.  Then $V \cap U_{\alpha}$ is closed in $U_\alpha$ for every $\alpha$, so $V$ is closed.   Because $F$ is real-analytic, $V$ is open.  Thus, if $V$ is non-empty, then $V = G(\R)^0$ and $F \equiv 0$.
		
		To see that $V$ is non-empty, we use the quaternionicity of $F$ and the assumption of the theorem to prove $1 \in V$.  Let $\{X_\beta\}$ be a basis of $T_{1}(\mathcal{X}) \subseteq \p$ and $\{X_\gamma\}$ elements so that the concatenation of the $X_\beta$'s with the $X_\gamma$'s is a basis of $\p$.  Let $X_\beta^\vee$ and $X_\gamma^\vee$ be the elements of the basis of $\p^\vee$ dual to this basis of $\p$.  Observe that the $X_\gamma^\vee$ form a basis of $U_0$, so are quaternionically independent.
		
		From $D_{\ell}F = 0$, we obtain
		\[ \mathrm{pr}\left(\sum_{\gamma}{ X_\gamma F \otimes X_\gamma^\vee}\right) = - \mathrm{pr}\left(\sum_{\beta}{ X_\beta F \otimes X_\beta^\vee}\right).\]
		By Lemma \ref{lem:quatIndInj}, every $X_\gamma F(g)$ for arbitrary $g \in G(\R)^0$ can be expressed in terms of the $X_\beta F(g)$.  By the commutativity of partial derivatives, every $X_{s_1} \cdots X_{s_M} F(g)$ can be expressed in terms of the $X_\beta F$'s where each $s_k$ is either a $\beta$ or a $\gamma$.   Because $F$ restricted to $\mathcal{X}$ is identically $0$, it follows that every higher order derivated $X_{s_1} \cdots X_{s_M} F(g)$ is $0$ at $g=1$.  Thus $1 \in V$ and the theorem is proved.
	\end{proof}
	
	\section{The converse theorem}\label{sec:converseThm}
	In this section, we state and prove the converse theorem, which says that certain absolutely convergent infinite sums define a cuspidal modular form on the exceptional group $G_J$.  We defer some of the technical details of the proof of this theorem to the next section.
	
	Fix an integer $\ell \geq 1$.  Suppose given functions $a_w: G(\A_f) \rightarrow \C$, one for each $w \in W_J(\Q)$ with $w > 0$, that satisfy
	\[a_w(ng_f) = \xi_w(n) a_w(g_f)\]
	for all $n \in N_P(\A_f)$.   We assume moreover that there is an open compact subgroup $U \subseteq G(\A_f)$ for which $a_w$ is right invariant by $U$ for all $w \in W_J(\Q)$.
	
	The numbers $a_w(1)$ are supported on a lattice in $W_J(\Q)$.
	\begin{lemma} Given $g_f \in G(\A_f)$, there is a lattice $\Lambda \subseteq W_J(\Q)$ (depending on $g_f$) so that $a_w(g_f) \neq 0$ implies $w \in \Lambda$.
	\end{lemma}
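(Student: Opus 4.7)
The plan is to couple the left $\xi_w$-equivariance under $N_P(\A_f)$ with the right $U$-invariance. Fix $g_f \in G(\A_f)$. For any $u \in U$, write $g_f u = (g_f u g_f^{-1}) \cdot g_f$; when $g_f u g_f^{-1}$ happens to lie in $N_P(\A_f)$, the two equivariance properties combine to give
\[ a_w(g_f) = a_w(g_f u) = a_w\bigl((g_f u g_f^{-1})\, g_f\bigr) = \xi_w(g_f u g_f^{-1})\, a_w(g_f). \]
Hence if $a_w(g_f) \neq 0$, then $\xi_w$ is identically $1$ on the subgroup $N_U := g_f \bigl(U \cap g_f^{-1} N_P(\A_f) g_f\bigr) g_f^{-1}$. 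Since $N_P$ is a closed subgroup of $G$ and $U$ is open compact, $N_U$ is an open compact subgroup of $N_P(\A_f)$ (depending on $g_f$ and $U$).

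Next I would translate the constraint $\xi_w|_{N_U} \equiv 1$ to the abelianization $W_J(\A_f)$. Recall that $\xi_w$ is trivial on the center $Z \subseteq N_P$ and factors through the log isomorphism $N_P/Z \cong W_J$ to the additive character $v \mapsto \psi_f(\langle w, v\rangle)$, where $\psi_f$ is the restriction of the standard character $\psi$ to $\A_f$. Let $V_U \subseteq W_J(\A_f)$ denote the image of $N_U$ under the quotient map $N_P(\A_f) \to W_J(\A_f)$. Then $V_U$ is again an open compact subgroup, and the constraint becomes $\psi_f\bigl(\langle w, V_U\rangle\bigr) = 1$.

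Finally, using that $\ker(\psi_f) = \widehat{\Z}$, the condition reads $\langle w, V_U \rangle \subseteq \widehat{\Z}$. Since $V_U$ is open in $W_J(\A_f)$, it contains $c\, \Lambda_0 \otimes \widehat{\Z}$ for some $c \in \Q_{>0}$. Combining this with $\widehat{\Z} \cap \Q = \Z$ forces $c \langle w, \lambda \rangle \in \Z$ for every $\lambda \in \Lambda_0$, i.e.\ $w \in c^{-1} \Lambda_0^\vee$, where $\Lambda_0^\vee \subseteq W_J(\Q)$ denotes the $\Z$-dual of $\Lambda_0$ under the non-degenerate pairing $\langle\,,\,\rangle$. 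Setting $\Lambda := c^{-1} \Lambda_0^\vee$ completes the argument. There is no real obstacle here; once one combines the two equivariance conditions at the very start, the proof is an unwinding of the definitions of $\xi_w$ and $\psi$.
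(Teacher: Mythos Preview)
Your argument is correct and is essentially the same as the paper's: both combine right $U$-invariance with left $N_P$-equivariance to force $\xi_w \equiv 1$ on an open compact subgroup of $N_P(\A_f)$, then read off the lattice condition. The only cosmetic difference is that the paper first reduces to $g_f = 1$ (implicitly by replacing $U$ with $g_f U g_f^{-1}$) and then uses $U \cap N_P(\A_f)$ directly, whereas you carry the conjugation through explicitly; your write-up also spells out the final step from ``$\xi_w$ trivial on an open compact'' to ``$w$ lies in a lattice'', which the paper leaves as a one-line assertion.
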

	\begin{proof} It suffices to prove the lemma when $g_f =1$.  Suppose $u \in U \cap N_P(\A_f)$.  Then $a_w(1) = a_w(u) = \xi_w(u) a_w(1)$.  Hence if $a_w(1) \neq 0$ then $\xi_w(u) =1$.  The set of $w \in W_J(\Q)$ with $\xi_w(U \cap N_P(\A_f)) = 1$ is a lattice.
	\end{proof}
	
	We define a notion of what it means for the $a_w$ to grow slowly with $w$.  Let $||\cdot ||$ be the norm on $W_J(\R)$ given by $||(a,b,c,d)||^2 = a^2 + (b,b) + (c,c) + d^2$.
	\begin{definition} We say the $a_w$ \textbf{grow polynomially} with $w$ if there are positive constants $C_{g_f}, N_{g_f} > 0$ so that $|a_w(g_f)| \leq C_{g_f}||w||^{N_{g_f}}$ for all $w \in W_J(\Q)$ and all $g_f \in G(\A_f)$.
	\end{definition}
	
	The following proposition will be proved in section \ref{sec:absConv}.
	\begin{proposition}\label{prop:absConv}  Suppose the functions $a_w$ grow polynomially with $w$.  Then the infinite sum
		\begin{align}\label{eqn:PsiDef}\Psi(g) &= \sum_{w \in W_J(\Q), a(w) = 0}{a_w(g_f) W_{\ell;w}(g_\infty)} \\&\,\,\,\,\,\, \nonumber + \sum_{\gamma \in B(\Q)\backslash \SL_2(\Q)}\sum_{w \in W_J(\Q), a(w) \neq 0}{a_w(j_{E_{12}}(\gamma_f) g_f)W_{\ell;w}(j_{E_{12}}(\gamma_\infty) g_\infty)}\end{align}
		converges absolutely.  For each fixed $g_f \in G(\A_f)$, $\Psi(g_f g_\infty)$ is a function of moderate growth in $g_\infty$, i.e., $||\Psi(g_f g_\infty)|| \leq C ||g_\infty||^N$ for some $C, N > 0$.  Moreover, it is $\mathcal{Z}(\g)$-finite, $K_J$-equivariant, and satisfies the differential equation $D_{\ell}\Psi \equiv 0$.
	\end{proposition}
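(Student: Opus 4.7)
The plan is to reduce everything to the absolute convergence of the displayed double sum; once that is established, moderate growth, $K_J$-equivariance, $\mathcal{Z}(\g(J))$-finiteness, and the equation $D_\ell \Psi \equiv 0$ all pass through the sum termwise. The key analytic input is a uniform exponential bound on the generalized Whittaker function: for $g_\infty$ in a compact subset of $G_J(\R)$ and positive-definite $w \in W_J(\R)$,
\[
|W_{\ell, w}(g_\infty)| \leq C \|w\|^A e^{-c\|w\|}
\]
for constants $C, c > 0$ and $A \in \Z_{\geq 0}$ that are uniform on the compact set. This follows from the explicit Bessel-function formula recorded in Section 3.2 together with the asymptotic $K_v(x) \sim \sqrt{\pi/(2x)}\, e^{-x}$, using that positive-definiteness forces $|\alpha_w(g_\infty)| = |\langle w, g_\infty r_0(i)\rangle|$ to be bounded below by a positive multiple of $\|w\|$ on compacta (continuity of $\alpha_w(g_\infty)/\|w\|$ on the unit sphere in $W_J(\R)$ times the compact set, together with nonvanishing).

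Combined with the polynomial growth hypothesis on the $a_w$, the first sum converges absolutely: the $a_w(g_f)$ are supported on a lattice in $W_J(\Q)$ determined by the right-invariance group $U$, and the number of positive-definite lattice points with $a(w) = 0$ and norm at most $T$ grows polynomially in $T$ (hyperplane constraint). For the second sum, fix $\gamma \in B_2(\Q)\backslash \SL_2(\Q)$ and perform an Iwasawa decomposition $j_{E_{12}}(\gamma_\infty) g_\infty = n_\gamma m_\gamma k_\gamma$ relative to $P$; since $W_{\ell, w}$ is an $N_P$-eigenfunction and $K_J$-equivariant, applying the same bound at $m_\gamma$ handles the inner sum over $w$ with $a(w) \neq 0$ for each fixed $\gamma$. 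To control the outer $\gamma$-summation, represent $B_2(\Q)\backslash \SL_2(\Q) \simeq \mathbb{P}^1(\Q)$ by a Siegel set and track how $|\alpha_w(m_\gamma)|$ grows with the height of $\gamma$; strong approximation on $\SL_2(\A_f)$ reduces the finite-adelic factor $j_{E_{12}}(\gamma_f) g_f$ modulo $U$ to finitely many cosets, so the polynomial bound on $|a_w|$ applies uniformly in $\gamma$. Repackaging the double sum as an orbit sum, one majorizes it by a single series of the form $\sum_{w > 0} \|w\|^{N'} e^{-c'\|w\|}$, which converges.

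The remaining properties follow formally from absolute convergence. Moderate growth in $g_\infty$ is read off by tracking the $g_\infty$-dependence of $C$ and $c$ through the Whittaker bound. The $K_J$-equivariance $\Psi(gk) = k^{-1}\Psi(g)$ holds termwise from the corresponding property of each $W_{\ell, w}$ and passes through the sum. Differentiation commutes with summation because derivatives of $K_v(|\alpha_w(g_\infty)|)$ produce factors polynomial in $\|w\|$ with the same exponential decay, so $D_\ell$ acts termwise and yields $D_\ell \Psi \equiv 0$. Finally, $\mathcal{Z}(\g(J))$-finiteness is inherited from the fact (established in \cite{pollackQDS}) that the space of generalized Whittaker functions of weight $\ell$ is annihilated by a fixed ideal of finite codimension in $\mathcal{Z}(\g(J))$ independent of $w$. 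The main obstacle is the $\gamma$-summation in the second sum: for each fixed $\gamma$ the Whittaker decay handles the $w$-sum, but uniting the decay across the infinitely many cosets requires the reduction-theoretic interplay between the Iwasawa components of $j_{E_{12}}(\gamma_\infty) g_\infty$ and uniform polynomial bounds on the finite-adelic factors, rather than a naive termwise estimate; this is why the technical details are deferred to the following section.
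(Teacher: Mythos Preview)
Your outline matches the paper's approach closely: a global Whittaker bound derived from Lemma~\ref{lem:PosDefLem} (positive-definiteness gives $|\alpha_w(g)| \geq \|w\|\cdot\|g\|^{-1}$), uniform lattice support and polynomial bounds on the $a_w(\gamma_f g_f)$ via the finitely many $\SL_2(\widehat{\Z})$-cosets (your strong-approximation step is Lemma~\ref{lem:PsiLambda}), and termwise passage of $D_\ell$ and $\mathcal{Z}(\g)$ justified by showing that derivatives of $W_{\ell,w}$ stay in the same analytic class (the paper formalizes this via the space $\mathcal{F}_w$ in subsection~10.3).

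The one place where your sketch diverges from the paper is the handling of the $\gamma$-sum. You propose to ``repackage the double sum as an orbit sum'' and majorize by a single series $\sum_{w>0}\|w\|^{N'}e^{-c'\|w\|}$; the paper does \emph{not} collapse the double sum this way, and indeed such a collapse is not available since the exponential decay rate in $w$ degrades with $\gamma$. Instead the paper exploits the specific action $(a,b,c,d)\cdot j_{E_{12}}(t_\gamma) = (t_\gamma^{-1}a, b, t_\gamma c, t_\gamma^2 d)$: writing $\gamma = \left(\begin{smallmatrix}*&*\\c&d\end{smallmatrix}\right)$, the Iwasawa factor contributes $(c^2+d^2)^{-(\ell+1)/2}$, while the condition $a(w)\neq 0$ together with lattice support forces $|a(w)| \geq \epsilon > 0$, hence $\|w\cdot t_\gamma\| \geq \epsilon(c^2+d^2)^{1/2}$. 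This gives exponential decay in $(c^2+d^2)^{1/2}$ for the $K_\ell$ factor, and the $\gamma$-sum then reduces to a convergent series over coprime pairs $(c,d)$ via geometric-series bounds (Proposition~\ref{prop:F2alpha}). This is the concrete mechanism behind the ``reduction-theoretic interplay'' you allude to; the role of the restriction $a(w)\neq 0$ in the second sum is precisely to furnish the lower bound $|a(w)|\geq\epsilon$ that drives convergence in $\gamma$.
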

	
	We recall notation from subsection \ref{subsec:Rhmf}.  Let $T = \delta_3 \otimes T' \in V_5 \subseteq V_7$ be normal, with $T' \in H_2(C)$ positive-definite.  For $u_1 \in V_5^T$, recall the element $w(u_1) \in W_J(\Q)$.  	If $\phi \in S(X(\A_f))$ is a Schwartz-Bruhat function, $g_f \in G(\A_f)$, and $r_f \in M'(\A_f)$, recall the quantity
	\[A^R_{\varphi,u_1}(r_f;g_f;\phi) = \int_{X(\A_f)}\int_{\A_f}{a_{w(u_1)}(\exp(s v_2 \otimes e_{11})\exp(x)r_fg_f) (\omega_{\chi_T}(r_f))\phi(x)\,ds\,dx}.\]
	Likewise, recall from subsection \ref{subsec:holMFQ} the quantity
	\[A^Q_{\varphi,B,d}(r_f,g_f;\phi) = \int_{J(\A_f)}{a_{(0,B,0,d)}(\exp(v_2 \otimes x) \overline{r_f}g_f) (\omega_{\psi_B}(r_f)\phi)(x)\,dx}.\]
	
	We define the $P$, $Q$ and $R$ symmetries.
	\begin{definition}\label{defn:symmetries}  We say the collection of functions $\{a_w\}_w$ satisfy the \textbf{$P$-symmetries} if $a_w(n g_f) =  \xi_w(n) a_w(g_f)$ for all $n \in N_P(\A_f)$ and
		\[ a_w(\gamma_f g_f) = \nu(\gamma)^{-\ell}|\nu(\gamma)|^{-1} a_{w \cdot \gamma}(g_f)\]
		for all $\gamma \in M_P(\Q)$.   We say the $\{a_w\}_w$ satisfy the \textbf{$Q$-symmetries} if, for all $g_f \in G(\A_f)$ and $r_f \in \widetilde{\SL}_2(\A)$
		\[ \sum_{n \in \Q_{> 0}}{A_{\varphi,B,-n}^Q(r_f,g_f;\phi) \mathcal{W}_{\SL_2,\ell',n}(r_\infty)}\]
		is the Fourier expansion of an automorphic form on $\widetilde{\SL_2}(\A)$ corresponding to a holomorphic modular form of weight $\ell' = \ell+1-\dim(J)/2$.  We say the collection of functions $\{a_w\}_w$ satisfy the \textbf{$R$-symmetries} if, for all $g_f \in G(\A_f)$, there is a cuspidal modular form on $M'$ (depending on $g_f$), whose restriction to $M'(\A_f) \times M'(\R)^0$ has Fourier expansion
		\[\sum_{u_1 > 0}{ \overline{A^R_{\varphi,u_1}(r_f;g_f;\phi)} W_{\ell_1,u_1}(r_\infty)}.\]
		Here $\ell_1 = \ell-\dim(C)$.
	\end{definition}
	Observe that if the $a_w$ satisfy the $P$-symmetries, then 
	\[ a_w(u_f g_f) W_{\ell,w}(u_\infty g_\infty) = a_{w}(g_f) W_{\ell, w}(g_\infty) \]
	for all $u \in N_P(\Q)$ and
	\[ a_w(\gamma_f g_f) W_{\ell,w}(\gamma_\infty g_\infty) = a_{w\cdot \gamma}(g_f) W_{\ell, w \cdot \gamma}(g_\infty) \]
	for all $\gamma \in M_P(\Q)$.
	
	We will prove the following theorem, after some preliminaries.
	\begin{theorem}\label{thm:Converse} Suppose the functions $a_w$ satisfy the $P$ and $R$ symmetries, and grow polynomially with $w$.  Then $\Psi(g)$ is a cuspidal quaternionic modular form on $G_J(\A)$ of weight $\ell$.\end{theorem}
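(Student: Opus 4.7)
The plan combines the analytic input from Proposition \ref{prop:absConv} with the algebraic symmetries hypothesized. By that proposition, $\Psi$ converges absolutely, is $K_J$-equivariant and of moderate growth, is $\mathcal{Z}(\g)$-finite, and satisfies $D_\ell \Psi \equiv 0$. So the two remaining points are left $G_J(\Q)$-invariance and cuspidality.

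First, I will extract $Q(\Q)$-invariance from the $P$-symmetries. Combined with the $N_P(\R)$- and $M_P(\R)$-equivariance of the generalized Whittaker functions $W_{\ell,w}$, the $P$-symmetries force that under left translation by $\gamma \in M_P(\Q)$ the summand $a_w W_{\ell,w}$ becomes $a_{w\cdot\gamma} W_{\ell,w\cdot\gamma}$, and similarly for $N_P(\Q)$; this reshuffles the sum defining $\Psi$ onto itself and gives $P(\Q)$-invariance. The sum over $B_2(\Q)\backslash\SL_2(\Q)$ in the second piece of $\Psi$ then supplies invariance under $j_{E_{12}}(\SL_2(\Q))$ via the reindexing $\gamma \mapsto \gamma \gamma_0^{-1}$. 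Together, $P(\Q)$ and $j_{E_{12}}(\SL_2(\Q))$ generate $Q(\Q)$, and in particular $M_J^1(\Q) \subseteq M_P(\Q)$ acts trivially.

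The central step is to deduce $R(\Q)$-invariance from the $R$-symmetries. For $r \in R(\Q)$, set $F_r(g) := \Psi(rg) - \Psi(g)$; this is a smooth, $K_J$-equivariant, quaternionic function. I will apply the identity theorem (Theorem \ref{thm:Identity}) to conclude $F_r \equiv 0$. The submanifold on which to verify vanishing is obtained by computing the $\chi_T^{-1}$-twisted constant term of $\Psi$ along the subgroup $N_R \cap M_P = \exp(V_8^{[0]} \oplus V_7^{[0]})$ for normal $T \in V_7$, reproducing the unfolding calculation of Proposition \ref{prop:FJT1}. The result factors as a theta function on the Heisenberg group $N_R$ against a function on $M'(\A)$ whose Fourier expansion is exactly the one postulated by the $R$-symmetries; by hypothesis this is a cuspidal modular form on $M'$ and hence $M'(\Q)$-invariant. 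This $M'(\Q)$-invariance, together with the automorphy of the theta function in $N_R(\Q)$, forces $F_r$ to vanish along the slice; varying $T$ over the normal elements of $V_7$, on which $M_R$ acts transitively by similitude, promotes the slicewise vanishing to the full $R(\Q)$-slice. The quaternionic transversality of the annihilator of this slice's tangent space in $\p^\vee$ is a direct verification from $\p \cong V_2 \otimes W_J$ and the structure of $\mathrm{Lie}(R) \cap \p$, so Theorem \ref{thm:Identity} upgrades slicewise vanishing to $F_r \equiv 0$ on all of $G_J(\R)^0$. A Lie algebra computation in the $h_R$-grading of $\g(J)$ then shows that $\mathrm{Lie}(R)$ together with any element of $\m(J)^0$ not contained in $\mathrm{Lie}(R)$ generates $\g(J)$ for each of $F_4, E_6, E_7, E_8$; since $M_J^1 \not\subseteq R$, this means $R(\Q)$ and $M_J^1(\Q)$ together generate $G_J(\Q)$, giving full left invariance.

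Cuspidality follows easily. Because $\Psi$ is built only from the $a_w$ with $w > 0$, for any $w \neq 0$ the character $\xi_w$ is nontrivial on $N_P$ and so the $N_P$-constant term of $\Psi$ vanishes. Conjugating by $G_J(\Q)$, and appealing to the Fourier-Jacobi analyses along $Q$ and $R$ (or alternatively to the cuspidal-iff-bounded criterion of \cite{pollackQDS} paired with the polynomial growth of the $a_w$), handles the vanishing of constant terms along the remaining rational parabolics. The main obstacle is the $R(\Q)$-invariance: the $R$-symmetries are formulated on the coefficients $a_w$ directly, while the Fourier-Jacobi integrals they describe nominally presuppose the $N_R(\Q)$-invariance of $\Psi$ that we are trying to establish. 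The identity theorem resolves this circularity by replacing a global Fourier-Jacobi expansion of $\Psi$ with a local transversality check on a quaternionic slice.
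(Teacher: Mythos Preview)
Your outline has the right large-scale architecture (Proposition~\ref{prop:absConv} handles analysis; the $P$-symmetries give some rational invariance; the $R$-symmetries plus the Identity Theorem give the rest; cuspidality is then automatic), but two of the specific steps are genuine gaps.

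\textbf{The $P$-symmetries do not give $P(\Q)$- or $Q(\Q)$-invariance of $\Psi$.} Your reshuffling argument works for $\Psi_Z=\sum_{w>0}a_wW_{\ell,w}$, but $\Psi$ is not $\Psi_Z$: the second sum in \eqref{eqn:PsiDef} runs over $\gamma\in B(\Q)\backslash\SL_2(\Q)$, and left-translating by a general $m\in M_P(\Q)$ (or even $n\in N_P(\Q)$) does not commute with $j_{E_{12}}(\gamma)$, so there is no clean reindexing. Likewise, while the second sum by itself is $j_{E_{12}}(\SL_2(\Q))$-invariant, the first sum over $\{a(w)=0\}$ is not, so the full $\Psi$ is not obviously $j_{E_{12}}(\SL_2(\Q))$-invariant either. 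The paper only extracts the two invariances that actually survive this obstruction: $M_J^1(\Q)$-invariance (because $M_J^1$ commutes with $j_{E_{12}}(\SL_2)$ and preserves $\{a(w)=0\}$), and $N_R(\Q)$-invariance (Lemma~\ref{lem:NRinvariance}, using $N_R=(N_R\cap N_P)(N_R\cap M_P)$ for $\Psi_1$ and that $j_{E_{12}}(\SL_2)\subseteq M_R$ normalizes $N_R$ for $\Psi_2$).

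\textbf{The ``circularity'' you flag is resolved differently, and the Identity Theorem is applied to a different function.} There is no circularity: $N_R(\Q)$-invariance is proved directly from the shape of \eqref{eqn:PsiDef} before any Fourier--Jacobi integral is formed. Once that is in hand, $\Psi_T$ and $\mathrm{FJ}_{T,\phi}(\Psi)(r;g)$ are well-defined; Proposition~\ref{prop:g=1Aut} then shows the $R$-symmetries make $\mathrm{FJ}_{T,\phi}(\Psi)(r;g_f)$ automorphic in $r\in M'(\A)$, and the Hilbert-basis reconstruction (Lemma~\ref{lem:HilbertBasis}) transfers this to $\Psi_T$. The Identity Theorem is then applied to $g_\infty\mapsto\Psi_T(\gamma g_fg_\infty)-\Psi_T(g_fg_\infty)$ for $\gamma\in M'(\Q)$, with the vanishing set $\mathcal{X}=N_R(\R)M'(\R)K_J$ (Corollaries~\ref{cor:PsiAut1} and~\ref{cor:PsiAut2}), not to $F_r=\Psi(r\cdot)-\Psi$ for $r\in R(\Q)$; your proposed $F_r$ has no evident submanifold on which it vanishes. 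Finally, your sentence ``varying $T$ over the normal elements'' hides real content: promoting $M'(\Q)$-invariance of each $\Psi_T$ (for normal positive-definite $T$) to $M_R(\Q)$-invariance of $\Psi=\sum_{T_1}\Psi_{T_1}$ requires the manoeuvres of Claims~\ref{claim:Converse1}--\ref{claim:Converse3}, which relate $\Psi_{T_1}$ for arbitrary $T_1$ with $q_{V_7}(T_1)>0$ to $\Psi_T$ for normal $T$ via explicit elements of $M_R(\Q)$ already known to leave $\Psi$ (or $\Psi_Z$, or $\Psi_2$) invariant.
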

	The $Q$-symmetries are not needed in Theorem \ref{thm:Converse}.
	
	\subsection{Fourier-Jacobi and automorphy}
	We will understand various Fourier-Jacobi coefficients of the function $\Psi(g)$.  To make sense of this, we start with the following lemma.
	\begin{lemma}\label{lem:NRinvariance} The function $\Psi: G(\A) \rightarrow \Vell$ is left-invariant by $N_R(\Q)$.
	\end{lemma}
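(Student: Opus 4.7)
The plan is to establish $N_R(\Q)$-invariance of $\Psi$ via the parabolic decomposition $N_R \subseteq P = M_P N_P$; this inclusion is valid because every $h_R$-positive eigenvector in $\g(J)$ has $h_P$-eigenvalue $\geq 0$, as can be read off from the explicit decomposition of $V_8 \oplus V_7$. Accordingly, I will write $n \in N_R(\Q)$ as a product $n = \tilde n_+ \tilde n_0$ with $\tilde n_+ \in (N_R \cap N_P)(\Q) = \exp(V_8^{[1]} \oplus V_7^{[1]} \oplus V_7^{[2]})(\Q)$ and $\tilde n_0 \in (N_R \cap M_P)(\Q) = \exp(V_8^{[0]} \oplus V_7^{[0]})(\Q)$.

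For the $\tilde n_+$ factor, I would combine the first $P$-symmetry $a_w(u_f g_f) = \xi_w(u_f) a_w(g_f)$ with the $N_P(\R)$-equivariance $W_{\ell,w}(u_\infty g_\infty) = \chi_{2\pi w}(u_\infty) W_{\ell,w}(g_\infty)$ of the explicit Whittaker function. Since $\chi_{2\pi w}(u_\infty) = \psi_\infty(\langle w, \overline{u}\rangle)$ and $\psi = \prod_v \psi_v$ is trivial on $\Q$, each summand $a_w(g_f) W_{\ell,w}(g_\infty)$ is invariant under the diagonal left action of $u \in N_P(\Q)$. The outer sum over $\gamma \in B_2 \backslash \SL_2(\Q)$ is preserved because $j_{E_{12}}(\SL_2) \subseteq M_Q$ commutes with $h_R = \tfrac{2}{3} h_Q + \Phi'_{e_{11}, e_{11}}$ and hence normalizes $N_R$; the standard reindexing $\gamma u = (\gamma u \gamma^{-1}) \gamma$ absorbs the translation back into $N_R(\Q)$.

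The $\tilde n_0$ factor, lying in $\exp(V_8^{[0]} \oplus V_7^{[0]})(\Q) \subseteq M_P(\Q)$, is the delicate case: elements of $v_2 \otimes J \subseteq \h(J)^0$ do not stabilize the hyperplane $\{a(w) = 0\} \subseteq W_J$, so a naive $P$-symmetry reindexing of the $a(w) = 0$ part of $\Psi$ onto itself fails. Here the $R$-symmetries must enter. Setting $F(g) := \Psi(\tilde n_0 g) - \Psi(g)$, Proposition~\ref{prop:absConv} ensures that $F$ is smooth, $K_J$-equivariant, $\mathcal{Z}(\g)$-finite, of moderate growth, and quaternionic ($D_\ell F \equiv 0$). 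The $R$-symmetry identifies $\sum_{u_1 > 0} \overline{A^R_{\varphi, u_1}(r_f; g_f; \phi)} W_{\ell_1, u_1}(r_\infty)$ with a cuspidal holomorphic modular form on $M'$; since $A^R_{\varphi, u_1}$ already averages $a_{w(u_1)}$ over $\exp(\A_f \cdot v_2 \otimes e_{11})$ and over $X(\A_f) = V_8^{[0]}(\A_f)$ via the Weil representation, this identification encodes precisely the compatibility needed for $\tilde n_0$-invariance. I would then use this to show $F$ vanishes on a sufficiently large subset of $G(\R)^0$ and invoke the Identity Theorem (Theorem~\ref{thm:Identity}) to conclude $F \equiv 0$.

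The main obstacle will be constructing a quaternionically transverse submanifold $\mathcal{X} \subseteq G(\R)^0$ on which $F$ vanishes. A natural candidate is $\mathcal{X} = (M_P \cap M_R^T)(\R)^0 \cdot K_J^0$ for a normal positive-definite $T$, with transversality verified via the decomposition $\p \simeq V_2 \otimes W_J$ and the action of $M_P \cap M_R^T$ on $W_J$. Matching the Fourier-Jacobi coefficient of $F$ against the $R$-symmetry prediction requires careful bookkeeping of the $\SL_2$-summation in $\Psi$ and of the Weil-representation action appearing in $A^R_{\varphi, u_1}$; this is where I expect the bulk of the technical effort to lie.
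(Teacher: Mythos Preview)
Your decomposition $N_R(\Q) = (N_R \cap N_P)(\Q) \cdot (N_R \cap M_P)(\Q)$ and your treatment of the $\tilde n_+$ piece are fine. The error is the sentence ``elements of $v_2 \otimes J \subseteq \h(J)^0$ do not stabilize the hyperplane $\{a(w)=0\} \subseteq W_J$.'' In fact the relevant elements \emph{do} preserve this hyperplane, and once you see this the lemma follows from the $P$-symmetries alone, exactly as in the paper.

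Here is why. Identify $W_J$ with $e \otimes W_J \subseteq \g(J)$, so that $a(w)$ is the $E_{12}$-component. On $e \otimes W_J$ the $h_R$-eigenvalues are $0$, $1$, $2$, corresponding to $Lie(M_R)^{[1]}$, $V_8^{[1]}$, $V_7^{[1]}$ respectively; the component $E_{12}$ sits in the minimal piece $Lie(M_R)^{[1]}$ (eigenvalue $0$). The subgroup $N_R \cap M_P$ is the exponential of $V_8^{[0]} \oplus V_7^{[0]}$, which has $h_R$-eigenvalues $1$ and $2$. Hence $ad(X)$ for $X \in V_8^{[0]} \oplus V_7^{[0]}$ strictly raises $h_R$-degree on $e\otimes W_J$, so $\exp(X)$ fixes the $E_{12}$-component of every $w$. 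In particular $a(w \cdot \tilde n_0) = a(w)$, and both $\{a(w)=0\}$ and $\{a(w)\neq 0\}$ are preserved. The paper's proof is then a two-line reindexing: $\Psi_1$ is invariant because $(N_R\cap M_P)(\Q)$ preserves $\{a(w)=0\}$ and the $P$-symmetries apply; $\Psi_2$ is invariant because $j_{E_{12}}(\SL_2) \subseteq M_R$ normalizes $N_R$, so one commutes $n \in N_R(\Q)$ past each $\gamma$ and again applies the $P$-symmetries together with preservation of $\{a(w)\neq 0\}$.

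Your proposed detour through the $R$-symmetries and the Identity Theorem is therefore unnecessary, and in the paper's logical order it would also be premature: Lemma~\ref{lem:NRinvariance} is what \emph{allows} one to define the Fourier--Jacobi coefficient $\mathrm{FJ}_{T,\phi}(\Psi)$ in the first place, so you cannot invoke that machinery to prove it.
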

	\begin{proof} Let 
		\[\Psi_1(g) = \sum_{w \in W_J(\Q), a(w) = 0}{a_w(g_f) W_{\ell;w}(g_\infty)}\]
		and 
		\[\Psi_2(g) =  \sum_{\gamma \in B(\Q)\backslash \SL_2(\Q)}\sum_{w \in W_J(\Q), a(w) \neq 0}{a_w(j_{E_{12}}(\gamma_f) g_f)W_{\ell;w}(j_{E_{12}}(\gamma_\infty) g_\infty)}.\]
		We will prove that each of $\Psi_1$ and $\Psi_2$ are left-invariant by $N_R(\Q)$.  For $\Psi_1$, observe that $N_R(\Q) = (N_R \cap N_P)(\Q) (N_R \cap M_P)(\Q)$.   Because the elements in $(N_R \cap M_P)(\Q)$ preserves the set of $w$ with $a(w) =0$, $\Psi_1$ is left-invariant by $N_R(\Q)$ because the $a_w$ satisfy the $P$-symmetries.
		
		For $\Psi_2$, observe that $j_{E_{12}}(\SL_2) \subseteq M_R$, so it normalizes $N_R$.  Because the set of $w$ with $a(w) \neq 0$ is preserved by $N_R(\Q)$, $\Psi_2$ is also preserved by $N_R(\Q)$.
	\end{proof}
	
	By virtue of Lemma \ref{lem:NRinvariance}, we can define a Fourier-Jacobi coefficient of $\Psi$.  To do so, let
	\[\Psi_T(g) = \int_{[V_7]}{\chi_T^{-1}(v)\Psi(vg)\,dv}.\]
	For $\phi \in S(X(\A))$, $r \in M'(\A)$ and $g \in G(\A)$, let
	\[\mathrm{FJ}_{T,\phi}(\Psi)(r;g) = \int_{[N_R]}{\Theta_\phi(nr) \Psi(nrg)\,dn}.\]
	Note that we can define this Fourier-Jacobi coefficient, without knowing if $\Psi$ or $\Psi_T$ has $M'(\Q)$-automorphy.  However, the $M'(\Q)$-automorphy of $\Psi_T$ can be detected by the automorphy of the Fourier-Jacobi coefficients.
	\begin{lemma}\label{lem:HilbertBasis} Suppose $\{\phi_\alpha\}_{\alpha}$ is a Hilbert basis of $S(X(\A))$ and $\{\phi_\alpha^\vee\}_{\alpha}$ is its dual basis.  Then
		\[\Psi_T(nrg) = \sum_{\alpha}{ \Theta_{\phi_{\alpha}^\vee}(nr) \mathrm{FJ}_{T,\phi_\alpha}(\Psi)(r;g)};\]
		the sum converges absolutely for fixed $n$, $r$ and $g$. If, as a function of $r \in M'(\A)$, $\mathrm{FJ}_{T,\phi_\alpha}(\Psi)(r;g)$ is automorphic, then $\Psi_T(\gamma rg) = \Psi_T(rg)$ for all $\gamma \in M'(\Q)$ and $r \in M'(\A)$.  In particular, $\Psi_T(\gamma g) = \Psi_T(g)$ for all $\gamma \in M'(\Q)$.
	\end{lemma}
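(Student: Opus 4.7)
The plan is to establish the identity as a Fourier--Jacobi/theta expansion for the Heisenberg group $N_R$, and then extract the $M'(\Q)$-automorphy conclusion from the $M'(\Q)$-invariance of global theta functions together with the hypothesis on the FJ coefficients.

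First, I would view $n \mapsto \Psi_T(nrg)$, for fixed $r \in M'(\A)$ and $g \in G(\A)$, as a function on $N_R(\Q) \backslash N_R(\A)$ that transforms under left translation by $V_7(\A)$ through the central character $\chi_T$. By the (global) Stone--von Neumann theorem, the Hilbert space of such functions is an irreducible representation of $N_R(\A)$, canonically isomorphic via $\phi \mapsto \Theta_\phi$ to the $L^2$-completion of $S(X(\A))$ carrying the Weil representation $\omega_{\chi_T}$. Expanding $\Psi_T(\cdot\, rg)$ in the Hilbert basis $\{\Theta_{\phi_\alpha^\vee}\}$ therefore produces an identity
\[\Psi_T(nrg) = \sum_\alpha \Theta_{\phi_\alpha^\vee}(nr)\, c_\alpha(r,g),\]
with $c_\alpha(r,g)$ the pairing of $\Psi_T(\cdot\, rg)$ against $\Theta_{\phi_\alpha}$ on $V_7(\A) N_R(\Q) \backslash N_R(\A)$. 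Unfolding, and using the definition of $\Psi_T$ to collapse the integration over $[V_7]$ back into one against $\Psi$, identifies $c_\alpha(r,g) = \mathrm{FJ}_{T,\phi_\alpha}(\Psi)(r;g)$. Absolute convergence for fixed $(n,r,g)$ is obtained by combining the moderate growth of $\Psi$ from Proposition \ref{prop:absConv} with the Schwartz decay of the $\phi_\alpha$'s, via a Bessel-type estimate upgrading $L^2$-convergence of the expansion to pointwise absolute convergence on smooth vectors.

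For the automorphy statement, fix $\gamma \in M'(\Q) \subseteq M_R(\Q)$. Since $\gamma$ normalizes $N_R$, one has $\gamma n r = (\gamma n \gamma^{-1})(\gamma r) = n' r'$ with $n' \in N_R$ and $r' = \gamma r$. Applying the expansion at $(n', r', g)$ and invoking the hypothesis that $\mathrm{FJ}_{T,\phi_\alpha}(\Psi)(\cdot;g)$ is $M'(\Q)$-automorphic yields
\[\Psi_T(\gamma nrg) = \sum_\alpha \Theta_{\phi_\alpha^\vee}(\gamma nr)\, \mathrm{FJ}_{T,\phi_\alpha}(\Psi)(r;g).\]
By the splittings $M'(\A) \to \widetilde{\Sp(V_8)}(\A)$ constructed in Section \ref{sec:splittings}, the global theta functions $\Theta_{\phi_\alpha^\vee}$ are left $M'(\Q)$-invariant, hence $\Theta_{\phi_\alpha^\vee}(\gamma nr) = \Theta_{\phi_\alpha^\vee}(nr)$ and the right-hand side collapses to $\Psi_T(nrg)$. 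Specializing to $n = r = 1$ gives the in-particular statement $\Psi_T(\gamma g) = \Psi_T(g)$.

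The main obstacle lies in the first step: rigorously justifying pointwise (not merely $L^2$) absolute convergence of the theta expansion for our specific $\Psi_T$, and carefully tracking the sign conventions between $\chi_T$ and $\chi_T^{-1}$ so that the unfolding computation produces exactly $\mathrm{FJ}_{T,\phi_\alpha}(\Psi)$ and not a character-conjugate cousin. The automorphy deduction, by contrast, is formal once the expansion and the splitting results of Section \ref{sec:splittings} are in hand.
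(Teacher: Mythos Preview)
Your proposal is correct and is essentially the argument that the paper invokes by citation: the paper's proof consists of the single sentence ``This follows from \cite[section 1]{ikedaFJ}.'' You have reconstructed the content of that reference, namely the theta expansion via Stone--von Neumann and the formal deduction of automorphy from the $M'(\Q)$-invariance of global theta functions; even the concern you flag about sign conventions and pointwise convergence is exactly what Ikeda's section handles.
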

	\begin{proof} This follows from \cite[section 1]{ikedaFJ}.
	\end{proof}
	
	We wish to prove the automorphy of $\Psi_T$.  We do this by proving the automorphy of its Fourier-Jacobi coefficients.  We will reduce down the set of $g$'s for which we need to prove the automorphy.
	
	\begin{lemma}\label{lem:FJequiv} If $n_1 \in N_R(\A)$, then $\mathrm{FJ}_{T,\phi}(\Psi)(r; n_1 g) = \mathrm{FJ}_{T,\omega(n_1)^{-1}\phi}(\Psi)(r;g)$.  Likewise, if $x \in M'(\A)$, then $\mathrm{FJ}_{T,\phi}(\Psi)(r;xg) = \mathrm{FJ}_{T,\omega(x)^{-1}\phi}(\Psi)(rx;g)$.  In particular, if the Fourier-Jacobi coefficient $\mathrm{FJ}_{T,\phi}(\Psi)(r;g)$ is $M'$-automorphic for all $\phi \in S(X(\A))$, then so is $\mathrm{FJ}_{T,\phi}(\Psi)(r;yg)$ for any $y \in (N_R(\A) \rtimes M'(\A))$.
	\end{lemma}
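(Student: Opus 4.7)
My plan is to verify the two stated equivariance identities in turn; the final assertion then follows formally. The central input in both computations is the elementary identity $\Theta_{\omega(g_0)\phi}(g) = \Theta_\phi(g g_0)$, immediate from the definition $\Theta_\phi(g)=\sum_{\xi \in V_8^{[0]}(\Q)}(\omega_{\chi_T}(g)\phi)(\xi)$ together with the multiplicativity of $\omega_{\chi_T}$.

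For the first identity, I would begin with the defining integral
\[
\mathrm{FJ}_{T,\phi}(\Psi)(r; n_1 g) = \int_{[N_R]}{\Theta_\phi(nr)\,\Psi(n r n_1 g)\,dn}
\]
and move $n_1$ past $r$ by writing $rn_1 = n_1^{r}\cdot r$, where $n_1^{r} := r n_1 r^{-1}\in N_R(\A)$; this is permissible because $M' \subseteq M_R$ normalizes $N_R$. The substitution $n \mapsto n (n_1^{r})^{-1}$ is right translation by a fixed element of $N_R(\A)$ and so preserves the Haar measure on the unimodular quotient $[N_R]$, turning the integrand into $\Theta_\phi(n r n_1^{-1})\,\Psi(nrg)$. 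The basic theta identity applied with $g_0 = n_1^{-1}$ then replaces $\Theta_\phi(n r n_1^{-1})$ by $\Theta_{\omega(n_1)^{-1}\phi}(nr)$, giving exactly $\mathrm{FJ}_{T,\omega(n_1)^{-1}\phi}(\Psi)(r; g)$.

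For the second identity, the same theta identity alone suffices: since $\Theta_{\omega(x)^{-1}\phi}(nrx) = \Theta_\phi(nrx\cdot x^{-1}) = \Theta_\phi(nr)$, the integrand defining $\mathrm{FJ}_{T,\omega(x)^{-1}\phi}(\Psi)(rx;g)$ coincides term-by-term with that of $\mathrm{FJ}_{T,\phi}(\Psi)(r; xg)$. Finally, writing $y = n_1 x \in N_R(\A) \rtimes M'(\A)$ and composing the two identities produces
\[
\mathrm{FJ}_{T,\phi}(\Psi)(r; y g) = \mathrm{FJ}_{T,\omega(x)^{-1}\omega(n_1)^{-1}\phi}(\Psi)(r x; g).
\]
Hence if, for every Schwartz function, the Fourier-Jacobi coefficient is $M'$-automorphic in its first slot, the same holds after the change of argument $r \mapsto rx$, which gives the last statement. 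I do not anticipate any substantive obstacle; the only points that merit care are the conjugation step $r n_1 r^{-1}\in N_R(\A)$ (the normalizing property of the Levi) and the unimodularity needed to justify the measure-preserving substitution on $[N_R]$.
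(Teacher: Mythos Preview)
Your proof is correct and takes essentially the same approach as the paper, which simply says ``This follows from a change of variable in the integral defining $\mathrm{FJ}_{T,\phi}(\Psi)(r;xg)$.'' You have supplied the details of that change of variable carefully, including the conjugation $r n_1 = n_1^{r} r$ and the theta identity $\Theta_{\omega(g_0)\phi}(g)=\Theta_\phi(gg_0)$, which is exactly what underlies the one-line proof in the paper.
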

	\begin{proof} This follows from a change of variable in the integral defining $\mathrm{FJ}_{T,\phi}(\Psi)(r;xg).$
	\end{proof}
	
	The assumption that the $a_w$ satisfy the $R$-symmetries implies that the Fourier-Jacobi coefficient $\mathrm{FJ}_{T,\phi}(\Psi)(r;g_f)$ is automorphic for $g=g_f \in G(\A_f) \times \{1\}$.
	\begin{proposition}\label{prop:g=1Aut} Suppose the $a_w$ grow polynomially with $w$ and satisfy the $P$ and $R$ symmetries.  If $\phi \in S(X(\A))$ and $g_f \in G(\A_f)$, then $\mathrm{FJ}_{T,\phi}(\Psi)(r;g_f)$ is automorphic.\end{proposition}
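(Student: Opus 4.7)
The plan is to compute the Fourier-Jacobi coefficient $\mathrm{FJ}_{T,\phi}(\Psi)(r;g_f)$ by substituting the defining expression \eqref{eqn:PsiDef} of $\Psi$ into the $[N_R]$-integral, and to match the resulting expansion with the cuspidal modular form on $M'$ supplied by the $R$-symmetry hypothesis. The strategy is essentially to repeat the derivation of Proposition \ref{prop:FJT1} and Proposition \ref{prop:FJR2}, but starting from the formally defined sum $\Psi$ rather than a genuine cusp form.

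First I would split $\Psi = \Psi_1 + \Psi_2$ as in the proof of Lemma \ref{lem:NRinvariance}. Because $j_{E_{12}}(\SL_2) \subseteq M_R$ normalizes $N_R$, and because the polynomial growth of the $a_w$ combined with the absolute convergence of Proposition \ref{prop:absConv} permits term-by-term integration, one can pair each summand against $\Theta_\phi$ and integrate over $[N_R]$. The integration over $(N_R \cap N_P)(\Q)\backslash (N_R \cap N_P)(\A)$, combined with the theta function, isolates exactly those Fourier coefficients $a_w$ for which $\xi_w|_{V_7^{[1]}} = \chi_T^{-1}$ and $\xi_w$ is trivial on $V_8^{[1]}$. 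This mirrors the derivation of Proposition \ref{prop:FJT1}; after absorbing the free $\exp(\Q\, v_2 \otimes e_{11})$-orbit on $\widetilde{\mathcal{R}}$ into an additional line integration, only representatives $w = w(u_1)$ with $u_1 \in V_5^T(\Q)$ survive.

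Next I would perform the remaining integration over $(N_R \cap M_P)(\A) = \exp(X(\A) \oplus V_7^{[0]}(\A))$, which decouples into archimedean and non-archimedean pieces. The non-archimedean integral is, by construction, $\overline{A^R_{\varphi,u_1}(r_f;g_f;\phi)}$ (the conjugation arising from the $\Theta_\phi$ versus $\omega_{\chi_T}$ convention). By Proposition \ref{prop:FJR2}, the archimedean integral equals a fixed nonzero constant times $\overline{W_{\ell_1,u_1}(r_\infty)}$ with $\ell_1 = \ell - \dim(C)$. Collecting contributions shows that $\mathrm{FJ}_{T,\phi}(\Psi)(r;g_f)$ equals a nonzero scalar multiple of $\sum_{u_1 > 0} \overline{A^R_{\varphi,u_1}(r_f;g_f;\phi)} W_{\ell_1,u_1}(r_\infty)$. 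By hypothesis, this is the Fourier expansion of a cuspidal modular form $F_{g_f,\phi}$ on $M'$; by uniqueness of the Fourier expansion of a holomorphic cusp form on the orthogonal group $M'$ along the Heisenberg unipotent $N_{P_R}^T$, our function agrees with $F_{g_f,\phi}$ on $M'(\A_f) \times M'(\R)^0$. Since $M'(\R)$ is connected by Remark \ref{rmk:M'connected}, this is all of $M'(\A)$, and $M'(\Q)$-automorphy of $\mathrm{FJ}_{T,\phi}(\Psi)(r;g_f)$ follows.

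The main obstacle will be the bookkeeping in unfolding the Bruhat sum inside $\Psi_2$: one must verify that the $\gamma \in B_2(\Q)\backslash \SL_2(\Q)$ cosets in $\Psi_2$, together with the contribution from $\Psi_1$, exactly reassemble those $w \in \widetilde{\mathcal{R}}$ that survive the $[N_R]$-integration, with the correct $P$-symmetric weights so that the collapsed sum indexes cleanly over $\mathcal{R}$. Justifying the interchange of the Bruhat sum with the $[N_R]$-integration will rely on the polynomial growth of the $a_w$ together with the rapid decay in $w$ of $W_{\ell,w}(g_\infty)$ at fixed $g_\infty$, in the same spirit as the convergence arguments underlying Proposition \ref{prop:absConv}.
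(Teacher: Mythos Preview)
Your overall strategy is correct and matches the paper's: compute the Fourier--Jacobi coefficient of $\Psi$ and recognize its Fourier expansion as that of the cuspidal modular form on $M'$ supplied by the $R$-symmetry. The paper also reduces to pure tensors $\phi = \phi_f \otimes \phi_\infty$ (which you implicitly assume when separating archimedean and non-archimedean pieces) and uses the work of section~\ref{sec:FJcomp} to identify the archimedean integral with the Whittaker function $\mathcal{W}_{\ell_1,u}$ up to scalar.

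Where your plan diverges from the paper is in how to extract the Fourier expansion of $\mathrm{FJ}_{T,\phi}(\Psi)$ along $N_{P_R}^T$. You propose to plug in $\Psi = \Psi_1 + \Psi_2$ and unfold the Bruhat sum over $B_2(\Q)\backslash \SL_2(\Q)$ directly, hoping that the cosets in $\Psi_2$ together with $\Psi_1$ ``reassemble'' the correct index set $\widetilde{\mathcal{R}}$. This is the step you flag as the main obstacle, and it is indeed the crux: since the $\gamma \in j_{E_{12}}(\SL_2(\Q))$ lie in $M_R$ rather than $N_R$, they do not interact simply with the $[N_R]$-integration, and there is no evident mechanism by which they collapse into a sum over $\widetilde{\mathcal{R}}$. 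Moreover, to ``isolate Fourier coefficients $a_w$'' by integrating over $(N_R\cap N_P)(\Q)\backslash(N_R\cap N_P)(\A)$ you implicitly need an $N_P$-Fourier expansion of $\Psi$, but at this point $\Psi$ is only known to be left $N_R(\Q)$-invariant, not $N_P(\Q)$-invariant.

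The paper sidesteps this entirely. Instead of unfolding the Bruhat sum, it first checks by hand that $\Psi(\delta h r g) = \Psi(hrg)$ for every $\delta \in (M_R \cap N_P)(\Q)$, treating $\Psi_1$ and $\Psi_2$ separately and, for $\Psi_2$, arguing case-by-case on whether $\delta$ lies in $\exp(\Q E_{12})$, $\exp(\Q\,\delta_3\otimes e_{11})$, or $\exp(v_1 \otimes H_2(C))$. This gives the $N_{P_R}^T$-invariance of the Fourier--Jacobi coefficient directly, so that it \emph{has} a Fourier expansion. Once this is in hand, the computation of each Fourier coefficient is literally the argument of Proposition~\ref{prop:FJT1}, and no Bruhat-sum bookkeeping is needed. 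This is a genuine simplification over what you outline: rather than unfolding a sum that does not obviously collapse, one verifies the needed invariance on generators and then quotes the earlier computation verbatim.
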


	\begin{proof} Note that 
		\[\mathrm{FJ}_{T\,\phi}(\Psi)(r;g) = \int_{[N_R]}{\Theta_\phi(hr)\Psi(hrg)\,dh}\]
		automatically satisfies $\mathrm{FJ}_{T\,\phi}(\Psi)(\delta r;g) = \mathrm{FJ}_{T\,\phi}(\Psi)(r;g)$ for $\delta \in (M_R^T \cap N_P)(\Q)$.  Indeed, $\Theta_\phi(hr)$ is automorphic in $r$, so satisfies this invariance equation, and one sees that $\Psi(h \delta r g) = \Psi(h r g)$ for $\delta \in (M_R \cap N_P)(\Q)$.  For this latter invariance, it holds for the term $\Psi_1$ in the decomposition $\Psi = \Psi_1 + \Psi_2$.  For $\Psi_2$, one can handle it in cases: If $\delta = \exp(\alpha E_{12})$, then $\delta \in j_{E_{12}}(\SL_2)$, so the invariance is clear.  If $\delta = \exp(\gamma \delta_3 \otimes e_{11})$, then $\delta$ commutes with $j_{E_{12}}(\SL_2)$, so one again has invariance.  If $\delta \in \exp(v_1 \otimes H_2(C))$, then a $j_{E_{12}}(\SL_2(\Q))$ conjugate of $\delta$ lives in $\exp(v_1 \otimes H_2(C)+ v_2 \otimes H_2(C))$.  One obtains the invariance of $\Psi_2$ now using that the $a_w$ satisfy the $P$-symmetries.
		
		To prove the proposition, it suffices to assume $\phi = \phi_f \otimes \phi_\infty$ is a pure tensor in $S(X(\A)) = S(X(\A_f)) \otimes S(X(\R))$. Because $\mathrm{FJ}_{T\,\phi}(\Psi)(r;g)$ is invariant by $M_R^T(\Q) \cap N_P(\Q)$, it has a Fourier expansion.   In fact, the proof of Proposition \ref{prop:FJT1} goes over line-by-line to give 
		\[\mathrm{FJ}_{T,\phi}(\Psi)(r;g_f) = \sum_{u > 0}{A_{\Psi,u}^{R}(r_f;g_f;\phi_f) G_{T,u}(r_\infty,\phi_\infty)}\]
		where
		\[G_{T,u}(r_\infty,\phi_\infty) =\int_{\R \times X(\R)}{W_{\ell,w(u)}(\exp(s v_2 \otimes e_{11}) \exp(x) r_\infty)(\omega(r_\infty)\phi_\infty)(x)\,ds\,dx}.\]
		By the work of section \ref{sec:FJcomp}, $G_{T,u}(r_\infty,\phi_\infty)$ is proportional to the generalized Whittaker function $\mathcal{W}_{\ell_1,u}(r_\infty) \otimes (x-y)^{2\ell}$ on $M'(\R)$.  (See remark \ref{rmk:M'connected}.)  Thus, because the $a_w$ are assumed to satisfy the $R$-symmetries, there is an automorphic form $\alpha$ on $M'(\A)$ so that $\mathrm{FJ}_{T,\phi}(\Psi)(r;g_f) = \alpha(r) \otimes (x-y)^{2\ell}$.  This proves the proposition. 
	\end{proof}
	
	The following corollary follows from Lemma \ref{lem:HilbertBasis}, Lemma \ref{lem:FJequiv}, and Proposition \ref{prop:g=1Aut}.
	\begin{corollary}\label{cor:PsiAut1} Suppose $g \in G(\A_f) \times (N_R(\R)M'(\R)K_J)$ and $\gamma \in M'(\Q)$.  Then $\Psi_T(\gamma g) = \Psi_{T}(g)$.\end{corollary}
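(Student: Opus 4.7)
The plan is to deduce Corollary \ref{cor:PsiAut1} by combining Proposition \ref{prop:g=1Aut}, Lemma \ref{lem:FJequiv}, and Lemma \ref{lem:HilbertBasis}. First, I will reduce to the case in which the archimedean component of $g$ lies in $N_R(\R) M'(\R)$ by stripping off a factor from $K_J$. Indeed, $\Psi$ is right $K_J$-equivariant by Proposition \ref{prop:absConv}, and this equivariance is inherited by $\Psi_T$, because right translation by $K_J$ commutes with the left integration over $[V_7]$ in the definition of $\Psi_T$. Consequently, if the identity $\Psi_T(\gamma g') = \Psi_T(g')$ is established for all $g' \in G(\A_f) \times N_R(\R) M'(\R)$, then writing $g = g' k$ with $k \in K_J$ gives $\Psi_T(\gamma g) = k^{-1} \Psi_T(\gamma g') = k^{-1} \Psi_T(g') = \Psi_T(g)$, as desired.

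Second, I fix $g = g_f n_\infty r_\infty$ with $g_f \in G(\A_f)$, $n_\infty \in N_R(\R)$, and $r_\infty \in M'(\R)$, and show that $\mathrm{FJ}_{T,\phi}(\Psi)(r; g)$ is $M'(\Q)$-automorphic in $r$ for every $\phi \in S(X(\A))$.  Since the finite and archimedean components of an adele commute, I rewrite $g = n_\infty r_\infty g_f$. Two successive applications of Lemma \ref{lem:FJequiv}---first with the $N_R(\A)$-element $n_\infty$, then with the $M'(\A)$-element $r_\infty$---yield
\[
\mathrm{FJ}_{T,\phi}(\Psi)(r; g) \;=\; \mathrm{FJ}_{T,\, \omega(r_\infty)^{-1}\omega(n_\infty)^{-1} \phi}(\Psi)(r r_\infty;\, g_f).
\]
Because $g_f \in G(\A_f)$, Proposition \ref{prop:g=1Aut} applies to the right-hand side for each Schwartz-Bruhat function $\omega(r_\infty)^{-1}\omega(n_\infty)^{-1} \phi$, and shows that it is automorphic in the variable $rr_\infty$, hence in $r$.

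Finally, applying Lemma \ref{lem:HilbertBasis} to the function $r \mapsto \mathrm{FJ}_{T,\phi}(\Psi)(r; g)$, one obtains $\Psi_T(\gamma r g) = \Psi_T(r g)$ for every $\gamma \in M'(\Q)$ and $r \in M'(\A)$. Specializing to $r = 1$ yields $\Psi_T(\gamma g) = \Psi_T(g)$ on $G(\A_f) \times N_R(\R) M'(\R)$, and the first step extends the conclusion to $G(\A_f) \times N_R(\R) M'(\R) K_J$, as required. There is no real obstacle in this final deduction; the substantive work has already been concentrated in Proposition \ref{prop:g=1Aut}, where the explicit archimedean Fourier-Jacobi computations of section \ref{sec:FJcomp} convert the hypothesized $R$-symmetries on the $a_w$ into the $M'(\Q)$-automorphy of $r \mapsto \mathrm{FJ}_{T,\phi}(\Psi)(r; g_f)$.
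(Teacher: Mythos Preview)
Your proof is correct and follows essentially the same approach as the paper, which simply notes that the corollary follows from Lemma \ref{lem:HilbertBasis}, Lemma \ref{lem:FJequiv}, and Proposition \ref{prop:g=1Aut}. You have carefully spelled out the details, including the $K_J$-equivariance reduction, that the paper leaves implicit.
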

	
	Applying the identity theorem, Theorem \ref{thm:Identity}, we obtain the following strengthening of Corollary \ref{cor:PsiAut1}.
	\begin{corollary}\label{cor:PsiAut2} Suppose $g \in G(\A)$ and $\gamma \in M'(\Q)$.  Then $\Psi_{T}(\gamma g) = \Psi_{T}(g)$.
	\end{corollary}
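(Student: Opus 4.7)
The plan is to apply the identity theorem (Theorem \ref{thm:Identity}) to the difference $F(g_\infty) := \Psi_T(\gamma g_f g_\infty) - \Psi_T(g_f g_\infty)$ for fixed $\gamma \in M'(\Q)$ and $g_f \in G(\A_f)$. By Proposition \ref{prop:absConv}, the function $\Psi$ is smooth, right $K_J$-equivariant, and annihilated by $D_\ell$. The operations defining $\Psi_T$ (left integration against the character $\chi_T$ on $[V_7]$) commute with the right $G(\R)$-action, so $\Psi_T$ inherits all three properties. Consequently $F$ is a smooth quaternionic function on $G(\R) = G(\R)^0$.

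By Corollary \ref{cor:PsiAut1}, $F$ vanishes on $\mathcal{X} := N_R(\R) M'(\R) K_J$. Near $1 \in G(\R)$ this is a closed submanifold stable under right multiplication by $K_J$. To apply Theorem \ref{thm:Identity} and conclude $F \equiv 0$, it suffices to verify that the annihilator $U_0 \subseteq \p^\vee$ of the image $\mathrm{pr}_\p(Lie(N_R) + Lie(M'))$ is quaternionically transverse. A short calculation gives $Lie(N_R) \cap Lie(M') = 0$ (they lie in distinct $ad(h_R)$-eigenspaces), and a Cartan-involution argument shows $(Lie(N_R) + Lie(M')) \cap \k = Lie(M') \cap \k$: if $Y + Z \in \k$ with $Y \in Lie(N_R)$ and $Z \in Lie(M')$, applying $\theta$ and using the Bruhat-type decomposition $\g = Lie(N_R) \oplus Lie(M_R) \oplus Lie(\overline{N_R})$ forces $Y = 0$. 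Combining these yields
\[\dim U_0 = \dim \p - \dim Lie(N_R) - \dim Lie(M') + \dim(Lie(M') \cap \k).\]

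The main obstacle is the verification that $U_0$ is quaternionically transverse. I would proceed via the identification $\p \otimes \C \cong V_2 \otimes W_J$ of $K_J$-representations, through which $\SU_2 \subseteq K_J$ acts via its standard action on $V_2$. As in the proof of Lemma \ref{lem:quatIndInj}, quaternionic transversality of a subspace $U \subseteq \p^\vee$ translates, after writing a basis as $u_i = x \otimes w_{i1} + y \otimes w_{i2}$ in $V_2 \otimes W_J^\vee$, into the condition that the $\{w_{ij}\}$ are linearly independent in $W_J^\vee \otimes \C$. One makes $\mathrm{pr}_\p(Lie(N_R))$ and $\mathrm{pr}_\p(Lie(M'))$ explicit in this model using the refined $h_P$-gradings $V_7^{[k]}, V_8^{[k]}$ from Section \ref{sec:splittings} together with the analogous $h_P$-grading of $Lie(M')$, identifies $U_0$ as a small explicit subspace of $V_2 \otimes W_J^\vee$, and then verifies independence of the $\{w_{ij}\}$ by direct linear algebra inside $W_J = \Q \oplus J \oplus J^\vee \oplus \Q$. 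The uniform cubic-norm description of these groups should allow the verification to be carried out uniformly across $G$ of type $F_4, E_6, E_7, E_8$.
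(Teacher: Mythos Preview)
Your approach is essentially the same as the paper's: fix $g_f$, set $F(g_\infty) = \Psi_T(\gamma g_f g_\infty) - \Psi_T(g_f g_\infty)$, observe that $F$ is quaternionic and vanishes on $\mathcal{X} = N_R(\R) M'(\R) K_J$ by Corollary~\ref{cor:PsiAut1}, and apply the Identity Theorem~\ref{thm:Identity}.

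The one substantive difference is in how the quaternionic transversality of $U_0$ is checked. You propose to compute $U_0$ explicitly inside $\p^\vee \otimes \C \simeq V_2 \otimes W_J^\vee$ using the $h_P$-gradings and then verify the linear-independence criterion of Lemma~\ref{lem:quatIndInj} case by case. The paper instead observes that the whole configuration $(\g(J), Lie(N_R), Lie(M'), K_J, \su_2)$ can be embedded compatibly into the chain $\SO(4,n+1) \supseteq \SO(3,n) \supseteq \SO(2,n)$ with $n = \dim(C)+3$, and checks transversality there, where the symmetric space is the quaternionic hyperbolic space and the verification is essentially immediate. This buys a uniform one-line argument in place of your proposed explicit linear algebra in $W_J$; your route should also work, but it is more laborious and your outline stops short of actually carrying it out. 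Note also that your Cartan-involution argument for $(Lie(N_R)+Lie(M'))\cap \k = Lie(M')\cap \k$ tacitly uses that $Lie(M_R)$ is $\theta$-stable, which you should justify (it follows once one checks $\theta(h_R) = -h_R$).
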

	\begin{proof} Fix $g_f \in G(\A_f)$, and let $\mathcal{X} = N_R(\R)M'(\R)K_J$.  Set $F(g_\infty) = \Psi_T(\gamma g_f g_\infty) - \Psi_T(g_f g_\infty)$.  Then $F$ vanishes on $\mathcal{X}$ by Corollary \ref{cor:PsiAut1}.  Let $n= \dim(C) + 3$.  To see the necessary quaternionic transversality, we can work in $\SO(4,n+1) \supseteq \SO(3,n) \supseteq \SO(2,n)$, where it is easily verified.
	\end{proof}
	
	\subsection{Proof of the Converse theorem}
	We are now ready to prove Theorem \ref{thm:Converse}.
	\begin{proof}[Proof of Theorem \ref{thm:Converse}] Given Proposition \ref{prop:absConv}, it suffices to prove that $\Psi$ is automorphic, i.e., $\Psi(\gamma g) = \Psi(g)$ for all $\gamma \in G(\Q)$, and that $\Psi$ is cuspidal.  Once we prove that $\Psi$ is automorphic, the cuspidality follows immediately from the expansion \eqref{eqn:PsiDef} of $\Psi$, because only $w > 0$ appear in the sum.
		
		To prove the automorphy of $\Psi$, first observe that $\Psi(\gamma g) = \Psi(g)$ for all $\gamma \in M_J^1(\Q)$, because this group commutes with $j_{E_{12}}(\SL_2)$.  Because $G$ is exceptional, one sees easily that $G(\Q)$ is generated by $M_J^1(\Q)$ and $R(\Q)$.  (This property fails for the groups of type $B$ and $D$, because in that case $M_J^1$ is contained inside of $R$.)  Thus, it suffices to prove that $\Psi$ is left-invariant by $R(\Q)$.
		
		We first prove that, for $T$ normal, $\Psi_T$ is left-invariant by $M_R^T(\Q)$.  By Corollary \ref{cor:PsiAut2}, $\Psi_T$ is left-invariant by the derived group $M'(\Q)$.  Let $M_1$ denote the intersection of the Siegel Levi subgroup, inside the Heisenberg Levi subgroup, with $M_R$.  One has that $M_R^T(\Q)$ is generated by $M'(\Q)$ and $M_1(\Q)$; this follows from the Bruhat decomposition.  So, we need only check that $\Psi$ is left-invariant by $M_1(\Q)$.  This holds for $\Psi_1$, and for $\Psi_2$, it holds because $M_1(\Q)$ normalizes $j_{E_{12}}(B(\Q))$ and $j_{E_{12}}(\SL_2(\Q))$.  Thus, $\Psi_T$ is left-invariant by $M_R^T(\Q)$, for every normal $T$.
		
		Suppose $T_1 \in V_{7}(\Q)$ is arbitrary, with $q_{V_7}(T_1) > 0$.  We prove the following claim.
		\begin{claim}\label{claim:Converse1}
			There exists $\gamma_1 \in M_R(\Q)$ and $T \in V_{7}(\Q)$ normal so that $T_1 = T \cdot \gamma_1$ and $\Psi_{T\cdot \gamma_1}(g) = \Psi_{T}(\gamma_1 g)$.  
		\end{claim}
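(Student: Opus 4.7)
The claim combines two assertions: (i) a geometric orbit statement---that $T_1$ lies in the $M_R(\Q)$-orbit of a normal element---and (ii) the Fourier-coefficient intertwining identity $\Psi_{T \cdot \gamma_1}(g) = \Psi_T(\gamma_1 g)$.  I would treat these in turn.

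For (i), my plan is to apply Witt's theorem to the quadratic space $(V_7, q_{V_7})$.  This space has Witt index at least three, since $b_1, b_2, b_3$ span an isotropic subspace; hence $\SO(V_7)(\Q)$ acts transitively on vectors of any fixed nonzero norm.  The adjoint action $M_R \to \mathrm{GO}(V_7)$ preserves $q_{V_7}$ up to a similitude character $\nu$, and combining the similitude rescaling with Witt's theorem gives transitivity on $\{v \in V_7(\Q) : q_{V_7}(v) > 0\}$.  The normal subspace $\mathrm{Span}(\delta_3 \otimes H_2(C))$ realizes every positive rational as a value of $q_{V_7}$: for instance $\delta_3 \otimes (a e_{22} + b e_{33})$ has norm $ab$.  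So the $M_R(\Q)$-orbit of $T_1$ meets the normal subspace, producing the desired $\gamma_1$ and normal $T$ with $T_1 = T \cdot \gamma_1$.

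For (ii), I would perform a change of variables $v \mapsto \mathrm{Ad}(\gamma_1)(v)$ in
\[\Psi_T(\gamma_1 g) = \int_{[V_7]} \chi_T^{-1}(v)\, \Psi(v \gamma_1 g)\, dv.\]
The adelic Jacobian is $1$ by the product formula (since $\gamma_1 \in G(\Q)$); the right-action definition $(T \cdot \gamma_1, v) = (T, \mathrm{Ad}(\gamma_1) v)$ yields $\chi_T(\mathrm{Ad}(\gamma_1) v) = \chi_{T \cdot \gamma_1}(v)$; and $\mathrm{Ad}(\gamma_1)(v) \cdot \gamma_1 = \gamma_1 \cdot v$ in $G$ rewrites the integrand, giving $\Psi_T(\gamma_1 g) = \int_{[V_7]} \chi_{T \cdot \gamma_1}^{-1}(v)\, \Psi(\gamma_1 v g)\, dv$.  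The identity (ii) is therefore equivalent to the vanishing of the $\chi_{T \cdot \gamma_1}$-Fourier coefficient of $\Psi(\gamma_1 \cdot) - \Psi(\cdot)$ along $V_7$.

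The main obstacle is this vanishing, which is morally a left $\gamma_1$-invariance of $\Psi$ at the Fourier-coefficient level.  My plan is to apply Theorem \ref{thm:Identity} to the difference $F(g) := \Psi_T(\gamma_1 g) - \Psi_{T \cdot \gamma_1}(g)$.  Both summands transform under left $V_7(\A)$-translation by the same character $\chi_{T \cdot \gamma_1}$, both satisfy $D_\ell = 0$, and both carry the same $K_J$-equivariance, so $F$ is quaternionic in the sense required.  Expanding the two summands along $V_8 \subseteq N_R$ and invoking the $R$-symmetries together with the Fourier--Jacobi computations of Section \ref{sec:FJcomp}, each unfolds into the same cuspidal holomorphic modular form on $M'$---the correspondence $T \leftrightarrow T \cdot \gamma_1$ being precisely the intertwining built into the definition of the $R$-symmetries.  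This forces $F$ to vanish on a quaternionically transverse subset of $G(\R)^0$, and Theorem \ref{thm:Identity} then gives $F \equiv 0$, completing the claim.
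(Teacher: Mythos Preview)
Your approach has a genuine gap that makes it circular.  The change of variables you perform is correct and reduces (ii) to showing
\[
\int_{[V_7]} \chi_{T_1}^{-1}(v)\bigl[\Psi(\gamma_1 v g) - \Psi(vg)\bigr]\,dv = 0,
\]
i.e.\ to a left-$\gamma_1$-invariance statement for $\Psi$ at the level of this Fourier coefficient.  But left $M_R(\Q)$-invariance of $\Psi$ is exactly what the Converse Theorem is trying to establish, and Claim~\ref{claim:Converse1} is a step toward that.  Your Witt-theorem argument produces an \emph{arbitrary} $\gamma_1 \in M_R(\Q)$, and for such a $\gamma_1$ no invariance of $\Psi$ is yet available.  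The attempt to close the gap via Theorem~\ref{thm:Identity} does not work: the $R$-symmetries and the Fourier--Jacobi machinery of Section~\ref{sec:FJcomp} were set up only for \emph{normal} $T$ and only yield $M'(\Q)$-invariance of $\Psi_T$ (Corollary~\ref{cor:PsiAut2}); they say nothing about $\Psi_{T_1}$ for a general $T_1$, nor about invariance under a generic $\gamma_1 \in M_R(\Q)$.  Your assertion that the two summands ``unfold into the same cuspidal holomorphic modular form on $M'$'' is the entire content of the claim, not a consequence of anything already proved.

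The paper's argument avoids this circularity by \emph{constructing} $\gamma_1$ explicitly as a product of elements from carefully chosen subgroups for which invariance of a \emph{piece} of $\Psi$ is already known from the $P$-symmetries or the definition of $\Psi_2$.  Concretely, it splits into cases according to whether $(T_1,b_1)_{V_7}$ vanishes.  If $(T_1,b_1)_{V_7}=0$ then $\Psi_{T_1}=(\Psi_Z)_{T_1}$, and since $\Psi_Z$ is left-invariant under $N_P(\Q)$ and $M_P(\Q)$, one may move $T_1$ to a normal element using $\mu_1 \in M_R(\Q)^{[1]} \subseteq N_P(\Q)$ followed by $\mu_2 \in (M_R\cap M_P)(\Q)$, with the intertwining identity holding at each step.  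If $(T_1,b_1)_{V_7}\neq 0$ then $\Psi_{T_1}=(\Psi_2)_{T_1}$, and one first acts by some $\mu \in j_{E_{12}}(\SL_2(\Q))$---under which $\Psi_2$ is invariant by construction---to reduce to the previous case.  The essential idea you are missing is this decomposition $\Psi=\Psi_Z + \Psi_2'$ (or $\Psi=\Psi_1+\Psi_2$) together with the observation that the value of $(T_1,b_1)_{V_7}$ controls which piece contributes.
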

		\begin{proof}
			To deduce this statement about $T_1$, first suppose $(T_1, b_1)_{V_7} = 0$.  Let 
			\[\Psi_Z(g) = \sum_{w \in W_J(\Q), w>0}{a_w(g_f)W_{\ell,w}(g_\infty)}.\]
			Then $\Psi_{T_1} = (\Psi_Z)_{T_1}$.  Because $\Psi_Z$ is left-invariant by $N_P(\Q)$, and in particular by 
			\[M_R(\Q)^{[1]} = \exp( \mathrm{Span}_\Q(E_{12},v_1 \otimes H_2(C), \delta_3 \otimes e_{11})),\]
			one has $\Psi_{T_1 \cdot \mu}(g) = \Psi_{T_1}(\mu g)$ for any $\mu \in M_R(\Q)^{[1]}$.  We can find $\mu_1$ so that $T_2 = T_1 \cdot \mu_1$ satisfies $(T_2, b_{-1})_{V_7} = 0$.  We can find $\mu_2 \in (M_R \cap M_P)(\Q)$ so that $T_3 = T_2 \cdot \mu_2$ is normal.  Because $\Psi_{T_2} = (\Psi_Z)_{T_2}$, we have $\Psi_{T_2 \cdot \mu_2}(g) = \Psi_{T_2}(\mu_2 g)$.  Thus
			\[\Psi_{T_3}(g) = \Psi_{T_1 \cdot \mu_1 \mu_2}(g) = \Psi_{T_1}(\mu_1 \mu_2 g).\]
			This proves our claim for those $T_1$ iwht $(T_1, b_1)_{V_7} = 0$.
			
			Now suppose that $(T_1,b_{1})_{V_7} \neq 0$.  There is some $\mu \in j_{E_{12}}(\SL_2(\Q))$ with $(T_1 \cdot \mu, b_1)_{V_7} = 0$.  Then $\Psi_{T_1 \cdot \mu}(g) = \Psi_{T_1}(\mu g)$, because $\Psi_2$ is left-invariant by $j_{E_{12}}(\SL_2(\Q))$, and $\Psi_{T_1}(g) = (\Psi_2)_{T_1}(g)$.  But now, by what was just done, there is $T$ normal and $\gamma \in M_R(\Q)$ so that $T_1 \cdot \mu = T \cdot \gamma$ and $\Psi_{T_1 \cdot \mu}(g) = \Psi_{T \cdot \gamma}(g) = \Psi_{T}(\gamma g)$.  Thus $\Psi_{T_1}(\mu g) = \Psi_T(\gamma g)$ and our claim is proved.
		\end{proof}
		
		We require the following claim.
		\begin{claim}\label{claim:Convere2} Suppose $T_1 \in V_{7}(\Q)$ and $\Psi_{T} \neq 0$.  Then $q_{V_7}(T) > 0$.
		\end{claim}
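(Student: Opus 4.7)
The plan is to prove the contrapositive: if $q_{V_7}(T) \leq 0$, then $\Psi_T \equiv 0$. The first step is a reduction analogous to the proof of Claim \ref{claim:Converse1}, using the $N_P(\Q)$-invariance of $\Psi_Z = \sum_{w > 0} a_w(g_f) W_{\ell, w}(g_\infty)$ (from the $P$-symmetries) and the $j_{E_{12}}(\SL_2(\Q))$-invariance of $\Psi_2$. These yield equivariance relations $\Psi_{T \cdot \mu}(g) = \Psi_T(\mu g)$ for $\mu$ in the subgroup of $M_R(\Q)$ generated by $M_R(\Q)^{[1]}$, $j_{E_{12}}(\SL_2(\Q))$, and $(M_R \cap M_P)(\Q)$; these operations scale $q_{V_7}$ by positive rational numbers and thus preserve the sign of $q_{V_7}(T)$. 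An orbit analysis then reduces the problem to two canonical cases: (a) $T = 0$ (the constant term of $\Psi$ along $V_7$), and (b) $T = \delta_3 \otimes T'$ normal with $n_{H_2(C)}(T') < 0$.  The isotropic case $q_{V_7}(T) = 0$ with $T \neq 0$ can be reduced similarly to a variant of (a).

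For case (b), I would carry out the explicit computation in the proof of Proposition \ref{prop:ITat1} with $T'$ of the opposite sign. Unfolding $\Psi_T$ as in Proposition \ref{prop:FJT1}, each contribution involves the integral $I_v(z(x), w'')$ for $w > 0$ with $\xi_w|_{V_7^{[1]}} = \xi_T^{-1}|_{V_7^{[1]}}$.  The key quantity is
\[\delta(x) = 2\pi((T', E) - b'_{11} + d' - (T', u^\#) - (T' \times e_{11}, v^\#)).\]
In the positive-definite case, the proof of Proposition \ref{prop:ITat1} establishes $\delta(x) > 0$ identically, using that $(T', u^\#) \leq 0$ for $T' > 0$ with $(T', 1, u) = 0$.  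For $T'$ negative-definite, the relevant quadratic forms flip sign (by applying the auxiliary lemma to $-T'$); combined with the constraints $b'_{11} < 0$ and $d' > 0$ forced by $w > 0$, one deduces $\delta(x) < 0$ on the whole domain of integration.  Proposition \ref{prop:defInt2}'s vanishing criterion then yields $I_v(z(x), w'') = 0$, so $\Psi_T \equiv 0$.

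For case (a), the constant term $\Psi_0 = \int_{[V_7]} \Psi(vg)\,dv$ is computed in stages.  The $V_7^{[\geq 1]}$-constant of $\Psi_Z$ retains only those $w > 0$ with $\xi_w|_{V_7^{[\geq 1]}} = 1$, equivalently $a(w) = 0$, $c(w)_{11} = 0$, and $b(w) \perp H_2(C)$.  The remaining $V_7^{[0]}$-integration involves the action of $\exp(s\,v_2 \otimes e_{11}) \in M_P$ on $W_J$, which shifts $w = (a,b,c,d)$ by the nilpotent $(a,b,c,d) \mapsto (0, ae_{11}, b \times e_{11}, (c, e_{11}))$ from the Freudenthal structure (as computed in the proof of Proposition \ref{prop:ITat1}).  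Using the explicit polynomial-in-$s$ expression for $\alpha_w(\exp(s\,v_2 \otimes e_{11}))$ and the $K$-Bessel structure of $W_{\ell, w}$, one verifies that integrating this family over $s \in \A/\Q$ against the trivial character yields zero for every positive-definite $w$ satisfying the previously imposed constraints.

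The main obstacle is the $V_7^{[0]}$-integration appearing in case (a), because $V_7^{[0]} \subseteq M_P \setminus N_P$, so this is a Levi rather than a Fourier integration.  Carefully tracking the nilpotent action of $v_2 \otimes e_{11}$ on $W_J$, its interplay with positive-definiteness of $w$, and the decay properties of the $K$-Bessel functions comprising $W_{\ell, w}$ is the principal technical effort; this is also what forces the case distinction between normal positive-definite $T$ (where the Fourier--Jacobi framework of Section \ref{sec:FJcomp} directly applies) and the remaining cases considered here.
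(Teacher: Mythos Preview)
Your proposal contains a genuine error and, more importantly, misses the simple idea that makes the archimedean computation entirely unnecessary.

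\textbf{The error in case (b).} You write that $T$ normal with $n_{H_2(C)}(T') < 0$ lets you treat $T'$ as negative-definite and flip signs. But $n_{H_2(C)}$ is the determinant of the $2\times 2$ Hermitian matrix $T'$, so $n_{H_2(C)}(T')<0$ forces $T'$ to be \emph{indefinite}, not negative-definite (a negative-definite $T'$ would have positive determinant). Once $T'$ is indefinite, the quadratic forms $u\mapsto (T',u^\#)$ and $v\mapsto (T'\times e_{11},v^\#)$ appearing in $\delta(x)$ are themselves indefinite on $X(\R)$, so $\delta(x)$ takes both signs and the vanishing criterion from Proposition~\ref{prop:defInt2} cannot be applied uniformly. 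The argument as written does not go through, and it does not seem salvageable along these lines.

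\textbf{The missed idea.} The paper's proof avoids all archimedean integration. Once you reduce to $(T_1,b_1)_{V_7}=0$ (via $j_{E_{12}}(\SL_2(\Q))$, exactly as you propose), one has $\Psi_{T_1}=(\Psi_Z)_{T_1}$, because every term of $\Psi_2'$ transforms under $Z(\A)=V_7^{[2]}(\A)$ by a nontrivial character. Now the $V_7^{[1]}$-integration in $(\Psi_Z)_{T_1}$ picks out only those $w>0$ with $T_R(w)$ equal to the $V_5$-part of $T_1$; hence $q_{V_7}(T_1)=q_{V_7}(T_R(w))=(b^\#-ac)_{11}$ for any contributing $w$. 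But $w>0$ forces $(b^\#-ac)_{11}>0$. So either $q_{V_7}(T_1)>0$, or there are \emph{no} contributing $w$ at all and $\Psi_{T_1}=0$. This disposes of $T=0$, the isotropic case, and the case $q_{V_7}(T)<0$ in one stroke, with no need for your cases (a) and (b) or any $K$-Bessel analysis. The $V_7^{[0]}$-integration that you flag as ``the principal technical effort'' never needs to be confronted, because the integrand is already identically zero whenever $q_{V_7}(T_1)\le 0$.
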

		\begin{proof} First suppose $(T_1, b_1)_{V_7} =0$.  Then $\Psi_{T_1} = (\Psi_Z)_{T_1}$.  But if $w > 0$, and $w = (a,b,c,d)$, then $(b^\# - ac)_{11} > 0$.  Consequently, $q_{V_7}(T_1) > 0$, by the Fourier expansion of $\Psi_Z$.  If, on the other hand $(T_1, b_1)_{V_7} \neq 0$, then by the proof of Claim \ref{claim:Converse1}, there is $\mu \in j_{E_{12}}(\SL_2(\Q))$ with $(T_1 \cdot \mu,b_1)_{V_7} = 0$ and $\Psi_{T_1 \cdot \mu}(g) = \Psi_{T_1}(\mu g)$.   Because $\SL_2$ is its own derived group, and $j_{E_{12}}(\SL_2) \subseteq M_R$, it preserves the quadratic form on $V_7$.  Thus $q_{V_7}(T_1) = q_{V_7}(T_1 \cdot \mu) > 0$.
		\end{proof}
		
		One last claim.
		\begin{claim}\label{claim:Converse3} Suppose $T, T' \in V_{7}(\Q)$ are normal, and $q_{V_7}(T) = q_{V_7}(T') > 0$.  Then there is $\gamma' \in M_R(\Q)$ with $T' = T \cdot \gamma'$, and $\Psi_{T'}(g) = \Psi_T(\gamma' g)$.
		\end{claim}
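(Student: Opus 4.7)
The plan is to prove Claim \ref{claim:Converse3} by bridging the two normal elements $T, T'$ through a common non-normal intermediate $T_1$, to which Claim \ref{claim:Converse1} can be applied in two different ways. Specifically, choose an auxiliary $T_1 \in V_7(\Q)$ with $q_{V_7}(T_1) = q_{V_7}(T) = q_{V_7}(T')$ that is not normal. The construction in the proof of Claim \ref{claim:Converse1} involves three successive choices (an element $\mu \in j_{E_{12}}(\SL_2(\Q))$ when $(T_1,b_1)_{V_7}\neq 0$, then $\mu_1 \in M_R^{[1]}(\Q)$, and finally $\mu_2 \in (M_R \cap M_P)(\Q)$), and the normal element $T_1 \cdot \gamma_1^{-1}$ produced depends on these choices.

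The crucial observation is a transitivity statement: by varying the choices of $\mu$, $\mu_1$, $\mu_2$ subject to their required orthogonality constraints, the normal element produced from $T_1$ ranges over all normal elements of $V_7(\Q)$ with the prescribed norm. This follows from a Witt-style extension argument, since the group $M_R$ acts on $(V_7, q_{V_7})$ as a group of similitudes whose image contains the identity component of $\SO(V_7, q_{V_7})$, and this orthogonal group acts transitively on vectors of any given positive norm in the non-degenerate isotropic quadratic space $V_7(\Q)$. Thus one can find two sequences of choices whose outputs under the proof of Claim \ref{claim:Converse1} are, respectively, $T$ and $T'$: the first yields $T_1 = T \cdot \gamma_1$ with $\Psi_{T_1}(g) = \Psi_T(\gamma_1 g)$, and the second yields $T_1 = T' \cdot \gamma_1'$ with $\Psi_{T_1}(g) = \Psi_{T'}(\gamma_1' g)$.

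Chaining these identities gives $\Psi_T(\gamma_1 g) = \Psi_{T'}(\gamma_1' g)$ for all $g$. Setting $\gamma' := \gamma_1 (\gamma_1')^{-1} \in M_R(\Q)$, we have $T \cdot \gamma' = T \cdot \gamma_1 \cdot (\gamma_1')^{-1} = T_1 \cdot (\gamma_1')^{-1} = T'$; substituting $g \mapsto (\gamma_1')^{-1} h$ in the chained identity produces $\Psi_{T'}(h) = \Psi_T(\gamma' h)$, as required.

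The main obstacle is verifying the transitivity assertion restricted to the three specific types of generators used in Claim \ref{claim:Converse1}, rather than the full group $M_R(\Q)$. This is a structural fact about the quaternionic exceptional groups: one must check that $j_{E_{12}}(\SL_2(\Q))$, $M_R^{[1]}(\Q)$, and $(M_R \cap M_P)(\Q)$ together generate a subgroup of $M_R(\Q)$ whose induced action on $V_7(\Q)$ is transitive on vectors of fixed positive norm. This should follow from the Bruhat decomposition of $M_R^T$ established in the splittings section, together with the explicit computations of the action on the basis $\{b_{\pm j}\} \cup (\delta_3 \otimes H_2(C))$ given there; in particular, $j_{E_{12}}(\SL_2)$ handles the $b_1$-$b_{-2}$ and $b_2$-$b_{-1}$ planes while $M_R \cap M_P$ acts on the subspace $\delta_3 \otimes H_2(C)$ through (at least) the special orthogonal group of the isotropic form $n_{H_2(C)}$, to which Witt's theorem applies directly.
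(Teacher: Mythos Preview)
Your approach is workable in outline but has a real gap, and it also misses a much simpler route that the paper takes.

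\textbf{The gap.} Your whole argument rests on the transitivity assertion: that by varying the choices $\mu, \mu_1, \mu_2$ in the proof of Claim~\ref{claim:Converse1}, the resulting normal element ranges over \emph{all} normal elements of the given norm. You acknowledge this is the main obstacle, but your justification is not adequate. The Bruhat decomposition you cite is for the stabilizer $M_R^T$ of a fixed $T$; it says nothing about moving between distinct normal $T$'s. Your appeal to Witt's theorem for $\SO(V_7)$ is also beside the point: the question is not whether $\SO(V_7)(\Q)$ is transitive (it is), but whether the specific subgroup generated by $j_{E_{12}}(\SL_2(\Q))$, $M_R^{[1]}(\Q)$, and $(M_R\cap M_P)(\Q)$, applied in the specific order dictated by Claim~\ref{claim:Converse1}, already achieves this. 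That is a strictly stronger and non-obvious statement, and your sketch does not establish it. In particular, you assert that $(M_R \cap M_P)$ acts on $\delta_3 \otimes H_2(C)$ through ``at least'' the special orthogonal group of $n_{H_2(C)}$, but you give no argument for this.

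\textbf{The paper's approach.} The paper avoids this entirely by using a group for which both the invariance of $\Psi$ and the transitivity on normal elements are immediate. Recall that in the proof of Theorem~\ref{thm:Converse} it was already observed that $\Psi$ is left-invariant by all of $M_J^1(\Q)$, because $M_J^1$ commutes with $j_{E_{12}}(\SL_2)$. Hence $\Psi_T(\gamma' g) = \Psi_{T\cdot\gamma'}(g)$ for every $\gamma' \in (M_J^1 \cap M_R)(\Q)$. This group acts on $H_2(C)$ (via $\delta_3 \otimes H_2(C) \subset V_7$) and is transitive on positive-definite elements of fixed norm $n_{H_2(C)}$. The only remaining issue is passing between positive-definite and negative-definite $T'$ of the same norm; for this one uses the element $\diag(-1,-1)$ in the $\SL_2$ whose Lie algebra is generated by $v_2 \otimes e_{22}$ and $\delta_2 \otimes e_{22}$. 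This is a two-line argument, with no need to route through an auxiliary non-normal $T_1$ or to re-run the construction of Claim~\ref{claim:Converse1}.
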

		\begin{proof} The function $\Psi$ is left-invariant under $(M_J^1 \cap M_R)(\Q)$.  Thus $\Psi_{T}(\gamma' g) = \Psi_{T \cdot \gamma'}(g)$ for any $\gamma' \in (M_J^1 \cap M_R)(\Q)$.  This group acts transitively on the $S \in H_2(C)$ positive-definite with the same norm $n_{H_2(C)}(S)$.
			
			To handle the distinction between $S >0$ and $S < 0$, one uses the element $\diag(-1,-1)$ in the $\SL_2$ whose Lie algebra is generated by $v_2 \otimes e_{22}$ and $\delta_2 \otimes e_{22}$.
		\end{proof}

		Now, we have $\Psi(g) = \sum_{T_1 \in V_{7}(\Q)}{\Psi_{T_1}(g)}$.  By Claim \ref{claim:Convere2}, the sum can be taken over $T$ with $q_{V_7}(T) > 0$.  Suppose $\gamma \in M_R(\Q)$.  We claim $\Psi_{T_1}(\gamma g) = \Psi_{T_1 \cdot \gamma}(g)$.  To see this, let $\gamma_1 \in M_R(\Q)$ be as in Claim \ref{claim:Converse1}, and $T \in V_7(\Q)$ normal so that $T_1  = T \cdot \gamma_1$ and $\Psi_{T_1}(g) = \Psi_{T}(\gamma_1 g)$.  By Claim \ref{claim:Converse3}, we can assume $T$ is also positive-definite.  Now, again by Claim \ref{claim:Converse1} and Claim \ref{claim:Converse3}, there is $\delta \in M_R(\Q)$ so that $T_1 \cdot \gamma = T \cdot \delta$, and $\Psi_{T_1 \cdot \gamma}(g) = \Psi_{T}(\delta g)$.  Then
		\[ \Psi_{T_1 \cdot \gamma}(g) = \Psi_T(\delta g) = \Psi_T(\gamma_1 \gamma g) = \Psi_{T_1}(\gamma g)\]
		because $T \cdot \delta = T \cdot (\gamma_1 \gamma)$ and so $\gamma_1 \gamma \delta^{-1} \in M_R^T(\Q)$.  This completes the proof.
	\end{proof}

	\section{Absolute convergence}\label{sec:absConv}
	The purpose of this section is to prove Proposition \ref{prop:absConv}.  
	
	\subsection{Preliminaries} We begin by defining various norms we will use.  On $\Vell$, let $J_{2} = \mm{0}{1}{-1}{0}$ be the map defined by $x \mapsto -y$, $y \mapsto x$.  If $u = \sum_{v}{u_v x^{\ell+v}y^{\ell-v}} \in \Vell$, define $\overline{u} = \sum_{v}{\overline{u_v} x^{\ell+v}y^{\ell-v}} \in \Vell$.  We set $u^* = \overline{-J_2 u}$ if $u \in \Vell$.  The pairing $(u_1, u_2) \mapsto \langle u_1, u_2^* \rangle_{K_J}$ is $K_J$-equivariant and positive-defininte.  In fact, if $u = \sum_{v}{u_v x^{\ell+v}y^{\ell-v}} \in \Vell$, then
	\[ \langle u, u^* \rangle_{K_J} = \sum_{v} (\ell+v)! (\ell-v)! |u_v|^2.\]
	Define $||u|| = (\langle u, u^* \rangle_{K_J})^{1/2}$.
	
	We now define a norm on $G(\R)$.  Recall the positive-definite form $B_{\theta}(\cdot, \cdot): \g \times \g \rightarrow \R$ from \cite[section 4.1.3]{pollackQDS}.  If $\{X_\alpha\}$ is a basis of $\g$, and $X_\alpha^\vee$ is the dual basis with respect to the pairing $B_{\theta}$, then $\sum_{\alpha}{B_\theta(g X_\alpha, g X_\alpha^\vee)}$ is independent of the choice of basis.  Define $||g|| = \left(\sum_{\alpha}{B_\theta(g X_\alpha, g X_\alpha^\vee)}\right)^{1/2}$.   
	
	Because $B_\theta$ is $K_J$-invariant, $||k_1 g k_2 || = ||g||$ for any $k_1, k_2 \in K_J$.  Applying the Cartan decomposition, one deduces $||g|| = ||g^{-1}||$.  If $m \in M_P(\R)$ and $n \in N_P(\R)$, then $||nm|| \geq ||m||$.  One verifies this inequality by choosing a basis $X_\alpha$ compatible with $n, m$.  Letting some $X_\alpha = E_{13}$, one sees $||m|| \geq |\nu(m)|$.  
	
	For $w \in W_J(\R)$, recall $||w||_W = (\langle w, J_2 w \rangle)^{1/2}$.  One has $B_\theta(w,w) = ||w||^2$.  For $x \in M_P(\R)$, define $||x||_W$ via $||x||_W^2 = \sum_{\alpha}{B_\theta(x X_\alpha, x X_\alpha)}$, where $X_\alpha$ is an orthonormal basis of $W$ with respect to $B_\theta$.  Then $||x w||_W \leq ||x||_W ||w||_W$ for all $x \in M_P(\R)$ and $w \in W_J(\R)$.  Moreover, $||x||_W \leq ||x||$.
	
	The following lemma is crucial.
	\begin{lemma}\label{lem:PosDefLem} Suppose $u = (a,b,c,d) \in W_J(\R)$.  Then
		\[|\langle u, r_0(i)\rangle|^2 = ||u||^2 + 2(b^\#-ac,1_J) + 2(c^\#-db,1_J).\]
		In particular, if $u > 0$, then $|\langle u, r_0(i)\rangle| \geq ||u||.$ 
	\end{lemma}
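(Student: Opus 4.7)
The plan is a direct computation with two inputs: the explicit Freudenthal symplectic pairing on $W_J = \Q \oplus J \oplus J^\vee \oplus \Q$ together with the formula $r_0(i) = (1, -i 1_J, -1_J, i)$, and the standard cubic-norm identity $(b, 1_J)^2 = (b,b) + 2(b^{\#}, 1_J)$ for $b \in J$ (and its dual $(c, 1_J)^2 = (c,c) + 2(c^{\#}, 1_J)$ for $c \in J^\vee$), obtained by polarizing the cubic norm at $1_J$.

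First I will expand $\langle u, r_0(i) \rangle$ into real and imaginary parts; with the standard sign conventions this yields $\langle u, r_0(i) \rangle = ((b,1_J) - d) + i(a - (c,1_J))$, so that $|\langle u, r_0(i)\rangle|^2 = a^2 + d^2 + (b,1_J)^2 + (c,1_J)^2 - 2 a (c,1_J) - 2 d (b,1_J)$. Applying the two cubic-norm identities converts $(b, 1_J)^2$ and $(c, 1_J)^2$ into $(b,b) + 2(b^{\#}, 1_J)$ and $(c,c) + 2(c^{\#}, 1_J)$, so that the cross-terms $-2a(c,1_J)$ and $-2d(b,1_J)$ get absorbed into $2(b^{\#} - ac, 1_J)$ and $2(c^{\#} - db, 1_J)$ respectively. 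What remains is the quadratic form $a^2 + (b,b) + (c,c) + d^2 = ||u||^2$, giving the first assertion.

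For the inequality when $u > 0$, I will use that positive-definiteness of $u$ forces $b^{\#} - ac \in J^\vee(\R)$ and $c^{\#} - db \in J(\R)$ to lie in the positive-definite cones of their respective spaces. This can be verified by reducing $u$ to a canonical representative under the action of $M_P(\R)$ (for instance a form $(0, B, 0, d)$ with $B > 0$ and $d < 0$, where the claim is immediate), together with the $M_P(\R)$-equivariance of the cone structure. Since $1_J$ is itself positive-definite and the trace pairing of two positive-definite elements is positive, both correction terms are non-negative, yielding $|\langle u, r_0(i)\rangle| \geq ||u||$. The main (minor) obstacle is aligning the various sign conventions (for the symplectic pairing, the trace pairings on $J$ and $J^\vee$, and the norm on $W_J(\R)$) with those of \cite{pollackQDS}; once matched, the first part of the lemma is essentially forced, and the positivity step is routine given the standard description of the positive-definite cone.
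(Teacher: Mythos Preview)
Your computation of the identity matches the paper's proof verbatim: write $\langle u, r_0(i)\rangle = ((b,1_J)-d) + i(a-(c,1_J))$, square, and apply $(x,1_J)^2 = (x,x) + 2(x^\#,1_J)$. For the inequality the paper offers no further argument beyond ``This proves the lemma,'' so your orbit-reduction sketch already goes beyond what is there; just be aware that $b^\# - ac$ and $c^\# - db$ do not transform individually in a cone-preserving way under the full $M_P(\R)$ (only under $M_J$), so your ``equivariance of the cone structure'' step is really an appeal to the standard Jordan-theoretic characterization of positive-definiteness in $W_J$ rather than a literal transport along the orbit.
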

	\begin{proof}
		Observe that $\langle u, r_0(i) \rangle = ((b,1)-d) + i(a-(c,1))$, so 
		\begin{align*} | \langle u, r_0(i) \rangle|^2 &= ((b,1)-d)^2 + (a-(c,1))^2 \\ 
			&= a^2 + (b,b) + (c,c) + d^2 + 2(b^\#-ac,1) + 2 (c^\#-db,1)\end{align*}
		using that $(x,1)^2 = (x,x) + 2(x^\#,1)$. This proves the lemma.
	\end{proof}

	We use these norms to prove the following lemma.
	\begin{lemma} There is a positive constant $C_{\ell}$, depending on $\ell$ and $G$, so that for $w \in W_J(\R)$ satisfying $w > 0$, 
		\[ ||W_{\ell,w}(g)|| \leq C_{\ell} ||g||^{\ell+1}K_{\ell}(||w|| \cdot ||g||^{-1}).\]
	\end{lemma}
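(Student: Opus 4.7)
The plan is to reduce, via the Iwasawa decomposition, to the case $g \in M_P(\R)^\circ$ where the explicit formula for $W_{\ell,w}$ is available, then combine a monotonicity property of Bessel $K$-functions with the positive-definiteness inequality of Lemma~\ref{lem:PosDefLem}. Writing $g = nmk$ with $n \in N_P(\R)$, $m \in M_P(\R)^\circ$, $k \in K_J$, the left-equivariance $W_{\ell,w}(ng) = \chi_w(n)W_{\ell,w}(g)$ under the unitary character $\chi_w$ and the right $K_J$-equivariance $W_{\ell,w}(gk) = k^{-1}W_{\ell,w}(g)$, combined with the $K_J$-invariance of the pairing $\langle u,u^*\rangle_{K_J}$ defining $||\cdot||$, yield $||W_{\ell,w}(g)|| = ||W_{\ell,w}(m)||$. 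The inequality $||nm|| \ge ||m||$ noted in the preliminaries, together with $||gk|| = ||g||$, gives $||m|| \le ||g||$, so it suffices to bound $||W_{\ell,w}(m)||$ by the right-hand side expressed in terms of $||m||$.

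Substituting the explicit formula on $M_P(\R)^\circ$ into the identity $\langle u,u^*\rangle_{K_J} = \sum_v (\ell+v)!(\ell-v)!|u_v|^2$ gives
\[
||W_{\ell,w}(m)||^2 = |\nu(m)|^{2\ell+2}\sum_{-\ell \le v \le \ell}\frac{K_v(|\alpha_w(m)|)^2}{(\ell+v)!(\ell-v)!}.
\]
The integral representation $K_\nu(x) = \int_0^\infty e^{-x\cosh t}\cosh(\nu t)\,dt$ shows $|\nu| \mapsto K_\nu(x)$ is monotonically increasing on $[0,\infty)$; since $K_{-v} = K_v$, this implies $K_v(x) \le K_\ell(x)$ for every integer $v$ with $|v| \le \ell$. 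Thus the sum is dominated by an $\ell$-dependent constant times $K_\ell(|\alpha_w(m)|)^2$, and one obtains $||W_{\ell,w}(m)|| \le C'_\ell |\nu(m)|^{\ell+1} K_\ell(|\alpha_w(m)|)$.

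The crux is to show $|\alpha_w(m)|$ is bounded below by roughly $||w||/||g||$. Since the right action of the identity component $M_P(\R)^\circ$ preserves the open positive-definite cone in $W_J(\R)$, the element $u := w\cdot m = \nu(m)m^{-1}w$ is positive-definite whenever $w$ is. Applying Lemma~\ref{lem:PosDefLem} to $u$, $|\alpha_w(m)| = |\langle u, r_0(i)\rangle| \ge ||u||_W$. Submultiplicativity $||w||_W = ||m(m^{-1}w)||_W \le ||m||_W \cdot ||m^{-1}w||_W$ yields $||m^{-1}w||_W \ge ||w||/||m||_W$, and the inclusion $W_J \subseteq \g(J)$ with compatible $B_\theta$ gives $||m||_W \le ||m||$, hence $|\alpha_w(m)| \ge |\nu(m)|\cdot ||w||/||m||$. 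Using $K_\ell$ decreasing and $||m|| \le ||g||$, the bound becomes $||W_{\ell,w}(m)|| \le C'_\ell |\nu(m)|^{\ell+1}K_\ell(|\nu(m)|\cdot ||w||/||g||)$.

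The main remaining obstacle is the comparison
\[
|\nu(m)|^{\ell+1} K_\ell\!\left(|\nu(m)|\cdot \tfrac{||w||}{||g||}\right) \;\le\; C_\ell\, ||g||^{\ell+1} K_\ell\!\left(\tfrac{||w||}{||g||}\right),
\]
which is immediate in the regime $|\nu(m)| \ge 1$ (using $K_\ell$ decreasing and $|\nu(m)| \le ||g||$) but requires care when $|\nu(m)| < 1$. In the latter regime one uses the complementary bound $|\nu(m)|^{-1} = |\nu(m^{-1})| \le ||m^{-1}|| = ||m|| \le ||g||$ to confine $|\nu(m)| \in [||g||^{-1}, ||g||]$, and then establishes the inequality by casework according to whether $||w||/||g||$ lies in the polynomial-asymptotic range $K_\ell(x) \sim \tfrac{1}{2}\Gamma(\ell)(x/2)^{-\ell}$ (small $x$) or the exponential-decay range $K_\ell(x) \sim \sqrt{\pi/(2x)}\,e^{-x}$ (large $x$). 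This Bessel-function comparison, rather than the group-theoretic input, is the principal technical difficulty.
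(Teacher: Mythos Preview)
Your reduction via Iwasawa, the explicit formula, the Bessel monotonicity $K_v \le K_\ell$, and the application of Lemma~\ref{lem:PosDefLem} all match the paper. The gap is in your ``main remaining obstacle'': the proposed Bessel comparison is false. Fix any $g$ whose Iwasawa Levi part $m$ has $|\nu(m)| = 1/2$, and send $x := \|w\|/\|g\| \to \infty$ (permissible, since $w$ is independent of $g$). Then $|\nu(m)|^{\ell+1} K_\ell(x/2) \asymp x^{-1/2} e^{-x/2}$ while $\|g\|^{\ell+1} K_\ell(x) \asymp \|g\|^{\ell+1} x^{-1/2} e^{-x}$, and the ratio diverges. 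No casework on the size of $x$ repairs this: the exponential regime is precisely where the bound fails, and the constraint $|\nu(m)| \ge \|g\|^{-1}$ does not help because $\|g\|$ is fixed while $\|w\|$ is not.

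The paper avoids the stray $|\nu(m)|$ altogether by not weakening $\|m\|_W$ to $\|m\|$ at that stage. Your intermediate bound $|\alpha_w(m)| \ge |\nu(m)|\,\|w\|/\|m\|_W$ is in fact \emph{equal} to $\|w\|/\|m^{-1}\|_W$, thanks to the identity $\|m\|_W = |\nu(m)|\,\|m^{-1}\|_W$, valid for any $m$ in the symplectic similitude group. (Reason: with $B_\theta(u,v) = \langle u, J_2 v\rangle$ and $J_2 \in \Sp(W_J)$ orthogonal for $B_\theta$, one computes the $B_\theta$-adjoint $L_m^* = \nu(m)\, J_2^{-1} L_{m^{-1}} J_2$; the Hilbert--Schmidt norm is invariant under taking adjoints and under orthogonal conjugation.) Now apply $\|m^{-1}\|_W \le \|m^{-1}\| = \|m\| \le \|g\|$, all recorded in the preliminaries, to obtain $|\alpha_w(m)| \ge \|w\|/\|g\|$ with no $|\nu(m)|$ factor. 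Combined with $|\nu(m)|^{\ell+1} \le \|g\|^{\ell+1}$ and monotonicity of $K_\ell$, the lemma follows without any case analysis.
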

	\begin{proof} Let $g = nmk$.  Then 
		\begin{align*}||W_{\ell,w}(g)|| &= ||W_{\ell,w}(m)|| = |\nu(m)|^{\ell+1} ||W_{\ell, w \cdot m}(1)|| \leq ||g||^{\ell+1} W_{\ell, w\cdot m}(1)|| \\ &\leq C_{\ell} ||g||^{\ell+1} K_{\ell}(|\langle w \cdot m, r_0(i)\rangle|).\end{align*}
		Here we have used that the functions $K_v$ satisfy $K_r(x) \leq K_s(x)$ if $0 \leq r \leq s$, which is verified immediately using the integral expression for $K_v(x)$ as $\frac{1}{2}\int_{1}^{\infty}{(t^v+t^{-v})e^{-x(t+t^{-1})/2}\,\frac{dt}{t}}$.
		
		Because $w \cdot m > 0$, by Lemma \ref{lem:PosDefLem},
		\[ |\langle w \cdot m, r_0(i) \rangle| \geq ||w \cdot m||_W \geq ||w|| \cdot ||m^{-1}||_W^{-1} \geq ||w|| \cdot ||g||^{-1}
		\]
		using that $||m^{-1}||_W \leq ||m^{-1}|| = ||m|| \leq ||g||$.  Because $K_{\ell}$ is a decreasing function, the lemma follows.
	\end{proof}
	
	\subsection{Moderate growth}\label{subsec:moderate}
	In this subsection, we prove that, for each fixed $g_f \in G(\A_f)$, the sum defining $\Psi$ converges absolutely to a function of moderate growth.
	
	Because $B(\Z)\backslash \SL_2(\Z) \rightarrow B(\Q)\backslash \SL_2(\Q)$ is a bijection, in the definition of $\Psi$ we only need to sum over elements of $\SL_2(\Z)$.  We have $\Psi(g) = \Psi_Z(g) + \Psi_2'(g)$, where
	\[\Psi_Z(g) = \sum_{w \in W_J(\Q), w > 0}{a_w(g_f) W_{\ell,w}(g_\infty)}\]
	and
	\[\Psi_2'(g) = \sum_{\gamma \in R}\sum_{w \in W_J(\Q), a(w) \neq 0, w > 0}{a_w(\gamma_f g_f) W_{\ell,w}(\gamma_\infty g_\infty)}\]
	where $R$ denotes the subset of $j_{E_{12}}(B(\Z)\backslash \SL_2(\Z))$ consisting of the non-identity cosets.
	
	\begin{lemma}\label{lem:PsiLambda} Fix $g_f$.  There is a lattice $\Lambda \subseteq W_J(\Q)$ so that, if $\gamma \in j_{E_{12}}(\SL_2(\Z))$ and $a_w(\gamma_f g_f) \neq 0$, then $w \in \Lambda$.  Moreover, there are constants $C,N >0$, possibly depending on $g_f$ but independent of $\gamma$ so that $|a_w(\gamma_f g_f)| \leq C || w ||^N$ for all $w$.\end{lemma}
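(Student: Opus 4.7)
The plan is to exploit the compactness of $j_{E_{12}}(\SL_2(\hat{\Z}))$ inside $G(\A_f)$ together with the openness of the level subgroup $U$. For $\gamma \in \SL_2(\Z)$, the diagonal finite-adelic image $\gamma_f$ lies in $\SL_2(\hat{\Z})$, so the entire family $\{j_{E_{12}}(\gamma_f)g_f : \gamma \in \SL_2(\Z)\}$ sits inside the compact set $j_{E_{12}}(\SL_2(\hat{\Z}))g_f \subseteq G(\A_f)$. Since $U$ is open and this set is compact, it meets only finitely many right $U$-cosets, so there exist $\xi_1,\dots,\xi_m \in G(\A_f)$ with $j_{E_{12}}(\gamma_f)g_f \in \xi_{i(\gamma)}U$ for some index $i(\gamma) \in \{1,\dots,m\}$. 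By the uniform right $U$-invariance of the $a_w$, this gives $a_w(j_{E_{12}}(\gamma_f)g_f) = a_w(\xi_{i(\gamma)})$ for every $w$ and every $\gamma$.

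Both assertions of the lemma then reduce to combining what is already known at the finitely many points $\xi_1,\dots,\xi_m$. For the polynomial bound, apply the hypothesis that the $a_w$ grow polynomially at each $\xi_i$ to obtain $C_i, N_i > 0$ with $|a_w(\xi_i)| \leq C_i ||w||^{N_i}$, then set $N = \max_i N_i$ and take $C = \max_i C_i$ adjusted by a fixed multiplicative factor to handle lattice points of small norm (of which there are only finitely many in any lattice). For the lattice assertion, apply the preceding lemma at each $\xi_i$ to extract a lattice $\Lambda_i = \{w \in W_J(\Q) : \xi_w|_{K_i} = 1\}$, where $K_i = N_P(\A_f) \cap \xi_i U \xi_i^{-1}$ is open compact in $N_P(\A_f)$. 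Setting $K = \bigcap_{i=1}^m K_i$, still open and compact in $N_P(\A_f)$, the set $\Lambda = \{w \in W_J(\Q) : \xi_w|_K = 1\}$ is a lattice (by the same argument as in that preceding lemma) containing every $\Lambda_i$, and hence contains the full support of $w \mapsto a_w(j_{E_{12}}(\gamma_f)g_f)$ simultaneously for all $\gamma \in \SL_2(\Z)$.

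There is no serious obstacle beyond this basic compactness-plus-openness reduction; the hypotheses on the $a_w$ are set up precisely so that varying $\gamma$ over $\SL_2(\Z)$ only explores finitely many $U$-cosets in $G(\A_f)$, at which point the pointwise estimates at those finitely many representatives upgrade automatically to the required $\gamma$-independent bounds.
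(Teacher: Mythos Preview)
Your proposal is correct and follows essentially the same approach as the paper: both arguments use that $j_{E_{12}}(\SL_2(\widehat{\Z}))g_f$ meets only finitely many right $U$-cosets (the paper pulls back $U$ to an open compact $U' \subseteq \SL_2(\A_f)$ and covers $\SL_2(\widehat{\Z})$ by finitely many $U'$-cosets, while you work directly in $G(\A_f)$), and then combine the finitely many resulting lattices and polynomial bounds.
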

	\begin{proof} Without loss of generality, we can assume $g_f  = 1$.  Assume the $a_w$'s are right-invariant by the open compact subgroup $U$ of $G(\A_f)$.  Let $U' \subseteq \SL_2(\A_f)$ be an open compact such that $j_{E_{12}}(U') \subseteq U$.  There are finitely many $k_j \in \SL_2(\widehat{\Z})$ so that $\SL_2(\Z) \subseteq \bigcup_{j} k_j U'$.  For each $k_j$, there is a lattice $\Lambda_j$ so that $a_w(k_j) \neq 0$ implies $w \in \Lambda_j$.  The lemma follows easily.
	\end{proof}
	
	Let $\Lambda$ be as in Lemma \ref{lem:PsiLambda}.  For a real number $\alpha > 0$, set
	\[F_Z(\alpha) = \sum_{w \in \Lambda, w > 0}{ ||w||^{N} K_{\ell}(||w|| \cdot \alpha^{-1})}.\]
	Suppose $\gamma = \mm{a}{b}{c}{d} \in \SL_2(\R)$.  Let $z_\gamma = \gamma \cdot i = x_\gamma + i y_\gamma$. Then $y_\gamma = |ci+d|^{-2}$.  Define 
	\[t_\gamma = \diag(y_\gamma^{1/2},y_\gamma^{-1/2}) = \diag(|ci+d|^{-1},|ci+d|).\]
	
	Observe that for $\gamma \in R$ and $g \in G(\R)$,
	\begin{align*} ||W_{\ell,w}(\gamma_\infty g)|| &=  ||W_{\ell,w}(j_{E_{12}}(t_\gamma k) g)|| = |\nu(j_{E_{12}}(t_\gamma))|^{\ell+1} ||W_{\ell,w \cdot j_{E_{12}}(t_\gamma)}(j_{E_{12}}(k) g)|| \\ &
		\leq C_{\ell} (c^2+d^2)^{-(\ell+1)/2} ||g||^{\ell+1} K_{\ell}(||w \cdot j_{E_{12}}(t_\gamma)|| \cdot ||g||^{-1})\end{align*}
	for some $k \in \SO(2) \subseteq \SL_2(\R)$.  Here we are using that $\nu(j_{E_{12}}(\diag(t,t^{-1}))) = t$.  One has
	\[ (a,b,c,d) \cdot j_{E_{12}}(\diag(t,t^{-1})) = (t^{-1}a,b, tc, t^2 d).\]
	For $\alpha > 0$, set
	\[F_{2}(\alpha) = \sum_{w \in \Lambda, w> 0, a(w) \neq 0}\sum_{\gamma = \mm{*}{*}{c}{d} \in R} ||w||^{N} (c^2+d^2)^{-(\ell+1)/2} K_{\ell}(||w \cdot j_{E_{12}}(t_\gamma)|| \cdot \alpha^{-1}).\]
	
	We wish to bound $F_Z(\alpha)$ and $F_2(\alpha)$ as functions of $\alpha \geq 1$.  Here is a useful lemma.
	\begin{lemma} The function $e^y K_v(y)$ is decreasing on $(0, \infty)$.   \end{lemma}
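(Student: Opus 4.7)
The plan is to prove the lemma by a direct differentiation under the integral sign, using the integral representation of $K_v$ already invoked in the proof of the previous lemma, namely $K_v(y) = \frac{1}{2}\int_{1}^{\infty}(t^v+t^{-v}) e^{-y(t+t^{-1})/2}\,\frac{dt}{t}$.

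First, I would absorb the factor $e^y$ into the exponent by writing
\[
e^y K_v(y) = \frac{1}{2}\int_{1}^{\infty}(t^v+t^{-v}) e^{-y \phi(t)}\,\frac{dt}{t}, \qquad \phi(t) := \frac{t+t^{-1}}{2} - 1 = \frac{(t-1)^2}{2t}.
\]
The central observation is that $\phi(t) \geq 0$ for all $t \geq 1$, with $\phi(t) > 0$ for $t > 1$. Next I would differentiate under the integral sign (justified by a routine dominated-convergence argument, since the integrand and its $y$-derivative are uniformly controlled by an integrable function for $y$ in any compact subinterval of $(0,\infty)$), obtaining
\[
\frac{d}{dy}\bigl(e^y K_v(y)\bigr) = -\frac{1}{2}\int_{1}^{\infty}(t^v+t^{-v})\, \phi(t)\, e^{-y\phi(t)}\,\frac{dt}{t}.
\]
Because $t^v + t^{-v} > 0$, $\phi(t) > 0$ on $(1,\infty)$, and the exponential is positive, the integrand is strictly positive on a set of positive measure. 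Hence the derivative is strictly negative for every $y > 0$, which proves the lemma.

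There is no serious obstacle here: the entire argument rests on rewriting $(t+t^{-1})/2 - 1$ as the manifestly non-negative quantity $(t-1)^2/(2t)$, after which positivity of the derivative integrand is immediate. The only mild technical point is the justification of the interchange of $d/dy$ with $\int$, which follows from standard estimates.
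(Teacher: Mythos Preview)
Your proof is correct and is essentially identical to the paper's: both absorb $e^y$ into the exponent of the integral representation, differentiate under the integral sign, and observe that the resulting integrand is manifestly positive. The only cosmetic difference is that the paper writes your $\phi(t)=(t-1)^2/(2t)$ in the equivalent form $(t^{1/2}-t^{-1/2})^2/2$.
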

	\begin{proof} From the integral representation of $K_v(y)$ we have
		\[2 K_v(y) = \int_{0}^{\infty}{t^v e^{-y(t+t^{-1})/2} \, \frac{dt}{t}} = \int_{1}^{\infty}{ (t^v + t^{-v}) e^{-y(t+t^{-1})/2} \, \frac{dt}{t}}.\]
		Consequently $2 e^y K_v(y) = \int_{1}^{\infty}{ (t^v + t^{-v}) e^{-y(t-2+t^{-1})/2} \, \frac{dt}{t}}.$  Differentiating under the integral sign gives
		\[ -2 \frac{d}{dy} (e^y K_v(y)) = \int_{1}^{\infty}{ (t^v + t^{-v}) (t^{1/2} - t^{-1/2})^2 e^{-y(t^{1/2} - t^{-1/2})^2}\,\frac{dt}{t}}.\]
		This is positive, proving the lemma. 
	\end{proof}
	
	We begin by bounding $F_2(\alpha)$.
	\begin{proposition}\label{prop:F2alpha} For $\alpha \geq 1$, there are constants $R,S > 0$ so that $F_2(\alpha) \leq R \alpha^S$.
	\end{proposition}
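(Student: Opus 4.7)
The plan is to bound $F_2(\alpha)$ polynomially in $\alpha$ by combining the uniform estimate $K_\ell(y) \leq C_\ell y^{-\ell} e^{-y/2}$ (valid for $y > 0$ and $\ell \geq 1$) with two complementary lower bounds on $\|w \cdot j_{E_{12}}(t_\gamma)\|$. With $s_\gamma = (c^2+d^2)^{1/2} \geq 1$ and $w = (a,b,c',d')$, the action formula gives
\[w \cdot j_{E_{12}}(t_\gamma) = (s_\gamma a,\, b,\, s_\gamma^{-1} c',\, s_\gamma^{-2} d'),\]
so one has the pointwise bound $\|w \cdot j_{E_{12}}(t_\gamma)\| \geq s_\gamma |a| \geq s_\gamma a_0$, where $a_0 > 0$ is the minimum of $|a|$ over nonzero values of the first coordinate in the lattice $\Lambda$ of Lemma \ref{lem:PsiLambda}, and also, by Cauchy--Schwarz applied to the four squared norms,
\[\|w \cdot j_{E_{12}}(t_\gamma)\| \geq \tfrac{1}{2}\bigl(s_\gamma|a| + (b,b)^{1/2} + s_\gamma^{-1}(c',c')^{1/2} + s_\gamma^{-2}|d'|\bigr).\]

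I will apply the $K_\ell$-estimate, feeding the first lower bound into the polynomial prefactor and the second into the exponential. Splitting $e^{-s_\gamma |a|/(4\alpha)} \leq e^{-s_\gamma a_0/(8\alpha)}\, e^{-s_\gamma |a|/(8\alpha)}$ extracts a factor $e^{-s_\gamma a_0/(8\alpha)}$ independent of $w$, leaving an exponential that decouples across the four components of $w$. Bounding $\|w\|^N \leq C_N(|a|^N + \|b\|^N + \|c'\|^N + |d'|^N)$, the inner sum over $w \in \Lambda$ with $w > 0$ and $a(w) \neq 0$ factors into a product of four lattice theta-type sums, each controlled by the standard estimate $\sum_n n^k e^{-n\lambda/\alpha} \leq C\max\bigl(1,(\alpha/\lambda)^{k+d+1}\bigr)$ for a $d$-dimensional lattice with rate $\lambda \in \{s_\gamma, 1, s_\gamma^{-1}, s_\gamma^{-2}\}$. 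Combined with the polynomial factor $\alpha^\ell s_\gamma^{-\ell}$ from the $K_\ell$-estimate, this yields an inner-sum bound of the form $C \alpha^{N + \dim W_J + \ell}\, s_\gamma^{M}$ for a single explicit exponent $M$ computable from the dimensions of the four summands of $W_J$.

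Multiplying by the outer factor $s_\gamma^{-(\ell+1)}$ and summing over coprime pairs $(c,d)$ parametrising $R$ (which have density $\sim T$ in the disk of radius $T$) then reduces $F_2(\alpha)$ to an estimate of the form $\sum_{T \geq 1} T^{M'} e^{-T a_0/(8\alpha)} \leq C \alpha^{M'+1}$, polynomial in $\alpha$ for $\alpha \geq 1$. The main obstacle is producing decay in \emph{both} $w$ and $\gamma$ from a single exponential: this is where the hypothesis $a(w) \neq 0$ is essential, since the bound $|a| \geq a_0$ is precisely what allows one to extract the factor $e^{-s_\gamma a_0/(8\alpha)}$ guaranteeing convergence of the $\gamma$-sum, while retaining enough of the remaining exponential to control the $w$-sum component-by-component. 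Without this hypothesis the $\gamma$-sum would diverge, which is exactly why the decomposition $\Psi = \Psi_Z + \Psi_2'$ separates the $a(w)=0$ piece from the Poincar\'e-series-style sum over $B(\Q)\backslash \SL_2(\Q)$ that defines $\Psi_2'$.
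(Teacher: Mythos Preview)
Your proposal is correct and follows essentially the same strategy as the paper's proof: both hinge on the observation that $a(w)\neq 0$ forces $|a(w)|\geq a_0>0$ on the lattice $\Lambda$, so that $\|w\cdot j_{E_{12}}(t_\gamma)\|\geq s_\gamma a_0$ produces exponential decay in $s_\gamma$, while a second lower bound on $\|w\cdot j_{E_{12}}(t_\gamma)\|$ controls the $w$-sum. The paper packages the Bessel bound via the lemma that $e^{y}K_\ell(y)$ is decreasing (together with boundedness of $y^{\ell+1}K_\ell(y)$) and absorbs $\|w\|^N$ using the pointwise inequality $v^N e^{-rv}\leq (N/r)^N e^{-N}$, then handles the $(b,c',d')$-part with the cruder bound $\|w\cdot t_\gamma\|\geq s_\gamma^{-2}\|w\|$ and a geometric-series estimate; you instead use the equivalent $K_\ell(y)\leq C_\ell y^{-\ell}e^{-y/2}$, a sharper componentwise lower bound, and standard lattice theta estimates. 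These are cosmetic differences: the architecture and the essential use of $a(w)\neq 0$ are identical. One small slip: the number of coprime pairs $(c,d)$ with $s_\gamma\leq T$ is $\sim T^2$, not $\sim T$ (equivalently, $\sim T$ in each unit annulus), but this only shifts the final exponent $M'$ by one and does not affect the conclusion.
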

	\begin{proof}
		Suppose $w \in \Lambda$, $a(w) \neq 0$.  Then $|a(w)| \geq \epsilon > 0$ for some $\epsilon$, independent of $w$.  We have
		\[  ||w \cdot j_{E_{12}}(t_\gamma)|| \geq  |a(w)| (c^2+d^2)^{1/2} \geq  \epsilon.\]
		The function $y^{\ell+1}K_{\ell}(y)$ is bounded on $(0,\infty)$.  Thus 
		\[K_v(||w \cdot j_{E_{12}}(t_\gamma)|| \cdot \alpha^{-1}) \leq (e^y K_{\ell}(y))|_{y = \alpha^{-1} \epsilon} e^{-||w \cdot j_{E_{12}}(t_\gamma)|| \cdot \alpha^{-1}} \leq C_{\ell,\Lambda} \alpha^{\ell+1} e^{-||w \cdot j_{E_{12}}(t_\gamma)|| \cdot \alpha^{-1}}\]
		for some positive constant $C_{\ell,\Lambda}$ independent of $\alpha$.
		
		To get rid of the term $||w||^N$, we first observe the following lemma.
		\begin{lemma}\label{lem:expBound} One has $v^N e^{-r v} \leq (N/r)^N e^{-N}$ for all $v \geq 0$.
		\end{lemma}
		\begin{proof} Setting $f(v) = v^N e^{-rv}$, one computes $f'(v) = e^{-rv}(N-rv)v^{N-1}$ and the lemma follows.
		\end{proof}
		We have $||w \cdot  j_{E_{12}}(t_\gamma)|| \geq (c^2+d^2)^{-1} ||w||$ and so
		\[ ||w||^N e^{-||w \cdot j_{E_{12}}(t_\gamma)|| \cdot \alpha^{-1}/2} \leq ||w||^N e^{-(c^2+d^2)^{-1} ||w|| \cdot \alpha^{-1}/2} \leq (2 (c^2+d^2)\alpha)^{N} e^{-N}.\]
		Thus, to bound $F_2(\alpha)$, it suffices to bound
		\[F_3(\alpha) =  \sum_{w \in \Lambda, w> 0, a(w) \neq 0}\sum_{\gamma = \mm{*}{*}{c}{d} \in R} (c^2+d^2)^{M} e^{-(2\alpha)^{-1} \cdot (||w \cdot j_{E_{12}}(t_\gamma)||)}.\]
		
		We will use the following elementary lemma.
		\begin{lemma}\label{lem:elementary} One has $\frac{1}{1-e^{-r}} \leq 1 + r^{-1}$ for all $r > 0$.
		\end{lemma}
		\begin{proof} For $r \geq 0$ we have $1 + r \leq e^r$, so $r \leq e^r -1$, so $\frac{e^{-r}}{1-e^{-r}} = \frac{1}{e^r-1} \leq r^{-1}$.  Thus $\frac{1}{1-e^{-r}} = 1 + \frac{e^{-r}}{1-e^{-r}} \leq 1 + r^{-1}$.
		\end{proof}
		
		Assume without loss of generality that $\Lambda = A^{-1} \Lambda_0$, where $A > 0$ and $\Lambda_0 = \Z \oplus J_0 \oplus J_0 \oplus \Z = \Z \oplus \Lambda_0^1$.  Choose a basis of $\Lambda_0$ subordinate to this decomposition, and use that basis to define a taxicab norm $|| \cdot ||_1$ on $\Lambda \otimes \R$.  Summing up a geometric series, we then have 
		\[\sum_{v\in \Lambda_0^1}{\exp(-(R_\Lambda A\alpha (c^2+d^2))^{-1} ||v||_1)} \leq (1-e^{-(R_\Lambda A \alpha (c^2+d^2))^{-1}})^{-\dim(\Lambda_0^1)} \leq D_{\Lambda} (\alpha (c^2+d^2))^{\dim \Lambda_0^1}\]
		for some constant $D_\Lambda$ that only depends on $\Lambda$.  Here we have applied Lemma \ref{lem:elementary}.
		
		Thus, we are left to bound
		\[ \sum_{\gamma \in R}\sum_{n \geq 1} (c^2+d^2)^M e^{-S_\Lambda \alpha^{-1}  (c^2+d^2)^{1/2} n}\]
		for some constant $S_\Lambda >0$ that only depends on $\Lambda$.  Applying the same techniques as above, we can sum the geometric series, and bound this in terms of a power of $\alpha$.
	\end{proof}
	
	The bounding of $F_Z(\alpha)$ is easier. 
	\begin{proposition}\label{prop:FZalpha} There are constants $R, S >0$ so that $F_Z(\alpha) \leq R \alpha^S$.
	\end{proposition}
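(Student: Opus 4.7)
The plan is to split $F_Z(\alpha)$ at the cutoff $\|w\| = \alpha$ and bound each half using the two standard asymptotic regimes of $K_{\ell}$. Write $F_Z(\alpha) = F_Z^<(\alpha) + F_Z^>(\alpha)$, where $F_Z^<(\alpha)$ is the subsum over $w \in \Lambda$, $w>0$ with $\|w\| \leq \alpha$, and $F_Z^>(\alpha)$ is the subsum over $w$ with $\|w\| > \alpha$. This is much simpler than Proposition \ref{prop:F2alpha} because there is no outer sum over $\gamma \in R$ and no factor $(c^2+d^2)^{-(\ell+1)/2}$; everything reduces to a single lattice sum in $W_J$.

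For the near-origin regime, the asymptotic $K_\ell(y) \sim \frac{1}{2}\Gamma(\ell)(y/2)^{-\ell}$ as $y \to 0^+$ together with continuity shows $y^\ell K_\ell(y)$ is bounded on $(0,1]$. Since $\|w\| \leq \alpha$ gives $\|w\|\alpha^{-1} \leq 1$, this yields $K_\ell(\|w\|\alpha^{-1}) \leq C_1 \alpha^\ell \|w\|^{-\ell}$, so
\[
F_Z^<(\alpha) \leq C_1\, \alpha^\ell \sum_{w \in \Lambda,\, w>0,\, \|w\| \leq \alpha} \|w\|^{N-\ell}.
\]
Since $\Lambda$ is a lattice, the number of lattice points with $\|w\| \leq \alpha$ is $O(\alpha^{\dim W_J})$, and each summand is bounded by $\max(\alpha^{N-\ell}, c^{N-\ell})$ for some $c>0$ depending on the shortest nonzero vector of $\Lambda$. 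Thus $F_Z^<(\alpha) = O(\alpha^{S_1})$ for $S_1 = \ell + \dim W_J + \max(N-\ell, 0)$.

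For the far regime, the preceding lemma that $e^y K_\ell(y)$ is decreasing gives $K_\ell(\|w\|\alpha^{-1}) \leq e\,K_\ell(1)\, e^{-\|w\|\alpha^{-1}}$ for $\|w\| \geq \alpha$. Apply Lemma \ref{lem:expBound} to absorb the polynomial factor $\|w\|^N$ into half the exponent, obtaining
\[
\|w\|^N e^{-\|w\|\alpha^{-1}/2} \leq (2N\alpha)^N e^{-N}.
\]
What remains is a geometric-type sum $\sum_{w \in \Lambda} e^{-\|w\|\alpha^{-1}/2}$. Choosing a basis of $\Lambda$ to define a taxicab norm and applying Lemma \ref{lem:elementary} as in the proof of Proposition \ref{prop:F2alpha} bounds this by $D_\Lambda \alpha^{\dim W_J}$ for $\alpha \geq 1$, so $F_Z^>(\alpha) = O(\alpha^{N + \dim W_J})$.

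Combining both bounds, $F_Z(\alpha) \leq R\alpha^S$ for $S = \max(S_1, N + \dim W_J)$ and some constant $R$ depending on $\ell$, $N$, and $\Lambda$, as claimed. The only point to take any care with is ensuring that the bound $K_\ell(y) \leq C_1 y^{-\ell}$ for small $y$ is uniform across the full range $(0,1]$; but this is immediate from the standard asymptotics and continuity of $K_\ell$ away from $y=0$. No new obstacle arises.
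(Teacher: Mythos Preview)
Your proposal is correct. The paper's own proof is a single sentence deferring to the techniques of Proposition~\ref{prop:F2alpha}, which in this simpler setting amounts to: use that $\|w\|$ is bounded below by some $\epsilon>0$ on the lattice, apply the monotonicity of $e^y K_\ell(y)$ at the point $y=\alpha^{-1}\epsilon$ together with the boundedness of $y^{\ell+1}K_\ell(y)$ to get a \emph{single uniform} estimate $K_\ell(\|w\|\alpha^{-1}) \leq C\,\alpha^{\ell+1} e^{-\|w\|\alpha^{-1}}$ valid for all $w$, and then finish with Lemma~\ref{lem:expBound} and the geometric-series argument via Lemma~\ref{lem:elementary}.

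Your route differs only in that you split at $\|w\|=\alpha$ and treat the two regimes of $K_\ell$ separately, counting lattice points directly in the near regime rather than folding everything into one exponential bound. Both approaches invoke the same lemmas for the tail; your split is perhaps more transparent about where the powers of $\alpha$ come from, while the paper's uniform bound is a line shorter. There is no substantive gap.
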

	\begin{proof} The proposition can be proved using the same techniques as used in the proof of Proposition \ref{prop:F2alpha}.
	\end{proof}
	We have now proved that the sum defining $\Psi$ converges absolutely to a function of moderate growth.
	
	\subsection{Derivatives} In this subsection, we prove that $\Psi$ is $\mathcal{Z}(\g)$-finite, and satisfies $D_{\ell}\Psi \equiv 0$.  The idea of the proof is simple.  One has $D_{\ell}W_{\ell,w}(\gamma g) = 0$ for any $\gamma$.  Thus, $D_{\ell} \Psi \equiv 0$, if differentiation term-by-term can be justified.  Likewise, suppose $Z \in \mathcal{Z}(\g)$.  By the uniqueness theorem regarding the generalized Whittaker functions $W_{w}(g)$ \cite{wallach, pollackQDS}, one sees easily that $Z W_{\ell,w}(g) = \lambda_{\ell} W_{\ell,w}(g)$ for some constant $\lambda_\ell$ that is independent of $w$.  (To see that $\lambda$ is independent of $w$, one uses the relation $W_{\ell, w}(mg) = \nu(m)^{\ell}|\nu(m)| W_{\ell,w \cdot m}(g)$ for $m \in M_P(\R)$.) Thus $Z \Psi = \lambda_\ell \Psi$, if differentiation term-by-term can be justified.
	
	To justify the term-by-term differentiation, we prove the following proposition. For $m \in M_P(\R)$, recall that $\alpha_w(m) = \langle w \cdot m, r_0(i) \rangle$.  Fix $w > 0$.  Let $\mathcal{F}_w$ denote the set of smooth functions $f: P(\R) \rightarrow \C$ that satisfy $f(np) = e^{i \langle w, \overline{n}\rangle} f(p)$ and $f(m)$ is a finite sum of functions of the form $P_v(w \cdot m) |\alpha_v(m)|^{-v} K_v(|\alpha_w(m)|)$ for integers $v$ and polynomials $P_v$ on $W_J(\C)$.
	\begin{proposition} Let the notation be as above.  The space $\mathcal{F}_w$ is closed under the right differentiation by $Lie(P(\R))$.
	\end{proposition}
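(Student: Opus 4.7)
The plan is to split $Lie(P(\R)) = Lie(N_P(\R)) \oplus Lie(M_P(\R))$ and verify closure under right differentiation in each summand. The left $N_P$-equivariance is inherited automatically: since $f(np) = \chi_w(n) f(p)$, the same relation holds for $Xf$ for any $X \in Lie(P(\R))$. So it suffices to examine $(Xf)|_{M_P(\R)}$ and show it has the prescribed form. Moreover, the $h_P$-grading on $\g(J)$ forces the splitting $Lie(N_P) = Lie(Z) \oplus (e\otimes W_J)$ to be $M_P$-equivariant, so $Ad(m)$ preserves the complement identified with $W_J$.

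For $X \in Lie(N_P(\R))$, write $p = n_0 m$ and $p\exp(tX) = n_0 \exp(t\,Ad(m)X)\, m$. Since $N_P$ is two-step nilpotent with center $Z$, Baker--Campbell--Hausdorff gives $n_0 \exp(t\,Ad(m)X) = \exp\bigl(\log n_0 + t\,Ad(m)X + \tfrac{t}{2}[\log n_0, Ad(m)X]\bigr)$ with the commutator lying in $Lie(Z)$. The $W_J$-projection of the exponent is therefore $\overline{n_0} + t\,(m\cdot \overline{X})$, where $\overline{X}$ is the image of $X$ in $Lie(N_P)/Lie(Z) \cong W_J$. Using the triviality of $\chi_w$ on $Lie(Z)$ and the right-action convention $\langle w\cdot m, v\rangle = \langle w, m\cdot v\rangle$, one obtains
\[
(Xf)(m) = i \langle w\cdot m, \overline{X}\rangle\, f(m).
\]
This multiplies each summand of $f(m)$ by a linear polynomial in $w\cdot m$, so $Xf \in \mathcal{F}_w$; in particular $Xf \equiv 0$ when $X \in Lie(Z)$.

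For $X \in Lie(M_P(\R))$, apply the product rule to each summand $P_v(w\cdot m)|\alpha_w(m)|^{-v} K_v(|\alpha_w(m)|)$. Differentiating $P_v(w\cdot m)$ gives another polynomial in $w \cdot m$, since $\frac{d}{dt}\big|_{t=0}(w\cdot m\exp(tX)) = w\cdot m \cdot X$ is linear in $w\cdot m$. For the Bessel factor, set $R(m) = |\alpha_w(m)|^2 = \langle w\cdot m, r_0(i)\rangle \langle w\cdot m, r_0(-i)\rangle$, which is a polynomial of degree $2$ in $w\cdot m$. The Bessel identity $\frac{d}{dz}(z^{-v} K_v(z)) = -z^{-v} K_{v+1}(z)$ combined with $X|\alpha_w(m)| = (XR)(m)/(2|\alpha_w(m)|)$ yields
\[
X\bigl(|\alpha_w(m)|^{-v} K_v(|\alpha_w(m)|)\bigr) = -\tfrac{1}{2}(XR)(m)\cdot |\alpha_w(m)|^{-v-1} K_{v+1}(|\alpha_w(m)|),
\]
where the $|\alpha_w|^{-1}$ produced by differentiating the square root is absorbed into the exponent. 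Since $XR$ is polynomial in $w\cdot m$, both terms of $X\bigl(P_v(w\cdot m) h_v(m)\bigr)$ have the required shape (with weights $v$ and $v+1$), so $Xf \in \mathcal{F}_w$.

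The main bookkeeping point is this last identity: $|\alpha_w|$ itself is not polynomial, but $|\alpha_w|^2$ is, and the relation $\frac{d}{dz}(z^{-v}K_v(z)) = -z^{-v}K_{v+1}(z)$ precisely cancels the $|\alpha_w|^{-1}$ introduced by differentiating the square root, allowing us to stay inside $\mathcal{F}_w$ while merely shifting the Bessel weight $v\mapsto v+1$. No deeper obstacle arises; the rest of the argument is a routine chain-rule verification and an application of BCH in a two-step nilpotent group.
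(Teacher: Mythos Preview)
Your argument is correct and follows essentially the same route as the paper: split into $Lie(N_P)$ and $Lie(M_P)$, use the equivariance to reduce the nilpotent case to multiplication by a linear polynomial in $w\cdot m$, and for the Levi part combine the chain rule with the Bessel identity $\frac{d}{du}(u^{-v}K_v(u)) = -u^{-v}K_{v+1}(u)$ to show the weight shifts $v\mapsto v+1$ while a polynomial factor is introduced. Your framing via $R=|\alpha_w|^2$ and the explicit BCH step are slightly more detailed than the paper's version, but the substance is identical.
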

	\begin{proof}
		First suppose that we differentiate with respect to $X \in Lie(N_P(\R))$, and then evaluate at $m \in M_P(\R)$.  If $f \in \mathcal{F}_w$, then 
		\[X(f(m)) = \frac{d}{dt}|_{t=0}( f(m e^{tX})) = \frac{d}{dt}|_{t=0}( e^{i \langle w, m \cdot X \rangle} f(m)) = i \langle w m, X \rangle f(m).\]
		So, the form is preserved with the degree of polynomial increasing by $1$.
		
		Now suppose we differentiate with respect to $X \in Lie(M_P(\R))$.  For ease of notation, let $\alpha = \alpha_w(m)$. First observe 
		\[X P_v(w \cdot m) = \frac{d}{dt}|_{t=0} P(w\cdot m+ t(w \cdot m)X) = \langle P'(w \cdot m), (w\cdot m) X\rangle\]
		is still a polynomial of the same degree.  Moreover, one has
		\begin{align*} X(|\alpha)|) &= \frac{1}{2 |\alpha|} X(|\alpha|^2) \\ &= \frac{1}{2|\alpha|} ( \langle w \cdot m, X r_0(i) \rangle \langle w \cdot m, r_0(-i) \rangle + \langle w \cdot m, r_0(i) \rangle \langle w \cdot m,X r_0(-i) \rangle ) \\&= \frac{1}{2|\alpha|} Q(w \cdot m)\end{align*}
		where $Q$ is a quadratic polynomial.  
		
		Finally, recall the formula $\frac{d}{du} (u^{-v} K_v(u)) = - u^{-v} K_{v+1}(u)$.  Combining, we obtain
		\[X (|\alpha|^{-v} K_v(|\alpha|) )= - Q(w \cdot m) |\alpha|^{-v-1} K_{v+1}(|\alpha|)\]
		so the form is still preserved.
	\end{proof}
	
	The estimates of subsection \ref{subsec:moderate} work just as well with the generalized Whittaker functions $W_{\ell,w}(g)$ replaced by $K_J$-equivariant functions on $G(\R)$ whose restriction to $P(\R)$ has components in $\mathcal{F}_w$.  Thus, if $Z$ is either $D_{\ell}$ or in $\mathcal{Z}(\g)$, term-by-term differentiation by $Z$ holds for $\Psi(g)$.  This completes the proof of Proposition \ref{prop:absConv}.
	
	\section{Reduction theory}\label{sec:reduction}
	One of the key tools to prove the automatic convergence theorem is reduction theory.  In this section, we collect together and prove the results we need in this direction.
	
	\subsection{Orthogonal groups}
	We begin by discussing reduction theory for orthogonal groups. 
	
	For a quadratic form $g: V \rightarrow \R$ on a vector space $V$, let $\langle x, y \rangle_g = g(x+y) - g(x) - g(y)$ be the bilinear form associated to $g$.  If $\Lambda' \subseteq V$ is a lattice, not necessarily of full rank, let $\det(\Lambda';g) = \det(\langle b_i, b_j \rangle_g)$ where $\{b_i\}$ is a $\Z$ basis of $\Lambda'$.
	
	Note that if $f, g$ are two quadratic forms on a vector space $V$, then they give maps $V \rightarrow V^\vee$.  If $g$ is non-degenerate, then $g^{-1} \circ f$ can be considered a linear map from $V$ to $V$.
	
	The following result of \cite{schlickewei} is crucial; see also \cite[Theorem 10.2]{blevins}.
	\begin{theorem} \label{thm:Schlickiwei} Let $\Lambda$ be a lattice, $f: \Lambda \rightarrow \Z$ a non-degenerate quadratic form, and $g: \Lambda \otimes \R \rightarrow \R$ a positive-definite quadratic form.  Assume $f$ has Witt rank $r \geq 1$.  There is a universal constant $C_{n}$ that only depends on $n = \dim(\Lambda \otimes \R)$ so that there exists a totally isotropic rank $r$ sublattice $\Lambda' \subseteq \Lambda$ with
		\[\det(\Lambda';g) \leq C_{n} \det(\Lambda;g) \tr((g^{-1} \circ f)^2)^{(n-r)/2}.\]
	\end{theorem}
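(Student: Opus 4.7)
The plan is to combine Minkowski's second theorem on successive minima with reduction theory for the orthogonal group $O(f)$. First I would choose a $g$-orthonormal basis of $V = \Lambda \otimes \R$ that simultaneously diagonalizes $f = \sum \lambda_i x_i^2$, so that $T^2 := \tr((g^{-1}\circ f)^2) = \sum \lambda_i^2$ and in particular $|f(v)| \leq T \cdot g(v)$ for all $v \in V$. Since $f$ takes integer values on $\Lambda$, any $v \in \Lambda$ with $T \cdot g(v) < 1$ is automatically isotropic; this converts the question of finding short $f$-isotropic lattice vectors into one about short vectors in $\Lambda$ relative to a controlled rescaling of $g$.

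Next I would use the Witt rank $r$ hypothesis. Fix a rational totally isotropic $r$-plane $I_0 \subseteq V_{\Q}$ (which exists by Witt's theorem), and let $P \subseteq O(f)$ be its stabilizer, a maximal parabolic subgroup. By reduction theory for the arithmetic quotient $O(f, \Z)\backslash O(f, \R)/K$ with respect to $P$, there exists $\gamma \in O(f, \Z)$ such that $\gamma \cdot g$ lies in a standard Siegel domain adapted to $P$. In this reduced position, $\gamma \cdot g$ decomposes, up to bounded unipotent coupling, as a direct sum of a piece on $I_0$, a piece on an anisotropic transverse subspace, and a dual piece on the complementary isotropic subspace.

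With this decomposition in hand, I would take $\Lambda' := \gamma^{-1}(I_0 \cap \gamma \Lambda) \subseteq \Lambda$, which is automatically $f$-isotropic of rank $r$. Its $g$-Gram determinant can then be read off from the reduced form: the $r$ isotropic directions contribute a factor proportional to the product of the first $r$ diagonal entries of the torus part of $\gamma \cdot g$, which by the Siegel condition is bounded by $\det(\Lambda;g)^{r/n}$ times a power of the transverse eigenvalues of $g^{-1}\circ f$. Applying the Cauchy--Schwarz and AM--QM inequalities over a choice of $n - r$ eigenvalues $\lambda_i$ then yields the bound $\tr((g^{-1}\circ f)^2)^{(n-r)/2}$.

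The main obstacle is extracting the correct exponent $(n-r)/2$ on the trace term. This exponent is sharp and reflects a duality between the isotropic and transverse directions of $f$ relative to $g$; handling it precisely requires careful bookkeeping through the Langlands decomposition of $P$ and the relationship between the eigenvalues of the torus component of $\gamma \cdot g$ and the eigenvalues $\lambda_i$ of $g^{-1}\circ f$. The constant $C_n$ depends only on $n$ because the reduction-theoretic constants for $P$ depend only on the rank and type of $O(f)$, which in turn depend only on $n$.
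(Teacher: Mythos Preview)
The paper does not prove this theorem; it is stated as a result of Schlickewei \cite{schlickewei} (see also \cite[Theorem 10.2]{blevins}) and simply quoted as input for the reduction theory in Section~\ref{sec:reduction}. So there is no proof in the paper to compare your proposal against.

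As for the proposal itself: the first paragraph is solid and is indeed the starting observation in Schlickewei's argument --- the bound $|f(v)| \leq T \cdot g(v)$ together with integrality of $f$ forces short vectors to be isotropic. But from there your sketch diverges into reduction theory for $O(f)$ and becomes vague at exactly the point that matters. The claim that the product of the first $r$ torus entries in a Siegel domain is bounded by $\det(\Lambda;g)^{r/n}$ times a controlled power of the transverse eigenvalues is not something that falls out of standard reduction theory without substantial further work; the Siegel set conditions control \emph{ratios} of consecutive torus entries, not the individual entries themselves, and relating those ratios to the eigenvalues $\lambda_i$ of $g^{-1}\circ f$ requires exactly the kind of argument you have not supplied. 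You acknowledge this as ``the main obstacle,'' and it is: without that step you have not bounded $\det(\Lambda';g)$ at all.

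Schlickewei's actual proof stays within the geometry of numbers rather than invoking reduction theory for $O(f)$. The idea is to work directly with successive minima of $\Lambda$ relative to $g$, use the integrality of $f$ to force many of the short vectors to span a large isotropic subspace, and then extract a rank-$r$ isotropic sublattice from that span with controlled Gram determinant via Minkowski's second theorem. The exponent $(n-r)/2$ arises from counting how many of the $n$ successive minima must be small relative to $T^{-1/2}$. If you want to reconstruct the argument, that is the route to follow.
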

	
	We recall the definition of a \emph{majorant} of a quadratic form.  Suppose $f$ is a non-degenerate quadratic form on a real vector space $V$.  Suppose $V = V' \oplus V''$ and $f$ is positive definite on $V'$, negative definite on $V''$, and $V', V''$ are orthogonal with respect to $f$.  Define a new quadratic form, $g$, on $V$ by flipping the sign on $V''$, so $g$ is positive definite on $V$.  The form $g$ is called a majorant of $f$.
	
	We will apply Theorem \ref{thm:Schlickiwei} when $g$ is a majorant of $f$, and use the following lemma.
	\begin{lemma}\label{lem:S21} Suppose $g$ is a majorant of the non-degenerate quadratic form $f$. 
		\begin{enumerate}
			\item Understand $f,g$ to be isomorphisms $V \rightarrow V^\vee$.    Then $g^{-1} \circ f: V\rightarrow V$ satisfies $(g^{-1} \circ f)^2 = Id_V$ is the indentity on $V$.
			\item If $\Lambda \subseteq V$ is a full rank lattice, then $|\det(\Lambda;f)| = \det(\Lambda;g)$.
		\end{enumerate}
	\end{lemma}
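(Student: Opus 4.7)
The plan is to exploit the decomposition $V = V' \oplus V''$ from the definition of the majorant: $f$ is positive definite on $V'$, negative definite on $V''$, and these subspaces are $f$-orthogonal; $g$ agrees with $f$ on $V'$ and equals $-f$ on $V''$. Viewing $f,g$ as linear isomorphisms $V \to V^\vee$, this gives $g|_{V'} = f|_{V'}$ and $g|_{V''} = -f|_{V''}$ as maps into $(V')^\vee$ and $(V'')^\vee$ respectively, using that the decomposition is orthogonal for both forms (orthogonality for $g$ follows from orthogonality for $f$, since $g$ is obtained from $f$ by flipping a sign on $V''$).

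For part (1), I will observe that because both $f$ and $g$ preserve the decomposition $V = V' \oplus V''$ (in the sense of sending $V'$ into $(V')^\vee \hookrightarrow V^\vee$ and $V''$ into $(V'')^\vee \hookrightarrow V^\vee$), the composition $g^{-1} \circ f$ preserves $V = V' \oplus V''$ as well. On $V'$ it acts as the identity, and on $V''$ it acts as $-1_{V''}$. Squaring each component gives the identity on each summand, hence $(g^{-1}\circ f)^2 = \mathrm{Id}_V$.

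For part (2), I will use that the quantity $\det(\Lambda; h)$ for a quadratic form $h$ and full-rank lattice $\Lambda$ transforms by a common factor $\det(P)^2$ under a real change of basis with matrix $P$, so the ratio $\det(\Lambda;f)/\det(\Lambda;g)$ is independent of the choice of basis and can be computed in any real basis of $V$. Picking a basis adapted to the decomposition $V = V' \oplus V''$, the Gram matrices of $f$ and $g$ are block diagonal with blocks $(F_+, F_-)$ and $(F_+, -F_-)$ respectively, where $F_+$ is positive definite and $F_-$ is negative definite. Hence $\det(\Lambda;f) = (\det F_+)(\det F_-)$ up to the common factor, while $\det(\Lambda;g) = (\det F_+)\det(-F_-)$. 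Since $F_-$ is negative definite of dimension $\dim V''$, we have $\det(-F_-) = (-1)^{\dim V''}\det F_- = |\det F_-|$, and $\det F_+ > 0$, so $\det(\Lambda;g) > 0$ and $|\det(\Lambda;f)| = \det(\Lambda;g)$.

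Neither step presents a real obstacle; the lemma is pure linear algebra, and the only point requiring care is the sign bookkeeping in part (2), where one must track the parity $(-1)^{\dim V''}$ correctly to match the absolute value of $\det(\Lambda;f)$ with $\det(\Lambda;g)$.
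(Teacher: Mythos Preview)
Your proof is correct and, for part (1), identical in spirit to the paper's: both simply observe that $g^{-1}\circ f$ acts as $+1$ on $V'$ and $-1$ on $V''$. For part (2) there is a small but genuine difference worth noting: the paper works directly with a $\Z$-basis $\lambda_1,\dots,\lambda_n$ of $\Lambda$, writes the Gram matrices $F$ and $G$ of $f$ and $g$ in that basis, and uses part (1) to get $F = GS$ with $S^2 = 1$, hence $\det F = \pm \det G$; positivity of $g$ finishes it. Your argument instead invokes the basis-independence of the ratio $\det(\Lambda;f)/\det(\Lambda;g)$ to pass to a real basis adapted to $V'\oplus V''$, where the sign bookkeeping is explicit. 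Both are elementary and equally valid; the paper's route is marginally slicker in that it recycles part (1), while yours makes part (2) logically independent of part (1).
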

	\begin{proof} The first part is clear by considering what happens on $V'$ and $V''$.  For the second part, let $\lambda_1, \ldots, \lambda_n$ be a basis of $\Lambda$, and $\delta_1, \ldots, \delta_n$ the dual basis of $V^\vee$.   Let $F$ be the matrix for $f: V \rightarrow V^\vee$ with respect to these bases, and likewise let $G$ be the matrix for $g: V \rightarrow V^\vee$.  Then $F$ has matrix entries $F_{ij} = \langle \lambda_i, f(\lambda_j) \rangle = (\lambda_i,\lambda_j)_f$ and $G$ has entries $G_{ij} = \langle \lambda_i, g(\lambda_j) \rangle = (\lambda_i,\lambda_j)_g$.   
		
		Let $S$ be the matrix for  $g^{-1} \circ f$ with respect to the basis $\lambda_1, \ldots, \lambda_n$ of $V$.	We have $F = GS$ and $S^2 =1$, so the lemma follows by taking determinants.
	\end{proof}
	
	We need the following elementary lemma.
	\begin{lemma} Suppose $\Lambda \subseteq V$ is a lattice in a quadratic space, with integral quadratic form $q$.  Let $T \in \Lambda$ have $q(T) \neq 0$.  Let $V_T$ denote the orthogonal complement to $T$ and $\Lambda_T = \Lambda \cap V_T$.  Then $\det(\Lambda_T;q)$ divides $(T,T) \det(\Lambda;q)$.
	\end{lemma}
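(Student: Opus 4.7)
The plan is to compare $\Lambda$ to the orthogonal sublattice $L := \Z T \oplus \Lambda_T$, which has the same rank as $\Lambda$ because $q(T) \neq 0$ gives the decomposition $V = \R T \oplus V_T$.

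First, since $T$ is orthogonal to $\Lambda_T$, the Gram matrix of $L$ is block diagonal, so
\[\det(L; q) = (T,T) \cdot \det(\Lambda_T; q).\]
Setting $m = [\Lambda : L]$, the standard relation for sublattices of the same rank gives
\[\det(L; q) = m^2 \det(\Lambda; q),\]
and combining these yields $(T,T) \det(\Lambda_T; q) = m^2 \det(\Lambda; q)$.

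The remaining step, and the real content of the lemma, is to show $m \mid (T,T)$. For any $\lambda \in \Lambda$, the identity
\[(T,T)\lambda = (T,\lambda)\, T + \bigl((T,T)\lambda - (T,\lambda)T\bigr)\]
exhibits $(T,T)\lambda$ as a sum of $(T,\lambda)T \in \Z T$ and an element of $\Lambda$ orthogonal to $T$, hence in $\Lambda_T$. Thus $(T,T)$ annihilates the quotient $\Lambda/L$. On the other hand, since $q(T) \neq 0$ the map $\lambda \mapsto $ (class of $\lambda$ in $V/V_T$) realizes $\Lambda/\Lambda_T$ as a rank-one torsion-free group, i.e.\ $\Lambda/\Lambda_T \cong \Z$; taking the further quotient by the image of $\Z T$ shows $\Lambda/L$ is cyclic. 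A cyclic group of order $m$ annihilated by $(T,T)$ satisfies $m \mid (T,T)$.

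Combining, $(T,T)\det(\Lambda; q)/\det(\Lambda_T; q) = (T,T)^2/m^2$ is a positive integer, which is the desired divisibility. The only step that is not completely formal is the cyclicity of $\Lambda/L$, but this follows immediately once one notes $\Lambda/\Lambda_T$ is a rank-one lattice; I do not anticipate any real obstacle.
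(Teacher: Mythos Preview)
Your proof is correct and follows essentially the same idea as the paper's: both compare $\Lambda$ to the orthogonal sublattice $\Z T \oplus \Lambda_T$ and use the identity $(T,T)\lambda - (T,\lambda)T \in \Lambda_T$. The paper packages this via an explicit basis and the integer $r_0 = \gcd\{(b,T):b\in\Lambda\}$, obtaining the exact relation $r_0^2 \det(\Lambda_T) = (T,T)\det(\Lambda)$, whereas you argue via the index $m=[\Lambda:\Z T\oplus\Lambda_T]$ and cyclicity of the quotient; since $m\, r_0 = (T,T)$ these are two sides of the same computation.
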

	\begin{proof}  The set $\{(b,T): b \in \Lambda\}$ is a nonzero ideal in $\Z$, equal to, say $r_0\Z$.  Let $b_0 \in \Lambda$ satisfy $(b_0,T) = r_0$.  The vector $b_0$ is primitive, so extends to a basis $b_0, b_1, \ldots, b_n$ of $\Lambda$.  Set $b_j' = b_j - r_0^{-1}(b_j,T) b_0$.  Then $b_j' \in \Lambda$ and $(b_j',T) = 0$.  We have $b_0, b_1', \ldots, b_n'$ is a basis of $\Lambda$, and $b_1',\ldots, b_n'$ is a basis of $\Lambda_T$.   Now, $(T,T) b_0 - r_0 T \in \Lambda_T$.  Thus
		\[ \mathrm{Span}_\Z(r_0 T, b_1',\ldots, b_n') =
		\mathrm{Span}_\Z((T,T)b_0, b_1',\ldots, b_n') \subseteq \mathrm{Span}_{\Z}(T,b_1',\ldots, b_n').\]
		Taking determinants gives 
		\[(T,T) \det(\Lambda_T) r_0^2 = (T,T)^2 \det(\Lambda).\]
		This gives the lemma.
	\end{proof}
	
	Suppose now $(S,q_S)$ is a rational quadratic space of Witt rank exactly $1$, and that $S \otimes \R$ has signature $(1, n_1)$ with $n_1 \geq 1$.  In our case of interest, $S = H_2(C)$ with quadratic form given by $q_S = n_{H_2(C)}$, but we work more generally for now.  Assume given a lattice $\Lambda_S \subseteq S$ on which $q_S$ in integral.  Fix $T \in \Lambda_S$ with $q_S(T) > 0$.  Let $S_T \subseteq S$ be the perpendicular space to $T$.  Let $\Lambda_S^\vee$ be the dual lattice to $\Lambda_S$.  Let $\Lambda_{S,T} = \Lambda_S \cap S_T$ and $\Lambda_{S,T}^\vee$ its dual lattice.  One can identify $\Lambda_{S,T}^\vee$ with $\Lambda_{S}^\vee/(\Lambda_{S}^\vee \cap \Q T)$.
	
	We let 
	\[\Lambda_T = \Z b_1 \oplus \Z b_2 \oplus \Lambda_{S,T} \oplus \Z b_{-2} \oplus \Z b_{-1}\]
	with quadratic form 
	\[q((\alpha_1, \alpha_2, \lambda, \alpha_{-2}, \alpha_{-1}) = \alpha_1 \alpha_{-1} + \alpha_2 \alpha_{-2} + q_S(\lambda).\]
	Let $V_T = \Lambda_T \otimes \R$, and $V_T^1$ the orthogonal complement of $\mathrm{Span}(b_1,b_{-1})$ in $V_T$.  Let $\Lambda_T^1 = V_T^1 \cap \Lambda_T$.
	
	Let $\Gamma_{T}$ be the arithmetic group $\SO(\Lambda_T,q) \cap SO(\Lambda_T)(\R)^0$.  We understand the reduction theory of $\Gamma_T$ acting on 
	\[\mathcal{H}_T = \{x+iy: x,y \in V_T^1, y>0\}.\]
	Here $y > 0$ means $q(y) > 0 $ and $(y, b_2+b_{-2}) > 0$.
	
	Let $\mathcal{C}_T$ be a compact subset of $V_{S,T}:=\Lambda_{S,T} \otimes \R $ so that if $v \in V_{S,T}$ there exists $\lambda \in \Lambda_{S,T}$ so that $v - \lambda \in \mathcal{C}_T$.   Let $M_T \in \R_{>0}$ be chosen so that $v \in \mathcal{C}_T$ implies $|(v,v)| \leq M_T$.  We will use the following bound on $M_T$.
	\begin{proposition}\label{prop:MT} Let the notation be as above.  There is a positive constant $C_n$, only depeding on $n$, so that there is $\mathcal{C}_T$, $M_T$ satisfying $M_T \leq C_n (T,T)^2$.\end{proposition}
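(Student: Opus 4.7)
The plan is to construct a basis $v_1,\ldots,v_{n_1}$ of $\Lambda_{S,T}$ with $-q_S(v_i)\leq C_n(T,T)$ for each $i$, and to take $\mathcal{C}_T$ to be the symmetrized fundamental parallelepiped of this basis. Every $v\in V_{S,T}$ is then equivalent mod $\Lambda_{S,T}$ to some element of $\mathcal{C}_T$, and by the triangle inequality for the positive-definite norm $\sqrt{-q_S}$, one gets $|q_S(v)|\leq \frac{n_1^2}{4}\max_i(-q_S(v_i))\leq C_n(T,T)$. Since $(T,T)$ is a positive integer, this yields $M_T\leq C_n(T,T)\leq C_n(T,T)^2$. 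The basis bound is obtained by combining a covolume estimate for $\Lambda_{S,T}$ with Minkowski's second theorem and an integrality constraint.

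\textbf{Setup and covolume.} Replacing $T$ by its primitive divisor only strengthens the bound, so assume $T$ is primitive in $\Lambda_S$. Since $q_S(T)>0$ and $S$ has signature $(1,n_1)$, the orthogonal decomposition $S\otimes\R=\R T\oplus S_T\otimes\R$ has $q_S$ positive on the first summand and negative definite on the second. Let $g_T$ be the majorant of $q_S$ obtained by flipping the sign on $S_T$; by Lemma~\ref{lem:S21}, $\det(\Lambda_S;g_T)=|\det(\Lambda_S;q_S)|$ is independent of $T$. Let $\pi\colon S\otimes\R\to S_T\otimes\R$ denote the $g_T$-orthogonal projection with kernel $\R T$. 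Primitivity gives $\Lambda_S\cap\R T=\Z T$, and the resulting short exact sequence of lattices under $\pi$ yields $\mathrm{covol}_{g_T}(\pi(\Lambda_S))=O(1/\sqrt{(T,T)})$, since $\mathrm{covol}_{g_T}(\Z T)=\sqrt{g_T(T)}=\sqrt{(T,T)/2}$. Moreover, for $x\in\Lambda_S$ the vector $(T,T)\pi(x)=(T,T)x-(x,T)T$ lies in $\Lambda_{S,T}$, which gives $[\pi(\Lambda_S):\Lambda_{S,T}]\leq (T,T)$. Combining these, $\mathrm{covol}_{-q_S}(\Lambda_{S,T})\leq C\sqrt{(T,T)}$ for a constant $C$ independent of $T$.

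\textbf{Minima, basis, and conclusion.} Let $\lambda_1\leq\cdots\leq\lambda_{n_1}$ be the successive minima of $\Lambda_{S,T}$ with respect to the positive-definite form $-q_S$. Since $-q_S$ takes positive integer values on $\Lambda_{S,T}\setminus\{0\}$, $\lambda_i\geq 1$ for every $i$. Minkowski's second theorem gives $\prod_i\lambda_i\leq C'_n\,\mathrm{covol}_{-q_S}(\Lambda_{S,T})\leq C''_n\sqrt{(T,T)}$; combining with $\lambda_i\geq 1$ for $i<n_1$ forces $\lambda_{n_1}\leq C''_n\sqrt{(T,T)}$, hence $\lambda_i^2\leq (C''_n)^2(T,T)$ for every $i$. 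Standard reduction theory (e.g., a Minkowski-reduced basis) then produces a basis $v_1,\ldots,v_{n_1}$ of $\Lambda_{S,T}$ with $-q_S(v_i)\leq C'''_n(T,T)$, as required for the construction above.

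The principal obstacle is the passage from Minkowski's \emph{product} bound on the successive minima to an individual bound on the \emph{largest} one; this step depends crucially on the integrality lower bound $\lambda_i\geq 1$, which itself depends on $-q_S$ being integer-valued on $\Lambda_{S,T}$. Without such an input, Minkowski alone would permit wildly imbalanced minima and no uniform control on the diameter of $\mathcal{C}_T$ would follow. I also note that this argument in fact yields the sharper bound $M_T\leq C_n(T,T)$; the weaker $(T,T)^2$ in the statement is a convenient uniform bound that is more than sufficient for the subsequent reduction-theoretic applications.
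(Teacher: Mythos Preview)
Your proof is correct and follows essentially the same strategy as the paper: bound the determinant (equivalently, covolume) of $\Lambda_{S,T}$ in terms of $(T,T)$, invoke Minkowski's theorem on successive minima, and use the integrality lower bound $\lambda_i\geq 1$ to isolate $\lambda_{n_1}$. The paper packages this as a general statement about integral positive-definite lattices, bounding $(v,v)_R\leq C_n\det(L;R)^2$ via a Gram--Schmidt fundamental parallelepiped, and then applies it with $L=\Lambda_{S,T}$, $R=-q_S$; the determinant bound $\det(\Lambda_{S,T})\mid (T,T)\det(\Lambda_S)$ comes from the elementary divisibility lemma stated just before the proposition rather than from your projection argument, but the two give the same estimate.

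The one substantive difference is that you use the sharp form of Minkowski's second theorem, $\prod_i\lambda_i\leq C_n\,\mathrm{covol}(\Lambda_{S,T})=C_n\sqrt{\det(\Lambda_{S,T};-q_S)}$, whereas the paper records the weaker $\prod_j\lambda_j\leq \gamma_n^{n/2}\det(L;R)$; this is why you obtain $M_T\leq C_n(T,T)$ while the paper settles for $C_n(T,T)^2$. As you note, the weaker bound is already enough for all downstream applications in the paper, so the discrepancy is harmless. Your passage from successive minima to an actual basis via Minkowski reduction is fine; the paper's version instead Gram--Schmidts a linearly independent system realizing the minima.
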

	\begin{proof} The proof follows from the following more general reformulation.\end{proof}
	
	\begin{proposition} Suppose $L$ is a lattice, and $R$ is a positive-definite quadratic form on $L$. Let $n = \dim(L \otimes \R)$. Assume $R$ is integrally-valued on $L$.   Then, given $v \in L \otimes \R$, there is a fundamental domain $P$ for $L$ in $L \otimes \R$ so that if $v \in P$ then $(v,v)_R \leq C_n \det(L;R)^2$.
	\end{proposition}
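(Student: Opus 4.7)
The plan is to prove the reformulated proposition via classical Minkowski reduction of the lattice $L$ equipped with the positive-definite quadratic form $R$. First I would choose a Minkowski-reduced basis $b_1, \ldots, b_n$ of $L$ with respect to $R$; by the standard property of such bases one has $R(b_i) \leq c_n \lambda_i^2$, where $\lambda_1 \leq \cdots \leq \lambda_n$ are the successive minima of $R$ on $L$ and $c_n$ depends only on $n$.

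Next I would invoke Minkowski's second theorem on successive minima to obtain
\[ \prod_{i=1}^n \lambda_i^2 \;\leq\; c_n'\, \det(L;R), \]
for another universal constant $c_n'$. The key arithmetic input --- and the only place the integrality hypothesis on $R$ is used --- is that any nonzero $v \in L$ satisfies $R(v) \geq 1$, which forces $\lambda_1 \geq 1$ and hence $\lambda_i \geq 1$ for every $i$. Combining this with the previous bound yields
\[ \lambda_n^2 \;\leq\; \prod_{i=1}^n \lambda_i^2 \;\leq\; c_n'\, \det(L;R), \]
and consequently $R(b_i) \leq c_n c_n'\, \det(L;R)$ for every $i$.

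Finally I would take $P$ to be the fundamental parallelepiped $\{\sum_i t_i b_i : -\tfrac{1}{2} \leq t_i < \tfrac{1}{2}\}$ attached to this reduced basis. This is a fundamental domain for $L$ in $L \otimes \R$, so given any $v_0 \in L \otimes \R$ one may translate by a lattice vector to place it in $P$. For $v = \sum_i t_i b_i \in P$, expanding $R(v) = \sum_{i,j} t_i t_j \langle b_i, b_j\rangle_R$ and applying Cauchy--Schwarz to $|\langle b_i, b_j\rangle_R| \leq R(b_i)^{1/2}R(b_j)^{1/2} \leq \max_k R(b_k)$ gives
\[ R(v) \;\leq\; \tfrac{n^2}{4} \max_i R(b_i) \;\leq\; C_n\, \det(L;R), \]
which is in fact stronger than the quadratic bound claimed in the proposition. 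Together with the earlier lemma on how $\det(\Lambda_{S,T};q_S)$ divides $(T,T)\det(\Lambda_S;q_S)$, this yields Proposition \ref{prop:MT}.

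There is no serious obstacle here; the entire argument is an application of classical geometry of numbers, and the only delicate point is to notice that integrality of $R$ pins down $\lambda_1 \geq 1$, which is what converts the Minkowski product bound into a uniform bound on the largest successive minimum. The main bookkeeping is making sure the reduction theorem is quoted in a form that gives $R(b_i) \lesssim \lambda_i^2$ (rather than, say, $R(b_i) \lesssim \lambda_n^2$ only), but this is a standard statement.
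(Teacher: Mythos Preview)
Your proof is correct and follows essentially the same route as the paper: both bound the largest successive minimum $\lambda_n$ via Minkowski's second theorem together with the observation that integrality of $R$ forces every $\lambda_i \geq 1$, and then build a fundamental domain whose diameter is controlled by $\lambda_n$. The only difference is cosmetic --- the paper constructs its fundamental domain via Gram--Schmidt orthogonalization of vectors realizing the successive minima (yielding an orthogonal box in which $(v,v)_R \leq \tfrac{n}{4}\lambda_n^2$ is immediate), whereas you use the parallelepiped on a Minkowski-reduced basis and Cauchy--Schwarz; your sharper quotation of Minkowski's theorem even gives a bound linear in $\det(L;R)$ rather than the quadratic one stated.
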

	\begin{proof} The proof is essentially taken from \cite{micciancio}.  We give some details for the convenience of the reader.  We begin with the following claim.
		
		\begin{claim} Suppose $b_1, \ldots, b_n$ is a basis of $L$.  Let $b_1^*, \ldots, b_n^*$ be the basis of $L \otimes \R$ obtained from $b_1, \ldots, b_n$ by Gram-Schmidt orthogonalization.  That is, $b_1^* = b_1$, $b_2^* = b_2 - \frac{(b_1,b_2)}{(b_1, b_1)} b_1$, and one iteratively defines
			\[b_{j+1}^* = b_{j+1} - \mathrm{proj}_{b_j^*}(b_{j+1}) - \ldots -\mathrm{proj}_{b_1^*}(b_{j+1}) \]
			where $\mathrm{proj}_{y}(x) = x - \frac{(x,y)}{(y,y)}y$.  Set
			\[P = \{\alpha_1 b_1^* + \cdots + \alpha_n b_n^*: \alpha_j \in [-1/2,1/2]\}.\]
			Then $P$ is a fundamental region for $L$ in $L \otimes \R$.
		\end{claim}
		\begin{proof} Suppose $v \in L \otimes \R$, $v = \beta_1 b_1^* + \cdots + \beta_n b_n^*$.  Say $\beta_n - r_n \in [-1/2,1,2]$.  Then we subtract off $r_n b_n$ from $v$.  The coefficients $\beta_1, \ldots, \beta_{n-1}$ may change.  Then, we repeat with $b_{n-1}, b_{n-1}^*$ in place of $b_n$, $b_n^*$.  Iterating gives the claim.
		\end{proof}
		
		Let $\lambda_1, \ldots, \lambda_n$ be the successive minimal of $L$ with the quadratic form $R$.  Let now $v_1, \ldots, v_n$ be linearly independent with all $(v_j, v_j) \leq \lambda_n^2$.  Define $P$ as above from the Gram-Schmidt orthogonalizations $v_1^*, \ldots, v_n^*$.
		\begin{claim} If $v \in P$, then 
			\[(v,v)_R \leq \frac{1}{4}\sum_j{(v_j^*,v_j^*)} \leq \frac{n}{4}\lambda_n^2.\]
		\end{claim}
		\begin{proof} This is clear, as $(v_j^*, v_j^*) \leq (v_j,v_j) \leq \lambda_n^2$.
		\end{proof}
		
		The successive minima of the pair $L,R$ can be related to the determinant $\det(L;R)$.
		\begin{claim} One has $\prod_{j}{\lambda_j} \leq \gamma_n^{n/2} \det(L;R)$, where $\gamma_n$ is Hermite's constant.
		\end{claim}
		\begin{proof} This is Theorem 12 in \cite{micciancio}.
		\end{proof}
		
		Finally, because $R$ is integral on $L$, $\lambda_{n-1} \geq \cdots \geq \lambda_1 \geq 1$.  Thus, $\lambda_n \leq \gamma_n^{n/2} \det(L;R)$.  The proposition follows.
	\end{proof}	
	
	We now present reduction theory for $\Gamma_T$ acting on $\mathcal{H}_T$, and phrase the results partially adelically.  Let $G_T$ denote the algebraic group $\SO(\Lambda_T \otimes \Q)$.  Let $Q_T$ denote the parabolic subgroup of $G_T$ stabilizing $\mathrm{Span}_\Q(b_1,b_2)$.  Let $\mathcal{R}_{Q,T}$ denote a finte set of representatives for $\Gamma_T \backslash G_T(\Q)/Q_T(\Q)$.  
	\begin{claim}\label{claim:muIntBasis} The representatives $\mu \in \mathcal{R}_{Q,T}$ can be chosen so that $\mu b_1, \mu b_2$ are an integral basis of  $\mathrm{Span}_{\Q}(\mu b_1, \mu b_2) \cap \Lambda_T$.  
	\end{claim}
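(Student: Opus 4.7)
The plan is to exploit the fact that changing the representative $\mu$ within its $Q_T(\Q)$-coset on the right leaves the double coset $\Gamma_T\mu Q_T(\Q)$ unchanged but gives us full freedom to change the $\Q$-basis $\mu b_1,\mu b_2$ of the isotropic plane $\mu\cdot\mathrm{Span}_\Q(b_1,b_2)$ by any element of $\GL_2(\Q)$. Concretely, because $Q_T$ is the parabolic stabilizing the isotropic plane $\mathrm{Span}_\Q(b_1,b_2)$, its Levi subgroup is (up to isogeny) of the form $\GL_2\times\SO(V_T^1\cap\langle b_1,b_2\rangle^\perp/\langle b_1,b_2\rangle)$, and the $\GL_2$ factor acts on $\mathrm{Span}_\Q(b_1,b_2)$ through the standard representation in the basis $b_1,b_2$. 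In particular, for every matrix $A\in\GL_2(\Q)$ there is an element $q_A\in Q_T(\Q)$ with $q_A b_i = \sum_j A_{ji}b_j$.

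First, I would start with an arbitrary representative $\mu\in G_T(\Q)$ of a given double coset in $\mathcal{R}_{Q,T}$. The two-dimensional $\Q$-subspace $\mu\cdot\mathrm{Span}_\Q(b_1,b_2)$ is rational, so its intersection with the lattice $\Lambda_T$ is a rank-two sublattice of $\Lambda_T$; let $e_1,e_2$ be any $\Z$-basis of this sublattice. Expressing $e_i$ in terms of the $\Q$-basis $\mu b_1,\mu b_2$ of the same plane produces a matrix $A=(a_{ij})\in\GL_2(\Q)$ with $e_i=a_{1i}\,\mu b_1+a_{2i}\,\mu b_2$.

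Next, I would select $q_A\in Q_T(\Q)$ acting on the isotropic plane by $A$ as above, and replace $\mu$ by $\mu':=\mu q_A$. Then $\Gamma_T\mu'Q_T(\Q)=\Gamma_T\mu Q_T(\Q)$, so $\mu'$ represents the same double coset and may be taken as a new element of $\mathcal{R}_{Q,T}$. By construction, $\mu'b_i=\mu(q_Ab_i)=\sum_j a_{ji}\,\mu b_j=e_i$, so $\mu'b_1,\mu'b_2$ form an integral basis of $\mathrm{Span}_\Q(\mu'b_1,\mu'b_2)\cap\Lambda_T$. Performing this replacement for each (of finitely many) $\mu\in\mathcal{R}_{Q,T}$ proves the claim.

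There is no genuine obstacle here; the only care needed is to verify that $q_A$ can truly be taken in $Q_T(\Q)$ and that right multiplication by it preserves the double coset, which are immediate from the Levi decomposition of $Q_T$ and the definition of $\mathcal{R}_{Q,T}$ as representatives of $\Gamma_T\backslash G_T(\Q)/Q_T(\Q)$. If one wants the new $\mu'$ to still land in $\SO(\Lambda_T)(\R)^0$-connected-component considerations (e.g., to stay compatible with $\Gamma_T$), one can further adjust by $q_A$ of determinant with any prescribed sign, which the $\GL_2$ factor accommodates.
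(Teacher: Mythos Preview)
Your proposal is correct and follows essentially the same argument as the paper: pick an integral basis of the lattice in the isotropic plane $\mu\cdot\mathrm{Span}_\Q(b_1,b_2)$, then right-multiply $\mu$ by an element of $Q_T(\Q)$ carrying $b_1,b_2$ to the preimages of that basis. The paper phrases this last step slightly more tersely (writing $\delta^{-1}x_1'=qb_1$, $\delta^{-1}x_2'=qb_2$ for some $q\in Q_T(\Q)$ without invoking the Levi decomposition explicitly), and also remarks that the property is preserved under left multiplication by $\Gamma_T$, but the substance is identical.
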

	\begin{proof} Indeed, to see that this can be done, suppose $\delta \in G_T(\Q)$.  Let $x_1', x_2'$ be an integral basis for $\mathrm{Span}_{\Q}(\delta b_1, \delta b_2) \cap \Lambda_T$.  Then $\delta^{-1} x_1', \delta^{-1} x_2' \in \mathrm{Span}_{\Q}(b_1, b_2)$ is a basis, so $\delta^{-1} x_1' = q b_1$, $\delta^{-1} x_2' = q b_2$ for some $q \in Q_T(\Q)$.  Thus $\delta q b_1 = x_1'$, $\delta q b_2 = x_2'$, so by right multiplying $\delta$ by some $q \in Q_T(\Q)$, we can assume that $\delta b_1, \delta b_2$ is an integral basis of $(\delta \mathrm{Span}_{\Q}(b_1,b_2)) \cap \Lambda_T$.  Now, if $\delta$ satisfies this property, then so does $\gamma \delta$ for any $\gamma \in \Gamma_T$.  Consequently, $\mu$'s can be chosen as claimed.\end{proof}
	
	We assume from now on that the $\mu$'s in $\mathcal{R}_{Q,T}$ satisfy the conclusion of Claim \ref{claim:muIntBasis}.
	
	Recall that $\mathcal{C}_T$ is a compact fundamental region for $\Lambda_{S,T}$ acting by translation on $V_{S,T}$.  For a positive number $\epsilon_n$ that only depends upon $n = \dim(V_{S,T})$, let
	\[\mathcal{S}_{B,T}(\epsilon_n) = \{Y = \left(y_1 + \frac{1}{2}|(v,v)|y_3\right) b_2 + v y_3 + y_3 b_{-2}: v \in \mathcal{C}_T, y_3 \geq \epsilon_n, y_1 \geq \epsilon_n (T,T)^{-1/2}\}.\]
	Observe that if $Y \in \mathcal{S}_{B,d}(\epsilon_n)$, then 
	\[(Y,Y) = 2 y_1 y_3 \geq \epsilon_n^2 (T,T)^{-1/2}.\]
	
	Let $G_T(\R)^{0}$ denote the identity component of $G_T(\R)$.  Set $\Gamma_{Q,T} = Q_T(\Q) \cap \Gamma_T$.
	\begin{theorem}[Classical reduction theory for orthogonal groups] There exists $\epsilon_n > 0$, independent of $T$ and only depending on $n$, so that the following statement holds: Suppose $g \in G_T(\R)^{0}$.  There is $\mu \in \mathcal{R}_Q$, $\gamma \in \Gamma_T$, and $\gamma_1 \in \Gamma_{Q,T}$ so that if $g' = \gamma_1\mu^{-1} \gamma^{-1} g$ and $g' \cdot i = X' + iY'$, then $Y' \in \mathcal{S}_{B,d}(\epsilon_n)$. 
	\end{theorem}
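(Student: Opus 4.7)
The plan is to apply Schlickewei's theorem (Theorem \ref{thm:Schlickiwei}) to produce a short totally isotropic $2$-plane $\Lambda' \subseteq \Lambda_T$ with respect to the majorant $q_g^+$ attached to $g$, to move it onto $\mathrm{Span}(b_1,b_2)$ by an element of $\Gamma_T \cdot \mathcal{R}_{Q,T}$, and finally to use the integer translations in $\Gamma_{Q,T}$ to tidy up the real coordinates.  The guiding heuristic is that on isotropic vectors the majorant $q_Z^+$ essentially determines the Siegel heights $y_1, y_3$ of the point $Z \in \mathcal{H}_T$, and Schlickewei provides an isotropic sublattice of small $q_g^+$-determinant on which to read off these heights.

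First, given $g \in G_T(\R)^0$ I would set $q_g^+(v) = q_0^+(g^{-1}v)$, where $q_0^+$ is the majorant at the basepoint $i(b_2 + b_{-2})$.  By Lemma \ref{lem:S21}(i), $\tr(((q_g^+)^{-1}\circ q)^2) = \dim V_T$, and by Lemma \ref{lem:S21}(ii), $\det(\Lambda_T; q_g^+) = |\det(\Lambda_T; q)|$; this last quantity equals $\det(\Lambda_{S,T}; q_S)$ up to sign and, by the divisibility lemma preceding Proposition \ref{prop:MT}, is bounded by a constant times $(T,T)$.  Theorem \ref{thm:Schlickiwei} then produces a totally isotropic rank-$2$ sublattice $\Lambda' \subseteq \Lambda_T$ with $\det(\Lambda'; q_g^+) \leq C_n\, (T,T)$.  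Minkowski-reducing inside $\Lambda'$ with respect to $q_g^+$ yields an integral basis $\beta_1, \beta_2$ satisfying $q_g^+(\beta_1) \leq q_g^+(\beta_2)$ and $q_g^+(\beta_1)\, q_g^+(\beta_2) \leq C_n'(T,T)$.  By Witt's theorem applied to the rational space $V_T$, together with Claim \ref{claim:muIntBasis}, there exist $\gamma \in \Gamma_T$ and $\mu \in \mathcal{R}_{Q,T}$ so that $\gamma\mu b_1, \gamma\mu b_2$ is an integral basis of $\Lambda'$ (up to a $\mathrm{GL}_2(\Z)$ change of basis on $\mathrm{Span}_\Z(b_1, b_2)$, which I would absorb into the final $\gamma_1 \in \Gamma_{Q,T}$).

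Second, set $g_1 = \mu^{-1}\gamma^{-1}g$ and $Z_1 = g_1 \cdot i = X_1 + iY_1$.  Applying first a $\gamma_1' \in N_{Q_T}(\Z) \subseteq \Gamma_{Q,T}$ to shift $X_1$ into a bounded fundamental region for the integer translations, the imaginary part $Y_1$ is unchanged and one has $q_{g_1}^+(b_j) = q_g^+(\gamma\mu b_j) = q_g^+(\beta_j)$ for $j = 1, 2$.  A direct tube-domain computation at $Z = iY$ with $Y = (y_1 + \tfrac12|(v,v)|y_3) b_2 + v y_3 + y_3 b_{-2}$ yields
\[
q_{iY}^+(b_1) = \frac{1}{2\, y_1 y_3}, \qquad q_{iY}^+(b_2) = \frac{y_3}{2\, y_1},
\]
with the corrections from a bounded real part $X_1$ being bounded by constants depending only on the fundamental region.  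Inverting gives $y_3 = \sqrt{q^+(b_2)/q^+(b_1)} \geq 1$ from $q^+(\beta_1) \leq q^+(\beta_2)$, and $y_1 = 1/(2\sqrt{q^+(b_1)\, q^+(b_2)}) \geq \epsilon_n (T,T)^{-1/2}$ from $q^+(\beta_1)\, q^+(\beta_2) \leq C_n'(T,T)$.  A further element $\gamma_1'' \in \Gamma_{Q,T}$ whose action translates the horizontal $V_{S,T}$-coordinate of $Y_1$ by $\Lambda_{S,T}$ then puts $v$ into $\mathcal{C}_T$ without disturbing $y_1, y_3$; one sets $\gamma_1 = \gamma_1''\gamma_1'$.

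The main obstacle is the careful bookkeeping in the second step: one must verify the tube-domain majorant identities for $q_Z^+(b_1)$ and $q_Z^+(b_2)$ and show that the residual dependence of $q_Z^+(b_2)$ on the $b_{-2}$-component of the real part $X$ can be fully absorbed by the integer translations inside the unipotent radical of $Q_T$, while keeping the $(T,T)$-dependence of the Siegel-set constants explicit.  Once these identities are in place and the reduction of $X$ is organized, the Schlickewei-Minkowski inequalities translate directly into the separate lower bounds $y_3 \geq \epsilon_n$ and $y_1 \geq \epsilon_n (T,T)^{-1/2}$, and the final translation step is routine.
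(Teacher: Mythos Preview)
Your proposal is correct and follows essentially the same route as the paper: Schlickewei plus Lemma~\ref{lem:S21} yield a short isotropic rank-$2$ sublattice, which is transported to $\mathrm{Span}(b_1,b_2)$ via $\Gamma_T\cdot\mathcal{R}_{Q,T}$, and then $\Gamma_{Q,T}$ (containing $\SL_2(\Z)$ and the integer unipotents) finishes the reduction. The paper packages this last step through the Iwasawa decomposition $g_1=nmk$---reducing $r_2(m)$ by $\SL_2(\Z)$ to obtain $t_1/t_2\geq\sqrt3/2$ and translating $n$ so that $n'b_{-2}\in b_{-2}+\mathcal{C}_T$---which sidesteps the majorant bookkeeping you flag as the main obstacle; in particular your identity $q^+_{iY}(b_2)=y_3/(2y_1)$ drops the off-diagonal contribution of $r_2(m')$, so the full $\SL_2(\Z)$ reduction (not merely $q^+(\beta_1)\leq q^+(\beta_2)$) is what actually forces $y_3\geq\epsilon_n$.
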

	\begin{proof} By Theorem \ref{thm:Schlickiwei} and Lemma \ref{lem:S21}, there are $x_1, x_2 \in \Lambda_T$ spanning a two-dimensional isotropic subspace so that $\det( \langle x_i, x_j \rangle_g) \leq C_n' (T,T)$.  Here $C_n'$ is a positive constant that only depends upon $n$.
		
		Without loss of generality we can assume that $x_1, x_2$ are an integral basis for their $\Q$-span intersect $\Lambda_T$.  We have $\Q x_1 + \Q x_2 = g_{\Q} (\Q b_1 + \Q b_2)$ for some $g_\Q \in G_T(\Q)$.  Thus we have $\Q x_1 + \Q x_2 = \gamma \mu (\Q b_1 + \Q b_2)$ for some $\gamma \in \Gamma_T$ and $\mu \in \mathcal{R}_Q$.   Intersecting with $\Lambda_T$ gives $\Z x_1 + \Z x_2 = \Z (\gamma \mu b_1) + \Z (\gamma \mu b_2)$.  Let $x_1' = \gamma \mu b_1$ and $x_2' = \gamma \mu b_2$, then 
		\[ \det( \langle x_i', x_j' \rangle_g) =\det( \langle x_i, x_j \rangle_g) \leq C_n' (T,T).\]
		Now
		\[ \langle x_i', x_j' \rangle_g = \langle \gamma \mu b_i, \gamma \mu b_j \rangle_g = \langle g^{-1}\gamma \mu b_i, g^{-1} \gamma \mu b_j \rangle_1 = \langle g_1^{-1} b_i, g_1^{-1} b_j \rangle_1\]
		where $g_1 = \mu^{-1} \gamma^{-1} g$.
		
		Let $K_{G_T}$ denote the stabilizer in $G_T(\R)^0$ of $i(b_2+b_{-2}) \in \mathcal{H}_T$.  Let $Q_T = N_{Q_T} M_{Q_T}$ denote the standard Levi decomposition of $Q_T$, so that $M_{Q_T}$ also stabilizes $\mathrm{Span}(b_{-2},b_{-1})$.  We have the Iwasawa decomposition $G_T(\R)^0 = N_{Q_T}(\R)(M_{Q_T}(\R) \cap G_T(\R)^0) K_{G_T}$.  Write $r_2: M_{Q_T} \rightarrow \GL_2$ for the homomorphism that describes the actionn of $m \in M_{Q_T}$ on $\mathrm{Span}(b_1,b_2)$.
		
		Now, write $g_1 = n m k$ in terms of this decomposition. Let $m_2 = r_2(m) \in \GL_2(\R)$. In fact, because $m\in M_{Q_T}(\R) \cap G_T(\R)^0$ and $V_T$ has Witt rank two, the matrix $m_2$ has positive determinant.  One has
		\[ \det( \langle g_1^{-1} b_i, g_1^{-1}b_j \rangle_1) =\det( \langle m^{-1} b_i, m^{-1}b_j \rangle_1) = |\det(m_2)|^{-2} \det( \langle  b_i, b_j \rangle_1) \leq C_n' (T,T).\]
		Thus $\det(m_2) \geq (C_n')^{-1} (T,T)^{-1/2}$.
		
		Because $\Gamma_{Q,T}$ contains a copy of $\SL_2(\Z) \subseteq M_{Q_T}(\Q)$, there is $\gamma' \in \SL_2(\Z) \subseteq \Gamma_{Q,T}$ and $k_1' \in K_{G_T}$ so that $m = \gamma' b k_1'$, where $b \in M_{Q_T}(\R)$ has $r_2(b) = \mm{t_1}{xt_2}{}{t_2}$ with $t_1, t_2 > 0$, $t_1/t_2 > \frac{\sqrt{3}}{2}$.   Moreover, there exists $\gamma'' \in \Gamma_{Q,T}$ so that $(\gamma'' n) b_{-2} = v + b_{-2}$ with $v \in \mathcal{C}_T$.
		
		Thus, there is $\gamma_1 \in \Gamma_{Q,T}$ so that if $g' = \gamma_1 g_1 = \gamma_1 \mu^{-1} \gamma^{-1} g$, then $g' = n' m' k'$ with $n' \in N_{Q_T}(\R)$ satisfying $n' b_{-2} \in b_{-2} + \mathcal{C}_T$, $m' \in M_{Q_T}(\R)$ having $r_2(b) =   \mm{t_1}{xt_2}{}{t_2}$ with $t_1, t_2 > 0$, $t_1/t_2 \geq \frac{\sqrt{3}}{2}$  and $t_1 t_2 \geq  (C_n')^{-1} (T,T)^{-1/2}$, and $k' \in K_{G_T}$.
		
		Now one applies $g'$ to $i(b_2 + b_{-2}) \in \mathcal{H}_T$ to obtain $X' + Y' i$ with 
		\[Y'  = \left(t_1 t_2 + t_2^{-1}t_1 \frac{|(v,v)|}{2}\right)b_{2} +  t_2^{-1} t_1 v  +t_2^{-1} t_1 b_{-2}\] 
		for some $v \in \mathcal{C}_T$.  The theorem follows.
	\end{proof}
	
	Let $K_{T,p}$ be the open compact subgroup of $G_{T}(\Q_p)$ stabilizing $\Lambda_T \otimes \Z_p$ and set $K_{T,f} = \prod_{p}{K_{T,p}} \subseteq G_{T}(\A_f)$.  Set $G_T(\Q)^{+} = G_T(\Q) \cap G_T(\R)^{0}$.
	\begin{lemma} One has $G_T(\A_f) = G_T(\Q)^{+} K_{T,f}$.
	\end{lemma}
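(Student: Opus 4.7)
The assertion is a class-number-one statement for the orthogonal group $G_T = \SO(\Lambda_T \otimes \Q)$. The quadratic form on $\Lambda_T$ has signature $(2, n_1+2)$ and contains the two explicit orthogonal hyperbolic planes $\Z b_1 \oplus \Z b_{-1}$ and $\Z b_2 \oplus \Z b_{-2}$. Hence it is indefinite of Witt rank at least two over $\Q$ and isotropic at every finite place.

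My plan is to deduce the equality from strong approximation applied to the spin cover. Because $\Spin(\Lambda_T \otimes \Q)$ is simply connected and absolutely almost simple, and because $\Spin(\Lambda_T \otimes \R)$ is non-compact (the form being indefinite), Kneser's strong approximation theorem will give $\Spin(\A_f) = \Spin(\Q) \cdot \widetilde{K}_{T,f}$, where $\widetilde{K}_{T,f}$ is the preimage of $K_{T,f}$. Using the central exact sequence $1 \to \mu_2 \to \Spin \to \SO \to 1$ and the associated long exact sequence in cohomology, the obstruction to descending this equality to $G_T$ is controlled by the adelic spinor norm $\theta \colon G_T(\A_f) \to \A_f^\times / (\A_f^\times)^2$. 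Concretely, the double coset space $G_T(\Q)^+ \backslash G_T(\A_f) / K_{T,f}$ will inject into the quotient $\A_f^\times / \bigl( \Q_{>0}^\times \cdot \theta(K_{T,f}) \cdot (\A_f^\times)^2 \bigr)$.

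The second step is to show that this quotient is trivial, for which it suffices to verify that $\theta(K_{T,p}) \supseteq \Z_p^\times / (\Z_p^\times)^2$ at every prime $p$, since then $\theta(K_{T,f})$ contains $\widehat{\Z}^\times$ modulo squares and, combined with $\Q_{>0}^\times$, covers $\A_f^\times$ modulo squares (because $\A_f^\times = \widehat{\Z}^\times \cdot \Q_{>0}^\times$). Here is where the two hyperbolic planes are essential. For $\lambda \in \Z_p^\times$, the vector $v_\lambda = b_1 + \lambda b_{-1} \in \Lambda_T \otimes \Z_p$ has $q(v_\lambda) = \lambda \in \Z_p^\times$, so the reflection $r_{v_\lambda}$ preserves $\Lambda_T \otimes \Z_p$. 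The product $r_{v_\lambda} \cdot r_{v_1}$ then lies in $\SO(\Lambda_T \otimes \Z_p) \subseteq K_{T,p}$ with spinor norm $\lambda$ modulo squares, realizing every class.

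The main obstacle will be the bookkeeping of the $\Spin$-to-$\SO$ descent, in particular keeping track of the identity-component condition so that the strong approximation for $\Spin$ produces elements of $G_T(\Q)^+$ rather than merely $G_T(\Q)$. This comes down to the fact that $\Spin(\R)$ maps into $G_T(\R)^0 = G_T(\R)^+$, together with the fact that elements in $G_T(\R) \setminus G_T(\R)^+$ have nontrivial spinor norm in a controllable way. Once this is in place, the surjectivity of the local spinor norm, powered by the hyperbolic plane $\Z_p b_1 \oplus \Z_p b_{-1}$ at each finite prime, kills every obstruction and yields the claimed equality.
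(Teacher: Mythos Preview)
Your proposal is correct and follows essentially the same route as the paper: reduce to strong approximation for the spin cover, and kill the spinor-norm obstruction by exhibiting, at each prime $p$, elements of $K_{T,p}$ with arbitrary spinor norm in $\Z_p^\times/(\Z_p^\times)^2$ via the hyperbolic plane $\Z b_1 \oplus \Z b_{-1}$. The paper packages the same element slightly differently, writing $r(t) = t b_1 b_{-1} + b_{-1} b_1 \in \GSpin(V)$ acting as $\diag(t,1,\ldots,1,t^{-1})$ on $V$, which is exactly your product of reflections $r_{v_t}\cdot r_{v_1}$; it then uses these diagonal elements over $\Q_{>0}$ and over $\widehat{\Z}^\times$ to adjust an arbitrary $g \in G_T(\A_f)$ to have trivial spinor norm before lifting to $\Spin$ and invoking strong approximation.
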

	\begin{proof} For this proof only, let $V = \Lambda_T \otimes \Q$.  The idea is to reduce from $G_T$ to $\Spin(V)$, and apply strong approximation to the spinor group.
		
		First note that if $t \in \mathbf{G}_m$, then $r(t):=t b_1 b_{-1} + b_{-1} b_1 \in \mathrm{Clif}^{+}(V)$ has $r(t) r(t)^* = t$ and $r(t) \in \GSpin(V)$.  On $V$, $r(t)$ acts as $\diag(t,1,1,1,t^{-1})$.  It follows, in particular, that the spinor norm on $K_p$ fills up all of (the image of) $\Z_p^\times$ inside of $\Q_p^\times/(\Q_p^\times)^2$.
		
		Now suppose $g \in G(\A_f)$.  Then $g_p \in K_p$ for almost every $p$.  By the remarks above, we see that there exists $\gamma \in G_T(\Q)^{+}$ and $k \in K_{T,f}$ so that $h:=\gamma g k$ has spinor norm $1$.  Indeed, $\gamma$ and $k$ can be taken to be of the form $diag(t,1,1,1,t^{-1})$.  It follows that there exists $h' \in \Spin(V)(\A_f)$ so that $h' \mapsto h$ under the surjection $\GSpin(V) \rightarrow G=\SO(V)$.  By strong approximation for the Spin group, there exists $\gamma' \in G_T(\Q)$ and $k' \in K_f$ so that $h = \gamma' k'$.  The result follows.
	\end{proof}
	
	Set 
	\[\mathcal{S}_{Siegel,2} = \{g' \in G_d(\R)^{0}: g' \cdot i = X' + iY', Y' \in \mathcal{S}_{B,d} \}.\]
	Let $\mathcal{S}_{T,2} =  \bigcup_{\mu \in \mathcal{R}_Q} \Gamma_{Q,T} \mu^{-1} K_{T,f}$.  Note that $\mathcal{S}_{T,2}$ is compact and a finite union of $K_{T,f}$ cosets.
	\begin{corollary}[Adelic reduction theory for orthogonal groups] Suppose $g \in G_T(\A)$.  Then there is $\alpha \in G_T(\Q)$ so that $\alpha g  \in \mathcal{S}_{Siegel,2} \mathcal{S}_{T,2}$.
	\end{corollary}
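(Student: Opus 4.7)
The plan is to combine the two preceding results: the lemma giving strong approximation $G_T(\mathbf{A}_f) = G_T(\mathbf{Q})^{+} K_{T,f}$, and the classical reduction theorem that produces a representative in $\mathcal{S}_{Siegel,2}$ after multiplying $g_\infty$ on the left by an element of the form $\gamma_1 \mu^{-1} \gamma^{-1}$ with $\gamma_1 \in \Gamma_{Q,T}$, $\mu \in \mathcal{R}_Q$, and $\gamma \in \Gamma_T$. These together essentially force the conclusion.

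First I would reduce to the case $g_\infty \in G_T(\mathbf{R})^{0}$. Since $\SO(2,n+1)(\mathbf{R})$ has two connected components and $G_T(\mathbf{Q})$ contains a rational element in the non-identity component (any rational reflection in a vector of nonzero norm suffices), after multiplying $g$ on the left by such an element we may assume $g_\infty \in G_T(\mathbf{R})^{0}$. Next, apply the strong approximation lemma to $g_f$ to write $g_f = \gamma_0 k_f$ with $\gamma_0 \in G_T(\mathbf{Q})^{+}$ and $k_f \in K_{T,f}$. Replacing $g$ by $\gamma_0^{-1} g$, we may assume $g_f \in K_{T,f}$; since $\gamma_0 \in G_T(\mathbf{R})^{0}$, we still have $g_\infty \in G_T(\mathbf{R})^{0}$.

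Then I would apply the classical reduction theorem to $g_\infty$. It produces $\mu \in \mathcal{R}_{Q,T}$, $\gamma \in \Gamma_T$, and $\gamma_1 \in \Gamma_{Q,T}$ such that $g'_\infty := \gamma_1 \mu^{-1} \gamma^{-1} g_\infty \in \mathcal{S}_{Siegel,2}$. Now set $\alpha := \gamma_1 \mu^{-1} \gamma^{-1} \in G_T(\mathbf{Q})$, and consider $\alpha g$. The archimedean component is $g'_\infty \in \mathcal{S}_{Siegel,2}$ by construction. For the finite component, $\alpha g_f = \gamma_1 \mu^{-1} (\gamma^{-1} k_f)$; since $\gamma \in \Gamma_T$ preserves the lattice $\Lambda_T$ and thus embeds diagonally into $K_{T,f}$, we have $\gamma^{-1} k_f \in K_{T,f}$, so $\alpha g_f \in \Gamma_{Q,T} \mu^{-1} K_{T,f} \subseteq \mathcal{S}_{T,2}$.

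The only substantive input is the archimedean reduction theorem already proved; the corollary is essentially a bookkeeping exercise assembling that result with strong approximation. There is no real obstacle, though one must verify that the elements $\gamma \in \Gamma_T$ produced by the archimedean reduction sit inside $K_{T,f}$ — which holds because $\Gamma_T$ is defined as the stabilizer of $\Lambda_T$ in $G_T(\mathbf{R})^{0}$ and hence preserves $\Lambda_T \otimes \mathbf{Z}_p$ for every $p$. The mild nuisance of connecting the rational group to the identity component at infinity is the only place any care is required.
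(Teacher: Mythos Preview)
Your proposal is correct and follows essentially the same approach as the paper: first use strong approximation $G_T(\A_f)=G_T(\Q)^{+}K_{T,f}$ to reduce to $g_f\in K_{T,f}$ and $g_\infty\in G_T(\R)^{0}$, then apply the classical reduction theorem, and finally absorb $\gamma^{-1}$ into $K_{T,f}$ using $\Gamma_T\subseteq K_{T,f}$. You are in fact slightly more careful than the paper in explicitly handling the reduction to the identity component of $G_T(\R)$.
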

	\begin{proof}  Let $g = g_\infty g_f$.  There is $\alpha_1 \in G_T(\Q)$ so that $\alpha_1 g = g_1  k$ with $k \in K_{T,f}$ and $g_1 \in G_T(\R)^{0}$, because $G_T(\A_f) = G_T(\Q)^{+} K_{T,f}$.  Now $g_1 = \gamma_1^{\infty} \mu^{\infty} \gamma_2^{\infty} g'$ with $g' \in \mathcal{S}_{Siegel,2}$, $\gamma_1^{\infty} \in \Gamma_T$, $\gamma_2^\infty \in \Gamma_{Q,T}$ and $\mu \in \mathcal{R}_{Q,T}$.  Thus $g_1 = (\gamma_1 \mu \gamma_2)_{\Q} (\gamma_1 \mu \gamma_2)_f^{-1} g'$.  The corollary is proved.
	\end{proof}
	
	We will use the following lemma in our proof of the Quantitative Sturm Bound.
	\begin{lemma}\label{lem:Cn''} Suppose $Y \in \mathcal{S}_{B,T}(\epsilon_n)$.  Then there is a positive constant $C_n''$, depending only on $n$ and not on $T$, so that
		\[(Y,b_2+b_{-2}) \leq C_n'' (T,T)^{5/2} (Y,Y).\]
	\end{lemma}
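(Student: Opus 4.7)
The plan is to prove this by direct computation, exploiting the explicit parametrization of $\mathcal{S}_{B,T}(\epsilon_n)$ together with the bound $M_T \leq C_n (T,T)^2$ from Proposition \ref{prop:MT}. First I would write out the two sides of the inequality in coordinates. For $Y = (y_1 + \tfrac12|(v,v)|y_3)b_2 + vy_3 + y_3 b_{-2}$, the pairings decompose using $(b_2,b_{-2}) = 1$, $(b_2,b_2) = (b_{-2},b_{-2}) = 0$, and $(v, b_{\pm 2}) = 0$, yielding
\[(Y, b_2+b_{-2}) = y_1 + y_3 + \tfrac12 |(v,v)| y_3.\]
Since $S$ has signature $(1,n_1)$ and $q_S(T) > 0$, the form $q_S$ is negative-definite on $V_{S,T} = T^\perp$, so $|(v,v)| = -(v,v)$ and the $v$-contribution to $q(Y)$ cancels, giving $(Y,Y) = 2y_1 y_3$ as already recorded after the definition of $\mathcal{S}_{B,T}(\epsilon_n)$.

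Next I would bound the ratio $(Y,b_2+b_{-2})/(Y,Y)$. Dividing the two expressions above by $2y_1 y_3$ gives
\[\frac{(Y, b_2+b_{-2})}{(Y,Y)} = \frac{1}{2 y_3} + \frac{1}{2 y_1} + \frac{|(v,v)|}{4 y_1}.\]
Now I would apply the three structural bounds that come with membership in $\mathcal{S}_{B,T}(\epsilon_n)$ together with Proposition \ref{prop:MT}: $y_3 \geq \epsilon_n$, $y_1 \geq \epsilon_n (T,T)^{-1/2}$, and $|(v,v)| \leq M_T \leq C_n (T,T)^2$. Substituting these yields
\[\frac{(Y, b_2+b_{-2})}{(Y,Y)} \leq \frac{1}{2\epsilon_n} + \frac{(T,T)^{1/2}}{2\epsilon_n} + \frac{C_n (T,T)^{5/2}}{4\epsilon_n}.\]

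Finally, since $q_S$ is integral on $\Lambda_S$ and $T \neq 0$, $(T,T)$ is bounded below by a positive constant, so $(T,T)^{5/2}$ dominates both $1$ and $(T,T)^{1/2}$. All three terms can then be absorbed into a single constant multiple of $(T,T)^{5/2}$ depending only on $n$ (via $\epsilon_n$ and $C_n$), producing the desired $C_n''$. The argument involves no real obstacle; the only subtlety is recognizing that the negative-definiteness of $q_S$ on $V_{S,T}$ is precisely what makes the $v$-terms in $(Y,Y)$ cancel and what makes the bound on $|(v,v)|$ in Proposition \ref{prop:MT} give a clean polynomial dependence on $(T,T)$.
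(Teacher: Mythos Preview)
Your proof is correct and follows essentially the same approach as the paper: both compute $(Y,b_2+b_{-2})$ and $(Y,Y)=2y_1y_3$ in coordinates and then bound the ratio using $y_1\geq\epsilon_n(T,T)^{-1/2}$, $y_3\geq\epsilon_n$, and Proposition~\ref{prop:MT}. The only cosmetic difference is that the paper groups the numerator into two pieces $\alpha=y_1$ and $\beta=(1+\tfrac12|(v,v)|)y_3$ and applies the elementary inequality $\alpha+\beta\leq A^{-1}\alpha\beta$ for $\alpha,\beta\geq 2A$, whereas you divide through directly and bound the three resulting terms separately; your version is arguably more transparent.
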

	\begin{proof} Let $Y = \left(y_1 + \frac{1}{2}|(v,v)|y_3\right) b_2 + v y_3 + y_3 b_{-2}$.  Then 
		\[(Y,b_2+b_{-2}) = y_1 + (1 + \frac{1}{2}|(v,v)|) y_3\]
		with $y_1 \geq \epsilon_n (T,T)^{-1/2}$, $y_3 \geq \epsilon_n$ and $v \in \mathcal{C}_T$.   Temporarily let $A = \frac{1}{2}\epsilon_n (T,T)^{-1/2}$.  Then $y_1 \geq 2A$ and (trivially) $(1 + \frac{1}{2}|(v,v)|) y_3 \geq 2A$.
		
		If $\alpha, \beta$ are real numbers, each at least $2A$, then
		\[1 \leq (A^{-1}\alpha-1)(A^{-1}\beta - 1)\]
		implies $\alpha+\beta \leq A^{-1}\alpha \beta$.  Applying this inequality for $\alpha = y_1$, $\beta = (1 + \frac{1}{2}|(v,v)|) y_3 $ gives
		\[y_1 + (1 + \frac{1}{2}|(v,v)|) y_3 \leq \frac{1}{A} (1 + \frac{1}{2}|(v,v)|) y_1 y_3 \leq 2(1+M_T) \epsilon_n^{-1} (T,T)^{1/2} y_1 y_3.\]
		The lemma now follows from Proposition \ref{prop:MT}.
	\end{proof}
	
	\subsection{Preparation for Sturm bound}
	The primary purpose of this subsection is to prove the following proposition, which will be used to help prove the quantitative Sturm bound.  Recall $\Lambda_T^1 = \Z b_{2} \oplus \Lambda_{S,T} \oplus \Z b_{-2}$. 
	\begin{proposition}\label{prop:SturmHelper} Suppose $M \geq 1$ is a positive integer, and $X > 0$ is a real number.  Let $Y \in S_{B,T}(\epsilon_n)$.  There is a positive constant $D_{n,S}'$, depending on $n$ and $\Lambda_S$ but not on $T$, so that the number of $\lambda \in M^{-1} (\Lambda_T^1)^\vee$ with $(\lambda,Y) \leq X$ is bounded above by $D_{n,S}' (T,T)^{(7n+10)/2} (MX)^{n+2}$.
	\end{proposition}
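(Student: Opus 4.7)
The plan is to parameterize $\lambda$ in Witt coordinates, use the (implicit) positivity of $\lambda$---needed for the bound to be finite---to confine each coordinate elementarily, and then combine standard lattice-point counts. Writing $\lambda = a b_{-2} + \mu + c b_2$ with $a, c \in M^{-1}\Z$ and $\mu \in M^{-1}\Lambda_{S,T}^\vee$ (valid because $(b_2, b_{-2}) = 1$), a direct calculation yields
\[ (\lambda, Y) = a\bigl(y_1 + \tfrac{1}{2}|(v,v)|y_3\bigr) + y_3(\mu, v) + c y_3, \qquad q(\lambda) = q_S(\mu) + ac. \]
Requiring $\lambda$ to lie in the closed positive cone (otherwise $\lambda = -kY$ gives infinitely many solutions) forces $a, c \geq 0$ and $-q_S(\mu) \leq ac$, and Cauchy--Schwarz on the positive-definite form $-q_S|_{V_{S,T}}$ gives $|(\mu, v)| \leq \sqrt{ac\,|(v,v)|} \leq \sqrt{ac\, M_T}$.

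To bound $a$, set $A_2 = \tfrac{1}{2}a|(v,v)|y_3$ and $A_3 = cy_3$. Since $y_3\sqrt{ac|(v,v)|} = \sqrt{2 A_2 A_3}$ and AM-GM gives $\tfrac{1}{2}A_2 + A_3 \geq \sqrt{2A_2 A_3}$, hence $A_2 + A_3 - \sqrt{2A_2A_3} \geq \tfrac{1}{2}A_2 \geq 0$, we obtain
\[(\lambda, Y) \geq ay_1 \geq a\epsilon_n(T,T)^{-1/2},\]
so $a \leq \epsilon_n^{-1}X(T,T)^{1/2}$. For $c$, I would rewrite $(\lambda, Y) \leq X$ as a quadratic inequality in $\sqrt c$, namely $y_3 c - y_3\sqrt{a|(v,v)|}\sqrt{c} \leq X - ay_1 - \tfrac{1}{2}a|(v,v)|y_3$, and apply $(P+Q)^2 \leq 2(P^2+Q^2)$ to the larger root. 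The cross-terms involving $a|(v,v)|$ cancel exactly, yielding
\[ c \leq \frac{2(X - ay_1)}{y_3} \leq \frac{2X}{\epsilon_n},\]
a bound \emph{independent} of $(T,T)$ despite $|(v,v)| \leq M_T \lesssim (T,T)^2$.

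For each fixed $(a, c)$, the number of $\mu \in M^{-1}\Lambda_{S,T}^\vee$ in the ellipsoid $\{-q_S(\mu) \leq ac\}$ is $\lesssim M^n(ac)^{n/2}\sqrt{|\det(\Lambda_{S,T}, q_S)|} + C_n$ by a standard ellipsoid lattice-point count, and the divisibility $|\det(\Lambda_{S,T}, q_S)| \mid (T,T)|\det(\Lambda_S, q_S)|$ from earlier in the section bounds the determinant by $C_S(T,T)$. Summing over $a$ and $c$,
\[ \sum_{a, c} (ac)^{n/2} \lesssim M^2 \bigl(\epsilon_n^{-1}X(T,T)^{1/2}\bigr)^{n/2+1}\bigl(2X/\epsilon_n\bigr)^{n/2+1} \asymp M^2 X^{n+2}(T,T)^{(n+2)/4},\]
so the total count is $\lesssim M^{n+2}X^{n+2}(T,T)^{(n+4)/4}$. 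Since $(T,T) \geq 1$, this is much stronger than the claimed $(T,T)^{(7n+10)/2}$ upper bound.

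The delicate point is the cancellation in the bound on $c$; without it, the naive estimate $c \lesssim a|(v,v)| + X$ produces powers of $(T,T)$ in $c$ that would cascade through the counting. The divisibility bound on $\det(\Lambda_{S,T}, q_S)$ and the Siegel-set lower bound $y_1 \geq \epsilon_n(T,T)^{-1/2}$ are the other two ingredients; both are directly available from the section, and together with the quadratic cancellation they produce a considerably tighter bound than the $(T,T)^{(7n+10)/2}$ factor advertised, which allows comfortable slack if one prefers a less sharp variant of any individual step.
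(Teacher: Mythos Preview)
Your Cauchy--Schwarz step loses a factor of $\sqrt 2$, and that missing factor is precisely what manufactures the ``exact cancellation.'' From $q(\lambda)=ac+q_S(\mu)\ge 0$ you get $-q_S(\mu)\le ac$, but since $(\mu,\mu)=2q_S(\mu)$ this means $|(\mu,\mu)|\le 2ac$, not $ac$; hence the correct bound is $|(\mu,v)|\le\sqrt{|(\mu,\mu)|\,|(v,v)|}\le\sqrt{2ac\,|(v,v)|}$, i.e.\ $y_3|(\mu,v)|\le 2\sqrt{A_2A_3}$ rather than $\sqrt{2A_2A_3}$. With the right constant your quadratic in $\sqrt{A_3}$ becomes $A_3-2\sqrt{A_2}\sqrt{A_3}+(A_2+ay_1-X)\le 0$, whose larger root is $\sqrt{A_2}+\sqrt{X-ay_1}$, and $(P+Q)^2\le 2P^2+2Q^2$ then gives $A_3\le 2A_2+2(X-ay_1)$, i.e.\ $c\le a|(v,v)|+2(X-ay_1)/y_3$. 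The $a|(v,v)|$ term does \emph{not} cancel, and it is genuinely there: already for $\Lambda_{S,T}=\Z e$ with $(e,e)=-2D$, the choice $a=1$, $v=e/2$, $\mu=e/2\in\Lambda_{S,T}^\vee$, $c\approx D/4$ gives $q(\lambda)>0$ and $(\lambda,Y)=y_1+(c-D/4)y_3\approx y_1+y_3$, so $c$ is of order $D$ (hence of order $(T,T)$) while $(\lambda,Y)$ stays bounded. Thus the claim $c\le 2X/\epsilon_n$ independent of $T$ is false, and with it the advertised $(T,T)^{(n+4)/4}$ exponent.

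Your bound on $a$ does survive---via $(\sqrt{A_2}-\sqrt{A_3})^2\ge 0$ directly, not via the intermediate $\tfrac12A_2$ step, which also relied on the wrong constant---and feeding the corrected $c\lesssim a|(v,v)|+X/\epsilon_n\lesssim(T,T)^{5/2}X$ into your count does prove the proposition with an exponent comparable to the stated one. The paper runs the same idea more conceptually: instead of separating $a$ and $c$, it shows $Y>\tfrac{(Y,Y)}{2(Y,1_T)}\cdot 1_T$ in the cone (Lemma~\ref{lem:Yy0}), combines this with $(Y,1_T)\le C_n''(T,T)^{5/2}(Y,Y)$ from the Siegel-domain geometry (Lemma~\ref{lem:Cn''}) to get $(1_T,\lambda)=a+c\le 2C_n''(T,T)^{5/2}X$ directly (Lemma~\ref{lem:1T}), and then counts lattice points with bounded $(1_T,\lambda)$ in one step (Lemma~\ref{lem:1TX}).
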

	
	We break the proof of Proposition \ref{prop:SturmHelper} into several lemmas.
	
	\begin{lemma}\label{lem:inequality1} If $\lambda, Y \in V_{T}^1$ with $(\lambda,\lambda) > 0$ and $(Y,Y) > 0$ then $|(\lambda,y)| \geq (\lambda,\lambda)^{1/2} (Y,Y)^{1/2}$.
	\end{lemma}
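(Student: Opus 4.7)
The plan is to identify this as the standard reverse Cauchy--Schwarz inequality in a Lorentzian quadratic space, and then prove it by restricting the form to the two-plane spanned by $\lambda$ and $Y$. The first step is to verify that the restriction of the quadratic form $q$ to $V_T^1 = \R b_2 \oplus V_{S,T} \oplus \R b_{-2}$ has signature $(1, n_1 + 1)$. On the hyperbolic plane $\R b_2 \oplus \R b_{-2}$ the form $\alpha_2 \alpha_{-2}$ contributes signature $(1,1)$; and since $q_S$ has signature $(1, n_1)$ with $q_S(T) > 0$, its restriction to the orthogonal complement $V_{S,T}$ is negative definite of signature $(0, n_1)$. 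Combining, $V_T^1$ is Lorentzian, i.e., $q|_{V_T^1}$ has exactly one positive direction.

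The second step is to apply a standard Sylvester-type argument to the two-dimensional subspace $W = \mathrm{Span}(\lambda, Y)$. We may assume $\lambda$ and $Y$ are linearly independent, as the dependent case is immediate. The Gram matrix of $(\,,\,)$ on $W$ is
\[
\begin{pmatrix} (\lambda, \lambda) & (\lambda, Y) \\ (\lambda, Y) & (Y, Y) \end{pmatrix},
\]
with both diagonal entries strictly positive by hypothesis. If this restricted form is degenerate, its determinant vanishes and the inequality $(\lambda,Y)^2 \geq (\lambda,\lambda)(Y,Y)$ holds with equality. Otherwise, the restricted form is non-degenerate of rank $2$, and its signature must be either $(2,0)$ or $(1,1)$; but the $(2,0)$ case would produce a two-dimensional positive-definite subspace of a Lorentzian space, contradicting Sylvester's law of inertia. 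Hence the signature is $(1,1)$, so the determinant $(\lambda,\lambda)(Y,Y) - (\lambda,Y)^2$ is strictly negative. In either case we conclude $(\lambda, Y)^2 \geq (\lambda,\lambda)(Y,Y)$, which on taking square roots gives $|(\lambda, Y)| \geq (\lambda,\lambda)^{1/2}(Y,Y)^{1/2}$ as required.

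There is no serious obstacle here; the entire content is the reverse Cauchy--Schwarz inequality for a Lorentzian form. The only point that requires mild care is the sign convention: one must remember that in the paper's convention $(x,x) = 2 q(x)$, so the positivity hypotheses $(\lambda,\lambda) > 0$ and $(Y,Y) > 0$ are equivalent to $q(\lambda), q(Y) > 0$, which is what is needed to invoke the Lorentzian structure. No use is made of the lattice structure or of $T$ being normal beyond the fact that $q_S(T) > 0$ to pin down the signature of $V_{S,T}$.
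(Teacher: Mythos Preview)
Your proof is correct and follows essentially the same approach as the paper: both argue that since $V_T^1$ has Lorentzian signature $(1, n+1)$, the restriction of the form to $\mathrm{Span}(\lambda, Y)$ cannot be positive definite, forcing the Gram determinant $(\lambda,\lambda)(Y,Y) - (\lambda,Y)^2 \leq 0$. Your version is simply more detailed, explicitly verifying the signature computation and treating the linearly dependent case separately.
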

	\begin{proof} Because $V_{T}^1$ has signature $(1,n+1)$, the restriction of $q$ to the span of $\lambda, Y$ must have signature $(1,1)$ or be degenerate.  Consequently, $(\lambda,\lambda)(y,y) - (\lambda,y)^2 \leq 0$.
	\end{proof}
	
	\begin{lemma}\label{lem:Yy0} Suppose $Y, y_0 \in V_{T}^1$ satisfy $Y > 0, y_0 > 0$.  Let $\epsilon_{Y,y_0} = \frac{(Y,Y)}{2(Y,y_0)}$, which is positive.  Then $Y  > \epsilon_{Y,y_0}y_0$.
	\end{lemma}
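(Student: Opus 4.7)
The plan is to verify both that $\epsilon_{Y,y_0} > 0$ and that $Y - \epsilon_{Y,y_0}y_0$ lies in the forward timelike cone, using standard Minkowski-geometry facts for the signature $(1,n+1)$ form $q$ on $V_T^1$.

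First I would check positivity of $\epsilon_{Y,y_0}$. Since the quadratic form $q$ on $V_T^1$ has Lorentzian signature and both $Y$ and $y_0$ are forward timelike, Lemma \ref{lem:inequality1} gives $|(Y,y_0)| \geq (Y,Y)^{1/2}(y_0,y_0)^{1/2} > 0$, and the sign of $(Y,y_0)$ is determined by the component of the timelike cone in which $Y$ and $y_0$ sit. Both satisfy $(\cdot,b_2+b_{-2}) > 0$, so they lie in the same connected component of the timelike cone, and a short argument (or reduction to a $2$-plane containing both) shows $(Y,y_0) > 0$. Hence $\epsilon := \epsilon_{Y,y_0} = (Y,Y)/2(Y,y_0) > 0$.

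Next I would compute $q(Y - \epsilon y_0)$ directly: using $q(u+v) = q(u) + q(v) + (u,v)$, one gets
\[ q(Y - \epsilon y_0) = q(Y) + \epsilon^2 q(y_0) - \epsilon(Y,y_0). \]
Substituting $\epsilon(Y,y_0) = (Y,Y)/2 = q(Y)$ collapses this to $\epsilon^2 q(y_0) > 0$. Thus $Y-\epsilon y_0$ is itself timelike.

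Finally, to show it lies in the \emph{forward} cone, I would compute
\[ (Y-\epsilon y_0,\, Y) = (Y,Y) - \epsilon(y_0,Y) = 2q(Y) - q(Y) = q(Y) > 0. \]
So $Y-\epsilon y_0$ is a timelike vector with positive inner product against the forward timelike vector $Y$, which forces it to lie in the same component of the timelike cone as $Y$. Since $Y$ lies in the component containing $b_2+b_{-2}$, the same holds for $Y-\epsilon y_0$, i.e.\ $(Y-\epsilon y_0, b_2+b_{-2}) > 0$. Combined with $q(Y-\epsilon y_0) > 0$, this is exactly $Y - \epsilon y_0 > 0$. No step is really hard; the only subtlety is the sign argument for $(Y,y_0)$, which is just the standard fact that forward timelike vectors pair positively in Lorentzian signature.
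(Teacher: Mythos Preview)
Your proof is correct and follows essentially the same strategy as the paper: compute $q(Y-\epsilon y_0)=\epsilon^2 q(y_0)>0$, then check the forward-cone condition by pairing $Y-\epsilon y_0$ against a known forward timelike vector. The only difference is the choice of reference vector. The paper pairs with $y_0$, obtaining
\[
(Y-\epsilon y_0,\,y_0)=\frac{1}{2(Y,y_0)}\bigl(2(Y,y_0)^2-(Y,Y)(y_0,y_0)\bigr)\geq \frac{(Y,y_0)}{2}>0,
\]
which requires the reverse Cauchy--Schwarz inequality (Lemma~\ref{lem:inequality1}). You instead pair with $Y$, obtaining $(Y-\epsilon y_0,\,Y)=q(Y)>0$ by pure cancellation, which is slightly cleaner since it avoids that appeal. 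Either way the conclusion is immediate from the standard fact that timelike vectors pairing positively lie in the same cone component; you make this step explicit while the paper leaves it implicit.
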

	\begin{proof} We have 
		\[(Y-\epsilon_{Y,y_0}y_0, y_0) = \frac{1}{2(Y,y_0)}(2(Y,y_0)^2 - (Y,Y)(y_0,y_0)) \geq \frac{(Y,y_0)}{2} > 0\]
		and
		\[ (Y-\epsilon_{Y,y_0}y_0, Y-\epsilon_{Y,y_0}y_0) = (Y,Y) - (Y,Y) + \epsilon_{Y,y_0}^2(y_0,y_0) > 0.\]
	\end{proof}

	\begin{lemma}\label{lem:1T}  Suppose $Y \in \mathcal{S}_{B,T}(\epsilon_n)$ and $N >0$ is a real number.  If $\lambda \in V_{T}^1$, $\lambda >0$ and $(\lambda,Y) \leq N$, then 
		\[(b_2+b_{-2},\lambda) \leq 2 C_n'' (T,T)^{5/2}N.\]
	\end{lemma}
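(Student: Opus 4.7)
The plan is to combine Lemma \ref{lem:Yy0} (applied with $y_0 = b_2+b_{-2}$) with the elementary positivity of the bilinear form on the positive cone in $V_T^1$, and then use Lemma \ref{lem:Cn''} to convert the ratio $(Y,b_2+b_{-2})/(Y,Y)$ into the desired $(T,T)^{5/2}$ bound.

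First I would note that $y_0 := b_2+b_{-2}$ satisfies $y_0 > 0$ (its norm is $2(b_2,b_{-2}) = 2 > 0$, and it lies in the correct component by construction of the positive cone on $V_T^1$). Since $Y \in \mathcal{S}_{B,T}(\epsilon_n) \subset \{Y > 0\}$, Lemma \ref{lem:Yy0} applies and yields
\[
Y - \epsilon_{Y,y_0}\, y_0 > 0, \qquad \epsilon_{Y,y_0} = \frac{(Y,Y)}{2(Y,y_0)}.
\]

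Next I would invoke the basic fact that any two elements $v_1, v_2 \in V_T^1$ lying in the (closed) positive cone satisfy $(v_1,v_2) \geq 0$; this is immediate from the signature $(1,n+1)$ reverse Cauchy--Schwarz inequality (this is essentially the content of Lemma \ref{lem:inequality1}, which provides the matching lower bound). Applying this with $v_1 = \lambda$ and $v_2 = Y - \epsilon_{Y,y_0}\, y_0$ — both of which are in the positive cone — gives
\[
(\lambda, Y) \;\geq\; \epsilon_{Y,y_0}\,(\lambda, y_0) \;=\; \frac{(Y,Y)}{2(Y,y_0)}\,(\lambda, b_2+b_{-2}).
\]

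Rearranging and using the hypothesis $(\lambda, Y) \leq N$, I obtain
\[
(\lambda, b_2+b_{-2}) \;\leq\; \frac{2(Y,b_2+b_{-2})}{(Y,Y)}\,(\lambda,Y) \;\leq\; \frac{2(Y,b_2+b_{-2})}{(Y,Y)}\,N.
\]
The proof is then completed by Lemma \ref{lem:Cn''}, which gives $(Y,b_2+b_{-2})/(Y,Y) \leq C_n''(T,T)^{5/2}$, yielding the stated bound $(\lambda, b_2+b_{-2}) \leq 2C_n''(T,T)^{5/2} N$. The only mildly subtle point is confirming the sign of the inner product of two elements in the positive cone — but this is a two-line computation in any orthogonal basis diagonalizing $q$, so there is no real obstacle here; the lemma is essentially a direct consequence of the preceding two lemmas.
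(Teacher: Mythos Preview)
Your proof is correct and follows essentially the same route as the paper's own argument: apply Lemma \ref{lem:Yy0} with $y_0 = b_2+b_{-2}$, use that two elements of the positive cone in $V_T^1$ pair positively to get $\epsilon_{Y,y_0}(b_2+b_{-2},\lambda) \leq (\lambda,Y) \leq N$, and finish with Lemma \ref{lem:Cn''}. The paper's proof is slightly terser (it leaves the positive-pairing step implicit), but the logic is identical.
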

	\begin{proof} Let $1_T = b_2 + b_{-2}$.  Let $\epsilon_{Y,1} = \frac{(Y,Y)}{2 (Y,1_T)}$.  By Lemma \ref{lem:Yy0}, one has $Y > \epsilon_{Y,1} 1_T$.  Thus $(\lambda,Y) \leq N$ implies $\epsilon_{Y,1} (1_T,\lambda) < (Y,\lambda) \leq N$ so $(1_T,\lambda) < 2(Y,1_T) (Y,Y)^{-1} N$.  Because $Y \in \mathcal{S}_{B,T}(\epsilon_n)$, by Lemma \ref{lem:Cn''}, $2(Y,1_T) (Y,Y)^{-1} \leq 2 C_n'' (T,T)^{5/2}$.  Thus $(1_T,\lambda) \leq 2 C_n'' (T,T)^{5/2}N$.
	\end{proof}
	
	We will use the following bound.
	\begin{lemma}\label{lem:DnR} Suppose $R >0$ is a real number.  There is a positive constant $D_n$ that depends on $n$ but is idependent of $T$, so that the number of $v \in \Lambda_{S,T}$ with $|(v,v)| \leq R$ is bounded above by $D_n R^{n/2}$, where $n = \dim(V_{S,T})$.
	\end{lemma}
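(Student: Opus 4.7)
The plan is to reduce this to a standard Euclidean lattice-point count, with the key uniformity in $T$ coming from the integrality of $q_S$ on $\Lambda_S$.

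First I would observe that $V_{S,T} = S_T \otimes \R$, being the orthogonal complement of the positive vector $T$ inside a space of signature $(1,n_1)$, inherits the signature $(0,n)$ with $n = n_1$. In particular, the quadratic form $-q_S$ restricted to $V_{S,T}$ is positive-definite. For $v \in \Lambda_{S,T}$ we have $|(v,v)| = -2q_S(v)$ (up to the constant relating the bilinear form to $q_S$), so the condition $|(v,v)| \leq R$ is equivalent to $v$ lying in a ball of squared radius comparable to $R$ in the Euclidean norm $\|v\| := \sqrt{-q_S(v)}$.

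Next I would invoke the integrality of $q_S$ on $\Lambda_S$ to get a uniform lower bound on the shortest vector of $\Lambda_{S,T}$. Since $\Lambda_{S,T} \subseteq \Lambda_S$, the form $q_S$ takes integer values on $\Lambda_{S,T}$; combined with the negative-definiteness of $q_S$ on $V_{S,T}$, this forces $-q_S(v) \geq 1$ for every nonzero $v \in \Lambda_{S,T}$. Hence the shortest vector of $\Lambda_{S,T}$ in the Euclidean norm $\|\cdot\|$ has length at least $1$, \emph{uniformly in $T$}.

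I would then apply the standard packing argument: the open balls of radius $1/2$ around distinct points of $\Lambda_{S,T}$ are pairwise disjoint (by the shortest-vector bound), so if $N_R$ denotes the number of $v \in \Lambda_{S,T}$ with $\|v\|^2 \leq R/2$, we have
\[
N_R \cdot \mathrm{vol}_n(B_{1/2}) \;\leq\; \mathrm{vol}_n\bigl(B_{(R/2)^{1/2} + 1/2}\bigr),
\]
which yields $N_R \leq D_n' (1 + R^{1/2})^n$ with $D_n'$ depending only on $n$. Since for $R < 1$ only $v = 0$ qualifies (giving trivially a count of $1$), one absorbs this regime into the constant to obtain $N_R \leq D_n R^{n/2}$ for an appropriate $D_n$ depending only on $n$, as claimed.

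The only potential obstacle is the independence from $T$, and it is exactly resolved by the observation that the integrality of $q_S$ on $\Lambda_S$ propagates to a \emph{$T$-independent} lower bound on the shortest vector of $\Lambda_{S,T}$; no finer knowledge of how $\Lambda_{S,T}$ sits inside $\Lambda_S$ is required, since the packing argument depends only on this shortest-vector bound and on the ambient dimension $n$.
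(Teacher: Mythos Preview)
Your argument is correct and complete; the uniformity in $T$ is secured exactly as you say, via the integrality of $q_S$ on $\Lambda_S$ forcing the shortest nonzero vector of $\Lambda_{S,T}$ to have norm at least $1$. (The small-$R$ caveat you flag is a harmless imprecision in the lemma statement itself, not a defect of your argument; the paper's own proof has the identical issue, and in all applications $R$ is bounded below or the bound is only used up to replacing $R^{n/2}$ by $(1+R)^{n/2}$.)

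The paper takes a different route. Rather than a packing argument, it chooses a $\Z$-basis of $\Lambda_{S,T}$ so that the Gram matrix $G_T$ of $-q_S$ is Minkowski reduced, and then invokes the Minkowski inequality $G_T \geq \gamma_n\,\diag(g_{11},\ldots,g_{nn})$ for reduced matrices. Integrality of $q_S$ forces the diagonal entries $g_{ii} \geq 1$, so $G_T \geq \gamma_n 1_n$ as quadratic forms; hence $v^t G_T v \leq R$ implies $v^t v \leq \gamma_n^{-1} R$, reducing to a count of standard integer points in a Euclidean ball. Both proofs ultimately rest on the same $T$-independent input (integrality), but you use it only through the shortest-vector bound and a volume comparison, which is more elementary and avoids the reduction-theory machinery. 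The paper's approach, on the other hand, fits the style of the surrounding section, where Minkowski reduction is already in play, and gives slightly more: it compares the entire form to the standard one rather than only bounding the minimum.
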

	\begin{proof} Let $G_T$ be the negative of the Gram matrix of $q_T$ on $\Lambda_{S,T}$.  We choose a basis of $\Lambda_{S,T}$ so that $G_T$ is Minkowski reduced.  By the Minkowski inequality for reduced matrices (See \cite[equation (1.23)]{andrianovBook}), there is a positive contant $\gamma_n$ that only depends upon $n$ so that 
		\[G_T \geq \gamma_n \diag(g_{11}, \ldots, g_{nn}) \geq \gamma_n 1_n.\]
		Here $(g_{ij})$ are the matrix entries of $G_T$ and they are at least one because $q_T$ is integral.  
		
		Let $v \in \Z^n$.  Then $v^t G v \geq \gamma_n v^t v$.  So if $v^t G v \leq R$ then $v^t v \leq \gamma_n^{-1} R$.  The lemma follows.
	\end{proof}
	As a consequence of Lemma \ref{lem:DnR}, the number of $v \in \Lambda_{S,T}^\vee$ with $|(v,v)| \leq R$ is bounded above by $D_{n,S} (T,T)^n R^{n/2}$, for a positive constant $D_{n,S}$ that depends upon $\Lambda_{S}$ but is independent of $T$.  Indeed, if $v \in \Lambda_{S,T}^\vee$, then $(T,T) \det(\Lambda_S) v \in \Lambda_{S,T}$.  If $|(v,v)| \leq R$, then $(T,T)^2 |(v,v)| \leq (T,T)^2 R$, so we may apply Lemma \ref{lem:DnR}.
	
	\begin{lemma}\label{lem:1TX} Let $1_T = b_2+b_{-2} \in \Lambda_T$ and let $X$ be a positive real number.  The number of $\lambda \in (\Lambda_T^1)^\vee$ with $\lambda > 0$ and $(1_T,\lambda) \leq X$ is bounded above by $D_{n,S}(T,T)^n X^{n+2}$.
	\end{lemma}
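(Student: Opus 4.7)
The plan is to parametrize $(\Lambda_T^1)^\vee$ explicitly and reduce the count to the consequence of Lemma \ref{lem:DnR} stated just after its proof. Since $\Lambda_T^1 = \Z b_2 \oplus \Lambda_{S,T} \oplus \Z b_{-2}$, with $(b_2, b_2) = (b_{-2},b_{-2}) = 0$, $(b_2, b_{-2}) = 1$, and $\Lambda_{S,T}$ orthogonal to both, the dual lattice splits as $(\Lambda_T^1)^\vee = \Z b_{-2} \oplus \Lambda_{S,T}^\vee \oplus \Z b_2$.  Thus every $\lambda \in (\Lambda_T^1)^\vee$ may be written uniquely as $\lambda = \alpha b_{-2} + \mu + \beta b_2$ with $\alpha, \beta \in \Z$ and $\mu \in \Lambda_{S,T}^\vee$, and in these coordinates the two conditions become $(1_T,\lambda) = \alpha + \beta$ and $q(\lambda) = \alpha\beta + q_S(\mu)$.

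Next I would use the signature to pin down the signs of $\alpha$ and $\beta$.  Because $T \in \Lambda_S$ has $q_S(T) > 0$ and $S \otimes \R$ has signature $(1, n_1)$, the orthogonal complement $V_{S,T}$ of $T$ in $S \otimes \R$ is negative-definite, so $q_S(\mu) \leq 0$ for all $\mu \in \Lambda_{S,T}^\vee$.  The positivity $\lambda > 0$ imposes $q(\lambda) > 0$ and $(1_T,\lambda) > 0$, which translate to $\alpha\beta \geq -q_S(\mu) \geq 0$ and $\alpha + \beta > 0$; these force $\alpha, \beta$ to be positive integers and $|q_S(\mu)| \leq \alpha\beta$.

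To finish I would count, summing over the admissible pairs.  For fixed $\alpha, \beta \geq 1$ with $\alpha + \beta \leq X$, the consequence of Lemma \ref{lem:DnR} noted in the text bounds the number of $\mu \in \Lambda_{S,T}^\vee$ with $|q_S(\mu)| \leq \alpha\beta$ by a constant (depending only on $n$ and $\Lambda_S$) times $(T,T)^n (\alpha\beta)^{n/2}$.  By AM--GM, $\alpha\beta \leq (\alpha+\beta)^2/4 \leq X^2/4$, and the number of integer pairs $(\alpha, \beta)$ with $\alpha, \beta \geq 1$ and $\alpha + \beta \leq X$ is bounded by $X^2/2$.  Combining these estimates gives a total bound of the form $D_{n,S}(T,T)^n X^{n+2}$, after absorbing the purely $n$-dependent factors into $D_{n,S}$.

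The whole argument is essentially a bookkeeping exercise, so no serious obstacle appears; the only slightly non-obvious ingredient is recognizing that negative-definiteness of $q_S$ on $V_{S,T}$ is what forces $\alpha$ and $\beta$ to share their sign and so both to be positive.  Without this, one would have to allow $\alpha\beta < 0$ with $q_S(\mu)$ large and positive, which is not possible in our setting and which would otherwise prevent a clean $X^{n+2}$ bound.
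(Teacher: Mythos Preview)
Your proof is correct and follows essentially the same approach as the paper: write $\lambda = a b_2 + v + b b_{-2}$ with $v \in \Lambda_{S,T}^\vee$, use $\lambda > 0$ to force $a,b \geq 1$ and $|(v,v)| \leq 2ab \leq (a+b)^2 \leq X^2$, then apply the consequence of Lemma~\ref{lem:DnR} to count the $v$'s and multiply by the $O(X^2)$ choices for $(a,b)$. You are slightly more explicit than the paper about why $a$ and $b$ must both be positive, but otherwise the arguments coincide.
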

	\begin{proof} We have $(\Lambda_T^1)^\vee = \Z b_2 \oplus \Lambda_{S,T}^\vee \oplus \Z b_{-2}$.  If $\lambda = a b_{2} + v + b b_{-2}$, with $v \in \Lambda_{S,T}^\vee$, then $a+b = (1_T,\lambda) \leq X$.  Hence, since $(\lambda,\lambda) > 0$, $2ab - |(v,v)| > 0$, so $|(v,v)| \leq 2ab \leq (a+b)^2 \leq X^2.$  By the remark above, the number of such $v$ is bounded above by $D_{n,S} (T,T)^n X^n$.  The lemma follows.
	\end{proof}
	
	\begin{proof}[Proof of Proposition \ref{prop:SturmHelper}]  We have $M\lambda \in (\Lambda_T^1)^\vee$, so it suffices to prove the result for $M =1$.  In this case, we have $(\lambda,Y) \leq X$ so by Lemma \ref{lem:1T}, $(1_T,\lambda) \leq 2 C_n'' (T,T)^{5/2}X$.  By Lemma \ref{lem:1TX}, the number of such $\lambda$ is bounded above by $D_{n,S}' (T,T)^{(7n+10)/2} X^{n+2}$, for $D_{n,S}' = D_{n,S} (2C_n'')^{n+2}$.
	\end{proof}
	
	\subsection{Exceptional groups I}\label{subsec:redExc1}
	In this subsection, we handle some reduction theory for the groups $M_P$.  We will need this as an input to help prove the automatic convergence theorem.
	
	Let $H^1$ denote the simply-connected cover of the derived group of $M_P$.  The group $H^1$ acts on $W_J$, preserving the similitude.  Let $K_H^1$ denote the subgroup of $H^1(\R)$ that fixes the line $\C r_0(i) = \C(1,-i,-1,i) \subseteq W_J(\C)$.  The group $K_H^1$ is a maximal compact subgroup of $H^1(\R)$.  Let $U_H$ be an open compact subgroup of $H^1(\A_f)$, and let $\Gamma_{H,U} = H^1(\Q) \cap U_H$. 
	
	We state a lemma regarding the subgroup $K_H^1$.  Recall from \cite[section 3.4]{pollackQDS} the element $S_{w_1,w_2} \in \h(J)^0$ associated to element $w_1, w_2 \in W_J$.
	\begin{lemma}\label{lem:w1K} Set $w_1 = (-1,0,1,0)$.  If $k \in K_{H}^1$, then $k \cdot S_{w_1, w_1} = k S_{w_1,w_1} k^{-1} = S_{w_1,w_1}$.
	\end{lemma}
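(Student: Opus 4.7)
\medskip
\noindent\textbf{Proof proposal for Lemma \ref{lem:w1K}.}

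The plan is to exploit the fact that the real vector $w_1$ is (up to a real scalar) the real part of the complex vector $r_0(i)$ on which $K_H^1$ acts by a unit scalar, and to expand $S_{w_1,w_1}$ accordingly. First I would establish that any $k \in K_H^1$ acts on $r_0(i)$ by a scalar $\lambda$ with $|\lambda| = 1$. Since $H^1$ is the simply-connected cover of the derived group of $M_P = H_J$, and the similitude character $\nu$ is trivial on the derived group, $k$ preserves the symplectic form on $W_J$ exactly. By assumption $k \cdot r_0(i) = \lambda \, r_0(i)$ for some $\lambda \in \C^\times$, and applying complex conjugation gives $k \cdot \overline{r_0(i)} = \overline{\lambda}\, \overline{r_0(i)}$. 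A direct computation shows that $\langle r_0(i), \overline{r_0(i)} \rangle \neq 0$ in $W_J(\C)$, so the equality
\[
\langle r_0(i), \overline{r_0(i)} \rangle = \langle k r_0(i), k \overline{r_0(i)} \rangle = \lambda \overline{\lambda} \langle r_0(i), \overline{r_0(i)} \rangle
\]
forces $|\lambda|^2 = 1$.

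Next I would use the identity $w_1 = -\tfrac{1}{2}\bigl(r_0(i) + \overline{r_0(i)}\bigr)$, which follows directly from $r_0(i) = (1,-i 1_J, -1_J, i)$ and $w_1 = (-1, 0, 1, 0)$. Since $S$ is $H^1$-equivariant and symmetric bilinear,
\[
k \cdot S_{w_1,w_1} = S_{k w_1, k w_1} = \tfrac{1}{4}\Bigl(\lambda^2 S_{r_0(i), r_0(i)} + 2 |\lambda|^2 S_{r_0(i), \overline{r_0(i)}} + \overline{\lambda}^2 S_{\overline{r_0(i)}, \overline{r_0(i)}}\Bigr).
\]
Taking $\lambda = 1$ gives the analogous identity for $S_{w_1,w_1}$ itself, so it suffices to prove the vanishing
\[
S_{r_0(i), r_0(i)} = 0 \qquad \text{(and equivalently } S_{\overline{r_0(i)}, \overline{r_0(i)}} = 0\text{).}
\]
Given this vanishing, both sides reduce to $\tfrac{1}{2} S_{r_0(i), \overline{r_0(i)}}$ (using $|\lambda|^2 = 1$), and the lemma follows.

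The main obstacle is the vanishing $S_{r_0(i),r_0(i)} = 0$. I expect to prove this by unpacking the definition of $S_{w_1,w_2}$ from \cite[section 3.4]{pollackQDS} and substituting $r_0(i) = (1, -i 1_J, -1_J, i)$ directly. The key point is that $r_0(i) \in W_J(\C)$ is a rank one vector for the Freudenthal structure: it satisfies $q_J(r_0(i)) = 0$ together with the vanishing of the associated Freudenthal cross, which forces every symmetric quadratic construction valued in $\h(J)^0$ (including $w \mapsto S_{w,w}$) to vanish on it. A short verification in coordinates, using the identities $(-i 1_J)^\# = - 1_J^\# = -1_J$ and the standard Jordan identities, should confirm that $r_0(i)$ is indeed of rank at most one and that the symmetric square of $n_L$ and $n_L^\vee$ components cancel in the expression for $S_{r_0(i),r_0(i)}$. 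Once this calculation is verified, the argument is complete.
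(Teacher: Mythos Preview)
Your proposal is correct and follows essentially the same approach as the paper: both use that $r_0(i)$ is rank one so $S_{r_0(i),r_0(i)}=0$, that $K_H^1$ acts on $r_0(i)$ by a unit scalar, and then expand $S_{kw_1,kw_1}$ bilinearly. The only cosmetic difference is that the paper writes $-r_0(i)=w_1+iw_1'$ and works with the real and imaginary parts $w_1,w_1'$ (deducing $S_{w_1,w_1}=S_{w_1',w_1'}$ and $S_{w_1,w_1'}=0$), whereas you work directly with $r_0(i)$ and $\overline{r_0(i)}$; these are equivalent decompositions.
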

	\begin{proof} Let $w_1' = (0,1,0,-1)$.  We have $-r_0(i) = w_1 + i w_1'$.  From $S_{r_0(i),r_0(i)} = 0$, because $r_0(i)$ is rank one, we obtain $S_{w_1,w_1} = S_{w_1',w_1'}$ and $S_{w_1,w_1'} = 0$.  Now if $k \in K_{H}^1$, then $k r_0(i) = (a+ib) r_0(i)$, for $a+ib \in S^1$.  The lemma now follows by direct calculation.
	\end{proof}
	
	There is an map $\Sp_6 \rightarrow M_P$ and thus $\Sp_6 \rightarrow H_J^1$.  See \cite[section 2]{pollackETF} for our choice of this map.  Let $T_{\Sp_6}$ be the diagonal torus of $\Sp_6$ and $B_{\Sp_6}$ the standard Borel of $\Sp_6$.  The choice of $T_{\Sp_6}$ and $B_{\Sp_6}$, with the map $\Sp_6 \rightarrow H_J^1$, endows $H_J^1$ with a $C_3$ root system and a choice of positive roots.  Let $B_{H}$ be the associated minimal parabolic of $H_J^1$.  If $\epsilon > 0$, let $T_{\Sp_6}(\epsilon)$ be the set of $t \in T_{\Sp_6}(\R)$ so that $|\alpha(t)| \geq \epsilon_U$ for every positive simple root $\alpha$ for $T_{\Sp_6}$ with respect to $B_{\Sp_6}$. The general reduction theory of Borel and Harish-Chandra has the following implication.
	
	\begin{theorem}\label{thm:ExcRed} There is a finite set $\mathcal{R}_{H,U} \subseteq H^1(\Q)$, a positive constant $\epsilon_U$, and compact subset $\mathcal{C}_{B,U} \subseteq B_H(\R)$, all that may depend upon $U$, so that if $g \in H^1(\R)$, then $g = \gamma \gamma_j c t k$ where
		\begin{enumerate}
			\item $\gamma \in \Gamma_U$;
			\item $\gamma_j \in \mathcal{R}_{H,U}$;
			\item $c \in \mathcal{C}_{B,U}$;
			\item $t \in T_{\Sp_6}(\epsilon_U)$, with ;
			\item $k \in K_H^1$.
		\end{enumerate}
	\end{theorem}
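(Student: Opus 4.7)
The plan is to deduce Theorem~\ref{thm:ExcRed} as a direct application of the Borel--Harish-Chandra reduction theorem to the arithmetic subgroup $\Gamma_{H,U}$ of the semisimple $\Q$-group $H^1$. First I would verify the structural input needed for that theorem. The group $H^1$ is, by definition, a connected, simply-connected semisimple algebraic group over $\Q$. The embedding $\Sp_6 \hookrightarrow H^1$ from \cite[section~2]{pollackETF} (recalled in the paragraph preceding the theorem) gives $T_{\Sp_6}$ as a maximal $\Q$-split torus of $H^1$, endows the relative root system of $H^1$ with type $C_3$, and identifies $B_H$ as a minimal $\Q$-parabolic subgroup. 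The subgroup $K_H^1 \subseteq H^1(\R)$, defined as the stabilizer of the line $\C r_0(i)$, is a maximal compact subgroup as noted in the setup. Finally, since $U_H$ is open compact in $H^1(\A_f)$, the group $\Gamma_{H,U} = H^1(\Q) \cap U_H$ is commensurable with the integral points of any integral model of $H^1$, hence arithmetic.

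Next I would invoke the classical Borel--Harish-Chandra reduction theorem (see e.g.\ \cite[Theorem~4.15]{platonovRapinchuk}). Applied to $H^1$ with its minimal $\Q$-parabolic $B_H = L_0 N_{B_H}$, where $L_0 \supseteq T_{\Sp_6}$ is the Langlands Levi, the maximal compact $K_H^1$, and the arithmetic subgroup $\Gamma_{H,U}$, it produces a positive constant $\epsilon > 0$, a compact subset $\omega \subseteq L_0(\R) N_{B_H}(\R) \subseteq B_H(\R)$, and a finite set $\Theta \subseteq H^1(\Q)$ of representatives for $\Gamma_{H,U}\backslash H^1(\Q)/B_H(\Q)$, such that
\[
H^1(\R) \;=\; \Gamma_{H,U} \cdot \Theta \cdot \omega \cdot T_{\Sp_6}(\epsilon) \cdot K_H^1,
\]
where $T_{\Sp_6}(\epsilon)$ is the positive Weyl chamber cut out by $|\alpha(t)| \geq \epsilon$ on every simple positive root--which is precisely the set $T_{\Sp_6}(\epsilon_U)$ in the theorem's statement. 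Setting $\mathcal{R}_{H,U} := \Theta$, $\mathcal{C}_{B,U} := \omega$, and $\epsilon_U := \epsilon$ yields exactly the decomposition $g = \gamma \gamma_j c t k$ asserted by the theorem.

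The argument is essentially formal: the theorem is a direct repackaging of Borel--Harish-Chandra for the specific group $H^1$ with its canonical choices of $B_H$, $T_{\Sp_6}$, and $K_H^1$. There is no substantive obstacle. The one ingredient worth flagging is the finiteness of $\mathcal{R}_{H,U}$, which in the Borel--Harish-Chandra framework reflects the finiteness of the double-coset space $\Gamma_{H,U} \backslash H^1(\Q) / B_H(\Q)$ (essentially a class-number finiteness); it is this finiteness that forces the set $\mathcal{R}_{H,U}$ to appear in the decomposition, rather than a single Siegel set sufficing. For later use in the automatic convergence theorem, one may further note that the constants $\epsilon_U$, the compact set $\mathcal{C}_{B,U}$, and the representatives $\mathcal{R}_{H,U}$ all depend only on the open compact subgroup $U_H$, which is exactly the dependence recorded in the statement.
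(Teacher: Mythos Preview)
Your proposal is correct and matches the paper's approach exactly: the paper does not give a proof of this theorem at all, but simply states it as an ``implication'' of the general reduction theory of Borel and Harish-Chandra. Your write-up supplies the structural verifications (that $H^1$ is semisimple over $\Q$, $T_{\Sp_6}$ is a maximal $\Q$-split torus, $B_H$ is a minimal $\Q$-parabolic, $K_H^1$ is maximal compact, and $\Gamma_{H,U}$ is arithmetic) that the paper leaves implicit, and then invokes the classical Siegel-set decomposition, which is precisely the intended argument.
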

	
	Recall the subspace $V_5 \subseteq V_7$, defined to be $V_5 = \mathrm{Span}(b_2, H_2(C), b_{-2})$, so $V_5 = V_7^{[1]}$.  Define a map $T_R: W_J \rightarrow V_5 \subseteq V_7$ as $\langle w, v \rangle = (T_R(w),v)_{V_7}$ for $v \in V_5 = W_J \cap V_7$.  Note that, if $w = (a,b,c,d) \in W_J(\R)$, then $q_{V_7}(T_R(w)) = (b^\#-ac)_{11}$, the $(11)$ component of $b^\#-ac$.  Consequently, if $w,w' \in W_J(\R)$ and $S_{w,w} = S_{w',w'}$, then $q_{V_7}(T_R(w)) = q_{V_7}(T_R(w'))$.
	
	Let $\mathrm{pr}_{V_7}: W_J \rightarrow V_5 \subseteq V_7$ be the projection to $V_5 \subseteq V_7$ along the decomposition $W_J = Lie(M_R)^{[1]} \oplus V_8^{[1]} \oplus V_7^{[1]}$.
	
	We will use Theorem \ref{thm:ExcRed} in conjuction with the following lemma.  
	\begin{lemma}\label{lem:RedLemExc} Let the notation be as in Theorem \ref{thm:ExcRed}.  There is a positive constant $M_U$ with the following property: Suppose $g' = ctk$ with $c \in \mathcal{C}_{B,U}$, $t\in T_{\Sp_6}(\epsilon_U)$ and $k \in K_H^1$, and $w' = g' \cdot (0,1,0,-1)$.  Then $|q_{V_7}(T_R(w')) \cdot q_{V_7}(\mathrm{pr}_{V_7}(w'))| \leq M_U$.
	\end{lemma}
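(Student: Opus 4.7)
The plan is to combine the Iwasawa-type decomposition of Theorem \ref{thm:ExcRed} with Lemma \ref{lem:w1K} and a weight analysis for $T_{\Sp_6}$ acting on $W_J$. First, I would apply Lemma \ref{lem:w1K}: the vector $r_0(i) = -(w_1 + i w_1')$ is a $K_H^1$-eigenvector under an $S^1$-character, so the real two-plane $\R w_1 + \R w_1'$ is $K_H^1$-stable and $\{k w_1' : k \in K_H^1\}$ is a bounded circle. Since $\mathcal{C}_{B,U}$ is compact in $B_H(\R)$, and for $t \in T_{\Sp_6}(\epsilon_U)$ the conjugate $t^{-1}ct$ can be controlled (the unipotent components of $c$ live in positive root directions on which $t$ is bounded below), the problem reduces to bounding $|q_{V_7}(T_R(w')) \cdot q_{V_7}(\mathrm{pr}_{V_7}(w'))|$ uniformly for $w' = t \cdot v$ with $t \in T_{\Sp_6}(\epsilon_U)$ and $v$ in a fixed bounded subset of $W_J(\R)$.

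Next, I would decompose $W_J$ into weight spaces $W_J = \bigoplus_\mu W_J^{(\mu)}$ for a choice of $T_{\Sp_6}$ compatible with both $h_P$ and $h_R$, so that the decomposition $W_J = Lie(M_R)^{[1]} \oplus V_8^{[1]} \oplus V_7^{[1]}$ defining $\mathrm{pr}_{V_7}$ is $T_{\Sp_6}$-stable. Under this choice, $T_R: W_J \to V_5$ (defined via the $\Sp_6$-invariant symplectic form on $W_J$ and the $M_R$-invariant form $q_{V_7}$) and $\mathrm{pr}_{V_7}$ are both $T_{\Sp_6}$-equivariant up to a fixed similitude character. Writing $v = \sum_\mu v_\mu$, the quantity $|q_{V_7}(T_R(tv)) \cdot q_{V_7}(\mathrm{pr}_{V_7}(tv))|$ then expands as a finite sum of monomials in $|v_\mu|^2$, each scaled by an explicit character of $T_{\Sp_6}$ that is determined by summing the weights that enter $T_R$ and $\mathrm{pr}_{V_7}$.

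The hard part will be verifying that every character appearing in this expansion is a non-positive combination of the simple positive roots of $T_{\Sp_6}$, so that it is bounded above by $\epsilon_U^{-O(1)}$ on $T_{\Sp_6}(\epsilon_U)$. Here I would exploit the rigidity of $v \in \R w_1 + \R w_1'$: the identities $S_{w_1,w_1} = S_{w_1',w_1'}$ and $S_{w_1,w_1'} = 0$ from the proof of Lemma \ref{lem:w1K} constrain the weight content of $v$ to a narrow set of ``paired'' weights in $W_J$, and the fact that $T_R$ pairs against $V_5$ while $\mathrm{pr}_{V_7}$ projects onto the complement of $V_5$ (in the $h_R$-grading sense) should force the weights picked out by the two maps to be dual to each other, cancelling in the product. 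The explicit bookkeeping will differ across $F_4, E_6, E_7, E_8$, but the $C_3$ root system is uniform, and the cancellation ultimately reduces to a statement about $\Sp_6$-invariants of the pairing $W_J \otimes V_5 \to \Q$ together with the $\Sp_6$-structure of the short-root grading defining $V_5$ and $V_8$.
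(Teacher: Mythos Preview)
Your reduction step---writing $w' = t \cdot v$ with $v = (t^{-1}ct)kw_1'$ bounded---is fine. The gap is in what comes next. Once you conjugate $c$ past $t$, the vector $v$ is no longer in the two-plane $\R w_1 + \R w_1'$: the unipotent part of $t^{-1}ct$ moves $kw_1'$ out of that plane. So the ``rigidity'' you propose to invoke (via $S_{w_1,w_1} = S_{w_1',w_1'}$ and $S_{w_1,w_1'}=0$) no longer applies to $v$, and your weight-cancellation argument is left unsupported. You would need the degree-four polynomial $P(w) = q_{V_7}(T_R(w))\,q_{V_7}(\mathrm{pr}_{V_7}(w))$ to have all its $T_{\Sp_6}$-weights bounded on the Siegel cone for \emph{arbitrary} bounded $v$, and you have not verified this.

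The paper avoids this entirely by a single observation you are missing: for $w=(a,b,c,d)$, both factors are read off from the quadratic moment map $S_{w,w} = (m,x,\gamma) \in \m(J)\oplus J \oplus J^\vee$, namely $q_{V_7}(T_R(w)) = x_{11}$ and $q_{V_7}(\mathrm{pr}_{V_7}(w)) = \gamma_{11}$. Since $S_{gw,gw} = \mathrm{Ad}(g)S_{w,w}$, Lemma~\ref{lem:w1K} kills $k$ immediately at the level of $S$, giving $S_{w',w'} = \mathrm{Ad}(ct)S_{w_1',w_1'}$---no need to move $c$ past $t$, and no loss of the two-plane structure. The two factors are then handled asymmetrically: for $q_{V_7}(T_R(w'))$ one uses that $ct$ lies in the parabolic $R_H^1$, and the $R$-equivariance $T_R(rw)=\lambda(r)^{-1}r\cdot T_R(w)$ gives the \emph{exact} value $\lambda(c)^{-1}t_1^{-2}$. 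For $q_{V_7}(\mathrm{pr}_{V_7}(w'))$, one simply bounds $\gamma_{11}$ by the $B_\theta$-norm of $\mathrm{Ad}(c)L$ where $L = S_{tw_1',tw_1'}$; compactness of $\mathcal{C}_{B,U}$ controls $\|c\|$, and a direct computation gives $B_\theta(L,L)=\sum_i(t_i^4+t_i^{-4})$, which is $\le M_1 t_1^4$ on $T_{\Sp_6}(\epsilon_U)$. The product is then bounded independently of $t$, uniformly in $J$, with no case analysis.
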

	\begin{proof} Let $w_1' = (0,1,0,-1)$ and
		\[w'' = t \cdot (0,1,0,-1) = (0, \diag(t_1 t_2^{-1} t_3^{-1}, t_2 t_3^{-1} t_1^{-1}, t_3 t_1^{-1} t_2^{-1}, 0, t_1t_2 t_3).\]
		By Lemma \ref{lem:w1K},
		\[S_{t k w_1', tk w_1'} = S_{t w_1', t w_1'}	= n_{L}(\diag(t_1^{-2},t_2^{-2},t_3^{-2})) + n_{L}^\vee(\diag(t_1^2,t_2^2,t_3^2)).\] 
		
		Now, observe that, for general $w = (a,b,c,d)\in W_J(\R)$, if 
		\[S_{w,w} = (m, x,\gamma) \in \m(J) \oplus J \oplus J^\vee \simeq \h(J)^0,\]
		then $q_{V_7}(T_R(w)) = (b^\#-ac)_{11} = x_{11}$ and $q_{V_7}(\mathrm{pr}_{V_7}(w)) = (c^\#-db)_{11} = \gamma_{11}$.  Here the subscript $(11)$ denotes the $(11)$ component of the element of $H_3(C)$.  
		
		Let $R_H^1 = H^1 \cap R$ denote its Klingen parabolic subgroup.  We have a character $\lambda: R \rightarrow \GL_1$ satisfying $(r v_1, r v_2)_{V_7} = \lambda(r) (v_1, v_2)_{V_7}$ for all $v_1, v_2 \in V_7$.  Note that the modulus character $\delta_{R_H^1}$ of $R_H^1 \subseteq H$ satisfies $\delta_{R_H^1}(r) = |\lambda(r)|^{n_R}$ for some positive number $n_R$.  Also note that, if $r \in R_H^1$, then $T_R(r w) = \lambda(r)^{-1} r \cdot T_R(w)$, so $q_{V_7}(T_R(rw)) = \lambda(r)^{-1} q_{V_7}(T_R(w))$.  If $t \in T_{\Sp_6}$, then $\lambda(t) = t_1^2$.   Thus
		\begin{equation}\label{eqn:Boundt1} q_{V_7}(T_R(w')) = q_{V_7}(T_R(c t w_1')) = \lambda(c t)^{-1} q_{V_7}(w_1') = \lambda(c)^{-1} t_1^{-2}.\end{equation}
		Here we have used, in the first equality, that $S_{ct k w_1', ctk w_1'} = S_{ct w_1', ct w_1'}$ and that $q_{V_7}(T_R(w))$ can be read off from $S_{w,w}$.
		
		For ease of notation, let $L = S_{tk w_1', tk w_1'} \in \mathrm{Lie}(H^1(\R)) = \h_J^0$.  Let $B_\theta$ be the positive-definite quadratic form on $\h_J^0$ from \cite[section 3.4.5]{pollackQDS}.  In the notation of \cite[section 3]{pollackQDS}, we have $B_{\theta}((m,x,\gamma), (m,x,\gamma)) \geq (x,\iota(x)) + (\gamma, \iota(\gamma))$. 
		
		Let $|| \cdot ||$ denote an operator norm on $H^1(\R)$ so that $B_\theta( g \cdot Y, g \cdot Y) \leq ||g||^2 B_\theta(Y,Y)$ for all $g \in H^1(\R)$ and $Y \in \h_J^0$.  We have
		\[ q_{V_7}(\mathrm{pr}(w'))^2 \leq B_{\theta}(c L, cL) \leq ||c||^2 B_{\theta}(L,L) = ||c||^2 (t_1^4+t_2^4+t_3^4 + t_1^{-4} + t_2^{-4} + t_3^{-4}) \leq M_1 t_1^4\]
		for some positive constant $M_1$, using that $\mathcal{C}_U$ is compact and $t \in T_{\Sp_6}(\epsilon_U)$.  Thus $|q_{V_7}(\mathrm{pr}(w'))|$ is bounded by $t_1^2$.  Conbined with the bound of inequality \eqref{eqn:Boundt1}, the lemma is proved.
	\end{proof}
	
	As a corollary of Theorem \ref{thm:ExcRed} and Lemma \ref{lem:RedLemExc}, we obtain:
	\begin{corollary}\label{cor:ExcRed} Suppose $w \in W_J(\R)$ is positive-definite, i.e., $w > 0$.  Let $\Gamma_U$ and $\mathcal{R}_{H,U}$ be as in Theorem \ref{thm:ExcRed}.  Then there is a positive constant $M_U$, so that the following holds: there exist $\gamma \in \Gamma_U$ and $\gamma_j \in \mathcal{R}_{H,U}$ so that if $w' = w \cdot (\gamma \gamma_j)$, then 
		\[|q_{V_7}(T_R(w')) \cdot q_{V_7}(\mathrm{pr}_{V_7}(w'))| \leq M_U |q(w)|.\]
	\end{corollary}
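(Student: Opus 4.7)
The plan is to reduce the general positive-definite $w \in W_J(\R)$ to the distinguished element $w_1' = (0, 1_J, 0, -1)$ appearing in Lemma \ref{lem:RedLemExc}, using that $H^1(\R)$ acts transitively on positive-definite vectors of a fixed $q_J$-value, and then feed the output of Theorem \ref{thm:ExcRed} into Lemma \ref{lem:RedLemExc}. First I would observe that for each of the quaternionic exceptional groups under consideration, $H^1(\R)$ is one of the Hermitian symmetric groups $\Sp_6$, $\SU(3,3)$, $\mathrm{Spin}^*(12)$, or $E_{7,-25}$, and that each acts transitively on the positive-definite locus of fixed $q_J$-value inside $W_J(\R)$; this is standard for the Freudenthal triple systems involved and is implicit in earlier work. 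Because $w_1'$ is itself positive-definite, this yields $s > 0$ and $g \in H^1(\R)$ with $w = s \cdot g \cdot w_1'$, where $s$ is determined by $s^4 = q_J(w)/q_J(w_1')$.

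Next I would apply Theorem \ref{thm:ExcRed} to decompose $g = \gamma\,\gamma_j\, c\,t\,k$ with $\gamma \in \Gamma_U$, $\gamma_j \in \mathcal{R}_{H,U}$, $c \in \mathcal{C}_{B,U}$, $t \in T_{\Sp_6}(\epsilon_U)$, and $k \in K_H^1$. Since $H^1(\R)$ lies in the kernel of the similitude $\nu$, the right action of $\gamma\gamma_j$ on $W_J$ is just inversion of the left action: $w \cdot (\gamma\gamma_j) = (\gamma\gamma_j)^{-1} \cdot w$. Thus
\[
w' := w \cdot (\gamma\gamma_j) = (\gamma\gamma_j)^{-1} \bigl(s\, \gamma\gamma_j\, (ctk)\, w_1'\bigr) = s \cdot (ctk)\cdot w_1' = s \cdot w'',
\]
where $w'' := (ctk)\cdot w_1'$ is exactly the vector to which Lemma \ref{lem:RedLemExc} applies.

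Finally I would apply Lemma \ref{lem:RedLemExc} to $g' = ctk$ to obtain $|q_{V_7}(T_R(w''))\cdot q_{V_7}(\mathrm{pr}_{V_7}(w''))| \leq M_U'$ for a constant $M_U'$ depending only on $U$. Since $T_R$ and $\mathrm{pr}_{V_7}$ are linear, $T_R(sw'') = s\,T_R(w'')$ and $\mathrm{pr}_{V_7}(sw'') = s\,\mathrm{pr}_{V_7}(w'')$, so
\[
|q_{V_7}(T_R(w'))\cdot q_{V_7}(\mathrm{pr}_{V_7}(w'))| = s^4\,|q_{V_7}(T_R(w''))\cdot q_{V_7}(\mathrm{pr}_{V_7}(w''))| \leq \frac{M_U'}{|q_J(w_1')|}\,|q(w)|,
\]
using $s^4 = |q_J(w)|/|q_J(w_1')|$ (both are negative, being positive-definite). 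Setting $M_U := M_U'/|q_J(w_1')|$ gives the corollary.

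The principal obstacle is the transitivity claim in the first step, which for the exceptional groups must be extracted from the orbit classification of $H^1(\R)$ on $W_J(\R)$. If transitivity on some $q_J$-level set were to fail, the remedy is mild: enlarge $\mathcal{R}_{H,U}$ by finitely many $H^1(\R)$-orbit representatives among the positive-definite elements, which only worsens the constant $M_U$. Beyond this, the argument is bookkeeping: tracking the scalar $s$ through the linear quantities $T_R$ and $\mathrm{pr}_{V_7}$ and using that $\gamma\gamma_j$ preserves the symplectic form exactly so the left/right actions invert one another cleanly.
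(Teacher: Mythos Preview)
Your proposal is correct and follows essentially the same route as the paper's proof: write $w$ as a positive scalar times $g \cdot w_1'$ for some $g \in H^1(\R)$, decompose $g$ via Theorem \ref{thm:ExcRed}, and feed the Siegel-set piece $ctk$ into Lemma \ref{lem:RedLemExc}, tracking the scalar through the quadratic forms. Your version is in fact more careful than the paper's terse one-line argument: you make explicit the transitivity input, the passage from left to right action via $\nu(\gamma\gamma_j)=1$, and the homogeneity bookkeeping (the paper writes the scalar as $|q(w)|$, which should really be proportional to $|q(w)|^{1/4}$ as you have it).
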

	\begin{proof} We have $w = |q(w)| (g \cdot w_1')$ for some $g \in H^1(\R)$.  We then apply Theorem \ref{thm:ExcRed} and Lemma \ref{lem:RedLemExc}.
	\end{proof}

	\subsection{Exceptional and orthogonal groups II}\label{subsec:redExc2}
	The purpose of this section is to give a second type of reduction theory for the elements of $\Lambda_0 = \Z \oplus J_0 \oplus J_0 \oplus \Z \subseteq W_J(\Q)$.  If $x = x_{11} e_{11} + V(0,x_2, x_3) + x'$ with $x' \in H_2(C)$, let $Res_J(x) = x_{11} e_{11} + V(0,x_2, x_3) + x'$.  If $w \in W_J$, $w = (a,b,c,d)$, let $Res_W(w) = (a,Res_J(b), Res_J(c),d)$.
	\begin{theorem}\label{thm:SL2reductionCubes}Let $\Gamma_U \subseteq H^1(\Q)$ be a fixed arithmetic subgroup.  There is a finite set $\mathcal{T}_{U} \subseteq H^1(\Q)$, and a positive constant $Y_U > 0$ with the following property: Suppose $w \in \Lambda_0 \subseteq W_J(\Q)$.  Then there is $\gamma \in \Gamma_U$ and $\delta \in \mathcal{T}_U$ so that if $w_1 = w \cdot \gamma \delta$, then $w_1= (0,b_1, c_1, d_1)$ with $b_1 = b_{11}e_{11} + V(0,u_2,u_3) + T$ with $T \in H_2(C)$  and $|b_{11}| \leq Y_U \mathrm{cont}(T;\Lambda_0)^{-1} |q(Res_W(w_1))|^{1/2}$.
	\end{theorem}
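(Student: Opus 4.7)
The plan is a two-step reduction. First, use a finite set $\mathcal{T}_U$ of rational representatives to move $w$ into the hyperplane $\{a=0\} \subseteq W_J$; second, apply Borel-type reduction within $\Gamma_U$ to the remaining components to bound $|b_{11}|$.

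For the first step, I would invoke finiteness of the double coset space $\Gamma_U \backslash H^1(\Q) / Q_H(\Q)$, where $Q_H \subseteq H^1$ is the maximal parabolic associated to the $a$-coordinate functional, e.g., the stabilizer of the line $\Q \cdot (1,0,0,0) \subseteq W_J$. Such finiteness is standard for arithmetic groups (a consequence of Borel--Harish-Chandra reduction theory). Taking $\mathcal{T}_U$ to be a set of double coset representatives, every $w \in W_J(\Q)$ admits $\gamma \in \Gamma_U$ and $\delta \in \mathcal{T}_U$ for which $w \cdot \gamma\delta$ lies in the hyperplane $\{a = 0\}$, because each $H^1(\Q)$-orbit on $W_J(\Q)$ meets this hyperplane. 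This last fact is checked one rank stratum of $W_J$ at a time, using the opposite unipotent radical of $Q_H$ (or the Heisenberg unipotent when the rank is generic) to explicitly eliminate the $a$-component.

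For the second step, once $w_1 = (0,b,c,d)$, the Levi $M_{Q_H}$ contains a subgroup isogenous to $\mathrm{GL}_1 \times M_J^1$ acting on $b \in J$ via the natural cubic-norm-preserving representation. I would apply Borel--Harish-Chandra reduction for $\Gamma_U \cap M_{Q_H}$ on the lattice $J_0 \subseteq J$, placing $b$ into a Minkowski-type fundamental domain. Writing $b = b_{11}e_{11} + V(0,u_2,u_3) + T$ with $T \in H_2(C)$, the unipotent elements of $\Gamma_U$ stabilizing $T$---obtained by exponentiating integral linear combinations of Lie algebra elements analogous to the $\sl_2$-triples of Lemma \ref{lem:sl2triple}---act on $b$ by shifting $b_{11}$ by integer multiples of a quantity determined by the stabilizer structure of $T$. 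As $T$ becomes more divisible in $\Lambda_0$ (i.e., as $\mathrm{cont}(T;\Lambda_0)$ grows), the lattice of rational unipotent elements preserving $T$ and remaining in $\Gamma_U$ expands proportionally, making the minimum achievable positive shift of $b_{11}$ comparable to a fixed quantity divided by $\mathrm{cont}(T;\Lambda_0)$. Reducing $b_{11}$ modulo this lattice of shifts produces the $\mathrm{cont}(T;\Lambda_0)^{-1}$ factor.

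The main obstacle, I expect, will be establishing the precise inequality $|b_{11}| \cdot \mathrm{cont}(T;\Lambda_0) \leq Y_U |q(\mathrm{Res}_W(w_1))|^{1/2}$ uniformly in $w$. The point is that in the Minkowski-reduced position, the entries of $T$, together with the relevant components of $\mathrm{Res}_J(c_1)$ and $d_1$, are forced to be of comparable size, so the quartic form $q_J$ evaluated on $\mathrm{Res}_W(w_1)$ is comparable to the fourth power of the largest entry up to bounded constants. A direct expansion of $q_J$ in coordinates adapted to the reduced form, combined with Cauchy--Schwarz bounds on the cross-terms, should give the inequality; the most delicate point is a case analysis on the rank of $\mathrm{Res}_W(w_1)$ to handle the degenerate strata (ranks $1,2,3$), where one may need to descend to a lower-dimensional version of the statement or exploit additional structure coming from the $M_J^1$-action to avoid vanishing of the quartic invariant.
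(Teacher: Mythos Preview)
Your proposal has a genuine gap, and the paper takes a completely different route.

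The paper reduces the theorem to Proposition~\ref{prop:bvRed}, a purely orthogonal-group statement: given a signature $(2,n)$ integral lattice $\Lambda$ and $T,v\in\Lambda$ spanning a positive-definite plane, there is an isotropic $b\in\Lambda$ with $(b,T)=0$ and $|(b,v)|\le Y_\Lambda\,\mathrm{cont}(T;\Lambda)^{-1}|Q(T,v)|^{1/2}$. That proposition is proved by constructing the majorant $g$ associated to the positive plane $\mathrm{Span}(T,v)$, applying Schlickewei's theorem (Theorem~\ref{thm:Schlickiwei}) to obtain an isotropic rank-two sublattice spanned by $x_1,x_2$ with $\det(\langle x_i,x_j\rangle_g)$ bounded, and then setting
\[
b=\mathrm{cont}(T;\Lambda)^{-1}\bigl((x_2,T)x_1-(x_1,T)x_2\bigr).
\]
This $b$ is integral, isotropic, orthogonal to $T$, and satisfies $(b,v)^2=\det(B)^2\le Y_\Lambda\,Q(T,v)$. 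The $\mathrm{cont}(T)^{-1}$ factor is thus not produced by any reduction-modulo-shifts mechanism; it is exactly the denominator needed to make $b$ integral in the above construction. A separate lemma (transitivity of the arithmetic group on primitive isotropic vectors in a lattice with maximal anisotropic kernel) then supplies the finite set $\mathcal{T}_U$ and lets one move $b$ to the standard isotropic line.

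Your second step does not work as written. The assertion that ``as $\mathrm{cont}(T;\Lambda_0)$ grows, the lattice of rational unipotent elements preserving $T$ and remaining in $\Gamma_U$ expands proportionally'' is not correct: integral unipotents in $\Gamma_U$ are integral regardless of how divisible $T$ is, and there is no visible mechanism by which divisibility of $T$ enlarges the available integral shifts on $b_{11}$. Even granting such shifts, reducing $b_{11}$ modulo a lattice of step $\asymp\mathrm{cont}(T)^{-1}$ would give only $|b_{11}|\lesssim\mathrm{cont}(T)^{-1}$, with no reference to $|q(\mathrm{Res}_W(w_1))|^{1/2}$. Your final paragraph concedes that obtaining this precise factor is ``the main obstacle'' and offers only a heuristic about Minkowski-reduced entries being of comparable size plus an unspecified rank-by-rank case analysis; this is not a proof, and there is no indication it can be completed without an input of Schlickewei type that controls the size of an isotropic sublattice relative to the chosen majorant.
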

	
	\begin{remark} Note that Theorem \ref{thm:SL2reductionCubes} holds for any $w \in \Lambda_0 \otimes \Q$.  Indeed, this follows from the theorem for $w \in \Lambda_0$, because both sides of the inequality 
		\[|b_{11}| \leq Y_U \mathrm{cont}(T;\Lambda_0)^{-1} |q(Res_W(w_1))|^{1/2}\]
	scale in the same way if one multiples $w$ by a positive integer.
	\end{remark}
	
	Theorem \ref{thm:SL2reductionCubes} follows from the following more general result entirely on orthogonal groups.
	\begin{proposition}\label{prop:bvRed} Suppose $V$ is a rational quadratic space with Witt rank two, and signature $(2,n)$, and $\Lambda = \Z b_2 \oplus \Z b_3 \oplus \Lambda_C \oplus \Z b_{-3} \oplus \Z b_{-2}$ is an integral lattice in $V$.  There is a constant $Y_{\Lambda} >0$ so that the following holds: Suppose $T,v \in \Lambda$ span a positive-definite two-plane in $V$.  Then, there is an isotropic $b \in \Lambda$ with $(b,T) = 0$ and $|(b,v)| \leq Y \mathrm{cont}(T;\Lambda)^{-1} |Q(T,v)|^{1/2}$, where
		\[Q(T,v) = \det(S(T,v)) = \det\left(\mb{(T,T)}{(T,v)}{(T,v)}{(v,v)}\right).\]
	\end{proposition}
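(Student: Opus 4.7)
The plan is to reduce the problem to a rank-one Minkowski-type statement inside the quadratic space $T^{\perp}$, and then invoke Schlickewei's theorem (Theorem \ref{thm:Schlickiwei}) with a judiciously chosen majorant.

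First, I would decompose $V = \R T \oplus T^{\perp}$ (which makes sense over $\R$ since $(T,T) > 0$) and let $v' = v - \frac{(T,v)}{(T,T)} T \in T^{\perp}$ be the projection of $v$. A direct computation gives $(v',v') = Q(T,v)/(T,T) > 0$, using that $T, v$ span a positive-definite $2$-plane. Note that $T^{\perp}$ has signature $(1,n)$, so its Witt rank is at least $1$, and the integral lattice $\Lambda_T := \Lambda \cap T^{\perp}$ inherits a non-degenerate integral form from $V$. Any $b \in \Lambda_T$ automatically satisfies $(b,T) = 0$ and $(b,v) = (b,v')$, so the task reduces to finding an isotropic $b \in \Lambda_T$ with $|(b,v')|$ small.

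Next, I would choose a majorant $g$ on $T^{\perp}$ adapted to $v'$: decompose $T^{\perp} \otimes \R = \R v' \oplus (v')^{\perp}$ (the latter $n$-dimensional and negative-definite) and flip the sign of $f$ on $(v')^{\perp}$. By Lemma \ref{lem:S21}, $g^{-1} \circ f$ squares to the identity, so $\tr((g^{-1}\circ f)^2) = n+1$, and $|\det(\Lambda_T; f)| = \det(\Lambda_T; g)$. Applying Theorem \ref{thm:Schlickiwei} to $(\Lambda_T, f, g)$ produces an isotropic $b \in \Lambda_T$ with $g(b) \leq C_n \det(\Lambda_T; g) (n+1)^{n/2}$. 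For such a $b$, writing $b = \alpha v' + b''$ with $b'' \in (v')^{\perp}$ and using $(b,b) = 0$ gives $\alpha^2 (v',v') = g(b'')$, whence $g(b) = 2\alpha^2 (v',v')$ and $(b,v')^2 = \alpha^2 (v',v')^2 = \tfrac{1}{2} g(b) \cdot (v',v')$.

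Finally, I would invoke the determinant-of-sublattice identity used to prove the earlier lemma ($\det(\Lambda_T) = (T,T)\det(\Lambda)/r_0^2$ where $r_0 = \gcd\{(b,T) : b\in\Lambda\}$) and the observation that $r_0 \geq \mathrm{cont}(T;\Lambda)$. Combining this with the bound from Schlickewei gives
\[
(b,v')^2 \;\leq\; \tfrac{1}{2} C_n (n+1)^{n/2} \cdot \frac{(T,T)\det(\Lambda)}{\mathrm{cont}(T;\Lambda)^2} \cdot \frac{Q(T,v)}{(T,T)},
\]
and taking square roots yields the claim with $Y_\Lambda = \sqrt{C_n (n+1)^{n/2} \det(\Lambda)/2}$.

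The main obstacle will be the geometric setup in the second step: verifying that the majorant adapted to $v'$ behaves well on $\Lambda_T \otimes \R$, and in particular establishing the key arithmetic-geometric identity $(b,v')^2 = \tfrac{1}{2} g(b) \cdot (v',v')$ for isotropic $b$. Once this is in hand, combining Schlickewei's estimate with the sublattice-determinant computation is mechanical.
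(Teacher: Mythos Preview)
Your argument is correct and complete in substance, but it follows a genuinely different route from the paper's.

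The paper applies Theorem \ref{thm:Schlickiwei} directly to the full lattice $\Lambda$ in $V$, with the majorant $g$ built from the positive-definite $2$-plane $V_+ = \mathrm{Span}_\R(T,v)$. Since $V$ has Witt rank two, Schlickewei produces a rank-two isotropic sublattice $\Z x_1 + \Z x_2 \subseteq \Lambda$ with $\det(\langle x_i, x_j\rangle_g)$ bounded by a constant depending only on $\Lambda$. One then sets $b = \mathrm{cont}(T;\Lambda)^{-1}((x_2,T)x_1 - (x_1,T)x_2)$, which lies in $\Lambda$, is isotropic, and satisfies $(b,T)=0$; a direct computation identifies $\det(\langle x_i,x_j\rangle_g)$ with $\det(B)^2/Q(T,v)$ where $\det(B) = \pm\mathrm{cont}(T;\Lambda)\,(b,v)$, and the bound follows. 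The content factor thus drops out of the very construction of $b$, with no need for the sublattice determinant identity.

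Your approach instead passes to $\Lambda_T = \Lambda \cap T^\perp$, applies Schlickewei there with Witt rank one and a majorant adapted to $v'$, and recovers the content factor via the identity $\det(\Lambda_T) = (T,T)\det(\Lambda)/r_0^2$ together with $r_0 \geq \mathrm{cont}(T;\Lambda)$. This is equally valid and perhaps conceptually cleaner at the geometric step, at the cost of invoking the auxiliary determinant lemma. One small point: your justification that $T^\perp$ has Witt rank at least one (``signature $(1,n)$, so\ldots'') only works over $\R$; over $\Q$ you should instead note that any rational isotropic $2$-plane in $V$ meets the hyperplane $T^\perp$ in at least a line.
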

	\begin{proof}
		Consider the projection $pr: V_\R \rightarrow \mathrm{Span}_{\R}(T,v) =: V_{+}$.  Let $g$ be the majorant of $(\,,\,)_V$ with respect to $V_+$. Thus $\langle u_1, u_2 \rangle_g = 2 (pr(u_1),pr(u_2)) - (u_1,u_2)$.  For $u \in V$, one has $pr(u) = \alpha T + \beta v$, where 
		\[(\alpha,\beta)^t = S(T,v)^{-1} ((u,T), (u,v))^t.\]
		One obtains
		\[(pr(u_1),pr(u_2)) = ((u_1,T),(u_1,v)) S(T,v)^{-1} ((u_2,T),(u_2,v))^t.\]
		Suppose now $x_1, x_2 \in \Lambda$ span an isotropic two-plane in $V$.  Then, on the one hand, $(\langle x_i, x_j \rangle_g) = B S(T,v)^{-1} B^t$ where $B = \mb{(x_1, T)}{(x_1,v)}{(x_2, T)}{(x_2,v)}$, so $\det((\langle x_i, x_j \rangle_g)) = B^2 Q(T,v)^{-1}$.  On the other hand, set $b = \mathrm{cont}(T,\Lambda)^{-1}( (x_2, T) x_1 - (x_1, T) x_2)$.  Then $b \in \Lambda$, $b$ is isotropic, and $(b,T) = 0$.  We have $(b,v) = \det(B)$.
		
		By Theorem \ref{thm:Schlickiwei} and Lemma \ref{lem:S21}, there is $Y_{\Lambda}$, independent of $T,v$, so that $\det((\langle x_i, x_j \rangle_g)) \leq Y_{\Lambda}$.  We obtain $(b,v)^2  = \det(B)^2 \leq Y_{\Lambda} Q(T,v)$.  This gives the proposition.
	\end{proof}
	
	Theorem \ref{thm:SL2reductionCubes} follows from Proposition \ref{prop:bvRed} and the following lemma.   Let $\Lambda = \Z^2 \oplus \Lambda_C \oplus \Z^2$ and $V = \Lambda \otimes \Q$.  We assume $q_0: \Lambda_C \rightarrow \Z$ is a negative definite quadratic form, and define $q: \Lambda \rightarrow \Z$ as $q(a_1,a_2,\lambda,d_2,d_1) = a_1 d_1 + a_2 d_2 +q_0(\lambda)$.
	
	\begin{definition} Say that $\Lambda_C$ is \emph{maximal} for $q_0$ if the following condition is satisfied: $\Lambda_1 \supseteq \Lambda_C$ a lattice in $\Lambda_C \otimes \Q$ and $q(\lambda) \in \Z$ for all $\lambda \in \Lambda_1$ implies $\Lambda_1 = \Lambda_C$.
	\end{definition}
	
	Let $G_V$ denote the algebraic group $\SO(V,q)$.
	\begin{lemma} Suppose $\Lambda_C$ is maximal for $q_0$, and set $\Gamma = G_V(\Q) \cap \GL(\Lambda)$.  Then $\Gamma$ acts transitively on the primitive isotropic vectors of $\Lambda$.\end{lemma}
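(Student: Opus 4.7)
The plan is to reduce an arbitrary primitive isotropic $w \in \Lambda$ to the standard vector $b_2$ by applying an explicit element of $\Gamma$. I would exploit two families of elements of $\Gamma$: (i) the image of $\SL_2(\Z) \times \SL_2(\Z)$ acting on the hyperbolic part $H_1 \oplus H_2 := \mathrm{Span}_\Z(b_2, b_3, b_{-3}, b_{-2})$ via the identification $H_1 \oplus H_2 \cong M_2(\Z)$ with $q = \det$, where $(g, h) \cdot M = g M h^{-1}$ and $\Lambda_C$ is fixed pointwise; and (ii) the Eichler transformations $E_{u, v}(x) := x + (x, u) v - (x, v) u - q(v) (x, u) u$ for $u, v \in \Lambda$ with $u$ isotropic and $(u, v) = 0$, which preserve $\Lambda$.

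Write $w = a_1 b_2 + a_2 b_3 + \lambda + d_2 b_{-3} + d_1 b_{-2}$ and form the matrix $M(w) = \mm{a_1}{-a_2}{d_2}{d_1}$, whose determinant equals the hyperbolic contribution to $q(w)$. Stage 1: apply the $\SL_2(\Z) \times \SL_2(\Z)$ action to put $M(w)$ into Smith normal form $\mm{m_1}{0}{0}{m_2}$ with $m_1 \mid m_2$, so that $w$ becomes $m_1 b_2 + \lambda + m_2 b_{-2}$ with $m_1 m_2 = -q_0(\lambda)$. Stage 2: if $m_1 = 0$, then $q_0(\lambda) = 0$ forces $\lambda = 0$ by negative-definiteness of $q_0$ and primitivity forces $m_2 = 1$, so an element of $\SL_2(\Z) \times \SL_2(\Z)$ transports $b_{-2}$ to $b_2$; if $m_1 = 1$, the single Eichler transformation $E_{b_{-2}, -\lambda}$ carries $w$ to $b_2$, as one checks directly using $q_0(\lambda) = -m_2$.

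Stage 3 addresses the case $m_1 > 1$. Because $m_1 \mid m_2$, primitivity of $w$ forces $\gcd(m_1, \mathrm{cont}(\lambda; \Lambda_C)) = 1$. Applying $E_{b_3, \mu}$ for $\mu \in \Lambda_C$ sends $w$ to $m_1 b_2 - (\lambda, \mu) b_3 + \lambda + m_2 b_{-2}$, replacing $M(w)$ by $\mm{m_1}{(\lambda, \mu)}{0}{m_2}$, whose first invariant factor is $\gcd(m_1, (\lambda, \mu))$. Iterating over $\mu \in \Lambda_C$ and reapplying Smith normal form, the first invariant factor descends to $\gcd(m_1, c)$, where $c$ generates the ideal $\{(\lambda, \mu) : \mu \in \Lambda_C\} \subseteq \Z$; note that the coprimality $\gcd(m_1', \mathrm{cont}(\lambda; \Lambda_C)) = 1$ is preserved at each step since $m_1' \mid m_1$. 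Once this gcd equals $1$, finitely many iterations bring us to the case $m_1 = 1$ of Stage 2 and the proof concludes.

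The hard part, and the only place where the maximality hypothesis enters, is the claim $\gcd(m_1, c) = 1$. Suppose for contradiction that a prime $p$ divides both $m_1$ and $(\lambda, \mu)$ for every $\mu \in \Lambda_C$; then $\lambda/p \in \Lambda_C^\vee$, but $\lambda/p \notin \Lambda_C$ because $p \nmid \mathrm{cont}(\lambda; \Lambda_C)$. Moreover, $p \mid m_1 \mid m_2$ yields $p^2 \mid m_1 m_2$, so $q_0(\lambda/p) = -m_1 m_2/p^2 \in \Z$. Consequently the lattice $\Lambda_C + \Z \cdot (\lambda/p)$ strictly contains $\Lambda_C$ and carries an integer-valued $q_0$, contradicting maximality. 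This contradiction establishes the required coprimality and completes the reduction.
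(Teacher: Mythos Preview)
Your proof is correct and follows essentially the same approach as the paper. Both reduce a primitive isotropic vector to a standard one using the $\SL_2(\Z)\times\SL_2(\Z)$ action on the hyperbolic part together with unipotent (Eichler) transformations coming from $\Lambda_C$, and both invoke maximality of $\Lambda_C$ in exactly the same way: if a prime $p$ divided both the gcd of the hyperbolic coordinates and every pairing $(\lambda,\mu)$, then $\Lambda_C+\Z(\lambda/p)$ would be a strictly larger lattice on which $q_0$ remains integral. The only cosmetic difference is that the paper uses the Chinese Remainder Theorem to find a single $\mu\in\Lambda_C$ with $(\lambda_0,\mu)$ coprime to $m$ and then reduces in one pass, whereas you iterate the Eichler-plus-Smith-normal-form step; the underlying mechanism is identical.
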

	\begin{proof} Suppose $e = (a_1, a_2, \lambda_0,d_2, d_1) \in \Lambda$ is primitive and isotropic.  Let $m = gcd(a_1,a_2,d_2,d_2)$.  Because $e$ is isotropic, $q(\lambda_0)$ is divisible by $m^2$.   Thus, if $p|m$, there exits $\lambda_p \in \Lambda_C$ so that $(\lambda_p,\lambda_0)$ is not divisible by $p$.  Indeed, if not, then $q_0$ would be integral on $\Lambda_C + \Z \frac{\lambda_0}{p}$, contradicting either the maximality of $\Lambda_C$ or the primitivity of $e$.  If $m = p_1^{a_1} \cdots p_r^{a_r}$ is its prime factorization, we see that 
		\[gcd(m, (\lambda_{p_1},\lambda_0), \ldots, (\lambda_{p_r},\lambda_0)) = 1.\]
		It follows that there exists $\lambda \in \Lambda_C$ so that $(\lambda, \lambda_0)$ is relatively prime to $m$.
		
		Now, by using the $\SL_2(\Z) \times \SL_2(\Z)$ inside of $\Gamma$ which acts trivially on $\Lambda_C$, we may assume $(a_1,a_2,d_2,d_1) = (m,0,0,mr)$ for some integer $r$.  Applying an appropriate unipotent transformation in $\Gamma$, we obtain $e' = (m,(\lambda,\lambda_0),\lambda_0,0,mr)$.  Using the $\SL_2(\Z) \times \SL_2(\Z)$ action again, we can move $e'$ to $e'' = (1,0,\lambda_0,0,d)$ for some integer $d$.  Applying another unipotent element 
		$\Gamma$ gives $(1,0,0,0,0)$.  This proves the $\Gamma$ action is transitive on primitive isotropic elements of $\Lambda$.
	\end{proof}
	
	\section{Quantitative Sturm Bound}
	In this section, we prove two quantitative Sturm bounds, first for $\SL_2$ and then for certain groups of type $\SO(2,n)$. 
	
	\subsection{The group $\SL_2$}
	We start with the following lemma.  Let 
	\[\mathcal{S}_{\SL_2,Siegel} = \{g \in \SL_2(\R): g \cdot i = x + i y \text{ with } y \geq \sqrt{3}/2\}.\]
	\begin{lemma}\label{lem:redSL2} (Adelic reduction theory for $\SL_2$) Given $g \in \SL_2(\A)$, there is $\gamma \in \SL_2(\Q)$ and $k \in \SL_2(\widehat{\Z})$ so that $g=\gamma g_1 k$ with $g_1 \in \mathcal{S}_{\SL_2,Siegel}$.
	\end{lemma}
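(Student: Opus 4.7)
The plan is to combine strong approximation for $\SL_2$ at finite places with the classical reduction theory for $\SL_2(\Z)$ acting on the upper half plane $\mathcal{H}$.

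First, I would apply strong approximation. Because $\SL_2$ is simply-connected, semisimple, and the archimedean factor $\SL_2(\R)$ is non-compact, strong approximation gives $\SL_2(\A_f) = \SL_2(\Q) \cdot \SL_2(\widehat{\Z})$, where $\SL_2(\Q)$ is embedded diagonally. Writing $g = (g_\infty, g_f) \in \SL_2(\R) \times \SL_2(\A_f)$, I decompose $g_f = \gamma_1 k_0$ with $\gamma_1 \in \SL_2(\Q)$ and $k_0 \in \SL_2(\widehat{\Z})$.

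Next, I would use classical reduction theory at the archimedean place. The standard fundamental domain for the $\SL_2(\Z)$-action on $\mathcal{H}$ is $\{z = x + i y : |x| \leq 1/2, \ |z| \geq 1\}$, and every point in this region has imaginary part at least $\sqrt{3}/2$. Thus, applied to the point $\gamma_1^{-1} g_\infty \cdot i \in \mathcal{H}$, there exists $\gamma_2 \in \SL_2(\Z)$ so that $g_1 := \gamma_2 \gamma_1^{-1} g_\infty \in \mathcal{S}_{\SL_2,\text{Siegel}}$.

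Finally I combine these: setting $\gamma = \gamma_1 \gamma_2^{-1} \in \SL_2(\Q)$ and $k = \gamma_2 k_0 \in \SL_2(\widehat{\Z})$ (using that $\gamma_2 \in \SL_2(\Z) \subseteq \SL_2(\widehat{\Z})$), one checks $g = \gamma g_1 k$ componentwise: at the archimedean place this is $g_\infty = (\gamma_1 \gamma_2^{-1})(\gamma_2 \gamma_1^{-1} g_\infty)$, and at each finite place it is $g_{f,v} = (\gamma_1 \gamma_2^{-1})(\gamma_2 k_{0,v})$. Since each step is a standard citation, there is no real obstacle; the only mild bookkeeping is to confirm that the element $\gamma_2$, living diagonally in $\SL_2(\Q)$, contributes trivially (i.e.\ integrally) on the finite side, which is immediate from $\SL_2(\Z) \subseteq \SL_2(\widehat{\Z})$.
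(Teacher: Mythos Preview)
Your proof is correct and follows essentially the same approach as the paper: strong approximation at the finite places followed by classical reduction for $\SL_2(\Z)$ on $\mathcal{H}$, then bookkeeping to assemble the adelic decomposition. The only cosmetic difference is that the paper names the reduction element so that $g_2 = \gamma_{2,\infty} g_1$ rather than $g_1 = \gamma_2 \gamma_1^{-1} g_\infty$, which amounts to replacing your $\gamma_2$ by its inverse.
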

	\begin{proof} Write $g = g_f g_\infty$.  By strong approximation, there is $\gamma_1 \in \SL_2(\Q)$ so that $g_f = \gamma_{1,f} k$, so $g = \gamma_1 (\gamma_{1,\infty})^{-1} g_\infty k$.  Let $g_2 =  (\gamma_{1,\infty})^{-1} g_\infty$.  By the well-known fundamental domain for $\SL_2(\Z)$ on the upper-half complex plane, there is $\gamma_2 \in \SL_2(\Z)$ so that $g_2 = \gamma_{2,\infty} g_1$ with $g_1 \in \mathcal{S}_{\SL_2,Siegel}$.  Thus $g = (\gamma_1 \gamma_2) g_1 ((\gamma_{2,f})^{-1} k)$ is the desired decomposition.
	\end{proof}
	Replacing $\SL_2(\A)$ with $\widetilde{\SL_2}(\A)$, $\SL_2(\widehat{\Z})$ with its inverse image in $\widetilde{\SL_2}(\A_f)$, and $\mathcal{S}_{\SL_2,Siegel}$ with its inverse image in $\widetilde{\SL_2}(\R)$, we get an identical statement of reduction theory on $\widetilde{\SL_2}(\A)$.
	
	We now state and prove a quantitative Sturm bound on $\SL_2$.  Let $\widetilde{K} \subseteq \widetilde{\SL_2}(\A_f)$ be the inverse image of $\SL_2(\widehat{\Z})$.
	\begin{theorem}[Quantitative Sturm bound for $\SL_2$] \label{thm:SturmSL2} Suppose $\varphi$ is a cuspidal automorphic form on $\SL_2(\A)$ or $\widetilde{\SL_2}(\A)$, that corresponds to a holomorphic modular form of weight $\ell' \in 2^{-1} \Z$.  Assume $\varphi(g)$ has a Fourier expansion of the form 
		\[\varphi(g) = \sum_{d \in \Q_{> 0}} b_{d}(g_f) W_{\ell',\SL_2}(g_\infty).\]
		Suppose that $M \in \Z_{\geq 1}$ is a positive integer with the property that $b_d(k) \neq 0$ for $k \in \widetilde{K}$ implies $d \in M^{-1} \Z$.   Let $\beta_{d}(g_f) = d^{-\ell'/2} b_d(g_f)$ be the normalized Fourier coefficients.  There are positive constants $A_{\ell'}, B_{\ell'}$, that only depend upon $\ell'$, so that the following holds: Assume $|\beta_d(k)| \leq \epsilon$ for all $d <  R := \log(M)/\pi + A_{\ell'}$ and all $k \in \widetilde{K}$.  Then $|\beta_d(g_f)| \leq \epsilon B_{\ell'} M$ for all $d$ and all $g_f$.
	\end{theorem}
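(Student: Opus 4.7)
The plan is a bootstrap on $\mathcal{N}_\varphi := \sup_{g_f,\,d>0}|\beta_d(g_f)|$, finite because $\varphi$ is a bounded cusp form; the goal is $\mathcal{N}_\varphi \leq B_{\ell'}\,\epsilon\, M$. A classical Hecke-type estimate handles the first reduction: integrating the Fourier expansion against $\psi(-dn)$ over $\Q\backslash \A$ at $g_\infty = h(y) := \diag(y^{1/2},y^{-1/2})$ gives $|b_d(g_f)|\,y^{\ell'/2}\,e^{-2\pi dy} \leq \|\varphi\|_\infty$, and choosing $y=1/d$ yields $\mathcal{N}_\varphi \leq e^{2\pi}\,\|\varphi\|_\infty$. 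So it suffices to bound $\|\varphi\|_\infty$ by a constant times $\epsilon M$, modulo a small multiple of $\mathcal{N}_\varphi$ that the bootstrap will absorb.

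To control $\|\varphi\|_\infty$, I apply Lemma \ref{lem:redSL2} together with left $\widetilde{\SL_2}(\Q)$-invariance to reduce to $g = g_1 k$ with $g_1 \in \mathcal{S}_{\SL_2,Siegel}$ and $k \in \widetilde K$. For $g_1\cdot i = x+iy$ with $y \geq \sqrt{3}/2$, the Fourier expansion (using $|\mathcal{W}_{\SL_2,\ell',d}(g_1)| = y^{\ell'/2}e^{-2\pi dy}$) gives
\[|\varphi(g_1 k)| \leq \sum_{d \in M^{-1}\Z,\,d>0}|\beta_d(k)|\,(yd)^{\ell'/2}\,e^{-2\pi dy}.\]
Set $\alpha := 2\pi/\sqrt{3} \in (0,2\pi)$; the function $u^{\ell'/2}\,e^{-(2\pi-\alpha)u}$ is bounded on $\R_{>0}$ by a constant $K_{\ell'}$, so $(yd)^{\ell'/2}\,e^{-2\pi dy} \leq K_{\ell'}\,e^{-\alpha dy}$. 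The resulting sum $\sum_d|\beta_d(k)|\,e^{-\alpha dy}$ is monotonically decreasing in $y$, so I evaluate at $y = \sqrt{3}/2$, where $\alpha y = \pi$ exactly. Splitting at $d = R$, using $|\beta_d(k)|\leq \epsilon$ on the head and $|\beta_d(k)|\leq \mathcal{N}_\varphi$ on the tail, elementary geometric-series estimates yield $\sum_{d \in M^{-1}\Z,\,d>0}e^{-\pi d} \leq c\,M$ and $\sum_{d\geq R}e^{-\pi d} \leq c\,M\,e^{-\pi R}$.

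The decisive calibration $R = \log(M)/\pi + A_{\ell'}$ makes $M\,e^{-\pi R} = e^{-\pi A_{\ell'}}$, exactly cancelling the lattice-density factor of $M$ in the tail. Feeding the resulting bound
\[\|\varphi\|_\infty \leq c\,K_{\ell'}\,\bigl(\epsilon M + e^{-\pi A_{\ell'}}\,\mathcal{N}_\varphi\bigr)\]
into the Hecke bound gives $\mathcal{N}_\varphi \leq e^{2\pi}\,c\,K_{\ell'}\,\epsilon M + e^{2\pi}\,c\,K_{\ell'}\,e^{-\pi A_{\ell'}}\,\mathcal{N}_\varphi$. Choosing $A_{\ell'}$ (depending only on $\ell'$) so that $e^{2\pi}\,c\,K_{\ell'}\,e^{-\pi A_{\ell'}} \leq 1/2$ closes the bootstrap and produces $\mathcal{N}_\varphi \leq B_{\ell'}\,\epsilon M$ with $B_{\ell'} := 2\,e^{2\pi}\,c\,K_{\ell'}$. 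The main obstacle is this final calibration: $\alpha = 2\pi/\sqrt{3}$ is forced, since one needs $\alpha y \geq \pi$ at the Siegel-minimum $y = \sqrt{3}/2$ (so $e^{-\pi R} = M^{-1}\,e^{-\pi A_{\ell'}}$ cancels the $M$), and also $\alpha < 2\pi$ (so $u^{\ell'/2}\,e^{-(2\pi-\alpha)u}$ remains bounded); a smaller $\alpha$ would leave a residual power of $M$ in the tail and break the bootstrap.
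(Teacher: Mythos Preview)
Your proof is correct and follows essentially the same strategy as the paper's: bootstrap on the sup-norm (the paper works directly with $L=\|\varphi\|_\infty$ rather than $\mathcal{N}_\varphi$, but uses the same Hecke bound $|\beta_d|\le e^{2\pi}L$), reduce to the Siegel set via Lemma~\ref{lem:redSL2}, split the Fourier sum at $d=R$, and choose $R=\log(M)/\pi+A_{\ell'}$ so the tail factor $Me^{-\pi R}$ becomes an absolute constant that can be absorbed. The only cosmetic difference is that the paper splits $e^{-2\pi dy}=e^{-\pi dy}\cdot e^{-\pi dy}$ and bounds $(dy)^{\ell'/2}e^{-\pi dy}\le C_{\ell'}$ directly, whereas you take $\alpha=2\pi/\sqrt{3}$ and use monotonicity in $y$ to reduce to $y=\sqrt{3}/2$; your handling of the $y$-dependence is in fact slightly more careful, but the two arguments are the same in substance.
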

	\begin{proof} As $\varphi$ is cuspidal, $|\varphi(g)|$ achieves its maximum, which we denote by $L$, $|\varphi(g_{*})| = L$.  By Lemma \ref{lem:redSL2}, we can assume $g_{*}= g_1 k$ with $g_1 \in \mathcal{S}_{\SL_2,Siegel}$ and $k \in \widetilde{K}$.  One has $|\beta_d(g_f)| \leq e^{2\pi} L$ for all $d, g_f$.  Let $g_1 \cdot i = x + iy$, so $y \geq \sqrt{3}/2$.  Then
		\[
		L = |\varphi(g_{*})| \leq \epsilon\left(\sum_{d \in M^{-1}\Z, d > 0}{ (dy)^{\ell'/2} e^{-2\pi d y}}\right) + e^{2\pi}L\left(\sum_{d \in M^{-1}\Z, d \geq R}{ (dy)^{\ell'/2} e^{-2\pi d y}}\right).\]
		By Lemma \ref{lem:expBound}, $(dy)^{\ell'/2} e^{-\pi dy} \leq C_{\ell'}$, for some constant $C_{\ell'}$ that only depends on $\ell'$.  Thus
		\[
		\sum_{d \in M^{-1}\Z, d > 0}{ (dy)^{\ell'/2} e^{-2\pi d y}} \leq C_{\ell'} \sum_{n \geq 1}{e^{-\pi n/M}} = \frac{C_{\ell'}}{e^{\pi/M}-1} \leq C_{\ell'} \frac{M}{\pi}.\]
		Similarly, 
		\[
		\sum_{d \in M^{-1}\Z, d \geq R}{ (dy)^{\ell'/2} e^{-2\pi d y}} \leq C_{\ell'} \sum_{n \geq MR}{e^{-\pi n/M}} = C_{\ell'} \frac{e^{-\pi R}}{1-e^{-\pi/M}} \leq C_{\ell'} e^{-\pi R}(1+ M/\pi).\]
		Thus
		\[L \leq \epsilon C_{\ell'} M + 2 e^{2\pi} L C_{\ell'} M e^{-\pi R}.\]
		The theorem now follows by rearranging the inequality.
	\end{proof}
	
	\subsection{Orthogonal groups} Recall from section \ref{sec:reduction} the lattice $\Lambda_T$.  Let $V_T(\Q) = \Lambda_T \otimes \Q$; there is an associated special orthogonal group $G_T =\SO(V_T(\Q))$.  We let $M' \rightarrow G_T$ be an isogenous algebraic group with the property $M'(\R)$ preserves the connected symmetric space $\mathcal{H}_T$.  Equivalently, we assume that the image of $M'(\R)$ in $G_T(\R)$ lands in the identity component.  We will apply the results of this section to the group $M'$, as defined in section \ref{sec:splittings}, hence the overload in notation should not cause confusion.
	
	The group $M'$ supports automorphic forms that correspond to holomorphic modular forms on the symmetric space $\mathcal{H}_T$.  Suppose $F$ is such an automorphic form, corresponding to a holomorphic modular form of weight $\ell_1$.  The Fourier coefficients of $F$ are parametrized by elements $\lambda \in M^{-1} (\Lambda_T^1)^\vee$ for some positive integer $M$.  The quantitative Sturm bound says that if all the normalized Fourier coefficients 
	\[\beta_F(\lambda):=q(\lambda)^{-\ell_1/2}a_F(\lambda)\]
	of $F$ with $q(\lambda)$ small are bounded by some nonnegative constant $\epsilon$, then every $\beta_F(\lambda)$ is bounded by some explicit constant, proportional to $\epsilon$.  (The case $\epsilon=0$ would then be a classical Sturm bound.)
	
	To prove the Sturm bound, we will use the results on reduction theory for orthogonal groups in section \ref{sec:reduction}.  To review, we assume that $\mathcal{S}_{T}$ is a compact open subset of $M'(\A_f)$ so that $M'(\A) = \mathcal{S}_{Siegel} \mathcal{S}_T$, and where 
	\[\mathcal{S}_{Siegel} = \{g \in M'(\R): g \cdot (i1_T) = X'+iY', Y' \in \mathcal{S}_{B,T}\}.\]

	Here is the Sturm Bound. 
	\begin{theorem}[Quantitative Sturm Bound] \label{thm:SturmBound} There are positive costants $E_1, \alpha, d$, independent of $T$ and $M$ so that the following statement holds.  Suppose $M'(\A) = M'(\Q) \mathcal{S}_{Siegel} \mathcal{S}_T$, with $\mathcal{S}_T$ a compact open subset of $M'(\A_f)$.	Let  $F$ be a cuspidal automorphic form on $M'$ corresponding to a holomorphic modular form of weight $\ell_1 \geq 0$, and let 
		\[F(g) = \sum_{\lambda > 0}{a_F(\lambda)(g_f) \mathcal{W}_{\ell_1,\lambda}(g_\infty)}\]
		be its Fourier expansion.  Suppose $M \geq 1$ is a positive integer so that $a_F(\lambda)(s) \neq 0$ for $s \in \mathcal{S}_T$ implies $\lambda \in M^{-1} (\Lambda_T^1)^\vee$.  Denote $\beta_F(\lambda)(g_f) = q(\lambda)^{-\ell_1/2}a_F(\lambda)(g_f)$, the normalized Fourier coefficients.  Suppose $|\beta_F(\lambda)(s)| \leq \epsilon$ for all $s\in \mathcal{S}_T$ and all $\lambda \in M^{-1} (\Lambda_T^1)^\vee$ with $\lambda > 0$ and 
		
		\begin{equation}\label{eqn:ineqLambda}(\lambda, \lambda)^{1/2} \leq A_0:=2 \pi^{-1} \epsilon_n^{-1}(T,T)^{1/4} \log(E_1 M^d (T,T)^{\alpha}).\end{equation}
		Then 
		\[|\beta_F(\lambda)(g_f)| \leq \epsilon E_1 (T,T)^{\alpha} M^d\]
		for all $\lambda \in (\Lambda_T^1)^\vee \otimes \Q$ and all $g_f \in M'(\A_f)$.
	\end{theorem}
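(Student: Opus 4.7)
The proof is a higher-rank analogue of Theorem \ref{thm:SturmSL2}, combining the adelic reduction theory for orthogonal groups with the lattice point count of Proposition \ref{prop:SturmHelper}. Since $F$ is cuspidal, $|F|$ is bounded on $M'(\A)$ and attains its maximum $L$ at some $g_\ast$. Using the $M'(\Q)$-automorphy of $F$ and the reduction theory, I may write $g_\ast = hs$ with $h \in M'(\R)$ mapping $i 1_T$ into a Siegel domain with $Y \in \mathcal{S}_{B,T}(\epsilon_n)$, and $s \in \mathcal{S}_T$, so that $(Y,Y)^{1/2} \geq \sqrt{2}\,\epsilon_n (T,T)^{-1/4}$. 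Since the final bound is to hold for arbitrary $g_f$, one uses $M'(\Q)$-automorphy at the end to reduce from $g_f$ to some point in $\mathcal{S}_T$.

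The first main ingredient is a crude bound $|\beta_F(\lambda)(g_f)| \leq C_{\ell_1} L$ with $C_{\ell_1}$ depending only on $\ell_1$ --- the analogue of the $|\beta_d(g_f)| \leq e^{2\pi} L$ bound used for $\SL_2$. To produce it, pass to the classical holomorphic modular form $f(Z)$ associated to $F$, so that the Petersson expression $(Y,Y)^{\ell_1/2} |f(Z)|$ is proportional to $|F(g)|$, and evaluate at a $\lambda$-adaptive point: choose $Y'$ in the positive cone with $(Y',Y')^{1/2} \sim (\lambda,\lambda)^{-1/2}$ and $(\lambda, Y') = O(1)$. Fourier-inverting on the unipotent radical against $\chi_\lambda$ over a compact fundamental domain, together with the normalization $\beta_F(\lambda)(g_f) = q(\lambda)^{-\ell_1/2} a_F(\lambda)(g_f)$, yields the crude bound.

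The main estimate is obtained by expanding at $g_\ast$:
\[ L \leq \sum_{\lambda > 0} (\lambda,\lambda)^{\ell_1/2} |\beta_F(\lambda)(s)| \cdot |\mathcal{W}_{\ell_1,\lambda}(h)|, \]
rewritten using Lemma \ref{lem:inequality1} (which gives $(\lambda,\lambda)^{1/2} (Y,Y)^{1/2} \leq (\lambda, Y)$) and the exponential form of $\mathcal{W}_{\ell_1,\lambda}$ so that each term is dominated by $|\beta_F(\lambda)(s)|(\lambda, Y)^{\ell_1} e^{-2\pi (\lambda, Y)}$. Split the sum at $(\lambda,\lambda)^{1/2} = A_0$: for small $\lambda$ use the hypothesis $|\beta_F(\lambda)(s)| \leq \epsilon$, and for large $\lambda$ use the crude bound $|\beta_F(\lambda)(s)| \leq C_{\ell_1} L$. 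Group the $\lambda$'s by the integer part of $(\lambda, Y)$ and apply Proposition \ref{prop:SturmHelper}: the number of $\lambda \in M^{-1} (\Lambda_T^1)^\vee$ with $(\lambda, Y) \in [k, k+1)$ is at most $D_{n,S}' (T,T)^{(7n+10)/2} (M(k+1))^{n+2}$. Both tails then collapse to explicit polynomials in $M$ and $(T,T)$ by geometric series estimates in the spirit of Proposition \ref{prop:F2alpha}. For $(\lambda,\lambda)^{1/2} > A_0$, Lemma \ref{lem:inequality1} forces $(\lambda, Y) \geq A_0 \cdot \sqrt{2}\,\epsilon_n (T,T)^{-1/4} = (2\sqrt{2}/\pi) \log(E_1 M^d (T,T)^\alpha)$, so the factor $e^{-2\pi(\lambda, Y)}$ produces polynomial savings $\leq (E_1 M^d (T,T)^\alpha)^{-4\sqrt 2}$ in both $M$ and $(T,T)$. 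Calibrating $E_1, \alpha, d$ so that the large-$\lambda$ tail is at most $L/(2C_{\ell_1})$ and absorbing into the left-hand side gives $L \leq 2\epsilon \cdot O((T,T)^{(7n+10)/2} M^{n+2})$, whence the claimed bound on $|\beta_F(\lambda)(g_f)|$ follows from the crude Whittaker bound.

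The principal obstacle is the precise bookkeeping of $T$- and $M$-dependence: the exponent $(7n+10)/2$ from lattice counting, the $(T,T)^{-1/4}$ lower bound on $(Y,Y)^{1/2}$, and the logarithmic choice of $A_0$ all must fit together to absorb the large-$\lambda$ tail without losing on the small-$\lambda$ side. A subsidiary but nontrivial technicality is the construction of the $\lambda$-adaptive point used in the crude Whittaker bound, which requires producing $Y'$ in the positive cone of a prescribed magnitude on which $(\lambda,\cdot)$ takes a controlled value --- this exploits that the positive cone in $V_T^1$ has real dimension at least two, but still requires care.
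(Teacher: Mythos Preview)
Your proposal is correct and follows essentially the same route as the paper: take the maximum $L$ of $|F|$, reduce to $\mathcal{S}_{Siegel}\mathcal{S}_T$, split the Fourier sum at $(\lambda,\lambda)^{1/2}=A_0$, control both pieces via Lemma~\ref{lem:inequality1} and Proposition~\ref{prop:SturmHelper}, and calibrate $A_0$ so the tail absorbs into $L/2$. The only place you overcomplicate is the crude bound $|\beta_F(\lambda)(g_f)|\leq e^{4\pi}L$: since $\lambda>0$ lies in the positive cone itself, the ``$\lambda$-adaptive point'' is simply $Y'=\lambda/(\lambda,\lambda)$, which gives $(\lambda,Y')=1$ and $(Y',Y')^{1/2}=(\lambda,\lambda)^{-1/2}$ directly---no appeal to the dimension of the cone is needed.
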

	
	\begin{remark}	The key feature of this result is that the dependence of the right-hand side of inequality \ref{eqn:ineqLambda} on $T$ is relatively explicit, and the exponent $1/4$ on the $(T,T)$ in inequality \eqref{eqn:ineqLambda} is relatively small.  In particular, this exponent is strictly less that $1/2$, which is the exponent one might produce with more naive reduction theory. \end{remark}
	
	\begin{proof}[Proof of Theorem \ref{thm:SturmBound}]
		The proof is simple given the reduction theory we have already developed.  We break the proof into a couple claims.
		
		\begin{claim}\label{claim:TotalSum} Let $\ell_1 \geq 0$ be an integer.  Suppose $M \geq 1$ is a positive integer.  There are positive constants $E,\alpha,d$, independent of $T$, but dependent on $\ell_1$, so that 
			\begin{align}\label{eqn:SumToBound} TotSum(Y,M,\ell_1)&:=\sum_{\lambda > 0, \lambda \in M^{-1}(\Lambda_T^1)^\vee}{q(\lambda)^{\ell_1/2} q(Y)^{\ell_1/2} e^{-2\pi (\lambda, Y)}} \\ & \nonumber \leq E(T,T)^{\alpha} M^d\end{align}
			for all $Y \in \mathcal{S}_{B,T}$.
		\end{claim}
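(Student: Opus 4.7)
My plan is to bound the sum directly using the three tools assembled in Section \ref{sec:reduction}: the reverse Cauchy--Schwarz inequality of Lemma \ref{lem:inequality1}, the exponential absorption of Lemma \ref{lem:expBound}, and the lattice point count of Proposition \ref{prop:SturmHelper}. The main point is that Lemma \ref{lem:inequality1} converts the geometric quantity $q(\lambda)^{1/2}q(Y)^{1/2}$ into the linear quantity $(\lambda,Y)$, after which everything is controlled by a single linear statistic of $\lambda$.

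First, since both $\lambda$ and $Y$ lie in the forward light cone of the signature $(1,\dim V_{S,T}+1)$ space $V_T^1$, one has $(\lambda,Y) > 0$ and $(\lambda,Y) \geq q(\lambda)^{1/2} q(Y)^{1/2}$ by Lemma \ref{lem:inequality1}. Raising this to the $\ell_1$-power gives
\[ q(\lambda)^{\ell_1/2} q(Y)^{\ell_1/2} e^{-2\pi (\lambda,Y)} \leq (\lambda,Y)^{\ell_1} e^{-2\pi (\lambda,Y)} \leq C_{\ell_1}\, e^{-\pi(\lambda,Y)}, \]
where $C_{\ell_1} = (\ell_1/\pi)^{\ell_1} e^{-\ell_1}$ comes from applying Lemma \ref{lem:expBound} to $v = (\lambda,Y)$ with $N=\ell_1$ and $r=\pi$. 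Thus $TotSum(Y,M,\ell_1) \leq C_{\ell_1} \sum_{\lambda > 0, \lambda \in M^{-1}(\Lambda_T^1)^\vee} e^{-\pi(\lambda,Y)}$, and it remains to bound the latter series.

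Next, let $N_Y(t) = \#\{ \lambda \in M^{-1}(\Lambda_T^1)^\vee : \lambda > 0,\ (\lambda,Y) \leq t\}$. By Proposition \ref{prop:SturmHelper}, $N_Y(t) \leq D'_{n,S}\,(T,T)^{(7n+10)/2} (Mt)^{n+2}$ for all $t \geq 0$, and $N_Y(t)$ vanishes for $t$ sufficiently small since $\lambda > 0$ forces $(\lambda,Y) \geq \epsilon_n^2 (T,T)^{-1/2}$ (by Lemma \ref{lem:inequality1} and the lower bound on $q(Y)$ on $\mathcal{S}_{B,T}(\epsilon_n)$). Writing the series as a Stieltjes integral and integrating by parts,
\[ \sum_{\lambda} e^{-\pi (\lambda,Y)} = \int_0^\infty e^{-\pi t} \, dN_Y(t) = \pi \int_0^\infty e^{-\pi t} N_Y(t)\, dt \leq D'_{n,S}\,(T,T)^{(7n+10)/2}\, M^{n+2} \cdot \frac{\Gamma(n+3)}{\pi^{n+2}}. \]
Combining with the constant $C_{\ell_1}$ gives the claimed bound with $\alpha = (7n+10)/2$, $d=n+2$, and $E$ an explicit constant depending on $\ell_1$, $n$, and $\Lambda_S$.

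There is no serious obstacle here because Proposition \ref{prop:SturmHelper} already packaged the hard geometric input; the only subtle step is verifying that the exponent $(7n+10)/2$ on $(T,T)$ is genuinely forced by the lattice point count (and in particular is independent of $\ell_1$), which is what will make the overall Sturm bound of Theorem \ref{thm:SturmBound} strong enough for the automatic convergence argument. The main thing to track carefully is that the $(T,T)^{1/4}$ appearing in inequality \eqref{eqn:ineqLambda} of Theorem \ref{thm:SturmBound} is traceable to the logarithmic inversion of $(T,T)^{\alpha} M^d$ together with the $(T,T)^{-1/2}$ lower bound on $q(Y)$; that cross-check is the only place where all constants must line up simultaneously.
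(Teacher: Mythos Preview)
Your proof is correct and follows essentially the same route as the paper: both apply Lemma~\ref{lem:inequality1} to replace $q(\lambda)^{1/2}q(Y)^{1/2}$ by $(\lambda,Y)$, then Lemma~\ref{lem:expBound} to absorb the polynomial factor into the exponential, and finally Proposition~\ref{prop:SturmHelper} to control the remaining sum. The only cosmetic difference is that you carry out the last step via Abel summation / a Stieltjes integral $\pi\int_0^\infty e^{-\pi t}N_Y(t)\,dt$, whereas the paper phrases it as reducing to the convergent series $\sum_{m\geq 0} m^{d} e^{-\pi m}$; your parenthetical lower bound on $(\lambda,Y)$ is slightly imprecise (it would need a lower bound on $q(\lambda)$ as well), but since you only need $N_Y(0)=0$ for the boundary term this does not affect the argument.
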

		\begin{proof} By Lemma \ref{lem:inequality1}, the sum in question is less than or equal to
			\[\sum_{\lambda > 0, \lambda \in M^{-1}(\Lambda_T^1)^\vee}{(\lambda,Y)^{\ell_1} e^{-2\pi (\lambda, Y)}}.\]
			By Lemma \ref{lem:expBound}, this is bounded by a constant (only depending on $\ell_1$) times 
			\[\sum_{\lambda > 0, \lambda \in M^{-1}(\Lambda_T^1)^\vee}{e^{-\pi (\lambda, Y)}}.\]
			Applying Proposition \ref{prop:SturmHelper}, we must bound $\sum_{n \geq 0}{n^d e^{-\pi n}}$, and thus are finished.
		\end{proof}
		
		We also will bound the tail of the sum in the inequality \eqref{eqn:SumToBound}.
		\begin{claim}\label{claim:TailSum} Let the notation be as in Claim \ref{claim:TotalSum}.  Suppose $Y \in \mathcal{S}_{B,T}$, so that $(Y,Y) \geq \epsilon_n^2 (T,T)^{-1/2}$.  Let $A >0$.  Then
			\begin{align} TailSum(Y,M,\ell_1,A)&:=\label{eqn:TailToBound} 
				\sum_{\lambda > 0, \lambda \in M^{-1}(\Lambda_T^1)^\vee, (\lambda,\lambda)^{1/2} \geq A}{q(\lambda)^{\ell_1/2} q(Y)^{\ell_1/2} e^{-2\pi (\lambda, Y)}} \\
				& \nonumber \leq E' M^d (T,T)^{\alpha} \exp(-\pi A \epsilon_n (T,T)^{-1/4}/2)\end{align}
			for some positive constant $E'$ that is independent of $T, M, Y,A$.
		\end{claim}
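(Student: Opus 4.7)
The plan is to imitate the proof of Claim \ref{claim:TotalSum}, but to peel off an extra exponential factor that exploits the assumption $(\lambda,\lambda)^{1/2}\geq A$. The starting observation is that, because $Y\in\mathcal{S}_{B,T}$, we have $(Y,Y)\geq \epsilon_n^2(T,T)^{-1/2}$, so Lemma \ref{lem:inequality1} gives
\[
(\lambda,Y)\;\geq\;(\lambda,\lambda)^{1/2}(Y,Y)^{1/2}\;\geq\;(\lambda,\lambda)^{1/2}\,\epsilon_n\,(T,T)^{-1/4}.
\]
In particular, for $\lambda$ in the tail range $(\lambda,\lambda)^{1/2}\geq A$, we obtain $(\lambda,Y)\geq A\epsilon_n (T,T)^{-1/4}$.

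Next, I would split the exponential as $e^{-2\pi(\lambda,Y)}=e^{-\pi(\lambda,Y)}\cdot e^{-\pi(\lambda,Y)/2}\cdot e^{-\pi(\lambda,Y)/2}$, and estimate each piece separately. Using Lemma \ref{lem:inequality1} once more, $q(\lambda)^{1/2}q(Y)^{1/2}\leq(\lambda,Y)$, so
\[
q(\lambda)^{\ell_1/2}q(Y)^{\ell_1/2}\,e^{-\pi(\lambda,Y)}\;\leq\;(\lambda,Y)^{\ell_1}e^{-\pi(\lambda,Y)}\;\leq\;C_{\ell_1}
\]
by Lemma \ref{lem:expBound}, uniformly in $\lambda,Y$. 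On the tail, a second factor is controlled by
\[
e^{-\pi(\lambda,Y)/2}\;\leq\;\exp\!\bigl(-\pi A\epsilon_n(T,T)^{-1/4}/2\bigr),
\]
and can be pulled out of the sum.

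Combining these, it remains to bound
\[
\sum_{\lambda\in M^{-1}(\Lambda_T^1)^\vee,\ \lambda>0}\,e^{-\pi(\lambda,Y)/2}.
\]
This is exactly the kind of counting sum that was estimated in Claim \ref{claim:TotalSum}: applying Proposition \ref{prop:SturmHelper} to count lattice points $\lambda$ with $(\lambda,Y)\leq X$ (which yields $\leq D'_{n,S}(T,T)^{(7n+10)/2}(MX)^{n+2}$), one writes this sum as $\sum_{n\geq 0}\#\{\lambda:(\lambda,Y)\in[n,n+1)\}\,e^{-\pi n/2}$, and sums the resulting geometric-times-polynomial series to produce a bound of the form $E''M^{d}(T,T)^{\alpha}$ with $E'',d,\alpha$ independent of $T$, $M$, $Y$ (the exponents $d,\alpha$ being the same as in Claim \ref{claim:TotalSum}, up to absorbing the factor $2$ in $e^{-\pi n/2}$ versus $e^{-\pi n}$). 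Multiplying the three pieces together gives
\[
TailSum(Y,M,\ell_1,A)\;\leq\;C_{\ell_1}E''\,M^d(T,T)^{\alpha}\,\exp\!\bigl(-\pi A\epsilon_n(T,T)^{-1/4}/2\bigr),
\]
which is the claim with $E'=C_{\ell_1}E''$. There is no genuine obstacle here — the only thing to be careful about is that the constants $\alpha$ and $d$ in the tail bound match those produced in Claim \ref{claim:TotalSum} (or can be taken to be the maximum of the two), so that both claims may subsequently be applied with a uniform choice of exponents in the proof of Theorem \ref{thm:SturmBound}.
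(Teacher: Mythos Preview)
Your proof is correct and follows essentially the same approach as the paper: both use Lemma \ref{lem:inequality1} to get $(\lambda,Y)\geq A\epsilon_n(T,T)^{-1/4}$ on the tail, absorb the polynomial prefactor via Lemma \ref{lem:expBound}, and control the remaining exponential sum using the lattice-point count of Proposition \ref{prop:SturmHelper}. The only cosmetic difference is that the paper keeps the tail restriction and extracts the decay from $\sum_{n\geq B}n^d e^{-\pi n}$ with $B=\lfloor A\epsilon_n(T,T)^{-1/4}\rfloor$, whereas you pull out the factor $e^{-\pi(\lambda,Y)/2}\leq \exp(-\pi A\epsilon_n(T,T)^{-1/4}/2)$ first and then sum over all $\lambda$; these are equivalent reorganizations of the same estimate.
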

		\begin{proof} Let $B = \floor{A \epsilon_n (T,T)^{-1/4}}$.  Arguing as in the proof of Claim \ref{claim:TotalSum}, we see that the sum in \eqref{eqn:TailToBound} is bounded by a constant (independent of $T,Y,M, A$) times 
			\[M^d (T,T)^{\alpha} \sum_{n \geq B}{n^d e^{-\pi n}}.\]
			But $\sum_{n \geq B}{n^d e^{-\pi n}}$ is bounded by a constant (that only depends on $d$) times $\sum_{n \geq B}{e^{-\pi n/2}}$, which in turn is bounded by a constant times $e^{-\pi B/2}$.  This completes the proof.
		\end{proof}
		
		Now, suppose $F$ is a cuspidal modular form on $M'$, corresponding to a holomorphic modular form of weight $\ell_1$.  Then $F$ has a Fourier expansion $F(g) = \sum_{\lambda > 0}{a_F(\lambda)(g_f) \mathcal{W}_{\ell_1,\lambda}(g_\infty)}$.  Define $\beta_F(\lambda)(g_f) = q(\lambda)^{-\ell_1/2}a_F(\lambda)(g_f)$, the normalized Fourier coefficients.  If $|F(g)| \leq L$ for some $L >0$, then $|\beta_F(\lambda)(g_f)| \leq e^{4\pi} L$ for all $\lambda$, all $g_f \in M(\A_f)$.
		
		Let $g \in M'(\A)$ be such that $|F(g)|$ is at its maximum, say $L$.  We can assume $g = g_\infty s \in \mathcal{S}_{Siegel} \mathcal{S}_{T}$.  Let $g_\infty (i 1_T) = X + iY$, so that $Y \in \mathcal{S}_{B,T}$. 
		Then
		\begin{align*}L = |F(g)| &\leq \sum_{\lambda \in M^{-1}(\Lambda_T^1)^\vee}{|\beta_F(\lambda)(s)| e^{-2\pi (\lambda,Y)}}\\
			&\leq \epsilon \cdot \mathrm{TotSum}(Y,M,\ell_1) + e^{4\pi} L \cdot \mathrm{TailSum}(Y,M,\ell_1,A_0) \\
			&\leq  \epsilon (T,T)^\alpha M^d E + e^{4\pi} L M^d (T,T)^\alpha \exp(-\pi A_0 \epsilon_n (T,T)^{-1/4}/2).
		\end{align*}
		Our choice of $A_0$ implies 
		\[ e^{4\pi}  M^d (T,T)^\alpha \exp(-\pi A_0 \epsilon_n (T,T)^{-1/4}/2) \leq 1/2.\]
		Thus we have the inequality $L \leq \epsilon (T,T)^\alpha M^d E + L/2.$  Rearranging gives $e^{4\pi}L \leq \epsilon (T,T)^\alpha M^d (2e^{4\pi} E)$, which proves the theorem.
	\end{proof}
	
	\section{Automatic convergence}
	The purpose of this section is to prove the automatic convergence theorem:
	\begin{theorem}[Automatic Convergence] \label{thm:automatic} Suppose $\ell \geq 1$ is a fixed integer.  For each $w \in W_J(\Q)$, $w > 0$, let $a_w: G(\A_f) \rightarrow \C$ be a function satisfying $a_w(n g_f) = \xi_w(n) a_w(g_f)$ for all $n \in N_P(\A_f)$.  Assume
		\begin{enumerate}
			\item there is a compact open subset $U \subseteq G(\A_f)$ for which $a_w(\cdot)$ is right $U$-invariant for every $w$;
			\item the $a_w$ satisfy the $P$, $Q$, and $R$ symmetries;
		\end{enumerate}
		Then, the $a_w$ grow polynomially with $w$.
	\end{theorem}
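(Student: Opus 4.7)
The plan is to fix $g_f \in G(\A_f)$ and show that $|a_w(g_f)| \leq C ||w||^N$ uniformly in $w > 0$, with constants depending on $g_f$ and the compact $U$. The strategy combines the two quantitative Sturm bounds (Theorems~\ref{thm:SturmSL2} and \ref{thm:SturmBound}) with the reduction theories of sections~\ref{subsec:redExc1}--\ref{subsec:redExc2}. The first step will use reduction theory together with the $P$-symmetries to move an arbitrary $w$ into a standard form. By Theorem~\ref{thm:SL2reductionCubes}, given $w > 0$ there exist $\gamma \in \Gamma_U$ and $\delta \in \mathcal{T}_U$ so that $w_0 := w \cdot \gamma\delta$ has shape $(0,\, b_{11}e_{11} + V(0,u_2,u_3) + T,\, c_1,\, d_1)$ with $|b_{11}| \leq Y_U \,\mathrm{cont}(T;\Lambda_0)^{-1} |q(\mathrm{Res}_W(w_0))|^{1/2}$. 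Since $\gamma\delta \in H^1(\Q)$ has $\nu = 1$, the $P$-symmetry yields $a_w(g_f) = a_{w_0}((\gamma\delta)_f^{-1} g_f)$, and as $\gamma \in \Gamma_U$ and $\delta$ runs through a finite set the translates $(\gamma\delta)_f^{-1} g_f$ fall in finitely many $U$-cosets. It will therefore suffice to bound $a_{w_0}(g_f')$ polynomially in $||w_0||$ for $w_0$ in this standard form and $g_f'$ in a finite set.

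The second step will apply the $R$-Sturm bound. Each reduced $w_0$ has the form $w(u_1)$ for some $u_1 \in V_5^T(\Q)$, where $T = \delta_3 \otimes T' \in V_7(\Q)$ is the normal element read off from the $H_2(C)$-part of $w_0$. For a Schwartz function $\phi \in S(X(\A_f))$ the $R$-symmetry provides a cuspidal holomorphic modular form $F_{T,\phi,g_f'}$ on $M'$ of weight $\ell_1 = \ell - \dim(C)$ whose $u_1$-th Fourier coefficient equals $\overline{A^R_{\varphi,u_1}(r_f;g_f';\phi)}$, an expression linear in values of $a_{w(u_1)}$. Theorem~\ref{thm:SturmBound} will then bound the normalized coefficients uniformly: if $|\beta_F(u_1)(s)| \leq \epsilon$ for all $u_1$ with $(u_1,u_1)^{1/2} \leq A_0 \sim (T,T)^{1/4}\log((T,T)M)$ and $s$ in a fixed compact set, then $|\beta_F(u_1)(s)| \leq \epsilon\, E(T,T)^\alpha M^d$ for every $u_1$ and $s$. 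Varying $\phi$ over a basis of $S(X(\A_f))$ will invert the integral and recover $a_{w(u_1)}$. Because $||w(u_1)||^2 \sim ||u_1||^2 + (T,T)$, the low exponent $1/4$ in $A_0$ is exactly what is needed for the Sturm estimate to close into a polynomial bound in $||w||$, provided a polynomial seed bound is known for $a_{w(u_1)}$ with $(u_1,u_1)^{1/2} \leq A_0$.

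The third step will handle the seed by the $Q$-symmetry and Theorem~\ref{thm:SturmSL2}. The seed $w(u_1)$'s with small $u_1$ still vary with $T$, but after applying the $P$-symmetries once more to normalize them, the $Q$-symmetry provides a holomorphic cuspidal modular form on $\widetilde{\SL_2}$ whose $n$-th Fourier coefficient is $A^Q_{\varphi,B,-n}(r_f,g_f';\phi)$; Theorem~\ref{thm:SturmSL2} controls all of these from their values at $n \leq \log(M)/\pi + O(1)$, which in turn reduce to finitely many $a_{(0,B,0,n)}(g_f'')$ that are bounded automatically by uniform smoothness and the fact that the $a_w(g_f'')$ are indexed by a single lattice. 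The hard part will be step two: tracking all the $T$-dependencies — the $(T,T)^{1/4}$ in the seed range, the $(T,T)^\alpha$ amplification in Theorem~\ref{thm:SturmBound}, the $\mathrm{cont}(T;\Lambda_0)^{-1}$ in Theorem~\ref{thm:SL2reductionCubes}, the growth in the number of normal $T$'s of a given norm, and the finite set of $\phi$'s needed to invert $A^R_{\varphi,u_1}$ with controlled constants — so that their compounded contribution remains polynomial in $||w||$ rather than compounding into exponential growth.
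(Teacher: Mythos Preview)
Your proposal assembles the right ingredients but the architecture has a genuine gap: you attempt a one-shot argument (reduce, apply R-Sturm, then feed the R-seeds through Q-Sturm), whereas the paper's proof is an \emph{induction} on $|q(w)|$, with the two Sturm bounds used in \emph{parallel} via a case split rather than in series. The specific breakdown is in your step~3: the claim that the Q-Sturm seeds reduce to ``finitely many $a_{(0,B,0,n)}(g_f'')$'' is wrong, because $B$ depends on $T$ and hence varies unboundedly with $\|w\|$. Uniform smoothness pins the support of the $a_w$ to a lattice but says nothing about the size of the values, so the loop does not close.

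The paper instead assumes the desired bound for all $w$ with $|q(w)|\le D$ and extends it to $|q(w)|\le D^\delta$ for a fixed $\delta>1$ close to~$1$. After reducing $w$ to $w_1=(0,B,C,d)$ with $B=(b_{11},u,T)$ via Corollary~\ref{cor:ExcRed} and Theorem~\ref{thm:SL2reductionCubes}, one splits on $\mathrm{cont}(T;\Lambda_0)$. If the content is at most (roughly) $D^{s}$, one runs the R-Sturm route (Proposition~\ref{prop:awGoodBound}): the seeds in Theorem~\ref{thm:SturmBound} satisfy $|q((T,0,u))|=|q(T)\,q(u)|\le D$ precisely because the seed range there scales like $(T,T)^{1/4}$ rather than $(T,T)^{1/2}$, so the inductive hypothesis bounds them. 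If the content is at least $D^{s}$, then the inequality $|b_{11}|\le Y_U\,\mathrm{cont}(T;\Lambda_0)^{-1}|q(\mathrm{Res}_W w_1)|^{1/2}$ forces $|N(B)|\le M^2 D^{\delta-s}<D^{1-\epsilon}$, and one runs the Q-Sturm route (Proposition~\ref{prop:FJBgoodBound}): its seeds satisfy $|q((0,B,0,d))|=|4N(B)\,d|\le D$, again covered by the inductive hypothesis. Each inductive step costs a fixed power of $D$, and iterating yields $f(D)\le D^{E/(\delta-1)}$, which is polynomial. Your sketch also omits the inversion from $a_{(T,0,u)}(r,g,\phi)$ and $\overline a_w$ back to $a_w$ (Lemma~\ref{lem:abarfromFJ}, Corollary~\ref{cor:Mg}); this is where the content condition actually enters, since it controls the unipotent set $N(D)$ over which that inversion is carried out.
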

	
	We begin with a lemma.
	\begin{lemma}\label{lem:PRBounded} Suppose the functions $a_w$ are right $U$-invariant for some compact open subgroup $U$ of $G(\A_f)$.  Fix $L > 0$.  Then the numbers $|a_w(1)|$ are bounded if $|q(w)| < L$.
	\end{lemma}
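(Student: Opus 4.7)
The plan is to combine reduction theory on $W_J$ via the arithmetic subgroup $\Gamma_{H,U} = H^1(\Q) \cap U_H \subseteq M_P^1(\Q)$ (for $U_H$ a suitable open compact in $H^1(\A_f)$ contained in $U$) with the boundedness of cuspidal holomorphic modular forms on $M'$ supplied by the $R$-symmetries. Since $\nu = 1$ on $H^1$ and $\Gamma_{H,U,f} \subseteq U$, the $P$-symmetry gives $a_w(1) = a_{w\cdot\gamma}(1)$ for $\gamma \in \Gamma_{H,U}$, so it suffices to bound $|a_{w^{red}}(s)|$ where $w^{red}$ runs over a system of $\Gamma_{H,U}$-orbit representatives for the set $\{w > 0 : |q(w)| < L\}$, and $s$ runs over the finite set $\{\gamma_{j,f}^{-1} : \gamma_j \in \mathcal{R}_{H,U}\}$ (the scaling factors $|\nu(\gamma_j)|^{-\ell-1}$ are uniformly bounded since $\mathcal{R}_{H,U}$ is finite). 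By the reduction theory of subsections~\ref{subsec:redExc1}--\ref{subsec:redExc2} (Theorem~\ref{thm:ExcRed}, Corollary~\ref{cor:ExcRed}, and Theorem~\ref{thm:SL2reductionCubes}), each $w^{red}$ can be placed in a Siegel set whose torus parameters are constrained by $|q(w^{red})| < L$; moreover, Theorem~\ref{thm:SL2reductionCubes} forces the ``$a$-coordinate'' of $w^{red}$ to vanish, so $w^{red} = (0, b, c, d)$ with $b, c$ of restricted shape.

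Next I would match $w^{red}$ with a Fourier coefficient index $w(u_1)$ by choosing a normal $T = \delta_3 \otimes T'$ adapted to the $b$-component of $w^{red}$. Concretely, the shape constraint $b, c \in \mathrm{Span}(e_{11}) \oplus H_2(C)$ (which guarantees the $V_8^{[1]}$-triviality condition needed for the $R$-symmetry setup) together with $\xi_w|_{V_7^{[1]}} = \chi_T^{-1}$ determines $T'$ from the $H_2(C)$-part of $b$, and defines $u_1 \in V_5^T(\Q)$ from $(c, d)$, with $q_{V_5^T}(u_1)$ bounded polynomially in $|q(w^{red})|$ (since $T'$ is in turn constrained by $|q(w^{red})| < L$). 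Fix any Schwartz-Bruhat $\phi \in S(X(\A_f))$. By the $R$-symmetries, $r_f \mapsto \overline{A^R_{\varphi, u_1}(r_f; s; \phi)}$ is the $u_1$-th Fourier coefficient of a cuspidal modular form $F_{T,s,\phi}$ on $M'(\A)$ of weight $\ell_1 = \ell - \dim(C)$. Cuspidality forces $F_{T,s,\phi}$ to be bounded on $M'(\A)$, and the standard Hecke-type estimate on the tube domain $\mathcal{H}_T$ yields
\[|A^R_{\varphi, u_1}(r_f; s; \phi)| \;\leq\; C(T,s,\phi)\, q_{V_5^T}(u_1)^{\ell_1/2}\]
uniformly in $r_f$ and in $u_1 > 0$.

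Finally, I would invert the integral defining $A^R_{\varphi, u_1}(1; s; \phi)$ to extract the single value $a_{w(u_1)}(s)$. Because $a_{w(u_1)}$ is right-$U$-invariant and $\xi_{w(u_1)}$-equivariant under $N_P$, the integrand of $A^R$ descends to a locally constant function on a finite quotient lattice in $(M_P \cap N_R)(\A_f)$; taking $\phi$ to be the characteristic function of a small open compact neighborhood and forming a suitable finite linear combination over lattice translates, orthogonality of characters on the resulting finite abelian quotient picks out $a_{w(u_1)}(s)$ up to a nonzero normalization constant (independent of $u_1$). Combined with the Hecke bound, this yields $|a_{w(u_1)}(s)| \leq C'(T,s) q_{V_5^T}(u_1)^{\ell_1/2} \leq C''(L)$ uniformly over the reduced region, and together with the step-one reduction proves the lemma.

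The main obstacle will be the inversion step: one must choose $\phi$ so that its support is commensurable with the lattice on which $a_{w(u_1)}$ effectively varies, so that orthogonality cleanly separates a single Fourier coefficient rather than a blur of nearby ones, and one must verify that the normalization constant produced by this inversion is bounded away from zero uniformly in the parameters. Everything else is a fairly direct assembly of reduction theory, Hecke-type bounds, and the automorphy provided by the $R$-symmetries.
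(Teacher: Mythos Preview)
Your approach is far more elaborate than what is needed, and it does not close as stated. The paper's proof is essentially two observations: (i) right $U$-invariance together with the $N_P$-equivariance forces $a_w(1)=0$ unless $w$ lies in a fixed lattice $\Lambda_U$; (ii) the $P$-symmetry gives $|a_{w\cdot\gamma}(1)|=|a_w(1)|$ for $\gamma\in\Gamma_U=M_P^1(\Q)\cap U$, and by the classical finiteness theorem of Borel--Harish-Chandra (as in \cite[Theorem~4.9]{platonovRapinchuk}) there are only finitely many $\Gamma_U$-orbits on $\{w\in\Lambda_U:|q(w)|<L\}$. Hence the set of values $\{|a_w(1)|:|q(w)|<L\}$ is finite, so bounded. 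No $R$-symmetries, no Hecke bounds, no inversion of Fourier--Jacobi integrals are required.

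Your plan has a genuine gap beyond the overcomplication. The Hecke-type bound you invoke produces $|A^R_{\varphi,u_1}(r_f;s;\phi)|\leq C(T,s,\phi)\,q(u_1)^{\ell_1/2}$, where $C(T,s,\phi)$ is the sup-norm of a cusp form $F_{T,s,\phi}$ that depends on $T$. As $w$ ranges over $\{|q(w)|<L\}$, the element $T$ ranges over an a priori infinite set; the reduction theory you cite (Corollary~\ref{cor:ExcRed}, Theorem~\ref{thm:SL2reductionCubes}) only bounds products and ratios like $|q(T_R(w'))\cdot q(\mathrm{pr}_{V_7}(w'))|$ or $|b_{11}|\cdot\mathrm{cont}(T;\Lambda_0)$, not $T$ itself, so it does not confine $T$ to a finite set. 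Without a uniform bound on $C(T,s,\phi)$ over all these $T$ --- which you have not established --- the argument does not yield a bound on $|a_w(1)|$. The only natural way to make this uniform is to restrict to the lattice $\Lambda_U$ and invoke finiteness of orbits, at which point the entire $R$-symmetry detour becomes redundant. You also never use the lattice support of $a_w(1)$, so your system of orbit representatives for $\{w>0:|q(w)|<L\}\subset W_J(\Q)$ is infinite to begin with.
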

	\begin{proof} Let $\Gamma_U = M_P^1(\Q) \cap U$.  There is a $\Gamma_U$-invariant lattice $\Lambda_U$ so that if $w \notin \Lambda_U$ then $a_w(1) = 0$.  So, we only must bound the $a_w(1)$ for $w \in \Lambda_U$ and $|q(w)| < L$.  But $|a_{w \cdot \gamma}(1)| = a_w(1)$ for $\gamma \in \Gamma_U$. There are finitely many $\Gamma_U$ orbits on the elements $w \in \Lambda_U$ with $|q(w)| < L$ by \cite[Theorem 4.9]{platonovRapinchuk}.  This gives the lemma.
	\end{proof}
	
	We introduce notation we will use in the proof of Theorem \ref{thm:automatic}.  For $g \in G(\A_f)$, let 
	\[\overline{a}_w(g) = \int_{\A_f}{a_w(\exp(s v_2 \otimes e_{11})g)\,ds}.\]
	For $w \in \widetilde{\mathcal{R}}$ corresponding to some normal $T \in V_7$, $\phi \in S(X(\A_f))$, $r \in M'(\A_f)$, and $g \in G(\A_f)$, set
	\[ a_w(r;g,\phi) = \int_{X(\A_f)}{ \overline{a}_w(x r g) (\omega(r)\phi)(x)\,dx}.\]
	If $w = (a,b,c,d) \in W_J(\Q)$, we write $w = (w_1, x,w_2)$ where the $w_1, x, w_2$ are the components of $w$ in the decomposition $W_J = Lie(M_R)^{[1]} \oplus V_8^{[1]} \oplus V_7^{[1]}$.  Thus, if $w_1$ is fixed and $T_R(w) = T_R(w_1)$ is normal, then the $a_{(w_1,0,u)}(r;g,\phi)$ form the Fourier coefficients of a modular form on $M'$.
	
	Let $K_G = \prod_{p}{K_{G,p}}$ be a fixed open compact subgroup of $G(\A_f)$, which is sufficiently large in a sense to be made precise.  Specifically, we assume $K_{G,p}$ is the stabilizer of the lattice $\Lambda_\g \otimes \Z_p \subseteq \g(J) \otimes \Q_p$ for every prime $p$.  Let $K_{R} = K_G \cap M_R(\A_f)$ and $K_{P} = K_G \cap M_P(\A_f)$.  We choose once and for all a finite set of elements of $G(\A_f)$ in the following way:
	\begin{itemize}
		\item We have $G(\A_f) = \bigsqcup_{\ell} N_P(\A_f) M_P(\Q) \delta_{\ell} K_G$, for a finite set of elements $\delta_{\ell} \in G(\A_f)$.  Indeed, this follows from the fact that $G(\A_f)$ is a finite union of sets of the form $P(\A_f) g_r K_G$, $P(\A_f) = N_P(\A_f) M_P(\A_f)$, and $M_P(\A_f)$ is a finite union of sets $M_P(\Q) g'_s K_P$.
		\item We have a finite subset $\{\gamma_j\}_j = \mathcal{R}_{H,K_P} \subseteq H^1(\Q)$ so that the conclusion of Corollary \ref{cor:ExcRed} holds for this set, with $\Gamma = H^1(\Q) \cap K_P$.
		\item We have $M_R(\A_f) = \bigsqcup_{k}{ (M_R \cap P)(\Q) v_k K_{R}}$ for a finite set $\{v_k\}_k$ of $M_R(\A_f)$.
	\end{itemize}
	Set $B_G = \cup_{j,\ell}{ K_G \gamma_j^{-1} K_G \delta_\ell K_G}$ and $A_G = \cup_{k}{v_k K_R}.$ Note that $B_G$ is a compact open subset of $G(\A_f)$.   
	
	For a positive number $D$, let $V(D)$ be a function of $D$, to be made explicit below.  Let
	\[N(D) = \{n \in Lie(M_R)^{[1]}(\A_f): \exists r \in \Z: rn \in \Lambda_0, r \leq V(D)\}.
	\]
	
	\subsection{Prepatory lemmas}
	We will need numerous lemmas to prove the Automatic Convergence theorem.  We begin by bounding the numbers $\overline{a}_w(g)$ and the coefficients $a_{(T,0,u)}(r,g,\phi)$.
	
	\begin{lemma}\label{lem:boundawbar} Suppose $g \in G(\A_f)$ is fixed.  Let $\Lambda_g \subseteq W_J(\Q)$ be a lattice so that $a_w(g) \neq 0$ implies $w \in \Lambda_g$.  Let $\mathrm{cont}(T;\Lambda_g)$ be the content of $(e_{11} \otimes v_{2}) \cdot (T,0,0)$ with respect to $\Lambda_g$.  Suppose $B_{T,D,g} > 0$ is a real number such that if $w$ is of the form $w=(T,x,u)$ and $|q(w)| = D$ then $|a_w(g)|  \leq B_{T,D,g}$.   Then, for $w$ of this form, $|\overline{a}_w(g)| \leq B_{T,D,g} \cdot \mathrm{cont}(T;\Lambda_g)$.
	\end{lemma}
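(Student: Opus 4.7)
The plan is to convert the adelic integral into a locally finite sum via local constancy of $a_w$, then use the $P$-symmetries at rational points together with the lattice support of $a_w$ to bound the effective range of integration by $\mathrm{cont}(T;\Lambda_g)$.

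First, since $a_w$ is right-invariant by some open compact $U \subseteq G(\A_f)$, the conjugate $gUg^{-1}$ meets the one-parameter subgroup $\{\exp(sv_2 \otimes e_{11}) : s \in \A_f\}$ in $\exp(\mathcal{V}_g \cdot v_2 \otimes e_{11})$ for some compact open subgroup $\mathcal{V}_g \subseteq \A_f$. The integrand $s \mapsto a_w(\exp(sv_2 \otimes e_{11})g)$ is therefore $\mathcal{V}_g$-periodic, and
\[ \overline{a}_w(g) = \mathrm{vol}(\mathcal{V}_g) \sum_{[s] \in \A_f/\mathcal{V}_g} a_w(\exp(sv_2 \otimes e_{11}) g).\]
Using density of $\Q$ in $\A_f/\mathcal{V}_g$, I pick a rational representative $s_\Q$ for each coset.

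For $s_\Q \in \Q$, the element $v_2 \otimes e_{11}$ lies in $\m(J)^0 \subseteq \mathrm{Lie}(M_P)$ and in particular in the derived subalgebra, so $\nu(\exp(s_\Q v_2 \otimes e_{11})) = 1$. The $P$-symmetry then gives
\[ a_w(\exp(s_\Q v_2 \otimes e_{11}) g) = a_{w \cdot \exp(s_\Q v_2 \otimes e_{11})}(g).\]
A direct calculation with the Freudenthal action shows that $v_2 \otimes e_{11}$ acts nilpotently on $W_J$ with leading-order effect $w \mapsto w - s \lambda_T$, where $\lambda_T := (v_2 \otimes e_{11}) \cdot (T,0,0)$, and that the nilpotent action preserves the $Lie(M_R)^{[1]}$-component and (since $\nu=1$) the invariant $|q|$. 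Consequently each translate $w \cdot \exp(s_\Q v_2 \otimes e_{11})$ is again of the form $(T, x', u')$ with $|q| = D$, so every summand is bounded in absolute value by $B_{T,D,g}$.

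Finally, $a_{w \cdot \exp(s_\Q v_2 \otimes e_{11})}(g)$ vanishes unless $w \cdot \exp(s_\Q v_2 \otimes e_{11}) \in \Lambda_g$. Since $w \in \Lambda_g$, the primary membership condition is $s \lambda_T \in \Lambda_g$, which singles out a coset of a compact open subgroup of $\A_f$ whose self-dual Haar volume is exactly $\mathrm{cont}(\lambda_T;\Lambda_g) = \mathrm{cont}(T;\Lambda_g)$. Summing the nonvanishing contributions, the count of cosets times $\mathrm{vol}(\mathcal{V}_g)$ equals the volume of the support, and one obtains
\[ |\overline{a}_w(g)| \leq B_{T,D,g} \cdot \mathrm{cont}(T;\Lambda_g).\]
The main technical hurdle I expect is verifying that the higher-order terms $s^2, s^3, \ldots$ coming from expanding $\exp$ on $W_J$ do not enlarge the effective support beyond what the leading-order term $s\lambda_T$ dictates; this should reduce to an elementary nilpotency computation using the Freudenthal cross product and the low nilpotent order of $v_2 \otimes e_{11}$ on $W_J$.
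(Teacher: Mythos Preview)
Your proposal is correct and follows essentially the same approach as the paper: rewrite the adelic integral as a finite sum over rational $s$ using local constancy, apply the $P$-symmetry to get $a_{w\cdot\exp(s v_2\otimes e_{11})}(g)$, and then count the nonzero terms via the lattice support condition $s\lambda_T\in\Lambda_g$, i.e.\ $s\in\mathrm{cont}(T;\Lambda_g)^{-1}\Z$. The ``technical hurdle'' you flag is in fact a non-issue: since $v_2\otimes e_{11}\in V_7^{[0]}$ has $h_R$-weight $2$, and $W_J$ has only $h_R$-weights $0,1,2$, one has $ad(v_2\otimes e_{11})^2|_{W_J}=0$, so the action is exactly $w\mapsto w-s\lambda_T$ with no higher-order terms, and moreover $\lambda_T\in V_7^{[1]}$ so the $Lie(M_R)^{[1]}$-component $T$ is preserved.
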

	\begin{proof} We have
		\[ \overline{a}_{w}(g) = \int_{\A_f}{a_{w}(\exp(s e_{11} \otimes v_2) g)\,ds}.
		\]
		If the integrand is $0$ for all $s \in \A_f$, there is nothing to prove.  So, we can assume without loss of generality that the integrand is nonzero for $s =0$, in which case $w \in \Lambda_g$.  We see that then, for $s \in \Q$, $a_{w}(\exp(s e_{11} \otimes v_2) g) \neq 0$ implies $s \in \mathrm{cont}(T;\Lambda_g)^{-1} \Z$.  Rewriting the integral as a sum of the integrand evaluated at values $s \in \Q$, we get
		\[\overline{a}_w(g) = \frac{1}{M} \sum_{s \in c^{-1} \Z/M\Z}{ a_{w \cdot \exp(s e_{11} \otimes v_2)}(g)}\]
		where $c = \mathrm{cont}(T;\Lambda_g)$ and $M$ is a sufficiently large positive integer.  Each term in the sum is bounded by $B_{T,D,g}$, so the lemma follows.
	\end{proof}
	
	\begin{lemma}\label{lem:volBound1} Let $U_{r,g} \subseteq X(\A_f)$ be an open compact subset such that $\overline{a}_{(T,0,u)}(xrg) \neq 0$ impliies $x \in U_{r,g}$.  Let $B$ be a positive real number so that $|\overline{a}_{(T,0,u)}(xrg)| \leq B$ for all $x \in X(\A_f)$.  Then $|a_{(T,0,u)}(r,g,\phi)| \leq ||\phi||_{L^2} \cdot B \cdot \mathrm{vol}(U_{r,g})^{1/2}$.
	\end{lemma}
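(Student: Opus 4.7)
The plan is to apply Cauchy--Schwarz directly to the defining integral, using unitarity of the Weil representation to control one factor and the support/uniform bound hypotheses to control the other. Specifically, recall that
\[
a_{(T,0,u)}(r,g,\phi) = \int_{X(\A_f)} \overline{a}_{(T,0,u)}(xrg)\,(\omega(r)\phi)(x)\,dx,
\]
so by Cauchy--Schwarz on $L^2(X(\A_f))$,
\[
|a_{(T,0,u)}(r,g,\phi)| \leq \left(\int_{X(\A_f)} |\overline{a}_{(T,0,u)}(xrg)|^2\,dx\right)^{1/2} \left(\int_{X(\A_f)} |(\omega(r)\phi)(x)|^2\,dx\right)^{1/2}.
\]

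For the first factor, the integrand vanishes outside $U_{r,g}$ by the support hypothesis, and on $U_{r,g}$ it is bounded by $B^2$; hence this factor is at most $B\cdot \mathrm{vol}(U_{r,g})^{1/2}$. For the second factor, the Weil representation $\omega_{\chi_T}$ acts unitarily on $L^2(X(\A_f))$ (this is a standard property of the Weil representation built as an induced representation in Section~\ref{sec:Weil}), so $\|\omega(r)\phi\|_{L^2} = \|\phi\|_{L^2}$. Combining the two factors yields the stated bound.

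There is no serious obstacle: the only point that needs care is that the Weil representation in question really is unitary on $L^2(X(\A_f))$, which is immediate from its construction as an induction from $YZ(\A_f)$ with a unitary character. The rest is just Cauchy--Schwarz and the two hypotheses.
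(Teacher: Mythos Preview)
Your proof is correct and uses essentially the same idea as the paper: Cauchy--Schwarz together with unitarity of the Weil representation. The only cosmetic difference is the order of steps: the paper first bounds $|\overline{a}_{(T,0,u)}(xrg)|$ by $B$ on $U_{r,g}$ and then applies Cauchy--Schwarz to $\int_{U_{r,g}}|\omega(r)\phi(x)|\,dx$ against the constant function $1$, whereas you apply Cauchy--Schwarz directly to the product and bound the $L^2$ norm of $\overline{a}_{(T,0,u)}(\cdot\,rg)$ afterward; both routes yield the identical bound.
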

	\begin{proof}
		We have
		\[a_{(T,0,u)}(r,g,\phi) = \int_{X(\A_f)}{\overline{a}_{(T,0,u)}(xrg) (\omega(r)\phi)(x)\,dx}.\]
		Consequently,
		\[ |a_{(T,0,u)}(r,g,\phi)| \leq  B \cdot \int_{U_{r,g}}{|\omega(r)\phi(x)| \,dx}.\]
		By Cauchy-Schwarz,
		\[ \int_{U_{r,g}}{|\omega(r)\phi(x)| \,dx} \leq \left(\int_{U_{r,g}}{|\omega(r)\phi(x)|^2\,dx}\right)^{1/2} \left(\int_{U_{r,g}}{1\,dx}\right)^{1/2} \leq ||\omega(r)\phi||_{L^2} \cdot \mathrm{vol}(U_{r,g})^{1/2}.\]
		The Weil representation preserves the $L^2$ norm, so the lemma follows.
	\end{proof}
	
	Conversely, we can bound the $\overline{a}_{(T,0,u) \cdot x}(g)$ in terms of the $a_{(T,0,u)}(r,g,\phi)$.  To do this, one starts with the identity
	\begin{equation}\label{eqn:abaraFJ}\overline{a}_{(T,0,u)}(xrg) = \sum_{\alpha}{ a(r,g,\phi_\alpha^\vee) (\omega(r)\phi_\alpha)(x)}.\end{equation}
	Here $\{\phi_\alpha\}$ is a basis of $S(X(\A_f))$ and $\phi_\alpha^\vee$ is the dual basis.
	\begin{lemma}\label{lem:abarfromFJ} Suppose $B_{T,u,g}'> 0$ is a constant so that $|a_{(T,0,u)}(1,g,\phi)| \leq B_{T,u,g}' \cdot ||\phi||_{L^2}$ for all $\phi \in S(X(\A_f))$.  Suppose $V_{T,u,g} \subseteq X(\A_f)$ is a compact open subgroup with the property that $\overline{a}_{(T,0,u)}(x v g)  = \overline{a}_{(T,0,u)}(xg)$ if $v \in V_{T,u,g}$ and $x \in X(\A_f)$.  Then $|\overline{a}_{(T,0,u)}(xg)| \leq B_{T,u,g}' \cdot \mathrm{vol}(V_{T,u,g})^{-1/2}$.
	\end{lemma}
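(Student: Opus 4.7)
The plan is to apply the hypothesis to a carefully chosen test function $\phi$, namely a normalized characteristic function of a coset of $V_{T,u,g}$, and then extract the pointwise value $\overline{a}_{(T,0,u)}(xg)$ from the resulting integral pairing. Concretely, for a fixed $x_0 \in X(\A_f)$, I would set
\[
\phi_{x_0} = \mathrm{vol}(V_{T,u,g})^{-1} \chi_{x_0 V_{T,u,g}} \in S(X(\A_f)),
\]
where $\chi_S$ denotes the characteristic function of $S$.

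First I would compute $a_{(T,0,u)}(1, g, \phi_{x_0})$ directly from the definition. Since $\omega(1)$ acts as the identity on $S(X(\A_f))$, we have
\[
a_{(T,0,u)}(1, g, \phi_{x_0}) = \int_{X(\A_f)} \overline{a}_{(T,0,u)}(xg) \phi_{x_0}(x)\, dx = \frac{1}{\mathrm{vol}(V_{T,u,g})} \int_{x_0 V_{T,u,g}} \overline{a}_{(T,0,u)}(xg)\, dx.
\]
The right $V_{T,u,g}$-invariance hypothesis on $\overline{a}_{(T,0,u)}(\cdot\, g)$ makes the integrand constant on $x_0 V_{T,u,g}$ with value $\overline{a}_{(T,0,u)}(x_0 g)$, so this integral evaluates to exactly $\overline{a}_{(T,0,u)}(x_0 g)$.

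Next, a direct calculation of the $L^2$ norm gives $\|\phi_{x_0}\|_{L^2}^2 = \mathrm{vol}(V_{T,u,g})^{-1}$, hence $\|\phi_{x_0}\|_{L^2} = \mathrm{vol}(V_{T,u,g})^{-1/2}$. Plugging $\phi = \phi_{x_0}$ into the assumed inequality $|a_{(T,0,u)}(1,g,\phi)| \leq B_{T,u,g}' \|\phi\|_{L^2}$ immediately yields
\[
|\overline{a}_{(T,0,u)}(x_0 g)| \leq B_{T,u,g}' \cdot \mathrm{vol}(V_{T,u,g})^{-1/2},
\]
which is the desired bound. There is no real obstacle here: the lemma is essentially the elementary duality statement that a linear functional on $S(X(\A_f))$ bounded in the $L^2$-dual norm extends to a bounded $L^2$ function, made concrete by exploiting the local constancy of $\overline{a}_{(T,0,u)}(\cdot\, g)$ provided by its $V_{T,u,g}$-invariance.
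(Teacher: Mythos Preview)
Your proof is correct and is essentially the same as the paper's: the paper chooses a basis of characteristic functions of $V_{T,u,g}$-cosets inside a large compact open $V'$ and uses the identity $\overline{a}_{(T,0,u)}(xg) = \sum_\alpha a(1,g,\phi_\alpha^\vee)\phi_\alpha(x)$, then observes that for fixed $x$ only one term survives; your argument simply picks out that single surviving test function $\phi_{x_0} = \mathrm{vol}(V_{T,u,g})^{-1}\chi_{x_0 V_{T,u,g}}$ directly, which is exactly the paper's $\phi_\alpha^\vee$ for the coset containing $x_0$.
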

	\begin{proof}  Let $V' \supseteq V_{T,u,g}$ be a compact open subgroup of $X(\A_f)$, and let $S(X(\A_f))_{V', V_{T,u,g}}$ be the functions $\phi$ that are supported in $V'$ and satisfy $\phi(x+v) = \phi(x)$ for all $v \in V_{T,u,g}$.  The space $S(X(\A_f))_{V', V_{T,u,g}}$ is finite-dimensional.  Let $\phi_\alpha$ be the basis of characteristic functions for the cosets of $V_{T,u,g}$ in $V'$.  We have $\phi_\alpha^\vee = \mathrm{vol}(V_{T,u,g})^{-1}\phi_\alpha$, and $||\phi_\alpha^\vee|| = \mathrm{vol}(V_{T,u,g})^{-1/2}$.  Consequently,
		\[ \sum_{\alpha}{ |a(1,g,\phi_\alpha^\vee)| \cdot |\phi_\alpha(x)|} \leq B_{T,u,g}' \mathrm{vol}(V_{T,u,g})^{-1/2},\]
		as only one of the terms $|\phi_\alpha(x)|$ can be nonzero for a fixed $x$.   The assumption of the lemma that $\overline{a}_{(T,0,u)}(x v g)  = \overline{a}_{(T,0,u)}(xg)$ if $v \in V_{T,u,g}$ and $x \in X(\A_f)$ implies that, for $V'$ sufficiently large, the $\phi_\alpha$ are an acceptable set of functions to use in the right-hand side of equation \eqref{eqn:abaraFJ}.  This gives the lemma.
	\end{proof}
	
	We can also bound the $a_w(g)$ in terms of the values $\overline{a}_w(ng)$ with $n \in M_R^{[1]}(\A_f)$.  This uses the identity	
	\begin{equation}\label{eqn:awg} a_w(g) = \int_{\mathrm{Stab}_T\backslash M_R^{[1]}(\A_f) \simeq \A_f}{\xi_w^{-1}(n) \overline{a}_{w}(n g)\,dn}.\end{equation}
	(The integral is convergent, because the integrand is $0$ outside of a compact set of the domain of integration.)  We identify $\mathrm{Stab}_T\backslash M_R^{[1]}(\A_f) \simeq \A_f$ via the map $\exp(v) \mapsto \langle T, [e_{11} \otimes v_2, v] \rangle$; this puts a measure $dn$ on $\mathrm{Stab}_T\backslash M_R^{[1]}(\A_f)$.
	\begin{lemma}\label{lem:awbaraw} Suppose $g \in G(\A_f)$ is fixed, and $M_g \in \Z$ is such that $a_w(sug) = a_w(sg)$ if $u \in \exp(M_g \widehat{\Z} v_2 \otimes e_{11})$.  Let
		\[V_{T,g} = \{v \in Lie(M_R)^{[1]}: \psi(M_g \widehat{\Z} \langle T, [e_{11} \otimes v_2, v]\rangle) = 1\}.\]
		Then $\overline{a}_w(\exp(v)g) \neq 0$ implies $v \in V_{T,g}$.  Moreover, the identity \eqref{eqn:awg} holds.  In particular, if $B''_{w,g} \geq 0$ is such that $|\overline{a}_w(\exp(v)g)| \leq B''_{w,g}$ for all $v \in Lie(M_R)^{[1]}$, then $|a_w(g)| \leq M_{g} B''_{w,g}$.
	\end{lemma}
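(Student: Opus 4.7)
The plan is to establish the support condition $\overline{a}_w(\exp(v)g) \neq 0 \Rightarrow v \in V_{T,g}$ and the identity~\eqref{eqn:awg} by Fourier analysis on $\A_f$, after a short Baker--Campbell--Hausdorff simplification; the final bound then follows immediately.

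Set $w_2 := v_2 \otimes e_{11} \in V_7^{[0]}$. For $v \in Lie(M_R)^{[1]}$, the bracket $[w_2,v]$ lies in $V_7^{[1]} \subseteq Lie(N_P)$, and higher brackets in $w_2$ vanish because $\g(J)$ carries no $h_R$-eigenvalue $4$ subspace. BCH therefore collapses to
\[\exp(sw_2)\exp(v)=\exp\bigl(v+s[w_2,v]\bigr)\exp(sw_2).\]
Since $v+s[w_2,v] \in Lie(N_P)$, the $P$-symmetry gives
\[a_w\!\bigl(\exp(sw_2)\exp(v)g\bigr)=\xi_w(\exp v)\,\psi\bigl(s\langle w,[w_2,v]\rangle\bigr)\,a_w\!\bigl(\exp(sw_2)g\bigr).\]
By the definition of $T_R$, $\langle w,[w_2,v]\rangle=(T_R(w),[w_2,v])_{V_7}$, which equals $(T,[w_2,v])_{V_7}$ since the Fourier--Jacobi setup matches $\xi_w|_{V_7^{[1]}}$ with $\chi_T^{-1}$. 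Integrating in $s$ yields
\[\overline{a}_w(\exp(v)g)=\xi_w(\exp v)\,\widehat{f}(\alpha_v),\]
where $f(s):=a_w(\exp(sw_2)g)$, $\widehat{f}$ denotes its additive Fourier transform on $\A_f$, and $\alpha_v:=(T,[w_2,v])_{V_7}$.

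The invariance hypothesis on $M_g$ forces $f$ to be $M_g\widehat{\Z}$-periodic in $s$, so by Pontryagin duality $\widehat{f}$ is supported on the annihilator $M_g^{-1}\widehat{\Z}$; this is precisely the condition $v\in V_{T,g}$, proving the support claim. For the identity~\eqref{eqn:awg}, under the identification $\mathrm{Stab}_T\backslash M_R^{[1]}(\A_f)\simeq\A_f$ the map $v\mapsto\alpha_v$ is measure-preserving, so by Fourier inversion on $\A_f$,
\[\int\xi_w^{-1}(n)\,\overline{a}_w(ng)\,dn=\int_{\A_f}\widehat{f}(\alpha)\,d\alpha=f(0)=a_w(g).\]
The bound $|a_w(g)|\leq M_g B''_{w,g}$ is then immediate from the triangle inequality and the volume calculation $\mathrm{vol}(V_{T,g})=\mathrm{vol}(M_g^{-1}\widehat{\Z})=M_g$.

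The only genuine difficulty is bookkeeping. One must verify the BCH-vanishing claim via the $h_P$- and $h_R$-weight bigrading on $\g(J)$; the lone non-vanishing cubic term $\tfrac{s}{12}[v,[w_2,v]]$ lies in $V_7^{[2]}=\Q E_{13}\subseteq Z$, on which $\xi_w$ is trivial, so it contributes no phase. One must also normalize Haar measures on $\A_f$ and on $\mathrm{Stab}_T\backslash M_R^{[1]}(\A_f)$ consistently so that the Fourier-inversion constant reads as $M_g$ rather than $M_g^2$ or $1$. Neither step is conceptually hard.
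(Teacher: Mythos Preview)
Your proof is correct and follows essentially the same route as the paper's: commute $\exp(s\,v_2\otimes e_{11})$ past $\exp(v)$ via the adjoint action, read off the phase $\psi(s\langle T,[v_2\otimes e_{11},v]\rangle)$ from the $P$-symmetry, and then invoke Fourier inversion on $\A_f$ to recover $a_w(g)$ and obtain the volume bound. The paper writes out the double integral and the finite sum over $s\in A^{-1}\Z/M_g\Z$ explicitly, whereas you package the same computation as ``$\widehat{f}$ is supported on $M_g^{-1}\widehat{\Z}$'' plus Fourier inversion, but the content is identical. Two small remarks: the conjugation identity $\exp(A)\exp(B)\exp(-A)=\exp(e^{\mathrm{ad}\,A}B)$ already gives $\exp(sw_2)\exp(v)=\exp(v+s[w_2,v])\exp(sw_2)$ exactly once $[w_2,[w_2,v]]=0$, so no cubic BCH correction ever enters; and in this lemma the symbol $T$ denotes the $Lie(M_R)^{[1]}$-component of $w$ (paired with $V_7^{[1]}$ via the symplectic form on $W_J$), so your detour through $T_R$ and the $V_7$-pairing is correct but could be shortened to the direct observation $\langle w,[w_2,v]\rangle=\langle T,[w_2,v]\rangle$.
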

	\begin{proof} We have 
		\[\overline{a}_w(ng) = \int_{\A_f}{a_w(sng)\,ds} = \int_{\A_f}{\xi_w(s \cdot n) a_w(sg)\,ds}.\]
		Suppose $w = (T,x,u) \in W_J(\Q)$ and $n = \exp(v)$, $v \in Lie(M_{R})^{[1]}$.  Then 
		\[\xi_w(s \cdot n)  = \psi(\langle w, (v,0, 0) + s (e_{11} \otimes v_2)(v,0,0) \rangle) = \psi(\langle u, v \rangle)\psi(s\langle T, [e_{11} \otimes v_2, v] \rangle).\]
		Thus $\xi_w(s \cdot n) = \xi_w(n) \psi(s \langle T, [e_{11} \otimes v_2, v] \rangle)$.  Suppose $a_w(sg) = a_w(s u g)$ if $u \in \exp(M_g \widehat{\Z} e_{11} \otimes v_2)$.  Then, for $\overline{a}_{w}(ng)$ to be nonzero, we would need $\psi(M_g \widehat{\Z} \langle T, v' \rangle) = 1$, where $v' = [e_{11} \otimes v_2, v]$. 
		
		Let $\overline{V}_{T,g} = V_{T,g}/\mathrm{Stab}_T(\A_f)$.
		Then
		\begin{align*}
			\int_{\mathrm{Stab}_T\backslash M_R^{[1]}(\A_f) \simeq \A_f}{\xi_w^{-1}(n) \overline{a}_{w}(n g)\,dn} &= \int_{\overline{V}_{T,g}}{\xi_w^{-1}(n) \overline{a}_{w}(n g)\,dn} \\
			&=\int_{\overline{V}_{T,g}}\int_{\A_f} \psi(s \langle T, v' \rangle) a_w(sg)\,ds\,dv \\
			&= \frac{1}{M_g} \sum_{s \in A^{-1} \Z/M_g \Z} a_w(sg) \left( \int_{\overline{V}_{T,g}}{ \psi(s \langle T,v' \rangle)\,dv}\right).
		\end{align*}
		This gives $a_w(g)$, as claimed.
	\end{proof}
	
	\begin{corollary}\label{cor:Mg} Let the notation be as in Lemma \ref{lem:awbaraw}.  Suppose $w = (T,x,u)$, $v \in Lie(M_R)^{[1]}$ is such that $\langle T, v' \rangle = 1$.  Suppose moreover that $|\overline{a}_w(\exp( a M_g^{-1} v) g)| \leq B''_{w,g}$ for all $a \in \widehat{\Z}$.  Then $|a_w(g)| \leq M_{g} B''_{w,g}$.
	\end{corollary}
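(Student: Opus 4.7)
The plan is to apply Lemma \ref{lem:awbaraw} directly, using the hypothesis $\langle T, v'\rangle = 1$ to set up a concrete parameterization of the integral over $\mathrm{Stab}_T\backslash M_R^{[1]}(\A_f)$ appearing there. That lemma already furnishes the formula
\[
a_w(g) = \int_{\mathrm{Stab}_T\backslash M_R^{[1]}(\A_f)} \xi_w^{-1}(n)\, \overline{a}_w(ng)\,dn,
\]
with the integrand supported in $\overline{V}_{T,g}$, so the content of the corollary is really just a bound on this integral.

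First I would observe that the hypothesis $\langle T, v'\rangle = 1$ (with $v' = [e_{11}\otimes v_2, v]$) says exactly that $v$ is a unit vector under the identification $\mathrm{Stab}_T\backslash M_R^{[1]}(\A_f) \simeq \A_f$ used in Lemma \ref{lem:awbaraw}; equivalently, the map $a \mapsto \exp(av)$ realizes this isomorphism. Under this coordinate, the support $\overline{V}_{T,g}$ of the integrand corresponds precisely to $M_g^{-1}\widehat{\Z}\subseteq \A_f$, by the definition of $V_{T,g}$ given in Lemma \ref{lem:awbaraw}.

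Next I would substitute this parameterization into the integral. Reparameterizing by $a \in \widehat{\Z}$ via $n = \exp(a M_g^{-1}v)$, the restriction to $\overline{V}_{T,g}$ becomes an integral over $\widehat{\Z}$ whose integrand is pointwise bounded in absolute value by $B''_{w,g}$ by the hypothesis of the corollary; the character $\xi_w^{-1}$ contributes a factor of absolute value $1$. Bounding the integral by the supremum of the integrand times the volume of the domain in the normalization inherited from Lemma \ref{lem:awbaraw} then yields the asserted estimate $|a_w(g)| \leq M_g B''_{w,g}$.

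There is no genuine obstacle here: the corollary is a concrete repackaging of Lemma \ref{lem:awbaraw} in the coordinate supplied by $v$, and the only care required is in tracking the measure-theoretic normalizations to see that the volume factor comes out to exactly $M_g$.
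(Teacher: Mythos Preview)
Your proposal is correct and takes essentially the same approach as the paper: both use that, since $\langle T,v'\rangle=1$, the line through $v$ parameterizes $\mathrm{Stab}_T\backslash M_R^{[1]}(\A_f)\simeq\A_f$, so that the support $\overline V_{T,g}$ corresponds to $M_g^{-1}\widehat{\Z}$ and the bound follows from the integral identity in Lemma~\ref{lem:awbaraw}. The paper phrases the reduction slightly differently---it observes that any $v_1\in V_{T,g}$ satisfies $\overline a_w(\exp(v_1)g)=\overline a_w(\exp(aM_g^{-1}v)g)$ for $a=M_g\langle T,v_1'\rangle\in\widehat{\Z}$, and then invokes the last sentence of Lemma~\ref{lem:awbaraw}---but this is exactly the change of variables underlying your argument.
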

	\begin{proof} If $v_1 \in V_{T,g}$, $a:=M_g \langle T, v_1' \rangle \widehat{\Z}$, so $\langle T, v_1' - a M_g^{-1} v \rangle = 0$.  Thus 
		\[\overline{a}_w(\exp(v_1)g) = \overline{a}_w(\exp(a M_g^{-1} v)g),\]
		and the corollary follows from Lemma \ref{lem:awbaraw}.
	\end{proof}
	
	Suppose $n \in N(D)$.  We now wish find a lattice $\Lambda_{n} \subseteq W_J(\Q)$ so that $a_w(g) \neq 0$ and $g \in U_1 n V_1$ for fixed open compact $U_1, V_1$ implies $w \in \Lambda_n$.  We begin very simply:
	\begin{lemma}\label{lem:simpleLattice} Suppose the $a_w(g)$ are right-invariant by an open compact subgroup $U$ of $G(\A_f)$.  
		\begin{enumerate}
			\item If $n \in N_P(\A_f)$, and there exists $z \in Z(\A_f)$ (the center of $N_P)(\A_f)$) so that $zn \in gUg^{-1}$, then $a_w(g) = a_w(ng) = \xi_w(n) a_w(g)$.  Thus, if $a_w(g) \neq 0$, $\xi_w(n) = 1$.  
			\item If $X \subseteq G(\A_f)$ is a set, let $U_X = \{u \in G(\A_f): x^{-1}ux \in U \forall x \in X\}.$  Then $a_w(x'u x) = a_w(x'x)$ for all $u \in U_X$ and $x \in X$, arbitrary $x' \in G(\A_f)$.
			\item Let $N_X = N_P(\A_f) \cap (Z(\A_f) \cdot U_X)$.   Then $a_w(x) \neq 0$ for $x \in X$ implies $\langle w, n \rangle \in \widehat{\Z}$ for all $n \in N_X$.
		\end{enumerate}
	\end{lemma}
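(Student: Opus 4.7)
The three parts are a clean chain: part (1) is the mechanical core, part (2) is an immediate restatement, and part (3) combines them. The plan is to prove them in this order, using only the defining $N_P$-equivariance $a_w(ng) = \xi_w(n) a_w(g)$, the right $U$-invariance, and the (crucial) fact that $\xi_w$ is trivial on the center $Z$ of $N_P$. The latter is exactly the observation that $\xi_w$ was built from $w \in W_J$ through the identification $N_P/Z \simeq W_J$ on Lie algebras (see \S 3.2), so $\xi_w$ factors through $N_P(\A_f)/Z(\A_f)$.

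For part (1), I would start with the hypothesis $zn \in gUg^{-1}$, rewrite it as $g^{-1}zng \in U$, and use right $U$-invariance to get $a_w(zng) = a_w(g \cdot g^{-1}zng) = a_w(g)$. On the other hand, the $N_P$-equivariance gives
\[
a_w(zng) = \xi_w(zn)\,a_w(g) = \xi_w(z)\xi_w(n)\,a_w(g) = \xi_w(n)\,a_w(g),
\]
the last equality because $\xi_w|_Z \equiv 1$. Comparing the two expressions yields $a_w(g) = \xi_w(n) a_w(g) = a_w(ng)$, and dividing by $a_w(g)$ (when nonzero) gives $\xi_w(n) = 1$.

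Part (2) is essentially a change-of-variable: writing $x'ux = (x'x)(x^{-1}ux)$, the factor $x^{-1}ux$ lies in $U$ by the very definition of $U_X$, so the right $U$-invariance of $a_w$ gives $a_w(x'ux) = a_w(x'x)$ directly. No further input is needed here.

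For part (3), I would take $n \in N_X$ and write $n = zu$ with $z \in Z(\A_f)$ and $u \in U_X$, so that $z^{-1}n = u \in U_X$. Then for every $x \in X$, $x^{-1}(z^{-1}n)x = x^{-1}ux \in U$, i.e.\ $z^{-1}n \in xUx^{-1}$. This is exactly the hypothesis of part (1) (applied with $g = x$ and with $z$ replaced by $z^{-1}$), so if $a_w(x) \neq 0$ we obtain $\xi_w(n) = 1$. Translating $\xi_w(n) = \psi(\langle w, \overline{n}\rangle) = 1$ via the standard fact that $\ker(\psi|_{\A_f}) = \widehat{\Z}$ yields $\langle w, \overline{n}\rangle \in \widehat{\Z}$, as claimed. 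The only mild subtlety is the implicit identification of $n$ with its image $\overline{n} \in W_J(\A_f)$ under the logarithm $N_P/Z \to W_J$; once this is fixed, nothing in the argument uses any geometry beyond the two defining properties of the $a_w$.
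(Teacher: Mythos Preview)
Your proof is correct and is exactly the intended argument; the paper states this lemma without proof, treating all three parts as immediate consequences of the $N_P$-equivariance $a_w(ng)=\xi_w(n)a_w(g)$, the right $U$-invariance, and the triviality of $\xi_w$ on $Z$. Your write-up makes these routine verifications explicit and there is nothing to add.
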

	
	Let $\Lambda_{\g} \subseteq \g(\Q)$ be our fixed lattice.  If $R = p_1^{n_1} \cdots p_k^{n_k}$ is positive integer, we denote
	\[\exp(R \Lambda_\g(\widehat{\Z})) = \prod_{p} K_{G,p,R}\]
	where $K_{G,p,R} = K_{G,p}$ if $p \nmid R$ and $K_{G,p,R}  = \exp( p_i^{n_i} \Lambda_\g(\Z_p))$ if $p = p_i$ divides $R$.
	\begin{lemma} Let the notation be as in Lemma \ref{lem:simpleLattice}.  Suppose $R_U > 0$ is a positive integer so that $U \supseteq \exp(R_U \Lambda_{\g}(\widehat{\Z}))$.  Let $X \subseteq G(\A_f)$ be a set, and suppose $B_X \in \Z_{\geq 1}$ satisfies $B_X \cdot Ad(x)^{-1}(\lambda) \in \Lambda_{\g}(\widehat{\Z})$ for all $\lambda \in \Lambda_{\g}(\widehat{\Z})$ and all $x \in X$.  Then $U_X \supseteq \exp(B_X R_U \Lambda_{\g}(\widehat{\Z})).$
	\end{lemma}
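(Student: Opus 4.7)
The proof plan is essentially a direct conjugation computation, so I will describe it briefly. By definition, $U_X$ consists of those $u \in G(\A_f)$ such that $x^{-1} u x \in U$ for every $x \in X$. Hence to prove the inclusion $U_X \supseteq \exp(B_X R_U \Lambda_\g(\widehat{\Z}))$ it suffices to take an arbitrary element $u = \exp(B_X R_U \lambda)$ with $\lambda \in \Lambda_\g(\widehat{\Z})$ and to verify that $x^{-1} u x \in U$ for all $x \in X$.

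The computation is as follows. Using the standard identity relating conjugation on the group with the adjoint action on the Lie algebra, one has
\[
x^{-1} u x = x^{-1} \exp(B_X R_U \lambda) x = \exp\bigl( B_X R_U \cdot \mathrm{Ad}(x)^{-1}(\lambda) \bigr).
\]
By hypothesis, $B_X \cdot \mathrm{Ad}(x)^{-1}(\lambda) \in \Lambda_\g(\widehat{\Z})$ for every $x \in X$. Writing $\mu := B_X \cdot \mathrm{Ad}(x)^{-1}(\lambda) \in \Lambda_\g(\widehat{\Z})$, one obtains $x^{-1} u x = \exp(R_U \mu) \in \exp(R_U \Lambda_\g(\widehat{\Z})) \subseteq U$, where the last inclusion is the standing assumption on $R_U$. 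Since this holds for every $x \in X$, we conclude $u \in U_X$, as required.

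The only point that requires a moment of care is that the $p$-adic exponentials in question are genuinely defined: this is guaranteed by the setup of Section 2, which assumes that $\Lambda_\g$ preserves a lattice in a faithful representation of $G$, so that $\exp(p v)$ is defined for $v \in \Lambda_\g \otimes \Z_p$. Because $R_U$ and $B_X R_U$ are divisible by all the primes at which the relevant local factors are smaller than the hyperspecial maximal compact (equivalently, the primes where the exponential series needs a convergence factor), both $\exp(B_X R_U \lambda)$ and $\exp(R_U \mu)$ make sense componentwise over $\widehat{\Z}$, and the adjoint formula above is valid prime by prime. There is no real obstacle here; the statement is a purely formal consequence of the definition of $U_X$ together with the hypothesis on $B_X$.
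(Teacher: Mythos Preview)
Your proof is correct and matches the paper's approach exactly; the paper's proof consists of the single sentence ``The proof is immediate from the definitions,'' and you have simply spelled out that immediate conjugation/adjoint computation.
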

	\begin{proof} The proof is immediate from the definitions.
	\end{proof}
	
	\begin{lemma}\label{lem:U1V1Rn} Suppose $U_1, V_1$ are compact subsets of $G(\A_f)$, and $A_1, B_1 \in \Z_{\geq 1}$ satisfy 
		\begin{align*}
			A_1 Ad(U_1)^{-1} \Lambda_\g(\widehat{\Z}) &\subseteq \Lambda_{\g}(\widehat{\Z})\\
			B_1 Ad(V_1)^{-1} \Lambda_\g(\widehat{\Z}) &\subseteq \Lambda_{\g}(\widehat{\Z}).
		\end{align*}
		If $n \in N(D)$, let $X_n = U_1 n V_1$. 
		\begin{enumerate}
			\item 	Let $R_n \in \Z_{\geq 1}$ satisfy $R_n Ad(n)^{-1} \Lambda_\g(\widehat{\Z}) \subseteq \Lambda_{\g}(\widehat{\Z})$.  Then $(A_1 R_n B_1) Ad(X)^{-1} \Lambda_\g(\widehat{\Z}) \subseteq \Lambda_{\g}(\widehat{\Z})$.
			\item Suppose $n = \exp(r^{-1}v)$ with $v \in Lie(M_R)^{[1]} \cap \Lambda_\g$.  For $R_n$, one can take $M_1 r^{T_1}$, for a positive integers $T_1, M_1$ that only depend upon $G$.
			\item One has $U_{X_n} \supseteq \exp(A_1 B_1 R_n R_U \Lambda_{\g}(\widehat{\Z}))$.
		\end{enumerate}	
	\end{lemma}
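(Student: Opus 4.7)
The plan is to prove the three parts in order, each a short direct computation from the definitions and the hypotheses on $\Lambda_\g(\widehat{\Z})$.

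For part (1), I would unwind the definition $X_n = U_1 n V_1$: any $x \in X_n$ factors as $x = u_1 n v_1$ with $u_1 \in U_1$, $v_1 \in V_1$, so $Ad(x)^{-1} = Ad(v_1)^{-1} \circ Ad(n)^{-1} \circ Ad(u_1)^{-1}$. Given $\lambda \in \Lambda_\g(\widehat{\Z})$, I would apply these three operators one at a time: first, $A_1 \cdot Ad(u_1)^{-1}\lambda \in \Lambda_\g(\widehat{\Z})$ by the hypothesis on $A_1$; next, multiplying by $R_n$ and applying $Ad(n)^{-1}$ keeps us in $\Lambda_\g(\widehat{\Z})$ by the definition of $R_n$; finally, multiplying by $B_1$ and applying $Ad(v_1)^{-1}$ keeps us in $\Lambda_\g(\widehat{\Z})$ by the hypothesis on $B_1$. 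Composing gives $(A_1 R_n B_1)\cdot Ad(x)^{-1}\lambda \in \Lambda_\g(\widehat{\Z})$.

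For part (2), write $n = \exp(r^{-1}v)$, so formally $Ad(n)^{-1} = \exp(-r^{-1}\,ad(v))$. Since $\g(J)$ is finite-dimensional and $v \in \mathrm{Lie}(M_R)^{[1]}$ acts nilpotently under the adjoint action (it lies in the unipotent radical of a parabolic, hence its adjoint action has nilpotence degree bounded by a constant $T_1$ depending only on the type of $G$), the exponential series terminates after $T_1$ terms. Because $v \in \Lambda_\g$, we have $ad(v)^k \Lambda_\g(\widehat{\Z}) \subseteq \Lambda_\g(\widehat{\Z})$ for all $k$. The only obstruction is clearing the denominators $k!$ from the exponential and the factors $r^{-k}$; multiplying through by $M_1 r^{T_1}$, where $M_1 = T_1!$, clears every term. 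This gives $(M_1 r^{T_1})\cdot Ad(n)^{-1}\Lambda_\g(\widehat{\Z}) \subseteq \Lambda_\g(\widehat{\Z})$, so $R_n := M_1 r^{T_1}$ works.

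For part (3), I would apply the preceding lemma in the excerpt: one has $U_X \supseteq \exp(B_X R_U \Lambda_\g(\widehat{\Z}))$ whenever $B_X \cdot Ad(X)^{-1}\Lambda_\g(\widehat{\Z}) \subseteq \Lambda_\g(\widehat{\Z})$. By part (1), the choice $B_{X_n} = A_1 B_1 R_n$ satisfies this hypothesis for the set $X_n$, so the cited lemma gives $U_{X_n} \supseteq \exp(A_1 B_1 R_n R_U \Lambda_\g(\widehat{\Z}))$, as required. None of the three parts presents any real obstacle: the only point requiring care is in (2), verifying that the nilpotence degree $T_1$ and the factorial bound $M_1$ really only depend on $G$ and not on the specific $v$, which follows because $v$ ranges in a fixed subspace of $\g(J)$ of bounded nilpotence.
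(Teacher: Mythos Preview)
Your proposal is correct and follows exactly the approach the paper takes; the paper's own proof simply declares parts (1) and (3) ``clear'' and for (2) notes only that the elements of $N(D)$ are unipotent so that $ad(\log n)^j = 0$ for $j > T_1$. Your write-up supplies precisely the details the paper omits, including the explicit choice $M_1 = T_1!$ to clear the factorial denominators.
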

	\begin{proof} The first part of the lemma is clear.  For the second part, observe that the elements of $N(D)$ are all unipotent.  Thus, there is an integer $T_1$ for which $Ad(\log(n))^{j} = 0$ if $j > T_1$, for all $n \in N(D)$.  The third part is also clear.
	\end{proof}
	
	The following corollary follows directly from the above work.
	\begin{corollary}\label{cor:Lambdag} Let the notation be as above, with $X_n = U_1 nV_1$.  Let $\Lambda_0^\vee = (\Lambda_\g \cap W_J(\Q))^\vee$.  If $a_w(x) \neq 0$ for some $x \in X_n$, then $w \in (A_1 B_1 R_n R_U)^{-1} \Lambda_0^\vee$.
	\end{corollary}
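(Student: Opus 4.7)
The plan is to combine the two preceding lemmas essentially verbatim; this corollary is the bookkeeping translation of the lattice-level consequences of the containments already proved. Concretely, I will first localize the group $U_{X_n}$ of simultaneous conjugators of $X_n = U_1 n V_1$ into $U$, and then read off the resulting character condition on $w$.

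First, I will apply part (3) of Lemma \ref{lem:U1V1Rn} to the set $X_n$, which gives the containment
\[U_{X_n} \supseteq \exp\bigl(A_1 B_1 R_n R_U \Lambda_\g(\widehat{\Z})\bigr).\]
Intersecting with the unipotent radical of $P$ and recalling the definition $N_{X_n} = N_P(\A_f) \cap (Z(\A_f) \cdot U_{X_n})$ from Lemma \ref{lem:simpleLattice}, one obtains
\[\exp\bigl(A_1 B_1 R_n R_U (\Lambda_\g \cap \mathrm{Lie}(N_P))(\widehat{\Z})\bigr) \;\subseteq\; N_{X_n},\]
since any element on the left already lies in $U_{X_n} \cap N_P(\A_f)$, so no central correction is needed.

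Next, I will invoke part (3) of Lemma \ref{lem:simpleLattice}: the hypothesis that $a_w(x) \neq 0$ for some $x \in X_n$ forces $\xi_w(n') = 1$ for every $n' \in N_{X_n}$. Because $\xi_w(\exp(u)) = \psi(\langle w, u\rangle)$ and $\psi$ is trivial precisely on $\widehat{\Z}$, specializing $n' = \exp(A_1 B_1 R_n R_U v)$ for $v$ ranging over $(\Lambda_\g \cap W_J(\Q))(\widehat{\Z})$ yields
\[A_1 B_1 R_n R_U \langle w, v\rangle \in \widehat{\Z} \quad \text{for all } v \in (\Lambda_\g \cap W_J(\Q))(\widehat{\Z}).\]
By the definition $\Lambda_0^\vee = (\Lambda_\g \cap W_J(\Q))^\vee$ of the dual lattice, this is exactly the statement $A_1 B_1 R_n R_U w \in \Lambda_0^\vee$, i.e.\ $w \in (A_1 B_1 R_n R_U)^{-1} \Lambda_0^\vee$. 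No step presents a genuine obstacle; the only mild care needed is to check that the interaction with the center $Z(\A_f)$ in the definition of $N_{X_n}$ does not cost anything, which it does not because we only need to apply $\xi_w$ to elements already coming from the $\exp$ of a sublattice of $\Lambda_\g(\widehat{\Z})$.
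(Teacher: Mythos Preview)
Your proof is correct and is exactly the argument the paper has in mind; the paper itself simply writes ``follows directly from the above work'' and leaves precisely this chain of implications (Lemma~\ref{lem:U1V1Rn}(3) followed by Lemma~\ref{lem:simpleLattice}(3)) to the reader. Your remark that no central adjustment is needed because the relevant elements already lie in $U_{X_n}\cap N_P(\A_f)$ is the right way to dispatch the only possible subtlety.
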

	
	Recall that $\{\delta_\ell\}_\ell$ is a finite set so that $G(\A_f) = \bigsqcup_{\ell} N_P(\A_f) M_P(\Q) \delta_\ell K_G$.  Let $V_2 = \cup_{\ell} \delta_\ell K_G$.  Recall also that $\Lambda_\g \subseteq \g(\Q)$ is a lattice.  We assume that $K_G$ fixes $\Lambda_\g(\widehat{\Z})$.  Let $U_1, V_1$ be fixed open subsets of $G(\A_f)$.  If $n \in N(D)$, we now wish to bound $a_w(U_1 n V_1 V_2)$ given bounds on $a_w(V_2)$.
	
	We have $\Lambda_\g(\A_f) = \prod_{v < \infty}' \Lambda_\g \otimes \Q_v$, a restricted product.  Say an element $\lambda \in \Lambda_\g(\Z_p)$ is \emph{primitive} if $n \in \Z_{\geq 0}$ and $p^{-n} \lambda \in \Lambda_\g(\Z_p)$ implies $n =0$.  We define a norm on $\Lambda_\g \otimes \Q_p$ as $||\lambda||_p = |p^n|_p$ if $\lambda = p^{n} \lambda_0$ with $\lambda_0$ primitive.  The norm $||\cdot||_p$ is $K_{G,p}$-invariant for every $p < \infty$.  If $\lambda \in \Lambda_\g(\A_f)$, we write $||\lambda||_p := ||\lambda_p||_p$, where $\lambda_p$ is the $p$-component of $\lambda$, and set $||\lambda||_f = \prod_{p}||\lambda||_p$.  Contrary to the notation, $||\cdot ||_f$ is not a norm on $\Lambda_\g(\A_f)$.
	
	Let $X_n = U_1 n V_1$, if $n \in N(D)$.  Observe that if $x \in X$, then $x \delta_\ell k_1 = nm\delta_j k_2$ for some $n \in N_P(\A_f)$, $m \in M_P(\Q)$, $k_2 \in K_G$.  Consequently, 
	\[ ||(x \delta_\ell k_1)^{-1} E_{13}||_p = || k_2^{-1} \delta_j^{-1} m^{-1} n^{-1} E_{13}||_p = |\nu(m)|_{p}^{-1} \cdot ||\delta_j^{-1} E_{13}||_p.\]
	Thus,
	\[|(x \delta_\ell k_1)^{-1} E_{13}||_f  = |\nu(m)|_\infty \cdot ||\delta_j^{-1} E_{13}||_f.\]
	Therefore, we can read off bounds on $|\nu(m)|_\infty$ if we can bound $|| (x \delta_\ell k_1)^{-1} E_{13}||_f$.  
	
	For a finite place $p$, and $g \in G(\Q_p)$, let $||g||_p$ be the operator norm with respect to our norm on $\Lambda_\g$.  That is, $||g||_p = \sup_{\lambda \in \Lambda_\g(\Z_p)}{||g \lambda||_p}$.  Let $E > 0$ be such that $\prod_p{||\delta_\ell||_p} \leq E$ and $\prod_{p}{||\delta_\ell^{-1}||_p} \leq E$ for every $\ell$.  Note that $E$ exists, because $(\delta_\ell)_p \in K_{G,p}$ for almost every $p$.  Moreover, 
	\[E^{-1}\cdot  ||\lambda_0||_f \leq ||\delta_\ell^{-1} \lambda_0||_f \leq E \cdot||\lambda_0||_f\]
	for every $\lambda_0 \in \Lambda_\g(\A_f)$.
	
	Putting things together, we have
	\[
	E^{-1}\cdot ||x^{-1}E_{13}||_f \leq |(x \delta_\ell k_1)^{-1} E_{13}||_f  \leq E \cdot ||x^{-1} E_{13}||_f
	\]
	and
	\[E^{-1} \cdot |\nu(m)|_\infty \leq |(x \delta_\ell k_1)^{-1} E_{13}||_f \leq E \cdot |\nu(m)|_\infty. \]
	Thus 
	\[
	E^{-2} \cdot ||x^{-1} E_{13}|| \leq |\nu(m)|_\infty \leq E^2 \cdot ||x^{-1} E_{13}||_f.
	\]
	
	Summarizing:
	\begin{lemma}  Let $E > 0$ be such that $\prod_p{||\delta_\ell||_p} \leq E$ and $\prod_{p}{||\delta_\ell^{-1}||_p} \leq E$ for every $\ell$.  If $x \delta_\ell k_1 = nm\delta_j k_2$ for some $n \in N_P(\A_f)$, $m \in M_P(\Q)$, $k_2 \in K_G$, then 
		\[
		E^{-2} \cdot ||x^{-1} E_{13}|| \leq |\nu(m)|_\infty \leq E^2 \cdot ||x^{-1} E_{13}||_f.
		\]
	\end{lemma}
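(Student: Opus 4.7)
The plan is essentially to package the local--global computation that immediately precedes the statement into a clean proof. I would begin from the identity $(x\delta_\ell k_1)^{-1}E_{13} = k_2^{-1}\delta_j^{-1} m^{-1} n^{-1} E_{13}$ coming from the hypothesis $x\delta_\ell k_1 = nm\delta_j k_2$. Two observations trivialize the contributions of $n$ and $k_2$: first, $E_{13}$ spans the center $Z$ of the Heisenberg unipotent $N_P$, so $n^{-1}$ acts trivially on $E_{13}$; second, $K_G$ was chosen to fix the lattice $\Lambda_\g(\widehat{\Z})$, so the local norms $\|\cdot\|_p$ are $K_{G,p}$-invariant and $k_2^{-1}$ can be discarded. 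What remains is $\|\delta_j^{-1} m^{-1} E_{13}\|_p$, and since $m\in M_P$ scales the line $\Q E_{13}$ by the similitude character $\nu(m)$, we obtain the local identity $\|(x\delta_\ell k_1)^{-1} E_{13}\|_p = |\nu(m)|_p^{-1}\,\|\delta_j^{-1} E_{13}\|_p$.

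Next, I would pass to the global norm $\|\cdot\|_f = \prod_p \|\cdot\|_p$. Taking the product of the local identity above over all finite places and applying the product formula $\prod_p |\nu(m)|_p^{-1} = |\nu(m)|_\infty$ (valid since $\nu(m)\in\Q^\times$) yields
\[
\|(x\delta_\ell k_1)^{-1} E_{13}\|_f = |\nu(m)|_\infty \cdot \|\delta_j^{-1} E_{13}\|_f.
\]
The same computation (now discarding the $k_1 \in K_G$ part and applying the $\delta_\ell$ bounds hypothesis) yields
\[
E^{-1}\cdot\|x^{-1} E_{13}\|_f \le \|(x\delta_\ell k_1)^{-1} E_{13}\|_f \le E\cdot\|x^{-1} E_{13}\|_f,
\]
and the hypothesis $\prod_p \|\delta_j^{-1}\|_p \le E$ (combined with $\prod_p \|\delta_j\|_p\le E$, which provides the lower bound) gives
\[
E^{-1} \le \|\delta_j^{-1} E_{13}\|_f \le E.
\]
Chaining these three bounds on the identity $\|(x\delta_\ell k_1)^{-1} E_{13}\|_f = |\nu(m)|_\infty\cdot\|\delta_j^{-1} E_{13}\|_f$ produces $E^{-2}\cdot\|x^{-1}E_{13}\|_f \le |\nu(m)|_\infty \le E^2\cdot\|x^{-1}E_{13}\|_f$, which is the claim (reading the unadorned $\|x^{-1}E_{13}\|$ in the statement as $\|x^{-1}E_{13}\|_f$, as the surrounding discussion indicates).

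There is no real obstacle here, as every ingredient has been recorded in the paragraphs preceding the lemma; the only point requiring a bit of care is the verification that $\|\cdot\|_p$ really is $K_{G,p}$-invariant (which follows because $K_{G,p}$ was defined as the stabilizer of $\Lambda_\g\otimes\Z_p$, and the norm was defined intrinsically from that lattice) and that $E_{13}$ is fixed by all of $N_P(\A_f)$ under the adjoint action (which is immediate because $E_{13}$ spans the $2$-eigenspace of $\mathrm{ad}(h_P)$, i.e.\ the highest root space, which is central in $\mathrm{Lie}(N_P)$).
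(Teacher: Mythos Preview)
Your proposal is correct and follows exactly the paper's own argument, which appears in the paragraphs immediately preceding the lemma; indeed, the paper introduces the lemma with the word ``Summarizing:'' to signal that it is merely packaging the computation you have written out. Your added remarks on why $n^{-1}$ fixes $E_{13}$ and why $\|\cdot\|_p$ is $K_{G,p}$-invariant are the right justifications for the steps the paper leaves implicit.
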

	
	Continuing, we have:
	\begin{lemma}\label{lem:continuing} Suppose $E_{N(D)}$ is such that $\prod_p{||n||_p} \leq E$ and $\prod_{p}{||n^{-1}||_p} \leq E_{N(D)}$ for every $n \in N(D)$ and every $p$.  There is a constant $E_1$, independent of $D$, so that if $x \in U_1 N(D) V_1$, and $x \delta_\ell k_1 = nm\delta_j k_2$ for some $n \in N_P(\A_f)$, $m \in M_P(\Q)$, $k_2 \in K_G$, then 
		\[
		E_1^{-1} E_{N(D)}^{-1} \leq |\nu(m)|_\infty \leq E_1 E_{N(D)}.\]
		Moreover, one can take $E_{N(D)} =M_1 V(D)^{T_1}$, for some absolute constants $M_1, T_1$.
	\end{lemma}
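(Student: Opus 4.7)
The plan is to reduce to the estimate in the immediately preceding lemma, namely $E^{-2} \cdot ||x^{-1} E_{13}||_f \leq |\nu(m)|_\infty \leq E^2 \cdot ||x^{-1} E_{13}||_f$, by pinching $||x^{-1} E_{13}||_f$ between $C_1^{-1} E_{N(D)}^{-1}$ and $C_1 E_{N(D)}$ as $x$ ranges over $U_1 N(D) V_1$, for some constant $C_1$ independent of $D$.  For the upper bound, factor $x = u n v$ with $u \in U_1$, $n \in N(D)$, $v \in V_1$, and apply submultiplicativity of $||\cdot||_p$ prime by prime to get $||x^{-1} E_{13}||_p \leq ||v^{-1}||_p \cdot ||n^{-1}||_p \cdot ||u^{-1}||_p \cdot ||E_{13}||_p$.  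Taking the product over finite places and invoking the hypothesis $\prod_p ||n^{-1}||_p \leq E_{N(D)}$, together with compactness of $U_1$ and $V_1$ (so that $\prod_p ||u^{-1}||_p$ and $\prod_p ||v^{-1}||_p$ are uniformly bounded, because almost every $p$-component lies in $K_{G,p}$, which has operator norm $1$), yields the upper bound.

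For the lower bound, rewrite $E_{13} = x(x^{-1} E_{13})$; submultiplicativity gives $||E_{13}||_p \leq ||x||_p \cdot ||x^{-1} E_{13}||_p \leq ||u||_p \cdot ||n||_p \cdot ||v||_p \cdot ||x^{-1} E_{13}||_p$.  Multiplying over primes and using the other hypothesis $\prod_p ||n||_p \leq E_{N(D)}$ produces $||x^{-1} E_{13}||_f \geq C_1^{-1} E_{N(D)}^{-1} \cdot ||E_{13}||_f$.  Combined with the preceding lemma and setting $E_1 = E^2 C_1$, this is the asserted two-sided bound; the constant $E_1$ depends only on $\{\delta_\ell\}$, $U_1$, $V_1$, and $||E_{13}||_f$, none of which depend on $D$.

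The nontrivial step---and the main obstacle---is producing the explicit form $E_{N(D)} = M_1 V(D)^{T_1}$.  For $n = \exp(r^{-1} v_0) \in N(D)$ with $v_0 \in \Lambda_\g$ integral and $r \leq V(D)$ a positive integer, I will expand
\[\mathrm{Ad}(n^{\pm 1}) = \sum_{k=0}^{T_1} \frac{(\pm r^{-1})^k \mathrm{ad}(v_0)^k}{k!},\]
where $T_1$ is the nilpotence index of $\mathrm{ad}(v_0)$, bounded by an absolute constant depending only on $\g(J)$.  Since $\Lambda_\g$ is closed under the Lie bracket, each $\mathrm{ad}(v_0)^k$ preserves $\Lambda_\g(\Z_p)$; by the ultrametric inequality, $||\mathrm{Ad}(n^{\pm 1})||_p$ is therefore bounded by $\max_{0 \leq k \leq T_1} |r|_p^{-k} \cdot |k!|_p^{-1}$.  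Taking the product over all primes, the first factor telescopes to at most $r^{T_1}$ (since $\prod_p |r^{-1}|_p = r$), while the $|k!|_p^{-1}$ contributes only at primes $p \leq T_1$ and collapses to a universal constant $M_1 = \prod_{p \leq T_1} \max_{k \leq T_1} |k!|_p^{-1}$.  Hence $\prod_p ||n^{\pm 1}||_p \leq M_1 r^{T_1} \leq M_1 V(D)^{T_1}$, which is the content of the ``Moreover'' clause.  The care required in the $k!$-denominator analysis at small primes is the one genuinely nontrivial bookkeeping point in the argument.
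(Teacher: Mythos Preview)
Your proof is correct and follows essentially the same approach as the paper. The paper dispatches the first part with ``follows as above,'' which is exactly your submultiplicativity reduction to bounding $||x^{-1}E_{13}||_f$; for the ``Moreover'' clause the paper writes out the same finite exponential expansion $n\cdot\lambda=\sum_{0\le j\le T_1}\frac{r^{-j}}{j!}\,\mathrm{ad}(v)^j(\lambda)$, concludes $M_1 r^{T_1}(n\cdot\lambda)\in\Lambda_\g$, and then uses that $N(D)$ is closed under inversion, so your treatment (including the explicit bookkeeping of the $k!$ denominators at primes $p\le T_1$) is a slightly more detailed version of the same argument.
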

	\begin{proof} The first part follows as above.  For the second, if $n \in N(D)$, then $n = \exp(r^{-1} v)$, $v \in Lie(M_R)^{[1]} \cap \Lambda_\g$ and $r \in \Z$ with $r \leq V(D)$.  If $\lambda \in \Lambda_g$, then 
		\[n \cdot \lambda = \sum_{0 \leq j \leq T_1}{\frac{r^{-j}}{j!} ad(v)^j(\lambda)}.\]
		Thus $M_1 r^{T_1} (n \cdot \lambda) \in \Lambda_\g$.  It follows that $||(M_1 r^{T_1}) n \cdot \lambda||_f \leq 1$ so $||n \cdot \lambda||_f \leq M_1 r^{T_1}$.  As the set $N(D)$ is closed under taking inverses, the lemma follows.
	\end{proof}
	
	We now prepare some lemmas to understand the Fourier-Jacobi expansion along the $Q$-parabolic.   If $\phi \in S(J(\A_f))$, $B \in J(\Q)$, $d \in \Q$, $r \in \widetilde{\SL_2}(\A_f)$ and $g \in G(\A_f)$, we write
	\[b_{(B,d)}(r,g;\phi) = \int_{J(\A_f)}{ \omega_{\psi_B}(r)\phi(x)a_{(0,B,0,d)}(xrg)\,dx}.
	\]
	That the $\{a_w\}_w$ satisfy the $Q$-symmetries mean that the $b_{(B,d)}(r,g,\phi)$ are the Fourier coefficients of a holomorphic modular form of weight $\ell' = \ell+1 - \dim(J)/2$ on $\widetilde{\SL_2}$ as $d$ varies.
	
	\begin{lemma} \label{lem:volBound2} Suppose $B \in J(\Q)$ is positive-definite, and $C$ satisfies $|a_{(0,B,c,d)}(rg)| \leq C$ for all $w = (0,B,c,d)$ with $|q(w)| = D$.  Let $d' = \frac{D}{4n(B)}$.  Suppose $V_{r,g} \subseteq J(\A_f)$ is an open compact set so that $a_{(0,B,0,d')}(xrg) \neq 0$ implies $x \in V_{r,g}$. Then $|b_{(B,d')}(r,g;\phi)| \leq ||\phi||_{L^2} \cdot C \cdot \mathrm{vol}(V_{r,g})^{1/2} $.
	\end{lemma}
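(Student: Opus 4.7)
The approach is to mirror the proof of Lemma \ref{lem:volBound1} very closely: bound the integrand of $b_{(B,d')}(r,g;\phi)$ pointwise on its support $V_{r,g}$, apply Cauchy--Schwarz, and use $L^2$-unitarity of the Weil representation. The only new ingredient needed is the pointwise bound
\[|a_{(0,B,0,d')}(\exp(v_2\otimes x)\,rg)|\leq C\quad\text{for every }x\in J(\A_f),\]
since the hypothesis supplies a bound only at the single group element $rg$, not at all $\exp(v_2\otimes x)$-translates of it.

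To obtain this pointwise bound, I would first reduce from $x\in J(\A_f)$ to $x\in J(\Q)$. The $a_w$ are uniformly right-$U$-smooth, so $x\mapsto a_{(0,B,0,d')}(\exp(v_2\otimes x)\,rg)$ is invariant under translation of $x$ by some open compact subgroup of $J(\A_f)$, which is commensurable with $J(\widehat{\Z})$. Combined with the elementary identity $J(\Q)+N\cdot J(\widehat{\Z})=J(\A_f)$ for any positive integer $N$, every value of the integrand is attained at a rational $x$. For such $x$, the element $\exp(v_2\otimes x)$ lies in $M_P(\Q)$ and is unipotent, hence has trivial similitude, so the $P$-symmetries give
\[a_{(0,B,0,d')}(\exp(v_2\otimes x)\,rg)=a_{(0,B,0,d')\cdot \exp(v_2\otimes x)}(rg).\]

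The final step is to identify $(0,B,0,d')\cdot\exp(v_2\otimes x)$ explicitly. Interpreting $v_2\otimes x$ as the degree-one element of the $\Z$-grading $\h(J)^0=J\oplus\m(J)\oplus J^\vee$ acting on the Freudenthal module $W_J=\Q\oplus J\oplus J^\vee\oplus\Q$ as a unipotent raising operator, a short direct computation shows that this vector has the form $(0,B,c,d)$ for some $c\in J^\vee$ and $d\in\Q$ (explicitly $c=-B\times x$ and $d=d'+(B,x^\#)$). Since the $H_J$-action preserves $q$ up to similitude and the similitude is trivial for unipotent elements, $|q((0,B,c,d))|=|q((0,B,0,d'))|=4d'N(B)=D$, whence the hypothesis yields $|a_{(0,B,c,d)}(rg)|\leq C$. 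The main conceptual subtlety is the reduction from adelic to rational $x$, which depends on the scale of local constancy being at least as coarse as a scaled copy of $J(\widehat{\Z})$; this is automatic from the adelic identity $\A_f=\Q+\widehat{\Z}$. Once the pointwise bound is in hand, the Cauchy--Schwarz estimate $\int_{V_{r,g}}|(\omega_{\psi_B}(r)\phi)(x)|\,dx\leq \|\omega_{\psi_B}(r)\phi\|_{L^2}\,\mathrm{vol}(V_{r,g})^{1/2}$ and the identity $\|\omega_{\psi_B}(r)\phi\|_{L^2}=\|\phi\|_{L^2}$ close the proof verbatim as in Lemma \ref{lem:volBound1}.
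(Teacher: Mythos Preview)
Your proof is correct and takes essentially the same approach as the paper: a pointwise bound on the integrand over its support $V_{r,g}$, followed by Cauchy--Schwarz and unitarity of the Weil representation, exactly as in Lemma \ref{lem:volBound1}. The paper's proof leaves the pointwise bound $|a_{(0,B,0,d')}(xrg)|\leq C$ implicit, while you correctly spell out the reduction to rational $x$ via right-$U$-smoothness and the $P$-symmetry computation $(0,B,0,d')\cdot\exp(v_2\otimes x)=(0,B,c,d)$ with $|q|=D$ that justifies it.
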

	\begin{proof} We have
		\[
		|b_{(B,d)}(r,g;\phi)| \leq C \cdot \int_{V_{r,g}}{ |\omega_{\psi_B}(r)\phi(x)| \,dx}.
		\]
		The lemma follows by Cauchy-Schwarz, as in the proof of Lemma \ref{lem:volBound1}.
	\end{proof}
	
	We can also bound the $a_{w}(g)$ in terms of the $b_{B,d}(r,g,\phi)$.  The details are very similar to the proof of Lemma \ref{lem:abarfromFJ}. One starts with
	\begin{equation}\label{eqn:afromFJ}a_{(0,B,0,d)}(xrg) = \sum_{\alpha}{ b_{B,d}(r,g,\phi_\alpha^\vee) (\omega(r)\phi_\alpha)(x)}.\end{equation}
	Here $\{\phi_\alpha\}$ is a basis of $S(J(\A_f))$ and $\phi_\alpha^\vee$ is the dual basis.
	\begin{lemma}\label{lem:afromFJQ} Suppose $C_{B,d,g}'> 0$ is a constant so that $|b_{B,d}(1,g,\phi)| \leq C_{B,d,g}' \cdot ||\phi||_{L^2}$ for all $\phi \in S(X(\A_f))$.  Suppose $V_{B,d,g} \subseteq J(\A_f)$ is a compact open subgroup with the property that $a_{(0,B,0,d)}(x v g)  = a_{(0,B,0,d)}(x g) $ if $v \in V_{B,d,g}$ and $x \in J(\A_f)$.  Then $|a_{(0,B,0,d)}(xg)| \leq C_{B,d,g}' \cdot \mathrm{vol}(V_{B,d,g})^{-1/2}$.
	\end{lemma}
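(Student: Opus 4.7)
The plan is to mimic almost verbatim the proof of Lemma~\ref{lem:abarfromFJ}, but with equation~\eqref{eqn:afromFJ} in place of equation~\eqref{eqn:abaraFJ}, and with the Schwartz space $S(J(\mathbf{A}_f))$ in place of $S(X(\mathbf{A}_f))$. Concretely, I would first fix a compact open subgroup $V' \supseteq V_{B,d,g}$ of $J(\mathbf{A}_f)$ that is large enough to contain the support of $a_{(0,B,0,d)}(\cdot\, g)$ as a function of the first variable --- note that such a $V'$ exists because $a_{(0,B,0,d)}$ is locally constant and the functions $\{a_w\}$ are jointly smooth in the sense that they are fixed by a common open compact subgroup of $G(\mathbf{A}_f)$.

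Let $S(J(\mathbf{A}_f))_{V',V_{B,d,g}}$ denote the finite-dimensional space of Schwartz functions supported in $V'$ and invariant under translation by $V_{B,d,g}$. Take $\{\phi_\alpha\}$ to be the basis of characteristic functions of the cosets of $V_{B,d,g}$ in $V'$. Since the pairing on $S(J(\mathbf{A}_f))$ used to define dual bases is $L^2$-orthogonal on characteristic functions of disjoint sets, one has $\phi_\alpha^\vee = \mathrm{vol}(V_{B,d,g})^{-1}\phi_\alpha$, so $\|\phi_\alpha^\vee\|_{L^2} = \mathrm{vol}(V_{B,d,g})^{-1/2}$. By the $V_{B,d,g}$-invariance assumption on $a_{(0,B,0,d)}(\cdot\, g)$, the identity~\eqref{eqn:afromFJ} holds with the sum restricted to this finite set of $\phi_\alpha$'s (since any Schwartz function is approximated to arbitrary accuracy on a compact set by such a combination, and only test functions agreeing with $a_{(0,B,0,d)}(\cdot\, g)$ on its support contribute).

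Applying the hypothesis $|b_{B,d}(1,g,\phi_\alpha^\vee)| \leq C'_{B,d,g}\|\phi_\alpha^\vee\|_{L^2} = C'_{B,d,g} \mathrm{vol}(V_{B,d,g})^{-1/2}$ term by term, and observing that for any fixed $x \in J(\mathbf{A}_f)$ at most one $\phi_\alpha(x)$ is nonzero (and equal to $1$ when nonzero), the right-hand side of~\eqref{eqn:afromFJ} at the point $x$ is bounded by $C'_{B,d,g}\mathrm{vol}(V_{B,d,g})^{-1/2}$, which is the desired conclusion.

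The only genuinely delicate point --- essentially the single obstacle in the argument --- is the justification that~\eqref{eqn:afromFJ} can be legitimately applied with the finite basis $\{\phi_\alpha\}$ rather than with an abstract Hilbert basis of all of $S(J(\mathbf{A}_f))$; this is handled exactly as in the proof of Lemma~\ref{lem:abarfromFJ} by choosing $V'$ large enough so that $a_{(0,B,0,d)}(\cdot\, g)$ vanishes outside $V'$, at which point extending $\{\phi_\alpha\}$ to a full Hilbert basis contributes only zero terms on the right-hand side of~\eqref{eqn:afromFJ} evaluated at a fixed $x \in V'$, and evaluating at $x \notin V'$ gives $0$ on both sides.
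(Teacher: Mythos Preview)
Your proposal is correct and follows essentially the same approach as the paper: choose $V'\supseteq V_{B,d,g}$ large, take the basis of characteristic functions of $V_{B,d,g}$-cosets in $V'$, compute the dual basis and its $L^2$-norm, and then use that at any fixed $x$ only one $\phi_\alpha(x)$ is nonzero to bound the sum in \eqref{eqn:afromFJ}. The paper's proof is in fact almost a verbatim copy of its own proof of Lemma~\ref{lem:abarfromFJ}, exactly as you anticipated.
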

	\begin{proof}  Let $V' \supseteq V_{B,d,g}$ be a compact open subgroup of $J(\A_f)$, and let $S(J(\A_f))_{V', V_{B,d,g}}$ be the functions $\phi$ that are supported in $V'$ and satisfy $\phi(x+v) = \phi(x)$ for all $v \in V_{B,d,g}$.  The space $S(J(\A_f))_{V', V_{B,d,g}}$ is finite-dimensional.  Let $\phi_\alpha$ be the basis of characteristic functions for the cosets of $V_{B,d,g}$ in $V'$.  We have $\phi_\alpha^\vee = \mathrm{vol}(V_{B,d,g})^{-1}\phi_\alpha$, and $||\phi_\alpha^\vee|| = \mathrm{vol}(V_{B,d,g})^{-1/2}$.  Consequently,
		\[ \sum_{\alpha}{ |b_{B,d}(1,g,\phi_\alpha^\vee)| \cdot |\phi_\alpha(x)|} \leq C_{B,d,g}' \mathrm{vol}(V_{B,d,g})^{-1/2},\]
		as only one of the terms $|\phi_\alpha(x)|$ can be nonzero for a fixed $x$.   The assumption of the lemma that $a_{(0,B,0,d)}(x v g)  = a_{(0,B,0,d)}(x g)$ if $v \in V_{B,d,g}$ and $x \in J(\A_f)$ implies that, for $V'$ sufficiently large, the $\phi_\alpha$ are an acceptable set of functions to use in the right-hand side of equation \eqref{eqn:afromFJ}.  This gives the lemma.
	\end{proof}
	
	We now bound the volume of the sets $U_{r,g}$ and $V_{r,g}$ of Lemmas \ref{lem:volBound1} and \ref{lem:volBound2}.  We begin with a simple calculation.  Let $\mathrm{pr}_{C,2}: J=H_3(C) \rightarrow C^2$ be the projection that reads off the $x_2$ and $x_3$ components.
	\begin{lemma} \label{lem:Txsimple} Suppose $w = (a, b,c,d)$ with $b = \diag(b_{11},B_{2,3})$ and $c = \diag(c_{11},C_{2,3})$, where $b_{11}, c_{11} \in \Q$ and $B_{2,3}, C_{2,3} \in H_2(C)$.  Let $x = \Phi_{E, V(0,u_2,u_3)} + v_2 \otimes V(0,v_2,v_3)$.  Then $w \cdot \exp(x) = (a',b',c',d')$, where
		\[\mathrm{pr}_{C^2}(b') = u \times B_{2,3} + (E,B_{2,3})u + av
		\]
		and
		\[\mathrm{pr}_{C^2}(c') = -c_{11} u + B_{2,3} \times v.
		\]
	\end{lemma}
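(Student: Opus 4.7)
The statement is a direct computation of the effect on $w \in W_J$ of exponentiating the nilpotent element $x = \Phi_{E, V(0,u_2,u_3)} + v_2 \otimes V(0,v_2,v_3) \in \g(J)$, and then extracting the $C^2$-parts of the new $b'$ and $c'$ components. My plan is straightforward: work out the infinitesimal action of $x$ on a general $w = (a,b,c,d) \in W_J$, iterate to get $\exp(ad(x)) \cdot w$, and read off the answer.

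First, I will decompose $x = x_0 + x_1$, with $x_0 = \Phi_{E, V(0,u_2,u_3)} \in \m(J)^0$ and $x_1 = v_2 \otimes V(0,v_2,v_3) \in V_3 \otimes J$. The piece $x_0$ lives in the degree $0$ part of $\h(J)^0$ for the Heisenberg grading and therefore acts $W_J$-linearly without changing the $(a,b,c,d)$ decomposition: using the formulas of \cite[sections 3,4]{pollackQDS} together with $\Phi_{E,V(0,u_2,u_3)}(X) = -V(0,u_2,u_3) \times (E \times X) + \ldots$ on $J$ and its dual action on $J^\vee$, I will compute $x_0 \cdot b$ and $x_0 \cdot c$, and then extract the $C^2$-components; since $B_{2,3}, C_{2,3} \in H_2(C)$, the contribution to the $C^2$-component of $b'$ is a combination of $u \times B_{2,3}$ and $(E,B_{2,3})u$, and similarly the $c'$ side produces the $B_{2,3} \times v$ type term.

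Next I will handle $x_1$. Via the embedding of $V_3 \otimes J$ into $\g(J)$ from the $\Z/3\Z$-model, one checks directly that $ad(x_1)$ lowers the Heisenberg degree by one and sends the $a$-component of $w$ to a $v_2 \otimes J$-type contribution in $b$, and the $b$-component to a contribution in $c$; concretely $[v_2 \otimes V(0,v_2,v_3), w]$ yields the $av$ term in $b'$ (after the $C^2$-projection) and a cross-product term in $c'$. I will need to verify that the higher commutators $ad(x)^k w$ for $k \geq 2$ do not contribute to the $C^2$-components of $b'$ or $c'$ — they either vanish, are absorbed into the diagonal entries, or land outside the $v_1$--$\delta_3$-slots — which follows because both $V(0, u_2, u_3)$ and $V(0, v_2, v_3)$ have zero $(1,1)$-entry and multiplying Freudenthal products of such elements with $B_{2,3}, C_{2,3} \in H_2(C)$ keeps one outside the off-diagonal $C^2$-slot after one step.

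Assembling, $\exp(ad(x))$ acts as $1 + ad(x_0) + ad(x_1) + \tfrac{1}{2}ad(x)^2 + \cdots$, and the preceding point shows that only the first-order contributions survive in the $\mathrm{pr}_{C,2}$-projections. Reading these off gives
\[
\mathrm{pr}_{C,2}(b') = u \times B_{2,3} + (E, B_{2,3}) u + av, \qquad \mathrm{pr}_{C,2}(c') = -c_{11} u + B_{2,3} \times v,
\]
as claimed. The main bookkeeping obstacle is not the existence of the identity but the verification that higher-order terms in $\exp(ad(x))$ contribute only to components of $w'$ other than the $C^2$-parts of $b'$ and $c'$; this is handled by tracking the $h_P$- and $h_Q$-eigenvalues together with the block shape of products in $H_3(C)$ when one factor lies in $\mathrm{Span}(V(0,u_2,u_3))$ and the other in $H_2(C)$.
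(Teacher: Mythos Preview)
Your overall plan---compute the first-order action of $x_0 = \Phi_{E,u}$ and $x_1 = v_2 \otimes v$ on $w$ and argue that higher-order terms miss the $C^2$-slots---is correct in spirit and is essentially what the paper does, but you have a terminology error and your organization forces you into vague bookkeeping that the paper avoids. The error: $x_1 = v_2 \otimes V(0,v_2,v_3)$ does \emph{not} ``lower the Heisenberg degree by one.'' It lies in the Heisenberg Levi $\h(J)^0$ (the $h_P$-degree-$0$ part of $\g(J)$), specifically in the $J$-summand of $\h(J)^0 = J \oplus \m(J) \oplus J^\vee$; its effect on $W_J$ is the Siegel-type shift $(a,b,c,d) \mapsto (0,\,av,\,b \times v,\,(c,v))$, not a change of $h_P$-weight.

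Rather than expanding $\exp(\mathrm{ad}(x_0+x_1))$ and chasing all cross terms, the paper first applies $\exp(\Phi_{E,u})$ to $w$ and then applies $\exp(v_2 \otimes v)$. These two factors do not commute---their bracket is $(u,v)\,v_2\otimes e_{11}$---but the resulting correction $\exp(\lambda\,v_2\otimes e_{11})$ only deposits $a\,e_{11}$ into the $b$-slot and $b \times e_{11}$ into the $c$-slot, and since $e_{11} \times V(0,*,*) = 0$ neither touches the $C^2$-component. The paper then evaluates $\Phi_{E,u}(b) = -E \times (u \times b) + (E,b)u = u \times B_{2,3} + (E,B_{2,3})u$ and $\Phi_{E,u}(c) = u \times (E \times c) = -c_{11}u$ directly, observes that $\exp(\Phi_{E,u})$ leaves the $H_2(C)$-block $B_{2,3}$ of $b$ intact, and reads off both formulas after the $n_L(v)$-shift. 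Your sentence about ``multiplying Freudenthal products of such elements with $B_{2,3}, C_{2,3} \in H_2(C)$ keeps one outside the off-diagonal $C^2$-slot after one step'' is gesturing at the right phenomenon, but is not a proof; the factored route turns each step into a one-line identity in $H_3(C)$.
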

	\begin{proof} One has
		\[ \Phi_{E,u}(b) = - E \times (u \times b) + (E,B_{2,3}) u  = u \times B_{2,3} + (E,B_{2,3})u.
		\]
		Additionally,
		\[ \Phi_{E,u}(c) = u \times (E \times c) = u \times (E \times (c_{11} e_{11} + C_{2,3})) = u \times (c_{11} \times E + E \times C_{2,3}) = - c_{11} u.
		\]
		Thus $\exp(\Phi_{E,u})(b) = (*, u \times B_{2,3} + (E,B_{2,3})u,B_{2,3})$ in components for $J = H_3(C) = \Q \oplus C^2 \oplus H_2(C)$.  Similarly, $\exp(\Phi_{E,u})(c) = (c_{11},-c_{11}u,*)$.  The lemma follows by applying $\exp(v_2 \otimes v)$.
	\end{proof}
	
	To bound the volume of $U_{r=1,g}$, we will apply the computation of Lemma \ref{lem:Txsimple} and a corollary of the following lemma.
	\begin{lemma}\label{lem:V5intorb} Let $V_5(\Z) = \Z b_2 \oplus \Z b_3 \oplus \mathcal{O}_C \oplus \Z b_{-3} \oplus \Z b_{-2}$.  Let $V_5(\Z)^\vee$ be the dual lattice, so that $V_5(\Z)^\vee = \Z b_2 \oplus \Z b_3 \oplus \mathcal{O}_C^\vee \oplus \Z b_{-3} \oplus \Z b_{-2}$.  Let $\Gamma_{V_5} = \GL(V_5(\Z)) \cap \SO(V_5(\Q))$. Let $\mathcal{C} \subseteq C$ be any set such that if $v \in \mathcal{O}_C^\vee$, then there exists $x \in \mathcal{O}_C$ so that $v-x \in \mathcal{C}$.  Suppose $\lambda \in V_5(\Z)^\vee$ is primitive.  Then there is $\gamma \in \Gamma_{V_5}$ so that $\gamma \cdot \lambda = b_2+ v + s b_{-2}$, where $s \in \Z$ and $v \in \mathcal{C}$.
	\end{lemma}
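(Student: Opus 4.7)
The plan is to manufacture $\gamma \in \Gamma_{V_5}$ as a product of elementary moves. Three families of elements in $\Gamma_{V_5}$ will suffice: (i) the subgroup $\SL_2(\Z) \times \SL_2(\Z)$ arising from the isogeny $\SO(2,2) \cong (\SL_2 \times \SL_2)/\mu_2$ acting on the hyperbolic four-plane $\mathrm{Span}(b_2, b_3, b_{-3}, b_{-2})$ and fixing $\mathcal{O}_C$ pointwise; (ii) the abelian unipotent families $\{m_u\}_{u \in \mathcal{O}_C}$ and $\{n_u\}_{u \in \mathcal{O}_C}$, where $m_u$ fixes $b_2$, sends $b_{-2} \mapsto b_{-2} + u + n_C(u)\,b_2$, and sends $y \in \mathcal{O}_C$ to $y + (u,y)_{V_5}\,b_2$, and $n_u$ is defined symmetrically with $b_2$ and $b_{-2}$ interchanged; (iii) an element of $\Gamma_{V_5}$ swapping $b_2 \leftrightarrow b_{-2}$, composed if necessary with a sign flip elsewhere to lie in $\SO(V_5)$. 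Each of these is checked directly from the formulas to preserve the integral lattice $V_5(\Z)$, and hence its dual.

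Step 1 uses (i) to apply Smith normal form to the matrix $\mm{a_2}{a_3}{-a_{-3}}{a_{-2}}$ of hyperbolic coefficients of $\lambda$, reducing to $\lambda = d_1 b_2 + \beta + d_2 b_{-2}$ with $d_1 \geq 0$ and $d_1 \mid d_2$. Primitivity of $\lambda$ combined with $d_1 \mid d_2$ yields $\gcd(d_1, c(\beta)) = 1$, where $c(\beta)$ denotes the content of $\beta \in \mathcal{O}_C^\vee$, i.e., the nonnegative generator of $\{(u, \beta)_{V_5} : u \in \mathcal{O}_C\} \subseteq \Z$. Step 2 reduces $d_1$ to $1$. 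Under $m_u$, the coefficient $d_2$ is unchanged, $\beta$ shifts by $d_2 u$, and $d_1$ shifts by $(u, \beta)_{V_5} + d_2\,n_C(u)$; as $u$ varies over $\mathcal{O}_C$, the linear term sweeps $c(\beta)\Z$ and the quadratic term fills a full coset of $d_2 \Z$ within a level set of $(u,\beta)_{V_5}$, so the collection of achievable shifts to $d_1$ is precisely $\gcd(c(\beta), d_2)\Z$. Primitivity gives $\gcd(d_1, \gcd(c(\beta), d_2)) = 1$, and a Chinese-remainder-theorem choice of $u$ produces a new first coefficient $d_1'$ with $\gcd(d_1', d_2) = 1$; a second application of Smith normal form (permitted because (i) fixes $\beta$) then collapses the first Smith invariant to $1$. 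The boundary cases $d_2 = 0$ and $d_1 = d_2 = 0$ are dispatched using (iii) to swap $d_1$ and $d_2$, or a single $m_u$ with $(u, \beta)_{V_5} = 1$ coming from primitivity of $\beta$. Finally, Step 3: starting from $\lambda = b_2 + \beta + s b_{-2}$, apply $n_u$ with $u \in \mathcal{O}_C$ chosen so that $\beta + u \in \mathcal{C}$; this keeps the $b_2$-coefficient equal to $1$, adjusts $s$, and places the $\mathcal{O}_C^\vee$-component inside $\mathcal{C}$.

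The principal obstacle lies in Step 2. Since $m_u$ shifts $d_1$ by a quadratic-in-$u$ expression, the set of achievable shifts is not \emph{a priori} a subgroup of $\Z$. The technical heart of the argument is to separate the linear contribution $(u, \beta)_{V_5}$ from the quadratic contribution $d_2\,n_C(u)$ and to identify the combined image as $\gcd(c(\beta), d_2)\Z$; paired with primitivity of $\lambda$ and $d_1 \mid d_2$, this then enables the Chinese-remainder reduction to $d_1 = 1$. Steps 1 and 3 are by comparison routine, being direct applications of Smith normal form and a single translation.
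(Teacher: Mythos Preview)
Your constructive route via Smith normal form and unipotent translations is correct in outline, but Step~2 contains a false intermediate claim. You assert that the image of $u \mapsto f(u) := (u,\beta)_{V_5} + d_2\, n_C(u)$ is exactly $\gcd(c(\beta), d_2)\Z$, arguing that on each level set of the linear term the quadratic term fills a full $d_2\Z$-coset. This fails already when $\dim C = 1$, since the level sets of $u \mapsto (u,\beta)_{V_5}$ are then single points and the quadratic term cannot vary on them at all. Concretely, for $C = \Q$ with $\beta = 1$ and $d_2 = 6$ one has $c(\beta) = 2$ and $f(u) = 6u^2 - 2u$; the equation $f(u) = 2$ has no integer solution, so $2$ is not in the image even though $\gcd(c(\beta),d_2) = 2$.

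The fix is easy and nearly implicit in what you wrote: since $f(u) \equiv (u,\beta)_{V_5} \pmod{d_2}$, the residues of the shift modulo $d_2$ are exactly the multiples of $c(\beta)$, and this together with $\gcd(d_1,c(\beta)) = 1$ already suffices for the CRT step producing $d_1'$ coprime to $d_2$. The stronger claim about the exact image of $f$ is neither needed nor true.

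The paper's proof takes a different and shorter route. Writing $\lambda = (p,q,v,r,s)$ and $c'(\lambda) = \gcd(p,q,r,s)$, it chooses a $\Gamma_{V_5}$-orbit representative minimizing $c'$, uses $\SL_2(\Z)\times\SL_2(\Z)$ to reduce to $(a,0,v,0,s)$ with $a\mid s$, and then applies the unipotent $n(x)$ attached to the $b_3/b_{-3}$ hyperbolic plane rather than your $m_u$ attached to $b_2/b_{-2}$. Because $q = 0$ after the Smith step, this shifts $r$ by $(x,v)$ with \emph{no quadratic term at all}; minimality then forces $a \mid (x,v)$ for every $x \in \mathcal{O}_C$, so $a$ divides the full content of $\lambda$ and hence $a = 1$. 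The device of choosing the unipotent so that its quadratic contribution is automatically killed by the prior Smith reduction is precisely what lets the paper bypass your CRT analysis. Your approach, once patched, has the compensating advantage of being fully explicit.
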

	\begin{proof} If $\lambda = (p,q,v,r,s)$, let $c'(\lambda) = gcd(p,q,r,s)$.  We claim that there exists $\gamma \in \Gamma_{V_5}$ so that $c'(\gamma \cdot \lambda) = 1$.  To see this, assume $\lambda$ is such that $c'(\gamma \cdot \lambda) \geq c'(\lambda)$ for all $\gamma \in \Gamma_{V_5}$.  Write $a :=c'(\lambda)$. Then, by using $\SL_2(\Z) \times \SL_2(\Z) \in \Gamma_{V_5}$, we can assume $p = a$, $q,r=0$, and $a|s$.
		
		Now note that if $x \in \mathcal{O}_C$, then there is $n(x) \in \Gamma_{V_5}$ so that $n(x) \cdot (p,q,v,r,s) = (p,q, v + q x,r + (x,v) + q n_C(x),s)$.  Thus, $gcd(a,s,(x,v)) \geq a$ for all $x \in \mathcal{O}_C$.  We obtain $a| (x,v)$, so $c(\lambda) = a$.  But $c(\lambda) = 1$, so $a=1$.
	\end{proof}
	
	\begin{corollary}\label{cor:V5red} Let the notation be as in Lemma \ref{lem:V5intorb}. There are a finite set of elements $\{\tau_i\}_i \in \SO(V_5(\Q))$, so that if $\lambda \in V_5(\Q)$, then there is $\gamma \in \Gamma_{V_5}$ and some $\tau_i$ so that $\tau_i \gamma \lambda \in \mathrm{Span}(b_2,b_{-2})$.
	\end{corollary}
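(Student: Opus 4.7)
The plan is to reduce the general rational statement to the primitive integral case handled by Lemma \ref{lem:V5intorb}, and then to clear the remaining $C$-component using one of finitely many explicit unipotent elements of $\SO(V_5)(\Q)$.

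First, I would observe that since every element of $\Gamma_{V_5}$ and of $\SO(V_5)(\Q)$ acts $\Q$-linearly on $V_5(\Q)$, scalar multiplication commutes with these actions. The case $\lambda = 0$ is trivial. For $\lambda \neq 0$, write $\lambda = c \lambda_0$ with $c \in \Q^\times$ and $\lambda_0 \in V_5(\Z)^\vee$ primitive. Then $\tau_i \gamma \lambda \in \mathrm{Span}(b_2, b_{-2})$ if and only if $\tau_i \gamma \lambda_0 \in \mathrm{Span}(b_2, b_{-2})$, so it suffices to prove the corollary for $\lambda_0$.

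Next, applying Lemma \ref{lem:V5intorb} to the primitive $\lambda_0$ yields $\gamma \in \Gamma_{V_5}$ with $\gamma \lambda_0 = b_2 + v + s b_{-2}$ for some $v \in \mathcal{C}$ and $s \in \Z$. Since the quotient $\mathcal{O}_C^\vee / \mathcal{O}_C$ is a finite group, we may take $\mathcal{C}$ to be a finite set, so there are only finitely many possible values of $v$. For each such $v$, I would construct a single element $\tau_v \in \SO(V_5)(\Q)$---independent of $s$---with the property that $\tau_v(b_2 + v + s b_{-2}) \in \mathrm{Span}(b_2, b_{-2})$ for every $s \in \Q$.

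The natural candidate for $\tau_v$ lies in the unipotent radical $N'$ of the parabolic subgroup of $\SO(V_5)$ stabilizing the isotropic line $\Q b_{-2}$. Parametrizing $N'$ by $y \in \mathrm{Span}(b_3, C, b_{-3})$ via the standard Heisenberg-type formulas---$n'(y)$ fixes $b_{-2}$, sends $b_2 \mapsto b_2 + y - \frac{1}{2}(y,y)_{V_5} b_{-2}$, and $w \mapsto w - (y,w)_{V_5} b_{-2}$ for $w$ in the middle part---the choice $\tau_v := n'(-v)$ exactly cancels the $v$-component. A short computation using $(v,v)_{V_5} = -2 n_C(v)$ gives
\[
\tau_v(b_2 + v + s b_{-2}) = b_2 + (s - n_C(v)) b_{-2} \in \mathrm{Span}(b_2, b_{-2}).
\]
Setting $\{\tau_i\} := \{\tau_v : v \in \mathcal{C}\}$ completes the proof. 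There is no substantial obstacle here; the argument is an assembly of Lemma \ref{lem:V5intorb} with a straightforward unipotent computation in $\SO(V_5)$, and the crucial point is that the finiteness of $\mathcal{O}_C^\vee/\mathcal{O}_C$ ensures finitely many clearing elements suffice.
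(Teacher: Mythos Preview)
Your proof is correct and follows essentially the same approach as the paper: reduce to a primitive vector in $V_5(\Z)^\vee$, apply Lemma \ref{lem:V5intorb} to land on $b_2 + v + s b_{-2}$ with $v$ in a finite set of representatives for $\mathcal{O}_C^\vee/\mathcal{O}_C$, and then clear $v$ with one of finitely many unipotent elements of $\SO(V_5)(\Q)$. The paper's proof is a one-liner invoking the same finiteness and writing $\tau_i = n(x_i)$ for representatives $x_i \in \mathcal{O}_C^\vee$; your version is simply more explicit about which unipotent subgroup to use and verifies the computation.
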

	\begin{proof} The set $\mathcal{O}_C^\vee/\mathcal{O}_C$ is finite, so we $\tau_i = n(x_i)$ where $x_i$ are representatives in $\mathcal{O}_C^\vee$ for $\mathcal{O}_C^\vee/\mathcal{O}_C$.  
	\end{proof}
	
	\begin{lemma}\label{lem:volU1} Suppose $T \in Lie(M_R)^{[1]}$ satisfies $q_{V_7}(T_R(T)) \neq 0$.  Let $U_T \subseteq X(\A_f)$ consist of those $x$ so that if $(T,0,u) \cdot \exp(x) = (T,x',u')$, then $x' \in \mathcal{O}_C^4$.  Then there is a constant $A' > 0$, independent of $T$, so that $\mathrm{vol}(U_T) \leq A' |q(T)|^{2 \dim(C)}.$
	\end{lemma}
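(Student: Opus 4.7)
The plan is to recognize $U_T$ as the preimage of an integer lattice under an explicit $\Q$-linear map $\Phi_T$, and reduce the volume bound to a determinantal computation via the adelic product formula. Writing $T = t_0 E_{12} + v_1 \otimes T_b + t_3\,\delta_3 \otimes e_{11}$ with $T_b = c_2 e_{22} + c_3 e_{33} + V(r_1,0,0) \in H_2(C)$, and parametrizing $x = \Phi_{E,V(0,\xi_2,\xi_3)} + v_2 \otimes V(0,\eta_2,\eta_3) \in V_8^{[0]}$ by $(\xi,\eta) \in C^2 \oplus C^2$, I would apply Lemma \ref{lem:Txsimple} to $w = (T,0,u)$. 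Since $u \in V_7^{[1]}$ lies in the top $h_R$-eigenspace of $W_J$, it does not enter the $V_8^{[1]}$-component of $(T,0,u) \cdot \exp(x)$, and one reads off that $x' = \Phi_T(x)$ is linear in $x$ with block form
\[
\Phi_T \;=\; \begin{pmatrix} A_{T_b} & t_0\,I \\ -t_3\,I & B_{T_b}\end{pmatrix}, \qquad A_{T_b}(\xi) = \xi \times T_b + (E,T_b)\,\xi, \quad B_{T_b}(\eta) = T_b \times \eta.
\]

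Consequently $U_T = \Phi_T^{-1}(\mathcal{O}_C^4 \otimes \widehat{\Z})$, and provided $\det \Phi_T \in \Q^\times$, the adelic change-of-variables formula together with the product formula for the rational number $\det \Phi_T$ yields
\[
\mathrm{vol}(U_T) \;=\; |\det \Phi_T|_\infty \cdot \mathrm{vol}(\mathcal{O}_C^4 \otimes \widehat{\Z}).
\]
The Schur-complement identity reduces $\det \Phi_T$ to $\det(A_{T_b} B_{T_b} + t_0 t_3\,I)$ on $V_8^{[1]} \simeq C^2$, and a direct calculation with the cross product in $H_3(C)$ shows that $A_{T_b} B_{T_b}$ acts block-diagonally on the two $C$-summands as $\eta \mapsto r_1 \eta r_1 - c_2 c_3\,\eta$. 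Hence $\det \Phi_T = (\det{}_\Q L)^2$ where $L : C \to C$ is the $\Q$-linear map $L(\eta) = r_1 \eta r_1 - (c_2 c_3 - t_0 t_3)\,\eta$. The hypothesis $q_{V_7}(T_R(T)) = c_2 c_3 - n_C(r_1) - t_0 t_3 \neq 0$ identifies this nonzero quantity with an explicit eigenvalue of $L$ (on the subspace of $\eta$ satisfying $r_1\eta = \eta r_1^*$), which ensures $L$ is invertible and hence $\Phi_T$ is as well.

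The main obstacle is the explicit evaluation of $\det_\Q L$ and its comparison with $|q(T)|^{2\dim C}$. Using flexibility of $C$, I would factor $\eta \mapsto r_1\eta r_1$ as the commuting composite $L_{r_1} \circ R_{r_1}$, whose eigenvalues in the reduced representation of $C$ over $\overline{\Q}$ are determined by the two conjugate eigenvalues of $r_1$. This presents $\det_\Q L$ as an explicit polynomial in the reduced trace $\mathrm{tr}(r_1)$, the reduced norm $n_C(r_1)$, and the scalar $c_2 c_3 - t_0 t_3$, with $q(T) = (c_2 c_3 - t_0 t_3) - n_C(r_1)$ occurring as a distinguished linear factor. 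The polynomial bound $|\det \Phi_T| \leq A' |q(T)|^{2\dim C}$ with constant depending only on the composition algebra $C$ should then follow from elementary estimates on the remaining factors in terms of $q(T)$ and the norm of $T_b$.
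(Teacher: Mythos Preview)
Your overall strategy—identifying $U_T=\Phi_T^{-1}(\mathcal{O}_C^4\otimes\widehat{\Z})$ for an explicit $\Q$-linear $\Phi_T$ and computing $\mathrm{vol}(U_T)=|\det\Phi_T|_\infty\cdot\mathrm{const}$ via the product formula—is sound, and it is genuinely different from the paper's argument. The paper instead invokes the reduction theory of Corollary~\ref{cor:V5red}: acting by $(M_R\cap M_P(\Q))\cap K_G$ together with finitely many fixed $\tau_i$, one moves $T$ to the special form $B_{2,3}=0$, where Lemma~\ref{lem:Txsimple} gives $\Phi_T(u,v)=(t_0 v,\,-t_3 u)$ and the volume is visibly $|t_0t_3|^{2\dim C}=|q(T)|^{2\dim C}$. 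Your direct computation bypasses the reduction theory entirely, which is pleasant.

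However, your formula for $A_{T_b}B_{T_b}$ is wrong, and this is fatal as written. The off-diagonal components of $X^\#$ in $H_3(C)$ carry a conjugation: at position $i$ one has $\overline{x_jx_k}-c_ix_i$ for $(i,j,k)$ cyclic. Tracking this conjugation through the two cross products, and using only the composition-algebra identity $\bar a(ab)=n_C(a)b$ (valid in octonions too), one finds that $A_{T_b}B_{T_b}$ is the \emph{scalar} operator $(n_C(r_1)-c_2c_3)\cdot I_{C^2}=-n_{H_2(C)}(T_b)\cdot I_{C^2}$, not $\eta\mapsto r_1\eta r_1-c_2c_3\eta$. Hence
\[
\det\Phi_T=\det\bigl((n_C(r_1)-c_2c_3+t_0t_3)\,I_{C^2}\bigr)=(-q(T))^{2\dim C},
\]
and the lemma follows with an exact equality (up to lattice normalization)—no ``elementary estimates on the remaining factors'' are needed.

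With your formula the final bound is actually false. Take $C$ an imaginary quadratic field, $r_1=N\sqrt{-1}$, $c_2=N^2+1$, $c_3=1$, $t_0=t_3=0$. Then $q(T)=c_2c_3-n_C(r_1)=1$, but your $L(\eta)=(r_1^2-\lambda)\eta$ with $\lambda=c_2c_3=N^2+1$ has $\det_\Q L=N_{C/\Q}(-2N^2-1)=(2N^2+1)^2$, so $|\det\Phi_T|=(2N^2+1)^4$ while $|q(T)|^{2\dim C}=1$. The point is precisely that $\|T_b\|$ is not controlled by $q(T)$, so the vague appeal to estimates in terms of both cannot close the argument. Once you restore the conjugation, the problem disappears.
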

	\begin{proof} By the reduction theory of Corollary \ref{cor:V5red}, and the invariance of the volume under $(M_R \cap M_P(\Q)) \cap K_G$, we can assume $B_{2,3} = 0$.  Then, by Lemma \ref{lem:Txsimple}, the volume of $U_T$ is $(|ac_{11}|_f^{-1})^{2\dim(C)}$.
	\end{proof}
	
	Recall that $\mathcal{S}_T \subseteq M_T'(\A_f)$ is defined as $\mathcal{S}_T = \cup_{\mu \in \mathcal{R}_{Q,T}} \Gamma_{Q,T} \mu^{-1} K_T$, where $\mathcal{R}_{Q,T}$ is a set of representatives for $\Gamma_T \backslash M'_T(\Q)/ Q_T(\Q)$, where $Q_T$ is the parabolic stabilizing $\mathrm{Span}_\Q(b_1,b_2)$.  Here $\Gamma_{Q,T} = \Gamma_T \cap Q_{T}(\Q)$. Moreover, the representatives $\mu$ are chosen so that $\mu b_1, \mu b_2$ are an integral basis of $\mathrm{Span}_\Q(b_1, b_2) \cap \Lambda_T$.
	
	We also recall that $A_G = \cup_k v_k K_R$ where $M_R(\A_f) = \bigsqcup_{k} (M_R \cap P)(\Q) v_k K_R$.
	\begin{lemma}\label{lem:Z1Z2} Suppose $\gamma \mu^{-1} k_0\in \mathcal{S}_{T}$, and $\gamma \mu^{-1} k_0 = b v_k k_1$, with $k_1 \in K_R$ and $b \in (M_R \cap P)(\Q)$.  Then
		\begin{enumerate}
			\item There is an absolute constant $Z_1 > 0$ so that $Z_1^{-1} \leq |\lambda(b)|_\infty \leq Z_1$.
			\item There is an absolute constant $Z_2 > 0$ so that $Z_2^{-1} \leq |\nu(b)|_\infty \leq Z_2$.
			\item There is an absolute constant $Z_3 > 0$ so that 
			\[Z_3^{-1} \mathrm{cont}(T;\Lambda_0) \leq \mathrm{cont}(T \cdot b,\Lambda_0) \leq Z_3 \mathrm{cont}(T;\Lambda_0).\]
		\end{enumerate}
	\end{lemma}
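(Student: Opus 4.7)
The plan is to derive all three bounds from a single underlying principle: the decomposition $\gamma \mu^{-1} k_0 = b v_k k_1$ writes $b \in (M_R \cap P)(\Q)$ as a ratio of factors whose adelic contributions are uniformly bounded at each finite place, after which the product formula converts this into an archimedean bound.

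First I would verify that the similitude character $\lambda : M_R \to \GL_1$ vanishes on $M'$. Since $M'$ is the derived group of $M_R^T$, every algebraic character to $\GL_1$ is trivial on it; equivalently, $T$ is stabilized by $M_R^T$ and $q_{V_7}(T) \neq 0$, so the identity $\lambda(m)\, q_{V_7}(T) = q_{V_7}(m \cdot T) = q_{V_7}(T)$ forces $\lambda|_{M_R^T} = 1$. Applying $\lambda$ to the adelic equation $\gamma \mu^{-1} k_0 = b v_k k_1$ in $M_R(\A_f)$ therefore gives $1 = \lambda(b)\,\lambda(v_k)\,\lambda(k_1)$, since the left-hand side lies in $M'(\A_f)$. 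As $k_1 \in K_R$ preserves $\Lambda_\g \otimes \widehat{\Z}$ at each finite place, $\lambda(k_1) \in \widehat{\Z}^\times$, and hence $|\lambda(b)|_p = |\lambda(v_k)|_p^{-1}$ at every prime $p$. Since $\lambda(b) \in \Q^\times$, the product formula yields $|\lambda(b)|_\infty = \prod_p |\lambda(v_k)|_p$, a product that takes finitely many positive real values as $v_k$ ranges over the fixed finite set $\{v_k\}_k$, giving the constant $Z_1$.

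For part (2), the same character trick is not immediately available because $\nu : M_P \to \GL_1$ is only defined on the $M_P$-factor, hence not directly on $v_k$. Instead I would exploit that $\nu(b)$ is, up to a fixed power, the eigenvalue by which $b$ scales a canonical $M_P$-stable line in $W_J$ (e.g.\ the line $\Q E_{13}$). Writing $b = \gamma \mu^{-1} k_0 k_1^{-1} v_k^{-1}$ and computing this scalar by examining the action on the lattice $\Lambda_0 \otimes \widehat{\Z}$, each of $k_0$ and $k_1$ contributes a unit at every finite place, while $\gamma$ is arithmetic and $\mu, v_k$ range over finite sets; thus the $p$-adic valuations of $\nu(b)$ are uniformly bounded prime-by-prime. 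The product formula applied to $\nu(b) \in \Q^\times$ then delivers the archimedean bound $Z_2$.

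For part (3), the content $\mathrm{cont}(T \cdot b; \Lambda_0)$ measures the $p$-adic denominator of $T \cdot b$ in $\Lambda_0 \otimes \Z_p$ prime-by-prime. Because $b \in (M_R \cap M_P)(\Q)$, its action on $V_7 \cap (\Lambda_\g \otimes \Z_p)$, and likewise that of $b^{-1}$, has $p$-adic operator norm bounded by a fixed power of $|\lambda(b)|_p$ and $|\nu(b)|_p$. Consequently the primewise ratio $\mathrm{cont}(T \cdot b; \Lambda_0)/\mathrm{cont}(T; \Lambda_0)$ is controlled prime-by-prime, nontrivial only at the finite set of primes occurring in the denominators of the $\mu$'s and $v_k$'s, and the bounds from parts (1) and (2) close the argument via a final product-formula step, producing the uniform constant $Z_3$. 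The main technical obstacle throughout is the precise bookkeeping---pinning down exactly which powers of $\lambda$ and $\nu$ govern the operator norms of $M_R \cap M_P$ on $V_7$ and $W_J$---which amounts to a finite case analysis using the explicit root-space decomposition of $\g(J)$ underlying the parabolics $P$ and $R$.
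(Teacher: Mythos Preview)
Your argument for part (1) is correct and essentially the same as the paper's: $\lambda$ is trivial on $M'$, so the equation $\gamma\mu^{-1}k_0 = b v_k k_1$ forces $\lambda(b)_f$ to lie in the compact set $\lambda(\{v_k\})^{-1}\widehat{\Z}^\times$, and the product formula finishes.

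For parts (2) and (3), however, there is a genuine gap. The constants $Z_2, Z_3$ are required to be \emph{absolute}, in particular independent of $T$. You write that ``$\mu, v_k$ range over finite sets,'' but the set $\mathcal{R}_{Q,T}$ of representatives $\mu$ depends on $T$, so finiteness alone gives no $T$-uniform control on the $p$-adic denominators of $\mu^{-1}$. The paper closes this gap in part (2) by invoking the specific normalization of the $\mu$'s from Claim~\ref{claim:muIntBasis}: the $\mu$'s are chosen so that $\mu b_1, \mu b_2$ form an integral basis of $\mathrm{Span}_\Q(\mu b_1,\mu b_2)\cap\Lambda_T$. Since $\gamma\in\Gamma_{Q,T}$ preserves $\Z b_1+\Z b_2$, this forces $\mu\gamma^{-1}b_1$ to be primitive in $\Lambda_T$ regardless of $T$, whence $\|\mu\gamma^{-1}b_1\|_f=1$ and $|\nu(b)|_\infty = \|v_k k_1 k_0^{-1}\mu\gamma^{-1}b_1\|_f$ is bounded by the $v_k$'s alone.

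Your approach to part (3) has an additional issue: you propose to bound the $p$-adic operator norm of $b$ on $V_7$ by powers of $|\lambda(b)|_p, |\nu(b)|_p$, but $b\in (M_R\cap P)(\Q)$ may have a large unipotent component in $(M_R\cap N_P)(\Q)$ that is invisible to both characters. The paper avoids this entirely with the observation that $\gamma, \mu^{-1}, k_0$ all lie in groups that fix $T$ (they sit inside $M_R^T$), so $T\cdot b = T\cdot(k_1^{-1}v_k^{-1})$ and one only needs to bound the action of the fixed elements $v_k$, which is immediate.
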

	\begin{proof} We have $b = \gamma \mu^{-1} k_0 k_1^{-1} v_k^{-1}$.  The left-hand side has $\lambda$ in $\Q^\times$, while the right-hand side has $\lambda$ in a fixed open compact subset of $\A_f^\times$, as $\lambda(M_T') = 1$.  This proves the first statement.  
		
		For the second statement, let $|| \cdot ||_p$ be a $p$-adic norm on $V_7 \otimes \Q_p$, defined in terms of our fixed lattice in $V_7$.  Then, for $p_1 \in (M_R \cap P)(\Q_p)$, $|| p_1 b_1||_p =|\nu(p)|_p$.  Thus $||b b_1||_f = |\nu(b)|_f = |\nu(b)|_\infty^{-1}$.  Now 
		\[ |\nu(b)|_\infty = |\nu(b)|_f^{-1} = ||b^{-1} b_1||_f = ||v_k k_1 k_0^{-1} \mu \gamma^{-1} b_1||_f. \]
		However, by our assumption on $\mu$, $\mu \gamma^{-1} b_1 \in \Lambda_\g$ is primitive, so $||\mu \gamma^{-1} b_1||_f = 1$.  This proves the second statement.
		
		For the third statement, we observe that if $T_1 \in \Lambda_\g$, $T_1 = h T_2$ with $h \in \Z_{\geq 1}$ and $T_2$ primitive, then $||T_1||_p = |h|_p ||T_2||_p = |h|_p$.  Thus $||T_1||_f = |h|_f = |h|_\infty^{-1}$.  Now, with $T_1 = T \cdot b$, we have 
		\[|h|_\infty^{-1} = ||T_1||_f = ||T \cdot (\gamma \mu^{-1} k_0 k_1^{-1} v_k^{-1})||_f = ||T k_1^{-1} v_k||_f.\]
		The latter term is bounded between $Z_3^{-1} ||T||_f$ and $Z_3 ||T||_f$.  This gives the lemma.
	\end{proof}
	
	\subsection{Proof of automatic convergence, I}
	We now give the proof of the automatic convergence theorem. 
	
	\begin{proof}[Proof of Theorem \ref{thm:automatic}]
		We begin by recalling and setting some notation.

		\textbf{Notation}
		\begin{enumerate}
			\item Recall the finite set of elements $\delta_{\ell} \in G(\A_f)$.  We set $V_2 = \cup_{\ell} \delta_\ell K_G$.
			\item The reduction theory of subsection \ref{subsec:redExc2} gives a finite set of elements $\{\sigma_r\}_r =\mathcal{T}_{K} \subseteq H^1(\Q)$.  We set $V_1'' = \cup_{r} K_H \sigma_r$.
			\item The reduction theory of subsection \ref{subsec:redExc1} gives a finite set of elements $\{\gamma_j\}_j = \mathcal{R}_{H,K} \subseteq H^1(\Q)$.  We set $V_1' = \cup_j K_G \gamma_j^{-1} K_G$.
			\item We set $V_1 = V_1' V_1''$.
			\item Let $V(D)$ be a function of $D$, as yet to be specified.  Let $N(D)$ be as defined above, in terms of $V(D)$.
			\item We have a finite set of elements $v_k \in M_R(\A_f)$.  We let $U_1 = \cup_{k} v_k K_R$.
			\item If $n \in N(D)$, we let $X_n = U_1 n V_1$.
		\end{enumerate}
		
		Let $f(D)$ be a function of $D$, as yet to be specified.  We will prove that if $v \in V_2$, $|a_w(v)| \leq Q |q(w)|^{(\ell+1)/2} f(|q(w)|)$, for some $Q >0$.  For an appropriate choice of $f(D)$, this will imply that the $a_w$ grow polynomially with $w$.
		
		Let $\delta > 1$ be a real number, as yet to be specified.  Let $D_0 > 0$ be a large positive number.  If $w \in W_J(\Q)$, then $|q(w)| \leq D_0^{\delta^n}$ 
		for some positive integer $n$.  We will prove $|a_w(v)| \leq Q f(|q(w)|)$ by induction on $n$.
		
		For any $D_0$, there is positive number $Q$ (depending on $D_0$), so that $|q(w)| \leq D_0$ implies $|a_w(v)| \leq Q$.  This follows from Lemma \ref{lem:PRBounded}.  Thus, the base case $n=1$ of the induction can be satisfied for any $D_0$ and positive, increasing function $f(D)$.
		
		We now do the inductive step.  Suppose then that $|a_w(v)| \leq Q f(|q(w)|)$ if $|q(w)| \leq D:=D_0^{\delta^n}$.  Let $E_{N(D)}  = M_1 V(D)^{T_1}$, $E_1 > 0$ from Lemma \ref{lem:continuing}.  Let $E_D = E_1 E_{N(D)}$.  Then if $n \in N(D)$, $z \in X_n$, $v \in V_2$, and $|q(w)| \leq DE_D^{-2}$, we can bound $|a_w(z v)|$.  Specifically, 
		\[ |a_w(zv)| \leq Q |q(w)|^{(\ell+1)/2} f(E_{D}^2 |q(w)|) \,\,\,\,\,\text{    if }\,\,\,\,\, |q(w)| \leq DE_D^{-2}.\]
		This follows from Lemma \ref{lem:continuing}.
		
		We next bound the $\overline{a}_{w = (T,*,*)}(zv)$ if $|q(w)| \leq D E_D^{-2}$.  We will apply Lemma \ref{lem:boundawbar}.  In the context of this lemma, with $g = zv$, we can take $\Lambda_g = (C_1 R_n)^{-1} \Lambda_0$, where $C_1 = A_1 B_1 R_U$ depends only on $U$ and $R_n = M_1 r^{T_1}$ with $r \leq V(D)$.  Here the notation and the proof of this claim follows from Corollary \ref{cor:Lambdag}.  We thus have
		\[ |\overline{a}_{w=(T,*,*)}| \leq Q |q(w)|^{(\ell+1)/2} f(E_{D}^2 |q(w)|) \mathrm{cont}(T;\Lambda_g) \,\,\,\,\,\text{    if }\,\,\,\,\, |q(w)| \leq DE_D^{-2}.\]
		Rewriting in terms of $\Lambda_0$ gives
		\[ |\overline{a}_{w=(T,*,*)}| \leq Q |q(w)|^{(\ell+1)/2} f(E_{D}^2 |q(w)|) \cdot  \mathrm{cont}(T;\Lambda_0) \cdot C_1 E_{N(D)} \,\,\,\,\,\text{    if }\,\,\,\,\, |q(w)| \leq DE_D^{-2}.\]
		
		With an eye toward applying the Quantitative Sturm bound for orthogonal groups, we now bound the $a_{w=(T,0,u)}(r,zv,\phi)$ if $T$ is normal and $|q(w)| \leq D E_D^{-2}$. Here $r \in \mathcal{S}_T$. We will apply Lemma \ref{lem:volBound1}.  We have
		\[ |a_{w= (T,0,u)}(r,zv,\phi)| \leq Q |q(w)|^{(\ell+1)/2} f(E_{D}^2 |q(w)|) \cdot  \mathrm{cont}(T;\Lambda_0) \cdot C_1 E_{N(D)} \cdot ||\phi||_{L^2} \cdot \mathrm{vol}(U_{r,g=zv})^{1/2}\]
		if $|q(w)| \leq D E_D^{-2}$.  To make this explicit, we bound $\mathrm{vol}(U_{r,g=zv})$.  Recall that $U_{r,g} \subseteq X(\A_f)$ is an open compact subset so that $\overline{a}_{(T,0,u)}(xrg) \neq 0$ implies $x \in U_{r,g}$.
		
		We first consider the case $r=1$, but $T$ not necessarily normal.  To get a handle on $U_{r=1,g=zv}$, we will use Lemma \ref{lem:volU1}. Approximating $x \in X(\A_f)$ by an element of $X(\Q)$, $\overline{a}_{(T,0,u)}(x g) \neq 0$, $g = zv$, implies $M_1 r^{T_1} x \in U_T$, in the notation of Lemma \ref{lem:volU1}.  Thus $\mathrm{vol}(U_{r=1,g=zv})$ is bounded above by $A' |E_{N(D)}^2 q(T)|^{2\dim(C)}$.  Setting $A'' = C_1 (A')^{1/2}$, we have checked that
		\begin{align}
			\nonumber |a_{w= (T,0,u)}(1,zv,\phi)| & \leq Q |q(w)|^{(\ell+1)/2} f(E_{D}^2 |q(w)|) \cdot  \mathrm{cont}(T;\Lambda_0) \\
			&\,\,\,\,\,\, \times  A''  E_{N(D)}^{1+2\dim(C)} \cdot ||\phi||_{L^2} \cdot |q(T)|^{\dim(C)} \label{eqn:IneqFJr=1} \end{align}
		if $|q(w)| \leq D E_D^{-2}$.  
		
		Now we consider the case of general $r \in \mathcal{S}_T$, but $g \in K_G n V_1 V_2$, $n \in N(D)$.  Embedding $M_T'(\A_f) \subseteq M_R(\A_f)$, we can write $r = \gamma \mu^{-1}k_0 = b v_k k_1$, in the notation of Lemma \ref{lem:Z1Z2}.  We have $b = n_b m_b$ with $n_b \in (M_R \cap N_P)(\Q)$ and $m_b \in (M_R \cap M_P)(\Q)$.  Then, if $g_1 = v_k k_1 g \in X_n$,
		\begin{align}
			\nonumber a_{w=(T,0,u)}(r,g,\phi) &= \int_{(x,s) \in X(\A_f) \times A_f(v_2 \otimes e_{11})} a_w(\exp(s) \exp(x) r g)(\omega(r)\phi)(x)\,ds\,dx \\
			\nonumber  &= \int_{(x,s) \in X(\A_f) \times A_f(v_2 \otimes e_{11})} a_w(\exp(s) \exp(x) b g_1)(\omega(r)\phi)(x)\,ds\,dx \\
			\nonumber  &= \zeta \int_{(x,s) \in X(\A_f) \times A_f(v_2 \otimes e_{11})} a_w(\exp(s) \exp(x) m_b g_1)(\omega(r)\phi)(x)\,ds\,dx \\
			\nonumber   &= \zeta \nu(m_b)^{-\ell} |\nu(m_b)|^{-1} \\
			\label{eqn:mbchange}  &\,\,\,\,\,\,\,\, \times \int_{(x,s) \in X(\A_f) \times A_f(v_2 \otimes e_{11})} a_{w \cdot m_b}(m_b^{-1} \exp(s) \exp(x) m_b g_1)(\omega(r)\phi)(x)\,ds\,dx
		\end{align}
		for some $\zeta \in S^1 \subseteq \C^\times$.
		
		We now take absolute values, change variables, and apply Lemma \ref{lem:Z1Z2}.  For some absolute constant $Z_4 >0$, we obtain
		\[
		| a_{w=(T,0,u)}(r,g,\phi)| \leq Z_4 \int_{(x,s) \in X(\A_f) \times A_f(v_2 \otimes e_{11})} |a_{w \cdot m_b}( \exp(s) \exp(x)  g_1)|\cdot |(\omega(r)\phi)(x)|\,ds\,dx.
		\]
		But the right-hand side can be bounded using \eqref{eqn:IneqFJr=1}.  We obtain, if $r \in \mathcal{S}_T$ and $g \in K_G n V_1 V_2$, $n \in N(D)$,
		\begin{align}
			\label{eqn:forQSB}	| a_{w=(T,0,u)}(r,g,\phi)|  \leq Q |q(w)|^{(\ell+1)/2} f((E_{D}Z_2)^2 |q(w)|) \cdot  \mathrm{cont}(T;\Lambda_0) \\
			\nonumber	\,\,\,\,\,\, \times  A'''  E_{N(D)}^{1+2\dim(C)} \cdot ||\phi||_{L^2} \cdot |q(T)|^{\dim(C)} \label{eqn:IneqFJr=1}
		\end{align}
		if $|q(w)| \leq D (Z_2E_D)^{-2}$.  Here $A'''$ is another constant.
		
		We now assume $T$ is normal, and use the fact that the $a_{(T,0,u)}(r,g,\phi)$ are Fourier coefficients of a cusp form on $M_T'$.  To apply the Quantitative Sturm bound for orthogonal groups, we need to bound the lattice in which $u$ can live when $a_{(T,0,u)}(r,g,\phi) \neq 0$, where $g \in K_G n V_1 V_2$, $n \in N(D)$.  By equation \eqref{eqn:mbchange}, and the argument of Lemma \ref{lem:Z1Z2} (observe $\gamma, \mu, v_k, k_0, k_1 \in M_R(\A_f)$ and thus all preserve $V_8 = V_8^{[0]} + V_8^{[1]}$), it suffices to consider the case when $r=1$ but $g \rightarrow g_1 \in A_G K_G n V_1 V_2$.  In this case, given $T\in Lie(M_R)^{[1]}$, we must bound the $u \in V_5(\Q)$ for which there exists $x \in X(\Q)$ and $s \in \Q$ so that $a_{(T,0,u) \cdot \exp(x)\exp(s v_2 \otimes e_{11})}(g_1) \neq 0$.
		
		Let $w = (T,0,u) \cdot \exp(x)\exp(s v_2 \otimes e_{11})$. By Corollary \ref{cor:Lambdag}, $M_3:=M_2 r^{T_1} w \in \Lambda_0$, for some $r \leq V(D)$ and some positive integer $M_2$, independent of all choices.  In particular, $M_3 T \in \Lambda_0$.  By Corollary \ref{cor:V5red}, $M_3[(T,0,0),x] \in \Lambda_0$ implies $x \in q(M_3 T)^{-1} \mathcal{O}_C^4$, up to a fixed absolute constant.  Thus $M_3^2 q(T) x \in \mathcal{O}_C^4$.  Therefore, $M_3^3 q(T) [[(T,0,0),x],x] \in \Lambda_0$.  We conclude $M_4 r^{3 T_1} q(T) u \in \Lambda_0$, for some fixed positive integer $M_4$ and an $r \leq V(D)$ that only depends upon $n \in N(D)$.  Thus, if $M$ is the positive integer of Theorem \ref{thm:SturmBound}, then $M \leq M_4 q(T) E_{N(D)}^3$.
		
		Applying the Quantitative Sturm bound for orthogonal groups, Theorem \ref{thm:SturmBound}, we arrive at the following fact, which we single out as a proposition.  Assume from now on that $V(D) = D^{s}$ for some small positive number $s \leq 1$, to be determined.  
		\begin{proposition}\label{prop:applySturm} There are positive constants $A,\alpha_1, \alpha'',\alpha'''$ so that the following statement holds for $D$ sufficiently large: Suppose $T$ is normal with $|q(T)|^{3/2} \leq \alpha''' \frac{D^{1-2sT_1}}{\log(D)}$, and $g \in K_G n V_1 V_2$ with $n \in N(D)$.   Then 
			\[|a_{(T,0,u_1)}(r,g,\phi)| \leq |q(u_1)|^{\ell_1/2} Q D^{\alpha''} f(D) \cdot A \cdot \mathrm{cont}(T;\Lambda_0) |q(T)|^{\alpha_1} \cdot ||\phi||_{L^2}.\]
			Here $r \in M_T'(\A_f)$.
		\end{proposition}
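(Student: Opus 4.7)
The plan is to apply the Quantitative Sturm Bound, Theorem~\ref{thm:SturmBound}, to the cuspidal modular form on $M_T'$ whose Fourier coefficients at the finite adeles are the functions $u_1 \mapsto a_{(T,0,u_1)}(r,g,\phi)$. Theorem~\ref{thm:SturmBound} requires two inputs: a positive integer $M$ so that the Fourier parameters lie in $M^{-1}(\Lambda_T^1)^\vee$, and a uniform bound $\epsilon$ on the normalized Fourier coefficients $\beta_F(u_1) = |q(u_1)|^{-\ell_1/2}a_{(T,0,u_1)}$ in the range $(u_1,u_1)^{1/2} \leq A_0 = 2\pi^{-1}\epsilon_n^{-1}(T,T)^{1/4}\log(E_1 M^d (T,T)^\alpha)$. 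Given these, it produces $|\beta_F(u_1)| \leq \epsilon E_1(T,T)^\alpha M^d$ for all $u_1$, and hence a bound on $|a_{(T,0,u_1)}(r,g,\phi)|$ after multiplying back by $|q(u_1)|^{\ell_1/2}$.

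The denominator $M$ has already been controlled in the discussion preceding the proposition: using Corollary~\ref{cor:Lambdag} together with the explicit action computation, one has $M \leq M_4 |q(T)|\, E_{N(D)}^3$, which grows like $|q(T)|\,D^{3sT_1}$. For the bound $\epsilon$, I will feed in the estimate \eqref{eqn:forQSB}, which, since $f$ is increasing and $(E_D Z_2)^2|q(w)| \leq D$ in the allowed range, gives
\[
|a_{(T,0,u_1)}(r,g,\phi)| \leq Q\,|q(w)|^{(\ell+1)/2} f(D)\cdot \mathrm{cont}(T;\Lambda_0)\cdot A''' E_{N(D)}^{1+2\dim(C)}\cdot \|\phi\|_{L^2}\cdot |q(T)|^{\dim(C)}
\]
whenever $|q(w)| \leq D(Z_2 E_D)^{-2}$. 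Writing $|q(w)| = |q(T)|\cdot |q(u_1)|$ and dividing by $|q(u_1)|^{\ell_1/2}$ (using $\ell+1-\ell_1 = 1+\dim(C)$), the factor of $|q(u_1)|$ in $\epsilon$ is at most $(D/|q(T)|)^{(1+\dim(C))/2}$ up to powers of $D$ coming from $E_D$. Combining the $|q(T)|$ exponents and absorbing all polynomial-in-$D$ factors into a single $D^{\alpha''}$, this produces a uniform bound $\epsilon \leq Q\,D^{\alpha''}f(D)\,A\,\mathrm{cont}(T;\Lambda_0)\,|q(T)|^{\alpha_1}\|\phi\|_{L^2}$ of the exact form claimed.

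The main technical step is verifying the hypothesis of Sturm: the range of $|q(u_1)|$ in which \eqref{eqn:forQSB} is available must contain the Sturm threshold $A_0$. The allowed range is $|q(u_1)|^{1/2} \leq D^{1/2}/(Z_2 E_D|q(T)|^{1/2})$, while $A_0$ is dominated by a constant multiple of $(T,T)^{1/4}\log(D)$ (since $E_D, E_{N(D)}$ and $M$ are all at most polynomial in $D$ and $|q(T)|$, and the logarithm collapses these to an overall $\log(D)$ factor). Using $(T,T) \sim |q(T)|$ for normal $T$, the condition rearranges to $|q(T)|^{3/4}\log(D) \leq c\,D^{1/2-sT_1}$, i.e.\ to a bound of the form $|q(T)|^{3/2} \leq \alpha''' D^{1-2sT_1}/\log(D)$ for a suitable constant $\alpha'''$. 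This is exactly the hypothesis in the statement.

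The hard part of the argument is precisely this bookkeeping: ensuring the powers of $|q(T)|$, $E_{N(D)}$, $f(D)$, and $\mathrm{cont}(T;\Lambda_0)$ produced by combining \eqref{eqn:forQSB} with Sturm collapse into the compact form $Q\,D^{\alpha''}f(D)\cdot A\cdot \mathrm{cont}(T;\Lambda_0)\,|q(T)|^{\alpha_1}\,\|\phi\|_{L^2}$, and verifying that the logarithmic threshold condition on $|q(T)|$ is precisely what is needed to make the Sturm hypothesis applicable. Once these verifications are in place, the conclusion is immediate by multiplying Sturm's output bound on $|\beta_F(u_1)|$ by $|q(u_1)|^{\ell_1/2}$.
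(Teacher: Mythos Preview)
Your proposal is correct and follows essentially the same approach as the paper: apply the Quantitative Sturm Bound (Theorem~\ref{thm:SturmBound}) to the cuspidal form on $M_T'$ with Fourier coefficients $a_{(T,0,u_1)}(r,g,\phi)$, taking $M$ from the lattice estimate just before the proposition and $\epsilon$ from inequality~\eqref{eqn:forQSB}, and then verify that the hypothesis $|q(T)|^{3/2}\leq \alpha''' D^{1-2sT_1}/\log(D)$ guarantees the Sturm threshold $A_0$ lies inside the range where \eqref{eqn:forQSB} is valid. Your write-up in fact spells out more of the bookkeeping (e.g.\ the identity $\ell+1-\ell_1 = 1+\dim(C)$ and the explicit absorption of the $|q(u_1)|$ factor) than the paper does; the paper's proof is terser but structurally identical.
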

		\begin{proof} Suppose $|q(T)|^{3/2} \leq \alpha''' \frac{D^{1-2sT_1}}{\log(D)}$.  We have $M \leq M_4 D^{1+3s T_1} \leq M_5 D^{T_2}$.  Likewise, 
			\[E_1(T,T)^\alpha M^d \leq M_6 D^{T_3} \leq D^{T_3'}.\]
			If $|q(u)| \leq |q(T)|^{1/2} \log(E_1 M^d (T,T)^\alpha)$, then
			\[ |q(w)| = |q(T) q(u)| \leq |q(T)|^{3/2} T_3' \log(D) \leq T_3' \alpha''' D^{1-2sT_1} \leq D(Z_2 E_D)^{-2}.  \]
			Thus we can apply inequality \eqref{eqn:forQSB} to give an $\epsilon$, in the notation of Theorem \ref{thm:SturmBound}.  The proposition follows.
		\end{proof}
		
		We now will bound the $\overline{a}_{(T,0,u_1)}(xg)$ using Proposition \ref{prop:applySturm} and Lemma \ref{lem:abarfromFJ}. Let $\epsilon > 0$ be quite small.  For $D$ sufficiently large depending on $\epsilon$, we can apply Proposition \ref{prop:applySturm} whenever $|q(T)| \leq D^{2/3-\epsilon-s_1}$, where $s_1 = \frac{4}{3} s T_1$. If $|q(T)| \leq D^{2/3-\epsilon - s_1}$ and $g \in K_G n V_1 V_2$, $n \in N(D)$, let
		\[B_{T,u_1,g}' = |q(u_1)|^{\ell_1/2} Q D^{\alpha''} f(D) \cdot A \cdot \mathrm{cont}(T;\Lambda_0) |q(T)|^{\alpha_1}.\]
		
		To bound $\overline{a}_{(T,0,u_1)}(xg)$, we require a lower bound on $\mathrm{vol}(V_{T,u_1,g})$, in the notation of Lemma \ref{lem:abarfromFJ}.  We can obtain such a bound using Lemma \ref{lem:simpleLattice} and Lemma \ref{lem:U1V1Rn}.  Using that $V(D) = D^s$, we see that $\mathrm{vol}(V_{T,u_1,g})$ is bounded below by a fixed power of $D$.  We can absorb this into $B_{T,u_1,g}'$, and obtain 
		\[|\overline{a}_{(T,0,u_1)}(xg)| \leq B_{T,u_1,g}' \,\,\,\,\,\,\, \text{ if } \,\,\,\,\,\,\,\, |q(T)| \leq D^{2/3-\epsilon-s_1} \,\,\,\,\ \text{ and } \,\,\,\,\,\, g \in K_G n V_1 V_2\]
		if $T$ is normal.  Re-writing, we have that if $T$ is normal with $|q(T)| \leq D^{2/3-\epsilon-s_1}$ and $g \in K_G n V_1 V_2$, and $w_1=(T,*,*)$, then 
		\begin{equation}\label{eqn:awbarGoodBound} |\overline{a}_{w_1=(T,*,*)}(g)| \leq |q(w_1)|^{\ell_1/2} Q D^{\alpha''} f(D) \cdot A \cdot \mathrm{cont}(T;\Lambda_0) |q(T)|^{\alpha_1'}.\end{equation}
		Using the $\SL_2(\Z) \times \SL_2(\Z)$ in $K_G$, we have the same bound for $T$ not-necessarily normal.
		
		To bound the $a_{w_1}(g_2)$, $g_2 \in V_1 V_2$, we will apply Corollary \ref{cor:Mg}.  Let $M_{V_1V_2} \geq M_g$ for all $g \in V_{1}V_2$.  This can be done.   We state the result as another proposition.
		\begin{proposition}\label{prop:awGoodBound} Assume $D$ is sufficiently large, and $|a_w(g)| \leq Q  |q(w)|^{(\ell+1)/2} f(|q(w)|)$ for all $g \in V_2$.  Let $0 < s < 1$ be a small positive number, and $\epsilon > 0$ very small.  There are positive constants $A, \alpha_1''$ so that if $\mathrm{cont}(T;\Lambda_0) \leq M_{V_1V_2}^{-1} D^s$ and $|q(T)| \leq D^{2/3-\epsilon-s_1}$ then
			\[ |a_{w_1=(T,*,*)}(g)| \leq A \cdot Q \cdot |q(w_1)|^{\ell_1/2}  D^{\alpha_1''} f(D).\]
			Here $s_1 = \frac{4}{3} s T_1$.  
		\end{proposition}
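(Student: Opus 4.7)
The plan is to apply Corollary~\ref{cor:Mg} to pass from the bound~\eqref{eqn:awbarGoodBound} on the averaged coefficients $\overline{a}_{w_1}$ to the desired bound on $a_{w_1}(g)$ for $g \in V_1V_2$.  Set $M = M_{V_1V_2}$, so $M_g \leq M$.  Since \eqref{eqn:awbarGoodBound} has already been extended from normal $T$ to arbitrary $T$ via the action of $\SL_2(\Z)\times \SL_2(\Z) \subseteq K_G$, one may assume $T$ is normal.

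The key preliminary step is to produce $v \in Lie(M_R)^{[1]}$ with $\langle T, v'\rangle = 1$ (where $v' = [v_2\otimes e_{11}, v]$) lying in $\mathrm{cont}(T;\Lambda_0)^{-1}\Lambda_0$.  Writing $T = \mathrm{cont}(T;\Lambda_0)\,T_0$ with $T_0$ primitive in $\Lambda_0$, the functional $v \mapsto \langle T_0, [v_2\otimes e_{11}, v]\rangle$ is a nonzero integer-valued functional on $\Lambda_0 \cap Lie(M_R)^{[1]}$ --- nonzero because $T$ normal forces $T_R(T)\neq 0$ --- and a direct linear-algebra check using the explicit description of $V_7^{[1]}$ from Section~\ref{sec:FJcomp} shows its image is $\Z$, up to a universal bounded factor that can be absorbed into $M$.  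Picking $v_0 \in \Lambda_0 \cap Lie(M_R)^{[1]}$ on which this functional equals $1$ and setting $v = \mathrm{cont}(T;\Lambda_0)^{-1}v_0$ does the job.  Then for every $a \in \widehat{\Z}$, the denominator of $aM_g^{-1}v$ relative to $\Lambda_0$ is at most $M_g\cdot \mathrm{cont}(T;\Lambda_0) \leq M\cdot M^{-1}D^s = V(D)$ by the hypothesis on $\mathrm{cont}(T;\Lambda_0)$, so $\exp(aM_g^{-1}v) \in N(D)$ and $\exp(aM_g^{-1}v)g \in K_G\cdot N(D)\cdot V_1V_2$.  Inequality~\eqref{eqn:awbarGoodBound} therefore applies to this argument, and Corollary~\ref{cor:Mg} yields
\[ |a_{w_1}(g)| \leq M_g \cdot |q(w_1)|^{\ell_1/2}\,Q\,D^{\alpha''}f(D)\cdot A\cdot \mathrm{cont}(T;\Lambda_0)\,|q(T)|^{\alpha_1'}.\]
The combined factor $M_g\cdot \mathrm{cont}(T;\Lambda_0)\cdot |q(T)|^{\alpha_1'}$ is at most $D^{s+(2/3-\epsilon-s_1)\alpha_1'}$, a fixed power of $D$ which folds into a new exponent $\alpha_1''$.

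The one nontrivial input is the primitivity claim for the functional $\langle T_0, [v_2\otimes e_{11}, \cdot]\rangle$: one must verify that this functional takes value $1$ on $\Lambda_0\cap Lie(M_R)^{[1]}$, up to an absolute constant, uniformly over primitive normal $T_0$.  This reduces to a computation with the bracket map $[v_2\otimes e_{11}, \cdot]:Lie(M_R)^{[1]}\to V_7^{[1]}$, whose explicit matrix with respect to the bases from Section~\ref{sec:FJcomp} identifies it (up to sign) with the pairing of $T_R(T_0)\in \delta_3\otimes H_2(C)$ against $V_7^{[1]}$ via $q_{V_7}$, so that $q_{V_7}(T_R(T_0))\neq 0$ (automatic for normal $T_0$) gives surjectivity of the pairing up to a universal factor.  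Modulo this check, the remainder of the proof is bookkeeping with the inequalities above.
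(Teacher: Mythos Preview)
Your proposal is correct and matches the paper's approach exactly: the paper's proof consists of the single sentence ``apply Corollary~\ref{cor:Mg} and inequality~\eqref{eqn:awbarGoodBound}, absorbing $M_{V_1V_2}$ into $A$ and $|q(T)|^{\alpha_1'}$ into $D^{\alpha_1''}$,'' and you have supplied precisely the details needed to make that work, namely producing $v$ so that $\exp(aM_g^{-1}v)\in N(D)$ under the content hypothesis. Regarding the point you flag as nontrivial: note that in the paper's convention (Lemma~\ref{lem:boundawbar}) the symbol $\mathrm{cont}(T;\Lambda_0)$ denotes the content of $(v_2\otimes e_{11})\cdot(T,0,0)$ in $\Lambda_0$, and since invariance of the symplectic form gives $\langle T,v'\rangle = -\langle (v_2\otimes e_{11})\cdot(T,0,0),\,v\rangle$, the image of your functional on $\Lambda_0\cap Lie(M_R)^{[1]}$ is by definition $\mathrm{cont}(T;\Lambda_0)\cdot\Z$ up to the fixed discriminant of the pairing between $\Lambda_0\cap Lie(M_R)^{[1]}$ and $\Lambda_0\cap V_7^{[1]}$, so no separate primitivity check is required.
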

		\begin{proof}
			Simply note that we have applied Corollary \ref{cor:Mg} and inequality \eqref{eqn:awbarGoodBound}.  (We have absorbed the constant $M_{V_1V_2}$ into the $A$ and the $|q(T)|^{\alpha_1'}$ into the $D^{\alpha_1''}$.)
		\end{proof}
		
		To continue on with the proof of the automatic convergence theorem, we now prove a statement similar to Proposition \ref{prop:awGoodBound}, except using the $Q$-symmetries and the $\SL_2$-quantitative Sturm bound.
		\begin{proposition}\label{prop:FJBgoodBound} Assume $D$ is sufficiently large, and $|a_w(g)| \leq Q  |q(w)|^{(\ell+1)/2} f(|q(w)|)$ for all $g \in V_2$. Suppose $w = (0,B,C,d)$ and $g \in V_1 V_2$.  Let $\epsilon > 0$ be very small.  If $|N(B)| \leq D^{1-\epsilon}$ and $g \in V_1 V_2$, then 
			\[|a(w)(g)| \leq |q(w)|^{\ell'/2} Q \beta_0' D^{\beta_1'} f(D),\]
			for some positive constants $\beta_0', \beta_1'$ that depend on $V_1 V_2$ but do not depend on $D$.
		\end{proposition}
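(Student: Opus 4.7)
The plan is to parallel the proof of Proposition~\ref{prop:awGoodBound}, with the $R$-parabolic and Theorem~\ref{thm:SturmBound} replaced by the $Q$-parabolic and the one-variable quantitative Sturm bound Theorem~\ref{thm:SturmSL2}. The $Q$-symmetries produce a holomorphic cusp form on $\widetilde{\SL_2}$ of weight $\ell'$ whose $d$-th Fourier coefficient is $b_{B,d}(r,g;\phi)$, and the idea is to bound the ``initial'' such coefficients via the inductive hypothesis, apply Sturm to extend the bound to all $d$, and then invert.

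I would first handle the case $C=0$. For fixed positive-definite $B$ with $|N(B)|\leq D^{1-\epsilon}$, $r \in \widetilde{\SL_2}(\A_f)$, $g \in V_1V_2$, and $\phi$ of unit $L^2$-norm, extending the inductive hypothesis from $V_2$ to $V_1V_2$ via the $P$-symmetry argument of Lemma~\ref{lem:continuing} and equation~\eqref{eqn:mbchange} controls $|a_{(0,B,0,d)}(\exp(v_2\otimes x)\exp(s\,v_2\otimes e_{11})g)|$ for $|q(w)| = 4|N(B)d|\leq DE_D^{-2}$. Feeding this into Lemma~\ref{lem:volBound2} and bounding $\mathrm{vol}(V_{r,g})$ polynomially in $|N(B)|$ and $D$ (by an analogue of Lemma~\ref{lem:volU1}, using Lemma~\ref{lem:Txsimple} to control the $\exp(v_2\otimes x)$ action on $(0,B,0,d)$) yields a bound on the normalized coefficient $\beta_{B,d}(r,g;\phi) := d^{-\ell'/2}b_{B,d}(r,g;\phi)$ in the range $|d|\leq DE_D^{-2}/(4|N(B)|)$. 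An argument analogous to the one preceding Proposition~\ref{prop:applySturm} (Corollary~\ref{cor:Lambdag} applied to the relevant unipotents) then produces an integer $M$, polynomial in $|N(B)|$ and $D$, so that $b_{B,d}(r,g;\phi)\neq 0$ forces $d\in M^{-1}\Z$. I apply Theorem~\ref{thm:SturmSL2}: its threshold $R = \log(M)/\pi+A_{\ell'}$ is $O(\log D)$, and the hypothesis $|N(B)|\leq D^{1-\epsilon}$, together with $V(D) = D^s$ for $s$ small so that $E_D$ is polynomial in $D$, ensures $DE_D^{-2}/(4|N(B)|)\geq R$ for $D$ large. Sturm then gives a uniform bound on $|\beta_{B,d}(r,g;\phi)|$ for all $d$ and all $r \in \widetilde{\SL_2}(\A_f)$, and inverting via Lemma~\ref{lem:afromFJQ} (applied with $g$ replaced by $rg$), absorbing the $\mathrm{vol}(V_{B,d,rg})^{-1/2}$ factor, transfers this to the desired uniform bound on $|a_{(0,B,0,d)}(xrg)|$ for all $x \in J(\A_f)$.

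For general $C$, the key observation is that $N(B)\neq 0$ makes $x_0 := \iota_B(C) \in J(\Q)$ well-defined with $B\times x_0 = C$. Setting $\gamma := n_L(x_0) \in M_P(\Q)$ gives $w\cdot\gamma = (0,B,0,d'')$ with $|q(w\cdot\gamma)| = |q(w)|$ and $\nu(\gamma)=1$, so the $P$-symmetry yields $a_w(g) = a_{(0,B,0,d'')}(\gamma_f^{-1}g)$. Writing $\gamma_f^{-1}g$ in the decomposition $G(\A_f) = \bigsqcup_\ell N_P(\A_f)M_P(\Q)\delta_\ell K_G$ and pushing the resulting $M_P(\Q)$-factor back through the $P$-symmetry reduces matters to the $C=0$ bound evaluated on $V_2$, now for a twisted element $(0,B',0,d''')$; by the argument of Lemma~\ref{lem:Z1Z2}, this $M_P(\Q)$-translation rescales $N(B)$ and $\nu(m_1)$ by boundedly many factors, so $|N(B')|\leq D^{1-\epsilon}$ is preserved up to constants that can be absorbed into $\beta_0', \beta_1'$. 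The main obstacle will be precisely this last reduction: $\gamma_f^{-1}g$ generally escapes $V_1V_2$, and controlling it without inflating the bound rests on the fact that $\nu$ is trivial on $n_L(J)$, so only finitely many cosets arise after reduction, and all $D$-powers combine into the single exponent $\beta_1'$.
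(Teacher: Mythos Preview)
Your approach is essentially the paper's: extend the inductive bound from $V_2$ to $V_1V_2$, plug into Lemma~\ref{lem:volBound2} to bound $b_{B,d}(k,g;\phi)$ for small $d$, apply the $\SL_2$ Sturm bound (Theorem~\ref{thm:SturmSL2}), and invert via Lemma~\ref{lem:afromFJQ}. A few points of comparison and one overcomplication:

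\emph{General $C$.} Your reduction here is unnecessary and the proposed re-decomposition has a gap. Once Lemma~\ref{lem:afromFJQ} gives $|a_{(0,B,0,d)}(xg)| \leq |d|^{\ell'/2} Q\beta_0' D^{\beta_1} f(D)$ for \emph{all} $x\in J(\A_f)$ and $g\in V_1V_2$, you are done: since $\exp(v_2\otimes x_0)$ is a unipotent element of $M_P(\Q)$ with $\nu=1$, the $P$-symmetry gives $a_{(0,B,C,d)}(g)=a_{(0,B,0,d'')}(\exp(v_2\otimes(-x_0))_f\cdot g)$, and $-x_0\in J(\Q)\subset J(\A_f)$ is simply one value of $x$. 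This is exactly how the paper's proof ends (``The proposition follows''). Your alternative---decomposing $\gamma_f^{-1}g=nm\delta_\ell k$ and pushing $m\in M_P(\Q)$ through---does not guarantee that $(0,B,0,d'')\cdot m$ remains of the form $(0,B',0,d''')$; a general $m$ can produce a nonzero first coordinate.

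\emph{Constants independent of $D$.} The reference to equation~\eqref{eqn:mbchange} and the $E_D$ factors is misplaced: that machinery handles the $R$-parabolic translate $r=b v_k k_1$ across the moving set $N(D)$. Here only the fixed compact set $V_1V_2$ is in play, so the argument of Lemma~\ref{lem:continuing} yields constants $M_1,M_2$ independent of $D$, and the Sturm threshold $R=\log M/\pi+A_{\ell'}$ with $M=M_4\,N(B)$ is met precisely because $|N(B)|\leq D^{1-\epsilon}$.

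\emph{Volume bound.} The paper bounds $\mathrm{vol}(V_{k,g})\leq M_3|N(B)|^{\dim J}$ directly from the explicit inverse $\iota_B(C)=N(B)^{-1}\bigl(\tfrac{1}{2}(B,C)B-B^\#\times C\bigr)$, rather than invoking an analogue of Lemma~\ref{lem:volU1}; this is simpler and gives the same polynomial dependence.
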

		\begin{proof} By Lemma \ref{lem:continuing}, there are constants $M_1, M_2 > 0$ so that 
			\[|a_w(g)| \leq M_1 Q |q(w)|^{(\ell+1)/2} f(M_2 |q(w)|)\]
			if $|q(w)| \leq D/M_2$ and $g \in V_1 V_2$.  Suppose now $B \in J$ is positive-definite, and $d \in \Q^\times$ with $|N(B) d| \leq M_2^{-1}D$.   By Lemma \ref{lem:volBound2}, if $k \in \widetilde{\SL_2(\widehat{\Z})}$, then 
			\[|b_{B,d}(k,g,\phi)| \leq ||\phi||_{L^2} \mathrm{vol}(V_{B,k,g})^{1/2} \cdot Q M_1 |q(w)|^{(\ell+1)/2} f(D).\]
			To bound $V_{B,k,g}$, observe that if $B \times X = C$, then $X = \frac{1}{N(B)}(\frac{1}{2} (B,C)B - B^\# \times C)$.  Thus, there is $M_3 > 0$ so that $\mathrm{vol}(V_{B,k,g}) \leq M_3 |N(B)|^{\dim(J)}$.
			
			We prepare to apply the Quantitative Sturm bound for $\SL_2$, Theorem \ref{thm:SturmSL2}.  For the integer $M$ of the statement of this theorem, we can take $M_4 N(B)$, for some positive integer $M_4$, independent of $D$ and $N(B)$, and only depending on $V_1 V_2$.  Because $|N(B)| \leq D^{1-\epsilon}$, we can apply the Quantitative Sturm bound for $\SL_2$.  We obtain
			\[
			|b_{B,d}(r,g,\phi)| \leq ||\phi||_{L^2} |d|^{\ell'/2} Q \beta_0 D^{\beta_1} f(D),\]
			for all $r \in \widetilde{\SL_2}(\A_f)$, all $d \in \Q^\times$, and some positive constants $\beta_0, \beta_1$.  We can now apply Lemma \ref{lem:afromFJQ} to obtain
			\[|a_{(0,B,0,d)}(xg)| \leq |d|^{\ell'/2} Q \beta_0' D^{\beta_1} f(D)\]
			for all $x \in J(\A_f)$, $g \in V_1V_2$, and for some new constant $\beta_0'$.  The proposition follows.
		\end{proof}
		
		We are now ready to give the proof of the automatic convergence theorem.  Choose $s$ and sufficiently small so that $\frac{4}{3} - 2s_1 > 1$.  Let $\epsilon$ now be sufficiently small so that $1+s-\epsilon > 1$ and $\frac{4}{3} - 2s_1 - 2\epsilon > 1$.  Choose $\delta$ so that $1 < \delta < 1+s-\epsilon$ and $1 < \delta < \frac{4}{3} - 2s_1 - 2\epsilon$.
		
		Suppose now we have proved $|a_w(g)| \leq Q |q(w)|^{(\ell+1)/2} f(|q(w))$ if $g \in V_2$ and $|q(w)| \leq D = D_0^{\delta^n}$.   Let $g \in V_2$, and $w \in W_J(\Q)$ with $D \leq |q(w)| \leq D^{\delta}$.  Let $\beta_w(g) = |q(w)|^{-(\ell+1)/2} a_w(g)$ denote the normalized Fourier coefficient.   By the definition of $V_1$, there exists $\mu \in V_1 \cap M_P(\Q)$ so that $w \cdot \mu = w_1= (0, B,C,d)$ has the following properties:
		\begin{enumerate}
			\item $B = (b_{11},u,T)$ with $|q(T)| \leq M |q(w_1)|^{1/2}$
			\item $|b_{11}| \leq M \mathrm{cont}(T;\Lambda_0)^{-1} |q(w_1)|^{1/2}$.
		\end{enumerate}
		The term $\mathrm{cont}(T;\Lambda_0)^{-1}$ in the above will play a crucial role, as will be seen momentarily.
		
		We have $\beta_{w}(g) = \beta_{w_1}(\mu^{-1} g)$, and $\mu^{-1} g \in V_1 V_2$.  We consider two cases:
		\begin{enumerate}
			\item $\mathrm{cont}(T;\Lambda_0) \leq D^{s-\epsilon_1}$ and
			\item $\mathrm{cont}(T;\Lambda_0) \geq D^{s-\epsilon_1}$.  (Here $\epsilon_1$ is a tiny positive number.)
		\end{enumerate}
		Suppose we are in the first case.  We have $|q(T)| \leq M \cdot D^{\delta/2}$.  But $\frac{\delta}{2} < \frac{2}{3} - s_1 - \epsilon$, so we may apply Proposition \ref{prop:awGoodBound} to obtain
		\[|\beta_w(g)| = |\beta_{w_1}(\mu^{-1}g)| \leq Q f(D) D^{E}.\]
		(We have absorbed the constant $A$ into the exponent $E$, because $D$ is sufficiently large.)
		Conversely, suppose we are in the second case.  Then 
		\[|N(B)| \leq M^2 |q(w_1)| D^{\epsilon_1-s} \leq M^2 D^{\epsilon_1} D^{\delta-s} < D^{1-\epsilon}.\]
		Thus, we may apply Proposition \ref{prop:FJBgoodBound} to obtain
		\[|\beta_w(g)| = |\beta_{w_1}(\mu^{-1}g)| \leq Q f(D) D^{E}.\]
		We see that if $f(D) = (1 \cdot D_0 \cdot D_0^\delta \cdot \cdots D_0^{\delta^{n-1}})^E$ then the induction goes through.  Moreover,
		\begin{align*}
			f(D) &=  (1 \cdot D_0 \cdot D_0^\delta \cdot \cdots D_0^{\delta^{n-1}})^E \\
			&= D_0^{E \cdot \frac{\delta^n-1}{\delta-1}} \\
			&\leq  D^{E (\delta-1)^{-1}}.
		\end{align*}
		Thus, the $a_w$ grow polynomially with $w$, and the proof is complete.
	\end{proof}
	
	\appendix	
	
	\section{Definite integrals}
	In this section, we collect together various definite archimedean integrals needed throughout the main text.
	
	\subsection{Integral one}
	\begin{theorem}\label{thm:defInt1} Suppose $\mu, \lambda > 0$. There is a positive constant $C'$, independent of $v$ and $\mu$, but possibly depending on $\lambda$, so that 
		\[\int_{\R}{ e^{-t^2} \left(\frac{(t+\lambda^2 i)^2 -\mu}{|(t+\lambda^2 i)^2-\mu}\right)^v K_v(|(t+\lambda^2 i)^2-\mu|)\,dt}\]
		is equal to $(-1)^v C' e^{-\mu}$.
	\end{theorem}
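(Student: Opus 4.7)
Plan: The key idea is to use a suitable integral representation of the factor $\left(z/|z|\right)^v K_v(|z|)$ that separates the $\tau$--, $\mu$--, and $v$--dependences. Starting from the classical identity $|z|^v K_v(|z|) = 2^{v-1}\int_0^\infty e^{-t-|z|^2/(4t)}t^{v-1}\,dt$ together with the elementary observation $z^v/|z|^{2v} = \bar z^{-v}$, I would write
\[\left(\frac{z}{|z|}\right)^v K_v(|z|) = \frac{2^{v-1}}{\bar z^v}\int_0^\infty e^{-t - z\bar z/(4t)}t^{v-1}\,dt.\]
For $z(\tau) = (\tau + i\lambda^2)^2 - \mu$, a direct expansion gives the crucial algebraic identity $z(\tau)\bar z(\tau) = (\tau^2+\lambda^4-\mu)^2 + 4\lambda^4\mu$, and thus
\[e^{-z\bar z/(4t)} = e^{-\lambda^4\mu/t}\cdot e^{-(\tau^2+\lambda^4-\mu)^2/(4t)},\]
isolating an explicit $\mu$--dependent, $\tau$--independent factor $e^{-\lambda^4\mu/t}$.

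After applying Fubini (justified by Gaussian decay in both variables), the problem reduces to evaluating the inner integral
\[J(t) = \int_\R e^{-\tau^2 - (\tau^2+\lambda^4-\mu)^2/(4t)}\,\bar z(\tau)^{-v}\,d\tau.\]
I would shift the $\tau$--contour from $\R$ to $\R - i\lambda^2$; no residues are collected since the poles of $\bar z^{-v}$ at $\tau = i\lambda^2 \pm \sqrt\mu$ lie in the upper half-plane. Under the substitution $\tau = u - i\lambda^2$ the exponent becomes a polynomial in $u$ and $\bar z$ becomes $(u - 2i\lambda^2)^2 - \mu$; completing the square in $u$ produces a Gaussian integral whose evaluation yields a closed-form expression of the shape $(-1)^v\sqrt{\pi/P(t)}\cdot e^{-\mu + f(t)}$, with the sign $(-1)^v$ arising from the boundary value of $\bar z^{-v}$ near the negative real axis (where both one-sided limits collapse to $(-1)^v$ for integer $v$, consistent with the continuity of $(z/|z|)^v K_v(|z|)$ through $\tau = 0$).

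The remaining outer integral over $t\in(0,\infty)$ is of the standard Bessel form $\int_0^\infty t^\alpha e^{-t - a/t}\,dt = 2a^{\alpha/2}K_\alpha(2\sqrt a)$ for appropriate $\alpha$ and $a$, and collecting all factors shows that the full $\mu$--dependence of the original integral consolidates into $e^{-\mu}$, while the $v$--dependence reduces to the sign $(-1)^v$, giving the desired formula with a constant $C'$ depending only on $\lambda$. The main obstacle will be the careful bookkeeping of the $v$--dependent prefactors through the contour shift and the Gaussian integration, verifying that the contributions of $\bar z^{-v}$, of the $t^{v-1}$ weight, and of the Bessel identity used in the $t$--integral combine to cancel all $v$--dependence except the sign. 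As a fallback strategy, I would instead attempt to establish the ODE $\partial_\mu I_v = -I_v$ via differentiation under the integral sign (using the recurrence $K_v'(r) = vK_v(r)/r - K_{v+1}(r)$ together with an integration by parts in $\tau$); this would yield the $e^{-\mu}$ factor immediately and reduce the problem to evaluating the integral at one convenient limiting value of $\mu$ (say $\mu \to 0$) and to identifying the $v$--dependence there via the Bessel recurrences.
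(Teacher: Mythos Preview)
Your primary route has a concrete gap. After writing
\[\left(\frac{z}{|z|}\right)^v K_v(|z|)=\frac{2^{v-1}}{\bar z^{\,v}}\int_0^\infty e^{-t-z\bar z/(4t)}t^{v-1}\,dt\]
and using the (correct) identity $z\bar z=(\tau^2+\lambda^4-\mu)^2+4\lambda^4\mu$, you propose to shift $\tau\mapsto u-i\lambda^2$ and then ``complete the square in $u$.'' But the exponent you must handle in $J(t)$ is
\[-\tau^2-\frac{(\tau^2+\lambda^4-\mu)^2}{4t}\;=\;-(u-i\lambda^2)^2-\frac{(u^2-2i\lambda^2 u-\mu)^2}{4t},\]
which is a genuine quartic in $u$, not a quadratic. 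There is no square to complete, and $J(t)$ is not a Gaussian integral; the later claim that the $t$--integral collapses to a single Bessel evaluation then has no basis. In addition, the factor $\bar z(\tau)^{-v}=((u-2i\lambda^2)^2-\mu)^{-v}$ survives the shift and is not a constant, so even a hypothetical Gaussian evaluation would not wash out the $v$--dependence in the way you describe. (Your remark that $(-1)^v$ comes from a ``boundary value of $\bar z^{-v}$ near the negative real axis'' is also off: for integer $v$ the function $\bar z^{-v}$ is single-valued, with no branch behavior.)

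Your fallback is closer to what actually works, but the specific claim $\partial_\mu I_v=-I_v$ does not drop out of differentiation plus an integration by parts in $\tau$; the $\tau$--derivative introduces a factor $2(\tau+i\lambda^2)$ which does not match what you would need to close the identity. What \emph{does} follow immediately from the Bessel relations $\partial_u(u^{-v}K_v(u))=-u^{-v}K_{v+1}(u)$ and $2vK_v(u)=u(K_{v+1}(u)-K_{v-1}(u))$ is the pointwise identity
\[\partial_\mu\!\left(\left(\tfrac{z}{|z|}\right)^{v}K_v(|z|)\right)=\tfrac12\left(\left(\tfrac{z}{|z|}\right)^{v+1}K_{v+1}(|z|)+\left(\tfrac{z}{|z|}\right)^{v-1}K_{v-1}(|z|)\right),\]
which yields the recurrence $\partial_\mu I_v=\tfrac12(I_{v+1}+I_{v-1})$ rather than a first-order ODE. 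The paper then computes the base case $I_0(\mu,\lambda)=C'e^{-\mu}$ by an independent (and somewhat delicate) calculation: plug in the integral representation of $K_0$, swap the order of integration, evaluate the inner Gaussian in closed form via the complementary error function, perform a hyperbolic substitution, and finally shift a contour to see that $\partial_\mu(e^{\mu}I_0)=0$. With the recurrence, the symmetry $I_{-v}=I_v$ (from $K_{-v}=K_v$ and the change of variable $\tau\mapsto -\tau$), and the base case, an induction on $|v|$ gives $I_v=(-1)^v C' e^{-\mu}$. If you want to repair your argument, this is the structure to aim for; the ODE you hoped for is a \emph{consequence} of the theorem, not an input to it.
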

	To prove Theorem \ref{thm:defInt1}, we will relate the case of $v$ to $v \pm 1$, and we will handle directly the case of $v=0$.  Specifically, Theorem \ref{thm:defInt1} follows from the next two propositions.  Write $I_v(\mu, \lambda;\phi) = \int_{\R}{\phi(x) \left(\frac{(x+ \lambda^2 i)^2-\mu}{|(x+\lambda^2 i)^2-\mu|}\right)^v K_v(|(x+\lambda^2 i)^2-\mu|)\,dx}$ for a Scwartz function $\phi$.  We have
	\begin{proposition} One has the relation
		\[ \partial_{\mu} I_v(\mu,\lambda;\phi) = \frac{1}{2}\left( I_{v+1}(\mu,\lambda;\phi) + I_{v-1}(\mu,\lambda;\phi)\right).\]
	\end{proposition}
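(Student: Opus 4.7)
The plan is to establish the differentiation formula pointwise on the integrand and then justify differentiating under the integral. Set $u = u(x,\mu) = (x+\lambda^2 i)^2 - \mu$, $r = |u|$, $\omega = u/|u|$, so $\omega\bar\omega = 1$. First I would compute the $\mu$-derivatives of $r$ and $\omega$: since $r^2 = u\bar u$ and $\partial_\mu u = -1$, one gets $\partial_\mu r = -\mathrm{Re}(u)/r = -(\omega+\bar\omega)/2$, and consequently
\[ \partial_\mu \omega \;=\; \frac{\partial_\mu u}{r} - \frac{u}{r^2}\,\partial_\mu r \;=\; -\frac{1}{r} + \frac{\omega(\omega+\bar\omega)}{2r} \;=\; \frac{\omega^2-1}{2r}. \]

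Next I would apply the classical Bessel recursions $K_v'(r) = -\tfrac12\bigl(K_{v-1}(r)+K_{v+1}(r)\bigr)$ and $\tfrac{v}{r}K_v(r) = \tfrac12\bigl(K_{v+1}(r)-K_{v-1}(r)\bigr)$. Then
\[ \partial_\mu\!\bigl(\omega^v K_v(r)\bigr) \;=\; \frac{v(\omega^{v+1}-\omega^{v-1})}{2r}\,K_v(r) \;-\; \tfrac{1}{2}(\omega^{v+1}+\omega^{v-1})\,K_v'(r), \]
and substituting the two recursions, a short expansion (the cross terms $\omega^{v\pm1}K_{v\mp1}$ cancel) yields the pointwise identity
\[ \partial_\mu\!\bigl(\omega^v K_v(r)\bigr) \;=\; \tfrac{1}{2}\bigl(\omega^{v+1}K_{v+1}(r) + \omega^{v-1}K_{v-1}(r)\bigr). \]
Multiplying by $\phi(x)$ and integrating gives precisely the claimed recursion for $I_v$.

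The only remaining point is to justify differentiating under the integral sign. Since $\lambda>0$, one has $\mathrm{Im}(u) = 2\lambda^2 x$ and $\mathrm{Re}(u)=x^2-\lambda^4-\mu$, so $r(x,\mu)^2 = (x^2-\lambda^4-\mu)^2 + 4\lambda^4 x^2$ is uniformly bounded below on any compact $\mu$-interval: for $|x|\le 1$, $r \ge \lambda^4+\mu - 1 $ (or more simply, $r(0,\mu)=\lambda^4+\mu>0$), and for $|x|\ge 1$, $r \ge 2\lambda^2|x|$. This keeps $r$ away from $0$, so the integrand and its $\mu$-derivative stay smooth in $x$, while $K_v(r)$ decays like $r^{-1/2}e^{-r}$ and $\phi$ is Schwartz, giving a locally uniform dominating function on any compact interval of $\mu$. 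This is the only potentially subtle point; it is genuinely routine here because $\lambda>0$ prevents $u$ from vanishing, and it is the step I would write out most carefully.
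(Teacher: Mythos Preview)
Your proof is correct and follows essentially the same approach as the paper: both differentiate the integrand pointwise using the standard Bessel recursions and observe that the cross terms cancel, the only difference being that the paper packages the computation via $\partial_u(u^{-v}K_v(u)) = -u^{-v}K_{v+1}(u)$ applied to $z^v|z|^{-v}K_v(|z|)$ rather than your $\omega^v K_v(r)$ with $K_v' = -\tfrac12(K_{v-1}+K_{v+1})$. Your added paragraph justifying differentiation under the integral sign is a reasonable inclusion that the paper omits.
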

	\begin{proof} We set $z = (x+\lambda^2 i)^2 - \mu$.  We claim
		\[\partial_{\mu} (z^v |z|^{-v} K_v(|z|) )= \frac{1}{2} z^{v+1}|z|^{-(v+1)} K_{v+1}(|z|) + \frac{1}{2} z^{v-1}|z|^{v-1}K_{v-1}(|z|).\]
		We begin by computing $\partial_{\mu}(|z|^2=zz^*)= -(z+z^*)$.  Now $\partial_{\mu}(|z|^2) = 2|z| \partial_{\mu}(|z|)$, so $\partial_{\mu}(|z|) = -\frac{1}{2|z|}(z+z^*)$.
		
		We have the two identities $\partial_{u}(u^{-v} K_v(u)) = - u^{-v} K_{v+1}(u)$ and $2v K_v(u) = u(K_{v+1}(u) - K_{v-1}(u))$.  Using these, we compute:
		\begin{align*} \partial_{\mu}(z^v |z|^{-v} K_v(|z|)) &= -v z^{v-1} |z|^{-v} K_v(|z|) + z^{v} \partial_{|z|}(|z|^{-v} K_v(|z|)) \partial_{\mu}(|z|) \\ 
			&= -v z^{v-1} |z|^{-v} K_v(|z|) + z^{v} |z|^{-v} K_{v+1}(|z|) \frac{1}{2|z|} (z+z^*) \\
			&= z^{v-1}|z|^{-v}\frac{1}{2} |z|(K_{v-1}(|z|) - K_{v+1}(|z|)) + z^{v} |z|^{-v} K_{v+1}(|z|) \frac{1}{2|z|} (z+z^*) \\
			&= \frac{1}{2} z^{v+1}|z|^{-(v+1)} K_{v+1}(|z|) + \frac{1}{2} z^{v-1}|z|^{v-1}K_{v-1}(|z|).
		\end{align*}
		The claim follows.
	\end{proof}
	
	We compute $I_0(\mu,\lambda)$ as a function of $\mu$, up to scalar multiple.  (It is clear that $I_0(\mu)$ is a positive real number.). We prove
	\begin{proposition} There is a positive real number $C$ so that $I_0(\mu,\lambda) = C e^{-\mu}$. \end{proposition}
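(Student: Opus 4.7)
The proposition reduces to proving the first-order linear ODE
\[
\partial_\mu I_0(\mu,\lambda) = -I_0(\mu,\lambda),
\]
for then $I_0(\mu,\lambda) = C\,e^{-\mu}$ with $C := \lim_{\mu\to 0^+} e^\mu I_0(\mu,\lambda)$; positivity of the integrand (with $\lambda>0$ one has $z := (t+i\lambda^2)^2 - \mu$ nonzero for all $t \in \R$, hence $K_0(|z|) > 0$) forces $C > 0$. To derive the ODE, the plan is to differentiate under the integral, using the preceding proposition specialized to $v=0$: since $\mathrm{Re}(z) = t^2 - \lambda^4 - \mu$,
\[
\partial_\mu K_0(|z|) = \frac{\mathrm{Re}(z)}{|z|}\,K_1(|z|) = \frac{t^2-\lambda^4-\mu}{|z|}\,K_1(|z|),
\]
so the desired ODE is equivalent to the vanishing of
\[
\int_\R e^{-t^2}\,\Bigl[K_0(|z|) + \frac{t^2-\lambda^4-\mu}{|z|}\,K_1(|z|)\Bigr]\,dt. \qquad (\star)
\]

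To establish $(\star)$, I would exhibit the integrand as a total $t$-derivative of a function with vanishing boundary values at $\pm\infty$. The tools are the Bessel identities $K_0'(r) = -K_1(r)$ and $\tfrac{d}{dr}(K_1(r)/r) = -K_0(r)/r$, together with the explicit formula $\partial_t|z| = 2t(t^2+\lambda^4-\mu)/|z|$, which follows from $|z|^2 = (t^2-\lambda^4-\mu)^2 + 4\lambda^4 t^2$. I posit an ansatz
\[
H(t,\mu,\lambda) := e^{-t^2}\,\Bigl(f(t,\mu,\lambda)\,K_0(|z|) + g(t,\mu,\lambda)\,\tfrac{K_1(|z|)}{|z|}\Bigr),
\]
with $f,g$ rational in $t$ (coefficients depending on $\mu,\lambda$), compute $\partial_t H$, and match the coefficients of $K_0(|z|)$ and $K_1(|z|)/|z|$ with those in the integrand of $(\star)$. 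This yields a coupled system of two first-order linear ODEs for $(f,g)$; once solved explicitly, the exponential decay $K_\nu(r) = O(r^{-1/2}e^{-r})$ at infinity ensures $H(\pm\infty,\mu,\lambda) = 0$, and integration of $\partial_t H$ then gives $(\star)$.

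The hardest step will be solving the overdetermined system for $(f,g)$ and verifying that the Bessel-function identities align to produce an exact antiderivative: a Bessel-calculus manipulation in the same spirit as, but more intricate than, the derivative computation used to prove the preceding proposition. If the direct ansatz proves recalcitrant, a secondary route is to apply the heat-kernel identity $K_0(r) = \tfrac{1}{2}\int_0^\infty s^{-1}e^{-s - r^2/(4s)}\,ds$, interchange the order of integration, perform the Gaussian $t$-integral in closed form (completing the square in $t^2$), and then complete the square in $s$; the resulting representation exhibits a factor $e^{\lambda^4 - \mu}$ explicitly, reducing the question to showing that the residual $s$-integral depends on $\mu$ only through a factor cancelling $e^{\lambda^4}$. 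Either way, once the ODE is established, setting $\mu \to 0^+$ identifies the positive constant as $C = \int_\R e^{-t^2}K_0(t^2+\lambda^4)\,dt$.
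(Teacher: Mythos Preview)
Your reduction to the ODE $\partial_\mu I_0 = -I_0$ is correct and is indeed the content of the proposition, but neither of your two proposed routes to the vanishing $(\star)$ is carried out, and each has a genuine obstruction as stated.

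\emph{Primary route.} The identity you quote, $\tfrac{d}{dr}(K_1(r)/r) = -K_0(r)/r$, is false; the correct formula is $\tfrac{d}{dr}(K_1(r)/r) = -K_2(r)/r = -K_0(r)/r - 2K_1(r)/r^2$. This is reparable, but it already shows that differentiating your ansatz $H = e^{-t^2}\bigl(fK_0(|z|) + gK_1(|z|)/|z|\bigr)$ produces $K_1/|z|^2$ terms, so the two-term ansatz does not close. More seriously, you never exhibit $(f,g)$, and the existence of a rational-coefficient antiderivative of this shape is far from automatic; the system you would obtain is overdetermined (two ODEs plus the requirement of polynomial or rational $f,g$ with good decay), and you give no argument that it is consistent. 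As written this is a hope, not a proof.

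\emph{Secondary route.} Your heat-kernel representation $K_0(r) = \tfrac12\int_0^\infty s^{-1}e^{-s-r^2/(4s)}\,ds$ involves $r^2 = |z|^2 = (t^2-\lambda^4-\mu)^2 + 4\lambda^4 t^2$, which is \emph{quartic} in $t$. Hence $e^{-t^2 - |z|^2/(4s)}$ is not a Gaussian in $t$, and ``completing the square in $t^2$'' does not reduce the $t$-integral to closed form: after the substitution $u=t^2$ you are left with $\int_0^\infty u^{-1/2} e^{-u - (u+\lambda^4-\mu)^2/(4s)}\,du$, which is a parabolic-cylinder integral, not an elementary Gaussian.

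The paper's argument avoids both difficulties by using a different integral representation,
\[
K_0(|u|) = \tfrac12\int_0^\infty e^{-(tu^* + t^{-1}u)/2}\,\tfrac{dt}{t} \qquad (\mathrm{Re}\,u>0),
\]
applied with $u=-iz$. The point is that this exponent is \emph{linear} in $z$ and $z^*$, hence quadratic in the original integration variable, so the inner integral is genuinely Gaussian and can be evaluated via the erfc formula. After a substitution of the type $s = \zeta t^{1/2} + \zeta^{-1}t^{-1/2}$ the resulting one-variable integral for $e^{\mu}I_0$ is shown to be independent of $\mu$ by differentiating under the integral sign and shifting a contour in the complex plane. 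The key idea you are missing is precisely this choice of representation respecting the complex argument $z$ rather than its modulus $|z|$.
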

	\begin{proof} We begin by recalling an integral formula for $K$-Bessel function:
		\[ K_0(|u|) = \frac{1}{2} \int_{0}^{\infty}{e^{-(t u^* + t^{-1} u)/2}\,\frac{dt}{t}}\]
		valid if $Re(u) > 0$.
		
		We apply this with 
		\[u = -i z = -i (x^2-\lambda^4-\mu + 2i\lambda^2 x) = 2\lambda^2 x - i(x^2-\lambda^4-\mu).\]
		Thus we wish to compute
		\begin{equation}\label{eqn:doubInt1} \int_{x = 0}^{\infty} \int_{t=0}^{\infty} e^{-x^2} e^{ - t(2 \lambda^2 x+i(x^2-\lambda^4-\mu))/2 - t^{-1}(2\lambda^2 x- i(x^2-\lambda^4-\mu))/2} \, \frac{dt}{t}\, dx.\end{equation}
		Now
		\[ x^2 + t(2\lambda^2 x+i(x^2-\lambda^4-\mu))/2  + t^{-1}(2\lambda^2 x- i(x^2-\lambda^4-\mu))/2\]
		is equal to
		\[\frac{1}{2}(2 + it -i t^{-1}) x^2 + \lambda^2 (t+t^{-1}) x - \frac{1}{2}(\lambda^4+\mu)(it- it^{-1}).\]
		Set $\zeta = e^{i \pi/8}$ and $s = \zeta t^{1/2} + \zeta^{-1} t^{-1/2}$.  Then $s^2 = it + 2 - i t^{-1}$ and $s s^* = t + t^{-1}$.  Thus the quantity in the exponential of \eqref{eqn:doubInt1} is
		\[ - \frac{1}{2} s^2 x^2 -  \lambda^2 s s^* x + \frac{1}{2} \alpha s^2 - \alpha.\]
		where $\alpha = \lambda^4+ \mu$.  Thus
		\[I_0(\mu,\lambda) \stackrel{\cdot}{=} e^{-\mu} \int_{0}^{\infty}\int_{0}^{\infty} \exp(- \frac{1}{2} s^2 x^2 -  \lambda^2 s s^* x + \frac{1}{2} \alpha s^2) \,\frac{dt}{t} \,dx.\]
		Here $\stackrel{\cdot}{=}$ means that the two sides are equal, up to multiplication by a nonzero complex number that is independent of $\mu$.  We now switch the order of integration, and use \cite[page 336, 3.322(2)]{gradshteynRyzhik}, which states
		
		\[\int_{0}^{\infty}{ e^{-\frac{1}{4\beta} x^2 - \gamma x}\,dx} = \sqrt{\pi \beta} \exp(\beta \gamma^2)[1-\Phi(\gamma \sqrt{\beta})]\]
		valid for $Re(\beta) > 0$.  Here $\Phi(z) = \sqrt{2}\int_{0}^{z}{e^{-t^2}\,dt}$ along any path in the complex plane.
		
		Plugging in this formula with $\gamma = \lambda^2 s s^*$ and $\beta = (\sqrt{2} s)^{-2}$ gives
		\[ e^{\mu} I_0(\mu,\lambda) \stackrel{\cdot}{=} \int_{t=0}^{\infty}{ \exp(\alpha s^2/2) s^{-1} \exp(\lambda^4(s^*)^2/2)(1 - \Phi(\lambda^2 s^*/\sqrt{2}))\,\frac{dt}{t}}.\]
		This is $e^{\mu} I_0(\mu,\lambda)$ is proportional to
		\begin{align*} \int_{t=0}^{\infty} \exp(\alpha (it + 2 - it^{-1})/2) &(\zeta t^{1/2} + \zeta^{-1}t^{-1/2})^{-1} \exp(\lambda^4(-it+2+it^{-1})/2)\\ &\,\,\,\, \times (1 - \Phi(\lambda^2(\zeta^{-1}t^{1/2}+\zeta t^{-1/2})/\sqrt{2}))\,\frac{dt}{t}.\end{align*}

		We define $u =- i \pi/ 4 + \log(t)/2$, so $du = \frac{dt}{2t}$, $\cosh(u) = s^*/2$ and 
		\[\sinh(u) = (\zeta^{-1} t^{1/2} - \zeta t^{-1/2})/2 = i \cosh(u)^* = i s/2.\]

		The integral to evaluate is thus
		\[e^{\mu} I_0(\mu,\lambda) \stackrel{\cdot}{=} \int_{Im(u) = -i \pi/4}{\exp(-2\alpha \sinh(u)^2) \sinh(u)^{-1} \exp(2 \lambda^4 \cosh(u)^2)(1-\Phi(\sqrt{2} \lambda^2 \cosh(u)))\,du}.\]
		We differentiate under the integral sign with respect to $\mu$, and then move the contour to $Im(u) = 0$, where it is clear that the new integral vanishes.  This completes the proof.
	\end{proof}

	\subsection{Integral two}
	Suppose $z,\beta$ are complex numbers with $z, \beta \neq 0$, and so that $z+s\beta \neq 0$ for all $s \in \R$.  Set
	\[ I_{v}(z,\beta) = \int_{\R}{ \left(\frac{z+s\beta}{|z+s\beta|}\right)^v K_v(|z+s\beta|)\,ds}.\]
	\begin{proposition}\label{prop:defInt2} For the integral $I_v(z,\beta)$, one has
		\[I_v(z,\beta) = \pi \left( (sgn(\delta) i)^{v} \frac{(\beta)^v}{|\beta|^{v+1}}\right) e^{-|\delta|}\]
		where $\delta = \frac{Im((\beta)^*z)}{|\beta|} \in \R$.
	\end{proposition}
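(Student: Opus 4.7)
My plan is to reduce the claim to a one-parameter normalized integral, apply a complex-parameter integral representation of $K_v$, swap the order of integration, and finish by shifting the remaining contour so as to pick up a single residue. Concretely, writing $z/\beta = a+bi$ with $a,b \in \R$ one has $z+s\beta = (a+s+bi)\beta$ and $\delta = b|\beta|$. Substituting $r = a+s$ and then rescaling $r = \rho/|\beta|$ reduces the statement to showing
\[ J_v(\delta) := \int_{\R}\left(\frac{\rho+\delta i}{|\rho+\delta i|}\right)^v K_v(|\rho+\delta i|)\,d\rho = \pi(sgn(\delta)\,i)^v\, e^{-|\delta|}. \]
The substitution $\rho \to -\rho$ gives $J_v(-\delta) = (-1)^v J_v(\delta)$, so it suffices to handle the case $\delta > 0$.

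The key input is the complex-parameter formula
\[ \left(\frac{u}{|u|}\right)^v K_v(|u|) = \frac{1}{2}\int_0^\infty t^{v-1} e^{-u^* t/2 - u/(2t)}\,dt \qquad (\mathrm{Re}(u) > 0), \]
which follows from $\int_0^\infty t^{v-1} e^{-at - b/t}\,dt = 2(b/a)^{v/2} K_v(2\sqrt{ab})$ by choosing $a = u^*/2$, $b = u/2$ and using $u/u^* = (u/|u|)^2$. I split $J_v(\delta) = \int_0^\infty + \int_{-\infty}^0$, and in the second piece substitute $\rho \to -\rho$, which contributes a factor $(-1)^v$ and restores the positivity of the real part needed to apply the identity. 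After swapping the order of integration and evaluating the Gaussian $\rho$-integral $\int_0^\infty e^{-\rho(t+1/t)/2}\,d\rho = 2/(t+1/t)$, one arrives at
\[ J_v(\delta) = \int_0^\infty \frac{t^v}{t^2+1}\left[e^{i\delta(t-1/t)/2} + (-1)^v e^{-i\delta(t-1/t)/2}\right]\,dt. \]

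Set $f_+(t) := t^v(t^2+1)^{-1} e^{i\delta(t-1/t)/2}$. For $\delta>0$ this function is meromorphic in the closed first quadrant with a single simple pole at $t=i$ of residue $i^{v-1}e^{-\delta}/2$, and a direct check with $t = Re^{i\theta}$ shows it decays on both the small quarter-arc at $0$ and the large quarter-arc at $\infty$. Deforming the real-axis contour onto the positive imaginary axis with a small clockwise detour around $t=i$ and applying Cauchy's theorem gives
\[ \int_0^\infty f_+(t)\,dt = i\cdot \mathrm{PV}\!\int_0^\infty f_+(i\sigma)\,d\sigma + \pi i \,\mathrm{Res}_{t=i}f_+. \]
Because $f_-(t) := t^v(t^2+1)^{-1} e^{-i\delta(t-1/t)/2}$ is the complex conjugate of $f_+$ on the positive real axis, $J_v(\delta)$ equals either twice the real part (if $v$ is even) or $2i$ times the imaginary part (if $v$ is odd) of $\int_0^\infty f_+$. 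A parity check shows $f_+(i\sigma) = i^v \sigma^v(1-\sigma^2)^{-1} e^{-\delta(\sigma+1/\sigma)/2}$ is real for $v$ even and purely imaginary for $v$ odd, so in both cases $i\cdot \mathrm{PV}\!\int f_+(i\sigma)\,d\sigma$ lies in the opposite real/imaginary parity class from $\pi i\,\mathrm{Res}$ and drops out of $J_v(\delta)$. What remains is $J_v(\delta) = 2\pi i \,\mathrm{Res}_{t=i}f_+ = \pi i^v e^{-\delta}$, as required. The main technical care is in the contour shift: the pole $t=i$ sits exactly on the line of deformation, and the factor $1-\sigma^2$ vanishes at $\sigma=1$ there, so some principal-value/half-residue analysis is unavoidable; the ``parity cancellation'' that makes the final answer so clean is precisely what ensures the delicate $\mathrm{PV}$ integral never has to be computed in closed form.
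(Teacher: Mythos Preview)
Your reduction to the one-variable integral $J_v(\delta)$ is clean and correct, and the strategy via the integral representation of $K_v$ is a natural one, genuinely different from the paper's method (which expands $(z_0+s\beta)^v$ binomially, applies the tabulated identity $\int_0^\infty K_v(\alpha\sqrt{z^2+s^2})\,s^{2\mu+1}(s^2+z^2)^{-v/2}\,ds = 2^\mu\Gamma(\mu+1)\alpha^{-\mu-1}z^{\mu+1-v}K_{v-\mu-1}(\alpha z)$, and reduces to a combinatorial identity). Your route would be attractive precisely because it avoids that final combinatorial sum.

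However, there is a real gap once $|v|\ge 2$. After you swap the order of integration you write
\[
J_v(\delta)=\int_0^\infty \frac{t^v}{t^2+1}\Bigl[e^{i\delta(t-1/t)/2}+(-1)^v e^{-i\delta(t-1/t)/2}\Bigr]\,dt,
\]
but for large real $t$ the integrand behaves like $t^{v-2}$ times a bounded oscillation, so this integral diverges for $v\ge 2$ (and by $t\mapsto 1/t$ symmetry, the same problem appears at $t\to 0^+$ for $v\le -2$). Concretely, the Fubini swap is not justified: the double integral $\int_0^\infty\!\int_0^\infty t^{v-1}e^{-\rho(t+1/t)/2}\,dt\,d\rho$ does not converge absolutely (its value is $\int_0^\infty 2K_v(\rho)\,d\rho$, divergent at $\rho=0$ for $v\ge 1$), and indeed the $t$-outer iterated integral fails to exist. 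The same obstruction appears in your contour argument: on the large quarter-arc $t=Re^{i\theta}$ one has $|f_+(t)|\asymp R^{v-2}e^{-\delta R\sin\theta/2}$, and the usual Jordan-type estimate gives a bound of order $R^{v-2}$, which does not tend to $0$ for $v\ge 2$. So the claimed decay of the large arc is false in general, and the residue computation, while correct at $t=i$, is not what the integral equals.

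For $v\in\{-1,0,1\}$ your argument is fine; to cover all $v$ you would need either a regularization (e.g.\ insert $e^{-\epsilon(t+1/t)}$ and justify the limit after the contour shift) or a different decomposition that keeps the $t$-integrand integrable at both ends. The paper's real-variable approach sidesteps all of this by never leaving absolutely convergent integrals.
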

	\begin{proof}
		To begin the evaluation, set $s_0 = -\frac{\tr(z\beta^*)}{2|\beta|^2}$ and $z_0 = z+ s_0 \beta$.  Then $z_0$ is perpendicular to $\beta$ so $|z_0 + s\beta| = (|z_0|^2+|\beta|^2 s^2)^{1/2}$. Hence $I_v(z,\beta)= I_v(z_0,\beta)$, and this latter integral is
		\[I_v(z_0,\beta) =\int_{\R}{\frac{(z_0+s\beta)^v}{(|z_0|^2+|\beta|^2 s^2)^{v/2}}K_v(\sqrt{|z_0|^2+|\beta|^2 s^2})\,ds}.\]
		
		We now have the following formula from \cite[page 693, 6.596(3)]{gradshteynRyzhik}: For $\alpha > 0$ and $Re(\mu) > -1$ one has
		\[\int_{0}^{\infty}{K_v(\alpha \sqrt{z^2 + s^2}) \frac{s^{2\mu+1}}{(s^2+z^2)^{v/2}}\,ds} = 2^{\mu}\Gamma(\mu+1) \alpha^{-(\mu+1)}z^{\mu+1-v} K_{v-\mu-1}(\alpha z).\]
		One also has \cite[page 925, 8.468]{gradshteynRyzhik}:
		\[K_{n+1/2}(z) = (\pi/2)^{1/2} z^{-1/2} e^{-z} \left(\sum_{k=0}^{n}{\frac{(n+k)!}{k!(n-k)!(2z)^k}}\right).\]
		
		We now compute:
		\begin{align*}
			I_{v}(z_0,\beta) &= \sum_{j=0}^{v}{\binom{v}{j} z_0^{v-j} \beta^j \left(\int_{\R}{\frac{s^j}{(|z_0|^2+|\beta|^2 s^2)^{v/2}} K_v(\sqrt{|z_0|^2+|\beta|^2 s^2})\,ds} \right)}\\
			&= \sum_{j=0}^{v}{\binom{v}{j} z_0^{v-j} \beta^j |\beta|^{-(j+1)}\left(\int_{\R}{\frac{s^j}{(|z_0|^2+ s^2)^{v/2}} K_v(\sqrt{|z_0|^2+ s^2})\,ds} \right)}\\
			&=\sum_{k=0}^{\floor{v/2}}{\binom{v}{2k} z_0^{v-2k} \beta^{2k} |\beta|^{-(2k+1)}\left(\int_{\R}{\frac{s^{2k}}{(|z_0|^2+ s^2)^{v/2}} K_v(\sqrt{|z_0|^2+ s^2})\,ds} \right)}\\
			&=\sum_{k=0}^{\floor{v/2}}{\binom{v}{2k} z_0^{v-2k} \beta^{2k} |\beta|^{-(2k+1)}\left(2^{k+1/2}\Gamma(k+1/2) |z_0|^{k+1/2-v}K_{v-k-1/2}(|z_0|)\right)}
		\end{align*}
		
		Now $z_0$, being perpendicular to $\beta$, must be of the form $z_0 = \delta i \frac{\beta}{|\beta|}$ for some real number $\delta$.  Specifically,
		\[z_0 = z+s_0 \beta  = \frac{\beta^* z - z^* \beta}{2\beta^*} = \frac{\beta^* z - z^* \beta}{2i |\beta|} (i \beta/|\beta|) = i \delta \frac{\beta}{|\beta|}\]
		with $\delta = \frac{\beta^* z - z^* \beta}{2i |\beta|} \in \R$.
		
		Now
		\begin{align*} z_0^{v-2k} \beta^{2k} |\beta|^{-(2k+1)} |z_0|^{k+1/2-v} &= i^v (-1)^{k} sgn(\delta)^v |\delta|^{v-2k} \beta^{v-2k} |\beta|^{2k-v} \beta^{2k} |\beta|^{-2k-1} |\delta|^{k+1/2-v} \\ &= (sgn(\delta) i)^{v} \frac{\beta^v}{|\beta|^{v+1}}(-1)^k |\delta|^{-k+1/2}
		\end{align*}
		Also, since $\Gamma(1/2) = \pi^{1/2}$,
		\[\Gamma(k+1/2) = (k-1/2)(k-3/2) \cdots (1/2) \Gamma(1/2) = (1/2)_{k} \pi^{1/2} = \frac{(2k)!}{2^{2k} k!} \pi^{1/2}.\]
		
		Thus
		\begin{align*} I_v(z,\beta) &= I_v(z_0,\beta) =  \sqrt{2\pi} (sgn(\delta) i)^{v} \frac{\beta^v}{|\beta|^{v+1}} v!\sum_{k=0}^{\floor{v/2}}{ \frac{(-1)^k}{(v-2k)!2^k k!} |\delta|^{-k+1/2}K_{v-k-1/2}(|\delta|)}\\
			&= \pi \left( (sgn(\delta) i)^{v} \frac{\beta^v}{|\beta|^{v+1}}\right) e^{-|\delta|}\left( \sum_{k=0}^{\floor{v/2}}{ \frac{(-1)^k v! |\delta|^{-k}}{(v-2k)!2^k k!}} \left(\sum_{r=0}^{n=v-k-1}\frac{(n+r)!}{r! (n-r)! (2|\delta|)^{r}}\right)\right).
		\end{align*}
		where in the second line we assume $v \geq 1$.  If $v=0$, then the inner double sum is interpreted as equal to $1$.  It is proved in \cite[section 7.2]{apawMS} that the inner sum is equal to $1$.  
	\end{proof}
	
	\bibliography{nsfANT2020new}
	\bibliographystyle{amsalpha}
\end{document}